\title[Pseudoholomorphic punctured spheres in $\mathbb{R} \times (S^{1}\times  S^{2})$]{Pseudoholomorphic punctured spheres in $\mathbb{R} \times (S^{1}\times  S^{2})$:\\Moduli space parametrizations}
\author{Clifford Henry Taubes}
\address{Department of Mathematics\\
Harvard University\\
Cambridge, MA 02138\\USA}
\email{chtaubes@math.harvard.edu}
\urladdr{}
\numberwithin{equation}{section}
\newcommand{\step}[1]{\medskip{{\bf #1}}\qua\ignorespaces}
\newcommand{\substep}[1]{\medskip{{\bfseries\itshape #1}}\qua\ignorespaces}
\def\itaubes#1{\addtocounter{equation}{1}
\begin{itemize}
\leftskip25pt
\item
\noindent\llap{\hbox to 53.5pt{\rm\hypertarget{eq:#1}%
{(\theequation)}\hss}}\ignorespaces}
\def\enditaubes{\end{itemize}}
\def\qtaubes#1{\addtocounter{equation}{1}\par\leftskip38pt
\noindent\llap{\hbox to 38pt{\rm\hypertarget{eq:#1}%
{(\theequation)}\hss}}\ignorespaces}
\def\endqtaubes{\par\medskip\leftskip0pt}
\def\eqreft#1#2{\hyperlink{eq:#1.#2}{(#1--#2)}}
\let\subsectionold\subsection
\def\subsection#1{\subsectionold{#1}\setobjecttype{Subs}}
\DeclareMathOperator{\dist}{dist}
\DeclareMathOperator{\Maps}{Maps}
\DeclareMathOperator{\Aut}{Aut}
\DeclareMathOperator{\Autt}{\mathbb{A}ut}
\DeclareMathOperator{\Hom}{Hom}
\DeclareMathOperator{\old}{old}
\DeclareMathOperator{\Crit}{Crit}
\DeclareMathOperator{\cokernel}{cokernel}
\DeclareMathOperator{\kernel}{kernel}
\DeclareMathOperator{\constant}{constant}
\DeclareMathOperator{\sign}{sign}
\DeclareMathOperator{\domain}{domain}
\DeclareMathOperator{\inti}{int}
\DeclareMathOperator{\Cyc}{Cyc}
\DeclareMathOperator{\Auttt}{\hat{A}ut}
\DeclareMathOperator{\Sym}{Sym}
\DeclareMathOperator{\new}{new}
\DeclareMathOperator{\Vertt}{Vert}
\DeclareMathOperator{\Arc}{Arc}
\DeclareMathOperator{\Auth}{\hat{A}ut}
\DeclareMathOperator{\maps}{maps}
\DeclareMathOperator{\Map}{Map}
\def\cnewtheorem#1[#2]#3{\newtheorem{#1}{#3}[section]
\expandafter\let\csname c@#1\endcsname\c@theorem}
\newtheorem{theorem}{Theorem}[section]
\newtheorem{point}{Point}
\newtheorem{scenario}{Scenario}
\theoremstyle{definition}
\newcommand{\bit}{\begin{itemize}}
\newcommand{\eit}{\end{itemize}}
\begin{document}

\begin{asciiabstract}
This is the second of two articles that describe the moduli spaces of
pseudoholomorphic, multiply punctured spheres in R x (S^1 x S^2) as
defined by a certain natural pair of almost complex structure and
symplectic form. The first article in this series described the local
structure of the moduli spaces and gave existence theorems. This
article describes a stratification of the moduli spaces and gives
explicit parametrizations for the various strata.
\end{asciiabstract}

\begin{htmlabstract}
This is the second of two articles that describe the moduli spaces of
pseudoholomorphic, multiply punctured spheres in <b>R</b> &times;
(S<sup>1</sup> &times; S<sup>2</sup>) as defined by a certain natural
pair of almost complex structure and symplectic form. The first
article in this series described the local structure of the moduli
spaces and gave existence theorems. This article describes a
stratification of the moduli spaces and gives explicit
parametrizations for the various strata.
\end{htmlabstract}

\begin{abstract}
This is the second of two articles that describe the moduli spaces of
pseudoholomorphic, multiply punctured spheres in $\mathbb{R} \times
(S^1 \times S^2)$ as defined by a certain natural pair of almost
complex structure and symplectic form. The first article in this
series described the local structure of the moduli spaces and gave
existence theorems. This article describes a stratification of the
moduli spaces and gives explicit parametrizations for the various
strata.
\end{abstract}
\maketitle

\section{Introduction}\label{sec:1}

This is the second of two articles that describe the moduli spaces
of multiply punctured, pseudoholomorphic spheres in $\mathbb{R}
\times  (S^1 \times  S^2)$ as defined using
the almost complex structure, $J$, for which
\begin{equation}\label{eq1.1}
\begin{split}
J\cdot \partial _{s}&=\frac{1 }{\surd 6 (1 +3\cos  ^4\theta )^{1 /
2}} \bigl((1 - 3\cos ^{2}\theta )\partial _{t} + \surd 6 \cos \theta
\partial _{\varphi }\bigr)
\\
J\cdot \partial _{\theta }&=\frac{1 }{\surd 6(1 + 3\cos  ^4\theta
)^{1 / 2}} \biggl(-\surd 6 \cos \theta  \sin \theta
\partial _{t} + (1 - 3 \cos ^{2}\theta )\frac{1 }{ {\sin\theta }}\partial _{\varphi }\biggr) .
\end{split}
\end{equation}
Here, $s$ is the Euclidean coordinate on the $\mathbb{R}$ factor of
$\mathbb{R}\times  (S^1 \times  S^2)$, while $t$ is
the $\mathbb{R}/(2\pi \mathbb{Z})$ coordinate on the $S^1$ factor
and $(\theta , \varphi ) \in  [0, \pi ] \times \mathbb{R}/(2\pi \mathbb{Z})$
are the usual spherical coordinates
on the $S^2$ factor. A change of coordinates shows that this
almost complex structure is well defined near the $\theta  = 0$
and $\theta = \pi $ cylinders, and that the latter are
pseudo-holomorphic with a suitable orientation.

This almost complex structure arises naturally in the following
context: A smooth, compact and oriented 4 dimensional manifold with
non-zero second Betti number has a 2--form that is symplectic on the
complement of its zero set, this a disjoint set of embedded circles
(see, eg Taubes \cite{T1}, Honda \cite{Ho}, Gay and Kirby
\cite{GK}). There are indications that certain sorts of closed,
symplectic surfaces in the complement of the zero set of such a
2--form code information about the differential structure of the
4--manifold (Taubes \cite{T2}). Meanwhile, \cite{T1} describes the complement
of any such vanishing circle in one of its tubular neighborhoods as
diffeomorphic to $(0, \infty ) \times (S^1 \times S^{2})$ via a
diffeomorphism that makes all of the relevant symplectic surfaces
pseudoholomorphic with respect to either the almost complex structure
in \eqref{eq1.1} or its push-forward by the two-fold covering map that
sends $(t, \theta , \varphi )$ to the same point as $(t+\pi , \pi
-\theta , -\varphi )$.

The investigation of these symplectic surfaces in the differential
topology context lead the author to study the pseudoholorphic
subvarieties in $\mathbb{R} \times  (S^1 \times  S^2)$
in general with a specific focus on the multiply
punctured spheres. This series of articles reports on this study.
In particular, the first article in this series \cite{T3} defined a
topology on the set of pseudoholomorphic subvarieties and defined
them as elements of moduli spaces that are much like those
introduced by Hofer \cite{H1,H2,H3}, Hofer--Wysocki--Zehnder 
\cite{HWZ2,HWZ1,HWZ3} and
Eliashberg--Hofer--Givental \cite{EGH} in a closely related context. The
multiply punctured sphere moduli spaces were then proved to be
smooth manifolds and formulae were given for their dimensions.
Finally, \cite{T3} describes necessary and sufficient conditions to
guarantee the existence of a moduli space component with
prescribed $|s|  \to \infty$ asymptotics in
$\mathbb{R}  \times (S^1 \times  S^2)$.
The details of much of this are summarized below for the benefit
of those who have yet to see \cite{T3}.

This second article in the series describes the multiply punctured sphere
moduli spaces in much more detail as it describes the set of components and
provides something of a parametrization for each component.

Note that the article \cite{T4}, a prequel to this series, provided this
information for certain disk, cylinder and thrice-punctured sphere moduli
space components.

\subsection{The background}\label{sec:1a}

The almost complex structure in \eqref{eq1.1} is compatible with
the symplectic form
\begin{equation}\label{eq1.2}
\omega  = d\bigl(e^{-\surd 6s} \alpha \bigr),
\end{equation}
where $\alpha $ is the following contact 1--form on $S^1 \times S^2$:
\begin{equation}\label{eq1.3}
\alpha \equiv - (1 - 3 \cos ^{2}\theta ) dt - \surd 6 \cos \theta
\sin^2\theta  d\varphi.
\end{equation}
In this regard, the standard product metric on $\mathbb{R} \times  (S^1 \times  S^2)$
is related to the bilinear
form $\omega (\cdot , J\cdot )$ using the rule
\begin{equation}\label{eq1.4}
\frac{1}{\surd 6 (1 + 3\cos  ^4\theta )^{1/2}}e^{\surd 6s}
\omega (\cdot , J\cdot ) = ds^{2} + dt^{2} +
d\theta ^{2} + \sin^{2}\theta  d\varphi ^{2}.
\end{equation}
On a related note, the form $\omega $ is self-dual and harmonic
with respect to the product metric in \eqref{eq1.4}, this a
consequence of the various strategically placed factors of $\surd
6$.

Following Hofer, a pseudoholomorphic subvariety in $\mathbb{R} \times  (S^1 \times  S^2)$
is defined to be a closed subset, $C$, that lacks isolated points and has the following
properties:

\itaubes{1.5}
\textsl{The complement in $C$ of a countable, nowhere accumulating subset is a
2--dimensional submanifold whose tangent space is $J$--invariant.}

\item
$\smallint _{C \cap K}  \omega  < \infty $ \textsl{when $K \subset  \mathbb{R}\times
(S^1 \times  S^2)$ is an open set with compact closure}.

\item
$\smallint _{C} d\alpha  < \infty $.

\end{itemize}

A pseudoholomorphic subvariety is said to be `reducible' when the removal of
a finite set of points disconnects it.

As explained in \cite[Section~2]{T4}, any pseudoholomorphic
subvariety intersects some sufficiently large $R$ version of the
$|s|   \ge R$  portion of $\mathbb{R} \times  (S^1 \times  S^2)$ as an embedded union of
disjoint, half-open cylinders. In particular, if $E$ denotes such a
cylinder, then the restriction of s to $E$ defines a smooth, proper
function with no critical points. Moreover, the limit as $|s|   \to  \infty $
of the constant s slices of $E$
converge in $S^1 \times S^2$ pointwise as a multiple
cover of some `Reeb orbit', this an embedded, closed orbit of the
vector field
\begin{equation}\label{eq1.6}
\hat {\alpha } \equiv (1 - 3 \cos ^{2}  \theta )  \partial _{t} +
\surd 6 \cos \theta \partial _{\varphi }.
\end{equation}
A subset $E  \subset  C$ of the sort just described is said to be an `end' of $C$.

In the case that an irreducible subvariety is not a $\theta =0$ or
$\theta =\pi $ cylinder, considerations of the convergence of the
constant $s$ slices of any given end to the limiting Reeb orbit led in  \cite[Section~1]{T3} to
the association of a 4--tuple of asymptotic data to the end in
question. To elaborate, such a 4--tuple has the form $(\delta , \varepsilon,  (p, p'))$
with $\delta $ either $-1$, $0$ or $1$; with $\varepsilon $ one of
the symbols $\{-, +\}$; and with  $(p, p')$ being an ordered pair of integers
that are not both zero and are constrained to obey:

\itaubes{1.7}
$|\frac{p'}{p}|  >\sqrt {\frac{3}{2}} $ \textsl{in the case that $p < 0$ and $\delta  = 0$}.

\item
\textsl{If $\delta =\pm 1$, then $p < 0$}.

\item
\textsl{If $\delta  = 1$, then $\frac{p'}{p} < -\sqrt {\frac{3}{2}} $ if
$\varepsilon  = +$, and $\frac{p'}{p} > -\sqrt{\frac{3}{ 2}} $ if $\varepsilon  = -$}.

\item
\textsl{If $\delta = -1$, then $\frac{p'}{p} >\sqrt {\frac{3 }{2}} $
if $\varepsilon  = +$, and $\frac{p'}{p} <\sqrt {\frac{3 }{ 2}} $ if $\varepsilon  = -$}.
\end{itemize}

To explain the meaning of the 4--tuple assignment, first note that
the angle $\theta $ is constant on any integral curve of the
vector field in \eqref{eq1.6}. This understood, the case $\delta
= 0$ signifies that this constant value of $\theta $ on the limit
Reeb orbit for the given end is an angle in $(0,\pi )$.
Meanwhile, the case $\delta  = 1$ signifies that this constant
value is 0, and $\delta = -1$ signifies that the $|s|    \to
\infty $ limit of $\theta $ on the end is $\pi $. In all of these
cases, the appearance of $+$ for the parameter $\varepsilon $
signifies that $s \to  \infty $ on the given end, while the
appearance of $-$ for $\varepsilon $ signifies that $s \to
-\infty $ on the end. An end with $\varepsilon  =+$ is said to be
a `concave side' end, and an $\varepsilon  = -$ end is said to be
a `convex side' end. Finally, the integers $p$ and $p'$ are the
respective integrals of the 1--forms $\frac{1 }{ {2\pi }}dt$ and
$\frac{1}{ {2\pi }}d\varphi $ around any given constant $|s| $
slice of the end granted that the latter are oriented by the
pull-back of the 1--form $-\alpha $.

In the case that $\delta  = 0$, the $|s| \to \infty $
limit of $\theta $ on the end in question is determined by the integer pair
 $(p, p')$  as this limiting angle obeys
\begin{equation}\label{eq1.8}
p'(1 - 3 \cos ^{2}\theta )- p\surd 6 \cos \theta  = 0 \text{ and }p'\cos \theta  \ge 0.
\end{equation}
In this regard, keep in mind that any ordered pair,  $(p, p')$ ,
of integers with at least one non-zero defines a unique angle in
$(0, \pi )$  via \eqref{eq1.8} in the case that $p \ge  0$. In
the case that $p < 0$, such a pair defines an angle in  $(0, \pi
)$  if and only if $| \frac{{p' } }{p}|  > \sqrt {\frac{3 }{
2}} $. The angle so defined is also unique.

In the case that $\delta =\pm 1$, the pair  $(p, p')$  determine the rate of
convergence of the angle $\theta $ to its limiting value of 0 or $\pi $. To
be explicit, results from \cite[Sections~2 and~3]{T4} can be used to verify
that
\begin{equation}\label{eq1.9}
\sin \theta =\surd 6 \hat {c}e^{(\sqrt {\frac{3 }{ 2}}  +
\delta  \frac{{p' } }{p})s}(1+ o(1))
\end{equation}
as $|s| \to \infty $ on the given end with $\hat {c}$ some positive constant.

Aside from the collection of 4--tuples from its ends, the
subvariety also defines a pair, (\c{c}$_ -$, \c{c}$_{ + })$, of
non-negative integers, these being the respective numbers
(counting multiplicity) of intersections between the the $\theta  = 0$
and $\theta =\pi $ cylinders and the subvariety. In this
regard, keep in mind that these cylinders are pseudoholomorphic.
Also keep in mind a consequence of the analysis in \cite[Section~2]{T4}:
There are at most a finite set of intersection points
between any two distinct, irreducible pseudoholomorphic
subvarieties in $\mathbb{R} \times  (S^1 \times  S^2)$. Finally, keep in mind that any intersection point
between distinct pseudoholomorphic subvarieties has positive local
intersection number (McDuff \cite{M}).

Granted what has just been said, an irreducible pseudoholomorphic subvariety
that is not a $\theta  = 0$ or $\theta =\pi $ cylinder defines an
example of what is a called here an asymptotic data set, this the set whose
elements consist of the ordered pair (\c{c}$_{ - }$, \c{c}$_{ + })$ and the
collection of 4--tuples from its ends. In general, the term `asymptotic data
set' refers to a certain sort of set that consists of one ordered pair of
non-negative integers, here (\c{c}$_{ - }$, \c{c}$_{ + })$; and some number
of 4--tuples that have the form $(\delta , \varepsilon ,  (p, p'))$ where
$\delta $ is $-1$, $0$ or $1$, $\varepsilon $ is either $-$ or $+$,
and  $(p, p')$  is an ordered pair of integers that obey the rules in~\eqreft17. A set,
$\hat{A}$, as just described is deemed an asymptotic data set in the event that
it obeys five additional constraints. Here are the first two:
\begin{equation}\label{eq1.10}
\sum _{(\delta ,\varepsilon ,(p,p')) \in \hat{A}}
\varepsilon p = 0
\qquad\text{and}\qquad
\sum _{(\delta ,\varepsilon ,(p,p')) \in \hat{A}} \varepsilon p' +\text{\c{c}}_{ + }
+ \text{\c{c}}_{ - } = 0.
\end{equation}
The constraint in \eqref{eq1.10} follows when $\hat{A}$ comes
from an irreducible pseudoholomorphic subvariety by using Stokes'
theorem when considering the integrals of $\frac{1}{2\pi}dt$ and
$\frac{1}{{2\pi}}d\varphi $ on any sufficiently large, constant
$|s| $ slice of the subvariety.

Here is the third constraint:

\qtaubes{1.11}
\textsl{If $\hat{A}$ has two 4--tuples and {\rm\c{c}}$_{ + } = \text{\rm\c{c}}_{ - } = 0$,
then the 4--tuple integer pairs are relatively prime.}
\endqtaubes

Indeed, any $\hat{A}$ with two 4--tuples and \c{c}$_{ + } = \text{\c{c}}_{ - } = 0$
labels a moduli space of pseudoholomorphic cylinders. All of the latter are
described in \cite[Section~4]{T4} and none violate~\eqreft1{11}.

The two remaining constraints refer to a set, $\Lambda _{\hat{A}}$,
of distinct angles in $[0, \pi ]$ that is defined from $\hat{A}$. This set
contains the angle 0 if \c{c}$_{ + } > 0$ or if $\hat{A}$ contains a
$(1,\ldots)$ element, it contains the angle $\pi $ if \c{c}$_{ - } > 0$ or
if $\hat{A}$ contains a $(-1,\ldots)$ element, and its
remaining angles are those that are defined through~\eqref{eq1.8} by the integer
pairs from the (0,$\ldots)$ elements in $\hat{A}$. This
understood, here are the remaining constraints:

\itaubes{1.12}
\textsl{$\Lambda _{\hat{A}}$ is a single angle if and only if the latter is in $(0, \pi)$,
$\hat{A}$ has only two 4--tuples, and these are $(0,+,P)$ and $(0,-,P)$ with
$P =  (p, p')$  being a relatively prime integer pair.}

\item
\textsl{If $\Lambda _{\hat{A}}$ has more than one angle, then neither of its maximal
or minimal elements is defined via~\eqref{eq1.8} by the integer pairs of any $(0,+,\ldots)$
element from $\hat{A}$.}
\end{itemize}

The constraints in~\eqreft1{12} are consequences of two facts noted in
\cite[(4.21)]{T4} and in \cite[Section 2.E]{T3} about the pull-back of the angle $\theta $
when the latter is non-constant on an irreducible pseudoholomorphic
subvariety: First, this pull-back has neither local maxima nor local minima
in  $(0, \pi )$ . Second, if its $s \to \infty $ limit on a concave side
end is neither 0 nor $\pi $, then its restriction to any constant $s$ slice of
the end takes values both greater and less than this limit.

Given an asymptotic data set $\hat{A}$, use $\mathcal{M}_{\hat{A}}$ to
denote the set of irreducible, pseudoholomorphic, multiply punctured spheres
that give rise to $\hat{A}$. Grace $\mathcal{M}_{\hat{A}}$ with the
topology where a basis for the neighborhoods of any given $C  \in  \mathcal{M}_{\hat{A}}$
is given by sets that are indexed by positive real
numbers where the version that is defined by $\kappa  > 0$ consists of those
$C'  \in   \mathcal{M}_{\hat{A}}$ with
\begin{equation}\label{eq1.13}
\sup_{z \in C} \dist(z, C') + \sup_{z \in C' } \dist(C, z) < \kappa.
\end{equation}
The following theorem restates \cite[Theorem~1.2 and Proposition~2.5]{T3}:

\begin{theorem}\label{thm:1.1}
If $\hat{A}$ is an asymptotic data set and $\mathcal{M}_{\hat{A}}$ is non-empty,
the latter is a smooth manifold of dimension
\begin{equation*}
N_{ + }+ 2(N_{ - }+\hat {N}+\text{\rm\c{c}}_{ - }+\text{\rm\c{c}}_{ + }-1),
\end{equation*}
where $N_{ + }$, $N_{ - }$ and $\hat {N}$ are the respective numbers of
$(0,+ \ldots)$, $(0,-,\ldots)$ and $(\pm 1,\ldots)$ elements in $\hat{A}$.
\end{theorem}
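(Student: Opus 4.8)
The plan is to realize $\mathcal M_{\hat A}$ as the zero set of a Fredholm section and to apply the implicit function theorem. First I would fix, for each element of $\hat A$, a model half-cylinder end carrying the prescribed $|s|\to\infty$ asymptotics: the Reeb orbit of \eqref{eq1.6} determined by $(\delta,(p,p'))$ on the side dictated by $\varepsilon$, approached at the exponential rate given by \eqref{eq1.9} when $\delta=\pm1$ and at a rate governed by the linearized Reeb-flow eigenvalue estimates when $\delta=0$. A point of the configuration space is then a genus-zero surface with one puncture for each element of $\hat A$, equipped with a conformal structure, together with a map $u$ into $\mathbb R\times(S^1\times S^2)$ that is asymptotic near each puncture to the corresponding model end. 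The analytic framework is a weighted Sobolev completion whose weight at each puncture is chosen to sit in the spectral gap of that puncture's asymptotic operator, just beyond the critical rate; the intersection numbers $\mathrm c_+$ and $\mathrm c_-$ with the $\theta=0$ and $\theta=\pi$ cylinders are built in by prescribing the corresponding local contact with those pseudoholomorphic cylinders. Working modulo the reparametrization group of the punctured domain — and \emph{not} quotienting by the $\mathbb R$--translations, which produce genuinely distinct subvarieties — identifies $\mathcal M_{\hat A}$ with the zero set of the $\bar\partial_J$ operator in this configuration space.

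Second, I would show that the linearization $D_u$ of $\bar\partial_J$ at a solution — acting between the weighted Sobolev spaces and including the variations of the domain conformal structure and quotienting the infinitesimal reparametrizations — is Fredholm, and compute its index. The Fredholm property is exactly where the choice of weights enters: at each puncture the asymptotic operator is the linearization of the Reeb flow along the limit orbit, its spectrum is computable for the orbits occurring here, and the weight has been placed to avoid that spectrum. The index is then given by the punctured Riemann--Roch formula as an Euler-characteristic term plus twice a relative first Chern number plus a sum of Conley--Zehnder-type contributions, one per puncture. Sorting the punctures by whether the associated tuple is of type $(0,+,\ldots)$, $(0,-,\ldots)$ or $(\pm1,\ldots)$, and accounting for the contact data recorded by $\mathrm c_\pm$, a bookkeeping computation should collapse the formula to $N_+ + 2(N_- + \hat N + \mathrm c_- + \mathrm c_+ - 1)$. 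The constraints \eqref{eq1.10} together with the parities of the individual Conley--Zehnder indices are what make the $2(\cdot)$ terms come out integral and make each $(0,+,\ldots)$ end contribute exactly once.

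The main obstacle is surjectivity of $D_u$, that is, regularity of every point of $\mathcal M_{\hat A}$: since $J$ is the single fixed almost complex structure of \eqref{eq1.1}, no genericity argument is available. Here I would use that the target is four-dimensional and that an irreducible subvariety which is not a $\theta=0$ or $\theta=\pi$ cylinder is somewhere injective; one decomposes $D_u$ into tangential and normal parts and invokes an automatic-transversality criterion in the spirit of Hofer--Lizan--Sikorav and its punctured refinements, which forces surjectivity once the normal operator has sufficiently small index relative to the genus and the number of punctures. Checking that inequality reduces to a relation among the asymptotic data, and this is precisely what the constraints \eqreft17 through \eqreft1{12} are designed to ensure — in particular the absence of local maxima or minima of $\theta$ in $(0,\pi)$ and the fact that the extreme angles of $\Lambda_{\hat A}$ are not contributed by $(0,+,\ldots)$ tuples. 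Once surjectivity holds, the implicit function theorem gives that $\mathcal M_{\hat A}$ is a smooth manifold whose dimension is the Fredholm index computed above.
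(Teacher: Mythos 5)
The theorem you are trying to prove is not actually established in this paper: the text immediately before the statement reads ``The following theorem restates [Theorem~1.2 and Proposition~2.5]{T3}.'' So there is no proof in this article to compare against; the framework is set up and cited, and the closest analogue here is \fullref{prop:5.2}, whose proof is in turn ``verbatim'' deferred to [T3, Proposition 2.9]. Your general skeleton --- asymptotically cylindrical maps with weights in a spectral gap, Fredholm linearization including variations of conformal structure modulo reparametrization, punctured Riemann--Roch for the index, no quotient by the $\mathbb{R}$--translations --- matches what Section~5 sets up for the operator $D_{C}$ and what [T3] does, so the overall route is sound.

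The genuine gap is in the surjectivity step. You invoke an automatic-transversality criterion in the Hofer--Lizan--Sikorav/Wendl style and then assert that ``this is precisely what the constraints \eqreft17{} through \eqreft1{12} are designed to ensure.'' That is a non sequitur. The constraints in \eqreft17{}--\eqreft1{12} are \emph{necessary} conditions for $\hat{A}$ to be realized by a pseudoholomorphic subvariety; they come from Stokes' theorem, the cylinder classification, and the fact that the pull-back of $\theta$ to a model curve has no interior extrema. Nothing about them is formulated as, or visibly equivalent to, the Conley--Zehnder/relative Chern number inequality that an automatic-transversality theorem demands. To close the argument you would have to actually compute the Conley--Zehnder indices of the relevant Reeb orbits of \eqref{eq1.6} (including the possibly multiply-covered orbits where $\theta\in\{0,\pi\}$, which is where the $\text{\c{c}}_{\pm}$ data lives), account for the zeros of $du$, and verify the inequality directly; none of this is automatic from the combinatorics of $\hat{A}$. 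Also note that your phrasing ``once the normal operator has sufficiently \emph{small} index relative to the genus and the number of punctures'' has the inequality inverted: in the Wendl-type criterion it is the index that must be large enough, or equivalently the genus plus an even-index puncture count plus the critical count of $du$ that must be small. Since $J$ is fixed and not generic, as you correctly observe, this surjectivity step is the entire content of the theorem beyond the index formula, and it cannot be waved through.
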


Note that by virtue of \eqreft1{12}, the sum $N_{ - }+\hat
{N}+\text{\c{c}}_{ -}+\text{\c{c}}_{ + }$ is at least 1 for any
asymptotic data set. As explained in \cite{T4}, the story when $N_{ -
}+\hat {N}+\text{\c{c}}_{ - }+\text{\c{c}}_{ + }= 1$ is as
follows:
\begin{equation}\label{eq1.14}
\textsl{$\mathcal{M}_{\hat{A}}$ consists of
$\mathbb{R}$--invariant cylinders when $N_{ - }+\hat{N}+
\text{\c{c}}_{ - }+\text{\c{c}}_{ + }= 1$.}
\end{equation}
The components of the moduli spaces of pseudoholomorphic,
$\mathbb{R}$--invariant cylinders are of two sorts. First, there
are two single element components, the $\theta  = 0$ cylinder and
the $\theta =\pi $ cylinder. The second sort of component is a
circle. In this regard, each circle component is labeled by a
relatively prime pair of integers, $(p,p')$, with at least one
non-zero and with $| \frac{{p' } }{ p}|  > \sqrt {\frac{3 }{
2}} $ in the case that $p < 0$. The circle consists of the
subvarieties of the form $\mathbb{R} \times  \gamma $ where
$\gamma   \subset  S^1 \times S^2$ is an orbit of the vector
field $\hat {\alpha }$ where $\theta $ is given by the pair  $(p,
p')$  via \eqref{eq1.8}. The circle parameter on the moduli space
can be taken to be the constant value in $\mathbb{R}/(2\pi
\mathbb{Z})$ along $\gamma $ of $p't - p\varphi$.

A description of $\mathcal{M}_{\hat{A}}$ in the case that
$N_{ -}+\hat {N}+\text{\c{c}}_{ - }+\text{\c{c}}_{ + } = 2$ is given in the next
subsection. \fullref{sec:1c} starts the story that is told in subsequent
sections about the general case.

\subsection{The space $\mathcal{M}_{\hat{A}}$ when its dimension is $N_{ + }+2$}\label{sec:1b}

This subsection is divided into two parts. The first provides an explicit
parameterization for $\mathcal{M}_{\hat{A}}$ in the case that
$N_{ -}+\hat {N}+\text{\c{c}}_{ - }+\text{\c{c}}_{ + }= 2$ and so
$\dim(\mathcal{M}_{\hat{A}}) = N_{ + }+2$. The second part describes some aspects of the
subvarieties that map near the frontier in the parametrizing space.

Before starting, note that the story in this case is simpler than
for cases when the dimension is greater than $N_{ + }+2$ by virtue
of the fact that function $\theta $ lacks critical points with
$\theta $ values in  $(0, \pi )$  on the model curves of
subvarieties in the $N_{ - }+\hat {N}+\text{\c{c}}_{ - }+\text{\c{c}}_{ +}= 2$
versions of $\mathcal{M}_{\hat{A}}$. This is a
consequence of \cite[Proposition~2.11]{T3}. The same proposition
implies that the tautological map from any such model curve to
$\mathbb{R} \times  (S^1 \times  S^2)$ is an
immersion that is transversal to the $\theta  = 0$ and $\theta =\pi $ loci.

\step{Part 1}
The story here and also in the $N_{ - }+\hat {N}+\text{\c{c}}_{ - }+\text{\c{c}}_{+ }> 2$
case presented later requires the introduction of a graph with
labeled vertices and labeled edges. The simplicity in the case at hand stems
from the fact that the graph in question is linear.

To describe the graph in the $N_{ - }+\hat {N}+\text{\c{c}}_{ - }+\text{\c{c}}_{ +}= 2$
case, first agree to view a linear graph as a closed interval in $[0, \pi ]$
whose vertices define a finite subset that includes the endpoints.
Let $T^{\hat{A}} \subset  [0, \pi ]$ denote the graph in
question. Each vertex of $T^{\hat{A}}$ is labeled with its
corresponding angle, and the set of these vertex angles is the set $\Lambda_{\hat{A}}$
as defined in the previous subsection. Meanwhile, each
edge of $T^{\hat{A}}$ is labeled by an integer pair using the rules
that follow. In the statement of these rules and subsequently, the letter
`$e$' is used to signify an edge, and $Q_{e}$ is used to signify an integer
pair that is associated to the edge $e$. Here are the rules:

\itaubes{1.15}
\textsl{If $e$ starts the graph at an angle in  $(0, \pi )$, then $Q_{e}$
is the integer pair from the $(0,-,\ldots)$  element in $\hat{A}$
that define this minimal angle via \eqref{eq1.8}.}

\item
\textsl{ If 0  is the smallest angle on $e$, then one and only one of the following
 situations arise: There is a $(1,-,\ldots)$  element in $\hat{A}$,
 or there is a $(1,+,\ldots)$  element in $\hat{A}$,  or $\text{\c{c}}_{+ } = 1$.
 In the these respective cases, $Q_{e}$  is the integer pair from the $(1,-,\ldots)$
 element in $\hat{A}$, or minus the integer pair from the $(1,+,\ldots)$
 element in $\hat{A}$ , or $(0, -1)$  when $\text{\c{c}}_{ + } = 1$.}

\item
\textsl{ Let $o$  denote a bivalent vertex, let $\theta _{o}$  denote its angle,
 and let $e$  and $e'$  denote its incident edges with the convention that
 $\theta _{o}$  is the largest angle on $e$.  Then $Q_{e} - Q_{e'  }$
 is the sum of the integer pairs from the $(0,+,\ldots)$  elements in $\hat{A}$
 that define $\theta _{o}$  via \eqref{eq1.8}.}
\end{itemize}

According to \cite[Theorem~1.3]{T3}, the moduli space $\mathcal{M}_{\hat{A}}$
is non-empty if and only if the following condition holds:

\qtaubes{1.16}{\sl
 Let  $\hat{e}$ denote an edge in $T^{\hat{A}}$. Then $p{q_{\hat{e} }}'-p'q_{\hat{e} } > 0$
 in the case that  $(p, p')$   is an integer pair that defines the
 angle of a bivalent vertex on $\hat{e}$. Moreover, if all of

\begin{enumerate}\leftskip38pt

\item[\rm(a)]
${q_{\hat{e} }}' < 0$,

\item[\rm(b)]
neither vertex on $\hat{e}$  has angle 0  or $\pi $  and

\item[\rm(c)]
the version of \eqref{eq1.8}'s integer $p'$  for one of the
vertex angles on $\hat{e}$ is positive,
\end{enumerate}
 hold, then both versions of $p'$  are positive.}
\endqtaubes

This condition is assumed in what follows.

The graph $T^{\hat{A}}$ is now used as a blueprint of sorts to define
a space, $O^{\hat{A}}$, that plays a fundamental role in what
follows. The definition of the latter follows in three steps.

\substep{Step 1}
Let $\hat{A}_{ + } \subset \hat{A}$ denote the set of $(0,+,\ldots)$ elements and let
\begin{equation}\label{eq1.17}
\mathbb{R}^{\hat{A}} \subset  \Maps(\hat{A}_{ + };\mathbb{R})
\end{equation}
denote the subspace where distinct elements in $\hat{A}_{+}$ have
distinct images in $\mathbb{R}/(2\pi \mathbb{Z})$ when their
respective integer pair components define the same angle in
\eqref{eq1.8}.

Let $\mathbb{R}_{ - }$ denote an extra copy of the affine line $\mathbb{R}$.

\substep{Step 2}
View the space $\Maps(\hat{A}_{ + }; \mathbb{Z})$ as a group using
addition in $\mathbb{Z}$ to give the composition law. Of
interest is an action of this group on
\begin{equation}\label{eq1.18}
\mathbb{R}_{ - }  \times \Maps(\hat{A}_{+};\mathbb{R}).
\end{equation}
This action is trivial on $\mathbb{R}_{ - }$. To describe the
action on the second factor in \eqref{eq1.18}, note first that
$\Maps(\hat{A}_{ + }; \mathbb{Z})$ is generated by a set
$\{z_{u}:u  \in \hat{A}_{ + }\}$ where $z_{u}(\hat{u}) = 0$
unless $\hat{u}= u$ in which case $z_{u}(u) = 1$. The action of
$z_{u}$ on a given $x  \in \Maps(\hat{A}_{ + }; \mathbb{R})$ is
as follows: First, $(z_{u}\cdot x)(\hat{u}) = x(\hat{u})$ in the
case that the integer pair from $\hat{u}$ defines an angle via
\eqref{eq1.8} that is less than that defined by $u$'s integer
pair. Such is also the case when the two angles are equal and
$\hat{u}  \ne u$. Meanwhile, $(z_{u}\cdot x)(u) = x - 2\pi $.
Finally, if the integer pair from $\hat{u}$ defines an angle that
is greater than that defined by $u$'s integer pair, then
$(z_{u}\cdot x)(\hat{u})$ is obtained from $x(\hat{u})$ by adding
\begin{equation}\label{eq1.19}
-2\pi   \frac{{p_u} ' p_{\hat{u}} - p_u p_{\hat{u}} ' }{{q_{\hat{e}}} '
p_{\hat{u}} - {q_{\hat{e}}} p_{\hat{u}} ' },
\end{equation}
where $(p_{u}, {p_{u}}')$ is the integer pair entry
of the element $u$, $(p_{\hat{u}}, {p_{\hat{u}}}')$ is that of
$\hat{u}$, and $\hat{e}$ is the edge in $T^{\hat{A}}$ whose
largest angle vertex has the angle that is defined via
\eqref{eq1.8} by this same $(p_{\hat{u}},{p_{\hat{u}}}')$.

The action just described of $\Maps(\hat{A}_{ + }; \mathbb{Z})$
commutes with the action of $\mathbb{Z}  \times   \mathbb{Z}$
that is defined as follows: An integer pair $N = (n, n')$ acts as
translation by $-2\pi (n'q_{e} - n{q_{e}}')$ on $\mathbb{R}_{ -}$; here
$(q_{e}, {q_{e}}')$ is the integer pair that is
associated to the edge in $T^{\hat{A}}$ with the smallest
angle vertex. Meanwhile, $N$ acts on any $x  \in \Maps(\hat{A}_{ +}; \mathbb{R})$
so that the resulting map, $N\cdot x$, sends any
given $u  \in \hat{A}_{ + }$ to the point that is obtained from $x(u)$ by adding
\begin{equation}\label{eq1.20}
-2\pi \frac{n' p_{\hat{u}} - np_{\hat{u}} ' }{{q_{\hat{e}}} ' p_{\hat{u}} -
q_e p_{\hat{u}} ' }.
\end{equation}

\substep{Step 3}
Granted the definitions in the preceding steps, set
\begin{equation}\label{eq1.21}
O^{\hat{A}} \equiv  [\mathbb{R}_{ - }  \times
\mathbb{R}^{\hat{A}}]/[(\mathbb{Z}  \times  \mathbb{Z})  \times \Maps(\hat{A}_{ + };
\mathbb{Z})].
\end{equation}
As is explained in \fullref{sec:3a}, $O^{\hat{A}}$ is a smooth
manifold. Of particular interest here is its quotient by the
evident action of the group, $\Aut^{\hat{A}}$, whose
elements are the 1--1 self maps of $\hat{A}_{ + }$ that permute only
elements with identical 4--tuples. To elaborate, this group action
is induced from the action on $\mathbb{R}^{\hat{A}}$ whereby a given
$\iota   \in  \Aut^{\hat{A}}$ acts by composition. Thus,
\begin{equation}\label{eq1.22}
(\iota \cdot x)(u) = x(\iota (u)) \text{ for each }u  \in \hat{A}_{ + }.
\end{equation}
In what follows, $\hat{O}^{\hat{A}}   \subset  O^{\hat{A}}$
denotes the subset of points with trivial $\Aut^{\hat{A}}$ stabilizer.

The following theorem explains the relevance of these constructs:

\begin{theorem}\label{thm:1.2}
There is a diffeomorphism from $\mathcal{M}_{\hat{A}}$ to
$\mathbb{R}\times\hat{O}^{\hat{A}}/\Aut^{\hat{A}}$.
\end{theorem}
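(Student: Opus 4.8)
The plan is to construct the diffeomorphism of \fullref{thm:1.2} explicitly, by first building a map at the level of model curves and then dividing by the relevant automorphisms. The starting point is \cite[Proposition~2.11]{T3}, which tells us that any $C \in \mathcal{M}_{\hat{A}}$ with $N_{-}+\hat{N}+\text{\c{c}}_{-}+\text{\c{c}}_{+}=2$ has a model curve on which $\theta$ has no critical points in $(0,\pi)$, and that the tautological map is an immersion transversal to the $\theta=0,\pi$ loci. Thus the $\theta$ function realizes the model curve as a branched object over the linear graph $T^{\hat{A}}$: over each open edge $e$ of $T^{\hat{A}}$ the level sets of $\theta$ are a disjoint union of circles, and the integer pair $Q_e$ is (up to sign and orientation conventions) the $(dt,d\varphi)$-winding data of such a circle, forced by the balancing rules \eqreft1{15}. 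The first step is therefore to show that on each edge the portion of $C$ lying over $e$ is, after using the pseudoholomorphic equation and \eqref{eq1.1}, determined by finitely many real parameters: the ``phase'' constants $p't-p\varphi$ of the component Reeb orbits together with an overall $s$-translation parameter, exactly as in the $\mathbb{R}$--invariant cylinder description recalled after \eqref{eq1.14} and in \cite[Section~4]{T4}.

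Next I would assemble these edge-by-edge parameters into a global object. Cutting $C$ along the $\theta$-preimages of the bivalent vertex angles decomposes it into pieces, one per edge; the gluing data at each bivalent vertex $o$ is constrained by rule \eqreft1{15}(iii), which says $Q_e-Q_{e'}$ equals the sum of the integer pairs of the $(0,+,\ldots)$ elements defining $\theta_o$. The key claim is that the free parameters are precisely: one copy of $\mathbb{R}$ recording the position of the convex-side ($\varepsilon=-$) end (this is the $\mathbb{R}_{-}$ factor, equivalently the overall $\mathbb{R}$--translation, which I would peel off as the first $\mathbb{R}$ factor in $\mathbb{R}\times\hat{O}^{\hat{A}}/\Aut^{\hat{A}}$), plus, for each $(0,+,\ldots)$ element $u\in\hat{A}_{+}$, a real number $x(u)\in\mathbb{R}$ recording where in the $\varphi$-circle the corresponding ``ramification'' of $C$ over the vertex angle of $u$ occurs. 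That gives a point of $\mathbb{R}_{-}\times\Maps(\hat{A}_{+};\mathbb{R})$. One then checks that two such tuples give the same subvariety exactly when they differ by the $(\mathbb{Z}\times\mathbb{Z})\times\Maps(\hat{A}_{+};\mathbb{Z})$ action of \eqref{eq1.18}--\eqref{eq1.20}: the $\Maps(\hat{A}_{+};\mathbb{Z})$ generator $z_u$ corresponds to dragging the $u$-ramification once around its $\varphi$-circle, which forces the compensating shifts \eqref{eq1.19} on all ramifications at strictly larger angles (because they sit on edges whose $Q_e$ has been altered), and leaves those at smaller or equal angles untouched; the $\mathbb{Z}\times\mathbb{Z}$ factor records the ambiguity in how $p't-p\varphi$ lifts from $\mathbb{R}/(2\pi\mathbb{Z})$ to $\mathbb{R}$ on the outermost edge, propagated inward via \eqref{eq1.20}. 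The constraint defining $\mathbb{R}^{\hat{A}}\subset\Maps(\hat{A}_{+};\mathbb{R})$ in \eqref{eq1.17} (distinct elements with equal angles have distinct images mod $2\pi$) is exactly the condition that the corresponding ramifications are genuinely distinct points, which is needed for $C$ to be an embedded subvariety rather than something with extra coincidences. Finally, relabeling $(0,+,\ldots)$ elements with identical 4--tuples produces the same subvariety, which is the $\Aut^{\hat{A}}$ quotient, and the embeddedness/irreducibility of $C$ forces the stabilizer to be trivial, landing us in $\hat{O}^{\hat{A}}/\Aut^{\hat{A}}$.

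To finish I would verify that the resulting bijection $\mathcal{M}_{\hat{A}}\to\mathbb{R}\times\hat{O}^{\hat{A}}/\Aut^{\hat{A}}$ is a diffeomorphism. One direction is clear: given a point of the target, the recipe above produces an explicit pseudoholomorphic subvariety (this is essentially the existence half of \cite[Theorem~1.3]{T3}, whose nonemptiness criterion \eqreft1{16} is exactly what guarantees the pieces can be glued with the right orientations and that the edge pieces are honest pseudoholomorphic annuli rather than degenerate), and this recipe is manifestly smooth in the parameters. For smoothness of the inverse, and to match dimensions, I would compare with \fullref{thm:1.1}: $\dim\mathcal{M}_{\hat{A}}=N_{+}+2(N_{-}+\hat{N}+\text{\c{c}}_{-}+\text{\c{c}}_{+}-1)=N_{+}+2$ here, while $\dim(\mathbb{R}\times\hat{O}^{\hat{A}})=1+\dim O^{\hat{A}}=1+(1+|\hat{A}_{+}|)=N_{+}+2$ since $\dim\mathbb{R}^{\hat{A}}=|\hat{A}_{+}|=N_{+}$ and the quotient in \eqref{eq1.21} is by a discrete group; smoothness of $O^{\hat{A}}$ itself is deferred to \fullref{sec:3a}. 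That the constructed map is a local diffeomorphism then follows from the local structure theory of \cite{T3}: the deformations of $C$ as a pseudoholomorphic subvariety are precisely the infinitesimal moves of the ramification points plus the translation, with no obstruction, so the derivative of our parametrization is an isomorphism onto the tangent space of $\mathcal{M}_{\hat{A}}$ supplied by \fullref{thm:1.1}.

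\medskip\noindent\textbf{Main obstacle.} The delicate point is the bookkeeping in Step~2 --- verifying that the $\Maps(\hat{A}_{+};\mathbb{Z})$-action with the precise shifts \eqref{eq1.19}, and the commuting $\mathbb{Z}\times\mathbb{Z}$-action with \eqref{eq1.20}, are \emph{exactly} the deck group of the map from $\mathbb{R}_{-}\times\mathbb{R}^{\hat{A}}$ to $\mathcal{M}_{\hat{A}}$, neither too large nor too small. This requires tracking how the winding-number data $Q_e$ of each edge changes as a ramification point is transported around its $\varphi$-circle, and confirming that the numerator ${p_u}'p_{\hat{u}}-p_u{p_{\hat{u}}}'$ and denominator ${q_{\hat{e}}}'p_{\hat{u}}-q_{\hat{e}}{p_{\hat{u}}}'$ in \eqref{eq1.19} are precisely the holonomy-change factors dictated by \eqref{eq1.8} and \eqref{eq1.9}. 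Establishing that this denominator is nonzero (indeed of a definite sign) is where the positivity inequality in \eqreft1{16} does its work, and getting the signs consistent with the orientation conventions around \eqref{eq1.6} is where the bulk of the careful computation lies.
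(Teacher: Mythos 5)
Your outline follows the broad shape of the paper's argument — parametrize by edge-data of the level sets of $\theta$, identify the deck group, and invoke the local structure theory — but there is a concrete error and two serious gaps that would block the proof as written.

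\textbf{Error: conflating the $\mathbb{R}_-$ factor with the translation factor.} You describe the free parameters as ``one copy of $\mathbb{R}$ recording the position of the convex-side end (this is the $\mathbb{R}_-$ factor, equivalently the overall $\mathbb{R}$-translation)'' plus one $x(u)$ per element of $\hat{A}_+$. This is $1 + N_+$ parameters, one short of $\dim\mathcal{M}_{\hat{A}} = N_+ + 2$, and contradicts your own later dimension count $1 + (1 + |\hat{A}_+|)$. In the paper these are two independent parameters of different types: the outer $\mathbb{R}$ factor records the $s$-translation (it is, by \fullref{sec:3b}, $-\zeta^{-1}\ln b$ where $b$ is the leading coefficient in \eqref{eq2.4}, hence changes linearly under $s\mapsto s+c$), whereas the $\mathbb{R}_-$ factor inside $O^{\hat{A}}$ is the quantity \eqref{eq3.15}, which by \fullref{prop:3.6} records the $|s|\to\infty$ limit of $\hat{p}_o\varphi - \hat{p}_o't$ on the convex end, i.e.\ which Reeb orbit at $\theta_-$ the end converges to. The $\mathbb{R}$-translation of $\mathbb{R}\times(S^1\times S^2)$ moves $s$, not the Reeb orbit phase; these two coordinates are transverse.

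\textbf{Gap: injectivity and the $\hat{O}^{\hat{A}}$ landing are not mere bookkeeping.} You say ``one then checks that two such tuples give the same subvariety exactly when they differ by the group action'' and that ``embeddedness/irreducibility of $C$ forces the stabilizer to be trivial.'' Both of these are the heart of the matter, and neither is a routine verification. In the paper, injectivity (\fullref{sec:4b}) and the claim that the image avoids the locus of nontrivial $\Aut^{\hat{A}}$ stabilizer (\fullref{sec:4d}) both rest on \fullref{lem:4.1}: one compares the two parametrizing functions $w_e$ and $w_e'$ edge by edge, forms the difference $\hat w$, shows (\fullref{lem:4.2}, \fullref{lem:4.3}) that its zero locus $G$ is a real-analytic graph with an orientation coming from $d\hat a$ and controlled behavior on the ends, and runs a maximum-principle argument to conclude either $G$ is empty or $\hat a$ is constant. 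Without some substitute for this analytic argument there is no reason the proposed parametrization is faithful rather than merely defined.

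\textbf{Gap: surjectivity is not free.} You write that ``given a point of the target, the recipe above produces an explicit pseudoholomorphic subvariety (essentially the existence half of \cite[Theorem~1.3]{T3}).'' That theorem tells you $\mathcal{M}_{\hat{A}}$ is non-empty when the conditions hold; it does not provide an inverse map from $\mathbb{R}\times\hat{O}^{\hat{A}}/\Aut^{\hat{A}}$. In the paper, surjectivity is established indirectly: the map is shown to be a local diffeomorphism (\fullref{prop:3.4}) and to be proper onto $\mathbb{R}\times\hat{O}^{\hat{A}}/\Aut^{\hat{A}}$ (\fullref{sec:4e}, via the Gromov-type compactness of \fullref{prop:4.6} and the topological obstruction arguments in Lemmas \ref{lem:4.7} and \ref{lem:4.8}). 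Properness is where most of the genuinely hard analysis lives — ruling out degenerations to $\mathbb{R}$-invariant cylinders or multiple covers — and your proposal does not address it at all. A local diffeomorphism between connected manifolds of equal dimension need not be onto, so this step cannot be omitted.
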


There are diffeomorphisms between $\mathcal{M}_{\hat{A}}$ and
$\mathbb{R} \times \hat{O}^{\hat{A}}/\Aut^{\hat{A}}$ that grant direct
geometric interpretations to the
$\mathbb{R}$ parameter and to all of the parameters in the
$\hat{O}^{\hat{A}}/\Aut^{\hat{A}}$ factor. Indeed,
\fullref{thm:3.1} asserts that the diffeomorphism in question can be
chosen so as to intertwine the $\mathbb{R}$ action on $\mathcal{M}_{\hat{A}}$
that translates the subvarieties by a
constant amount along the $\mathbb{R}$ factor in $\mathbb{R} \times  (S^1 \times  S^2)$
with the $\mathbb{R}$
action on its factor in $\mathbb{R} \times \hat{O}^{\hat{A}}/\Aut^{\hat{A}}$.
Furthermore, Propositions \ref{prop:3.4} and \ref{prop:3.5}
describe diffeomorphisms of the latter sort that interpret
the image in $\hat{O}^{\hat{A}}/\Aut^{\hat{A}}$ for
any given subvariety in terms of the $|s| \to \infty $ limits on the
subvariety and its behavior near the 0 and $\pi $ loci.

For example, what follows describes how this
$\hat{O}^{\hat{A}}/\Aut^{\hat{A}}$ data determines Reeb orbit
limits for a particular choice of diffeomorphism in \fullref{thm:1.2}.
To set the stage, keep in mind that $\theta $ is constant on any
Reeb orbit and that the Reeb orbits at a given $\theta    \in (0,
\pi )$ are distinguished as follows: Let  $(p, p')$  denote the
relatively prime integer pair that defines $\theta $ via
\eqref{eq1.8}. The $\mathbb{R}/(2\pi \mathbb{Z})$ valued function
$p\varphi  - p't$ is constant on any Reeb orbit at angle $\theta
$, and its values distinguish these orbits.

The set of $s \to   \infty $ limits in  $(0, \pi )$  of $\theta $'s
restriction to any subvariety in $\mathcal{M}_{\hat{A}}$ is the set of
angles for the bivalent vertices in $T^{\hat{A}}$. Here is how to
obtain information about the corresponding Reeb orbit limits of the $s \to
\infty $ slices: Each element in $\hat{A}_{ + }$ whose integer pair defines
a given angle in $\Lambda _{\hat{A}}$ labels a map from
$O^{A}$ to $\mathbb{R}/(2\pi \mathbb{Z})$ that is obtained as
follows: First take the $\mathbb{R}/(2\pi \mathbb{Z})$ reduction of
the image via a given map in $\mathbb{R}^{\hat{A}}$ of the
element in $\hat{A}_{ + }$ and then multiply the result by
$({q_{e}}'p - q_{e}p')$ where $e$ denotes the edge that ends at the
given vertex and  $(p, p')$  denotes the relatively prime pair of
integers that defines the angle in question. Now, let n denote the
number of $\hat{A}_{ + }$ elements whose integer pair defines the
given angle. The unordered set of n points in $\mathbb{R}/(2\pi
\mathbb{Z})$ so defined by the image in $O^{\hat{A}}/\Aut^{\hat{A}}$
of any chosen subvariety from $\mathcal{M}_{\hat{A}}$ is precisely
the set of values for $p\varphi - p't$ on those Reeb orbits at the
given angle that arise as $s \to \infty $ limits of the constant s slices of the chosen
subvariety.

\step{Part 2} The space $\hat{O}^{\hat{A}}$ is compact in the
case that the pairs from distinct $(0,+,\ldots)$ elements in
$\hat{A}$ define distinct angles via \eqref{eq1.8}. Thus,
$\mathcal{M}_{\hat{A}}/\mathbb{R}$ is compact in this case.
However, when two or more $(0,+,\ldots)$ elements of $\hat{A}$
have integer pairs that give the same angle, then the quotient
$\mathcal{M}_{\hat{A}}/\mathbb{R}$ is no longer compact. This
part of the subsection concerns the latter situation. In
particular, what follows is a brief introduction to the sorts of
subvarieties that inhabit the frontier of
$\mathcal{M}_{\hat{A}}/\mathbb{R}$. A more detailed description of
the frontier is contained in \fullref{sec:9}.

To set the stage, remark that $\hat{O}^{\hat{A}}$ sits in
$O^{\hat{A}}$ and the latter sits in the compact space,
$\underline {O}^{\hat{A}}$, that is obtained by replacing
$\mathbb{R}^{\hat{A}}$ in \eqref{eq1.21} with the whole of
$\Maps(\hat{A}_{ + }; \mathbb{R})$. As the $\Aut^{\hat{A}}$
action on $O^{\hat{A}}$ extends to one on $\underline
{O}^{\hat{A}}$, the space
$\underline{O}^{\hat{A}}/\Aut^{\hat{A}}$ provides a
compactification of $\mathcal{M}_{\hat{A}}/\mathbb{R}$. This
compactification is geometrically natural since each point in
$\underline {O}^{\hat{A}}$ can be used to parametrize some
pseudoholomorphic, multiply punctured sphere. However, a point in
$\underline {O}^{\hat{A}}-\hat{O}^{\hat{A}}$ together with a
point in $\mathbb{R}$ always parametrizes a sphere with less than
$N_{ + }+N_{ - }+\hat {N}$ punctures. Even so, the subvarieties
that are parametrized by the points in $\underline
{O}^{\hat{A}}-\hat{O}^{\hat{A}}$ are geometric limits of
sequences in $\mathcal{M}_{\hat{A}}$.

To elaborate on this last remark, suppose that $y  \in \mathbb{R}\times\underline{O}^{\hat{A}}$
and that $C$ is the pseudoholomorphic, punctured sphere that is parametrized
by $y$. Meanwhile, let $\{y_{j}\}$ denote any sequence in
$\mathbb{R} \times  \underline {O}^{\hat{A}}$ that
converges to $y$ and let $\{C_{j}\}$ denote the corresponding
sequence of pseudoholomorphic subvarieties. As explained in
\fullref{sec:9c}, the latter converges pointwise to $C$ in $\mathbb{R}
\times  (S^1 \times  S^2)$ in the sense that
\begin{equation}\label{eq1.23}
\lim_{j \to \infty } \bigl(\sup_{z \in C} \dist(z, C_{j}) + \sup_{z \in C_j }
\dist(C, z)\bigr) = 0.
\end{equation}
In fact, more is true: Let $C_{0}$ denote the model curve for $C$,
this a punctured sphere together with a proper, almost everywhere
1--1 pseudoholomorphic map to $\mathbb{R} \times  (S^1 \times  S^2)$
whose image is $C$. As noted above, $C$ has at
worst immersion singularities. Thus, the punctured sphere $C_{0}$
has a canonical `pull-back' normal bundle, $N \to  C_{0}$, with
an exponential map from a fixed radius disk subbundle in $N$ to
$\mathbb{R} \times  (S^1 \times  S^2)$ that
immerses this disk bundle as a regular neighborhood of $C$. Let
$N_{1} \subset  N$ denote this disk bundle. Now, suppose
that $R$ is very large number, chosen to insure, among other things,
that the $|s|   \ge  R$ part of $C$ lies far out on
the ends of $C$. Let $C_{0j}$ denote the corresponding model curve
for $C_{j}$. Then the $|s| \le  R$ portion of
each sufficiently large $j$ version of $C_{0j}$ maps to the
corresponding portion of $N_{1}$ so that the composition with the
exponential map gives the tautological map to the $|s| \le  R$ part of $C_{j}$.
Meanwhile, the composition of this
map to $N_{1}$ with the projection from $N_{1}$ to $C_{0}$ defines
a proper covering map of the $|s| \le  R$ part
of $C_{0j}$ over this same part of $C_{0}$. In this regard, the
degree of this covering can be 1 or greater; in all cases, the
covering is unramified.

To describe the $|s|   \ge R$ part of $C_{j}$,
suppose that $E  \subset  C$ is an end and let $\gamma \subset  S^1 \times S^2$
denote the corresponding
closed Reeb orbit. This is to say that the constant $|s| $ slices of $E$
converge pointwise to $\gamma $ as $|s| \to \infty $. Thus, $E$ lies in a small,
constant radius tubular neighborhood of one of the very large
$|s| $ sides of $\mathbb{R} \times \gamma$ as an embedded cylinder.
A component of the $|s|   \ge R$ part of $C_{j}$ lies in this same tubular
neighborhood. When $y$ is in $O^{\hat{A}}$, then the
corresponding part of $C_{0j}$ is a cylinder. However, if $y$ is in
the boundary of $\underline {O}^{A}$, there is at least one end of
$C$ where the corresponding part of $C_{0j}$ is a sphere with at
least three punctures. In any case, there is a tubular
neighborhood projection onto $\mathbb{R} \times\gamma $ that restricts
to the nearby part of $C_{j}$ so as to
define a proper, possibly ramified covering map from the
corresponding $|s|   \ge R$ part of $C_{0j}$
onto the relevant $|s|   \ge R$ part of
$\mathbb{R} \times \gamma $.

More is said in \fullref{sec:9c} about this compactification.

\subsection{The case that $\mathcal{M}_{\hat{A}}$ has dimension
greater than $N_{ + }+2$}\label{sec:1c}

The discussion here is meant to provide a brief overview of the story in the
case that $N_{ - }+\hat {N}+\text{\c{c}}_{ - }+\text{\c{c}}_{ + } = k+2 > 2$.

Necessary and sufficient conditions for a non-empty
$\mathcal{M}_{\hat{A}}$ are given in \cite[Theorem~1.3]{T3}. As
remarked previously, when non-empty, then $\mathcal{M}_{\hat{A}}$
is a smooth manifold whose dimension is $N_{ + }+2k+2$. The
structure of $\mathcal{M}_{\hat{A}}$ in the $k > 0$ case is
rather more complicated then in the $k = 0$ case. To simplify
matters to some extent, the space $\mathcal{M}_{\hat{A}}$ is
viewed here as an open subset in a somewhat larger space whose
extra elements consist of what are called `multiply covered'
subvarieties. In this regard, a multiply covered subvariety
consists of an equivalence class of elements of the form $(C_{0},
\phi )$ where $C_{0}$ is a connected, complex curve with finite
Euler characteristic and $\phi $ is a proper, pseudoholomorphic
map from $C_{0}$ to $\mathbb{R} \times  (S^1 \times  S^2)$ whose
image is a pseudoholomorphic subvariety in the sense of
\eqreft15. Here, the equivalence relation puts $(C_{0},\phi )
\sim  (C_{0}, \phi ')$ in the case that $\phi'$ is obtained from
$\phi $ by composing with a holomorphic diffeomorphism of
$C_{0}$. If $\phi $ is almost everywhere 1--1 onto its image, then
$(C_{0}, \phi )$ defines a pseudoholomorphic subvariety as
described in \eqreft15. The added elements consist of the
equivalence classes of pairs $(C_{0},\phi )$ where $\phi $ maps
$C_{0}$ to $\phi (C_{0})$ with degree greater than 1. Of interest
here is the case where $C_{0}$ and $\phi (C_{0})$ are punctured
spheres. Moreover, with $\hat{A}$ given, then $C_{0}$ must have
$N_{ + }+N_{ - }+\hat {N}$ punctures. Furthermore, the ends of
$C_{0}$ and the points where $\theta $'s pull-back is 0 and $\pi
$ must define the data set $\hat{A}$. In what follows
${\mathcal{M}^{*}}_{\hat{A}}$ is used to denote this larger space.

The topology on ${\mathcal{M}_{ \hat{A}}}^{*}$ is defined as
follows: A basis for the neighborhoods of any given ($C_{0}$, $\phi )$ are
sets \{$\mathcal{U} _{\kappa }$\}$_{\kappa > 0}$, where $\mathcal{U}
_{\kappa }$ consists of the equivalence classes of elements (${C_{0}}'$,
$\phi' $) that obey the following: There exists a diffeomorphism
$\psi \co C_{0} \to {C_{0}}'$ such that
\begin{equation}\label{eq1.24}
\sup_{z \in C} \bigl(\dist(\phi (z), (\phi ' \circ \psi )(z)) + r_{z}(\psi )\bigr) < \kappa.
\end{equation}
Here, $r_{z}(\psi )$ is the ratio of the norm at $z$ of the
$\Hom(T_{0,1}C_{0}; T_{1,0}{C_{0}}')$ part of $\psi $'s
differential to that of the $\Hom(T_{1,0}C_{0};T_{1,0}{C_{0}}')$ part.
The theorem that follows describes the
local topology of ${\mathcal{M}_{ \hat{A}}}^{*}$. This
theorem introduces the subspace $\mathcal{R} \subset {\mathcal{M}_{ \hat{A}}}^{*}$
whose elements are the
equivalence classes $(C_{0}, \phi )$ where $\phi $ agrees with
its pull-back under some non-trivial holomorphic diffeomorphism.
Thus $\phi =\phi  \circ \psi $ where $\psi \co  C_{0} \to C_{0}$ is a non-trivial,
holomorphic diffeomorphism.

The description that follows of ${\mathcal{M}^{*}}_{\hat{A}}$
speaks of a `smooth orbifold'. This term is used here to denote a Hausdorff
space with a locally finite, open cover by sets of the form $B/G$, where $B$ is
a ball in a Euclidean space and where $G$ is finite group acting on $B$.
Moreover, these local charts are compatible in the following sense: Let $U$
and $U'$ denote two sets from the cover that overlap. Let $\lambda \co  B  \to
 U$ and $\lambda '\co  B'  \to  U'$ denote the quotient maps and let $\Omega
 \subset  B$ and $\Omega '  \subset  B'$ denote respective components
of the $\lambda $ and $\lambda '$ inverse images of $U  \cap  U'$. Then,
there is a diffeomorphism, $h\co  \Omega  \to  \Omega '$ such that
$\lambda =\lambda ' \circ h$. A smooth map between smooth orbifolds is
a map that lifts near any given point in the domain to give a smooth,
equivariant map between Euclidean spaces. A diffeomorphism is a smooth
homeomorphism with smooth inverse.

\begin{theorem}\label{thm:1.3}

The space ${\mathcal{M}^{*}}_{\hat{A}}$ has the structure of a smooth manifold of
dimension $N_{ + }+2(N_{ - }+\hat {N}+\text{\c{c}}_{\hat{A}}-1)$
on the complement of $\mathcal{R}$ and,
overall, it has the structure of a smooth orbifold.
Moreover, $\mathcal{M}_{\hat{A}}$ embeds
in ${\mathcal{M}_{ \hat{A}}}^{*}$ as an open set.

\end{theorem}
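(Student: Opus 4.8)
The plan is to build the orbifold charts on $\mathcal{M}^{*}_{\hat{A}}$ by gluing together the local models supplied by \cite{T3}, treating the multiply covered elements as limits of genuine subvarieties and the locus $\mathcal{R}$ as the place where nontrivial deck symmetries appear. The starting point is the local structure theory from \cite[Section~2]{T3} for the tautological map of a model curve: a pair $(C_{0},\phi)$ has a pull-back normal bundle $N\to C_{0}$, and deformations of $(C_{0},\phi)$ within $\mathcal{M}^{*}_{\hat{A}}$ are governed by a first-order $\bar{\partial}$-type operator $D$ on sections of $N$ (together with the finite-dimensional deformation parameters of the complex structure on the punctured sphere $C_{0}$, subject to the fixed asymptotic data $\hat{A}$). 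The first step is to assemble from \cite{T3} the statement that this operator is Fredholm with the index that produces the claimed dimension $N_{+}+2(N_{-}+\hat{N}+\text{\c{c}}_{\hat{A}}-1)$, and — crucially — that $D$ is surjective for every $(C_{0},\phi)$ meeting the $\hat{A}$ constraints. Surjectivity is what upgrades the formal kernel to an honest smooth local model; it is the analytic input I would quote rather than reprove, since it is exactly the content of \cite[Theorem~1.2 and Proposition~2.5]{T3} extended to the multiply covered case via the observation that a degree-$d$ cover has its deformation operator identified with the pull-back of the operator for the underlying almost everywhere $1$--$1$ map, and the latter is surjective.

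Next I would describe the charts. Near a point $(C_{0},\phi)\in\mathcal{M}^{*}_{\hat{A}}\setminus\mathcal{R}$, pick a slice for the reparametrization action (fix marked points and asymptotic markers on $C_{0}$) so that $\ker D$ together with the complex-structure parameters is a genuine Euclidean ball $B$; the implicit function theorem, using surjectivity of $D$, identifies a neighborhood of $(C_{0},\phi)$ with $B$. Two such charts overlap in the expected way because any two slices for the same reparametrization data differ by a holomorphic diffeomorphism, which supplies the transition diffeomorphism $h$ with $\lambda=\lambda'\circ h$; this is where I would check compatibility with the topology defined by \eqref{eq1.24}, using that $r_{z}(\psi)$ measures precisely the failure of $\psi$ to be holomorphic and hence controls when two parametrized curves are close in $\mathcal{M}^{*}_{\hat{A}}$. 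At a point of $\mathcal{R}$, the curve $(C_{0},\phi)$ has a nontrivial automorphism group $G$ of holomorphic diffeomorphisms fixing $\phi$; $G$ is finite because $C_{0}$ is a punctured sphere with finitely many marked/asymptotic points and $\phi$ is nonconstant. The slice can be chosen $G$-invariantly, so the local model becomes $B/G$ with $G$ acting linearly on $\ker D$ (and on the complex-structure parameters), giving the orbifold chart; away from $\mathcal{R}$ the group is trivial and the chart is a manifold chart. Finally, the open embedding $\mathcal{M}_{\hat{A}}\hookrightarrow\mathcal{M}^{*}_{\hat{A}}$ is immediate: an almost everywhere $1$--$1$ pair $(C_{0},\phi)$ has trivial automorphism group (it lies outside $\mathcal{R}$) and the two topologies agree on it because the Hausdorff-distance topology of \eqref{eq1.13} and the $\psi$-distance topology of \eqref{eq1.24} induce the same neighborhood basis when $\phi$ is an almost everywhere embedding — the map $\psi$ is then essentially determined by requiring $\phi'\circ\psi$ to track $\phi$; openness follows because the multiply covered locus is cut out by the covering degree being $\ge 2$, a closed condition.

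The main obstacle I anticipate is the compatibility of the two topologies and of the local charts \emph{across} $\mathcal{R}$, i.e.\ verifying that a sequence of genuine subvarieties in $\mathcal{M}_{\hat{A}}$ approaching a multiply covered limit actually converges in the sense of \eqref{eq1.24} after suitable reparametrization, and conversely that the orbifold chart at a point of $\mathcal{R}$ sees exactly these nearby covers. This requires a uniform estimate: the covering map of \cite[Section~9]{T3} (the degree-$\ge 1$, unramified covering of the $|s|\le R$ part of $C_{0j}$ over $C_{0}$, and the possibly ramified covering on the ends described after \eqref{eq1.23}) must be realized as an element of the chart $B/G$, with the degree recorded by the $G$-orbit structure. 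I would handle this by first establishing the estimate on the compact $|s|\le R$ region using the immersion-with-regular-neighborhood picture from the paragraph containing \eqref{eq1.23}, then separately on each end using the tubular neighborhood projection onto $\mathbb{R}\times\gamma$; patching the two regions is routine once the asymptotic data $\hat{A}$ is held fixed throughout. The dimension count and Fredholm surjectivity I treat as quotable from \cite{T3}; the genuinely new work is the orbifold bookkeeping at $\mathcal{R}$ and the topology match, and that is where I would concentrate the detailed argument.
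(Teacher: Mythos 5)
Your overall chart-building strategy matches the paper's (Sections 5.B--5.C): local models from the deformation operator via the implicit function theorem, a $G_{C}$-invariant slice producing orbifold charts $B/G_{C}$ at points of $\mathcal{R}$, and a check that the two topologies agree on $\mathcal{M}_{\hat{A}}$. The choice to slice by marked points and asymptotic markers rather than, as the paper does, by a $G_{C}$-invariant finite-dimensional space $V$ of sections of $T_{1,0}C_{0}\otimes T^{1,0}C_{0}$ projecting onto $\operatorname{cokernel}(\bar\partial)$ is a legitimate cosmetic difference. But there are two substantive problems.

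First, your claim that surjectivity of the deformation operator in the multiply covered case follows because ``a degree-$d$ cover has its deformation operator identified with the pull-back of the operator for the underlying almost everywhere $1$--$1$ map'' is wrong as a mechanism. Pulling back a surjective $\bar\partial$-type operator by a branched cover does \emph{not} preserve surjectivity; the cokernel of the pulled-back operator decomposes into isotypical pieces and the non-invariant ones are not controlled by the underlying curve's transversality. This is exactly the textbook failure of transversality for multiple covers. The paper's Proposition 5.2 asserts trivial cokernel for the operator $D_{C}$ (which is built from $\underline D$ and the projection $(1-\Pi)$, not a naive pull-back) and cites the argument of \cite[Proposition~2.9]{T3}, which rests on the specific structure of \eqref{eq5.1} on a genus-zero domain — an automatic-transversality-type input, not a functoriality of surjectivity under covers. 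You cannot quote the somewhere-injective case and deduce the cover case by pull-back; that step, as you have written it, would fail.

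Second, you dismiss the topology match as essentially routine (``the map $\psi$ is then essentially determined by requiring $\phi'\circ\psi$ to track $\phi$''). In fact this is where the bulk of the paper's argument sits: Section 5.C is a nine-step construction whose hard part is \emph{extending} $\psi$ over the exceptional set $U$ — small disks about singular points of $C$ and the large $|s|$ ends — where the normal-bundle picture degenerates. That requires the local complex-coordinate normal form of Step 2, the estimate \eqref{eq5.5} controlling $|\bar\partial x|/|\partial x|$, and a nontrivial interpolation near branch/critical points packaged in Lemma~\ref{lem:5.3}, where one must find a coordinate $\tau$ making the pull-back of $x$ close to a power $\tau^{p+1}$ uniformly on an annulus. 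Without that, you do not get a diffeomorphism $\psi$ with $r(\psi)$ small across all of $C_{0}$, only over $C_{0}\setminus U$, and the ``same neighborhood basis'' claim is exactly what is in question. Relatedly, your stated ``main obstacle'' — sequences in $\mathcal{M}_{\hat{A}}$ converging to multiply covered limits and the covering structure from the paragraph around \eqref{eq1.23} — is not actually part of Theorem~\ref{thm:1.3}: the theorem asserts only the local orbifold structure and the openness of $\mathcal{M}_{\hat{A}}\hookrightarrow\mathcal{M}^{*}_{\hat{A}}$, which concerns convergence to points \emph{inside} $\mathcal{M}_{\hat{A}}$, not to points of $\mathcal{R}$. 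So the place you planned to concentrate your effort is adjacent to, but not the same as, the place where the work is actually needed.
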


This theorem is proved in \fullref{sec:5}.

A more detailed view of ${\mathcal{M}_{ \hat{A}}}^{*}$ is
provided via a decomposition as a stratified space where each
stratum intersects ${\mathcal{M}_{ \hat{A}}}^{*}-\mathcal{R}$
as a smooth submanifold that extends across
$\mathcal{R}$ as an orbifold.  The details of this are in
Sections~\ref{sec:5}--\ref{sec:9}. 
What follows is a brief outline of the story.

The strata of ${\mathcal{M}^{*}}_{\hat{A}}$ are indexed in part
by a subset, $B$, of $(0,-,\ldots)$ elements from $\hat{A}$ along
with a non-negative integer, $c$, that is no greater than $N_{ - }+\hat
{N}+\text{\c{c}}_{ - }+\text{\c{c}}_{ + }-2-| B| $. Here, $|B|$
denotes the number of elements in the set $B$. With $B$ and $c$ fixed, introduce
$S_{B,c} \subset  {\mathcal{M}_{ \hat{A}}}^{*}$ to denote
those equivalence classes $(C_{0}, \phi )$ for which the $\phi $ pull-back
of $\theta $ has precisely c critical points where $\theta $ is neither 0
nor $\pi $, and where the following condition holds:

\qtaubes{1.25}
\textsl{
The 4--tuples in $B$ correspond to the ends in $C_{0}$ where $s$ is unbounded from below,
where the $s \to -\infty $ limit of $\theta $ is neither 0 nor $\pi $,
and where this limit is achieved at all sufficiently large values of $|s|$.}
\endqtaubes

In general, $\mathcal{S} _{B,c}$ is a union of strata. To describe the latter,
introduce $d  \equiv N_{ + }+|B| +c$ and use $I_{d}$
to denote the space of $d$ unordered points in  $(0, \pi )$ , thus the $d$--fold
symmetric product of the interval  $(0, \pi )$ . The space $I_{d}$ has a
stratification whose elements are labeled by the partitions of $d$ as a sum of
positive integers. A partition of d as $d_{1}+\cdots+d_{m}$ corresponds to the
subset in $I_{d}$ where there are precisely $m$
distinct $\theta $ values, with $d_{1}$ points supplying one $\theta $
value, $d_{2}$ supplying another, and so on. If $\mathfrak{d} = (d_{1}, \ldots ,d_{m})$
denotes such a partition, use $I_{d,d}$ to denote the corresponding stratum.

Now define a map, $f\co  \mathcal{S} _{B,c} \to  I_{d}$ by assigning
to any given pair $(C_{0}, \phi )$ the angles of the critical
values of $\phi^*\theta $ in  $(0, \pi )$  along with the angles
that are defined via \eqref{eq1.8} using the integer pairs from
the $(0,+,\ldots)$ elements in $\hat{A}$ and from the 4--tuples
from $B$. Each stratum in $\mathcal{S} _{c}$ has the form
$f^{-1}(I_{d,\mathfrak{d}})$ where $\mathfrak{d}$ is a partition
of the integer $d$. The stratum $f^{-1}(I_{d,\mathfrak{d}})
\subset  \mathcal{S} _{B,c}$ is denoted by $\mathcal{S}
_{B,c,\mathfrak{d}}$ in what follows.

As explained in Sections~\ref{sec:6} and~\ref{sec:8}, each component of each stratum
is homeomorphic to a space that has the schematic form $\mathbb{R}
\times \mathbb{O}/\Autt$ where $\Autt$ is a
certain finite group and $\mathbb{O}$ is a certain tower of circle
bundles over a product of simplices. The locus $\mathcal{R}$
appears here as the image of the points where the $\Autt$
action is not free.

By way of an example, suppose that the integer pairs from the
$(0,+,\ldots)$ elements in $\hat{A}$ define distinct angles via
\eqref{eq1.8}. In this case, the version of $\mathbb{O}$ for any
component of the $N_{ +}+2(k+1)$ dimensional stratum is a tower
of circle bundles over a $k$--dimensional product of simplices.
Moreover, $\Autt$ is always trivial when $k > 0$. To say more,
reintroduce the set $\Lambda _{\hat{A}}$ and let $\{\theta _{ -
}, \theta _{ + }\}$ denote the respective minimal and maximal
angles in $\Lambda _{\hat{A}}$. Then any $k$--dimensional simplex
product that appears in the definition of an $N_{ +}+2(k+1)$
dimensional version of $\mathbb{O}$ is a component of the space
of $k$ distinct angles in $(\theta _{ - }, \theta _{ +
})-\Lambda_{\hat{A}}$.

In this example, the codimension 1 strata consist of the subvarieties from
${\mathcal{M}^*}_{\hat{A}}$ where the pull-back of $\theta $
to the model curve has $k$ non-degenerate critical points with either $k-1$
distinct critical values, none in $\Lambda _{\hat{A}}$, or $k$
distinct critical values with one in $\Lambda _{\hat{A}}-(\theta _{ - }, \theta _{ + })$.
What follows is meant to give a
rough picture of the manner in which the coincident top dimensional strata
join along the codimension 1 stratum.

In the case where there is a critical value in $\Lambda _{\hat{A}}$,
the picture depends on where the angle comes from. For example, in the
case that the angle comes from some $(0,+,\ldots)$ element
in $\hat{A}$, three coincident top dimensional strata glue across the
codimension 1 strata via a fiber bundle version of the `pair of pants' that
joins two circles to one. This happens fiberwise as respective fiber circles
for two of the associated top strata are joined across a codimension 1
stratum to a fiber circle in the third. The following is a schematic
drawing:
\begin{equation}
\label{eq1.26}\includegraphics{\figdir/fig1-1}
\end{equation}
In this picture, the interior of each oval, minus its center point, and the
exterior of the union of the ovals correspond to the three codimension 0
strata. The two ovals minus their intersection point correspond to the
codimension 1 strata; the central intersection point is a codimension two
stratum.

If the angle comes from some $(0,-,\ldots)$ element, then
there are two incident top dimensional strata; and the gluing comes from an
identification between a fiber circle in one top strata and a corresponding
circle in the other. There is no fancy stuff here.

The story for the case where two critical values coincide is more involved
by virtue of the fact that there are various ways for this to happen. In the
first, the critical values coincide but the two critical points are in
mutually disjoint components of the critical locus. In this case again, two
codimension 0 strata glue across the codimension 1 stratum via an
identification of a fiber circle in one stratum with that in the other.

In the remaining cases, the two critical points share the same component of
the critical locus. What follows is a schematic picture for the first of
these cases:
\begin{equation}
\label{eq1.27}\includegraphics[width=1.5in]{\figdir/fig1-2}
\end{equation}
Here, the complement of the three points and the three rays corresponds to
three codimension 0 strata. The rays minus the origin correspond to
codimension 1 strata. The origin and the point at $\infty $ correspond to
codimension 2 strata.

The following drawing depicts the second of the cases under consideration.
\begin{equation}
\label{eq1.28}\includegraphics[width=2in]{\figdir/fig1-3}
\end{equation}
There are three top dimensional strata depicted here; these correspond to
the interiors of the two circles minus their centers, and the exterior
region. Meanwhile, the codimension 1 strata correspond to the interior of
the horizontal arc, and the complement in the two circles of the
endpoints of this same arc. The end points of the horizontal arc depict
codimension 2 strata.

\fullref{sec:9} contains additional details about all of this. \fullref{sec:9} also
describes a compatible, stratified compactification of ${\mathcal{M}^{*}}_{\hat{A}}$.

\subsection{A table of contents for the remaining sections}\label{sec:1d}

\fullref{sec:2} constitutes a digression of sorts to accomplish two
tasks. The first is to explain how to use the level sets of the
function $\theta $ on a given subvariety to construct a certain
sort of graph with labeled edges and vertices. The second task
explains how this graph can be used to define a canonical set of
parametrizations for the subvariety. As explained in a later
section, the associated graph and canonical parametrizations
provide the map that identifies a given component of a given
stratum in ${\mathcal{M}^{*}}_{\hat{A}}$ with a particular
version of $\mathbb{R}\times  \mathbb{O}/\Autt$.

\fullref{sec:3} elaborates on the story told above for the case where
$N_{ -}+\hat {N}+\text{\c{c}}_{ - }+\text{\c{c}}_{ + } = 2$. In particular, this section
describes a map that provides the diffeomorphism in \fullref{thm:1.2}. The proof
that the map is a diffeomorphism is started in this section.

\fullref{sec:4} starts with a digression to describe how the parametrizations from
\fullref{sec:2} can be used to distinguish distinct elements in
${\mathcal{M}^{*}}_{\hat{A}}$. These results are then used to prove that the map
from \fullref{sec:3} is 1--1. The final part of this section proves that the map is
proper. This completes the proof of \fullref{thm:1.2}.

\fullref{sec:5} proves \fullref{thm:1.3}; and it proves that each component of each
stratum of ${\mathcal{M}^{*}}_{\hat{A}}$ is a suborbifold.

\fullref{sec:6} uses the graphs from \fullref{sec:2} to parametrize certain
sorts of slices of the strata of ${\mathcal{M}^{*}}_{\hat{A}}$. A graph $T$
determines a certain submanifold, $\mathcal{M}^{*}_{\hat{A},T}$;
these are the fibers for a map that
fibers a given component of a given strata over a product of
simplices. Theorems~\ref{thm:6.2} and \ref{thm:6.3} identify each $\mathcal{M}^{*}_{\hat{A},T}$
as the product of $\mathbb{R}$ with the
quotient by a certain finite group of an iterated tower of circle
bundles over simplices. This section ends with description of a
particular map that realizes the identification in \fullref{thm:6.2}.

\fullref{sec:7} completes the proof of Theorems~\ref{thm:6.2} and
\ref{thm:6.3}. The arguments here first use the results from the
beginning of \fullref{sec:4} to prove that the map from
\fullref{sec:6} is one-to-one. The map is then proved to be
proper. The proof of the latter assertion requires the analysis of
limits of sequences in ${\mathcal{M}^{*}}_{\hat{A}}$, and the analysis
of such limits is facilitated by the use of \fullref{sec:2}'s
canonical parametrizations. In particular, the latter are used to
replace some of the hard analysis in similar compactness theorems from
Bourgeois--Eliashberg--Hofer--Wysocki--Zehnder \cite{BEHWZ} with
topology.

\fullref{sec:8} describes the stratification of ${\mathcal{M}^{*}}_{\hat{A}}$
in greater detail. In particular, this section describes each
component of each stratum as a fiber bundle over a product of simplices with
the typical fiber being the space $\mathcal{M}^{*}_{\hat{A},T}$
from \fullref{sec:6}. The section then explains how the graphs that arise can be
used to classify the strata.

\fullref{sec:9} is the final section. This section describes how the codimension 0
strata fit together across the codimension 1 strata and associated
codimension 2 strata. This section also describes a certain stratified
compactification of ${\mathcal{M}^{*}}_{\hat{A}}$. In particular,
\fullref{sec:9c} says more about \fullref{sec:1b}'s compactification of the
$N_{ -}+\hat {N}+\text{\c{c}}_{ - }+\text{\c{c}}_{ + } = 2$ moduli spaces. The section
ends with a description of the neighorhoods of the codimension 1 strata that
are added to make the compactification of ${\mathcal{M}^{*}}_{\hat{A}}$ in the
cases where $N_{ - }+\hat {N}+\text{\c{c}}_{ - }+\text{\c{c}}_{ +} > 2$.

\subsubsection*{Acknowledgements}
David Gay pointed out to the author that the use
of the critical values of $\theta $ to decompose ${\mathcal{M}^{*}}_{\hat{A}}$ has
very much a geometric invariant theory
flavor. He also pointed out that the techniques that are described in
\fullref{sec:4a} are similar to those used by Grigory Mikhalkin in other contexts.
The author also gratefully acknowledges the insight gained from
conversations with Michael Hutchings.

The author is supported in part by the National Science Foundation.


\setcounter{section}{1}
\setcounter{equation}{0}

\section{Background material}\label{sec:2}

The purpose of this section is to elaborate on various notions from
\cite{T3} that are used extensively in the subsequent subsections to
construct the parametrizations of any given component of the
multi-punctured sphere moduli space. The first subsection explains
how to use a subvariety in such a moduli space to define a certain
graph with labeled edges and vertices. The second describes some
useful parametrizations of various cylinders in any given
pseudoholomorphic subvariety. The third explains how a graph from
the first subsection is used to designate as `canonical' some of the
parametrizations from the second subsection.

\subsection{Graphs and subvarieties}\label{sec:2a}

The purpose of this subsection is to elaborate on the part of the discussion
in \cite[Section 2.G]{T3} that describes a method of associating a graph to a
pair $(C_{0}, \phi )$ where $C_{0}$ is a complex curve and $\phi $ is a
pseudoholomorphic map from $C_{0}$ into $\mathbb{R}  \times (S^1  \times  S^2)$
whose image is a pseudoholomorphic subvariety as defined in
\eqreft15. As discussed in \cite[Section 2.E]{T3}, the pull-back of $\theta $ to
$C_{0}$ has no local extreme points where its value is in $(0, \pi )$. This
understood, let $\Gamma    \subset C_{0}$ denote the union of the
non-compact or singular components of the level sets of the pull-back of
$\theta $.

The graph assigned to $(C_{0}, \phi )$ is denoted here as $T$. Its edges are
in a 1--1 correspondence with the components of $C_{0}- \Gamma $.
Given the correspondence and an edge, $e$, then $K_{e}$ is used in what
follows to denote $e$'s component of $C_{0}- \Gamma $. Each edge is
labeled by an ordered pair of integers, and when $e$ denotes a given edge,
$Q_{e}$ or $(q_{e}, {q_{e}}')$ is used to denote the labeling integer pair.
These integers are the respective integrals of $\frac{1 }{{2\pi}} dt$ and $\frac{1 }{ {2\pi }}d\varphi $
about any constant $\theta $ slice of $K_{e}$ when these components are oriented by the
pull-back of the form
\begin{equation}\label{eq2.1}
x = (1-3\cos^{2}\theta ) d\varphi -\surd 6 \cos\theta  dt.
\end{equation}
In this regard, note that this pull-back is non-zero along any such slice
because the form in \eqref{eq2.1} is a non-zero multiple of $J\cdot d\theta $.
(Here, the action of $J$ on the cotangent bundle is dual to its action on the
tangent bundle.) The integral of $x$ around any constant $\theta $ slice of
$K_{e}$ gives the $Q  \equiv  (q, q') = Q_{e}$ version of the function
\begin{equation}\label{eq2.2}
\alpha _{Q}(\theta ) = q' (1 - 3 \cos^{2}\theta )- q \surd 6\cos\theta.
\end{equation}
The $Q = Q_{e}$ version of $\alpha _{Q}$ is strictly positive on the
closure of $K_{e}$.

The monovalent vertices in $T$ are in a 1--1 correspondence with the following:

\itaubes{2.3}
 \textsl{The points in $C_{0}$  where $\theta $  is either 0  or $\pi $.}

\item
 \textsl{The ends of $C_{0}$  where the $|s|    \to   \infty $  limit of $\theta $  is 0  or $\pi $.}

\item
 \textsl{The convex side ends of $C_{0}$  where the $|s|    \to   \infty $  limit of $\theta $  is not achieved at finite $|s|$.}
\end{itemize}

With regards to the last two points, the discussion in \cite[Section~1.E]{T3}
noted the existence of some $R \gg  1$ such that the $|s|    \ge  R$
portion of $C_{0}$ is a disjoint union of embedded cylinders to which $|s| $ restricts as an unbounded,
proper map to $[R, \infty )$ without
critical points. Each such cylinder is called an `end'. The end is said to
be on the convex side of $C_{0}$ when s is negative on the end. Otherwise,
the end is said to be on the concave side of $C_{0}$. The angle $\theta $ on
any such end has a unique limit as $|s|    \to   \infty $.

To say more about the manner in which this limit is approached, note first
that the analysis used in \cite[Sections 2 and 3]{T4} proves a version of \eqref{eq1.9}
for a given end where the $|s|    \to   \infty $ limit of
$\theta $ is in $(0, \pi )$. In fact, the techniques from \cite{T4} can be used
to find coordinates $(\rho, \tau )$ for the end such that $\rho $ is
equal to a positive multiple of $s, \tau    \in \mathbb{R}/(2\pi\mathbb{Z})$, and $d\rho    \wedge  d\tau $
is positive. Moreover, when
written as a function of $\rho $ and $\tau $, the function $\theta $ has the
form
\begin{equation}\label{eq2.4}
\theta (\rho , \tau )=\theta _{E} + e^{- r\rho } (b \cos(n_{E}(\tau +\sigma )) + \hat{o}),
\end{equation}
where the notation is as follows: First, $\theta _{E}$ is the $s  \to \infty $ limit of $\theta $ on $E$.
Second, $b$ is a non-zero real number,
$n_{E}$ is a non-negative integer, but strictly positive if $E$ is on the
concave side of $C_{0}$, and $\sigma    \in   \mathbb{R}/(2\pi \mathbb{Z})$.
Third, $r > 0$ when $E$ is on the concave side, $r < 0$ when $E$ is on the convex
side of $C_{0}$; and in either case, $r$ is determined a priori by the integer
$n_{E}$ and $E$'s label in $\hat{A}$. Finally, $\hat{o}$ and its derivatives limit to
zero when $| \rho |    \to   \infty $.

The convex side ends that arise in the third point of \eqreft23 are those using
the $n_{E} = 0$ version of \eqref{eq2.4}.

The multivalent vertices of $T$ are in a 1--1 correspondence with the sets that
comprise a certain partition of the collection of non-point like components
in $\Gamma $. To describe these partition subsets, define a graph, $G$, whose
vertices are the non-point like components of $\Gamma $, and where two
vertices share an edge if the restrictions of $\theta $ to the corresponding
components of $\Gamma $ agree and if both of these components lie in the
closure of some component of $C_{0}- \Gamma $. The set of
components of $G$ defines the desired partition of the set of non-point like
components of $\Gamma $. In this regard, note that each compact, non-point
like component of $\Gamma $ defines its own partition subset.

As remarked at the outset, each vertex in $T$ has a label. These labels are
explained next. To start, a monovalent vertex that corresponds to a point in
$C_{0}$ where $\theta  = 0$ is labeled by $(p')$, where $p'$ is the positive
integer that gives the degree of tangency at the intersection point between
the $\theta  = 0$ cylinder and the image of any sufficiently small radius
disk about the given point in $C_{0}$. Meanwhile, a vertex that corresponds
to a point in $C_{0}$ where $\theta =\pi $ is labeled by $(-p')$ where $p' \ge  1$ is the analogous degree of tangency.

A monovalent vertex that corresponds to an end where the $|s|  \to   \infty $ limit of $\theta $
is 0 or $\pi $ is labeled by a 4--tuple of
the form $(\delta , \varepsilon , (p, p'))$, where $\delta  = 1$ if the
$\theta $ limit is 0 and $\delta  = -1$ if the $\theta $ limit is $\pi $.
Meanwhile, $\varepsilon    \in  \{+, -\}$ with $+$ appearing when the end
is on the concave side and $-$ appearing when the end is on the concave side.
Finally, the ordered pair $(p, p')$ are the integers that appear in \eqref{eq1.9}.

A monovalent vertex that corresponds to a convex side end where the $|s|    \to \infty$ limit of $\theta $
is neither 0 nor $\pi $ is labeled by
a 4--tuple of the form $(0,-, (p, p'))$ where the ordered pair of integers is
either $+$ or $-$ the pair that labels its incident edge. The sign here is that
of the constant $b$ in \eqref{eq2.4}.

The labeling of the multivalent vertices is more involved. To elaborate,
each such vertex is first assigned the angle in $(0, \pi )$ of the
components of its corresponding partition subset. In addition, each is
assigned a certain graph of its own, this also a graph with labeled vertices
and edges. When $o$ denotes a vertex, its graph is denoted by $\underline{\Gamma }_{o}$.
The graph $\underline {\Gamma }_{o}$ is a certain
closure of the union, $\Gamma _{o}$, of the components of the partition
subset that is assigned to $o$. The vertices of $\underline {\Gamma }_{o}$
that lie in $\Gamma _{o}$ consist of the critical points of $\theta $ on
$\Gamma _{o}$. The vertices in $\underline {\Gamma }_{o}- \Gamma _{o}$ are in 1--1 correspondence
with the set of ends of $C_{0}$
where all sufficiently large $|s| $ slices intersect $\Gamma_{o}$. Each vertex in
$\underline {\Gamma }_{o}$ is labeled with an
integer. This integer is zero when the vertex lies in $\Gamma _{o}$. The
integer is positive when the vertex corresponds to a concave side end of
$C_{0}$ and it is negative when the vertex corresponds to a convex side end.
In either case, the absolute value of this integer is the greatest common
divisor of the respective integrals of $\frac{1 }{ {2\pi }} dt$ and
$\frac{1 }{ {2\pi }}d\varphi $ around any given constant $|s| $ slice in the corresponding end.

So as not to confuse the edges in $\underline {\Gamma }_{o}$ with $o$'s
incident edges in $T$, those in $\underline {\Gamma }_{o}$ are called
`arcs'. The arcs are in 1--1 correspondence with the components of the
complement in $\Gamma _{o}$ of the $\Gamma _{o}$'s $\theta $ critical
points. Note that each arc is oriented by the 1--form that appears in \eqref{eq2.1}.
As can be seen in \cite[(2.16)]{T3} and \eqref{eq2.4} here, each vertex in $\underline {\Gamma }_{o}$ has
an even number of incident half-arcs and with half
oriented so as to point towards the vertex and half oriented so as to point
away. Only vertices with non-zero integer label can have two incident half
arcs. Each arc is also labeled by two incident edges to the vertex $o$; these
correspond to the two components of $C_{0}- \Gamma $ whose closure
contains the arc's image in the locus $\Gamma _{o}$. Thus, one labeling
edge connects $o$ to a vertex with smaller angle and the other to a vertex
with larger angle.

An isomorphism between graphs $T$ and $T'$ of the sort just described consists
of, among other things, a homeomorphism between the underlying 1--complexes.
However, such an isomorphism must map vertices to vertices and edges to
edges so as to respect all labels. In particular, if $o$ is a bivalent vertex
in $T$ and $o'$ its image in $T'$, then the isomorphism induces an isomorphism
between $\underline {\Gamma }_{o}$ and $\underline {\Gamma}_{o'}$ that preserves their vertex
labels and arc orientations and
respects the labeling of their arcs by pairs of incident edges to the
vertices. An automorphism of a given graph $T$ is an isomorphism from $T$ to
itself.

As just described, the edges of $T$ correspond to the components of
$C_{0}- \Gamma $, the vertices to certain ends of $C_{0}$ and the
singular and non-compact level sets of $\phi ^*\theta $. Meanwhile, the
labeling of the edges and vertices correspond to other aspects of the ends
of $C_{0}$ and the $\phi ^*\theta $ level sets. This understood, a
`correspondence' in $(C_{0}, \phi )$ of a graph $T$ of the sort just
described signifies in what follows a particular choice for the
identification that were just described between the geometry of $\phi ^*\theta $ on
$C_{0}$ and the various edges, vertices and so on of $T$. Note
that any one correspondence of $T$ in $(C_{0}, \phi )$ is obtained from any
other by the use of an automorphism on $T$. The use of a subscript, `$C$',
on $T$, thus $T_{C}$, signifies in what follows a graph $T$ together with a
chosen correspondence in a specified pair $(C_{0}, \phi )$.

If $T$ has a correspondence in $(C_{0}, \phi )$, and if $\psi $ is a
holomorphic diffeomorphism of $C_{0}$, then $T$ also has a correspondence in
$(C_{0}, \phi '  \equiv   \phi  \circ \psi )$. More to the point,
there is an automorphism $\delta \co  T  \to T$ and respective
correspondences for $T$ in $(C_{0}, \phi )$ and in $(C_{0}, \phi ')$ such
that the inverse image via $\psi $ and $\delta $ intertwine the two
correspondences. For example, if $e \subset T$ is an edge and $K_{e}$ is
its corresponding component of the $\phi ^*\theta $ version of $C_{0}- \Gamma $, then
$\psi ^{- 1}(K_{e})$ is the component of the
$\phi '*\theta $ version of $C_{0}- \Gamma $ that corresponds to
$K_{\delta (e)}$.

Granted what has just been said, the isomorphism type of graph with a
correspondence in a given pair $(C_{0}, \phi )$ depends only on the image
of $(C_{0}, \phi )$ in ${\mathcal{M}^{*}}_{\hat{A}}$.

\subsection{Preferred parametrizations}\label{sec:2b}

Let $K \subset C_{0}- \Gamma $ denote a component. Since the
$\theta $ level sets in $K$ are circles, the angle $\theta $ and an affine
parameter on the $\theta $ level sets can be used to parametrize $K$ by an
open cylinder. In this regard, there is a set of preferred parametrizations.
To set the stage for their description, introduce the ordered pair $Q \equiv  (q, q')$
to denote the respective integrals of $\frac{1 }{{2\pi }} dt$ and $\frac{1 }{ {2\pi }}d\varphi $ around any given
constant $\theta $ slice of $K$ using the orientation that is defined by the
pull-back of the 1--form in \eqref{eq2.1}. Next, let $\theta _{0}$ denote the
infimum of $\theta $ on $K$ and let $\theta _{1}$ denote the supremum. In
what follows, $\sigma $ is used to denote the linear coordinate on $(\theta_{0}, \theta _{1})$ and $v$
is used to denote an affine coordinate for the circle $\mathbb{R}/(2\pi \mathbb{Z})$.

\begin{definition}\label{def:2.1}
A preferred parametrization for $K$  is a diffeomorphism from the cylinder
$(\theta _{0}, \theta _{1})  \times   \mathbb{R}/(2\pi \mathbb{Z})$  to $K$  whose composition with
the tautological immersion of $K$  into $\mathbb{R}  \times (S^1  \times  S^2)$  can be written in
terms of smooth functions $a$  and $w$  on the cylinder as the map that pulls back the coordinates
$(s, t, \theta , \varphi )$  as

\itaubes{2.5}
$s = a$,

\item
$t = q v + (1-3\cos^{2}\sigma ) $w$ \mod(2\pi \mathbb{Z})$,

\item
$\theta  = \sigma$,

\item
$\varphi  = q'v + \surd 6 \cos\sigma $w$ \mod(2\pi \mathbb{Z})$.
\enditaubes
\end{definition}

The set of preferred parametrizations for $K$ is in all cases non-empty.
Indeed, to construct such a parametrization, start by fixing a transversal
to the constant $\theta $ circle in $K$, this a properly embedded, open arc in
 $K$ that is parametrized by the restriction of $\theta $. Next, use $\theta $
as one coordinate and, for the other, use the line integral of $x/\alpha_{Q}$ along the constant
$\theta $ circles from the chosen arc. A
preferred coordinate system can be obtained from the latter by adding an
appropriate function of $\theta $ to the second coordinate.

Listed next are six important properties of the preferred parametrizations.

\step{Property 1}{All preferred parametrizations pull the
exterior derivative of the contact 1--form $\alpha $ back as
$\sin\sigma \alpha _{Q}(\sigma ) d\sigma \wedge dv$ where $\alpha
_{Q}$ is the function on $[0, \pi ]$ that appears in
\eqref{eq2.2}. This implies that $\alpha _{Q}$ is positive on the
interval $(\theta _{0}, \theta _{1})$, and that it is positive at an
endpoint of this interval if the value there of $\theta $ is attained on
the closure of $K$.}

With regards, to $\alpha _{Q}$, note as well that the 1--form $x$ in
\eqref{eq2.1} pulls back to any constant $\sigma $ circle in the
parametrizing cylinder as $\alpha _{Q} dv$.

\step{Property 2}{The fact that $K$ is pseudoholomorphic puts
certain demands on the pair $(a,w)$. In particular, $K$ is
pseudoholomorphic in $R$ $\times (S^1 \times S^2)$ if and only if
\begin{equation}\label{eq2.6}
\begin{aligned}
&\alpha _{Q} a_{\sigma }-\surd 6 \sin\sigma  (1 + 3 \cos^{2}\sigma
) w a_{v} = -\frac{{1 + 3\cos ^4\sigma } }{ {\sin \sigma
}}\Bigl(w_{v}-\frac{1 }{ {1 + 3\cos ^4\sigma }}\beta \Bigr)
\\
&(\alpha _{Q}w)_{\sigma }-\surd 6 \sin\sigma  (1 + 3
\cos^{2}\sigma ) ww_{v}=\frac{1 }{ {\sin \sigma }}a_{v},
\end{aligned}
\end{equation}
Here, $\beta $ is defined to be the function $p (1 - 3 \cos^{2}\sigma ) +p'\surd 6 \cos\sigma \sin^{2}\sigma $.
In this equation and
subsequently, the subscripts $\sigma $ and $v$ denote the partial derivatives
in the indicated direction. Because $\alpha _{Q}$ is nowhere zero on
$(\theta _{0}, \theta _{1})$ the system in \eqref{eq2.6} is a non-linear
version of the Cauchy--Riemann equations.}

Here is an important consequence of the second equation in \eqref{eq2.6}:

\qtaubes{2.7}
\textsl{The function $\sigma    \to   \alpha _{Q}(\sigma )
\int _{\mathbb{R}/ (2\pi \mathbb{Z})} w(\sigma , v) dv$  is constant on the interval\break $(\theta _{0}, \theta _{1})$.}
\endqtaubes

\step{Property 3}{Consider the behavior of $K$ where $\theta $
is near a given $\theta _{*} \in \{\theta _{0}, \theta _{1}\}$.  If
$\theta _{*}$ is not achieved by $\theta $ on the closure of $K$, then
there exists $\varepsilon > 0$ such that the portion of $K$ where $|
\theta -\theta _{*}| \le \varepsilon $ is properly embedded in an end
of $C_{0}$. In particular, the constant $\theta $ slices of this
portion of $K$ are isotopic to the constant $|s| $ slices when $\theta
$ is very close to $\theta _{*}$. Moreover, if $\theta_{*} \notin \{0,
\pi \}$, then such an end is on the convex side of $C_{0}$ and the
associated integer $n_{E}$ that appears in \eqref{eq2.4} is zero.}

Supposing still that $\theta _{*}   \in (0, \pi )$, let $(p, p')$
denote the relatively prime integer pair that defines $\theta _{*}$
via \eqref{eq1.8}. Then $(q, q') = m(p, p')$ with $m$ a non-zero integer, and it
follows from \eqreft25 and \eqreft27 that the $\mod(2\pi \mathbb{Z})$ reduction of any
$\sigma    \in (\theta _{0}, \theta _{1})$ version of
\begin{equation}\label{eq2.8}
\frac{1 }{ {2\pi m}}\alpha _{Q}(\sigma )\smallint _{\mathbb{R} /(2\pi \mathbb{Z})} w(\sigma, v) dv
\end{equation}
is the $\mathbb{R}/(2\pi \mathbb{Z})$ parameter that distinguishes the Reeb
orbit limit of the $\theta  \to   \theta _{*}$ circles in $K$.

\step{Property 4}{If $\theta _{*} \in \{\theta _{0}, \theta
_{1}\}$ is neither 0 nor $\pi $ and if $\theta _{*}$ is achieved on
the closure of $K$, then the complement of the $\theta $ critical
points in the $\theta =\theta _{*}$ boundary of this closure is the
union of a set of disjoint, embedded, open arcs. The closures of each
such arc is also embedded. However, the closures of more than two arcs
can meet at any given $\theta $-critical point.}

This decomposition of the $\theta =\theta _{*}$ boundary of $K$
into arcs is reflected in the behavior of the parametrizations in \eqreft25 as
$\sigma $ approaches $\theta _{*}$. To elaborate, each critical
point of $\theta $ on the $\theta =\theta _{*}$ boundary of the
closure of $K$ labels one or more distinct points on the $\sigma =\theta
_{*}$ circle in the cylinder $[\theta _{0}$, $\theta _{1}]\times   \mathbb{R}/(2\pi \mathbb{Z})$.
These points are called `singular
points'. Meanwhile, each end of $C_{0}$ that intersects the $\theta  =\theta _{*}$
boundary of the closure of $K$ in a set where $|s| $ is unbounded also labels one or more points on this same circle.
The latter set of points are disjoint from the set of singular points. A
point from this last set is called a `missing point'.

The complement of the set of missing and singular points is a disjoint set
of open arcs. Each point on such an arc has a disk neighborhood in
$(0, \pi)  \times   \mathbb{R}/(2\pi \mathbb{Z})$ on which the parametrization in
\eqreft25 has a smooth extension as an embedding into $\mathbb{R}  \times (S^1  \times  S^2)$ onto a disk in $C_{0}$.

As might be expected, the set of arcs that comprise the complement of the
singular and missing points are in 1--1 correspondence with the set of arcs
that comprise the $\theta =\theta _{*}$ boundary of the closure
 $K$. In particular, the extension to \eqreft25 along any given arc in the $\sigma=\theta _{*}$ boundary of
 $[\theta _{0}, \theta _{1}]\times   \mathbb{R}/(2\pi \mathbb{Z})$ provides a smooth parametrization of
the interior of its corresponding arc in the $\theta =\theta _{*}$ boundary of the closure of $K$.

\step{Property 5}
It is pertinent to what follows to say more about the behavior of the
parametrization near the singular points on the $\sigma =\theta _{*}$ circle in the case that
$\theta _{*} \in \{\theta _{0}, \theta _{1}\}$ is a value of $\theta $ on the
closure of $K$.

To set the stage, let $z_{*}$ denote any given point in $C_{0}$. Let
$\hat {t}$ and $\hat {\varphi }$ denote the functions, defined on a ball
centered at the image of $z_{*}$ in $S^1  \times  S^2$, that
vanish at the latter point and whose respective differentials are $dt$ and
$d\varphi $. Now introduce
\begin{equation}\label{eq2.9}
r \equiv   \frac{{\sin \theta } }{ {(1 + 3\cos ^4\theta )^{1 /2}}}\bigl((1 - 3 \cos^{2}\theta )
\hat {\varphi }-\surd 6 \cos\theta \hat {t}\bigr),
\end{equation}
a function that is defined on the given ball about the image of $z_{*}$ in $S^1  \times  S^2$.
Next, let $D_{*}   \subset C_{0}$ be a small radius disk with center $z_{*}$ on which the
pull-back of $r$ is well defined. Note that d$\theta    \wedge  dr$ is zero
on $D_{*}$ only at the critical points of $\theta $. Thus, $r$ and
$\theta $ define local coordinates on the complement in $D_{*}$ of its
$\theta $ critical points.

When a component $K \subset C_{0}- \Gamma $ whose closure
contains $z_{*}$ is given a preferred parametrization, there is a
function, $\hat {v}$, that is defined on any given contractible component of
$K \cap D_{*}$ whose differential pulls back to $(\theta _{0},\theta _{1})\times \mathbb{R}/(2\pi \mathbb{Z})$ as $dv$.
This understood, then \eqreft25 identifies $r$ on such a component of $K \cap D_{*}$ as
\begin{equation}\label{eq2.10}
\begin{split}
r& = \frac{{\sin \theta } }{ {(1 + 3\cos ^4\theta )^{1 / 2}}}
\alpha _{Q_K } (\theta ) (\hat {v} - v^{*})\\
&\quad-\frac{{\sin \theta } }{ {(1 + 3\cos ^4\theta )^{1 / 2}}}  \surd
6 (\cos\theta  - \cos\theta _{*})(1 + 3 \cos\theta  \cos\theta _{*}) w^{*}.
\end{split}
\end{equation}
Here, $v^{*}$ and $w^{*}$ are constants. In particular, if
$z_{*}$ is in $K$, then $v^{*}=\hat {v}(z_{*})$
and $w^{*}$ is the value of $w$ at the point in $(\theta _{0},\theta _{1})  \times \mathbb{R}/(2\pi \mathbb{Z})$
that parametrizes $z_{*}$ via \eqreft25. Such is also the case when $z_{*}$ is on the
boundary of $K$ and is not a critical point of $\theta $ provided that the
radius of $D_{*}$ is sufficiently small. With regard to this last
case, note that a small radius guarantees the following: The boundary of $K$'s
closure intersects $D_{*}$ as an embedded arc and any chosen $\hat{v}$ extends to this arc as a smooth function.

According to \eqref{eq2.9}, the 1--form $dr$ can be written as $dr = J\cdot d\theta + r d\theta$
where $|r| $ vanishes at the image of $z_{*}$. This and the fact that $\phi $ is pseudoholomorphic have the following
consequence: There exists a holomorphic coordinate, $u$, on $D_{*}$ such
that the pull-back of the complex function $\theta +ir$ has the form
\begin{equation}\label{eq2.11}
\theta  + i r = \theta _{*} + u^{m + 1}+\mathcal{O}\bigl(|u| ^{m + 2}\bigr).
\end{equation}
Here, $m  \ge 0$ is the degree at $z_{*}$ of the zero of $d\theta $.
The term in \eqref{eq2.11} designated as $\mathcal{O}(|u| ^{m + 2})$ is
such that it's quotient by $|u| ^{m + 2}$ is bounded as $u$
limits to zero. Now, given that $D_{*}$ has small radius, \eqref{eq2.11}
indicates that $K$ intersects $D_{*}$ in a finite number of components,
each contractible. This noted, then $r$ is given on each such component by a
version of \eqref{eq2.10} where $v^*$ is determined by the choice for $\hat{v}$ and the
$R/(2\pi   \mathbb{Z})$ coordinate of the given given $\sigma  =\theta _{*}$ singular point.
Meanwhile, $w^{*}$ is the limiting value of the function $w$ at the singular point that maps
to $z_{*}$. Together, \eqref{eq2.10} and \eqref{eq2.11} describe the behavior of a preferred
parametrization near this singular point.

\step{Property 6}
{In the case that $\theta _{*}   \in  \{\theta _{0}, \theta_{1}\}$ is either 0 or $\pi $
and $\theta $ takes value $\theta _{*}$ on the closure of $K$, then the map in \eqreft25 extends to the
$\sigma  =\theta _{*}$ boundary of the cylinder as a smooth map that sends
this boundary to a single point. This extended map factors through a
pseudoholomorphic map of a disk into $\mathbb{R}  \times (S^1  \times
S^2)$ with the $\sigma =\theta _{*}$ circle being sent to the
disk's origin.}

To say a bit more about this case, note that the the pair $(q, q')$ has $q = 0$
and $q' < 0$. In this regard, $-q'$ is the intersection number between the image
of the aforementioned disk and the $\theta    \in  \{0, \pi \}$
locus. As $q = 0$, the assertion in \eqreft27 implies that the $\mod(2\pi \mathbb{Z})$
reduction of any $\sigma    \in (\theta _{0}, \theta _{1})$
version of the expression in \eqref{eq2.8} gives the $t$--coordinate of the
intersection point between the disk and the $\theta  = \{0, \pi \}$
locus.

To end this subsection, note that the set of preferred parametrizations of a
given component $K \subset C_{0}- \Gamma $ constitutes a single
orbit for an action of the group $\mathbb{Z}\times   \mathbb{Z}$. To
elaborate, let $\psi $ denote a parametrization for $K$, and let $N  \equiv (n, n')$
denote an ordered pair of integers. Let $\phi _{N}$ denote the
diffeomorphism of $(\theta _{0}, \theta _{1})  \times   \mathbb{R}/(2\pi \mathbb{Z})$ that pulls
$(\sigma, v)$ back as
\begin{equation}\label{eq2.12}
\phi _{N}^*(\sigma, v) = \biggl(\sigma , v - 2\pi   \frac{{\alpha _N
(\sigma )} }{ {\alpha _Q (\sigma )}}\biggr).
\end{equation}
Then $\psi ^{N}   \equiv   \psi  \circ \phi ^{N}$ is also a
preferred parametrization. In this regard, if $(a, w)$ are the pair that
appears in $\psi $'s version of \eqreft25, then the $\psi ^{N}$ version is the
pair $(a^{N}, w^{N})$ given by
\begin{equation}\label{eq2.13}
\begin{split}
a^{N}(\sigma , v) &= a\biggl(\sigma , v - 2\pi   \frac{{\alpha _N
(\sigma )} }{ {\alpha _Q (\sigma )}}\biggr)  \qquad\text{and}\qquad\\
w^{N}(\sigma , v) &= w\biggl(\sigma
, v - 2\pi   \frac{{\alpha _N (\sigma )} }{ {\alpha _Q (\sigma
)}}\biggr) + 2\pi   \frac{{qn' - q' n} }{ {\alpha _{Q_e }
(\sigma )}} .
\end{split}
\end{equation}
The assignment of the pair $(N, \psi )$ to $\psi ^{N}$ defines a
transitive action of $\mathbb{Z}  \times   \mathbb{Z}$ on the set of preferred
parametrizations. In this regard, note that the stabilizer of any given
parametrization is the $\mathbb{Z}$ subgroup in $\mathbb{Z}  \times   \mathbb{Z}$
of the integer multiples of $Q$.

As all parametrizations that arise henceforth for any given component of
$C_{0}- \Gamma $ are preferred parametrizations, the convention
taken from here through the end of this article is that the word
`parametrization' refers in all cases to a preferred parametrization. The
qualifier `preferred' will not be written as its presence should be
implicitly understood.

\subsection{Graphs such as $\underline{\Gamma}_{o}$ and preferred parametrizations}\label{sec:2c}

Let $o$ denote a given multivalent vertex in $T_{C}$. This subsection
describes how the graph $\underline {\Gamma }_{o}$ is used to define
certain `canonical' parametrizations for those components of $C_{0}- \Gamma $ that are labeled by $o$'s
incident edges. This is preceded by a
description of various properties of $\underline {\Gamma }_{o}$ that are
used in subsequent parts of this article. The discussion here has seven
parts.

\step{Part 1}
Suppose here that $\gamma $ is an arc in $\underline {\Gamma }_{o}$ and
let $e \in  E_{ - }$ and $e'  \in  E_{ + }$ denote the incident edges
to $o$ that comprise its edge-pair label. Suppose that both $K_{e}$ and
$K_{e'}$ have been graced with parametrizations. View the interior of
$\gamma $ as an open arc in $\Gamma _{o}$ and so an open arc in $C_{0}$.
As explained in the previous subsection, the parametrizations of both
$K_{e}$ and $K_{e'}$ extend to a neighborhood of int$(\gamma )$. As
such, there is a `transition function' that relates one of these extensions
to the other. To describe this transition function, let $\hat {v}_{e}$
denote a lift to $\mathbb{R}$ of the $\mathbb{R}/(2\pi \mathbb{Z})$ valued
coordinate $v$ on the parametrizing cylinder for $K_{e}$. Meanwhile, let $\hat
{v}_{e'}$ denote a corresponding lift of the $\mathbb{R}/(2\pi
\mathbb{Z})$ valued coordinate on the parametrizing cylinder for
$K_{e'}$. Then the coordinate transition function has the form
\begin{equation}\label{eq2.14}
\alpha_{Q_e }\hat {v}_{e}=\alpha _{Q_{e' } } \hat {v}_{e'} -2\pi   \alpha _{N},
\end{equation}
where $N  \equiv  (n, n')$ is some ordered pair of integers and
$\alpha_{N} = n' (1 - 3\cos^{2}\theta )-\surd 6 n \cos\theta $ is the $Q= N$
version of the function that appears in \eqref{eq2.2}. In this regard, the pair
$N$ can be chosen so that the corresponding versions of $(a, w)$ that appear in
\eqreft25 are related via
\begin{equation}\label{eq2.15}
\begin{split}
a_{e}\biggl(\sigma ,\frac{{\alpha _{Q_{e' } } (\sigma )} }{
{\alpha _{Q_e } (\sigma )}}\hat {v}_{e'} + 2\pi
\frac{{\alpha _N (\sigma )} }{ {\alpha _{Q_e } (\sigma )}} \biggr) &=
a_{e'}(\sigma , \hat {v}_{e'}) ,
\\
w_{e}\biggl(\sigma ,\frac{{\alpha _{Q_{e' } } (\sigma )} }{
{\alpha _{Q_e } (\sigma )}}\hat {v}_{e'} + 2\pi
\frac{{\alpha _N (\sigma )} }{ {\alpha _{Q_e } (\sigma )}}\biggr) &=
w_{e'}(\sigma , \hat {v}_{e'})\\
+\frac{1 }{
{\alpha _{Q_e } (\sigma )}}({q_{e}}'q_{e'} &-
q_{e}{q_{e'}}')\hat {v}_{e'}-\frac{{2\pi } }{
{\alpha _{Q_e } (\sigma )}}(q_{e}n' - {q_{e}}'n).
\end{split}
\end{equation}
Here, $(q_{e}, {q_{e}}')$ comprise the pair $Q_{e}$ while $(q_{e'},{q_{e'}}')$ comprise $Q_{e'}$.
With regards to these formulae,
note that changing the lift $\hat {v}_{e}$ by adding $2\pi $ has the
effect of changing the integer pair from $N$ to $N - Q_{e}$. Meanwhile, a
change of the lift $\hat {v}_{e'}$ by the addition of $2\pi $
changes $N$ to $N + Q_{e'}$.

\step{Part 2}
As is explained momentarily, a parametrization of $K_{e}$ and a lift to
$\mathbb{R}$ of the $\mathbb{R}/(2\pi \mathbb{Z})$ valued coordinate on the
corresponding parametrizing cylinder determines a canonical ordered pair
whose first component is a parametrization of $K_{e'}$ and whose
second is a lift to $R$ of the $\mathbb{R}/(2\pi \mathbb{Z})$ valued coordinate
on the latter's parametrizing cylinder. Indeed, this canonical pair provides
the $N = (0, 0)$ version of \eqref{eq2.14} and \eqref{eq2.15}, and it is a consequence of
\eqref{eq2.12} and \eqref{eq2.13} that there is a unique pair of parametrization and lift
that makes both \eqref{eq2.14} and \eqref{eq2.15} hold with $N = (0, 0)$.

Keep in mind that this canonical parametrization and lift changes with a
change in the initial parametrization for $K_{e}$ and lift of its $\mathbb{R}/(2\pi \mathbb{Z})$ parameter.
To make matters explicit, suppose that $N =(n, n')$ is an integer pair and that the latter changes the parametrization
for $K_{e }$ as depicted in \eqref{eq2.12} and \eqref{eq2.13}. In addition, suppose that the
value of the new $\mathbb{R}$--lift, $\hat {v}^{N}_{e}$, of the $\mathbb{R}/(2\pi \mathbb{Z})$ 
coordinate on the parametrizing cylinder is related
to the original at any given point by
\begin{equation}\label{eq2.16}
{\hat{v}^{N}}_{e}=\hat {v}_{e}- 2\pi \frac{{\alpha _N
(\sigma )} }{ {\alpha _{Q_e } (\sigma )}}.
\end{equation}
It now follows from \eqref{eq2.14} and \eqref{eq2.15} that the canonical $e'$ pair of
parametrization and lift are changed in the analogous manner by this same
integer pair $N$. This is to say that the new parametrization of
$K_{e'}$ is related to the original via the $e'$ version of \eqref{eq2.12} and
\eqref{eq2.13} as defined using the pair $N$. Meanwhile, the $\mathbb{R}$--valued lift,
$\hat {v}^{N}_{e'}$, of the $\mathbb{R}/(2\pi \mathbb{Z})$ valued
coordinate on the $e'$ version of the parametrizing cylinder is obtained from
the old at any given point by the version of \eqref{eq2.16} that substitutes $e'$ for $e$.

With regards to $\mathbb{R}$--valued lifts of the $\mathbb{R}/(2\pi \mathbb{Z})$
coordinate on a given parametrizing cylinder, note that any such lift is
uniquely determined by the lift to $\mathbb{R}$ of the $\mathbb{R}/(2\pi \mathbb{Z})$
coordinate of any one point. In the applications below, a lift to $\mathbb{R}$ is chosen
for the $\mathbb{R}/(2\pi \mathbb{Z})$ coordinate of a
distinguished missing or singular point on the $\sigma =\theta _{o}$
boundary of the parametrizing cylinder. The latter lift is then used to
define the lift of the $\mathbb{R}/(2\pi \mathbb{Z})$ coordinate over the
whole cylinder.

This last very straightforward observation is used below in the following context:
Suppose that $\upsilon _{e}$ is a distinguished missing or singular point
on the $\sigma =\theta _{o}$ boundary circle of the parametrizing
cylinder for $e$, and suppose that a lift to $\mathbb{R}$ of the $\mathbb{R}/(2\pi
\mathbb{Z})$ coordinate of $\upsilon _{e}$ has been chosen. Now, let $\nu $
denote a path on this boundary circle that begins at $\upsilon _{e}$ and
ends at some other missing or singular point. In this regard, the end point
of $\nu $ may well be $\upsilon _{e}$. In any event, assume that $\nu $
can be written as a non-empty concatenation of segments that connect pairs
of missing points, or connect pairs of singular points, or connect a missing
point and a singular point. Note that any such segment can run either with
or against the defined orientation of the $\sigma =\theta _{o}$
boundary circle.

Let $\gamma '$ denote the image in $\underline {\Gamma }_{o}$ of the final
segment on $\nu $, and let $e'$ denote the vertex that labels $\gamma '$ with
$e$. The lift to $\mathbb{R}$ of the $\mathbb{R}/(2\pi \mathbb{Z})$ coordinate of
$\upsilon _{e}$ then defines a lift to $\mathbb{R}$ of the coordinate along
the whole of $\nu $ and thus a lift over $\gamma '$. In this regard, note
that the latter depends on the homotopy class rel end points of the path
$\nu $. In any event, this lift and the given parameterization of $K_{e}$
determines a canonical parameterization of $K_{e'}$. Of course, it
also determines a canonical lift to $\mathbb{R}$ of the $\mathbb{R}/(2\pi \mathbb{Z})$
coordinate of the end point of $\nu $.

\step{Part 3}
This and Part 4 of the subsection describe a generalization of these
constructions. In this regard, the preceding definition suffices when
$T_{C}$ is a linear graph with no automorphisms, but more is needed for a
more complicated graph.

This part of the story constitutes a digression whose purpose is to
elaborate on some aspects of the graph $\underline {\Gamma }_{o}$. To
start, let $e$ denote an incident edge to o. Then $e$ labels a certain circular
graph, $\ell _{oe}$, with oriented edges and labeled vertices that has an
immersed image in $\underline {\Gamma }_{o}$. In order to describe $\ell_{oe}$,
it convenient to first choose a parametrization of the component,
$K_{e}   \subset C_{0}- \Gamma $ that $e$ labels. Such a
parametrization identifies $\ell _{oe}$ with the $\sigma =\theta
_{o}$ circle in the associated parametrizing cylinder. In this regard, the
vertices of $\ell _{oe}$ are the set of missing and singular points on
this circle, and its `arcs' are then the arcs in this circle that connect
these points. The orientation of an arc is defined by the restriction of dv.
Meanwhile, the vertices are labeled by integers in the following manner: All
singular points are labeled by the integer 0. Any given missing point has
label $\pm m$ where $m$ is a positive integer and where the $+$ sign is used if
and only if the missing point corresponds to a concave side end of $C$. The
integer $m$ is the greatest common divisor of the respective integrals of
$\frac{1 }{ {2\pi }} dt$ and $\frac{1 }{ {2\pi }}d\varphi $
over any given constant $|s| $ slice of the end.

The map from $\ell _{oe}$ to $\underline {\Gamma }_{o}$ is then induced
by the extension of the parametrizing map. In this regard, recall that this
extension maps the complement of the set of missing points on the $\sigma = \theta _{o}$
circle to $\Gamma _{o}   \subset \underline {\Gamma}_{o}$.
This map from $\ell _{oe}$ to $\underline {\Gamma }_{o}$ has
the following properties: First, it sends vertices to vertices and arcs to
arcs so as to preserve the vertex labels and the arc orientations. Second it
is 1--1 on the complement of the vertices. Thus, the image of the map from
$\ell _{oe}$ to $\underline {\Gamma }_{o}$ is a closed, oriented path in
$\underline {\Gamma }_{o}$ that crosses no arc more than once. The arcs in
the image of $\ell _{oe}$ are those whose edge pair label contains e. In
what follows, the homology class in $H_{1}(\underline {\Gamma }_{o}$;
$\mathbb{Z})$ of the image of $\ell _{o,e}$ is denoted as $[\ell _{oe}]$.

Since the abstract circle $\ell _{oe}$ can be reconstituted (up to an
automorphism) from its image as an oriented path in $\underline {\Gamma
}_{o}$, the image is also denoted by $\ell _{oe}$. Note as well that
distinct parametrizations of $K_{e}$ define isomorphic versions of $\ell
_{oe}$ and that the associated maps to $\underline {\Gamma }_{o}$ are
compatible with any such isomorphism.

Four properties of $C_{0}$ are reflected in the manner in which the
collection $\{\ell _{oe}\}$ sit in $\underline {\Gamma }_{0}$. Their
descriptions involve sets $E_{ - }$ and $E_{ + }$ where $E_{ - }$ is the set
of incident edges to $o$ that connect to vertices with angles less than
$\theta _{o}$, while $E_{ + }$ is the set of incident edges that connect
to vertices with angles greater than $\theta _{o}$.

\step{Property 1}
\textsl{Each arc in $\underline {\Gamma }_{o}$  is contained in precisely two versions of $\ell_{o(\cdot )}$,
one labeled by  an edge from $E_{ + }$  and the other by an edge from $E_{ - }$.  These are the
edges that comprise the label of the arc.}

\step{Property 2}
\textsl{The collection $\{[\ell _{oe}]\}_e$ is
incident to $o$  generates $H_{1}(\underline {\Gamma }_{o}; \mathbb{Z})$  subject to the following
constraint:}
\begin{equation}\label{eq2.17}
\sum_{e \in E_ + } [\ell _{oe}] - \sum _{e \in E_ - } [\ell _{oe}]= 0.
\end{equation}
The first property is a direct consequence of the definitions. To explain
the second property, first note that the closed parametrizing cylinders for
the components of $C_{0}- \Gamma $ that are labeled by $o$'s incident
edges can be glued to $\underline {\Gamma }_{o}$ by identifying any given
version of $\ell _{o(\cdot )}   \subset \underline {\Gamma }_{o}$
with corresponding segment in the $\sigma =\theta _{o}$ boundary of
the relevant parametrizing cylinder. The result of this gluing is a sphere
with as many punctures as $o$ has incident edges. By construction this
multipunctured sphere deformation retracts onto $\underline {\Gamma}_{o}$.
Meanwhile, its homology is generated by the collection $\{[\ell_{oe}]\}$ subject to the one constraint in \eqref{eq2.17}.

The third property is actually a consequence of the first two, but can also
be seen to follow from the fact that $C_{0}$ is a smooth, irreducible curve.

\step{Property 3}
\textsl{Let $\hat{E}$  denote the set of $o$'s incident edges that appear in the pair labels of the
incident half-arcs to a given vertex in $\underline {\Gamma }_{o}$.
An equivalence relation on $\hat{E}$ is generated by equating the two edges that come from the edge
pair label of an incident half-arc to the given vertex. This relation defines just one equivalence class.}

The final property is a special case of the one just stated.

\step{Property 4}
\textsl{Suppose that $e$  and $e'$ are incident edges to $o$,  one in $E_{ + }$  and the other in $E_{ - }$,
and  suppose that $\gamma    \subset   \ell _{oe}   \cap   \ell _{oe'}$.  If $\gamma'$  follows
$\gamma $  in $\ell _{oe}$,  then $\gamma '$  cannot follow $\gamma $  in $\ell _{oe'}$
unless the vertex between them is bivalent.}

\step{Part 4}
With the preliminary digression now over, this part of the subsection
describes the advertised generalization of the definition in Part 2 of a
canonical parametrization. The following definition is needed to set the
stage:

\begin{definition}\label{def:2.2}

A `concatenating path set' is an ordered set, $\{\nu _{1},\nu _{2},\ldots , \nu _{N}\}$,
of labeled paths in $\underline {\Gamma }_{o}$  with the following properties:

\begin{itemize}

\item
The label of each $\nu _{k}$  consists of a specified direction of travel and a specified incident edge to $o$.

\item
If $e$  denotes the edge label to a given $\nu _{k}$,  then $\nu _{k}$  is entirely contained in
$\ell _{oe}$.\\
Moreover, $\nu _{k }$ is the concatenation of a non-empty, ordered set of arcs in
$\ell_{oe}$  that are crossed in their given order when the path is traversed from start to finish.
Note that the arcs that comprise $\nu _{k}$  need not be distinct and can be crossed in either direction.

\item
No two consecutive pairs $\nu _{k}$  and $\nu _{k + 1}$  are labeled by the same incident edge to $o$.

\item
For each $1  \le  j < N$, the final arc on $\nu _{j}$  is the starting arc on $\nu _{j + 1}$.

\end{itemize}

The concatenating path set defines a directed path in $\underline {\Gamma }_{o}$
by sequentially traversing the paths that comprise the ordered set $\{\nu _{1}, \ldots , \nu _{N}\}$
in their given order.
\end{definition}

Granted this definition, here is the context for the discussion that
follows: Suppose that one of $o$'s incident edges has been designated as the
`distinguished incident edge'. Let $e$ denote the latter, and suppose that a
parametrization is given for $K_{e}$, and that a vertex has been designated
as the `distinguished vertex' on the realization of $\ell _{oe}$ as the
$\sigma =\theta _{o}$ boundary circle of the parametrizing cylinder.
Let $\upsilon _{e}$ denote this distinguished vertex, and let $\upsilon  \in \underline {\Gamma }_{o}$
denote its image. Now, let $e'$ label
some incident edge to $o$ with $e' = e$ allowed. Suppose, in addition, that a
concatenating path set $\nu \equiv \{\nu _{1},\ldots, \nu _{N}\}$ has been chosen so that

\itaubes{2.18}
\textsl{The edge label of $\nu _{1}$  is $e$, and $\nu _{1}$  starts at $\upsilon $.}

\item
\textsl{The edge label of $\nu _{N}$  is not $e'$  but its final arc lies in $\ell _{oe'}$.}
\end{itemize}

Properties 3 and 4 from Part 3 can be used to construct this sort of concatenating path set.

With this data set, a  \textit{parametrizing algorithm} is described next whose input is a pair consisting of
a parametrization for $e$ and a lift to $\mathbb{R}$ of the $\mathbb{R}/(2\pi\mathbb{Z})$
coordinate of $\upsilon _{e}$ in the $\sigma =\theta_{o}$ boundary of the parametrizing
cylinder for $K_{e}$ and whose output
is a parametrization for $K_{e'}$ as well as a canonical $\mathbb{R}$--valued lift of the
$\mathbb{R}/(2\pi \mathbb{Z})$ parameter on the
associated parametrizing cylinder. This output parameterization is the
advertised `canonical' parametrization for $K_{e'}$.

To describe this algorithm, note first that the observations from the final
paragraph from Part 2 can be viewed as defining a  parametrizing subroutine that takes as input the
data:

\itaubes{2.19}
\textsl{A parametrization of a component of $C_{0}- \Gamma $  whose closure intersects $\Gamma _{o}$.}

\item
\textsl{A distinguished point on the $\sigma =\theta _{o}$  boundary of the parametrizing cylinder.}

\item
\textsl{A lift to $\mathbb{R}$  of the $\mathbb{R}/(2\pi \mathbb{Z})$  coordinate of this distinguished point.}

\item
\textsl{A non-trivial path in the $\sigma =\theta _{o}$  boundary of the parametrizing cylinder
that starts at the distinguished point and consists of segments that connect missing and/or
singular points on this boundary.}
\end{itemize}
and gives as output:

\itaubes{2.20}
\textsl{A parametrization of the as yet unparametrized component of $C_{0}- \Gamma $  whose closure
contains the image in $\Gamma _{o}$  of the interior of the final segment on the chosen path.}

\item
\textsl{A lift to $\mathbb{R}$  of the $\mathbb{R}/(2\pi \mathbb{Z})$  coordinate on the $\sigma =\theta _{o}$
boundary of the corresponding parametrizing cylinder for this second component of $C_{0}- \Gamma$.}
\end{itemize}

The parametrizing algorithm runs this subroutine $N$ consecutive times. The
first run uses as input the chosen parameterization for $K_{e}$, the point
$\upsilon _{e}$, the chosen lift of its $\mathbb{R}/(2\pi \mathbb{Z})$
coordinate, and the path $\nu _{1}$. The second run of the subroutine uses
as input the resulting parameterization from \eqreft2{20} of the component of
$C_{0}- \Gamma $ that shares the edge label with $\nu _{2}$, the
second to last vertex on $\nu _{1}$ with the lift of its $\mathbb{R}/(2\pi
\mathbb{Z})$ parameter from \eqreft2{20}, and the path $\nu _{2}$. In general,
the $(j+1)'$st run of the subroutine uses the output from the $j$'th run of the
subroutine as it takes as input the parameterization in \eqreft2{20} for the
component of $C_{0}- \Gamma _{ }$ that shares the edge label with
$\nu _{j + 1}$, the second to last vertex on $\nu _{j}$, the lift
of the $\mathbb{R}/(2\pi \mathbb{Z})$ coordinate for $\nu _{j}$ that
is obtained from \eqreft2{20}, and the path $\nu _{j + 1}$. Because of \eqreft2{18},
the $N$'th run of the subroutine parametrizes $K_{e'}$ and gives a lift
of the $\mathbb{R}/(2\pi \mathbb{Z})$ parameter on its parametrizing cylinder.
This parametrization of $K_{e'}$ is deemed `canonical'.

As just described, the canonical parameterization of a component of
$C_{0}- \Gamma $ whose closure intersects $\Gamma _{o}$ depends on the following input data:

\itaubes{2.21}
\textsl{A distinguished incident edge to $o$}

\item
\textsl{A parameterization of the corresponding component in $C_{0}- \Gamma $.}

\item
\textsl{A choice of a distinguished vertex in the $\sigma =\theta _{o}$  boundary of the parametrizing cylinder.}

\item
\textsl{A lift to $R$ of the $R/(2\pi \mathbb{Z})$  coordinate of the corresponding point in the
$\sigma =\theta _{o}$  boundary of the parametrizing cylinder.}

\item
\textsl{A choice of a concatenating path set that obeys the constraints in \eqreft2{18}.}
\end{itemize}

The dependence on this input data has a role in subsequent parts of this
article and so warrants the discussion that occupies the remaining portions
of this subsection.

\step{Part 5}
The question on the table now is that of the dependence of a canonical
parametrization on the data in \eqreft2{21}. What follows describes the situation
for the various listed cases, starting at the top and descending. In these
descriptions, the edge $e$ denotes the original distinguished edge, and the
edge $e'$ denotes the edge that labels the component of $C_{0}- \Gamma $ that
is to receive the canonical parameterization. Use $\{\nu_{1}, \ldots , \nu _{N}\}$
to denote the original concatenating path set.

\substep{Case 1}
Suppose that a new distinguished edge,
$\hat{e}$, has been selected. What follows describes input data for the
parametrization algorithm, now run with the edge $\hat{e}$, that supplies a
canonical parametrization for $K_{e'}$ that agrees with the original
one. In short, the $\hat{e}$ input data is obtained as follows: A concatenating
path, $\hat {\nu }$, is chosen to first parametrize $K_{{\hat{e}}}$ given
the original parametrization of $K_{e}$. This parametrization of
$K_{{\hat{e}}}$ is then used as the input to the algorithm to obtain the
$\hat{e}$ version of the canonical parametrization of $K_{e'}$. The
concatenating path that is used for this $\hat{e}$ parametrization of
$K_{e'}$ runs backword along $\hat {\nu }$ to its starting vertex on
$\ell _{oe}$, and then forward along the concatenating path that is used
by the parametrization algorithm to obtain the original canonical
parametrization of $K_{e'}$.

Here are the details: Choose a concatenating path set, $\{\hat{\nu}_{1}, \ldots , \hat{\nu }_{\hat{N}} \}$
whose end vertex lies on
$K_{{\hat{e}}}$ and let $\hat {\upsilon }$ denote its ending vertex. Let
$\hat {\nu }$ denote this path set, and use $\hat {\nu }$ with the
parametrizing algorithm to give $K_{{\hat{e}}}$ its canonical
parameterization. Since the final arc from $\hat {\nu }_{\hat {N}} $ lies on
$\ell _{o{\hat{e}}}$, the vertex $\hat {\upsilon }$ has a canonical
lift to the $\sigma =\theta _{o}$ circle of the parametrizing
cylinder for $K_{{\hat{e}}}$. Use the latter for the new distinguished
vertex, and use the value given by the parametrizing algorithm for the
$\mathbb{R}$--valued lift of its $\mathbb{R}/(2\pi \mathbb{Z})$ parameter.

Now, take the following for the new concatenating path set: The first path
in this set consists solely of the final arc in $\hat {\nu }_{\hat {N}} $,
but traveled in the direction opposite to that used by $\hat {\nu }$. The
second constituent path in the new concatenating path set is $\hat {\nu}_{\hat {N}}$,
but traveled in the direction opposite to that by $\hat{\nu}$. The third is
$\hat{\nu }_{N - 1}$, also traveled in the reverse
direction from end to start. Continue in this vein using the paths in $\hat
{\nu }$ in reverse to return to the original distinguished vertex on $\ell
_{oe}$. However, instead of using the first constituent path, $\hat {\nu
}_{1}$, of $\hat {\nu }$ in reverse here, use instead the concatenation
that first takes $\hat {\nu }_{1}$ in reverse and then adds to its end the
first path, $\nu _{1}$, in the concatenating path set that lead from the
original distingushed vertex to $\ell _{oe'}$. This understood,
take the subsequent constituent path in the new concatenating path set to be
the second of the constituent paths, $\nu _{2}$, from the original
distinguished vertex to $\ell _{oe'}$. Then, complete the new path
set by adding in their given order the constituent paths $\nu _{3},
\ldots , \nu _{N}$ from the original concatenating path set.

It is left as an exercise to verify that the output parameterization for
$K_{e'}$ is not changed when this new data is used as input to the
parametrizing algorithm.

\substep{Case 2}
Suppose that the parameterization of
$K_{e}$ is changed by the action of a given integer pair $N$ as depicted in
\eqref{eq2.12} and \eqref{eq2.13}. In addition, suppose that the new $\mathbb{R}$--lift of the
$\mathbb{R}/(2\pi \mathbb{Z})$ coordinate is related to the old via \eqref{eq2.16}.
Repeating what is said in Part 2 at each run of the parametrizing
subroutine finds that the canonical parameterization of $K_{e'}$ is
also changed by the action of the integer pair $N$ via the $e'$ version of
\eqref{eq2.12} and \eqref{eq2.13}.

\substep{Case 3}
 This is the story in the case that the
distinguished vertex on the $\sigma =\theta _{o}$ boundary of the
parametrizing cylinder is changed. Agree to use the same parameterization
for $K_{e}$ as the original. Also, agree to take the $R$--valued lift of the
$\mathbb{R}/(2\pi \mathbb{Z})$ coordinate for the new vertex to be that given
by the lift of the original distinguished vertex. Take the new concatenating
path set as follows: The new version is $\{{\nu _{1}}', \nu _{2},
\ldots , \nu _{N}\}$ where $\nu _{1}$' is the concatenation that
travels from the new distinguished vertex to the original distinguished
vertex in $\ell _{oe}$, against the given orientation of $\ell _{oe}$,
and then proceeds outward from the old distinguished vertex along $\nu_{1}$.

The parametrizing algorithm finds no difference between the new and old
parametrizations of $K_{e'}$ when using this new input data.

\substep{Case 4} The change in just the $\mathbb{R}$--valued
lift of the $\mathbb{R}/(2\pi \mathbb{Z})$ coordinate of the distinguished
point is given by the $N \in   \mathbb{Z}\cdot Q_{e}$ version of Case 2.
In particular, if $z  \in   \mathbb{Z}$ and if the new lift is related to the
old via the $N = z Q_{e}$ version of \eqref{eq2.16}, then the parameterization of
$K_{e'}$ is changed by the action of $N = z Q_{e}$ via the $e'$ version
of \eqref{eq2.12} and \eqref{eq2.13}.

\substep{Case 5}
As is explained momentarily, the story for
any given case from the final point in \eqreft2{21} can be obtained from that for
the special case in which the new concatenating path set shares its
respective starting and ending arcs with the original set. Meanwhile, the
discussion for the latter case requires the introduction of some new notions
and is deferred for this reason to the upcoming Part 7 of this subsection.

To begin the discussion here, suppose that the original concatenating path
set is changed subject to \eqreft2{18} so as to obtain a new concatenating path
set whose starting arc differs from the original. Let $\{{\nu _{1}}',
{\nu _{2}}', \ldots , {\nu _{N'}}'\}$ denote the new set. The
resulting parametrization of $K_{e'}$ is not changed when the new
concatenating path set has the form $\{{\nu _{1}}'', {\nu _{2}}',
\ldots , {\nu _{N'}}'\}$ where ${\nu _{1}}''$ is obtained from
${\nu _{1}}'$ by adding two new arcs in $\ell _{oe}$ at its start, the
first leading out along the starting arc of $\nu _{1}$, and the second
returning along this same arc to the distinguished vertex. The third arc in
${\nu _{1}}''$ is the first arc in ${\nu _{1}}'$, the fourth arc in
${\nu _{1}}''$ is the second arc in ${\nu _{1}}'$, and so on.

Suppose next that the final arc in the new concatenating path set
$\{{\nu _{1}}', \ldots , {\nu _{N'}}'\}$ differs from that in the
original. In this case, the new concatenating path set can be modified by
increasing the number of its constituent paths by 2 so as not to change the
resulting parametrization of $K_{e'}$ but so as to have the desired
final arc. To elaborate, let $\tau    \in \ell _{oe'}$
denote the original final arc, viewed as a directed arc. Let $e''$ denote the
incident edge to $o$ that labels $\tau $ with $e'$. There are two cases to
consider: In the first, direction along $\tau $ and direction along the
final arc in ${\nu _{N'}}'$ give the same orientation to $\ell
_{oe'}$. In this case, add a path, $\nu _{N' + 1}
\subset   \ell _{oe'}$, that starts with the final arc in ${\nu _{N'}}'$, proceeds along it in the direction used by ${\nu _{N'}}'$, and continues until it first hits a vertex of $\tau $. By
assumption, this must be the starting vertex of $\tau $. This understood,
continue $\nu _{N' + 1}$ by traversing $\tau $ to its end. The
final path, $\nu _{N' + 2}$, consists solely of $\tau $, but viewed
as in $\ell _{oe''}$.

In the second case, the arc $\tau $ and the final arc on ${\nu _{N'}}'$ define opposite orientations for $\ell _{oe'}$. In
this case, $\nu _{N' + 1}$ starts with the final arc in ${\nu _{N'}}'$ and continues along $\ell _{oe'}$ until the it
first hits a vertex on $\tau $. In this case, the vertex here is the end
vertex. Continue $\nu _{N' + 1}$ by traversing $\tau $ backwards to
its start. Take $\nu _{N' + 2}$ to be the path that starts at the
end vertex of $\tau $, traverses $\tau $ to its start and then reverses
direction to retrace $\tau $ to its end again. Label this path with the edge
$e''$. A check of the parametrizing algorithm reveals that the canonical
parameterization of $K_{e'}$ as defined using $\{{\nu _{1}}',
\ldots , {\nu _{N'}}', \nu _{N' + 1}, \nu _{N'
+ 2}\}$ is identical to that obtained using $\{{\nu _{1}}', \ldots ,
{\nu _{N'}}'\}$.

\step{Part 6}
The remainder of the story for the final point in \eqreft2{21} requires the
introduction of a certain `blow up' of the graph $\underline {\Gamma}_{o}$. This blow up is a graph,
${\underline {\Gamma }^{*}}_{o}$,
with labeled and oriented edges that comes equipped with a canonical `blow
down' map to $\underline {\Gamma }_{o}$ that maps certain edges to
vertices of $\underline {\Gamma }_{o}$. Note that Property 4 from Part 3
is used implicitly in the the constructions that appear in the next few
paragraphs.

To start the definition of ${\underline {\Gamma }^{*}}_{o}$, let
$\upsilon $ denote a vertex in $\underline {\Gamma }_{o}$ with non-zero
integer assignment. Then $\upsilon $ labels an oriented, circular subgraph
$\ell ^{*\upsilon }   \subset {\underline {\Gamma}^{*}}_{o}$ whose vertices are in 1--1 correspondence with the incident
half-arcs to $\upsilon $ in $\underline {\Gamma }_{o}$. For the purpose of
describing this correspondence, keep in mind that each vertex in $\underline {\Gamma }_{o}$ has an even number
of incident half-arcs, half oriented
towards the vertex and half oriented in the outward direction. The
correspondence between the vertices of $\ell ^{*\upsilon }$ and the
incident half-arcs to $\upsilon $ is defined so that the following is true:
Let $\gamma $ denote an incident half-arc to $\upsilon $ with edge labels $e$
$ \in E_{ - }$ and $e' \in  E_{ + }$. If $\gamma $ is an inbound arc,
then $\gamma $ labels a vertex on $\ell ^{*\upsilon }$ and the
subsequent vertex in the oriented direction on $\ell ^{*\upsilon }$
is labeled by the arc in $\ell _{oe}$ that follows $\gamma $. On the
other hand, if $\gamma $ is an outbound arc, then the subsequent vertex on
$\ell ^{*\upsilon }$ is labeled by the arc in $\ell _{oe'}$
that precedes $\gamma $.

This correspondence has the following consequence: The vertices that are met
upon a circumnavigation of $\ell ^{*\upsilon }$ correspond
alternately to inbound and outbound incident half-arcs to the vertex
$\upsilon $. Moreover, if a given vertex corresponds to an inbound arc in
$\underline {\Gamma }_{o}$ with the edge pair label $(e, e')$, then the
subsequent vertex is the subsequent arc in $\ell _{oe}$ and the previous
vertex is the subsequent arc in $\ell _{oe'}$. On the other hand,
if the vertex corresponds to an outbound arc with label $(e, e')$, then the
subsequent vertex on $\ell ^{*\upsilon }$ corresponds to the
previous arc on $\ell _{oe'}$ and the previous vertex corresponds
to the previous arc on $\ell _{oe}$.

With the definition set, then each arc in $\ell ^{*\upsilon }$ is
labeled by an ordered pair whose first element is $\upsilon $ and whose
second is an incident edge to $o$. To elaborate, an incident edge $e \in
E_{-}$ labels the arc when its starting vertex (as defined by its given
orientation) corresponds to an inbound arc to $\upsilon $ in $\underline{\Gamma }_{o}$ whose $E_{ - }$ edge
label is $e$. An edge $e' \in  E_{ +}$ labels the arc when its starting vertex corresponds to an outbound arc in
$\underline {\Gamma }_{o}$ whose $E_{ + }$ label is $e'$.

The remaining arcs in ${\underline {\Gamma}^{*}}_{o}$ are labeled
by pairs of incident edges to $o$, and this set enjoys a 1--1 correspondence
with the arcs in $\underline {\Gamma }_{o}$ that respects their incident
edge labels.

All of this is designed so that the map from ${\underline {\Gamma }^{*}}_{o}$ to
$\underline {\Gamma }_{o}$ that collapses the circles
$\{\ell ^{*\upsilon }\}$ has the following properties: First, it
is an orientation preserving map that sends vertices to vertices. Second,
the map is 1--1 on neighborhoods of vertices with zero integer assignment. On
the other hand, the inverse image of any given vertex in $\underline {\Gamma}_{o}$ with non-zero
integer assingment is the circular subgraph that
carries its label. Third, the inverse image of any arc in $\underline {\Gamma }_{o}$ is
an arc that bears its same edge pair label. Finally,
each $\ell _{oe}$ in $\underline {\Gamma }_{o}$ has a canonical inverse
image as an embedded circular subgraph of ${\underline {\Gamma }^{*}}_{o}$. The latter is
obtained from the inverse image of the arcs that
comprise $\ell _{oe}$ by adding the arcs from the collection in $ \cup_{\upsilon }\ell^{ * \upsilon}$ whose
label contains the edge $e$. The inverse image of $\ell _{oe}$ in ${\underline {\Gamma}^{*}}_{o}$
is denoted subsequently as $\ell ^{*}_{oe}$.

With their orientations as specified above, each edge labeled loop ${\ell^{*}}_{o(\cdot )}$ defines a
homology class in $H_{1}({\underline {\Gamma}^{*}}_{o}$; $\mathbb{Z})$ as does each vertex labeled $\ell^{*(\cdot )}$.
In this regard, the collection of these classes,$\{[\ell^{*}_{oe}], [\ell ^{*\upsilon
}]\}$, generate the integral homology of ${\underline {\Gamma}^{*}}_{o}$ subject to the following single constraint:
\begin{equation}\label{eq2.22}
\sum_{e \in E_ + } [\ell ^{*}_{oe}] - \sum _{e \in E_ - }
[\ell ^{*}_{oe}] = \sum _{\upsilon }[\ell ^{*\upsilon }].
\end{equation}
This last identity provides a canonical class in the $\mathbb{Z}  \times\mathbb{Z}$ valued
cohomology of ${\underline {\Gamma}^{*}}_{o}$.
Indeed, this class, $\phi _{o}$, is defined so as to send any given $[\ell
^{*}_{oe}]$ to $Q_{e}$. Meanwhile, $\phi _{o}$ sends the class
$[\ell ^{*\upsilon }]$ to the pair $m_{\upsilon } P_{o}$, where $m_{\upsilon }$ is the integer
weight assigned to the vertex $\upsilon $ and $P_{o}$ denotes here the relatively prime integer pair that defines
$\theta_{o}$ via \eqref{eq1.8}.

Here is one way to view ${\underline {\Gamma }^{*}}_{o}$: The result
of attaching one boundary circle of a closed cylinder to each circle from
the collection $\{\ell ^{*}_{oe}\} \cup \{\ell
^{*\upsilon }\}$ results in a space whose interior is a
multipunctured sphere that is homeomorphic to $\Gamma _{o}$'s component of
every small but positive $\delta $ version of the $| \theta  -\theta _{o}|  < \delta$ portion of $C_{0}$.

\step{Part 7}
This part of the subsection describes how the canonical parametrization of
$K_{e'}$ changes when the given concatenating path is changed with no
change to the first traversed arc and no change either to the last arc or to
the direction of traverse on the last arc.

To begin the story, note first that when $\{\nu _{1}, \ldots , \nu_{N}\}$ is a concatenating path set,
then each if its elements
can be assigned a canonical inverse image in ${\underline {\Gamma }^{*}}_{o}$.
The inverse image of a given $\nu _{k}$ is denoted here as
${\nu ^{*}}_{k}$. The latter path lies in the version of ${\ell^{*}}_{o(\cdot )}$ that shares $\nu _{k}$'s incident edge
label, it starts with the inverse image of $\nu _{k}$'s starting arc and
ends with the inverse image of $\nu _{k}$'s ending arc. The next point to
make is that the union of the collection $\{{\nu ^{*}}_{k}\}_{1 \le k \le N}$ of such lifts defines a single directed
path whose starting point is a vertex on the version of $\ell ^{*(\cdot )}$ that projects to the starting vertex on
$\nu _{1}$ and whose end point is on the version of $\ell ^{*(\cdot )}$ that projects to
the ending vertex of $\nu _{N}$. In this regard, the various subpaths that
comprise this union are traversed in their given order in their given
direction. Let $\nu ^{*}$ denote the latter path.

Now, if $\nu $ and $\nu '$ are concatenating path sets that obey \eqreft2{18}, and
if they share both starting arcs and ending arcs, then the corresponding
lifts, $\nu ^{*}$ and $\nu'^{*}$, share the same starting
vertex and also share the same ending vertex. Thus, a closed loop, $\mu^*$, is defined in
${\underline {\Gamma}^{*}}_{o}$ by
traveling first on $\nu'^{*}$ from start to finish, and then
traveling on $\nu ^{*}$ in the `wrong' direction, thus from its
ending vertex back to its starting vertex.

The following lemma now summarizes the relation between the $\nu $ and $\nu'$
versions of the canonical parameterization for $K_{e'}$:

\begin{lemma}\label{lem:2.3}
The action of the integer pair $-\phi _{o}([\mu ^{*}])  \in   \mathbb{Z}\times   \mathbb{Z}$
on the canonical parameterization defined by $\nu $  gives the canonical parameterization defined by $\nu '$.
\end{lemma}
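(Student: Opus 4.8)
**The plan is to track how the canonical parametrization algorithm transforms under the change of concatenating path, reducing everything to the cocycle $\phi_o$ paired against the homology class of the difference loop.**

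The strategy is to analyze the parametrizing subroutine step by step. Recall from Part 2 of the subsection that one run of the subroutine, associated to a segment of a path that crosses from one component $K_e$ to an adjacent $K_{e'}$ across an arc of $\underline{\Gamma}_o$, changes the parametrization and the $\mathbb{R}$-valued lift according to the transition formulas \eqref{eq2.14} and \eqref{eq2.15}, specialized to the $N=(0,0)$ case. When a segment runs \emph{against} the orientation of the relevant $\ell_{o(\cdot)}$, one must read these formulas backwards, which introduces a compensating integer pair determined by the edge labels of the traversed arc. The first step is to make this bookkeeping precise: to each directed arc $\tau$ of ${\underline{\Gamma}^*}_o$ traversed by the lifted path, assign the integer pair contribution to the running $\mathbb{Z}\times\mathbb{Z}$ action that the subroutine accumulates when that arc is crossed. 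I would show that this contribution is exactly $\pm\phi_o$ evaluated on the arc, with the sign according to whether the traverse agrees with or opposes the arc's orientation in ${\underline{\Gamma}^*}_o$. For the arcs of ${\underline{\Gamma}^*}_o$ that project to arcs of $\underline{\Gamma}_o$ this follows directly from inspecting \eqref{eq2.15}; for the arcs lying in some $\ell^{*\upsilon}$ (the `blown-up' arcs over a vertex with nonzero weight $m_\upsilon$) one uses the structure of the correspondence set up in Part 6, together with the fact noted there that $\phi_o$ sends $[\ell^{*\upsilon}]$ to $m_\upsilon P_o$.

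Granting this, the canonical parametrization of $K_{e'}$ produced from a concatenating path set $\nu$ is obtained from the input parametrization of $K_e$ by the action of the integer pair $-\phi_o(\nu^*)$, where $\phi_o(\nu^*)$ denotes the sum of these per-arc contributions along the lifted directed path $\nu^*$; since $\phi_o$ is a cocycle, this sum depends only on $\nu^*$ as a chain, i.e.\ on $\phi_o$ paired against the relative class of $\nu^*$ in ${\underline{\Gamma}^*}_o$ rel endpoints. Now if $\nu$ and $\nu'$ are two concatenating path sets obeying \eqreft2{18} that share their starting arc and their ending arc (with the same direction on the last arc), then by Part 6 the lifts $\nu^*$ and $\nu'^*$ start at the same vertex of ${\underline{\Gamma}^*}_o$ and end at the same vertex, so $\mu^* \equiv \nu'^* \cdot (\nu^*)^{-1}$ is a genuine closed loop, and $\phi_o(\nu'^*) - \phi_o(\nu^*) = \phi_o([\mu^*])$. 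Therefore the $\nu'$-canonical parametrization is obtained from the $\nu$-canonical one by the action of $-\phi_o(\nu'^*)+\phi_o(\nu^*) = -\phi_o([\mu^*])$ via the $e'$ version of \eqref{eq2.12}--\eqref{eq2.13}, which is the assertion of the lemma. (One should double-check that a full loop of the lift around some $\ell^{*\upsilon}$ or $\ell^*_{oe}$ really does act by the $\mathbb{Z}\times\mathbb{Z}$ translation predicted by $\phi_o$ on that class, and that the constraint \eqref{eq2.22} is consistent with this — it is, precisely because both sides of \eqref{eq2.22} are killed by no part of $\phi_o$ beyond what the identity records.)

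The main obstacle I anticipate is the careful treatment of the blown-up arcs over the weighted vertices: verifying that traversing the boundary circle $\ell^{*\upsilon}$ once contributes exactly $m_\upsilon P_o$ to the accumulated action. This requires combining the combinatorial description of $\ell^{*\upsilon}$'s vertices (alternating inbound/outbound half-arcs, with the `next vertex' rule given in Part 6) with the analytic content of \eqref{eq2.4} — namely the way the functions $a$ and $w$ in a preferred parametrization behave as $\sigma \to \theta_o$ near a singular versus a missing point — and then summing the per-segment shifts from \eqref{eq2.15} around the circle. The bookkeeping of which $\hat v_e$ lift is being used at each stage, and keeping the $\pm 2\pi$ ambiguities from changing lifts (the remark at the end of Part 1, and Case 4 of Part 5) from contaminating the computation, is where the argument is most delicate. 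Once the per-arc contribution is correctly identified as $\pm\phi_o$ on each arc, the rest is the formal manipulation with the cocycle that I sketched above.
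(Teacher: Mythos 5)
Your proposal tracks the paper's strategy closely: reduce the comparison of the $\nu$- and $\nu'$-canonical parametrizations of $K_{e'}$ to the effect of the parametrizing algorithm on the closed loop $\mu^*$ in ${\underline{\Gamma}^*}_o$, argue that this effect depends only on the homology class, and then compute on the generating set $\{[\ell^*_{oe}],[\ell^{*\upsilon}]\}$, with the $\ell^{*\upsilon}$ contribution (worked out via \eqref{eq2.4}) being the delicate analytic step, which you correctly flag. The packaging differs somewhat: you want to exhibit a per-arc $\mathbb{Z}\times\mathbb{Z}$-valued cochain on ${\underline{\Gamma}^*}_o$ representing $\phi_o$ and sum it along the lifted path, whereas the paper introduces the map $N_*$ on the set $\mathcal{C}$ of based loops and proves directly that it is additive under concatenation, compatible with reversal, and insensitive to inserted backtracks (Steps 3--4 of the paper's proof), so that it factors through $H_1({\underline{\Gamma}^*}_o;\mathbb{Z})$, and then evaluates it on generators (Steps 4--5). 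Your cochain framing is conceptually tidy but has a looseness worth fixing: $\phi_o$ is a cohomology class, not a cochain, so it does not literally ``evaluate on an arc,'' and the claim that the per-arc contribution ``follows directly from inspecting \eqref{eq2.15}'' glosses over the fact that a well-defined per-arc quantity requires fixing reference data (a parametrization and lift for each $K_{(\cdot)}$), relative to which the contribution is a representing cochain only up to coboundary. This ambiguity washes out once you pair against the closed loop $\mu^*$, so the argument is not broken, but the paper's $N_*$ bookkeeping avoids committing to a cochain and thereby sidesteps the issue; if you keep the cochain picture you should make the reference choices explicit.
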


The remainder of this subsection is occupied with the

\begin{proof}[Proof of \fullref{lem:2.3}]
The proof is given in six steps.

\substep{Step 1}
Write $\nu ' = \{{\nu _{1}}', \ldots, {\nu _{N'}}'\}$ and let $\gamma ' \subset   \ell_{oe'}$
denote the directed path that first crosses the final arc
on ${\nu _{N'}}'$ from start to finish in the direction used by $\nu'$, and then
recrosses this arc in the opposite direction. Label $\gamma '$
with the incident edge $e'$. With $\gamma '$ understood, introduce the
concatenated path set given by the following ordered set of constituent
paths:
\begin{equation}\label{eq2.23}
\bigl\{{\nu _{1}}', \ldots , {\nu _{N'}}', \gamma ', {\nu_{N}}^{- 1}, \ldots , {\nu _{2}}^{- 1}, \nu _{1}
\circ \nu _{1}^{- 1}, \nu _{2}, \ldots ,\nu _{N}\bigr\}.
\end{equation}
Here, $\nu _{1} \circ \nu _{1}^{- 1}$ denotes the path in
$\ell _{oe}$ obtained by first traveling $\nu _{1}$ in reverse from its
end to its start, and then returning on $\nu _{1}$ back to its end vertex.
The canonical parameterization that the path set in \eqref{eq2.23} defines for
$K_{e'}$ is the same as that assigned when using the path $\nu '$.

\substep{Step 2}
Let $\tau $ denote the first arc in $\nu_{1}$, viewed as a directed arc that starts at the distinguished vertex
$\nu $. Let $\hat{e}$ denote the edge that labels the arc $\tau $ with $e$. Now,
let $\gamma $ denote the following directed path in $\ell_{o{\hat{e}}}$: Start at the ending vertex on $\tau $ and
traverse this arc in reverse so as to return to the distinguished vertex.
Then, reverse direction and retrace $\tau $ to return its end vertex. Thus,
$\gamma =\tau  \circ \tau ^{- 1}$. Label $\gamma $ with the edge $\hat{e}$.

This labeled, directed path $\gamma $ appears in the concatenating path set
whose ordered constituent paths are
\begin{equation}\label{eq2.24}
\bigl\{{\nu _{1}}', \ldots , {\nu _{N'}}', \gamma ', {\nu _{N}^{- 1}}, \ldots , {\nu _{2}}^{- 1}, {\nu _{1}}^{ - 1}, \gamma \bigr\}.
\end{equation}
Running the parametrizing algorithm on the set in \eqref{eq2.24} provides as
output a new pair of parameterization of $K_{e}$ and lift to $\mathbb{R}$ of
the $\mathbb{R}/2\pi \mathbb{Z}$ coordinate of the distinguished point on the
$\sigma =\theta _{o}$ circle in the parametrizing cylinder. The
latter pair is obtained from the former by the action of some integer pair
as depicted in \eqref{eq2.12}, \eqref{eq2.13} and \eqref{eq2.16}.

This integer pair is relevant because the conclusions of Step 1 together
with the Case 3 story for \eqreft2{21} from Part 5 imply that the canonical
parametrization of $K_{e'}$ that is obtained using the concatenating
path $\nu '$ is obtained from the canonical parametrization that is obtained
using the concatenating path $\nu $ by the action of this same integer pair
via the $e'$ version of \eqref{eq2.12} and \eqref{eq2.13}.

\substep{Step 3}
Let $\mathcal{C}$ denote the collection of
concatenating path sets that obey the $e' = e$ version of \eqreft2{18} and have
$\tau$ as both first and last arc. For example the
set in \eqref{eq2.24} lies in $\mathcal{C}$. Note that each $\mu    \in   \mathcal{C}$
defines an integer pair as just described in Step 2, this denoted below by
$N_{*}(\mu )$. To elaborate, any given $\mu    \in   \mathcal{C}$ can
be used with the parametrizing algorithm to define a new parameterization
for $K_{e}$ and a corresponding lift of the $\mathbb{R}/(2\pi \mathbb{Z})$
coordinate of the distinguished point on the $\sigma =\theta _{o}$
circle of the parametrizing domain. The latter pair is obtained from the
original by the action of $N_{*}(\mu )$ as depicted in \eqref{eq2.12},
\eqref{eq2.13} and \eqref{eq2.16}.

The assignment $\mu \to  N_{*}(\mu )$ defines a map from $\mathcal{C}$ to $\mathbb{Z}  \times   \mathbb{Z}$ and
the task is to identify this map.
It is useful in this regard to introduce various structures of an algebraic
nature to $\mathcal{C}$ of which the first is the product operation, $\wp:
\mathcal{C}  \times   \mathcal{C}   \to   \mathcal{C}$, given by
\begin{equation}\label{eq2.25}
\wp (\mu ', \mu ) = \bigl\{\mu _{1}', \ldots, \mu _{N'}',
\mu _{1}, \ldots , \mu _{N}\bigr\}.
\end{equation}
The conclusions from Step 2 imply that
\begin{equation}\label{eq2.26}
N_{*}(\wp (\mu , \mu ')) = N_{*}(\mu ) + N_{*}(\mu').
\end{equation}

The set $\mathcal{C}   \to   \mathcal{C}$ also admits a self map, $t$, that changes
the sign of $N_{*}(\cdot)$. To elaborate, let $\gamma _{0}$
denote the directed path that starts at the distinguished vertex, runs along
$\tau $ to its end, and the returns to the distinguished vertex by reversing
direction on $\tau $. Label $\gamma _{0}$ with the edge $e$. Now, let $\mu = \{\mu _{1}, \ldots,\mu _{N}\}\in
\mathcal{C}$ denote a given data set and set
\begin{equation}\label{eq2.27}
t(\mu ) = \bigl\{\gamma _{0}, {\mu _{N}}^{- 1}, {\mu _{N -1}}^{- 1}, \ldots , {\mu _{1}}^{ - 1},
\gamma \bigr\},
\end{equation}
The parametrizing algorithm finds that $N_{*}(t(\mu ))= -N_{*}(\mu )$.

By virtue of \eqref{eq2.26} and \eqref{eq2.27}, the set $\mathcal{C}$ contains elements with
$N_{*}(\cdot ) = (0, 0)$. Perhaps the simplest such element is the
concatenating path set $\{\gamma , \gamma '\}$ with $\gamma$ and
$\gamma '$ defined as in \eqref{eq2.27}.

The next point to make is that the assignment $N_{*}\co  \mathcal{C}   \to
  \mathbb{Z}  \times   \mathbb{Z}$ is insensitive to certain modifications of a
given concatenating path set. To elaborate, suppose again that $\mu =
\{\mu _{1}, \ldots ,\mu _{N}\} \in   \mathcal{C}$. Suppose, in
addition that $j < N$ has been specified. Let $e_{j}$ denote the version of
$\ell _{o(\cdot )}$ that contains $\mu _{j}$ and let $\tau _{1}$
denote the final arc in $\mu _{j}$ viewed here as a directed arc. Use
$\hat{e}_{j}$ to denote the incident edge to $o$ that is partnered with $e_{j}$
in the label of $\tau _{1}$. Let $\{\nu _{1}, \ldots ,\nu_{N'}\}$ now denote a concatenating path set with the property
that the first arc in $\nu _{1}$ has label $\hat{e}_{j}$ and directed first
arc equal to $\tau _{1}$. Let $\tau _{N}$ denote the final arc in $\nu_{N}$, also viewed
as a directed arc. With the concatenations ${\tau_{N}}^{- 1} \circ \tau _{N}$ and $\tau _{1} \circ {\tau_{1}}^{- 1}$
suitably labeled by edges, the set
\begin{equation}\label{eq2.28}
\bigl\{\mu _{1}, ... , \mu _{j}, \nu _{1},... ,\nu _{N},
({\tau _{N}}^{- 1} \circ \tau _{N}),
{\nu _{N}^{- 1}}, ... , \nu _{1}^{- 1}\!, (\tau _{1} \circ {\tau _{1}^{- 1}}), \mu _{j + 1}, ... ,\mu _{N}\bigr\}.
\end{equation}
is an element in $\mathcal{C}$. Running the parametrizing algorithm find that
$N_{*}$ has the same value on $\mu $ as it has on the set in \eqref{eq2.28}.

\substep{Step 4}
Let $\mu _{e}   \subset   \ell_{oe}$ denote the path that starts at the vertex $\upsilon $, runs out
along $\tau $ and continues once around $\ell _{oe}$ so as to end after
crossing $\tau $ a second time. The ordered pair $\{\mu _{e}, \gamma\}$ defines a concatenating path set in
$\mathcal{C}$. The considerations in Case 4 of the preceding part of this subsection imply that $N_{*}$ has
value $\pm Q_{e}$ on this element, with the $-$ sign occurring if and only
if travel along $\tau $ from $\upsilon $ defines the oriented direction in
$\ell_{oe}$.

As will now be explained, every other version of $Q_{(\cdot )}$ whose
label is an incident edge to $o$ is a value of $N_{*}$ in $c$. To
see how this comes about, let $e' \ne e$ denote a given incident edge to $o$
and let $\{\nu _{1}, \ldots ,\nu _{N}\}$ denote a
concatenating path set with the following properties: First, $\nu _{1}$ is
labeled by $e$, it starts at $\upsilon $ and its first arc is $\tau $.
Meanwhile, $\nu _{N}$ is not labeled by $e'$ but its last arc lies in $\ell_{oe'}$
and the direction along $\nu _{N}$ on this last arc
defines the given orientation of $\ell _{oe'}$. Let $\mu_{e'}$ denote the path in $\ell _{oe'}$ that starts with
this last arc in $\nu _{N}$ continues once around $\ell _{oe'}$
to the end of this last arc, and then reverses direction to retrace this
last arc back to its starting vertex. Label $\mu _{e'}$ with the
edge $e'$.

The ordered set $\{\nu _{1}, \ldots ,\nu _{N}, \mu
_{e'}, {\nu _{N}^{- 1}}, \ldots ,{\nu _{1}}^{ - 1},\gamma \}$ is an element in $\mathcal{C}$, and a
run of the parametrizing
algorithm finds that $N_{*}$ has the desired value $-Q_{e'}$ on
this element.

\substep{Step 5}
The results from the preceding steps suggest that the value of $N_{*}$ on any given
$\mu    \in   \mathcal{C}$ depends only on the homology class of the path in $\underline {\Gamma}_{o}$ that
is defined by the union of $\mu $'s constituent paths.
However, this conclusion is wrong when $\underline {\Gamma }_{o}$ contains
vertices that correspond to ends of $C_{0}$.

That such is the case can be seen most readily when the vertex at the end of
$\tau $ is a bivalent vertex. Let $\upsilon '$ denote the latter vertex and
let $\tau _{1}$ denote the arc in $\ell _{oe}$ that shares $\upsilon '$
with $\tau $. Thus, both $\tau $ and $\tau _{1}$ are labeled by the edge
pair that consists of $e$ and $\hat{e}$. This understood, then
\begin{equation}\label{eq2.29}
\bigl\{\tau _{1} \circ \tau , \tau  \circ \tau ^{- 1}
\circ {\tau _{1}^{- 1}}\bigr\}
\end{equation}
is a concatenating path set in $\mathcal{C}$ if the first constituent path is
labeled by $e$ and the second by $\hat{e}$. As is explained next, a run of the
parametrizing algorithm on this path finds that $N_{*}$ assigns it either $m_{\upsilon '} P_{o}$ or
$-m_{\upsilon'} P_{o}$. Here, $m_{\upsilon '}$ is the integer weight
accorded $\upsilon '$ as a vertex in $\underline {\Gamma }_{o}$, and
$P_{o}$ is the relatively prime integer pair that is defined by $\theta_{o}$ via \eqref{eq1.8}.
Meanwhile, the + sign appears if and only if either $e\in  E_{ - }$ and travel from $\upsilon $ along $\tau $ is
in the oriented direction in $\ell _{oe}$, or else $e \in  E_{ + }$ and travel from
$\upsilon $ along $\tau $ goes against the orientation from $\ell _{oe}$.

To see why the parametrizing algorithm must give $\pm m_{\upsilon'}P_{o}$, it
is necessary to go back to \eqreft25 for the cases of $e$
and $\hat{e}$. Let $R$ be some very large number, chosen so that $|s| $
has value $R$ on the end $E\subset C_{0}$ that is labeled by the vertex
$\upsilon '$. Write $P_{o} = (p_{o}, {p_{o}}')$ and keep in mind that the
respective integrals of $\frac{1 }{ {2\pi }} dt$ and $\frac{1}{ {2\pi }}d\varphi $ about the $|s| = R$
slice of $E$ are $|m_{\upsilon '}| p_{o}$ and $| m_{\upsilon'}| p_{o}$ granted that the latter are oriented by the
pull-back of the 1--form $-\alpha $ with $\alpha $ depicted in \eqref{eq1.3}.

As can be seen from \eqref{eq2.4}, there are two points of the $|s| = R$
circle in $E$ that lie on $\Gamma _{o}$, and one will lie in $\tau $ and the
other in $\tau _{1}$ when $R$ is large. Label these points $z$ and $z'$. Given
that $K_{{\hat{e}}}$ is parametrized using the single element
concatenating path set $\{\tau _{1} \circ \tau \}$, it then
follows that the respective values of the $\mathbb{R}$--valued functions $\hat{v}_{e}$
and $\hat {v}_{e'}$ are very close at the respective
points in the $K_{e}$ and $K_{{\hat{e}}}$ parametrizing cylinders that
map to $z$. They also have very similar values at the respective points that
map to $z'$. The values of $w_{e}$ and $w_{e'}$ at the respective
points in the $K_{e}$ and $K_{e'}$ parametrizing cylinders that map
to $z'$ are also nearly equal since these points are used in \eqref{eq2.14} and \eqref{eq2.15}
to define the parameterization of $K_{e'}$.

However, given these near equalities, and given that the pair $(\frac{1}{ {2\pi }} dt, \frac{1 }{ {2\pi }}d\varphi )$
has a non-zero integral in $\mathbb{Z}  \times   \mathbb{Z}$ around the $|s| = R$
circle, the form of \eqreft25 precludes nearly equal values for $w_{e}$ and
$w_{e'}$ at the respective points in their parametrizing cylinders
that map to $z$. Indeed, the difference in their values is exactly accounted
for by the $\mathbb{Z}  \times   \mathbb{Z}$ action of the appropriately signed
version of $m_{\upsilon '} P_{o}$ on the given pair of $K_{e}$
parameterization and lift to $R$ of the $R/(2\pi Z)$ coordinate of $\upsilon$.

The analysis just done for this simple case can be repeated in the case that
the vertex at the end of $\tau $ has some $2(k+1)$ incident half-arcs. To
elaborate, each such arc can be directed and then the set suitably labeled
as $\{\tau , \tau _{1}, \ldots , \tau _{k}\}$ so that
\begin{equation}\label{eq2.30}
\bigl\{\tau _{1} \circ \tau , \tau _{2} \circ {\tau_{1}}^{- 1}, \ldots , \tau _{k} \circ {\tau _{k -1}}^{- 1}, \tau  \circ \tau ^{- 1} \circ
{\tau_{k}}^{- 1}\bigr\}
\end{equation}
defines an element in $\mathcal{C}$. For example, $\tau _{1}$ is the incident
half-arc to $\upsilon '$ in $\ell _{oe}$ that follows $\tau $ when
traveling along $\tau $ in $\ell _{oe}$. Meanwhile, $\tau _{2}$ is the
incident half-arc that follows $\tau _{1}^{ - 1}$ when traveling the
latter in the indicated direction on the version of $\ell _{o(\cdot )}$ whose edge shares with
$e$ the label of $\tau _{1}$. The arc $\tau_{3}$ is defined by the analogous rule when $\tau _{2}$ is used in lieu
of $\tau _{1}$; in general, $\tau _{k}$ for $k > 3$ is defined by
successive applications of this same rule using $\tau _{k - 1}$ in lieu of $\tau _{1}$.

The parametrizing algorithm can be run using \eqref{eq2.30} and then \eqreft25 can be
employed to prove that $N_{*}$ again has value $\pm m_{\upsilon'}P_{o}$ with the $+$ sign appearing under
the same circumstances as in case when $\upsilon '$ is bivalent.

The story is much the same for a null-homotopic path in $\underline {\Gamma}_{o}$ of the following sort: Let
$\upsilon '$ denote any given arc in $\underline {\Gamma }_{o}$ and let $2(k+1)$ denote the number of its
incident half-arcs. Direct and label these as $\{\tau ',\tau _{1},
\ldots , \tau _{k}\}$ by mimicking the scheme used in \eqref{eq2.30} with $\tau'$ replacing $\tau $.
Now, let $\{\nu _{1}, \ldots , \nu _{N}\}$ denote a concatenating path set with the following properties: The path
$\nu_{1}$ starts with $\tau $ and is labeled by $e$. Meanwhile, the final arc on
$\nu _{N}$ is $\tau '$, but the edge label of $\nu _{N}$ is not that of
both $\tau '$ and $\tau _{1}$. Given these properties, the ordered set
\begin{equation}\label{eq2.31}
\bigl\{\nu _{1}, \ldots , \nu _{N}, \tau _{1} \circ \tau ',
\tau _{2} \circ {\tau _{1}}^{- 1}, \ldots , \tau _{k}
\circ {\tau _{k - 1}}^{- 1}, \tau ^{- 1} \circ {\tau_{k}}^{- 1}, {\nu _{N}}^{- 1}, \ldots , {\nu_{1}}^{- 1}, \gamma \bigr\}
\end{equation}
defines an element in $\mathcal{C}$ on which $N_{*}$ has value $\pm m_{\upsilon }P_{o}$ with the
$\pm $ determined as before but for the replacement of $\tau '$ by $\tau $.

\substep{Step 6}
As explained previously, a concatenating
path set defines a canonical path in the blow up graph ${\underline {\Gamma}^{*}}_{o}$. Indeed, each constituent
subpath is specified as a concatenated union of arcs that all lie in a single version of $\ell_{o(\cdot )}$.
Thus, each such path has a canonical lift to $\ell ^{*}_{e}   \subset {\underline {\Gamma }^{*}}_{o}$ to give a
directed path of concatenated arcs. Moreover, consecutive paths from the
given path set lift to paths in ${\underline {\Gamma }^{*}}_{o}$ so
that their union defines a directed path with travel starting on the lift of
first path and continuing on that of the second.

When $\mu    \in   \mathcal{C}$, let $\mu ^{*}$ denote its lift. Let
$\tau ^{*}   \subset {\underline {\Gamma }^{*}}_{o}$
denote the inverse image of the first arc in $\mu $, and thus the first arc
in $\mu ^{*}$. This is also the final arc in $\mu ^{*}$.
Thus, travel on $\mu ^{*}$ up to but not including the final
appearance of $\tau ^{*}$ defines a closed loop, $\hat {\mu },
\subset {\underline {\Gamma }^{*}}_{o}$, that starts and ends at
the starting vertex of $\tau ^{*}$.

By virtue of what has been said in the previous steps, the value of $N_{*}$ on $\mu $ is minus the value
of the homomorphism $\phi _{o}$ on $\hat{\mu }$, and this last conclusion implies the assertion of \fullref{lem:2.3}.
\end{proof}

\setcounter{section}{2}
\setcounter{equation}{0}
\section{The map from $\mathcal{M}$ to $O_{T}$}\label{sec:3}

Suppose for the moment that $\hat{A}$ is an asymptotic data set
with $N_{ -}+\hat {N}+\text{\c{c}}_{ - }+\text{\c{c}}_{ + } = 2$.
Granted that $\hat{A}$ also obeys the conditions in
\eqreft1{16} that guarantee a non-empty version of
$\mathcal{M}_{\hat{A}}$, then \fullref{thm:1.2} asserts a
diffeomorphism between $\mathcal{M}_{\hat{A}}$ and the product of
$\mathbb{R}$ and a space denoted by $\hat{O}^{\hat{A}}/\Aut
^{\hat{A}}$. Here, $\hat{O}^{\hat{A}}$ is the part of
$O^{\hat{A}}$ from \eqref{eq1.21} where $\Aut ^{\hat{A}}$ acts
freely. The purpose of this subsection is to describe one such
map from $\mathcal{M}_{\hat{A}}$ to $\mathbb{R}\times
\hat{O}^{\hat{A}}/\Aut ^{\hat{A}}$. The map given here is the
prototype for a suite of such maps that are used in subsequent
sections to describe other components of the space moduli space
of multiply punctured, pseudoholomorphic spheres.

\subsection{The more general context}\label{sec:3a}

If $\hat{A}$ is an asymptotic data set, reintroduce from
\fullref{sec:1a} the set $\Lambda _{\hat{A}}$ of angles in
$[0, \pi ]$. Unless noted to the contrary, restrict attention in the
remainder of this section to asymptotic data sets with the following
properties:

\qtaubes{3.1}
\textsl{There is a unique element in $\hat{A}$ that supplies the minimal angle in
$\Lambda _{\hat{A}}$, and there is also a unique element in $\hat{A}$
that supplies the maximal angle in $\Lambda _{\hat{A}}$.}
\endqtaubes

To make this explicit, note that $\hat{A}$ has a unique element that gives
$\Lambda _{\hat{A}}$'s minimal angle if and only if the one of
following is true:

\itaubes{3.2}
\textsl{There are no $(1,\ldots)$ elements in $\hat{A}$, the integer
$\text{\rm\c{c}}_{ + }$ is zero, and there is a unique
$(0,-,\ldots)$ element whose integer pair gives the minimal angle
in $\Lambda _{\hat{A}}$ via \eqref{eq1.8}.}

\item
\textsl{There are no $(1,\ldots)$ elements in $\hat{A}$ and $\text{\rm\c{c}}_{ + } > 0$.}

\item
\textsl{There is a unique $(1,\ldots)$ element in $\hat{A}$ and $\text{\rm\c{c}}_{ + } = 0$.}
\end{itemize}

An analogous set of conditions must hold when there is a unique element in
$\hat{A}$ that supplies the maximal angle to $\Lambda _{\hat{A}}$. Note
that an asymptotic data set with $N_{ - }+\hat {N}+\text{\c{c}}_{ - }+\text{\c{c}}_{ + } = 2$
automatically obeys \eqreft31.

As is explained next, the data set $\hat{A}$ can be used to define a
linear graph, $T^{\hat{A}}$, much like that introduced in
\fullref{sec:1b} in the case $N_{ - }+\hat {N}+\text{\c{c}}_{
- }+\text{\c{c}}_{ + } = 2$. As in \fullref{sec:1b}, the graph
$T^{\hat{A}}$ is viewed as a closed subinterval in $[0, \pi]$ whose
vertices include its endpoints; the edges of $T^{\hat{A}}$ are the
closed intervals that run from one vertex to the next. The vertices
of $T^{\hat{A}}$ are in 1--1 correspondence with the angles in
$\Lambda _{\hat{A}}$ and this correspondence is such that the angle
of any vertex in $\Lambda _{\hat{A}}$ is also its angle in $[0,
\pi]$.

The edges of $T^{\hat{A}}$ are labeled by integer pairs that are
determined using the rules that follow. The notation is such that when $e$
denotes an edge, then $Q_{e} = (q_{e}, {q_{e}}')$ denotes its integer pair
label.

\itaubes{3.3}
\textsl{Let $e$ denote the edge that contains the minimum angle vertex.}

\begin{enumerate}\leftskip 25pt
\item[\rm(a)]
\textsl{ If this angle is positive, then $Q_{e}$ is the integer pair from the
 $(0,-,\ldots)$ element in $\hat{A}$ that supplies this minimal angle to
 $\Lambda _{\hat{A}}$.}

\item[\rm(b)]
\textsl{If the miminal angle in $\Lambda _{\hat{A}}$ is zero and $\text{\rm\c{c}}_{ + } > 0$,
then $Q_{e} = (0, -\text{\rm\c{c}}_{ +})$.}

\item[\rm(c)]
\textsl{If the minimal angle in $\Lambda _{\hat{A}}$ is zero and
$\text{\rm\c{c}}_{ + } = 0$, then $Q_{e} = -\varepsilon P$ given that the element from
$\hat{A}$ that supplies this angle to $\Lambda _{\hat{A}}$ has the form
$(1, \varepsilon , P)$.}
\end{enumerate}
\item
\textsl{Let $o$ denote a bivalent vertex in $T^{\hat{A}}$ and let $e$ and $e'$
denote its incident edges with $e$ connecting $o$ to a vertex with angle less
than that of $o$. Then $Q_{e} = Q_{e' } + P_{o}$ where $P_{o}$ is obtained by
subtracting the sum of the integer pairs from the $(0,-,\ldots)$ elements from
$\hat{A}$ that supply $o$'s angle to $\Lambda _{\hat{A}}$ from the some of the
integer pairs from the $(0,+,\ldots)$ elements from $\hat{A}$ that supply $o$'s angle to
$\Lambda _{\hat{A}}$.}
\end{itemize}

In \cite[Theorem~1.3]{T3} asserts that $\mathcal{M}_{\hat{A}}$ is
non-empty if and only if the conditions in \eqreft1{16} hold for
the current version of $T^{\hat{A}}$. Assume for the remainder of
this subsection that $\hat{A}$ obeys \eqreft31 and that the
conditions of \eqreft1{16} hold for $T^{\hat{A}}$. What follows
describes a generalization of the space $O^{\hat{A}}$ that appears
in \eqref{eq1.21}. This space is then used to parametrize a certain
subspace in $\mathcal{M}_{\hat{A}}$.

To begin the description of $O^{\hat{A}}$, introduce $\hat{A}_{*} \subset \hat{A}$
to denote the subset of 4--tuples whose integer pairs
define the non-extremal angles in $\Lambda _{\hat{A}}$. Associate
to each $u  \in \hat{A}_{* }$ a copy, $\mathbb{R}_{u}$ of the affine
line and let
\begin{equation}\label{eq3.4}
\mathbb{R}^{\hat{A}}  \subset  \Maps(\hat{A}_{* }; \mathbb{R})
\end{equation}
denote the subspace of points where distinct $u$ and $u'$ from
$\hat{A}_{* }$ have distinct images in $\mathbb{R}/(2\pi
\mathbb{Z})$ in the case that their integer pairs defined the
same angle via \eqref{eq1.8}. Let $\mathbb{R}_{ - }$ denote an
auxiliary copy of $\mathbb{R}$.

As is explained next, there is an action of $\Maps(\hat{A}_{* }; \mathbb{Z})$ on
\begin{equation}\label{eq3.5}
\mathbb{R}_{ - }  \times \Maps(\hat{A}_{*}; \mathbb{R})
\end{equation}
that preserves $\mathbb{R}_{ - }  \times  \mathbb{R}^{\hat{A}}$.
This action is trivial on $\mathbb{R}_{ - }$. To describe the
action on the second factor, let $u  \in \hat{A}_{* }$ and let
$z_{u}$ denote the corresponding generator of $\Maps(\hat{A}_{*
}; \mathbb{Z})$. If $x  \in \Maps(\hat{A}_{* }; \mathbb{R})$,
then $(z_{u}\cdot x)(\hat{u}) =x(\hat{u})$ if the integer pair
from $\hat{u}$ defines an angle via \eqref{eq1.8} that is less
than that defined by the integer pair from $u$. Such is also the
case when the two angles are equal except if $\hat{u} = u$. If
$\hat{u} = u$, then $(z_{u}\cdot x)(u) = x(u) - 2\pi $. When the
integer pair from $\hat{u}$ defines an angle that is greater than
that of the pair from $u$, then $(z_{u}\cdot x)(\hat{u})$ is
obtained from $x(\hat{u})$ by adding
\begin{equation}\label{eq3.6}
-2\pi   \varepsilon _{u}  \frac{{p_u} ' p_{\hat{u}} - p_u
{p_{\hat{u}}} ' }{q_{\hat{e}} ' p_{\hat{u}} - q_{\hat{e}} {p_{\hat{u}}} ' },
\end{equation}
where the notation is as follows: First, $\hat{e}$ labels the edge
that contains $\hat{u}$ as its largest angle vertex and $(q_{\hat{e}
}, {q_{\hat{e} }}')$ is the integer pair that is associated to
$\hat{e}$. Meanwhile, $(p_{u}, {p_{u}}')$ is the integer pair from $u$
and $(p_{\hat{u} }$, $p_{\hat{u} }')$ is that from $\hat{u}$.
Finally, $\varepsilon _{u} \in  \{\pm \}$ is the second entry in the
4--tuple $u$.

Also needed is the action of $\mathbb{Z}  \times   \mathbb{Z}$ on
the space in \eqref{eq3.5} that is described just as in Step 2 of
Part 1 in \fullref{sec:1a} for the latter's version of
$\mathbb{R}_{ - } \times \mathbb{R}^{\hat{A}}$. This $\mathbb{Z}
\times \mathbb{Z}$ action commutes with that of $\Maps(\hat{A}_{* };
\mathbb{Z})$. This understood, define
\begin{equation}\label{eq3.7}
O^{\hat{A}}\equiv  [\mathbb{R}_{ - } \times
\mathbb{R}^{\hat{A}}]/[(\mathbb{Z}
\times \mathbb{Z})\times \Maps(\hat{A}_{* }; \mathbb{Z})].
\end{equation}
As is explained next, $O^{\hat{A}}$ is a smooth manifold.

To see that $O^{\hat{A}}$ is smooth, it is sufficient to verify
that each point in \eqref{eq3.5} has the same stabilizer under the action of the
group in \eqref{eq3.7}. As is explained below, this stabilizer is a copy of $\mathbb{Z}$
whose generator projects to $\Maps(\hat{A}_{ + }; \mathbb{Z})$ as $-\sum_{u} z_{u}$
and projects to $\mathbb{Z} \times \mathbb{Z}$ as the integer
pair $Q_{e}$ with $e$ here denoting the edge in $T^{\hat{A}}$ that
contains the smallest angle vertex. To prove such is the case, suppose that
$\tau\in (\tau _{ - }, x)$ is a point in the space depicted in
\eqref{eq3.5}, and that $g = (N, z)$ fixes $\tau $. As $\tau _{ - }$ is fixed, the
pair $N$ must have the form $r_{ - }Q_{e}$ where $Q_{e}$ here denotes the
integer pair that is assigned to the edge with the smallest angle vertex and
where $r_{ - }$ is a fraction whose denominator is the greatest common
divisor of these same two integers.

To proceed, suppose next that $\hat{u}  \in \hat{A}_{* }$ supplies
an integer pair that gives the second smallest angle in $\Lambda
_{\hat{A}}$. Since the pair $Q_{\hat{e} } = (q_{\hat{e}
},{q_{\hat{e} }}')$ in the relevant version of \eqref{eq1.19} is the
same as the just defined $Q_{e}$, it follows that $g$ fixes both
$\tau _{ - }$ and $x(\hat{u})$ if and only if $z(\hat{u}) = -r_{ -
}$. Note that this means that $r_{ - }$ is an integer since
$z(\hat{u})$ is an integer.

Suppose next that $\hat{u}  \in \hat{A}_{* }$ supplies an integer pair
that gives the third smallest angle in $\Lambda _{\hat{A}}$. The
corresponding $x(\hat{u})$ is then fixed if and only if
\begin{equation}\label{eq3.8}
z(\hat{u}) + 2\pi\sum _{u} z(u) \varepsilon _{u}
\frac{{p_u} ' p_{\hat{u}} - p_u {p_{\hat{u}}} ' }{{q_{\hat{e}}} ' p_{\hat{u}} -
q_{\hat{e}} p_{\hat{u}} ' } + r_{ - }\frac{q_e ' p_{\hat{u}} - q_e
p_{\hat{u}} ' }{{q_{\hat{e}}} ' p_{\hat{u}} - q_{\hat{e}} p_{\hat{u}} ' } = 0,
\end{equation}
where the sum is over the elements in $\hat{A}_{* }$ whose integer pair
defines the second smallest angle in $\Lambda _{\hat{A}}$. Here, $e$
is the edge that has the smallest angle vertex in $T^{\hat{A}}$ and
$\hat{e}$ is the edge that contains the second and third smallest angle
vertices. To make something of \eqref{eq3.8}, note that each $z(u)$ that appears in
the sum is $-r_{ - }$. In addition, \eqreft33 identifies
$\sum _{u}\varepsilon (p_{u}, {p_{u}}')$ with $Q_{e} - Q_{\hat{e} }$.
These points understood, then \eqref{eq3.8} asserts that $x(\hat{u})$ is fixed by $g$ if
and only if $z(\hat{u}) = -r_{ - }$.

One can now continue in this vein in an inductive fashion through
the elements from $\hat{A}_{* }$ with integer pairs that give
successively larger angles in \eqref{eq1.8}. In particular, an
application of \eqreft33 at each step finds that $g = (N, z)$ fixes
$(\tau _{ - }, x)$ if and only if $N = r_{ -}Q_{e}$ and $z$ sends
each element in $\hat{A}_{* }$ to $-r_{ - }$. This
straightforward task is left to the reader.

Define the group $\Aut ^{\hat{A}}$ to be the group of 1--1 maps of
$\hat{A}_{* }$ to itself that only mix elements with identical 4--tuples.
This group acts smoothly on $O^{\hat{A}}$. Set $\hat{O}^{\hat{A}}\subset O^{\hat{A}}$
to be the set of points where the
action is free. Propositions~\ref{prop:4.4} and~\ref{prop:4.5} say more about
$\hat{O}^{\hat{A}}$.

\fullref{thm:1.2} in \fullref{sec:1b} now has the
following generalization:

\begin{theorem} \label{thm:3.1}

Let $\hat{A}$ denote an asymptotic data set that obeys \eqreft31 and
whose graph $T^{\hat{A}}$ obeys the conditions in \eqreft1{16}.
The subspace of subvarieties in $\mathcal{M}_{\hat{A}}$ whose
graph from \fullref{sec:2a} is linear is a closed
submanifold of $\mathcal{M}_{\hat{A}}$ that is diffeomorphic to
$\mathbb{R}\times \hat{O}^{\hat{A}}/\Aut ^{\hat{A}}$. Moreover,
there is a diffeomorphism that intertwines the $\mathbb{R}$
action on $\mathcal{M}_{\hat{A}}$ as the group of constant
translations along the $\mathbb{R}$ factor of $\mathbb{R}\times
(S^{1}\times S^{2})$ with the $\mathbb{R}$ action on
$\mathbb{R}\times \hat{O}^{\hat{A}}/\Aut ^{\hat{A}}$ as the group
of constant translations along the $\mathbb{R}$ factor in the
latter space.
\end{theorem}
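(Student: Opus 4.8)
The plan is to construct an explicit map $\Phi\colon\mathcal{M}^{\mathrm{lin}}_{\hat A}\to\mathbb{R}\times\hat O^{\hat A}/\Aut^{\hat A}$ from the locus $\mathcal{M}^{\mathrm{lin}}_{\hat A}\subset\mathcal{M}_{\hat A}$ of subvarieties whose \fullref{sec:2a} graph is linear — this $\Phi$ being the direct generalization of the map behind \fullref{thm:1.2} — to prove it is well defined, smooth, one--to--one, proper and a local diffeomorphism, and, separately, that $\mathcal{M}^{\mathrm{lin}}_{\hat A}$ is closed in $\mathcal{M}_{\hat A}$; these together force $\mathcal{M}^{\mathrm{lin}}_{\hat A}$ to be a closed submanifold and $\Phi$ a diffeomorphism. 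To build $\Phi$, take $C\in\mathcal{M}^{\mathrm{lin}}_{\hat A}$ with model curve $C_0$. Linearity of $T_C$ makes it isomorphic to $T^{\hat A}$, the isomorphism unique up to automorphisms permuting bivalent vertices with identical labels; since \eqreft{3}{1} makes the extremal--angle vertices single elements of $\hat A$, this residual ambiguity is exactly an $\Aut^{\hat A}$--worth of choice, quotiented out at the end. Having fixed such a correspondence, grace the smallest--angle cylinder $K_{e_1}\subset C_0-\Gamma$ with a preferred parametrization (\fullref{def:2.1}), pick a distinguished missing or singular point on its $\sigma=\theta_o$ boundary circle with an $\mathbb{R}$--lift of its $\mathbb{R}/(2\pi\mathbb{Z})$ coordinate, and run the parametrizing algorithm of \fullref{sec:2c} to propagate canonical parametrizations and lifts to every $K_e$. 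From the functions $(a_e,w_e)$ of \eqreft{2}{5} so obtained, extract: the number $\tfrac1{2\pi}\int_{\sigma=\sigma_*}a_{e_1}\,dv$ for a fixed universal $\sigma_*$ interior to the smallest edge (the $\mathbb{R}$ factor, an equivariant coordinate for $s$--translation); the $\mathbb{R}$--lift of the Reeb--orbit--limit parameter of $C$'s minimal--angle end, read from $w_{e_1}$ via \eqreft{2}{7} and \eqref{eq2.8} when that angle is in $(0,\pi)$, and via \textbf{Property 6} of \fullref{sec:2b} when it is $0$ (the $\mathbb{R}_-$ factor); and, for each $u\in\hat A_*$, the $\mathbb{R}$--lift of the Reeb--orbit--limit parameter of the end over the bivalent vertex whose angle $u$'s integer pair defines via \eqref{eq1.8}, again via \eqref{eq2.8} (a point of $\Maps(\hat A_*;\mathbb{R})$ lying in $\mathbb{R}^{\hat A}$, as ends over a common vertex limit to distinct Reeb orbits).

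The first substantive step is to check that $\Phi$ descends to $\mathbb{R}\times O^{\hat A}/\Aut^{\hat A}$, lands in $\mathbb{R}\times\hat O^{\hat A}/\Aut^{\hat A}$, and is smooth. The coordinate $\tfrac1{2\pi}\int a_{e_1}\,dv$ is unchanged by reparametrization of $K_{e_1}$ (by \eqref{eq2.13} this composes $a_{e_1}$ with a rotation of the cylinder), by changing the distinguished point or any lift, and by the $\Aut^{\hat A}$ ambiguity (it uses only the canonically determined edge $e_1$), so it is a genuine function on $\mathcal{M}^{\mathrm{lin}}_{\hat A}$. For the $(\mathbb{R}_-,\mathbb{R}^{\hat A})$ data one must show the choices move it exactly by the $(\mathbb{Z}\times\mathbb{Z})\times\Maps(\hat A_*;\mathbb{Z})$ action of \eqref{eq3.7}: the preferred parametrizations of $K_{e_1}$ form a single $\mathbb{Z}\times\mathbb{Z}$ orbit (end of \fullref{sec:2b}), and \eqref{eq2.12}, \eqref{eq2.13} together with \eqref{eq2.8} identify the induced change of $(\mathbb{R}_-,\mathbb{R}^{\hat A})$ with the $\mathbb{Z}\times\mathbb{Z}$ action built around \eqref{eq1.20}; the dependence on the distinguished point, lift and concatenating paths is precisely what \textbf{Cases 1--5} of \fullref{sec:2c} and \fullref{lem:2.3} compute, and this translates into the $\Maps(\hat A_*;\mathbb{Z})$ action. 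Smoothness of $\Phi$ holds because the preferred parametrizations, hence the $(a_e,w_e)$, hence the integrals and evaluations defining $\Phi$, vary smoothly over $\mathcal{M}_{\hat A}$ in the smooth structure of \fullref{thm:1.1}; and $\Phi$ lands in $\hat O^{\hat A}/\Aut^{\hat A}$ since the distinctness of the phases over a common bivalent vertex makes $\Aut^{\hat A}$ act freely on the image point.

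It remains to carry out four steps: (i) $\mathcal{M}^{\mathrm{lin}}_{\hat A}$ is closed in $\mathcal{M}_{\hat A}$ — if a sequence of curves with no interior $\theta$--critical point converges in $\mathcal{M}_{\hat A}$, the limit has none either, since by \eqref{eq2.11} a zero of $d\theta$ of order $m$ cannot be destroyed by a small perturbation of the map; (ii) $\Phi$ is one--to--one — this is deferred to \fullref{sec:4}, whose opening results on distinguishing elements of $\mathcal{M}^{*}_{\hat A}$ by their parametrizations show that two subvarieties with the same $\Phi$--image have identical preferred parametrizations on every $K_e$, hence coincide; (iii) $\Phi$ is proper — a sequence in $\mathcal{M}^{\mathrm{lin}}_{\hat A}$ with convergent images has uniformly controlled, non-degenerating cylinders $K_e$ (the bound on the $\mathbb{R}$ coordinate and on the finitely many phases controls all of $C_0$) and no bubbling, the model curves being punctured spheres of fixed type with circle $\theta$--level sets, so a subsequence converges in $\mathcal{M}^{\mathrm{lin}}_{\hat A}$ to a preimage of the limit; (iv) $\Phi$ is a local diffeomorphism — this follows from the analysis underlying the smooth structure of $\mathcal{M}_{\hat A}$ in \cite{T3}: the functionals defining $\Phi$ are independent on the tangent space $T_C\mathcal{M}_{\hat A}$, which is a space of appropriately decaying solutions of the linearization of \eqref{eq2.6}, and by the count $\dim\mathcal{M}^{\mathrm{lin}}_{\hat A}=2+|\hat A_*|$ they span its dual. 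Given (i)--(iv), $\Phi$ is a proper injective local diffeomorphism, hence a diffeomorphism onto a union of components of $\mathbb{R}\times\hat O^{\hat A}/\Aut^{\hat A}$; surjectivity onto every component is the existence statement \cite[Theorem~1.3]{T3}, available because $T^{\hat A}$ obeys \eqreft{1}{16}. The intertwining of the two $\mathbb{R}$--actions is then immediate: translating $C$ by $r$ along the $\mathbb{R}$ factor of $\mathbb{R}\times(S^1\times S^2)$ replaces every $a_e$ by $a_e+r$ and leaves every $w_e$ — hence every Reeb phase and the $\mathbb{R}_-$ datum — unchanged, so $\Phi$ shifts by $r$ in its $\mathbb{R}$ factor only. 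I expect step (iii) to be the main obstacle: ruling out loss of compactness along convergent sequences is the compactness analysis that \fullref{sec:7} later performs in the general case, where, as noted in the introduction, part of the estimates of \cite{BEHWZ} are replaced by topological input from \fullref{sec:2}; by contrast the bookkeeping in the second paragraph, though lengthy, is mechanical once the machinery of \fullref{sec:2c} is in hand.
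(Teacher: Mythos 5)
Your outline tracks the paper's proof faithfully: construct the map (the paper's $\mathfrak{B}$), prove it well defined modulo the group actions of \eqref{eq3.7}, show the lift through ${\mathcal{M}_{\hat{A},T}}^{\Lambda }$ is a local diffeomorphism by comparing with \cite[Proposition~2.13]{T3}, then establish injectivity and properness. The $\mathbb{R}$--coordinate you propose, $\tfrac1{2\pi}\int a_{e_1}\,dv$ at a fixed slice, differs from the paper's choice (the asymptotic constant $b$ from \eqref{eq2.4}, or $\hat{c}$ from \eqref{eq1.9}, or the $s$--value of the $\theta=0$ intersection, per the three cases in \fullref{sec:3b}); both are $s$--translation equivariant, so this is a benign variation, though the paper's choice is the one that plugs directly into \cite[Proposition~2.13]{T3} via \fullref{prop:3.6}.

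Two of your steps have genuine gaps. First, step (i): linearity of $T_C$ is not equivalent to the absence of interior $\theta$--critical points. A multivalent vertex $o$ is bivalent exactly when $\underline{\Gamma}_o$ is a circle, which requires both that no critical point of $\theta$ lies on $\Gamma_o$ \emph{and} that every concave side end with $|s|\to\infty$ limit $\theta_o$ has $n_E=1$ in \eqref{eq2.4}, and every convex side end at $\theta_o$ has $n_E=0$. Your stability argument via \eqref{eq2.11} only controls the critical points; it does not rule out a jump in $n_E$ along a sequence converging in the Hausdorff topology of \eqref{eq1.13}, which controls ends rather weakly. The paper sidesteps this entirely by citing \cite[Proposition~2.12]{T3} in \fullref{prop:3.3}; if you want a self-contained argument you must address the $n_E$ degeneration.

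Second, step (ii): equality of images under $\Phi$ does \emph{not} imply that the two subvarieties have identical preferred parametrizations on each $K_e$. What it gives you, after adjusting the free choices, is that the conditions in \eqreft41 and \eqreft42 hold with the two collections $\{w_e\}$, $\{w'_e\}$ agreeing across $\Gamma$ and having the same circle average \eqref{eq3.15}. The work is then in \fullref{lem:4.1}: the pair $(\hat{a}_e,\hat{w}_e)$ satisfies the linearized Cauchy--Riemann system \eqref{eq4.3}, whose real analyticity gives the zero locus $G$ of $\hat w$ the graph structure of \fullref{lem:4.3}; a maximum-principle argument applied to $\hat a$ on $G$ then forces $G$ to be everything and hence $\hat{a}$ constant, i.e.\ the maps differ by an $s$--translation, and only at that point does the common $\mathbb{R}$--coordinate finish the job. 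This analytic mechanism is the content of \fullref{sec:4a} and is not replaceable by a formal parametrization-matching argument. Your deferral of properness (iii) to the compactness analysis of \fullref{sec:4e} (Propositions~\ref{prop:4.6}, \fullref{lem:4.7}, \fullref{lem:4.8}) is accurate and appropriate.
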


Let $\mathcal{M}\subset\mathcal{M}_{\hat{A}}$ denote the indicated
subspace. The remaining subsections describe a map from $\mathcal{M}$ to
$\mathbb{R}\times O^{\hat{A}}/\Aut ^{\hat{A}}$ that is seen
in the next section to be a diffeomorphism between these two spaces.

\subsection{The map to $\mathbb{R}$}\label{sec:3b}

Fix $C  \in\mathcal{M}$. The image of $C$ in the $\mathbb{R}$ factor of
$\mathbb{R}\times O^{\hat{A}}/\Aut ^{\hat{A}}$ is the simplest part
of the story. Even so, its definition is different in each of the three
cases of \eqreft32. These are treated in turn.

\step{Case 1}
Let $E  \subset C$ denote the convex side end where the $|s|  \to\infty $
limit of $\theta $ gives the minimal angle in $\Lambda
_{\hat{A}}$. Associated to the latter is the real number $b$ that
appears in \eqref{eq2.4}. In this regard, note that the integer $n_{E}$ that appears
here is zero and thus, $b$ must be positive since $\theta _{E}$ is the
infimum of $\theta $ on $C$. This understood, the map to $\mathbb{R}$ sends the
subvariety $C$ to $-\zeta ^{-1} \ln(b)$ where $\zeta\equiv \surd 6 \sin^{2}\theta _{E}
(1+3\cos^{2}\theta_{E})/(1+3cos^{4}\theta _{E})$.

\step{Case 2}
In this case, $C$ intersects the $\theta  = 0$ cylinder at a single point
and $C$'s image in the $\mathbb{R}$ factor of $\mathbb{R}\times O^{\hat{A}}/\Aut ^{\hat{A}}$
is the s coordinate of this point.

\step{Case 3} In this case, $C$ has a single end whose constant
$|s| $ slices limit to the $\theta  = 0$ cylinder as
$|s|\to\infty $. This end defines the positive constant $\hat
{c}$ that appears in \eqref{eq1.9}. Note that the integers $p$
and $p'$ that appear in \eqref{eq1.9} comprise the pair from the
$(1,\ldots)$ element in $\hat{A}$. The image of $C$ in
$\mathbb{R}$ is $-(\sqrt {\frac{3}{2}} +\frac{{p'
}}{p})^{-1} \ln(\hat {c})$.

\subsection{The map to $O^{\hat{A}}/\Aut ^{\hat{A}}$}\label{sec:3c}

The definition of $C$'s assigned point in $O^{\hat{A}}/\Aut
^{\hat{A}}$ requires a preliminary digression to set the stage.
To start, let $o$ denote a bivalent vertex in $T^{\hat{A}}$ and
let $\hat{A}_{o} \subset \hat{A}$ denote the subset of elements
whose integer pair defines $o$'s angle via \eqref{eq1.8}. Use
$n_{o}$ in what follows to denote the number of elements in
$\hat{A}_{o}$.

Introduce $S_{o}$ to denote the subspace of 1--1 maps from $\hat{A}_{o}$ to
$\mathbb{R}/(2\pi \mathbb{Z})$. The inverse image of $S_{o}$ in
$\Maps(\hat{A}_{o}; \mathbb{R})$ is the part of the latter space that
contributes to $\mathbb{R}^{\hat{A}}$. The components of
$S_{o}$ are in 1--1 correspondence with the set of cyclic orderings of the
elements in $\hat{A}_{o}$. If $u_{o}\in \hat{A}_{o}$ is a chosen
distinguished element, then any given component $S  \subset S_{o}$ can be
identifed with
\begin{equation}\label{eq3.9}
\mathbb{R} / (2\pi \mathbb{Z}) \times \Delta _{o},
\end{equation}
where $\Delta _{o}\subset\mathbb{R}^{n_o }$ is the open, $n_{o}-1$
dimensional simplex in the positive quadrant where the sum of the
coordinates is equal to $2\pi $. Here, the identification in \eqref{eq3.9} is
obtained as follows: Identify the cyclic ordering that defines $S$ with the
linear ordering that has $u_{o}$ as the last element. This identification
provides a 1--1 correspondence between $\hat{A}_{o}$ and the set $\{1, \ldots, n_{o}\}$.
Granted this, the identification of $S$ with the space in \eqref{eq3.9}
is given by identifying a given $(\tau , (r_{1},\ldots ))  \in\mathbb{R}/(2\pi \mathbb{Z})
\times \Delta _{o}$ with the map from
$\hat{A}_{o}$ to $\mathbb{R}/(2\pi \mathbb{Z})$ that sends the $k$'th element in
$\hat{A}_{o}$ to the $\mod(2\pi \mathbb{Z})$ reduction of $\tau +r_{1}+\cdots +r_{k}$.

Now suppose that $F$ is a space with an action of $\Maps(\hat{A}_{o}; \mathbb{Z})$.
Let $F_{o}$ denote the associated fiber bundle over $S_{o}$ with fiber
$F$. Thus,
\begin{equation}\label{eq3.10}
F_{o} = (\Maps(\hat{A}_{o}; \mathbb{R})\times  F)/\Maps(\hat{A}_{o}; \mathbb{Z}).
\end{equation}
The identification between $S$ and the space in \eqref{eq3.9} is covered by one
between $F_{o}| _{S}$ and
\begin{equation}\label{eq3.11}
(\mathbb{R}_{o }\times  F)/\mathbb{Z}_{o }\times\Delta _{o},
\end{equation}
where $\mathbb{R}_{o}$ is a copy of $\mathbb{R}$ while $\mathbb{Z}_{o}$ is a copy
of $\mathbb{Z}$ whose generator acts on $\mathbb{R}_{o}$ as the
translation by $-2\pi $ and on $F$ as that of $\sum _{u} z_{u}\in \Maps(\hat{A}_{o};
\mathbb{Z})$.

All of this has the following implications: Fix a distinguished element in
each version of $\hat{A}_{o}$. This done, then any given component of
$O^{\hat{A}}$ is identified with
\begin{equation}\label{eq3.12}
(\times _{o}\Delta _{o})\times  [\mathbb{R}_{ - }\times
(\times _{o}\mathbb{R}_{o})] /[(\mathbb{Z} \times \mathbb{Z})\times
(\times _{o}\mathbb{Z}_{o})],
\end{equation}
where the notation $\times _{o}$ indicates a product that is
labeled by the vertices in $T^{\hat{A}}$, and where the group
actions are as follows: First, $N = (n, n')  \in\mathbb{Z} \times
\mathbb{Z}$ acts on $\mathbb{R}_{ - }$ as before while acting on
any given version of $\mathbb{R}_{\hat{o}}$ as translation by the
element in \eqref{eq1.20}. Meanwhile, any given $\mathbb{Z}_{o}$
acts trivially on $\mathbb{R}_{ - }$ and also trivially on
$\mathbb{R}_{\hat{o}}$ in the case that the angle of $\hat{o}$ is
less than that of $o$. On the other hand,  $1 \in\mathbb{Z}_{o}$
acts on $\mathbb{R}_{o}$ as the translation by $-2\pi $ and it
acts on $\mathbb{R}_{\hat{o}}$ in the case that $\hat{o}$'s angle
is greater than $o$'s angle as the translation by
\begin{equation}\label{eq3.13}
-2\pi \frac{{p_o} ' \hat {p}_{\hat{o}} - p_o {\hat {p}_{\hat{o}}} '
}{{q_{\hat{e}}} ' \hat {p}_{\hat{o}} - q_{\hat{e}} {\hat {p}_{\hat{o}}} ' },
\end{equation}
where the notation is as follows: First, $(\hat {p}_{\hat{o}},
\hat {p}_{\hat{o}}')$ is the relatively prime pair of integers
that defines $\hat{o}$'s angle in \eqref{eq1.8}. Second
$P_{o}\equiv (p_{o}, {p_{o}}')$ is obtained by subtracting the sum
of integer pairs from the $(0,-,\ldots)$ elements in
$\hat{A}_{o}$ from the sum of those from the $(0,+,\ldots)$
elements. Thus, $P_{o}$ is the integer pair from $o$'s version of
the second point in \eqreft33.

Granted what has just been said, if a distinguished element has been chosen
in each version of $\hat{A}_{o}$, then a point for $C$ in
$O^{\hat{A}}/\Aut ^{\hat{A}}$ is obtained from an assigned point in
\begin{equation}\label{eq3.14}
\mathbb{R}_{ - }\times (\times _{o}(\mathbb{R}_{o}\times \Delta _{o}))
\end{equation}
together with an assigned cyclic ordering for each version of $\hat{A}_{o}$.
The story on these assignments appears below in four parts.

With the preceding understood, the digression is now ended.

\step{Part 1}
The assignment to $C$ of a point in the
$\mathbb{R}_{ - }$ factor of \eqref{eq3.14} requires the choice of a parameterization
for the component of $C_{0}-\Gamma $ whose labeling edge has the
minimal angle vertex. With such a parameterization in hand, let $w$ denote the
associated function from \eqreft25. The $\mathbb{R}_{ - }$ assignment of $C$ is the
value of the expression
\begin{equation}\label{eq3.15}
-\frac{1}{{2\pi }}\alpha _{Q}(\sigma )\int_{\mathbb{R} / (2\pi \mathbb{Z})} w(\sigma , v) dv
\end{equation}
as computed using any $\sigma $ that lies between the vertex angles on the
edge in question. Here, $Q$ is the integer pair that labels this same edge.

\step{Part 2}
As has most probably been noted, there is an evident
`forgetful' map from a graph $T$ with a correspondence in $(C_{0}, \phi )$
to the graph $T^{\hat{A}}$ that is obtained by dropping the circular
graph labels from the vertices in $T$. Thus, the map is an isomorphism of
underlying graphs that respects vertex angles and the integer pair labels of
the edges. This forgetful map is used implicitly in what follows to identify
respective vertices and respective edges in the two graphs.

Let $o$ denote a given bivalent vertex in $T$. Fix a 1--1 map from the vertices
in the $T$ version of $\underline {\Gamma }_{o}$ to $\hat{A}_{o}$ with the
following property: A vertex in $\underline {\Gamma }_{o}$ is assigned an
element $u  \in \hat{A}_{o}$ if and only the integer label for the vertex
is equal to $\varepsilon _{u} m_{u}$ where $\varepsilon _{u}$ is the
second entry to $u$ and $m_{u}$ is the greatest common divisor of the integer
pair entry for $u$. Granted this 1--1 correspondence, define a cyclic ordering
of $\hat{A}_{o}$ using the ordering of the vertices on the circular graph
$\underline {\Gamma }_{o}$ as they are met on an oriented
circumnavigation.

Label the arcs in $T$'s version of $\underline {\Gamma }_{o}$ by integers
consecutively from 1 to $n_{o}$ so that the labeling gives the order in
which arcs are crossed when circumnavigating $\underline {\Gamma }_{o}$ in
its oriented direction when starting at the vertex that is mapped to the
distinguished element in $\hat{A}_{o}$.

Having done the above, fix a correspondence of $T$ in $(C_{0}, \phi )$ and
thus define $T_{C}$.

\step{Part 3}
Let $k$ denote an integer that labels a
given arc on $\underline {\Gamma }_{o}$. To obtain the assignment for $C$ in
the $k$'th coordinate factor of the simplex $\Delta _{o}$ in \eqreft3{16}, first
integrate the pull-back of $(1-3\cos^{2}\theta ) d\varphi -\surd
6\cos\theta  dt$ over the $k$'th arc in the corresponding $C_{0}$ version of
the graph $\Gamma _{o}$. Then, divide the result by 2$\pi \alpha
_{Q}(\theta _{o})$ where $\theta _{o}$ is $o$'s angle and where $Q$ is
the integer pair that labels the edge in $T^{\hat{A}}$ with largest
angle $\theta _{o}$.

\step{Part 4}
The assignment to $C$ of a point in the
$\times _{o}\mathbb{R}_{o}$ factors in \eqref{eq3.14} is made in an iterative
fashion that starts with the minimal angle bivalent vertex in $T^{\hat{A}}$
and proceeds from vertex to vertex in their given order along
$T^{\hat{A}}$. Here is the basic iteration step: Let $o$ denote a
bivalent vertex in $T^{\hat{A}}$, let $e$ denote the edge in $T_{C}$
that contains $o$ as its largest angle vertex and let $e'$ denote the edge that
contains $o$ as its smallest angle vertex. Suppose that $e$'s component
$K_{e} \subset C_{0}-\Gamma $ has been assigned a canonical
parameterization. Let $\upsilon $ denote the missing point on the $\sigma = \theta _{0}$
circle in the corresponding parametrizing cylinder that
corresponds to the distinguished element in $\hat{A}_{o}$. Choose a lift to
$\mathbb{R}$ of the $\mathbb{R}/(2\pi \mathbb{Z})$ coordinate of $\upsilon $. Use
this lift as the value that is assigned to $C$ in the factor $\mathbb{R}_{o}$
that appears in \eqref{eq3.14}. To initiate the next iteration round, use this same
lift and the canonical parameterization of $K_{e}$ in the manner described
by Part 2 of \fullref{sec:2c} to define the canonical parametrization of the
component $K_{e' }\subset C_{0}-\Gamma $.

The chosen parameterization from Part 1 above should be used for the
canonical parameterization when starting the iteration at the smallest
angled bivalent vertex.

\subsection{The invariance of the image in $O^{\hat{A}}$}\label{sec:3d}

The point just assigned to $C$ in \eqref{eq3.14} required the following suite of
choices:

\itaubes{3.16}
\textsl{A distinguished element in each version of $\hat{A}_{o}$.}

\item
\textsl{A suitably constrained 1--1 correspondence from each  $o \in T$ version of the vertex
set in $\underline {\Gamma }_{o}$ to the corresponding $\hat{A}_{o}$.}

\item
\textsl{A correspondence of $T$ in $(C_{0}, \phi )$.}

\item
\textsl{A parameterization for the component of $C_{0}-\Gamma $ whose corresponding
edge contains the minimal angled vertex in $T_{C}$.}

\item
\textsl{A lift to $\mathbb{R}$ made for each multivalent vertex in $T_{C}$.
In particular, let $o$ denote such a vertex and let $e$ denote the edge
that contains $o$ as its maximal angled vertex. The lift for $o$
is that of the $\mathbb{R}/(2\pi \mathbb{Z})$ coordinate of the missing point
on the $\theta =\theta _{o}$ boundary of the pararameterizing cylinder for
the canonical parameterization of $K_{e}$ that corresponds to the distinguished
element in $\hat{A}_{o}$}
\end{itemize}
This subsection explains why the image in $O^{\hat{A}}/\Aut ^{\hat{A}}$
of the point given $C$ in \eqref{eq3.14} is insensitive to the choices that
are described in \eqreft3{16}. These choices are considered below in the order 5,
4, 1, 2, 3. Note for future reference that the arguments given below proves
that the image of $C$ in $O^{\hat{A}}$ is already insenstive to changes
that are described points 5, 4 and 1. In any event, the discussion that
follows is in five parts, one for each of the points in \eqreft3{16}.

\step{Part 1}
To start the explanation for the fifth point in \eqreft3{16}, suppose that $o$ is
a multivalent vertex in $T_{C}$. Let $\theta _{o}$ denote $o$'s angle, let $e$
denote the edge that has $o$ as its largest angle vertex and let $e'$ denote the
edge that has $o$ as its smallest angle vertex. Note that any parameterization
of $K_{e}$ has a `distinguished' missing point on the $\sigma =\theta
_{o}$ boundary circle, this the point that corresponds to the chosen
distinguished point in $\hat{A}_{o}$.

Suppose that $\tau _{o}\in\mathbb{R}$ is the original lift of the
$\mathbb{R}/(2\pi \mathbb{Z})$ coordinate of the distinguished point on the
$\sigma =\theta _{o}$ boundary of the parametrizing cylinder for
$K_{e}$. Now change this lift to $\tau _{o}-2\pi $. Such a change has no
affect on the assignment of $C$ in $\mathbb{R}_{ - }$ or in $\mathbb{R}_{\hat{o}}$
if $\hat{o}$ is a bivalent vertex with angle less than $\theta
_{o}$. Of course, it changes the assignment in $\mathbb{R}_{o}$ from $\tau
_{o}$ to $\tau _{o}-2\pi $.

The affect of this change on the remaining $\mathbb{R}_{(\cdot )}$ factors in
\eqref{eq3.14} is examined vertex by vertex in order of increasing angle. To start
this process, invoke the discussion in Case 4 in Part 5 of \fullref{sec:2c}, to
conclude that the change $\tau _{o}\to\tau _{o}-2\pi $ changes
the parameterization of $K_{e' }$ by the action of the integer pair
$Q_{e}$. In this regard, note that the equality $Q_{e} = Q_{e' }+ P_{o}$,
implies that this change is identical to that obtained by the
action of $P_{o}$. The latter view proves the more useful for what follows.

Let $o'$ now denote the vertex with the largest angle on $e'$ and suppose that
 $o'$ is bivalent. Let $\tau ^{\old}$ denote the original assignment to $C$ in
$\mathbb{R}_{\hat{o}}$. As applied now to $o'$, the assertion of \eqref{eq2.16}
and the conclusions of Case 2 of Part 5 in \fullref{sec:2c} imply that there is a
choice for the lift of the $\mathbb{R}/(2\pi \mathbb{Z})$ coordinate of the
distinguished point on the $\sigma =\theta _{o' }$ circle for
the new parametrizing cylinder whose relation to the old is given by the $N= P_{o}$
version of
\begin{equation}\label{eq3.17}
\tau ^{\new}=\tau ^{\old}- 2\pi \frac{{\alpha _N (\theta
_{o' } )}}{{\alpha _{Q_{e' } } (\theta _{o' } )}} -
2\pi  k_{o' }
\end{equation}
with $k_{o' }\in\mathbb{Z}$.

To continue, let $\hat{o}$ denote the vertex that shares an edge with $o'$ but has
greater angle. Suppose, for the sake of argument, that $\hat{o}$ is bivalent.
Let $\hat{e}$ denote the edge between $\hat{o}$ and $o'$. Apply Case 2 of Part 5 in
\fullref{sec:2c} here together with the identity $Q_{e' } = Q_{\hat{e} } + P_{o' }$
to conclude that the canonical
parameterization for $K_{\hat{e} }$ is changed by the action of the
integer pair $P_{o} + k_{o' }P_{o' }$. This then means
that the old and new assignments for $C$ in $\mathbb{R}_{\hat{o}}$ are
related by the $\sigma =\theta _{\hat{o}}$ version of
\begin{equation}\label{eq3.18}
\tau ^{\new}=\tau ^{\old} - 2\pi \frac{{\alpha _{P_o }
(\theta _{\hat{o}} )}}{{\alpha _{Q_{\hat{e}} } (\theta _{\hat{o}} )}} - 2\pi
k_{o' }\frac{{\alpha _{P_{o' } } (\theta _{\hat{o}} )}
}{{\alpha _{Q_{\hat{e}} } (\theta _{\hat{o}} )}} - 2\pi k_{\hat{o}}
\end{equation}
with $k_{\hat{o}}\in\mathbb{Z}$.

Continue in this vein vertex by vertex in order of increasing angle with
applications of Case 2 in Part 5 of \fullref{sec:2c} so as to obtain the
generalization of \eqref{eq3.18} that is summarized as the next lemma.

\begin{lemma}\label{lem:3.2}

The bivalent vertices of $T$ with angles greater than $\theta _{o}$ label a collection,
$\{k_{(\cdot )}\}$, of integers with the following significance: Let $\hat{o}$
denote any bivalent vertex in $T$ with angle greater than $\theta _{o}$, and let $\hat{e}$
denote the edge that has $\hat{o}$ as it maximal angled vertex. Then, the change,
$\tau _{o}\to\tau_{o}-2\pi $ of the assignment to $C$ in $\mathbb{R}_{o}$ changes
the assignment of $C$ in $\mathbb{R}_{\hat{o}}$ by the rule
\begin{equation}\label{eq3.19}
\tau _{\hat{o}}\to\tau _{\hat{o}}- 2\pi
\frac{{\alpha _{P_o } (\theta _{\hat{o}} )}}{{\alpha _{Q_{\hat{e}} }
(\theta _{\hat{o}} )}} - 2\pi\sum '_{o' } k_{o' }\frac{{\alpha _{P_{o' } }
(\theta _{\hat{o}} )}}{{\alpha_{Q_{\hat{e}} } (\theta _{\hat{o}} )}} - 2\pi  k_{\hat{o}},
\end{equation}
where the prime on the summation symbol is meant to indicate that the sum is over
those bivalent vertices whose angles lie between $\theta _{o}$ and $\theta _{\hat{o}}$.
\end{lemma}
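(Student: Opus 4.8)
The plan is to establish \fullref{lem:3.2} by induction on the bivalent vertices of $T$ whose angle exceeds $\theta_o$, processed in order of increasing angle; the assignments of $C$ in the $\mathbb{R}_-$ factor, in the simplices $\Delta_{(\cdot)}$, and in $\mathbb{R}_{\hat o}$ for bivalent $\hat o$ with $\theta_{\hat o}<\theta_o$ are untouched by the substitution $\tau_o\to\tau_o-2\pi$, as already noted, so only the $\mathbb{R}_{(\cdot)}$ factors above $\theta_o$ are at issue. For the base of the induction, note that the canonical parametrization of the edge having $o$ as its maximal angle vertex is fixed before the iteration of Part 4 of \fullref{sec:3c} reaches $o$, so its distinguished missing point $\upsilon$ on the $\sigma=\theta_o$ circle is unchanged and the $\mathbb{R}_o$ assignment — the chosen $\mathbb{R}$-lift of $\upsilon$'s $\mathbb{R}/(2\pi\mathbb{Z})$ coordinate — simply drops by $2\pi$. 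By Case 4 of Part 5 of \fullref{sec:2c}, such a decrease of the lift is the $N=Q_e$ version of \eqref{eq2.16} and therefore changes the canonical parametrization of the edge $e'$ with $o$ as its smallest angle vertex by the action of $Q_e$; since $Q_e=Q_{e'}+P_o$ by \eqreft33 and $\mathbb{Z}\cdot Q_{e'}$ stabilizes every parametrization of $K_{e'}$ (end of \fullref{sec:2b}), this is the same as the action of $P_o$. As $e'$ is precisely the edge carrying the lowest bivalent vertex $\hat o$ above $o$ as its maximal angle vertex, \eqref{eq2.16} records the induced change of the lift at $\hat o$ as the $N=P_o$ version of \eqref{eq3.17}, the integer $k_{\hat o}$ absorbing the freedom in re-choosing that lift allowed by the fifth bullet of \eqreft3{16}; expanding $\alpha_{P_o}$ by the additivity of $\alpha_{(\cdot)}$ in its integer-pair argument visible in \eqref{eq2.2}, this is \eqref{eq3.19} with the primed sum empty.

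For the inductive step I would fix a bivalent vertex $\hat o$ with $\theta_o<\theta_{\hat o}$, let $\hat e$ be the edge with $\hat o$ as its maximal angle vertex, and assume that re-running the iteration of Part 4 of \fullref{sec:3c} has changed the canonical parametrization of $K_{\hat e}$ by the action of $N_{\hat o}:=P_o+\sum'_{o'}k_{o'}P_{o'}$, the primed sum being over the bivalent vertices with angle strictly between $\theta_o$ and $\theta_{\hat o}$, and has changed the $\mathbb{R}_{\hat o}$ assignment by $-2\pi\,\alpha_{N_{\hat o}}(\theta_{\hat o})/\alpha_{Q_{\hat e}}(\theta_{\hat o})-2\pi k_{\hat o}$ — which, writing $\alpha_{N_{\hat o}}=\alpha_{P_o}+\sum'_{o'}k_{o'}\alpha_{P_{o'}}$, is exactly \eqref{eq3.19} for $\hat o$. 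Since $\mathbb{Z}\cdot Q_{\hat e}$ stabilizes parametrizations of $K_{\hat e}$, that change is equally the action of $N_{\hat o}+k_{\hat o}Q_{\hat e}$, and the accompanying lift change is exactly the $N=N_{\hat o}+k_{\hat o}Q_{\hat e}$ version of \eqref{eq2.16}; hence Case 2 of Part 5 of \fullref{sec:2c} changes the canonical parametrization of the edge $\hat e'$ having $\hat o$ as its smallest angle vertex by the action of $N_{\hat o}+k_{\hat o}Q_{\hat e}$, and then $Q_{\hat e}=Q_{\hat e'}+P_{\hat o}$ with the stabilizer fact reduces this to the action of $N_{\hat o}+k_{\hat o}P_{\hat o}$. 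This last pair is precisely the $N$ predicted for the next bivalent vertex, so feeding it into \eqref{eq2.16} and \eqref{eq3.17} there — with a new integer $k$ for the lift choice — and once more expanding $\alpha_{(\cdot)}$ reproduces \eqref{eq3.19}. The induction has finitely many steps and ends once all bivalent vertices above $\theta_o$ have been treated.

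I expect the only delicate point to be the bookkeeping of the integers $k_{(\cdot)}$: each records, in units of $2\pi$, the difference between the lift chosen at the corresponding bivalent vertex during the re-run and the lift propagated up from below, and one must confirm that this difference really is an integer multiple of $2\pi$ rather than an arbitrary real. This is the same integrality exploited in the stabilizer computation of \fullref{sec:3a} — two lifts to $\mathbb{R}$ of one and the same $\mathbb{R}/(2\pi\mathbb{Z})$-valued coordinate differ by an element of $2\pi\mathbb{Z}$ — and requires no new ideas. Granting it, formula \eqref{eq3.19} is a purely formal consequence of the propagation rules \eqref{eq2.12}, \eqref{eq2.13} and \eqref{eq2.16}, the edge identity in \eqreft33, and the additivity of $\alpha_{(\cdot)}$, so I will not carry out the resulting substitutions explicitly.
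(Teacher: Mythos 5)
Your proposal is correct and follows essentially the same route as the paper: the paper also works up the linear graph vertex by vertex, using Cases 2 and 4 of Part 5 of \fullref{sec:2c}, the edge identity $Q_e = Q_{e'} + P_o$ from \eqreft33, and the fact that $\mathbb{Z}\cdot Q_{(\cdot)}$ stabilizes parametrizations, with the integers $k_{(\cdot)}$ entering exactly as you say — as the $2\pi\mathbb{Z}$ ambiguity in re-choosing each lift. Your only stylistic departure is to package the propagated integer pair as an explicit invariant $N_{\hat o}=P_o+\sum'k_{o'}P_{o'}$ and rewrite the combined parametrization-plus-lift change as the single $N=N_{\hat o}+k_{\hat o}Q_{\hat e}$ version of \eqref{eq2.16} before applying Case 2, whereas the paper carries out the first two steps concretely as \eqref{eq3.17} and \eqref{eq3.18} and then states the general pattern; the content is identical.
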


Together, \eqref{eq1.8} and \eqref{eq3.19} imply that the action of
$\times _{o}\mathbb{Z}_{o}$ on \eqref{eq3.14} rectifies any change in the
choices that are described in the fifth point of \eqreft3{16}. Thus,
the image of $C$ in $O^{\hat{A}}$ is insensitive to any such
change.

\step{Part 2} Consider now the effect of a change as described by
the fourth point in \eqreft3{16}. The analysis of this change starts
with its affect on the various $\mathbb{R}_{o}$ factors. This can
be analyzed vertex by vertex in order of increasing angle by
successively invoking the observations from Case 2 in Part 5 of
\fullref{sec:2c}. In particular, with the help of
\eqref{eq1.8}, these observations dictate the following: Suppose
that $N = (n, n') \in \mathbb{Z} \times \mathbb{Z}$ and that the
chosen parameterization of the smallest angled component of
$C_{0}-\Gamma $ is changed by the action of $N$ as described in
\eqref{eq2.12} and \eqref{eq2.13}. This changes $C$'s assignment in the $\times
_{o}\mathbb{R}_{o}$ factor of \eqref{eq3.14} by the action on the
original assignment in $\times _{o}\mathbb{R}_{o}$ of an element
of the form $(N,\ldots)\in (\mathbb{Z} \times \mathbb{Z})\times
(\times _{o}\mathbb{Z}_{o})$.

As can be seen directly from \eqref{eq2.13} and \eqref{eq3.15}, the
change induced by $N$ on the parameterization of the $C_{0}-\Gamma $
component with the smallest $\theta $ values changes the assignment
of $C$ in $\mathbb{R}_{ - }$ by subtracting $2\pi (n'q_{e}-
n{q_{e}}')$. Meanwhile, the integer pair
 $N$ acts affinely on $\mathbb{R}_{ - }$ as described in the preceding
subsection by subtracting $2\pi (n'q_{e} - n{q_{e}}')$.

Taken together, the conclusions of these last two paragraphs imply that the
image of $C$ in $O^{\hat{A}}$ is insensitive to any change that is
described by the fourth point in \eqreft3{16}.

\step{Part 3}
This part analyzes the affect on $C$'s assigned point in $O^{\hat{A}}$ of a
change in the chosen distinguished element from any given version
of $\hat{A}_{(\cdot )}$. The subsequent five steps prove that this assignment
is not changed.

\substep{Step 1}
The new and old version choices for
distinguished elements result in respective new and old assignments of a
point for $C$ in $\mathbb{R}_{ - } \times \mathbb{R}^{\hat{A}}$. To
be explicit, a choice of cyclic ordering and distinguished element in each
version of $\hat{A}_{(\cdot )}$ identifies $\times _{\hat{o}}(\mathbb{R}_{\hat{o}}
\times \Delta _{\hat{o}})$ with a
component of $\mathbb{R}^{\hat{A}}$ as follows: The distinguished
element and cyclic ordering of a given $\hat{A}_{o}$ endow its elements in
their cyclic order with a labeling by the integers in the set $\{1, \ldots, n_{o}\}$
so that $n_{o}$ labels the distinguished element. Granted this
linear ordering, let $u_{k}$ denote the $k$'th element in some given
$\hat{A}_{o}$. Then all maps in $\mathbb{R}^{\hat{A}}$ that arise from a
point in $\times _{\hat{o}}(\mathbb{R}_{\hat{o}} \times \Delta _{\hat{o}})$
with a given $(\tau , (r_{1}, \ldots )) \in\mathbb{R}_{o} \times \Delta _{o}$
send $u_{k}$ to $\tau+r_{1}+\cdots +r_{k}-2\pi\in\mathbb{R}$.

This identification of $\times _{\hat{o}}(\mathbb{R}_{\hat{o}} \times \Delta _{\hat{o}})$
with a component of $\mathbb{R}^{\hat{A}}$ results in the identification described earlier
between the space in \eqref{eq3.12} and a component of $O_{\hat{A}}$.

\substep{Step 2}
Let $o$ denote a bivalent vertex in $T_{C}$,
let $u  \in \hat{A}_{o}$ the original choice for a distinguished element,
and $u'$ denote the new choice. As explained in Step 1, these choices result
in respective points for $C$ in $\mathbb{R}_{ - } \times \mathbb{R}^{\hat{A}}$.
Such a change has no affect on the assignment to $\mathbb{R}_{ -}$.
Let $x$ and $x'$ denote the respective original and new assignments of $\mathbb{R}^{\hat{A}}$.

Because the change from $u$ to $u'$ has has no affect on $C$'s assignment in any
$\mathbb{R}_{\hat{o}}$ or $\Delta _{\hat{o}}$ factor in
\eqref{eq3.12} when $\theta _{\hat{o}}<\theta _{o}$, the maps $x$ and
$x'$ send any given $\hat{u}  \in \hat{A}_{* }$ to the same point in
$\mathbb{R}$ if the integer pair component of $\hat{u}$ is less than $\theta_{o}$.

\substep{Step 3}
This step describes the image via $x'$ of an
element whose integer pair gives $\theta _{o}$. This task requires an
analysis of the change to $C$'s assignment to the $\Delta _{o}$ and $\mathbb{R}_{o}$
factors in \eqref{eq3.14}. Start this analysis by labeling the arcs in
$\underline {\Gamma }_{o}$ from 1 through $n_{o}$ with the first arc
starting at $u$'s vertex in $\underline {\Gamma }_{o}$. Suppose that the arc
that ends at the vertex that corresponds to $u'$ is the $k$'th arc. Let
$r =(r_{1}, r_{2}, \ldots )  \in\Delta _{o}$ denote the original
assignment for $C$. Then $r' = (r_{k + 1}, r_{k + 2},\ldots )$ gives the new
assignment.

Consider next the change in the assignment to $\mathbb{R}_{o}$. The
description here is simplest if it is agreed beforehand to keep the original
parameterization of the component of $C_{0}-\Gamma $ labeled by
the smallest angle vertex in $T_{C}$ and also to keep $C$'s assigned point in
each $\mathbb{R}_{\hat{o}}$ in the case that $\theta _{\hat{o}}<\theta _{o}$.
Granted this, let $e$ now denote the edge that has $o$ as
its largest angle vertex. Then the canonical parametrization of the
component $K_{e}\subset C_{0}-\Gamma $ is unchanged. With
this last point understood, take the lift to $\mathbb{R}$ of the
$\mathbb{R}/(2\pi \mathbb{Z})$ coordinate of the missing point on the
$\sigma  = \theta _{o}$ circle that maps to $u'$ to be that obtained from the original
lift by adding $r_{1}+\cdots +r_{k}$.

The preceding conclusions have the following implications for the map $x'$:
Let $u_{j}$ denote the $j$'th element in the original linear ordering of the
set $\hat{A}_{o}$. Then $x(u_{j}) = x'(u_{j})$ in the case that
$j \in\{k+1,\ldots , n_{o}\}$ and $x'(u_{j}) = x(u_{j}) + 2\pi$  in the
case that  $j \in  \{1,\ldots , k\}$.

\substep{Step 4}
This step describes the value of $x'$ on the
elements in $\hat{A}_{* }$ whose integer pair defines an angle that is
greater than $\theta _{o}$. To start the analysis, suppose that $\hat{o}$ is a
bivalent vertex with angle greater that $o$'s angle. There is no change to the
simplex $\Delta _{\hat{o}}$ with the change from $u$ to $u'$. To
consider the affect on the assignment in \eqref{eq3.12}'s factor $\mathbb{R}_{\hat{o}}$,
let $e'$ denote the edge in $T_{C}$ with $o$ as its smallest angle
vertex. According to what is said in Case 3 of Part 5 and \fullref{lem:2.3} of Part
6 from \fullref{sec:2c}, the parameterization for $K_{e'}$ is changed by
the action of the integer pair
\begin{equation}\label{eq3.20}
N = \sum _{1 \le j \le k}P_{u_j }.
\end{equation}
Given the discussion from Part 2 of \fullref{sec:2c} and the fourth point in
\eqreft3{16}, this then has the following consequence: The new assignment for $C$ in
\eqref{eq3.14} can be made consistent with what has been said so far and such that
the new and old assignments to the $\theta _{\hat{o}}>\theta
_{o}$ versions of $\mathbb{R}_{\hat{o}}$ change by the addition of
\begin{equation}\label{eq3.21}
2\pi\sum _{1 \le j \le k}\frac{{p_{u_j }} ' \hat {p}_{\hat{o}} -
p_{u_j } {\hat {p}_{\hat{o}}} ' }{{q_{\hat{e}}} ' \hat {p}_{\hat{o}} - q_{\hat{e}}
{\hat {p}_{\hat{o}}} ' }.
\end{equation}
The implication for the map $x'$ is as follows: Let $\hat{u}  \in
\hat{A}_{* }$ denote an element whose integer pair component
defines via \eqref{eq1.8} an angle that is greater than $\theta
_{o}$. Then $x'(\hat{u})$ is obtained from $x(\hat{u})$ by acting
on the former by adding the term in \eqref{eq3.21}.

\substep{Step 5}
The conclusions of the previous steps
imply that the map $x'$ is obtained from $x$ by acting on the former with the
element $-\sum _{1 \le j \le k} z(u_{j})$ from $\Maps(\hat{A}_{* }; \mathbb{Z})$.

\step{Part 4}
What follows in this part is an explanation of why the image of $C$ in
$O^{\hat{A}}/\Aut ^{\hat{A}}$ is insensitive to the change
that is described by the second point in \eqreft3{16}. To start, suppose that $o$ is
a multivalent vertex in $T_{C}$ and the original correspondence between to
the vertex set of $\underline {\Gamma }_{o}$ and $\hat{A}_{o}$ is changed to
some new assignment. The new assignment is thus obtained by composing the
original with a 1--1 self map of $\hat{A}_{o}$ that only permutes elements with
identical 4--tuples. Let $\iota \co  \hat{A}_{o}\to \hat{A}_{o}$ denote
this permutation.

As explained in Step 1 of the Part 3, the original point for $C$ in
$\mathbb{R}_{ - }\times (\times _{o}\mathbb{R}_{o})$ corresponds to an
assigned point $(\tau _{ - }, x)  \in\mathbb{R}_{ - }\times \mathbb{R}^{\hat{A}}$.
The change induced by $\iota $ in the
identification between $\underline {\Gamma }_{o}$ and $\hat{A}_{o}$ to $C$'s
assigned point in $\mathbb{R}_{ - }\times (\times _{o}\mathbb{R}_{o})$
will change the assigned point in $\mathbb{R}_{ - }\times \mathbb{R}^{\hat{A}}$.
Let $({\tau _{ - }}', x')$ denote this new
point. The task ahead is to prove that $({\tau _{ - }}', x')$ and $(\tau ,x)$
define the same point in $O^{\hat{A}}/\Aut ^{\hat{A}}$.

To start this task, let $\hat{e}$ denote the edge in $T$ that
contains $T$'s minimal angled vertex. Agree to keep the original
parameterization of $K_{\hat{e} }$. This then means that ${\tau _{ - }}' = \tau _{ - }$. One can also arrange that $x'(\hat{u}) =
x(\hat{u})$ if the integer pair from $\hat{u}$ defines an angle
via \eqref{eq1.8} that is less than $\theta _{o}$. This is done
by an iterative scheme that keeps the canonical parameterization
unchanged on each component of $C_{0}-\Gamma $ where $\theta
<\theta _{o}$. Indeed, suppose that $\hat{o}$ is a bivalent
vertex, and suppose that the original parameterization is used
for the component labeled by the edge with $\hat{o}$ as its
largest angle vertex. Then, the original parameterization will
arise on the component labeled by the edge with $\hat{o}$ as its
smallest angle vertex if there is no change made to the lift at
$\hat{o}$ as described in the fourth point of \eqreft3{16}.

To consider the behavior of $x'$ on elements whose integer pair gives an angle
as large as $\theta _{o}$, remark that the new identification between
$\underline {\Gamma }_{o}$ and $\hat{A}_{o}$ has two affects. First, it
changes the distinguished point on the $\sigma =\theta _{o}$ boundary
of the parametrizing cylinder for the component of $C_{0}-\Gamma$
that is labeled by the edge with $o$ as its largest angle vertex. It also
changes the embedding of $\mathbb{R}_{o} \times \Delta _{o}$ into
$\Maps(\hat{A}_{o}; \mathbb{R})$.

Now, the change of the distinguished missing point can be undone
if one first changes the original choice for the distinguished
element in $\hat{A}_{o}$. As explained in Part 3, such a change
modifies $x$ to some $z_{\iota }\cdot x$ where $z_{\iota }\in
\Maps(\hat{A}_{* },\mathbb{Z})$. Note that by virtue of what was
said in the preceding paragraph, $z_{\iota }(\hat{u}) = 0$ in the
case that the integer pair from $\hat{u}$ defines an angle via
\eqref{eq1.8} that is less than $\theta _{o}$.

Meanwhile, the new embedding of $\mathbb{R}_{o} \times \Delta
_{o}$ into $\Maps(\hat{A}_{o}; \mathbb{R})$ only affects the
value of $z_{\iota }\cdot x$ on elements $u$ whose integer pair
defines $\theta _{o}$ via \eqref{eq1.8}. To see how $x'$ differs
from $z_{\iota }\cdot x$ on the latter set, note first that the
new embedding of $\mathbb{R}_{o} \times \Delta _{o}$ to
$\Maps(\hat{A}_{o}; \mathbb{R})$ is obtained from the old by
composing the latter with the action of the permutation $\iota $.
This then means that $x'$ is obtained from $z_{\iota }\cdot x$ by
the action of $\iota \in  \Aut ^{\hat{A}}$.

\step{Part 5}
A change in the choice for the correspondence of $T$ in $(C_{0}, \phi )$
can be rectified by changing the choices for the first and second points in
\eqreft3{16}. This understood, then the image of $C$ is insensitive to the choice of
such a correspondence.

\subsection{The local structure of the map}\label{sec:3e}

The purpose here is to investigate the local structure around any given
subvariety in $\mathcal{M}$ of the map just defined to
$\mathbb{R}\times O^{\hat{A}}/\Aut ^{\hat{A}}$. However, there is one important
point to establish first, this is summarized by:

\begin{proposition}\label{prop:3.3}
The subspace $\mathcal{M} \subset\mathcal{M}_{\hat{A}}$ is a smooth submanifold
whose dimension is $N_{ + }+ N_{ - }+2$.
\end{proposition}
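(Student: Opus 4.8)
The plan is to identify $\mathcal{M}$ with a locally closed subset of $\mathcal{M}_{\hat{A}}$ cut out by the condition that the graph $T$ of Section~\ref{sec:2a} is linear, i.e. that $\theta$ has no critical points with value in $(0,\pi)$ on the model curve, and then to prove this locus is a smooth submanifold of the stated dimension. The starting point is \fullref{thm:1.1}: since $\hat{A}$ obeys \eqreft31, its version of $N_{-}+\hat{N}+\text{\c{c}}_{-}+\text{\c{c}}_{+}$ need not equal $2$, but the linearity of $T^{\hat{A}}$ means $\mathcal{M}$ should be modeled on curves for which $\hat{N}+\text{\c{c}}_{-}+\text{\c{c}}_{+}$ is still effectively $1$ from each extremal vertex and the only parameters beyond those come from the $(0,+,\ldots)$ ends and the $(0,-,\ldots)$ ends that sit at bivalent vertices. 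The dimension count $N_{+}+N_{-}+2$ then comes from: two translation-type parameters (the $\mathbb{R}_{-}$ factor and a $\theta$-flow parameter, or equivalently the $\mathbb{R}$ of Part~1 of Section~\ref{sec:3b} together with the $\mathbb{R}_{-}$ of Part~1 of Section~\ref{sec:3c}); one parameter in $\mathbb{R}_{o}$ for each bivalent vertex; and the simplex dimensions $\sum_o (n_o-1)$ distributed among the bivalent vertices. Summing $\sum_o n_o = N_{-}' + N_{+}$ where $N_{-}'$ counts the $(0,-,\ldots)$ elements at bivalent (hence non-extremal) vertices, and accounting for the one extremal $(0,-,\ldots)$ element when present, gives the total $N_{+}+N_{-}+2$.

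The key steps I would carry out, in order, are as follows. First, show that $\mathcal{M}$ is locally closed in $\mathcal{M}_{\hat{A}}$: this is a matter of observing that "$\theta$ has a critical value in $(0,\pi)$" is a closed condition under the $C^0$-topology of \eqref{eq1.13} on the relevant open subset, using \cite[Proposition~2.11]{T3} which characterizes which subvarieties have this immersed-transversal structure, together with the fact (Property~3 of Section~\ref{sec:2b}) that the behavior near extremal angles is controlled by the asymptotic data. Second, I would produce a local slice: near $C\in\mathcal{M}$, use the preferred parametrizations of Section~\ref{sec:2b} and the Cauchy--Riemann system \eqref{eq2.6} to set up the deformation problem. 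The point is that for a curve in $\mathcal{M}$ the model curve $C_0$ is a disjoint union of cylinders $K_e$ (one per edge of the linear graph) glued along the circle level sets, and a deformation of $C$ staying in $\mathcal{M}$ is a solution of \eqref{eq2.6} on each $K_e$ with matching conditions at the bivalent vertices governed by the transition formulas \eqref{eq2.14}--\eqref{eq2.15}. Linearizing, the deformation operator is a direct sum of linearized CR operators on cylinders, and its kernel/cokernel can be computed by separation of variables in the circle coordinate $v$ of Definition~\ref{def:2.1}; the relevant indicial roots are controlled by the exponential weights in \eqref{eq2.4} and by $\alpha_Q$. Third, I would match the kernel dimension against $N_{+}+N_{-}+2$ and check the cokernel vanishes, so that $\mathcal{M}$ is cut out transversally inside $\mathcal{M}_{\hat{A}}$ as a submanifold of the asserted codimension $2(\hat{N}+\text{\c{c}}_{-}+\text{\c{c}}_{+}-1)$, which by \fullref{thm:1.1} gives dimension $N_{+}+N_{-}+2$.

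Alternatively, and probably more economically, I would deduce the dimension directly from the construction already in place: Sections~\ref{sec:3b}--\ref{sec:3d} assign to each $C\in\mathcal{M}$ a well-defined point of $\mathbb{R}\times O^{\hat{A}}/\Aut^{\hat{A}}$, and $O^{\hat{A}}$ by \eqref{eq3.7} and the stabilizer computation just completed is a smooth manifold of dimension $1 + \dim\mathbb{R}^{\hat{A}}$, where $\dim\mathbb{R}^{\hat{A}}=|\hat{A}_{*}|=N_{+}+N_{-}'$ with $N_{-}'$ the number of non-extremal $(0,-,\ldots)$ elements. Hence $\dim(\mathbb{R}\times O^{\hat{A}}/\Aut^{\hat{A}}) = 1 + 1 + N_{+} + N_{-}'$, and one checks $N_{-}' = N_{-}$ or $N_{-}'=N_{-}-1$ according to whether the minimal angle is $0$ (or an end angle) or is supplied by a $(0,-,\ldots)$ element, the discrepancy being exactly absorbed by the fact that in the latter case the $\mathbb{R}$-factor of Section~\ref{sec:3b} is the $b$-parameter of that same extremal $(0,-,\ldots)$ end rather than an independent datum --- so uniformly the total is $N_{+}+N_{-}+2$. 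To make this a proof of the \emph{proposition} (not just of the dimension), I would argue that the assignment $C\mapsto(\mathbb{R}\times O^{\hat{A}}/\Aut^{\hat{A}})$ is a local homeomorphism onto an open set --- which is the content being built toward in \fullref{thm:3.1} and proved in Sections~\ref{sec:3e}--\ref{sec:4} --- and invoke that the model space is a smooth manifold. Since the full diffeomorphism claim is \fullref{thm:3.1}, for \fullref{prop:3.3} it suffices to establish that the map is a submersion with discrete fibers, i.e. a local diffeomorphism, at each point, which is exactly what the local-structure analysis of Section~\ref{sec:3e} is designed to do.

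The main obstacle I anticipate is the kernel computation for the linearized operator, specifically sorting out precisely which modes at the bivalent vertices survive the matching conditions \eqref{eq2.14}--\eqref{eq2.15} versus which are absorbed into reparametrization by the $\mathbb{Z}\times\mathbb{Z}$ and $\Maps(\hat{A}_{*};\mathbb{Z})$ actions of \eqref{eq3.7}. Getting the bookkeeping right --- that each bivalent vertex contributes exactly $n_o$ real parameters ($n_o-1$ simplex coordinates plus one $\mathbb{R}_o$ coordinate), that the two extremal vertices each contribute just the single translation/scaling parameter they carry, and that nothing is double-counted --- is the delicate part, and it is where the identity $Q_e = Q_{e'} + P_o$ from \eqreft33 and the stabilizer computation following \eqref{eq3.7} do the essential work. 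The rest (local closedness, smoothness of the model space, elliptic regularity for \eqref{eq2.6}) is routine given what precedes.
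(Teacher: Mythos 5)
The paper's own proof is a one-line citation: \emph{This is a special case in \cite[Proposition~2.12]{T3}}. So by reconstructing the argument from scratch you are necessarily taking a different route, and your first approach --- setting up the linearized Cauchy--Riemann problem from \eqref{eq2.6} on the cylinders $K_e$ with matching at the bivalent vertices --- is a reasonable sketch of what that cited proposition presumably does.

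Two things go wrong, though. The codimension you assert, $2(\hat{N}+\text{\c{c}}_{-}+\text{\c{c}}_{+}-1)$, does not produce the stated dimension: subtracting it from the formula $N_{+}+2(N_{-}+\hat{N}+\text{\c{c}}_{-}+\text{\c{c}}_{+}-1)$ of \fullref{thm:1.1} yields $N_{+}+2N_{-}$, which equals $N_{+}+N_{-}+2$ only when $N_{-}=2$. The direct count from \eqref{eq3.7} gives $\dim\mathcal{M}=2+|\hat{A}_{*}|$, and because $\hat{A}_{*}$ excludes any $(0,-,\ldots)$ element whose integer pair gives an extremal angle of $\Lambda_{\hat{A}}$, matching this against $N_{+}+N_{-}+2$ requires tracking exactly those cases; your bookkeeping never actually does this and the asserted codimension formula is simply inconsistent. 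Second, your \emph{more economical} alternative route is circular: \fullref{prop:3.4}, on which you lean, is proved in the paper from \cite[Proposition~2.13]{T3} together with the already-established differential structure on $\mathcal{M}$, i.e.\ from the content of \fullref{prop:3.3} itself. You cannot invoke the later local-diffeomorphism statement to establish the submanifold claim without first supplying the smooth structure by some independent means.
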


\begin{proof}[Proof of  \fullref{prop:3.3}]
This is a special case in \cite[Proposition~2.12]{T3}.
\end{proof}

The next proposition summarizes some of the salient local features of the
map. In this regard, one should keep in mind that each $C  \in\mathcal{M}$
has an open neighborhood with the following property: The graph $T_{(\cdot)}$
of any subvariety from the neighborhood is canonically isomorphic
to $T_{C}$. Indeed, the ambiguity with the choice of an isomorphism between
$T_{C}$ and some $T_{C' }$ arises when there is no canonical pairing
between the respective sets of ends in $C$ and in $C'$ that define identical
elements in $\hat{A}$. However, if $C'$ and $C$ are close in $\mathcal{M}$, then each
end of $C'$ is very close at sufficiently large $|s| $ to a unique
end of $C$, and vice versa. This geometric fact provides the canonical pairing
of ends and thus the canonical identification between $T_{C}$ and
$T_{C' }$.

Having made this last point, it then follows that the map from $\mathcal{M}$ to
$O^{\hat{A}}/\Aut ^{\hat{A}}$ has a canonical lift to
$O^{\hat{A}}$ on a neighborhood of any given subvariety. The
following proposition describes the nature of this lift.

\begin{proposition}\label{prop:3.4}

Every subvariety in $\mathcal{M}$ has an open neighborhood on which the map to
$O^{\hat{A}}/\Aut ^{\hat{A}}$ lifts as a smooth embedding onto an open set in $O^{\hat{A}}$.
\end{proposition}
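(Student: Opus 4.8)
The plan is to show, working on a suitably small neighborhood $\mathcal{U}$ of a fixed $C\in\mathcal{M}$, that the map to $O^{\hat{A}}$ (the canonical lift provided by the canonical identification $T_{C'}\cong T_{C}$ for $C'$ near $C$) is smooth, injective, and has injective differential, hence is an embedding onto an open set. First I would use \fullref{prop:3.3}: $\mathcal{M}$ is a smooth submanifold of dimension $N_{+}+N_{-}+2$, and by \eqref{eq3.7}, \eqref{eq3.12} the dimension of $O^{\hat{A}}$ is $\dim\mathbb{R}_{-}+\dim\mathbb{R}^{\hat{A}}-\dim\Maps(\hat{A}_{*};\mathbb{Z})-2 = 1+N_{-}+N_{*}-N_{*}-2+\ldots$ — more simply, one reads off from \eqref{eq3.14} that $O^{\hat{A}}/\Aut^{\hat{A}}$ has dimension $1+N_{-}+\sum_{o}(n_{o}-1)+\sum_{o}1 -2 = N_{-}+1$ once the $\times_{o}\mathbb{R}_{o}$ and $\Delta_{o}$ contributions are tallied against the $\mathbb{Z}\times\mathbb{Z}$ quotient; since \fullref{thm:3.1} already asserts the diffeomorphism $\mathcal{M}\cong\mathbb{R}\times\hat{O}^{\hat{A}}/\Aut^{\hat{A}}$, I may simply take for granted that the target-plus-$\mathbb{R}$ has the same dimension $N_{+}+N_{-}+2$ as $\mathcal{M}$, so it suffices to prove the map (together with the $\mathbb{R}$-coordinate of \fullref{sec:3b}) is an immersion and a local homeomorphism, or — the cleaner route — that the $O^{\hat{A}}$-valued map alone restricted to an appropriate slice transverse to the $\mathbb{R}$-action is a smooth embedding.

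Smoothness of the lift is the first genuine point. The three building blocks — the $\mathbb{R}_{-}$ assignment from \eqref{eq3.15}, the $\Delta_{o}$ assignments from Part 3 of \fullref{sec:3c} (integrals of the pulled-back form \eqref{eq2.1} over arcs of $\Gamma_{o}$), and the $\mathbb{R}_{o}$ assignments from Part 4 (the chosen $\mathbb{R}$-lift of a missing point on a $\sigma=\theta_{o}$ boundary circle, propagated by the parametrizing algorithm of \fullref{sec:2c}) — are each expressed through the functions $(a,w)$ of \eqref{eq2.5}, which satisfy the elliptic system \eqref{eq2.6}. The key analytic input here is that, by \cite[Proposition~2.11]{T3} cited at the start of \fullref{sec:1b}, on the model curves in $\mathcal{M}$ the function $\theta$ has no critical points in $(0,\pi)$ and the tautological map is an immersion transverse to the $\theta\in\{0,\pi\}$ loci; this means the level-set structure $\Gamma\subset C_{0}$, the graph $T_{C}$, and the arc decompositions vary smoothly in families. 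The preferred parametrizations can then be chosen to depend smoothly on $C'\in\mathcal{U}$ (using the construction after \fullref{def:2.1}: a smoothly varying transversal arc plus the $x/\alpha_{Q}$ line integral), so $w$, the arc integrals, and the missing-point coordinates are all smooth functions of $C'$. The parametrizing algorithm composes finitely many of these smooth operations (the transition formulae \eqref{eq2.14}–\eqref{eq2.15} are manifestly smooth in their inputs), so the composite lift $\mathcal{U}\to O^{\hat{A}}$ is smooth.

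Injectivity on a small enough $\mathcal{U}$ and injectivity of the differential are what I expect to be the main obstacle, and this is where I would defer to the later sections: the excerpt's own roadmap (\fullref{sec:1d}) says the proof that the \fullref{sec:3} map is one-to-one occupies \fullref{sec:4}, using the parametrization-distinguishing results from the start of that section. For the local statement here, the cleanest argument is: by \fullref{prop:3.3} both $\mathcal{M}$ and $\mathbb{R}\times O^{\hat{A}}/\Aut^{\hat{A}}$ are smooth manifolds of equal dimension, so it suffices to prove the combined map (the $\mathbb{R}$-coordinate of \fullref{sec:3b} together with the $O^{\hat{A}}$-lift) has injective differential at $C$; the open-image and embedding conclusions then follow from the inverse function theorem plus the fact (\fullref{sec:3d}) that the $O^{\hat{A}}$-value is choice-independent. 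To see the differential is injective one argues contrapositively: a nonzero tangent vector $\xi\in T_{C}\mathcal{M}$ deforms either the asymptotic parameter encoded in the $\mathbb{R}_{-}$ and $\mathbb{R}$ slots (when it moves $C$ along the $\mathbb{R}$-translation or changes the function $w$'s average via \eqref{eq2.7}), or it moves one of the critical-value-free $\theta$-slices, which is detected by some $\Delta_{o}$ coordinate (the arc integral of \eqref{eq2.1} changes), or it rotates a Reeb-orbit limit, detected by an $\mathbb{R}_{o}$ coordinate via the interpretation in \eqref{eq2.8} and Property 3 of \fullref{sec:2b}. Since these are exactly the coordinates on $O^{\hat{A}}$, no nonzero $\xi$ lies in the kernel. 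Finally, shrinking $\mathcal{U}$ so that the graph $T_{(\cdot)}$ is constantly $T_{C}$ and the end-pairing is canonical (as in the paragraph preceding the proposition) guarantees the lift to $O^{\hat{A}}$ is well defined and single-valued on $\mathcal{U}$, so the local diffeomorphism onto its image is an embedding onto an open subset of $O^{\hat{A}}$, as claimed.
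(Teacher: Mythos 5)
Your framework is the right one — show the lift is smooth, show its differential is injective, and conclude by the inverse function theorem plus a dimension count — and your smoothness argument is sound. But the proof of injectivity of the differential is where the real content lies, and your treatment of it is circular: you assert that any nonzero $\xi\in T_C\mathcal{M}$ must be ``detected'' by either the $\mathbb{R}_-$, $\Delta_o$ or $\mathbb{R}_o$ coordinates ``since these are exactly the coordinates on $O^{\hat{A}}$,'' which restates the conclusion rather than proving it. The substantive claim is that a kernel element of the deformation operator $D_C$ that is annihilated by all the evaluation-at-infinity functionals (the $|s|\to\infty$ Reeb-orbit limits $\hat{p}_o\varphi-\hat{p}_o't$ of Propositions~\ref{prop:3.5}--\ref{prop:3.6}) must vanish; this is an analytic fact, not a tautology, and it requires its own argument.

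The paper handles this by outsourcing. Its proof is one sentence of substance: the differential structure on ${\mathcal{M}_{\hat{A},T}}^{\Lambda}$ comes from \fullref{thm:1.1}, and \cite[Proposition~2.13]{T3} guarantees that the angles appearing in Propositions~\ref{prop:3.5} and \ref{prop:3.6} are local coordinates on that moduli space. Propositions~\ref{prop:3.5} and~\ref{prop:3.6} — which appear immediately before the statement in question and which you do not engage — exist precisely to translate between the abstract $O^{\hat{A}}$ coordinates (the $\Psi_u$ maps, the $\mathbb{R}_-$ map) and the geometric asymptotic data to which \cite[Proposition~2.13]{T3} applies. Without that citation (or an independent proof of the analogous fact), your argument has a genuine gap at the one point that carries the weight. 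A minor further point: your dimension arithmetic in the first paragraph is garbled, and invoking \fullref{thm:3.1} to match dimensions is dangerously close to circular, since \fullref{prop:3.4} is a step in that theorem's proof; the dimension of $O^{\hat{A}}$ should instead be read off directly from \eqref{eq3.7}.
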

\noindent This proposition is also proved momentarily.

What follows is another way to view \fullref{prop:3.4}. To set things up,
remark that all graphs in any given component of $\mathcal{M}$ have isomorphic
versions of $T_{(\cdot )}$. This said, fix a component and fix a graph, $T$,
that is in the corresponding isomorphism class. Let ${\mathcal{M}_{\hat{A},T}}$
denote the corresponding component, and let ${\mathcal{M}_{\hat{A},T}}^{\Lambda }$
denote the set of pairs $(C, T_{C})$ where $C  \in {\mathcal{M}_{\hat{A},T}}$
and $T_{C}$ signifies a chosen correspondence
of $T$ in $(C_{0}, \phi )$. The tautological projection from
${\mathcal{M}_{A,T}}^{\Lambda }$ to ${\mathcal{M}_{\hat{A},T}}$ defines the
former as a covering space and principal $\Aut(T)$ bundle over ${\mathcal{M}_{\hat{A},T}}$.
Given a bivalent vertex $o \in  T$, fix an
admissable identification between the vertices of $T$'s version of
$\underline {\Gamma }_{o}$ and the set $\hat{A}_{o}$. To elaborate, the identification
is admissable if the second component of the assigned 4--tuple gives the sign
of the integer label of the vertex, and if the greatest common divisor of
the integer pair component of the 4--tuple is the absolute value of the
integer label.

With these identification chosen, the map from ${\mathcal{M}_{\hat{A},T}}$ to
$O^{\hat{A}}/\Aut ^{\hat{A}}$ lifts as a map from
${\mathcal{M}_{\hat{A},T}}^{\Lambda }$ to $O^{\hat{A}}$ and
\fullref{prop:3.4} asserts that the lifted map is a local diffeomorphism. Note
that ${\mathcal{M}_{\hat{A},T}}^{\Lambda }$ can be viewed as the moduli
space of pairs that consist of a subvarieties with a distinct labeling of
its ends.

As indicated by the discussion from \fullref{sec:1b} in the
paragraphs after \fullref{thm:1.2}, the various affine
parameters that enter in the definition of $O^{\hat{A}}$ have direct
geometric interpretations. To elaborate, suppose that $T$,
${\mathcal{M}_{\hat{A},T}}$ and ${\mathcal{M}_{\hat{A},T}}^{\Lambda }$
are as just defined. Let $o \in T$ denote a bivalent vertex and let
$e$ denote the edge of $T$ that contains $o$ as its largest angle
vertex. Introduce $(\hat {p}_{o}, {\hat {p}_{o}}')$ to denote the
relatively prime pair of integers that defines $\theta _{o}$ via
\eqref{eq1.8}. Now define a map from $\Maps(\hat{A}_{o};
\mathbb{R})$ to $\Maps(\hat{A}_{o}; \mathbb{R}/(2\pi \mathbb{Z}))$
as follows: First multiply any given $x  \in \Maps(\hat{A};
\mathbb{R})$ by $({q_{e}}'\hat {p}_{o} - q_{e}{\hat{p}_{o}}')$ and
then take the $\mod(2\pi \mathbb{Z})$ reduction of the result. Using
the map for the relevant bivalent vertex, evaluation on any given $u
\in \hat{A}_{* }$ defines a map from $\mathbb{R}^{\hat{A}}$ to
$\mathbb{R}/(2\pi \mathbb{Z})$ that descends to give a map
\begin{equation}\label{eq3.22}
\Psi _{u}\co  O^{\hat{A}}\to\mathbb{R}/2\pi \mathbb{Z}.
\end{equation}
The composition of such a map with the map from ${\mathcal{M}_{\hat{A},T}}^{\Lambda }$
has following geometric interpretation:

\begin{proposition}\label{prop:3.5}

Let $(C, T_{C})\in{\mathcal{M}_{\hat{A},T}}^{\Lambda }$,
let $o$ denote a bivalent vertex in $T$, let $u  \in \hat{A}_{o}$, and let
$E  \subset C$ denote the end that corresponds via $T_{C}$
to $u$. Then the restriction to the end $E$ of $\hat {p}_{o}\varphi -{\hat {p}_{o}}'t$
has a unique $|s|\to \infty $ limit in $\mathbb{R}/(2\pi \mathbb{Z})$ and the latter is
obtained by composing the map $\Psi _{u}$ with the map from
${\mathcal{M}_{\hat{A},T}}^{\Lambda }$ to $O^{\hat{A}}$.
\end{proposition}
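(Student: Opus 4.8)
The plan is to combine the geometric interpretation of the preferred parametrization developed in \fullref{sec:2b}, in particular Property 3 and the displayed expression \eqref{eq2.8}, with the iterative construction of the canonical parametrizations in Part 4 of \fullref{sec:3c}. First I would recall that since $u\in\hat{A}_o$, the end $E\subset C$ corresponding to $u$ via $T_C$ is a concave side end whose $|s|\to\infty$ limit of $\theta$ equals $\theta_o\in(0,\pi)$; hence $E$ sits at large $|s|$ inside the component $K_e\subset C_0-\Gamma$ labeled by the edge $e$ that has $o$ as its largest angle vertex. By Property 3 of \fullref{sec:2b}, the relatively prime pair defining $\theta_o$ is $(\hat p_o,\hat p_o')$, the integer pair $Q_e=(q_e,q_e')$ equals $m(\hat p_o,\hat p_o')$ for a nonzero integer $m$, and the $\mod(2\pi\mathbb{Z})$ reduction of the $\sigma$-independent quantity
\begin{equation*}
\frac{1}{2\pi m}\,\alpha_{Q_e}(\sigma)\int_{\mathbb{R}/(2\pi\mathbb{Z})} w(\sigma,v)\,dv
\end{equation*}
is precisely the value of the Reeb-orbit invariant $\hat p_o\varphi-\hat p_o' t$ on the $|s|\to\infty$ limit of the constant-$\theta$ slices of $K_e$ nearest $\theta_o$; since these slices are isotopic to the constant-$|s|$ slices of $E$, this is the asserted $|s|\to\infty$ limit of $\hat p_o\varphi-\hat p_o't$ restricted to $E$. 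This already establishes existence and uniqueness of the limit; it remains to identify it with $\Psi_u$ composed with the canonical lift.

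Next I would track the two relevant pieces of data attached to $u$ in the definition of the point in $O^{\hat A}$: the entry $x(u)\in\mathbb{R}$ of the map $x\in\mathbb{R}^{\hat A}$, and the value in $\mathbb{R}_o$. By Part 4 of \fullref{sec:3c}, the canonical parametrization of $K_e$ is produced by the iteration running up through the bivalent vertices of smaller angle, and the $\mathbb{R}/(2\pi\mathbb{Z})$ coordinate of the missing point on the $\sigma=\theta_o$ boundary circle that corresponds to the distinguished element $u_o\in\hat{A}_o$ is, by definition, the value assigned to $C$ in $\mathbb{R}_o$; meanwhile the other elements of $\hat{A}_o$ are located on that same circle by the simplex coordinates $r_1,\dots,r_{n_o}$ of $\Delta_o$ via the identification \eqref{eq3.9}. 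Thus $u$'s missing point has $\mathbb{R}/(2\pi\mathbb{Z})$ coordinate (on that boundary circle, with respect to the affine parameter $v$) equal to the class of $\tau+r_1+\cdots+r_k$ where $k$ records $u$'s position in the cyclic order; and this is exactly the image of $u$ under the map from $\mathbb{R}^{\hat A}$ to $\mathbb{R}/(2\pi\mathbb{Z})$ built from the component identification in Step 1 of Part 3 of \fullref{sec:3d}. So the remaining task is purely computational: show that the $\mod(2\pi\mathbb{Z})$ reduction of the integral expression above, evaluated on the end $E$, equals $({q_e}'\hat p_o-q_e\hat p_o')$ times this boundary-circle coordinate of $u$'s missing point. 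One computes the left side by taking $\sigma\to\theta_o$ along the cylinder parametrizing $K_e$; the missing point contributes the dominant term, and the factor relating $\frac{1}{2\pi m}\alpha_{Q_e}(\theta_o)$ to $({q_e}'\hat p_o-q_e\hat p_o')$ is forced by $Q_e=m(\hat p_o,\hat p_o')$ together with \eqref{eq2.2} and \eqref{eq1.8} — indeed $\alpha_{Q_e}(\theta_o)$ is a multiple of $({q_e}'\hat p_o-q_e\hat p_o')/m$ up to a common factor that cancels against the normalization built into \eqref{eq3.22}.

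Finally I would check consistency with the quotient: a change of distinguished element in $\hat A_o$, a change of the lift at a vertex of smaller angle, or a change of parametrization at the minimal angled edge each moves the point in $O^{\hat A}$ by the group actions analyzed in \fullref{sec:3d}, and one sees directly from \eqref{eq3.22} (and from the fact that the multiplication factor $({q_e}'\hat p_o-q_e\hat p_o')$ is the same for every $u\in\hat{A}_o$) that $\Psi_u$ is unchanged; likewise the Reeb-orbit value of $\hat p_o\varphi-\hat p_o't$ is manifestly intrinsic to $E$. The main obstacle, and the only place real work is needed, is the exact bookkeeping of constants in the last computational step — reconciling the factor $1/(2\pi m)$, the value $\alpha_{Q_e}(\theta_o)$, and the normalization $({q_e}'\hat p_o-q_e\hat p_o')$ in \eqref{eq3.22} — and keeping careful track of the accumulated $2\pi$ shifts from the iteration in Part 4 of \fullref{sec:3c} so that the identity holds on the nose in $\mathbb{R}/(2\pi\mathbb{Z})$ rather than merely up to an unspecified integer multiple of $2\pi$; this is exactly what the insensitivity arguments of \fullref{sec:3d} were arranged to supply, so invoking them closes the argument.
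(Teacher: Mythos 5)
Your central step misapplies Property 3 of \fullref{sec:2b} and equation \eqref{eq2.8}, and as a result the argument does not go through. That property and that formula concern the case where the limiting angle $\theta_{*}$ is \emph{not} achieved on the closure of $K$ — the convex-side-end situation — where $Q_{e}$ is a nonzero multiple $m(\hat p_{o},\hat p_{o}')$ of the relatively prime pair and the constant-$\theta$ circles converge to a single Reeb orbit. In Proposition~\ref{prop:3.5}, however, $o$ is a bivalent vertex of $T$, so $\theta_{o}$ \emph{is} achieved on the closure of $K_{e}$. By Property 1 of \fullref{sec:2b} this forces $\alpha_{Q_{e}}(\theta_{o})>0$, which is incompatible with $Q_{e}$ being a multiple of $(\hat p_{o},\hat p_{o}')$ — a multiple would make $\alpha_{Q_{e}}(\theta_{o})=0$. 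Hence the integer $m$ in your formula does not exist, the displayed quantity $\frac{1}{2\pi m}\alpha_{Q_{e}}(\sigma)\int w\,dv$ is not well-defined in this setting, and even morally the average of $w$ over a whole constant-$\theta$ circle cannot single out the Reeb orbit attached to the one end $E$ among the several ends whose missing points all lie on the $\sigma=\theta_{o}$ boundary.

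The argument the paper has in mind is far more elementary and entirely local to $E$: substitute \eqreft25 directly into $\hat p_{o}\varphi-\hat p_{o}'t$ to get \eqref{eq3.23},
\begin{equation*}
\hat p_{o}\varphi-\hat p_{o}'t=\bigl(\hat p_{o}q'-\hat p_{o}'q\bigr)v+\bigl(\hat p_{o}\surd 6\cos\sigma-\hat p_{o}'(1-3\cos^{2}\sigma)\bigr)w,
\end{equation*}
and note that the coefficient of $w$ vanishes at $\sigma=\theta_{o}$ precisely because $(\hat p_{o},\hat p_{o}')$ satisfies \eqref{eq1.8} there. Thus on $E$ the $|s|\to\infty$ limit is $(\hat p_{o}q_{e}'-\hat p_{o}'q_{e})\,v_{o}$, where $v_{o}$ is the $\mathbb{R}/(2\pi\mathbb{Z})$ coordinate of $E$'s missing point on the $\sigma=\theta_{o}$ boundary circle; this is exactly the value of $\Psi_{u}$ as defined in \eqref{eq3.22}, with no normalization constant to chase. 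The final paragraph of your proposal — invoking \fullref{sec:3d} to absorb changes of lift, distinguished element, and parametrization — is sound, but it is the direct substitution via \eqreft25 and the vanishing of the $w$-coefficient, not the averaged quantity from Property 3, that carries the computational content.
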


The parameter on the line $\mathbb{R}_{ - }$ also has geometric
interpretation. To describe the latter, first introduce m here to denote the
greatest common divisor of the integer pair that is assigned to the edge in
 $T$ with $T$'s smallest angle vertex. Then, the map from $\mathbb{R}_{ - }$ to
$\mathbb{R}/(2\pi \mathbb{Z})$ that takes the $\mod(2\pi \mathbb{Z})$ reduction
of $\frac{1}{m}\tau _{ - }$ descends to $O^{\hat{A}}/\Aut ^{\hat{A}}$
Let $\Psi _{ - }$ denote the latter map.

\begin{proposition}\label{prop:3.6}
The composition of $\Psi _{ - }$ with the map from $\mathcal{M}$ to
$O^{\hat{A}}/\Aut ^{\hat{A}}$ is the following:

\begin{itemize}

\item
When the first point in \eqreft32 holds, let $o$ denote the minimal angle vertex in $T$.
Let $C  \in\mathcal{M}$ and let $E  \subset  C$ denote the end where the
$|s|\to\infty $ limit of $\theta$ is $\theta _{o}$. Then this composition maps $C$
to the $|s|\to\infty $ limit on $E$ of $\hat{p}_{o}\varphi -{\hat {p}_{o}}'t$.

\item
When the second point in \eqreft32 holds, then this composition assigns to $C$
the $t$--coordinate of its intersection point with the $\theta  = 0$ locus.

\item
When the third point in \eqreft32 holds, write the $(1,\ldots)$ element in $\hat{A}$ as\break
$(1,\varepsilon , m(\hat {p}_{o},{\hat {p}_{o}}'))$. Let $E  \subset  C$ denote the end of $C$
where the $|s|\to\infty $ limit of $\theta $ is 0. This composition then assigns to $C$
the $|s|\to\infty $ limit of $-\varepsilon (\hat{p}_{o}\varphi -{\hat {p}_{o}}'t)$.
\end{itemize}
\end{proposition}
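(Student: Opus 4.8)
The plan is to unwind two definitions and then match the outcome, case by case, against the three quantities listed, which correspond to the three cases of \eqreft32. Recall from \fullref{sec:3c} that the image of $C$ in the $\mathbb{R}_{-}$ factor of $O^{\hat{A}}$ is the real number $\tau _{-}=-\frac{1}{2\pi }\alpha _{Q_{e}}(\sigma )\int _{\mathbb{R}/(2\pi \mathbb{Z})}w(\sigma ,v)\,dv$, where $e$ is the edge of $T^{\hat{A}}$ carrying $\Lambda _{\hat{A}}$'s minimal angle, $Q_{e}$ its integer pair from \eqreft33, $(a,w)$ the functions of a chosen preferred parametrization of $K_{e}$, and $\sigma $ any value strictly between the two vertex angles of $e$; this is well defined because $\alpha _{Q_{e}}(\sigma )\int w\,dv$ is independent of $\sigma $ by \eqreft27. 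Hence, with $m$ the greatest common divisor of $Q_{e}$, the composition of $\Psi _{-}$ with the map to $O^{\hat{A}}/\Aut ^{\hat{A}}$ sends $C$ to the $\mod(2\pi \mathbb{Z})$ reduction of $\frac{1}{m}\tau _{-}$. The common tool for identifying this reduction is the expression, read off from \eqreft25 on any constant $\sigma $ slice of $K_{e}$, for $\hat{p}_{o}\varphi -{\hat{p}_{o}}'t$ in terms of $v$ and $w$: in each of the three cases the integer pair of the element producing the minimal angle is an integer multiple of the relatively prime pair $(\hat{p}_{o},{\hat{p}_{o}}')$, so \eqref{eq1.8} makes the $v$--linear part of that expression drop out and leaves a constant multiple of $\alpha _{Q_{e}}(\sigma )w$.

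In the first case the minimal angle $\theta _{o}$ lies in $(0,\pi )$ and $Q_{e}=m(\hat{p}_{o},{\hat{p}_{o}}')$. I would then observe that the $v$--average over a constant $\sigma $ slice of $\hat{p}_{o}\varphi -{\hat{p}_{o}}'t$ equals $\frac{1}{m}\tau _{-}$ for every $\sigma $, and let $\sigma \to\theta _{o}$: by the third listed property of preferred parametrizations in \fullref{sec:2b} this slice is then embedded in the convex side end $E$ and converges to its limiting Reeb orbit $\gamma $, and $\hat{p}_{o}\varphi -{\hat{p}_{o}}'t$ is constant on $\gamma $, equal there to its $|s| \to\infty $ limit on $E$. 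That gives the first bullet. In the second case $C$ meets the $\theta =0$ cylinder in a single point $z_{0}$ (\fullref{sec:3b}), so $Q_{e}=(0,-1)$ and $m=1$; by the sixth listed property in \fullref{sec:2b} the parametrization of $K_{e}$ extends over the $\sigma =0$ circle, collapsing it to $z_{0}$ and extending $w$ to a constant $w_{0}$, and \eqreft25 at $\sigma =0$ then computes the $t$--coordinate of $z_{0}$ to equal $\tau _{-}$; since $m=1$ this is the asserted value.

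The third case is the one needing real input. Here the minimal angle $0$ comes from the unique $(1,\varepsilon ,P)$ element with $Q_{e}=-\varepsilon P$, and the value $0$ is reached only as an $|s| \to\infty $ limit on an end $E$ which, when $\varepsilon =+$, is on the concave side, so neither the third nor the sixth property of \fullref{sec:2b} applies. The fact I would import from Sections 2 and 3 of \cite{T4} (the analysis behind \eqref{eq1.9}) is that on such an end the function $w$ of a preferred parametrization of $K_{e}$ has a limit as $\sigma \to0$ that is uniform in $v$. Granting this, a direct evaluation of the \eqreft25 expression for $\hat{p}_{o}\varphi -{\hat{p}_{o}}'t$ at $\sigma =0$, together with \eqreft27 and $Q_{e}=-\varepsilon P$, identifies $\lim_{|s| \to\infty }(-\varepsilon (\hat{p}_{o}\varphi -{\hat{p}_{o}}'t))$ with $\frac{1}{m}\tau _{-}$, which is the third bullet.

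I expect the main obstacle to be precisely that last asymptotic input. The $(1,\ldots)$ ends are the degenerate ones, whose limiting Reeb orbit is a multiple cover of the $\theta =0$ circle, and the required uniform control of $w$ as $\theta \to0$ is not among the properties recorded in \fullref{sec:2b}; it has to come from \cite{T4}. The rest is bookkeeping, the one delicate part being a consistent tracking of the orientation conventions — the $1$--form \eqref{eq2.1} entering \eqreft25 versus $-\alpha $, and the concave versus convex side distinction — since these fix all the signs, with \eqreft17 through \eqref{eq1.9} and \cite{T4} supplying what is needed.
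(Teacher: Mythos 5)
Your proposal is correct and follows essentially the same route as the paper's own (explicitly omitted) argument, which derives \eqref{eq3.23} from \eqreft25, uses \eqref{eq1.8} to annihilate the $v$--linear coefficient, and then compares the remaining $\alpha_{Q}(\sigma)w$ term with \eqref{eq3.15} and \eqreft27. Your identification of the one genuine analytic input in Case~3 — that $w$ on a $(1,\ldots)$--labeled end becomes $v$--independent as $\theta\to 0$, coming from the asymptotics behind \eqref{eq1.9} in \cite{T4} — is accurate and is precisely what the paper's sketch (which only works out Case~1) leaves tacit.
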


Formal proofs of these last two propositions are omitted as both
follow directly from \eqref{eq1.8} and \eqreft25 using the given
definitions in \fullref{sec:3c} above. The remarks that
follow are meant to indicate what is going on. Consider first the
assertion in \fullref{prop:3.5}. The point here is that
if $e$ is the edge that ends at $o$, and if $(\sigma , v)$
parametrize $K_{e}$ via \eqref{eq1.8}, then \eqreft25 finds
\begin{equation}\label{eq3.23}
\hat {p}_{o}\varphi -{\hat {p}_{o}}'t = \bigl(\hat {p}_{o}q' -
{\hat {p}_{o}}'q\bigr) v
+ \hat {p}_{0}\surd 6 \cos(\sigma )-{\hat{p}_{o}}'(1 - 3\cos^{2}\sigma ) w.
\end{equation}
According to \eqref{eq1.8}, the coefficient in front of $w$
vanishes when $\sigma $ is the angle that is assigned to the
vertex o. As a consequence, the $|s|\to\infty $ limit of $\hat
{p}_{o}\varphi -{\hat {p}_{o}}'t$ on the end that is associated to
$o$ is equal to $(\hat{p}_{o}q' - {\hat {p}_{o}}'q) v_{o}$ (modulo
$2\pi \mathbb{Z})$, where $v_{o}$ is the coordinate of the
missing point on the boundary of the parametrizing cylinder that
corresponds to the end in question.

The conclusions of \fullref{prop:3.6} are derived using similar considerations.
For example, in the first case of the proposition, the left most term in
\eqref{eq3.23} is zero on the end in question and the right most term is $\alpha_{Q}(\sigma ) w$.
Thus, the claim follows directly using \eqref{eq3.15}.

\begin{proof}[Proof of  \fullref{prop:3.4}]
The result follows using the previous two propositions in
conjunction with \cite[Proposition~2.13]{T3}. To elaborate, note that the differential
structure on ${\mathcal{M}_{\hat{A},T}}^{\Lambda }$ is that given by
\fullref{thm:1.1}. \cite[Proposition~2.13]{T3} guarantees that
the angles from Propositions~\ref{prop:3.5} and~\ref{prop:3.6}
provide local coordinates.
\end{proof}


\setcounter{section}{3}
\setcounter{equation}{0}

\section{Proving diffeomorphisms}\label{sec:4}

The previous section introduced the submanifold $\mathcal{M}
\subset   \mathcal{M}_{\hat{A}}$ of subvarieties with a
corresponding $T_{(\cdot )}$ graph that is linear, and it
described a map from $\mathcal{M}$ to $\mathbb{R}  \times O^{\hat{A}}/\Aut^{\hat{A}}$.
This map is denoted in
what follows by $\mathfrak{B}$. This section proves that the map $\mathfrak{B}$ is a
dffeomorphism onto $\mathbb{R}  \times \hat{O}^{\hat{A}}/\Aut^{\hat{A}}$. The proof of this
assertion verifies \fullref{thm:3.1}. Meanwhile, the proof introduces
various techniques that are used in the subsequent sections to
analyze the whole of ${\mathcal{M}^{*}}_{\hat{A}}$ and the
other multiply punctured sphere moduli spaces.

The proof that $\mathfrak{B}$ is a diffeomorphism has three parts. The first
part establishes that the map is 1--1 onto its image. The second
proves that the image is in $\mathbb{R}  \times \hat{O}^{\hat{A}}/\Aut^{\hat{A}}$. The third proves
that the map is proper onto $\mathbb{R}  \times \hat{O}^{\hat{A}}/\Aut^{\hat{A}}$. Granted that the map
is 1--1 and that its image is $\mathbb{R}  \times \hat{O}^{\hat{A}}/\Aut^{\hat{A}}$, \fullref{prop:3.4}
establishes that the map is a local diffeomorphism. Granted that
$P$ is a proper map onto $\mathbb{R}  \times \hat{O}^{\hat{A}}/\Aut^{\hat{A}}$, it diffeomorphically
identifies $\mathcal{M}$ with $\mathbb{R}  \times \hat{O}^{\hat{A}}/\Aut^{\hat{A}}$.

The first subsection below introduces the machinery that is used
to prove both that $\mathfrak{B}$ is 1--1 and that its image is in $\mathbb{R} \times \hat{O}^{\hat{A}}/\Aut^{\hat{A}}$. The
former conclusion is established in the subsequent subsection.
\fullref{sec:4c} constitutes a digression to describe
$O^{\hat{A}}- \hat{O}^{\hat{A}}$, and then \fullref{sec:4d} contains the proof that $P$
maps $\mathcal{M}$ onto $\mathbb{R}  \times \hat{O}^{\hat{A}}/\Aut^{\hat{A}}$. The final subsection
explains why $\mathfrak{B}$ is a proper as a map onto $\mathbb{R}  \times \hat{O}^{\hat{A}}/\Aut^{\hat{A}}$.

\subsection{Graphs and subvarieties}\label{sec:4a}

One can ask of any two elements from some version of ${\mathcal{M}^{*}}_{\hat{A}}$ whether one is obtained from the
other by a constant translation along the $\mathbb{R}$ factor of
$\mathbb{R}  \times (S^1\times S^2)$. This section
provides a sufficient condition for such to be the case. This
condition is summarized by \fullref{lem:4.1}, below. The next subsection
considers the consequences of \fullref{lem:4.1} in the case that both
subvarieties come from \fullref{thm:3.1}'s moduli space $\mathcal{M}$.

To set the stage for \fullref{lem:4.1}, suppose that $(C_{0}, \phi )$
and $({C_{0}}', \phi ')$ define points in some version of ${\mathcal{M}^{*}}_{\hat{A}}$. Then the same version of $T$ must
have correspondences in both $(C_{0}, \phi )$ and in $({C_{0}}',\phi ')$. Choose respective correspondences and
denote them as $T_{C}$ and $T_{C'}$.

For each edge $e \subset T$, the parametrizing cylinder for
the $e$--labeled component in $C_{0}- \Gamma$ is the same
as that used for ${C_{0}}'- \Gamma '$. This understood, say
that respective parametrizations of the $C_{0}$ and ${C_{0}}'$
versions of $K_{e }$ are  compatible when the following two conditions are met:

\itaubes{4.1}
\textsl{When $o$  is a multivalent vertex on $e$, then the respective extensions of the two
parametrizations to the $\sigma =\theta _{o}$  circle in
the parametrizing cylinder have identical sets of missing
points, and also identical sets of singular points. Moreover,
these identifications define identical versions of the circular
graph $\ell _{oe}$  in the sense that a given point has
the same integer label whether viewed in the $C_{0}$  or
in the ${C_{0}}'$  version.}

\item
\textsl{This identification between the two versions of $\ell_{oe}$  is the same as that given by the chosen
correspondences $T_{C}$  and $T_{C'}$.}
\end{itemize}

Assume now that the respective parametrizations of the $C_{0}$
and ${C_{0}}'$ versions of each $K_{(\cdot )}$ are compatible.

To continue, suppose again that $e$ is an edge in $T$, and define
functions $(\hat{a}_{e}, \hat{w}_{e})$ on any given $C_{0}$
version of $K_{e}$ as follows: Let $(a_{e}, w_{e})$ denote the
versions of the functions that parametrize $C_{0}$'s version of
$K_{e}$ via \eqreft25, and let $({a_{e}}', {w_{e}}')$ denote those that
appear in the ${C_{0}}'$ version. Both versions are pairs of
functions on the parametrizing cylinder. Define 
$\hat {a}_{e}  = a_{e}- {a_{e}}'$ and 
$\hat {w}_{e}  = w_{e}- {w_{e}}'$.

Now make the following assumption:

\qtaubes{4.2}
{\sl There exists a continuous function on the complement in
$C_{0}$  of the critical point set of $\cos\theta $ whose pull-back from
$C_{0}- \Gamma $  via the various parametrizing
maps gives the collection $\{\hat {w}_{e}\}$.}
\endqtaubes

Let $\hat {w}$ denote this continuous function. As is explained
below, such a function exists if and only if the identification
between the $C_{0}-\Gamma $ and ${C_{0}}'- \Gamma '$ via
the compatible parametrizing maps extends to define a
homeomorphism between $C_{0}$ and ${C_{0}}'$ that identifies
respective $\cos(\theta )$ critical point sets and is
differentiable in their complements.

To finish the stage setting, introduce $G$ to denote the portion of
the $\hat {w} = 0$ locus that lies in the complement of the set
of critical points of $\cos\theta $ on $C_{0}$. The guarantee that
$\phi '$ and $\phi $ are translates of each other involves $G$:

\begin{lemma}\label{lem:4.1}
If $G \ne${\o}, then $\phi '$  is obtained from $\phi $  by composing
the latter with a constant translation along the $\mathbb{R}$ factor of $\mathbb{R}  \times (S^1  \times S^2)$.
\end{lemma}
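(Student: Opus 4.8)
The goal is to show that if the locus $G$ — the part of the $\hat w = 0$ set away from the critical points of $\cos\theta$ on $C_0$ — is non-empty, then $\phi'$ differs from $\phi$ by an $\mathbb R$--translation; equivalently, after a translation, all the functions $\hat a_e$ vanish identically, which together with $\hat w_e \equiv 0$ forces the two parametrizations (hence the two maps $\phi,\phi'$) to coincide on every $K_e$ and so, by continuity, on all of $C_0$. The strategy is to exploit the structure of the pseudoholomorphicity equations \eqref{eq2.6}: on each $K_e$ the pair $(a_e,w_e)$ and $(a_e',w_e')$ both satisfy \eqref{eq2.6} with the \emph{same} $Q=Q_e$ and the \emph{same} $\beta$ (the pair $(p,p')$ comes from $\hat A$, which is shared). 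Subtracting the two systems, one finds that $(\hat a_e,\hat w_e)$ satisfies a \emph{linear} first-order elliptic system — a perturbed Cauchy--Riemann equation, since $\alpha_{Q_e}$ is nowhere zero on the relevant interval. Concretely, from the second equation of \eqref{eq2.6} one gets $(\alpha_{Q_e}\hat w_e)_\sigma - \surd6\sin\sigma(1+3\cos^2\sigma)\,\partial_v(\tfrac12(w_e+w_e')\hat w_e) = \tfrac1{\sin\sigma}\,\partial_v\hat a_e$, and similarly a first equation relating $\partial_\sigma\hat a_e$, $\partial_v\hat a_e$, $\partial_v\hat w_e$ and $\hat w_e$ with coefficients that are smooth and, crucially, with $\alpha_{Q_e}$ multiplying the $\partial_\sigma$ terms. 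So $\hat w := \hat w_e$ (the function on $C_0$ provided by \eqref{eq4.2}) together with the associated $\hat a$ solves a linear elliptic equation on $C_0$ minus the $\cos\theta$--critical locus.

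**Main steps, in order.** First I would assemble, from \eqref{eq4.2} and the compatibility hypotheses \eqref{eq4.1}, the fact that $\hat w$ is a well-defined continuous function on $C_0$ minus the critical set of $\cos\theta$, and that it is differentiable there (this is the "if and only if" remark already made in the excerpt, so I may quote it). Next I would write down the linearized system: subtract the two copies of \eqref{eq2.6}, collect terms, and observe that the result is $\mathcal L(\hat a,\hat w)=0$ where $\mathcal L$ is a first-order linear operator whose symbol is that of $\bar\partial$ (because $\alpha_{Q_e}\neq0$), with smooth coefficients built from $\alpha_{Q_e}$, $\sigma$, and the averages $\tfrac12(w_e+w_e')$, $\tfrac12(a_e+a_e')$. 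Then the key analytic input: solutions of such a linear, $\bar\partial$--type system obey a \emph{unique continuation / Carleman} property, and more to the point the zero set of a nontrivial solution is discrete (it behaves locally like the zero set of a holomorphic function, by the similarity principle — write $(\hat a + i\hat w)\cdot(\text{bounded factor}) = $ holomorphic, à la Aronszajn/Vekua). Hence, if $\hat w$ (equivalently the complex combination) vanished on a set with an accumulation point, it would vanish identically on the connected curve $C_0$ minus the critical set, and then on all of $C_0$ by continuity.

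**Using $G\neq\emptyset$.** The point is that $G$, being a level set of $\hat w$, is automatically a set of positive dimension wherever it is non-empty and $\hat w$ is not identically zero — but I expect the real content is subtler: $G$ is cut out by \emph{one real} equation $\hat w=0$, so generically it is a curve, not a discrete set; the similarity-principle argument above shows the zero set of the \emph{complex} function $\hat a + i\hat w$ is discrete, but $\hat w$ alone can vanish on arcs. So the correct use of $G\neq\emptyset$ is: pick a point $z_*\in G$; there $\hat w=0$; now invoke the \emph{first} linearized equation, which at a point where $\hat w$ and $\partial_v\hat w$ are controlled forces a relation on $\partial_\sigma\hat a_e$, and combine with \eqref{eq2.7}'s consequence that $\sigma\mapsto\alpha_Q(\sigma)\int \hat w(\sigma,v)\,dv$ is \emph{constant} on each edge interval (this is just \eqref{eq2.7} applied to $w_e$ and to $w_e'$ and subtracted). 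That constant is then zero on any edge whose closure meets $G$, and by propagating across the multivalent vertices using the transition formulae \eqref{eq2.14}--\eqref{eq2.15} — whose $\hat w$--transition is affine with the \emph{same} integer data $N$ on both sides, because the parametrizations are compatible — one gets $\int \hat w(\sigma,\cdot)\,dv \equiv 0$ on \emph{every} edge of $T$. Once $\alpha_{Q_e}\int\hat w\,dv=0$ everywhere, the second linearized equation reads $(\alpha_{Q_e}\hat w)_\sigma - (\text{transport term}) = \tfrac1{\sin\sigma}\partial_v\hat a_e$; integrating in $v$ kills the right side and shows $\hat w$ has, after the transport, mean zero with a maximum principle / Hopf-lemma argument available because the operator on $\hat w$ (after eliminating $\hat a$ via the first equation — an allowed elimination since the system is elliptic) is a scalar second-order elliptic operator with no zeroth-order term; its only solutions that vanish on a nonempty open arc and have mean zero are identically zero. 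Hence $\hat w\equiv0$ on $C_0$, and feeding $\hat w\equiv0$ back into the linearized system gives $\partial_\sigma\hat a_e = \partial_v\hat a_e = 0$, i.e. each $\hat a_e$ is constant; the transition formulae \eqref{eq2.15} then force a single common constant $c$ across all edges, and since $a = s\circ\phi$, $a' = s\circ\phi'$ in the parametrizations \eqref{eq2.5}, this says precisely $\phi' = (\text{translation by }-c)\circ\phi$.

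**The main obstacle.** The delicate point is the transition across multivalent vertices and around the possibly-nontrivial $\pi_1$ of $\underline\Gamma_o$: I need the "$\int\hat w\,dv$ is a well-defined global object on $C_0$ that vanishes once it vanishes on one edge." The compatibility conditions \eqref{eq4.1} plus \fullref{lem:2.3}'s bookkeeping are exactly designed so that the $\hat w$--transition in \eqref{eq2.15} involves only the shared integer pairs $Q_e$ and a \emph{common} $N$ (not a $\phi$-vs-$\phi'$ discrepancy), so the jump in $\alpha_{Q_e}\int\hat w\,dv$ across a vertex is governed by $({q_e}'q_{e'}-q_e{q_{e'}}')\int\hat v_{e'}\,dv_{e'}$-type terms which, after one checks the constant-of-motion \eqref{eq2.7} is respected, all cancel. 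Verifying this cancellation cleanly — rather than by brute force through \eqref{eq2.14}--\eqref{eq2.15} at every vertex — is where I would spend the bulk of the effort, likely by phrasing $\alpha_{Q_e}\int\hat w\,dv$ as (minus $2\pi$ times) the difference of the two subvarieties' images under the $\mathbb R_-$--type functional \eqref{eq3.15}, which is a vertex-independent quantity. The analytic unique-continuation step, by contrast, is standard (similarity principle for $\bar\partial$-type operators) and I would cite it rather than reprove it.
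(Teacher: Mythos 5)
Your proposal has a genuine gap at the step where you use $G\neq\emptyset$. You pick $z_*\in G$ so that $\hat w(z_*)=0$, and then assert that the constant $\alpha_Q(\sigma)\int\hat w(\sigma,v)\,dv$ ``is then zero on any edge whose closure meets $G$.'' That does not follow: the vanishing of $\hat w$ at one point of a constant-$\sigma$ circle says nothing about the integral of $\hat w$ over that circle. In the paper's application (Part~3 of \fullref{sec:4b}), the vanishing of this integral is obtained from the \emph{separate} hypothesis that $C$ and $C'$ have the same assignment in the $\mathbb{R}_-$ factor, and that vanishing is used to prove $G\neq\emptyset$ (a zero of $\hat w$ must exist on a slice whose mean vanishes) --- the implication runs the opposite way from what you need. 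Lemma~4.1 as stated is only given $G\neq\emptyset$, so you cannot assume the integral vanishes, and your path to $\hat w\equiv0$ collapses.

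The paper's actual mechanism is quite different and never tries to show $\hat w\equiv0$ directly from a pointwise zero. The proof splits into two cases: either the closure of $G$ is all of $C_0$, in which case $\hat w\equiv 0$ trivially, \eqref{eq4.3} then forces each $\hat a_e$ to be constant, and the constants agree by continuity across $\Gamma$; or the closure of $G$ is a proper subset, in which case \fullref{lem:4.3} endows $G$ with the structure of an embedded, real-analytic graph whose edges are oriented by $d\hat a$ and whose vertices, ends, and $\cos\theta$-critical points have alternating incidence structure. A graph maximum-principle argument then shows the supremum of $\hat a$ on $G$ cannot be attained anywhere (not in the interior of an edge by orientation, not at a vertex or a $\cos\theta$-critical point by the alternating-edge property, not as an $|s|\to\infty$ limit by the boundedness/unique-limit clauses), which is absurd unless $G=\emptyset$. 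So $G\neq\emptyset$ forces the first case. Your unique-continuation/similarity-principle observation (that the zero set of $\hat a+i\hat w$ is discrete) is exactly what underlies \eqref{eq4.5}--\eqref{eq4.6} and the graph structure of $G$ in \fullref{lem:4.3}, so that part of your reading is on target; but the decisive step is the maximum-principle argument on $\hat a$ restricted to the graph $G$, which your outline does not contain. You would need to either reproduce that argument or find a correct replacement for the false implication $\hat w(z_*)=0\Rightarrow\int\hat w\,dv=0$.
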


The proof of this lemma has three fundamental inputs. The first
concerns the edge labeled set $\{\hat{a}_{e}   \equiv a_{e}- {a_{e}}'\}$:

\begin{lemma}\label{lem:4.2}
There is a continous function on $C_{0}$  that pulls back from $C_{0}- \Gamma$
via the parametrizing maps as the collection $\{\hat{a}_{e}\}$.  Moreover, this function is smooth
away from the critical points of the pull-back of $\theta $.
\end{lemma}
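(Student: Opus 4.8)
The plan is to build the claimed continuous function $\hat a$ on $C_0$ directly from the local data $\{\hat a_e = a_e - a_e'\}$, patching the pieces together across the level-set locus $\Gamma$ and then checking smoothness. First I would note that on each component $K_e \subset C_0 - \Gamma$ the functions $a_e$ and $a_e'$ are the $s$-coordinates of $\phi$ and $\phi'$ restricted to $K_e$, pulled back through the (compatible) preferred parametrization of the standard cylinder; since the parametrizing cylinders used for $C_0$ and ${C_0}'$ along $e$ are literally the same, $\hat a_e$ is a well-defined smooth function on that cylinder, hence a smooth function on $K_e$. The content of the lemma is that these locally-defined functions glue to a single continuous function on all of $C_0$, including across $\Gamma$ and across the points where $\phi$ or $\phi'$ meets the $\theta \in\{0,\pi\}$ loci.

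The key gluing step is to understand the transition across an arc $\gamma$ in a graph $\underline\Gamma_o$ shared by the closures of two components $K_e$ and $K_{e'}$. Here I would invoke the transition formulas \eqref{eq2.14} and \eqref{eq2.15}: along the interior of $\gamma$ the two preferred parametrizations of $K_e$ are related to those of $K_{e'}$ by the change of variables $\sigma\mapsto\sigma$, $\hat v_e \mapsto (\alpha_{Q_{e'}}/\alpha_{Q_e})\hat v_{e'} + 2\pi\alpha_N/\alpha_{Q_e}$, and crucially the $a$-component transforms by $a_e(\sigma, \cdot) = a_{e'}(\sigma,\cdot)$ — i.e. the $a$-functions agree under the transition (only $w$ picks up the extra affine term). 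Since the same is true for the primed data with the same integer pair $N$ — this is exactly the compatibility hypothesis \eqreft41, which forces the missing/singular point data and hence the gluing integers to match for $C_0$ and ${C_0}'$ — we get $\hat a_e = \hat a_{e'}$ on the overlap (the neighborhood of $\mathrm{int}(\gamma)$ to which both parametrizations extend, cf.\ Property~4 of \fullref{sec:2b}). Thus the $\{\hat a_e\}$ agree on all double-overlaps of the form (component, neighboring arc), and by Property~1 in Part~3 of \fullref{sec:2c} (each arc lies in exactly two versions of $\ell_{o(\cdot)}$) these overlaps suffice to glue a continuous function $\hat a$ on $C_0 - (\text{$\theta$-critical points})$, which by boundedness extends continuously across those finitely many points as well.

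For the monovalent vertices — the points of $C_0$ where $\theta$ is $0$ or $\pi$, and the ends — I would argue separately. At a point where $\theta\in\{0,\pi\}$, Property~6 of \fullref{sec:2b} says the preferred parametrization extends smoothly across the $\sigma=\theta_*$ circle and factors through a pseudoholomorphic disk; since $a_e$ and $a_e'$ both extend continuously (indeed the $s$-coordinate extends smoothly over such a disk), so does $\hat a_e$, and the same compatibility argument as above shows the extensions agree where defined. On an end $E$, both $\phi$ and $\phi'$ converge to (possibly multiply covered) Reeb orbits with the same $\hat A$-data, and the asymptotic coordinates $(\rho,\tau)$ of \eqref{eq2.4} give uniform control on $a_e$ and $a_e'$ out to $|s|\to\infty$; the difference $\hat a_e$ is therefore bounded and continuous on $E$, which is all that is needed for a continuous (not necessarily proper) function on $C_0$.

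Finally, smoothness away from the $\theta$-critical points is immediate once the gluing is established: on the complement of those points, $r$ and $\theta$ (or $\sigma$ and $v$, or the holomorphic coordinate $u$ of \eqref{eq2.11}) are honest smooth coordinates, and each $\hat a_e$ is smooth in them, so the glued function is smooth there. The main obstacle, and the step I would be most careful about, is verifying that the compatibility conditions \eqreft41 genuinely force the \emph{same} integer pair $N$ to appear in the $C_0$ and ${C_0}'$ versions of the transition \eqref{eq2.14}–\eqref{eq2.15} across every shared arc — so that the $a$-parts really do match and not just the $w$-parts up to a constant. This is where one must use that the chosen correspondences $T_C$ and $T_{C'}$ induce the same identification of the $\ell_{oe}$ circles (second clause of \eqreft41), which pins down the lifts $\hat v_e$, $\hat v_{e'}$ consistently and hence the integer $N$; once that is in hand the rest is bookkeeping with the formulas already recorded in \fullref{sec:2}.
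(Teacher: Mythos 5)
Your structural approach matches the paper's: patch the local data $\{\hat a_e\}$ across $\Gamma$ using the compatibility of preferred parametrizations, then check smoothness. There is, however, a genuine gap precisely in the step you flag as the one to be careful about. You assert that \eqreft41 alone forces the transition integer pair $N$ (for $C_0$) and its primed counterpart $N'$ (for ${C_0}'$) in \eqref{eq2.14}--\eqref{eq2.15} to coincide across each shared arc $\gamma$. This does not follow. Condition \eqreft41 is a boundary-circle condition: it controls the identification of the two parametrizing extensions only on the $\sigma = \theta_o$ locus. Evaluating \eqref{eq2.14} there yields only $\alpha_{L}(\theta_o) = 0$ for $L = N - N'$, which leaves a $\mathbb{Z}$'s worth of ambiguity, since any $L$ proportional to the relatively prime pair defining $\theta_o$ via \eqref{eq1.8} also satisfies $\alpha_L(\theta_o)=0$. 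With $L \neq 0$ the two parametrizations overlap differently away from the boundary circle, the gluing map is merely continuous (not differentiable) across $\gamma$, and your claim that $\hat a_e = \hat a_{e'}$ on an open neighborhood of $\operatorname{int}(\gamma)$ fails; only the equality on $\gamma$ itself is obtained.

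The paper closes this gap using inputs you never invoke. It uses the standing hypothesis \eqreft42, i.e.\ that the collection $\{\hat w_e\}$ glues to a continuous $\hat w$, together with the Cauchy--Riemann system \eqref{eq4.3} that the pair $(\hat a_e, \hat w_e)$ satisfies, to deduce the second vanishing condition $q_e \ell' - q_{e'}\ell = 0$. Combined with $\alpha_L(\theta_o)=0$ this forces $L=0$, and only then is the glued homeomorphism (equivalently $\hat a$) smooth across $\Gamma$ away from the $\theta$-critical set. In short, the continuity clause of the lemma does follow from \eqreft41 as you describe, but your smoothness argument is incomplete: you need \eqreft42 and \eqref{eq4.3}, not just \eqreft41, to show the transition integers actually match.
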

\noindent Let $\hat{a}$ denote the function from \fullref{lem:4.2}.

The second input concerns the nature of $G$:

\begin{lemma}\label{lem:4.3}
If $G$  is neither empty nor all of the complement of the $\cos\theta $  critical point
set, then it has the structure of an embedded, real analytic
graph. In addition:
\begin{itemize}
\item
Each edge is an embedded arc whose
interior is oriented by the pull-back of $d\hat{a}$.

\item
The number of incident edges to any given vertex is even
and at least four. Moreover, any circle about a vertex with
sufficiently small and generic radius misses all vertices of
$G$ , and intersects the edges transversely. Furthermore, a
circumnavigation of such a circle alternately meets inward
pointing and outward pointing incident edges.

\item
Any sufficiently small and generic radius circle about a
critical point of $\cos\theta $  misses all vertices of $G$
and intersects the edges transversely. Furthermore, a
circumnavigation of such a circle alternately meets inward
pointing and outward pointing incident edges.

\item
If $R$  is sufficiently large and generic, then the $|s| = R$ locus misses the vertices $G$
and intersects the edges transversely. This locus
intersects an even number of edges and a circumnavigation about
any given component of the $|s| = R$ locus
alternately meets edges where $|s|$  is
respectively increasing and decreasing in the oriented direction.

\item
 Let $E$  denote an end of $C_{0}$. Then, $\hat{a}$  is bounded on $G \cap E$ and it has a
unique $|s|    \to   \infty $  limit on $G \cap  E$.
\end{itemize}
\end{lemma}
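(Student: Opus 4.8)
The plan is to prove \fullref{lem:4.3} by a local analysis near each of the three types of special points (vertices of $G$, $\cos\theta$ critical points, and the $|s|=R$ locus on an end) together with a global argument for the behavior on ends. The starting observation is that, by \fullref{lem:4.2}, $\hat a$ is continuous on all of $C_0$ and smooth away from the $\theta$ critical locus, while $\hat w$ is continuous on the complement of the $\cos\theta$ critical points; so on the open set $C_0\setminus\Crit(\cos\theta)$ both $\hat a$ and $\hat w$ are smooth. I would first derive the linear system that the pair $(\hat a_e,\hat w_e)$ satisfies. Since $(a_e,w_e)$ and $(a_e',w_e')$ each solve the Cauchy--Riemann--type system \eqref{eq2.6} with the same $\alpha_Q$, $\beta$, and since that system is quasilinear, the differences satisfy a first-order linear elliptic system whose coefficients are smooth on $C_0\setminus\Crit(\cos\theta)$. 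Consequently $\hat a+i\hat w$ (in a suitable local complex coordinate) is a solution of a linear equation of the form $\bar\partial f + \mathfrak{a} f + \mathfrak{b}\bar f = 0$, hence has the local structure of a pseudoanalytic function: near any point of $C_0\setminus\Crit(\cos\theta)$ where $(\hat a,\hat w)\neq(0,0)$ it is a nonvanishing local homeomorphism onto a neighborhood of $0$, and near a point where $(\hat a,\hat w)$ vanishes to order $k$ it looks like $z^k$ times a nonvanishing factor. This is the engine behind every clause of the lemma.

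Granted this pseudoanalytic description, the clauses follow fairly mechanically. The zero set $G$ of $\hat w$ (restricted to $C_0\setminus\Crit(\cos\theta)$) is the preimage of the imaginary axis under a local homeomorphism of the above type, so away from the common zeros of $(\hat a,\hat w)$ it is a real-analytic embedded arc, and $d\hat a$ is nonzero along it since $\hat a$ is the real part and $\hat w\equiv 0$ forces $d\hat a\neq 0$ transverse to $G$; this gives the orientation claim. At a common zero of $(\hat a,\hat w)$ of order $k$, the local model $z\mapsto z^{k+1}(1+\cdots)$ shows $G$ is a union of $2(k+1)$ arcs meeting at the point, so the valence is even and at least four (the minimal case $k=1$ giving four), and a small generic circle meets the arcs transversely, alternately inbound and outbound with respect to the orientation by $d\hat a$. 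For the clause about a $\cos\theta$ critical point $z_*$: the parametrizations of the incident components $K_e$ of $C_0\setminus\Gamma$ all extend smoothly across $z_*$ by Property 5 of \fullref{sec:2b}, and \eqref{eq2.11} describes the local holomorphic normal form $\theta+ir=\theta_*+u^{m+1}+\cdots$; translating the $\hat w$-equation into the $u$-coordinate, $\hat w$ again has a pseudoanalytic local form, so $G$ near $z_*$ is a union of transversely-met arcs with the alternating inbound/outbound property around a small generic circle.

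For the clause on the $|s|=R$ locus: on each end $E$, \eqref{eq2.4} gives the asymptotic form of $\theta$ and the analysis of \cite{T4} gives the asymptotic forms of $(a_e,w_e)$ and $(a_e',w_e')$; since both $C_0$ and $C_0'$ define the same element of $\hat A$, the leading exponential behaviors match and the differences $(\hat a,\hat w)$ decay, with $\hat a$ bounded on $E$ and possessing a unique $|s|\to\infty$ limit — this is essentially a direct computation from the asymptotic expansions plus the constancy statement \eqreft27 applied to the $w$'s, which controls the average of $\hat w$ over each constant-$|s|$ slice. Boundedness of $\hat a$ on $G\cap E$ and existence of its limit there then follow. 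For a generic large $R$ the $|s|=R$ circle avoids the (necessarily compact) vertex set of $G$, meets $G$ transversely; the alternating-sign statement for $|s|$ along $G$ follows from the Cauchy--Riemann relation between $\hat a$ (which is $|s|$ up to the translation) and $\hat w$ (which cuts out $G$): crossing $G$ in the oriented direction, $d\hat a$ changes sign relative to the tangent to the circle. The main obstacle I anticipate is making the pseudoanalytic reduction rigorous and uniform — that is, checking that the coefficients of the linear system for $(\hat a,\hat w)$ are genuinely smooth (not merely $L^p$) on $C_0\setminus\Crit(\cos\theta)$ and extend appropriately across the $\theta$-critical points using \eqref{eq2.11}, so that the similarity-principle normal form $z^{k}\times(\text{unit})$ is legitimately available everywhere it is needed, including in the blown-up coordinate near a $\cos\theta$ critical point where the model \eqref{eq2.11} has to be combined with the $w$-equation. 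Once that is secured, each of the five bullet points is a short local computation.
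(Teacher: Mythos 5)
Your overall strategy for the first four bullets is essentially the paper's own: after establishing that $(\hat a_e,\hat w_e)$ solves a first-order linear system obtained by subtracting the two copies of \eqref{eq2.6}, you reduce to a pseudoanalytic normal form $z^k\times(\text{unit})$ near zeros of $(\hat a,\hat w)$. The paper writes the same reduction by setting $\eta\equiv\hat a_e - i k(\sigma)\hat w_e$ with the specific weight \eqref{eq4.4} and deriving a single complex equation $\bar\partial\eta + u\,\hat w_e = 0$; it then reads off the normal form $\eta = r + c z^p + o(|z|^{p+1})$ from real analyticity rather than from the similarity principle. Both give the same local picture, and the orientation, parity, and alternating-sign claims then follow just as you describe, with Sard supplying the transversality statements. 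One small but worth-flagging omission: the definition of $G$ lives on $C_0\setminus\Crit(\cos\theta)$, so you must also show that the zero set of $\hat w$ matches up smoothly across the locus $\Gamma$ (not just across the $\cos\theta$ critical points). The paper does this in a separate part of the proof by extending the parametrizing coordinates across $\Gamma$ and invoking \eqref{eq2.14}--\eqref{eq2.15}; your appeal to \fullref{lem:4.2} secures the smoothness of $\hat a$ across $\Gamma$ but you do not explicitly deal with $\hat w$ there, though the needed argument is morally contained in \fullref{lem:4.2}'s proof.

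Where your proposal has a genuine gap is the fifth bullet. You assert that, on an end $E$, ``the differences $(\hat a,\hat w)$ decay, with $\hat a$ bounded on $E$ and possessing a unique $|s|\to\infty$ limit,'' and then deduce the statement about $G\cap E$. This is not correct for a concave side end whose $|s|\to\infty$ limit of $\theta$ lies in $(0,\pi)$ and whose \eqref{eq2.4}-integer $n_E$ is positive: for such an end the paper establishes boundedness and a unique limit of $\hat a$ \emph{only along} $G\cap E$, not on all of $E$. The mechanism is also different from the one you name. The constancy statement \eqreft27 is about the $\sigma$-independence of $\alpha_Q(\sigma)\int w(\sigma,v)\,dv$, which does not control averages of $\hat w$ over constant-$|s|$ slices (those are not constant-$\sigma$ slices on an $n_E>0$ end). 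The argument that actually works introduces the antiderivative $f$ of the 1-form $(1-3\cos^2\theta_o)\,d\varphi - \sqrt 6\cos\theta_o\,dt$ on the end, expressed in cylinder coordinates as in \eqref{eq4.9} (and in the end coordinates as in \eqref{eq4.8}), observes that $f - f'$ vanishes precisely along $G\cap E$, and then uses \eqref{eq2.4} and \eqref{eq4.8} to convert that vanishing into the bound and the limit \eqref{eq4.7} for $\hat a$ restricted to $G\cap E$. Without this identification you have no control on how $\hat a$ behaves as the zero locus of $\hat w$ spirals out toward infinity in the $n_E>0$ end, and the supremum argument in \fullref{lem:4.1} that the fifth bullet is designed to support would not go through. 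So the fifth bullet needs a genuinely different, and more delicate, argument than the one you sketch.
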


The third input, and part of \fullref{lem:4.2}'s proof, is a certain
Cauchy--Riemann equation that is obeyed any given pair $(\hat{a}_{e}, \hat {w}_{e})$:
\begin{equation}\label{eq4.3}
\begin{split}
\alpha _{Q_e } \hat{a}_{e_\sigma }&-\surd 6 \sin\sigma
 (1 + 3 \cos^{2}\sigma ) \big(w_{e} \hat{a}_{e v}+\hat {w}_{e}
{a_{e}}'_{v}\big) = -\frac{{1 + 3\cos ^4\sigma } }{ {\sin
\sigma }}\hat {w}_{e v}
\\
(\alpha _{Q_e} \hat{w}_{e})_{\sigma }&-\surd 6 \sin\sigma  (1
+ 3 \cos^{2}\sigma ) \big(w_{e}  \hat {w}_{e v }+\hat{w}_{e}{w_{e}}'_{v}\big)=\frac{1 }{ {\sin \sigma}}\hat{a}_{ev},
\end{split}
\end{equation}
Indeed, this last equation appears when the $({a_{e}}', {w_{e}}')$
version of \eqref{eq2.6} is subtracted from the $(a_{e}, w_{e})$ version.

If Lemmas \ref{lem:4.2} and \ref{lem:4.3} are taken on faith for the
moment, here is the proof of \fullref{lem:4.1}.

\begin{proof}[Proof of \fullref{lem:4.1}]
If $G$ $ \ne  ${\o}, then there are two possibilities to
consider, that where the closure of $G$ is all of $C_{0}$, and that
where it is not. To analyze the first, appeal to \eqref{eq4.3} to
conclude that each $\hat{a}_{e}$ is constant when the closure of $G$
is $C_{0}$. According to \fullref{lem:4.3}, all of these constants are
the same; $\phi '$ is obtained from $\phi $ by composing with a
constant translation along the $\mathbb{R}$ factor in $\mathbb{R}\times (S^1  \times {\rm g}^{2})$.

As is argued next, the case that $G$'s closure is not $C_{0}$ can
not occur unless $G$ is empty. Here is why: Since $\hat{a}$ is bounded
on $G$, it has some supremum. By virtue of the first point in \fullref{lem:4.3}, this supremum is
not achieved in the interior of any edge.
By virtue of the second point, it is not achieved at any vertex.
Meanwhile, the third point implies that it is not achieved on the
closure of $G$. The fourth and fifth points in \fullref{lem:4.3} imply that
the supremum is not an $|s|    \to   \infty$ limit of $\hat{a}$ on $G$. This exhausts all possibilities and so $G=
${\o}.
\end{proof}

To tie up the first of the loose ends, here is the proof of \fullref{lem:4.2}.

\begin{proof}[Proof of \fullref{lem:4.2}]
Let $e$ $ \in T$ denote a given edge. A diffeomorphism is defined from $C_{0}$'s version of $K_{e}$ to
the $C_{0}'$ version by pairing respective points that have the
same inverse image in the parametrizing cylinder. Let $\psi_{e}$ denote the latter map.

As is explained momentarily, there is a homeomorphism between
$C_{0}$ and ${C_{0}}'$ that is smooth away from the $\theta $
critical set and whose restriction to any given $K_{e}$ is the
corresponding $\psi _{e}$. This homeomorphism is denoted by $\psi$. Then $\hat{a} = \phi *s - \psi *\phi '*s$,
and so $\hat{a}$ is continuous and smooth away from the $\theta $--critical set.

With the preceding understood, consider now the asserted
existence of $\psi$. To start the analysis, focus attention on a
multivalent vertex, $o \in T$. At issue here is whether the
relevant versions of $\psi _{(\cdot )}$ fit together across
the locus $\Gamma _{o}$ in $C_{0}$. For this purpose, let $e$
denote one of $o$'s incident edges and let $\gamma $ denote an arc
in $\ell _{oe}$ with the latter viewed as the $\sigma =\theta_{o}$ circle in the parametrizing cylinder for $K_{e}$. By
virtue of the first point in \eqreft41, any point in the interior of
$\gamma $ simultaneously parametrizes a unique point in
$C_{0}$'s version of $\Gamma _{o}$ and also one in the ${C_{0}}'$
version. This fact gives the map $\psi _{e}$ a canonical
extension as a homeomorphism from a neighorhood of the $\theta =\theta _{o}$ boundary in
$C_{0}$'s version of the closure of $K_{e}$ to a neighorhood of the $\theta =\theta _{o}$ boundary in
the closure of the ${C_{0}}'$ version.

To continue, let $e'$ denote $\gamma$'s other edge label. Just as
with $\psi _{e}$, the extension of $\psi _{e'}$ to $\gamma
$ defines a homeomorphism from its interior to that of an arc in
the ${C_{0}}'$ version of $\Gamma _{o}$. Together, \eqref{eq2.14} and the
second point in \eqreft41 imply that these two homeomorphisms agree.
As this conclusion holds for any arc of any version of $\Gamma_{o}$, it thus follows from
\eqreft41 that the collection $\{\psi_{e}\}$ is the restriction of a homeomorphism from $C_{0}$ to
${C_{0}}'$.

Having established that $\psi $ is at least continuous across
$\Gamma $ in $C_{0}$, the next step is to verify that it is
smooth across this locus. For this purpose, return to the arc
$\gamma $ in $\Gamma _{o}$, and let $e$ and $e'$ again denote its
edge labels. Fix respective lifts, $\hat {v}_{e}$ and $\hat{v}_{e'}$, for the $\mathbb{R}/(2\pi \mathbb{Z})$ coordinate
functions on the $e$ and $e'$ versions of the parametrizing
cylinder. Having done so, an integer pair arises in $C_{0}$'s
version of \eqref{eq2.14} and \eqref{eq2.15}, and a similar pair arises in the
${C_{0}}'$ version. Let $L = (\ell , \ell ')$ denote the
difference between these two integer pairs. The assumptions in
\eqreft42 force $\alpha _{L}(\theta _{o})$ to vanish. This understood,
then the assumption in \eqref{eq4.3} forces $(q_{e}\ell ' -
q_{e'}\ell )$ to vanish as well. These two vanishing
conditions require $L$ to vanish.

Granted that $L= 0$, fix a small radius disk about any given point
in the interior of $\gamma $'s image in the version of $\Gamma_{o}$ from $C_{0}$. Let $D$ denote this disk.
If the radius of $D$ is
very small, then the parametrizing maps for both the $C_{0}$
versions of $K_{e}$ and $K_{e'}$ extend to parametrize D.
There is an analogous $D'$ in ${C_{0}}'$. If the radius of $D$ is
small, both $\psi _{e}$ and $\psi _{e'}$ extend as
diffeomorphisms from $D$ onto an open set in $D'$. Since $\psi $ is
continuous, these extensions agree on the $\theta=\theta _{o}$ locus in $D$. However, since $L= 0$, they agree on the
whole of $D$. Thus, $\psi $ is smooth on $D$.
\end{proof}

\begin{proof}[Proof of \fullref{lem:4.3}]
The analysis of $G$ has four parts. The first studies $G$ in $C_{0}- \Gamma $, the second examines
how $G$ intersects $\Gamma $. Together, these prove that $G$ has the
structure of a graph. The third part establishes the third and
fourth points of the lemma. The final part establishes the final
point about $\hat{a}$.

\step{Part 1}
Fix an edge, $e \subset T$, so as to
consider the part of the $\hat {w} = 0$ locus that lies in
$K_{e}$. In this regard, assume that this locus is not the whole
of $K_{e}$. It is a consequence of \eqref{eq4.3} that the pair
$(\hat{a}_{e}, \hat {w}_{e})$ are real analytic on the interior
of the parametrizing cylinder for $K_{e}$. Thus, the zero locus
of $\hat {w}$ in $K_{e}$ is a 1--dimensional, real analytic
variety. In particular, this gives it the structure of a graph
whose vertices are the points where both $d\hat {w}$ and $\hat{w}$ are zero. An edge of
$G$ is the closure of a component of the
complement in the $\hat {w} = 0$ locus of the set where $d\hat
{w} = 0$. As a consequence of \eqref{eq4.3}, the 1--form $d\hat{a}_{e}$ is
zero at any given point on the $\hat {w} = 0$ locus if and only
if $d\hat {w}_{e}$ is also zero there. It also follows from \eqref{eq4.3}
that $d\hat{a}_{e}$ is not a multiple of $d\hat {w}_{e}$ at any
point on the $\hat {w}_{e} = 0$ locus where both are non-zero.
This implies that $d\hat{a}_{e}$ pulls-back to orient each edge of
$G$. Moreover, the latter orientation is consistent with the one
that comes by viewing the interior of any given edge as a portion
of the boundary of the region where $\hat {w}_{e}   \le 0$.

To continue, remark that \eqref{eq4.3} also implies that $G$ has no
isolated points in $K_{e}$.\\
To see why, first introduce the function
\begin{equation}\label{eq4.4}
\sigma \to k(\sigma ) = \bigg(\frac{1 }{ {(1 + 3\cos
^4\sigma )\alpha _{Q_e } (\sigma )}}\biggr)^{1 / 2 }.
\end{equation}
Next, set $\eta    \equiv \hat{a}_{e}- i k \hat
{w}_{e}$. Now fix any point in the parametrizing cylinder, and
standard arguments find a complex coordinate, $z$, centered on the
point and a disk about that point that makes \eqref{eq4.3} equivalent to
a single complex equation that has the form
\begin{equation}\label{eq4.5}
\bar {\partial }\eta  + u \hat {w}_{e} = 0.
\end{equation}
Here, $u$ is a smooth function on the disk. As $\eta $ is real
analytic, so it has a non-trivial Taylor's expansion about any
given point; and \eqref{eq4.5} implies that this expansion about any
given $\hat {w}_{e} = 0$ point has the form
\begin{equation}\label{eq4.6}
\eta  = r + c z^{p} + o\big(|z| ^{p + 1}\big),
\end{equation}
where $r$ is real and $p$ is a positive integer. This last
observation precludes local extrema for $\hat {w}_{e}$ where
$\hat {w}_{e}$ is zero. Therefore, $G$ has no isolated points in
$K_{e}$.

Equation \eqref{eq4.6} also implies that each vertex of $G$ in $K_{e}$ has
at least four, and an even number of incident edges.

In any event, the second point in \fullref{lem:4.3} for vertices in
$K_{e}$ follows using transversality theory and the fact that the
$d\hat{a}$ orientation on any given edge is consistent with its
orientation as part of the boundary of the region where $\hat {w} \le 0$.

\step{Part 2}
Let $o \in T$ be a bivalent vertex, and let
$\gamma    \subset \Gamma _{o}$ be an arc. This part of
the proof describes $G$'s behavior in a neighborhood of $\gamma $.
To start, focus attention on a small radius disk about a point in
the interior of $\gamma $ whose closure lies some positive
distance from the vertices of $\gamma $. Let $e$ and $e'$ denote the
edges that label $\gamma $. Thus, $\gamma $ is in the
closure of both $K_{e}$ and $K_{e'}$. Because the pairs
$(a_{e}, w_{e})$ and $({a_{e}}', {w_{e}}')$ extend over the
$\sigma =\theta _{o}$ boundary of $K_{e}$'s parametrizing
domain, so the functions $(\hat{a}_{e}, \hat {w}_{e})$ also
extend. By taking the chosen disk to have small radius, the disk
can be assumed to lie in the image of this extention of the
parametrizing domain. Likewise, the pair $(\hat{a}_{e'},
\hat {w}_{e'})$ can be assumed to extend from the
parametrizing cylinder of $K_{e'}$ to a subset in $[0,
\pi ]\times   \mathbb{R}/(2\pi \mathbb{Z})$ that maps to the
chosen disk via an extension of the $e'$ version of \eqreft25.

A comparison of the respective extensions of $(\hat{a}_{e}, \hat
{w}_{e})$ and $(\hat{a}_{e'}, \hat {w}_{e'})$ can
be made using the corresponding $C_{0}$ and ${C_{0}}'$ versions of
\eqref{eq2.14} and \eqref{eq2.15}. For this purpose, fix respective lifts, $\hat{v}_{e}$ and
$\hat {v}_{e'}$ chosen for the $\mathbb{R}/(2\pi \mathbb{Z})$ coordinate functions on the $e$ and $e'$
versions of the parametrizing cylinder. As remarked in the proof
of \fullref{lem:4.2}, this can be done so that respective integer pairs
that appear in the $C_{0}$ and ${C_{0}}'$ versions of \eqref{eq2.14} and
\eqref{eq2.15} are equal. Granted this, \eqref{eq2.15} implies that the
coordinate transformation that is defined by \eqref{eq2.14} on the given
disk identifies $(\hat{a}_{e}, \hat {w}_{e})$ with
$(\hat{a}_{e'}, \hat {w}_{e'})$. As a consequence,
the use on this disk of the extension of $K_{e}$'s parametrizing
cylinder coordinates identifies $\hat {w}^{- 1}(0)$ with the
zero locus of $\hat {w}_{e}$.

With this last identification understood, then $G$'s intersection
with the chosen disk must be a graph that has the structure
described in Part 1 above for $G \cap K_{e}$. Moreover, any
give edge of this intersection is either contained in the $\sigma=\theta _{o}$ locus, or it
intersects this locus in a finite set.
By the way, if $G$'s intersection with the chosen disk has an edge
in the $\sigma =\theta _{o}$ locus, then it contains the whole of
$\gamma $ since this locus and $G$ are real analytic.

\step{Part 3}
The transversality assertions in the third and
fourth points follow from Sard's theorem. The assertion about the
alternating orientation follows from the fact that the $d\hat{a}$
orientation on any given edge is consistent with its orientation
as a part of the boundary of the region where $\hat {w}  \le 0$.

\step{Part 4}
The proof of the fifth point of the lemma
requires some understanding of the behavior of $\hat{a}$ on the ends
of $C_{0}$. For this purpose, suppose first that $E \subset C_{0}$ is an end with a
$(\pm 1,\ldots)$ label
from $\hat{A}$. The identification $T_{C} = T_{C'}$ pairs $E$
with a ${C_{0}}'$ end, $E'$, that has the same label. This
understood, a comparison between the $E$ and $E'$ versions of \eqref{eq1.9}
proves that $\hat{a}_{e}$ is bounded on $E$ and has a unique limit on $E$ as $\theta \to  0$.

Next, suppose that $E \subset C_{0}$ is an end with a
$(0,-,\ldots)$ label from $\hat{A}$ where the constant
$n_{E}$ in \eqref{eq2.4} is zero. Let $E'$ denote the corresponding end in
${C_{0}}'$ as defined by the chosen identification of $T_{C}$ and
$T_{C'}$. Let $b$ and $b'$ denote the respective constants
that appear in the $E$ and $E'$ versions of \eqref{eq2.14}. It then follows
from \eqref{eq2.14} that
\begin{equation}\label{eq4.7}
\hat{a} = \frac{1 }{ r} \ln\bigg(\frac{b }{ {b'
}}\bigg) + o(1) ,
\end{equation}
where $r$ is the constant that appears in \eqref{eq2.4}. Here,
the term designated as $o(1)$ limits to zero as $\sigma $ limits to
the angle $\theta _{E}$, thus as $|s|    \to
 \infty $ on $E$.

Finally, suppose that $E \subset C_{0}$ is an end with a $(0,\cdots)$ label from $\hat{A}$ where
the integer $n_{E}$ in \eqref{eq2.4} is non-zero. It then follows from \eqref{eq2.4} that $E$
corresponds to some multivalent vertex, $o \in T$. Now,
equation \eqref{eq2.4} is one of a pair of equations that describe $E$ at
large $|s|$. To write this second equation, note that
$(1-3\cos^{2}\theta _{o}) d\varphi -\surd 6 \cos\theta _{o}dt$
is exact on $E$. This understood, in \cite[Equation (2.13)]{T4} with the
analysis from of \cite[Section 2]{T4} can be used to prove that any
anti-derivative of this 1--form appears as follows: Let $f$ denote
the antiderivative, and let $(\rho, \tau )$ denote the
coordinates that appear in \eqref{eq2.4}. Then
\begin{equation}\label{eq4.8}
f(\rho , \tau )=f_{E} + e^{ - r\rho }(\kappa b
\sin(n_{E}(\tau +\sigma )) + \hat{o}) ,
\end{equation}
where $n_{E}$, $r$ and $b$ are the same constants that
appear in $E$'s version \eqref{eq2.4}. Meanwhile, $\kappa $ is a constant
that depends only $n$ and on $E$'s label in $\hat{A}$. Finally, $f_{E}$
is a constant and $\hat{o}$ is a term that limits to zero as $\rho  \to   \infty $.

To exploit \eqref{eq2.4} and \eqref{eq4.8} for the present purposes, keep in mind
that the coordinate $\rho $ is a constant, positive multiple of
the function $s$. In this regard, it is important to realize that
this multiplier depends only on $E$'s label in $\hat{A}$.

To see how \eqref{eq2.4} and \eqref{eq4.8} describe the behavior of $\hat{a}$ on $G\cap E$,
suppose that $e$ is an edge of $T$ that is incident to $o$,
and suppose that the closure of $K_{e}$ has non-compact
intersection with $E$. This being the case, the end $E$ then
corresponds to a vertex on $\ell _{oe}$ and thus a missing point
on the $\sigma =\theta _{o}$ circle in $C_{0}$'s version of the
parametrizing cylinder for $K_{e}$. Fix an $\mathbb{R}$--valued
lift, $\hat {v}$, of the coordinate $v$ of this parametrizing
cylinder that is defined in a contractible neighborhood of $E$'s
missing point. To save on notation, assume that $\hat {v}$ has
value zero at this missing point. It then follows from \eqref{eq2.4} that
\begin{equation}\label{eq4.9}
f \equiv \alpha  _{Q_e } (\theta _{o})  \hat {v}+\surd
6 \Big[\big(1-3\cos^{2}\theta _{o}\big) \cos\theta  - \cos\theta _{o}
\big(1-3\cos^{2}\theta \big)\Big] w_{e}.
\end{equation}
is a pull-back of an antiderivative of
$(1-3\cos^{2}\theta _{o})dt - \surd 6 \cos\theta _{o} d\varphi$ to a neighborhood of $E$'s
missing point in the $C_{0}$ version of the parameter cylinder for $K_{e}$.

The respective identifications of $T$ with both $T_{C}$ and
$T_{C'}$ pairs $E$ with an end, $E'  \subset {C_{0}}'$ that
also corresponds to a vertex on $\underline {\Gamma }_{o}$. In
this regard, the vertex is the same as the one given $E$, and this
implies that the $E'$ versions of \eqref{eq2.4} and \eqref{eq4.8} use the same
integer $n_{E}$. Even so, the $E'$ version can employ a different
positive constant, $b'$.

There is also a primed version of \eqref{eq4.9}, this denoted by $f'$.
Note, in particular, that $f- f'$ is zero on the portion of $G \cap E$ in the closure of $K_{e}$.
This being the case, \eqref{eq2.4}
and \eqref{eq4.8} imply that $\hat{a}$ is bounded on this part of $G \cap E$, and that it has a
unique $|s|    \to\infty $ limit here which is given by \eqref{eq4.7}. Since the pair $b$ and
$b'$ are determined by $E$ and $E'$ without regard for the edge $e$, so
the function $\hat{a}$ has a unique $|s| \to
 \infty $ limit on $G \cap E$.
\end{proof}

\subsection{Why the map from $\mathcal{M}$ to $\mathbb{R}  \times O^{\hat{A}}{/\Aut}^{\hat{A}}$
is 1--1}\label{sec:4b}

The previous section defined a continuous map, $P$, from $\mathcal{M}$
to $\mathbb{R}  \times O^{\hat{A}}/\Aut^{\hat{A}}$ and the purpose of this
subsection is to explain why the latter map is 1--1. The
explanation that follows is in three parts.

\step{Part 1}
To start, suppose that $C$ and $C'$ have the same
image. As is explained in a moment, this requires that the same
version of $T$ have correspondences in both $(C_{0}, \phi )$ and
in $(C_{0}, \phi ')$. Granted that such is the case, choose
such correspondences. Such choices are implicit in Parts 2 and 3
that follow.

Let $T_{C}$ denote a graph with a correspondence in $(C_{0}, \phi )$ and let $T_{C'}$ denote
one with a correspondence
in $(C_{0}, \phi ')$. To explain why $T_{C}$ must be isomorphic
to $T_{C'}$, remark first that the respective vertex sets
enjoy a 1--1 correspondence that preserves angles, and that the
latter correspondence induces one between the respective edge
sets that preserves the integer pair labels. Thus, the issue here
is whether the circular graphs that label the respective equal
angle bivalent vertices in $T_{C}$ and $T_{C'}$ are
isomorphic.

To see that such is the case, let $o$ denote a given bivalent
vertex in $T_{C}$ and let $o'$ denote its equal angle partner in
$T_{C'}$. Choose admissable identifications between
$\underline {\Gamma }_{o}$ and $\hat{A}_{o}$, and likewise
between $\underline {\Gamma }_{o'}$ and $\hat{A}_{o}$.
Here as before, an identification is deemed to be admissable when
the second component of a 4--tuple gives the sign of the integer
label of the corresponding vertex while the greatest common
divisor of the integer pair component gives the absolute value of
the integer label. These identifications induce respective cyclic
orderings of $\hat{A}_{o}$. These cyclic orderings are relevant
because $\underline {\Gamma }_{o}$ is isomorphic to $\underline {\Gamma }_{o'}$ as a labeled graph if and only if one
cyclic ordering is obtained from the other by composing with a
1--1 self map of $\hat{A}_{o}$ that only permutes identical 4--tuples.

To see that the cyclic orderings have the desired relation, take
a lift of the common image point to $\mathbb{R}_{ - }  \times
 \mathbb{R}^{\hat{A}}$. The image of $\hat{A}_{o}$ by the
$\mathbb{R}^{\hat{A}}$ component of this lift defines a set of
$n_{o}$ distinct points in $\mathbb{R}/(2\pi \mathbb{Z})$ and thus a
cyclic ordering for $\hat{A}_{o}$. As dictated by the construction
in \fullref{sec:3c}, this cyclic ordering is obtained from those
induced by the ordering of the vertices on the respective $T_{C}$
and $T_{C'}$ versions of $\underline {\Gamma }_{o}$ by
composing the latter with an appropriate permutation of
$\hat{A}_{o}$ that mixes only elements with identical 4--tuples.
Thus, the induced cyclic orderings on $\hat{A}_{o}$ from the
$T_{C}$ and $T_{C'}$ versions of $\underline {\Gamma
}_{o}$ are related in the desired fashion.

\step{Part 2}
The respective images of $C$ and $C'$ in $O^{\hat{A}}/\Aut^{\hat{A}}$ are defined by first
assigning these subvarieties points in the space depicted in
\eqref{eq3.14}. As the images of $C$ and $C'$ agree in
$O^{\hat{A}}/\Aut^{\hat{A}}$, so their images agree in
\eqref{eq3.12}; and this means that the choices that are used to define
the respective assignments in \eqref{eq3.14} can made so that these
assignments agree. This has the following consequences: First,
the conditions in \eqreft41 are met at each multivalent vertex. To
see why this is so, let $o \in T$ denote such a vertex and let
 $e$ denote the incident edge with maximal angle $\theta _{o}$.
Since $C$ and $C'$ have the same image in $\Delta _{o}$, the spacing
of the missing points on the $\sigma =\theta _{o}$ circle in the
parametrizing cylinder for $K_{e}$ agree. Moreover, as the $C$ and
$C'$ versions of the assigned point in $\mathbb{R}_{o}$ agree, so the
$\mathbb{R}/(2\pi \mathbb{Z})$ coordinates of the respective
distingushed missing points on the $\sigma =\theta _{o}$ circle
must agree. This then implies that the respective $C$ and $C'$
versions of the missing point set on the $\sigma =\theta _{o}$
circle agree. Given the definitions of the respective $T_{C}$ and
$T_{C'}$ versions of $\Gamma _{o}$, this agreement of
missing point sets induces the isomorphism between the two
versions of $\ell _{oe}$ that comes from the given identification
between $T_{C}$ and $T_{C'}$ as the graph $T$.

The fact that $C$ and $C'$ define the same points in \eqref{eq3.14} has
additional consequences. To describe the first, let $e$ denote for
the moment the edge of $T$ that contains the minimal angle vertex
of $T$. Since $C$ and $C'$ have the same image in $\mathbb{R}_{ - }$,
there are respective parametrizations of $K_{e}$ that make their
versions of \eqref{eq3.15} agree. Choose such parametrizations. Now, let
 $o$ denote the vertex on $e$ with the larger angle and, if $o$ is
bivalent, let $e'$ denote the second of $o$'s incident edges. The
point assigned both $C$ and $C'$ in the line $\mathbb{R}_{o}$ endows
$K_{e'}$ with its `canonical' parameterization. This
parameterization is such that the integer pair $(n, n')$ that
appears in \eqref{eq2.15} is zero. As such is the case for both the $C$ and
$C'$ versions, so the functions $\hat {w}_{e}$ and $\hat
{w}_{e'}$ agree along the $\theta =\theta _{o}$ locus in $C$.

\step{Part 3}
Suppose that $o$ is a bivalent vertex. Let $e$ and
 $e'$ denote the incident edges to $o$ with $e$ the edge where the
maximum of $\theta $ is $\theta _{o}$. Suppose that both the $C$
and $C'$ versions of $K_{e}$ are given their `canonical'
parameterization as defined inductively using the following data:
First, the already chosen parametrizations for the respective $C$
and $C'$ versions of the component of $C_{0}- \Gamma $ whose
labeling edge has the minimal angle vertex in $T$. Second, the
assigned point in each $\mathbb{R}_{(,)}$ factor from \eqref{eq3.14} whose
label is a multivalent vertices with angle less than $\theta
_{o}$. The canonical parametrizations of the two versions of
$K_{e}$ and the assigned point in $\mathbb{R}_{o}$ endows both the $C$
and $C'$ versions of $K_{e'}$ with a `canonical'
parameterization. The parametrizations of the $C$ and $C'$ versions
of $K_{e}$ give the function $\hat {w}_{e}$ on $C$'s version of
$K_{e}$, while that of the two versions of $K_{e'}$ give
$\hat {w}_{e'}$ on the $C'$ version of $K_{e'}$. This
understood, the argument used at the end of the previous
paragraph finds that $\hat {w}_{e}=\hat {w}_{e'}$ along
the $\theta =\theta _{o}$ locus in $C$.

Granted the preceding, it then follows that \eqreft42 holds when the
components of the $C$ and $C'$ versions $C_{0}- \Gamma $ are
given the parametrizations just described. Since $C$ and $C'$ have
the same image in the $\mathbb{R}$ factor of $\mathbb{R}  \times O_{T}/\Aut(T)$,
the desired identity $C = C'$ follows from \fullref{lem:4.2} if the graph $G \subset  C$
is non-empty. That such is the case follows from the identity between the assigned values of $C$
and $C'$ in \eqref{eq3.14}'s factor $\mathbb{R}_{ - }$. Indeed, subtracting
the $C'$ version of \eqref{eq3.15} from the $C$ version finds that $\hat {w}$
has average 0 over any constant $\theta $ slice of the component
of $C$'s version $C_{0}- \Gamma$ whose labeling edge
contains the minimal angle vertex in $T$. As such there must be a
zero of $\hat {w}$ on each such slice.

\subsection{The complement of $\hat{O}^{\hat{A}}$  in $O^{\hat{A}}$}\label{sec:4c}

Before proving that the map from $\mathcal{M}$ to
$O^{\hat{A}}/\Aut^{\hat{A}}$ lands in
$\hat{O}^{\hat{A}}/\Aut^{\hat{A}}$, it is worthwhile to
describe the complement of $\hat{O}^{\hat{A}}$ in
$O^{\hat{A}}$, this the subspace of points where
$\Aut^{\hat{A}}$ has non-trivial stabilizer.

To start, let $o$ denote a multivalent vertex in $T^{\hat{A}}$
and let $\Cyc_{o}$ denote the set of cyclic orderings of
$\hat{A}_{o}$. The components of $O^{\hat{A}}$ are in 1--1
correspondence with $\times _{o}\Cyc_{o}$. When $v \in
\times _{o}\Cyc_{o}$, let ${O^{\hat{A}}}_{v}$ denote the
corresponding component. If $x \in  {O^{\hat{A}}}_{v}$ is
fixed by some $\iota \in  \Aut^{\hat{A}}$, then
$\iota $ must preserve the cyclic orderings that are
determined by $v$.

To see the implications of this last observation, let $o$ denote a
bivalent vertex and let $\Aut_{o,v}$ denote the group of
permutations of $\hat{A}_{o}$ that preserve the cyclic order
defined by $v$ while permuting only elements with identical
4--tuples. This is a cyclic group whose order is denoted in what
follows by $m_{o}$. The group $\times _{o}\Aut_{o,v}$ is the
subgroup of $\Aut^{\hat{A}}$ that preserves ${O^{\hat{A}}}_{v}$.

Set $k_{v}$ to denote the greatest common divisor of the integers
in the set that consists of $m_{ - }$ and the collection
$\{m_{o}\}$. Thus, each $\Aut_{o,v}$ has a unique $\mathbb{Z}/(k_{v}\mathbb{Z})$ subgroup.
Note that in the case that $k_{v}> 1$, each such subgroup has a canonical generator. To explain,
let $\iota _{o}$ denote the generator of $\Aut_{o,v}$ that moves
elements the minimal amount in the direction that increases the
numerical order in the following sense: Fix a distinguished
element in $\hat{A}_{o}$ so as to turn $v$ into a linear ordering
with the distinguished element last. Then $\iota _{o}$ moves the
distinguished element to the position numbered by
$n_{o}/m_{o}$. The canonical generator of the $\mathbb{Z}/(k_{v}\mathbb{Z})$ subgroup moves the
distinguished element to the position numbered $n_{o}/k_{v}$.

Granted the preceding, a canonical $\mathbb{Z}/(k_{v}\mathbb{Z})$
subgroup of $\Aut^{\hat{A}}$ is defined by the requirement
that its generator project to any given $\Aut_{o,v}$ so as to give
the canonical generator of the latter's $\mathbb{Z}/(k_{v}\mathbb{Z})$ subgroup.

With this understood, consider:

\begin{proposition}\label{prop:4.4}
The $\Aut^{\hat{A}}$ stabilizer of any given point in ${O^{\hat{A}}}_{v}$
 is a subgroup of the canonical $\mathbb{Z}/k_{v}\mathbb{Z}$
 subgroup.  Conversely, any subgroup of the latter
has a non-empty set of fixed points in ${O^{\hat{A}}}_{v}$.
\end{proposition}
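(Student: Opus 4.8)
The plan is to analyze the stabilizer of a point $x \in {O^{\hat{A}}}_{v}$ by working with a lift $\hat{x} = (\tau_{-}, \xi) \in \mathbb{R}_{-} \times \mathbb{R}^{\hat{A}}$ and chasing through the explicit group actions described in \fullref{sec:3a}. An element $\iota \in \Aut^{\hat{A}}$ fixes $x$ precisely when $\iota \cdot \hat{x}$ lies in the same orbit as $\hat{x}$ under $(\mathbb{Z}\times\mathbb{Z})\times\Maps(\hat{A}_{*};\mathbb{Z})$; that is, there exists $g = (N,z)$ with $g \cdot (\iota\cdot\hat{x}) = \hat{x}$. First I would observe that since $\iota$ preserves the component ${O^{\hat{A}}}_{v}$, it preserves the cyclic orderings recorded by $v$, so at each bivalent vertex $o$ the restriction $\iota_{o}$ lies in the cyclic group $\Aut_{o,v}$ of order $m_{o}$. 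Writing $\iota_{o}$ as the $j_{o}$-th power of the minimal-motion generator, the key is to quantify how $\iota_{o}$ shifts the coordinates $\{\xi(u): u \in \hat{A}_{o}\}$ in $\mathbb{R}/(2\pi\mathbb{Z})$: composing with $\iota$ permutes these $n_{o}$ points cyclically, which is realized on the lift by an overall translation by $-2\pi n_{o}j_{o}/m_{o}$ together with a possible genuine relabeling among elements with identical $4$-tuples (the latter being invisible in $O^{\hat{A}}$).

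Next I would run the same inductive argument used in \fullref{sec:3a} to compute stabilizers. Starting from the $\mathbb{R}_{-}$ coordinate: the action of $\iota$ is trivial there, so $g$ must fix $\tau_{-}$, forcing $N = r_{-}Q_{e}$ where $e$ is the minimal-angle edge and $r_{-}$ is a fraction with denominator $\gcd(q_{e},q_{e}')$; but then proceeding vertex by vertex in order of increasing angle exactly as in the stabilizer computation after \eqref{eq3.7}, the requirement that $g\cdot(\iota\cdot\hat{x})$ agree with $\hat{x}$ at the coordinate of each successive element of $\hat{A}_{*}$ forces a compatibility among $r_{-}$, the integers $z(u)$, and the cyclic shift amounts $j_{o}/m_{o}$. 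Specifically, at the vertex $o$ carrying the minimal-angle element the shift $-2\pi n_{o} j_{o}/m_{o}$ must be canceled by the $z_{u}$-action, which shifts by integer multiples of $-2\pi$, so $n_{o}j_{o}/m_{o}$ must be an integer; combined with the constraint linking $r_{-}$ to $\gcd$ data (which, by \eqreft33, propagates through the bivalent vertices as in \eqref{eq3.8}), one finds that the common ``shift fraction'' must be an integer divided by $k_{v}$, where $k_{v} = \gcd(m_{-}, \{m_{o}\})$. This shows $\iota$ lies in the canonical $\mathbb{Z}/k_{v}\mathbb{Z}$ subgroup, giving the first claim.

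For the converse, given a subgroup $H \subseteq \mathbb{Z}/k_{v}\mathbb{Z}$ with generator $\iota_{H}$ (the appropriate power of the canonical generator), I would construct an explicit fixed point. The idea is to take $\hat{x}$ to be the ``most symmetric'' configuration: at each bivalent vertex $o$, choose the $n_{o}$ coordinates of $\hat{A}_{o}$ to be equally spaced in $\mathbb{R}/(2\pi\mathbb{Z})$ (this is a legitimate point of $\mathbb{R}^{\hat{A}}$ since distinct elements with the same angle then have distinct images), and choose $\tau_{-}$ compatibly with the equal-spacing pattern propagated by the formulas \eqref{eq3.13}, \eqref{eq1.20}. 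Then applying $\iota_{H}$ rotates each such equally-spaced set by $2\pi/k_{v}$ worth of positions, which is undone by the simultaneous action of a suitable $(N,z) \in (\mathbb{Z}\times\mathbb{Z})\times\Maps(\hat{A}_{*};\mathbb{Z})$ — precisely because the rotation amount is a $(1/k_{v})$-fraction of a full turn and $k_{v}$ divides each $m_{o}$ and $m_{-}$. Hence the image of $\hat{x}$ in ${O^{\hat{A}}}_{v}$ is fixed by $H$.

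The main obstacle I expect is the bookkeeping in the converse direction: one must verify that the equally-spaced configuration, together with a coherent choice of the $\mathbb{R}_{-}$ and $\mathbb{R}_{o}$ lift values, is genuinely invariant — the subtle point being that the $z_{u}$-action at a vertex $o$ shifts coordinates at \emph{larger-angle} vertices by the non-integer amounts in \eqref{eq3.6}/\eqref{eq1.19}, so the cancellation of the cyclic rotation must be checked to propagate consistently through the whole linear graph $T^{\hat{A}}$. This requires the same telescoping identity from \eqreft33 (that $\sum_{u}\varepsilon_{u}(p_{u},p_{u}') = Q_{e} - Q_{\hat{e}}$ at each bivalent vertex) that drove the stabilizer computation, applied now with the rotation-by-$2\pi/k_{v}$ data in place of the $\mathbb{Z}$-translation data; verifying it closes up is the heart of the argument.
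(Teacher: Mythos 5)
Your overall strategy is the same as the paper's: lift a fixed point of $\iota \in \Aut^{\hat{A}}$ to $(\tau_-, x) \in \mathbb{R}_- \times \mathbb{R}^{\hat{A}}$, observe that invariance means some $g = (N,z) \in (\mathbb{Z}\times\mathbb{Z})\times\Maps(\hat{A}_*;\mathbb{Z})$ undoes $\iota$'s action, then chase through the vertices of $T^{\hat{A}}$ in order of increasing angle using the telescoping identity from \eqreft33. This is exactly the paper's Steps 1--5.

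However, the forward direction has a genuine gap where the paper's Step 6 does the essential work. Your pivotal claim --- that ``the shift $-2\pi n_o j_o / m_o$ must be canceled by the $z_u$-action, which shifts by integer multiples of $-2\pi$, so $n_o j_o/m_o$ must be an integer'' --- is vacuous: since $\Aut_{o,v}$ is cyclic of order $m_o$ acting freely by rotation on the $n_o$ elements of $\hat{A}_o$, one automatically has $m_o \mid n_o$, so $n_o j_o/m_o$ is always an integer regardless of $j_o$. Moreover, this conflates the \emph{position-index shift} (always integral) with the actual \emph{coordinate shift} in $\mathbb{R}$, which is not integral in general since the coordinates $\{x(u) : u \in \hat{A}_o\}$ are arbitrary points of $\mathbb{R}/(2\pi\mathbb{Z})$, not equally spaced. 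What the paper actually extracts at each vertex $o$ is, after iterating the single-vertex equation \eqref{eq4.10} $k_o$ times (with $k_o$ the order of $\iota_o$), the arithmetic constraint $k_o r_- \in \mathbb{Z}$ of \eqref{eq4.11}. The crucial step you omit is then the paper's contradiction argument: because $x \in \mathbb{R}^{\hat{A}}$ (distinct images for same-angle elements), \emph{all} the orders $k_o$ must agree --- otherwise replacing $g$ by $g^k$ for the smallest $k = k_o$ would force coincidences among coordinates of a larger-$k_{o'}$ vertex. Only once the $k_o$'s are identified with a single integer $k$ can one conclude that $k$ divides each $m_o$ and divides $m_-$, hence divides $k_v$. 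Your ``combined with the constraint linking $r_-$ to $\gcd$ data, one finds that the common shift fraction must be an integer divided by $k_v$'' simply asserts the desired conclusion without supplying this argument. The converse direction (equally spaced configurations) is sound in spirit and matches what the paper invokes via the ``if'' direction of its Step 5, though, as you acknowledge, the compatibility check that the $\mathbb{Z}\times\mathbb{Z}$ action in \eqref{eq1.20}--\eqref{eq3.13} and the $\Maps$ action cancel the rotation coherently across the whole linear graph is left unverified.
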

\noindent This proposition is proved momentarily.

To view this fixed point set in a different light, fix a
distinguished element in each $\hat{A}_{(\cdot )}\subset \hat{A}_{*}$ so as to use the
description in \eqref{eq3.12} for ${O^{\hat{A}}}_{v}$. In particular, the space in \eqref{eq3.12} has
the evident projection to $\times _{o}  \Delta _{o}$ and
this projection is equivariant with respect to the action of
$\times _{o}\Aut_{o,v}$ on ${O^{\hat{A}}}_{v}$ and an action
on $\times _{o}  \Delta _{o}$. In this regard, the action on
$\times _{o}  \Delta _{o}$ is the product of the action of
each version of $\Aut_{o,v}$ on the corresponding version of
$\Delta _{o}$ that cyclically permutes the coordinates in the
manner dictated by a given element in $\Aut_{o,v}$. Here, the
latter action is obtained by labeling the Euclidean coordinates
of points in $\Delta _{o}$ by the elements in $\hat{A}_{o}$ so that
the $j$'th coordinate corresponds to the $j$'th element in $\hat{A}_{o}$
using the linear ordering that gives $v$'s cyclic ordering and has
the distinguished element last.

Granted all of this, consider:

\begin{proposition}\label{prop:4.5}
Let $G$  be a subgroup of the canonical $\mathbb{Z}/k_{v}\mathbb{Z}$  subgroup in $\Aut^{\hat{A}}$.
Then the set of points in \eqref{eq3.12} with $\Aut^{\hat{A}}$  stabilizer $G$
 is the restriction of the fiber bundle projection from $O^{\hat{A}}$  to $\times _{o}  \Delta _{o}$
 over the set of points in $\times _{o}  \Delta_{o}$  with stabilizer $G$.
\end{proposition}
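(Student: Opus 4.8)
\textbf{Proof proposal for Proposition~\ref{prop:4.5}.}
The plan is to reduce the statement to a fiber-by-fiber analysis of the projection $\pi\co {O^{\hat{A}}}_v\to\times_o\Delta_o$ appearing in the discussion just above, using the equivariance of $\pi$ with respect to the $\times_o\Aut_{o,v}$ actions. Fix a subgroup $G$ of the canonical $\mathbb{Z}/k_v\mathbb{Z}$ subgroup in $\Aut^{\hat{A}}$. First I would observe that, since $\pi$ is equivariant, $\pi$ maps the $\Aut^{\hat{A}}$-fixed point locus of any element into the corresponding fixed-point locus downstairs; hence $\pi$ carries the set of points with stabilizer exactly $G$ (respectively, stabilizer containing $G$) into the set of points in $\times_o\Delta_o$ with the analogous property. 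The content of the proposition is the reverse inclusion together with the assertion that the whole fiber over such a point has stabilizer exactly $G$. So the real task is: given $y\in\times_o\Delta_o$ whose $\times_o\Aut_{o,v}$-stabilizer equals $G$ (meaning each coordinate $y_o\in\Delta_o$ is fixed by the image of $G$ in $\Aut_{o,v}$ and by nothing larger inside the canonical $\mathbb{Z}/k_v\mathbb{Z}$ subgroup), every point $x\in\pi^{-1}(y)$ has $\Aut^{\hat{A}}$-stabilizer equal to $G$.

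The key steps I would carry out, in order, are as follows. (1) Recall from Proposition~\ref{prop:4.4} that the stabilizer of any $x\in{O^{\hat{A}}}_v$ is automatically a subgroup of the canonical $\mathbb{Z}/k_v\mathbb{Z}$ subgroup, so it suffices to decide, for each of the finitely many subgroups $H\le\mathbb{Z}/k_v\mathbb{Z}$, whether $H$ fixes $x$; and since $\mathbb{Z}/k_v\mathbb{Z}$ is cyclic, this is decided by whether the generator $\iota_H$ of $H$ fixes $x$. (2) Using the description in \eqref{eq3.12}, write $x$ as a tuple whose $\Delta_o$-components are the $y_o$ and whose remaining entries lie in the quotient $[\mathbb{R}_-\times(\times_o\mathbb{R}_o)]/[(\mathbb{Z}\times\mathbb{Z})\times(\times_o\mathbb{Z}_o)]$; note that $\Aut^{\hat{A}}$ acts trivially on the $\mathbb{R}_-$ and $\mathbb{R}_o$ factors (it only permutes elements of $\hat{A}_o$, hence reindexes simplex coordinates and reindexes the summands of $\Maps(\hat A_o;\mathbb{Z})$ used to form the quotient). (3) Show that $\iota_H$ fixes $x$ if and only if $\iota_H$ fixes $y=\pi(x)$: the ``only if'' direction is immediate from equivariance; for ``if'', suppose $\iota_H$ fixes each $y_o$. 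Because $\iota_H$ cyclically permutes the Euclidean coordinates of $\Delta_o$ and fixes $y_o$, the permutation $\iota_H$ acting on $\hat{A}_o$ permutes only indices carrying identical simplex-coordinate values; combined with the already-imposed condition that $\iota_H\in\Aut_{o,v}$ permutes only elements with identical $4$-tuples, the action of $\iota_H$ on the corresponding $\mathbb{R}_o$ and on the $\Maps(\hat A_o;\mathbb{Z})$-lattice is, after the canonical identifications, by an element that moves the distinguished point by a full period, i.e.\ acts trivially on the quotient. Hence $\iota_H\cdot x = x$. (4) Combine: the stabilizer of $x$ equals the set of $H$ whose generator fixes $y$, which is exactly the stabilizer of $y$ in $\times_o\Aut_{o,v}$, i.e.\ $G$. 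This proves $\pi^{-1}(\{y : \operatorname{Stab}(y)=G\})$ is precisely the locus of points with stabilizer $G$, which is the assertion.

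I expect the main obstacle to be Step~(3), and specifically the ``if'' direction: one must verify carefully that the extra factors $\mathbb{R}_o$ and the group $\mathbb{Z}_o\subset\Maps(\hat A_o;\mathbb{Z})$ in the quotient \eqref{eq3.12} do not carry any nontrivial $\iota_H$-action modulo the quotienting subgroup. This requires unwinding the definitions in Step~2 of \fullref{sec:1a}, the action of $z_u$ via \eqref{eq1.19}, and the fact that $\iota_H$, being the $k_v$-th-power generator, moves the distinguished element in $\hat{A}_o$ by $n_o/k_v$ positions, so that iterating it $k_v$ times returns to a genuine period; the translation amounts in \eqref{eq1.19} and \eqref{eq1.20} must be checked to sum to the correct integer multiple of $2\pi$ times the relevant integer-pair determinant. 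Once that bookkeeping is done — essentially the same computation that underlies Proposition~\ref{prop:4.4} — the rest is formal. A secondary point to be careful about is that different cyclic orderings $v$ give different components ${O^{\hat{A}}}_v$ and the canonical generators are defined componentwise; but since the proposition is stated within a single \eqref{eq3.12}-chart, i.e.\ a single $v$, this causes no trouble.
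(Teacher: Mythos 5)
Your overall strategy — reduce to showing that if $\iota_H$ fixes $y=\pi(x)\in\times_o\Delta_o$ then it fixes $x$, and carry out that check by writing a lift of $\iota_H$ in the semi-direct product $(\mathbb{Z}\times\mathbb{Z})\times\Maps(\hat{A}_{*};\mathbb{Z})\rtimes\Aut^{\hat{A}}$ that fixes $(\tau_-,x)$ — is exactly the route the paper takes, and your final paragraph correctly flags the bookkeeping through \eqref{eq1.19}--\eqref{eq1.20} as the real content. However, the justification you offer for Step~(3) in the body of the proposal is wrong as stated, and it is wrong in a way that would derail the argument if followed literally. You claim the effect of $\iota_H$ on the chosen $\mathbb{R}_o$ ``moves the distinguished point by a full period, i.e.\ acts trivially on the quotient.'' In fact, when $\iota_H$ rotates the cyclic ordering of $\hat{A}_o$ by $n_o/k_o$ positions and fixes the simplex coordinate $r\in\Delta_o$, the induced shift of the $\mathbb{R}_o$-coordinate is by $2\pi/k_o$ (the partial sum of $r$ over $n_o/k_o$ consecutive arcs, using $\sum r=2\pi$), which is not an integer multiple of $2\pi$ and hence is not an element of $\mathbb{Z}_o$ by itself. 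The fractional shift can only be cancelled by bringing in the $\mathbb{Z}\times\mathbb{Z}$ factor — concretely, a translation by $N=r_-Q_e$ with $r_-$ a fraction having $m_-r_-\in\mathbb{Z}$ — and this same $r_-$ is shared across all the vertices, so the cancellation conditions at the different $o$'s are coupled. Unwinding that coupling is exactly the inductive computation in the paper's Steps~2--5 (equations~\eqref{eq4.10}--\eqref{eq4.13}), which shows that the cancellation works precisely when $\iota_H$ lies in the canonical $\mathbb{Z}/k_v\mathbb{Z}$ subgroup and fixes each $y_o$.

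So the gap is concrete: the ``if'' direction of Step~(3) requires showing that, given $\iota_H$ fixes each $y_o$, there is a single $(N,z)\in(\mathbb{Z}\times\mathbb{Z})\times\Maps(\hat{A}_{*};\mathbb{Z})$ — not separate elements of each $\mathbb{Z}_o$ — making $(\iota_H,(N,z))$ fix $(\tau_-,x)$, and that the existence of such $(N,z)$ imposes no constraint on the $\mathbb{R}_-$ and $\mathbb{R}_o$ coordinates beyond the $\Delta_o$-conditions already assumed. Your proposal asserts the conclusion, replaces the coupled system with an unjustified per-vertex ``full period'' statement, and then relegates the actual verification to a caveat. Since that verification is precisely the proof — the paper establishes Propositions~\ref{prop:4.4} and~\ref{prop:4.5} simultaneously because both fall out of the same fixed-point analysis — the proposal as written does not contain a proof, though it does correctly identify where one must look.
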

\noindent The remainder of this subsection contains the following proofs.

\begin{proof}[Proof of Propositions \ref{prop:4.4} and \ref{prop:4.5}]
The proof is given in six steps.

\substep{Step 1}
Let $\Auttt $  denote the semi
direct product of the groups $\Aut^{\hat{A}}$ and $(\mathbb{Z}
 \times   \mathbb{Z})\times  \Maps(\hat{A}_{*}; \mathbb{Z})$. This group acts on
 $\mathbb{R}_{ - }  \times\mathbb{R}^{\hat{A}}$. The stabilizers of points in
$O^{\hat{A}}$ can be determined by studying the stabilizers
in $\Auttt ^{\hat{A}}$ of points in $\mathbb{R}_{ - }
\times   \mathbb{R}^{\hat{A}}$ since the image in
$O^{\hat{A}}$ of $(\tau _{ - }, x) \in   \mathbb{R}_{ -}  \times   \mathbb{R}^{\hat{A}}$ is fixed by
$g \in  \Aut^{\hat{A}}$ if and only if $(\tau _{ - }, x)$ is fixed
by an element $g\in \Auttt$ that has the form of $(g, (N, z))$
with $N \in   \mathbb{Z}  \times   \mathbb{Z}$ and $z
\in  \Maps(\hat{A}_{*}; \mathbb{Z})$. This understood,
suppose now that the point $(\tau _{ - }, x)$ projects to
${O^{\hat{A}}}_{v}$ and that $g \in   \times _{o}\Aut_{o,v}$ fixes its image.

\substep{Step 2}
To start the analysis of $g$, note that $\tau _{ - }$ is fixed if and only if $N = r_{ - }$
$Q_{e}$ where $e$ here denotes the edge in $T^{\hat{A}}$ with
the smallest angle vertex in $T^{\hat{A}}$ and where $r_{ -}$ is a fraction with
$m_{ - } r_{ - }   \in   \mathbb{Z}$.

\substep{Step 3}
Let $o$ now denote the vertex in
$T^{\hat{A}}$ with the second smallest angle. Assuming $o$ is
bivalent, g fixes both $\tau _{ - }$ and the restriction of $x$ to
$\hat{A}_{o}$ if and only if
\begin{equation}\label{eq4.10}
x(u) = x(g_{o}u) - 2\pi (z(u) + r_{ - })
\end{equation}
for all $u \in \hat{A}_{o}$. Here, $g_{o}$ is the
component of $g$ in $\Aut_{o,v}$. Let $k_{o}$ denote the order of
$g_{o}$. Using \eqref{eq4.10} some $k_{o}$ times in succession finds that
$g$ fixes both $\tau _{ - }$ and the restriction of $x$ to
$\hat{A}_{o}$ if and only if $m_{ - }r_{ - }\in \mathbb{Z}$ and
\begin{equation}\label{eq4.11}
\sum _{0 \le j< k_o } z(g_{o}^{j}u) + k_{o}r_{ - } = 0
\end{equation}
for all $u \in \hat{A}_{o}$. Note that this last
condition can be satisfied if and only if $k_{o} r_{ - }
\in   \mathbb{Z}$. Note as well that the case $r_{ - }
\in   \mathbb{Z}$ is precluded unless $g_{o}$ is trivial since
 $x$ is assumed to lie in $\mathbb{R}^{\hat{A}}$.

\substep{Step 4}
If $T^{\hat{A}}$ has a second bivalent vertex, let $\hat{o}$ denote the one that shares an
edge with $o$. Thus, $\hat{o}$ has the third smallest vertex angle.
Then $g$ fixes the restriction of $x$ to $\hat{A}_{{\hat{o}}}$ if
and only if
\begin{equation}\label{eq4.12}
x(\hat{u}) = x(g_{{\hat{o}}}\hat{u}) - 2\pi  \bigg(z(\hat{u}) + \sum
_{u \in \hat{A}_o } z(u) \varepsilon _{u}  \frac{{p_u}' p_{\hat{u}} - p_u {p_{\hat{u}}}' }{{q_{\hat{e}}}' p_{\hat{u}} -
q_{\hat{e}} {p_{\hat{u}}}'}  + r_{ - }
\frac{{q_e}' p_{\hat{u}} - q_e {p_{\hat{u}}}' }{{q_{\hat{e}}}' p_{\hat{u}} - q_{\hat{e}}
{p_{\hat{u}}}' }\bigg)
\end{equation}
for all $\hat{u} \in \hat{A}_{{\hat{o}}}$. Here,
$g_{{\hat{o}}}$ denotes $g$'s component in $\Aut_{o,v}$ and $\hat{e}$
denotes the edge in $T^{\hat{A}}$ that contains both $o$ and
$\hat{o}$.

To make sense of this last condition, use \eqref{eq4.11} as applied to
the various $g_{o}$ orbits in $\hat{A}_{o}$ to identify the sum in
\eqref{eq4.12} with
\begin{equation}\label{eq4.13}
-r_{ - }  \frac{{p_o}' p_{\hat{u}} - p_o {p_{\hat{u}}}'}{{q_{\hat{e}}}' p_{\hat{u}} - q_{\hat{e}} {p_{\hat{u}}}' }.
\end{equation}
Then, use \eqreft33 to write $P_{o}$ as $Q_{e}-Q_{{\hat{e}}}$
and thus see that the equality in \eqref{eq4.12} is exactly the equation
that results from \eqref{eq4.10} by replacing $o$ with $\hat{o}$ and $u$ with
$\hat{u}$.

This last point leads to the following conclusion: The element $g$
fixes $\tau _{ - }$ and the restriction of $x$ to both $\hat{A}_{o}$
and $\hat{A}_{{\hat{o}}}$ if and only if $m_{ - }r_{ - }
  \in   \mathbb{Z}$ and both the $(o, u)$ and $(\hat{o}, \hat{u})$
versions of \eqref{eq4.11} hold.

\substep{Step 5}
With the help of \eqreft33, essentially the same analysis can be continued in an inductive
fashion through the bivalent vertices with ever larger angle so
as to draw the following conclusions:
\begin{itemize}
\item
 \textsl{The element $g$  fixes $(\tau _{ - }, x)$ if and only if both $m_{ - }r_{ - }   \in
\mathbb{Z}$  and \eqref{eq4.10} holds for each bivalent vertex $o \in  T^{\hat{A}}$}.

\item
\textsl{These conditions are satisfiable if and only if \eqref{eq4.11}
holds for each bivalent vertex, and the latter can be satisfied
if and only if $k_{o}r_{ - }   \in   \mathbb{Z}$
 for each bivalent vertex $o \in  T^{\hat{A}}$}.
\end{itemize}

\substep{Step 6}
The `only if' direction of the
two conclusions from Step 5 have two relevant consequences, and
here is the first: Because $x \in   \mathbb{R}^{\hat{A}}$, the various versions of $k_{o}$ must agree.
Indeed, if not, let $k$ denote the smallest and let $o$ denote a
vertex where $k_{o} = k$. Now, repeat the analysis with $g$ replaced
by $g^{k}$. The corresponding $g^{k}$ versions of \eqref{eq4.11} has
$kr_{ - }$ replacing $r_{ - }$ and $o$'s version would require
$kr_{ - }   \in   \mathbb{Z}$. But then a version where
$k_{(\cdot )}\ne  k$ would need $x$ to lie outside of $\mathbb{R}^{\hat{A}}$.

Here is the second consequence: Each $k_{o}$ is a divisor of the
corresponding $m_{o}$, this the order of $\Aut_{o,v}$. As they are
all equal to the same $k\in   \mathbb{Z}$, so $k$ divides each
$m_{o}$ and so $k$ is a multiple of the greatest common divisor of
the collection $\{m_{o}\}$. In addition as $kr_{ - }
\in   \mathbb{Z}$, so $k$ must also be a divisor of $m_{ - }$.
Thus, $k$ is a multiple of $k_{v}$ and $g$ is an element in some
subgroup of the canonical $\mathbb{Z}/k_{v}\mathbb{Z}$ subgroup of
$\times _{o}\Aut_{o,v}$.

Meanwhile, the `if' directions of Step 5's observations directly
imply that any subgroup of the canonical $\mathbb{Z}/k_{v}\mathbb{Z}$
subgroup of $\times _{o}\Aut_{o,v}$ has a non-empty set of
fixed points, and that the fixed point set is described by
\fullref{prop:4.5}.
\end{proof}

\subsection{Why the map from $\mathcal{M}$ lands in
$\mathbb{R}\times \hat{O}^{\hat{A}}/\Aut^{\hat{A}}$}\label{sec:4d}

As indicated by the heading, the purpose of this subsection is to explain
why the image of the map $P$ from $\mathcal{M}$ to $\mathbb{R}  \times O^{\hat{A}}/\Aut^{\hat{A}}$
lies in $\mathbb{R}  \times \hat{O}^{\hat{A}}/\Aut^{\hat{A}}$. To this end, suppose that $C
\in   \mathcal{M}$. The image of $C$ in $O^{\hat{A}}/\Aut^{\hat{A}}$
has a lift to some component ${O^{\hat{A}}}_{v}   \subset O^{\hat{A}}$, and suppose that this lift
is fixed by some $g \in \times _{o} \Aut_{o,v}$. This being the case, $g$ is some multiple of the
generator of the canonical $\mathbb{Z}/k_{v}\mathbb{Z}$ subgroup. Let $k \in  \{0, \ldots , k_{v}-1\}$
denote this multiple. The proof that $k = 0$ has four parts.

\step{Part 1}
Fix a distinguished element in each version of $\hat{A}_{o}$ so as to write
${O^{\hat{A}}}_{v}$ as in \eqref{eq3.12}. Using this view of
${O^{\hat{A}}}_{v}$, lift $C$'s image as a point in \eqref{eq3.14} by choosing an
admissable identification between the vertex set of each $T_{C}$ version of
$\underline {\Gamma }_{o}$ and the corresponding $\hat{A}_{o}$. Denote the
resulting point in \eqref{eq3.14} as $(\tau _{ - }, (\tau _{(\cdot )},
r_{(\cdot )}))$ where $\tau _{ - }   \in   \mathbb{R}_{ - }$, each $\tau
_{o}$ is a point in the corresponding version of $\mathbb{R}_{o}$, and each
$r_{o}$ is a point in the corresponding version of $\Delta _{o}$.

Let $e$ denote for the moment the edge in $T^{\hat{A}}$ that contains the
minimal angle vertex and choose a parameterization of $K_{e}$ that makes
\eqref{eq3.15} equal to $\tau _{ - }$. Use this parameterization with the data
$(\tau _{(,)})$ to inductively define canonical parametrizations for the
remaining $C_{0}- \Gamma $. This is done with the following
induction step: Suppose that $\hat{o}$ is a bivalent vertex and that the
component of $C_{0}- \Gamma $ whose label is the edge with largest
angle $\theta _{{\hat{o}}}$ has its canonical parameterization. To
obtain one for the component that has $\theta _{{\hat{o}}}$ as its
smallest angle, first introduce the arc in $\underline {\Gamma
}_{{\hat{o}}}$ starting at the vertex that corresponds to the
distinguished element in $\hat{A}_{{\hat{o}}}$. This arc constitutes a one
element concatenating path set. The latter with the lift $\tau
_{{\hat{o}}}$ defines the canonical parameterization for the component
of $C_{0}- \Gamma $ whose label is the edge that has $\theta
_{{\hat{o}}}$ as its smallest angle, this according to the rules laid
out in Part 4 of \fullref{sec:2c}.

The next step is to change each of these parameterization. To start, let $e$
denote an edge of $T^{\hat{A}}$ and let $\phi _{e}$ denote the
parametrizing map from the relevant parametrizing cylinder to $K_{e}$. The
new parameterization of $K_{e}$ is defined by composing $\phi _{e}$ with
the diffeomorphism of $[0, \pi ]\times   \mathbb{R}/(2\pi \mathbb{Z})$
that pulls back the coordinate functions as
\begin{equation}\label{eq4.14}
(\sigma, v) \to (\sigma , v - 2\pi k/k_{v}).
\end{equation}
This new parameterization is of the sort that is described in \fullref{sec:2b}
because $k_{v}$ evenly divides both integers from the pair associated to any
edge of $T^{\hat{A}}$. This divisibility arises for the following
reasons: First, $k_{v}$ divides both integers from the edge with the
smallest angle vertex. Second, it divides the order of $\Aut_{o,v}$ for each
bivalent vertex $o \in  T^{\hat{A}}$ and the order of $\Aut_{o,v
}$ divides the corresponding version of $P_{o}$ that appears in \eqreft33.

For reference below, note that if $(a_{e}, w_{e})$ denotes that original
versions of the functions $(a, w)$ that appear in \eqreft25, then the new
versions, $({a_{e}}', {w_{e}}')$ are given by
\begin{equation}\label{eq4.15}
{a_{e}}'(\sigma, v) = a_{e}(\sigma ,v - 2\pi k/k_{v}) \quad\text{and} \quad
{w_{e}}'(\sigma, v) = w_{e}(\sigma ,v - 2\pi k/k_{v}).
\end{equation}

\step{Part 2}
Agree to implicitly view $g$ as an element in $\Aut(T_{C})$ via the chosen
identification between any given $T_{C}$ version of $\underline {\Gamma
}_{o}$ and the corresponding $\hat{A}_{o}$. Granted this view of $g$, let $o$
now denote a bivalent vertex in $T^{\hat{A}}$. According to \fullref{prop:4.5},
$g$'s action on $\Delta _{o}$ must fix the point $r_{o}   \in \Delta _{o}$ and this has
the following consequence: Let $\upsilon $
denote a given vertex in $\underline {\Gamma }_{o}$. Now sum the
coordinates of $r_{o}$ that correspond to the arcs in $\underline {\Gamma
}_{o}$ that are met on the oriented path that starts at $g^{ -
1}\upsilon $ and ends at $\upsilon $. This sum is $2\pi k/k_{v}$.

This last observation implies that the diffeomorphism in \eqref{eq4.15} restricts to
the $\sigma =\theta _{o}$ circle of the parametrizing cylinder so as
to map missing points to missing points. Moreover, the labels granted these
missing points as vertices in $\ell _{oe}$ are preserved by \eqref{eq4.14}. Thus,
both of the conditions in \eqreft41 hold for the new and original
parametrizations using $C' = C$ and using $g$ as the element in $\Aut(T_{C})$
for the isomorphism between $T_{C}$ and itself.

\step{Part 3}
The condition in \eqreft42 also holds in this context. To see this, let $o$
denote a bivalent vertex in $T^{\hat{A}}$, let $e$ denote the edge that
contains $o$ as its maximal angle vertex, and let $e'$ denote the edge that
contains $o$ as its minimal angle vertex. Let $\gamma \subset\underline{\Gamma}_{o}$ denote
the arc that starts at the vertex that corresponds
to the distinguished element in $\hat{A}_{o}$. This arc corresponds to
respective arcs on the $\sigma =\theta _{o}$ boundary circle for the
parametrizing domain of both $K_{e}$ and $K_{e'}$. The $\mathbb{R}/(2\pi \mathbb{Z})$
coordinates on these arcs lift to $\mathbb{R}$ so as to
make the $N= 0$ version of \eqref{eq2.14} hold. These lifts identify the two boundary
circle arcs. With this identification understood, the $N= 0$ version of
\eqref{eq2.15} describes the relationship between $w_{e}$ and $w_{e'}$ on the
interior of $\gamma $ when the latter is viewed as an arc in the $\theta  =
\theta _{o}$ locus in $C$'s model curve.

On this same arc, the pair ${w_{e}}'$ and ${w_{e'}}'$ are related by
another version of \eqref{eq2.15}; the version that applies to the arc that is
mapped to $\gamma $ by the inverse of the map in \eqref{eq4.14}. However, using
\fullref{lem:2.3} and the fact that $g$ acts on $\underline {\Gamma }_{o}$ as an
isomorphism, the relationship between ${w_{e}}'$ and ${w_{e'}}'$ on
$\gamma $'s image in $C_{0}$ is also given by the $N = 0$ version of \eqref{eq2.15}.
Indeed, \fullref{lem:2.3} finds that
\begin{equation}\label{eq4.16}
\begin{split}
w_{e}\big(\sigma , \hat {v}-2\pi k/k_{v}\big) &= w_{e'}\big(\sigma
,\hat {v}-2\pi k/k_{v}\big)\\
&\quad+\frac{1 }{ {\alpha _{Q_e } (\sigma
)}}\big({q_{e}}'q_{e'} - q_{e}{q_{e'}}'\big)\big(\hat {v}-2\pi
k/k_{v}\big)\\
&\quad+\frac{{2\pi } }{ {\alpha _{Q_e } (\sigma
)}}\big({q_{e}}'j - q_{e}j'\big),
\end{split}
\end{equation}
where $(j, j')$ is proportional to the relatively prime integer pair that
defines $\theta _{o}$ via \eqref{eq1.8}. In particular, the proportionality factor
here is minus the sum of the integer weights that are assigned to all
vertices but $\upsilon $ that lie on the oriented path in $\underline {\Gamma }_{o}$ from
$g^{- 1}(\upsilon )$ to $\upsilon $. Because $g$ acts as an isometry, this sum must equal
$(k/k_{v})P_{o}$ with $P_{o}$ as
in \eqreft33. Thus, $(j, j')$ can be written as $(k/k_{b})(Q_{e}-
Q_{e'})$. With $(j, j')$ so identified, the equality in \eqref{eq4.16} finds
that ${w_{e}}'$ and ${w_{e'}}'$ do indeed obey the predicted $N = 0$
version of \eqref{eq2.15} on $\gamma $.

As both $(w_{e}, w_{e'})$ and $({w_{e}}', {w_{e'}}')$ obey the
same $N = 0$ version of \eqref{eq2.15} on $\gamma $, it then follows that $\hat
{w}_{e}=\hat {w}_{e'}$ along $\gamma $. The fact that this
equality holds along the whole of $\Gamma _{o}$ in $C_{0}$ can be seen
with the help of \eqreft25. To see this, let $\gamma '$ denote another arc in
$\underline {\Gamma }_{o}$. The relationship between $w_{e}$ and
$w_{e'}$ on $\gamma '$ can be obtained with the use of the $e$ and $e'$
versions of \eqreft25; it is given by the version of \eqref{eq2.15} where $(j, j')$ is a
certain multiple of the relatively prime pair that defines $\theta _{o}$
via \eqref{eq1.8}. To be precise, this multiple is obtained by summing with an
appropriate sign the weights of the vertices that lie on the oriented path
in $\underline {\Gamma }_{o}$ that starts at the endpoint of $\gamma $ and
ends at the starting point of $\gamma '$. Since the same weights appear on
the analogous path defined by $g^{- 1}(\gamma )$ and $g^{- 1}(\gamma ')$,
the relation between ${w_{e}}'$ and ${w_{e'}}'$ on
$\gamma '$ is the same $(j, j')$ version of \eqref{eq2.15}. Thus, $\hat {w}_{e} =
\hat {w}_{e'}$ along $\gamma '$.

\step{Part 4}
Let $G\subset C_{0}$ denote the part of the zero locus of $\hat {w}$
that lies in the complement of the critical point set of the pull-back of
$\cos(\theta )$. This set is described by \fullref{lem:4.3}. Moreover, $G\ne  ${\o}
since the $w_{e}$ and ${w_{e}}'$ versions of the integral in \eqref{eq3.15} are
identical in the case that $e$ is any edge in $T^{\hat{A}}$.

To see the implications of \fullref{lem:4.1} in this context, return momentarily to
the proof of \fullref{lem:4.2}. The latter explains how the various versions of
\eqref{eq4.14} fit together across $\Gamma\subset C_{0}$ so as to define a
diffeomorphism, $\psi \co  C_{0}   \to C_{0}$. With $\psi $ in hand,
introduce the tautological map, $\phi \co  C_{0}   \to   \mathbb{R}  \times (S^1  \times  S^2)$, onto $C$.
Keep in mind that $\phi $ is almost everywhere 1--1. Let $\phi '  \equiv   \phi  \circ \psi $. In the
present context, \fullref{lem:4.1} asserts that $\phi'(C_{0})$ is obtained from $C$
by a constant translation along the $\mathbb{R}$ factor in $\mathbb{R}  \times (S^1  \times  S^2)$.
However, this factor must be zero since the average of any given ${a_{e}}'$ around a constant $\sigma $
circle is equal to that of $a_{e}$ around the same circle. Thus, $\phi '$ also maps $C_{0}$
onto $C$. Granted this, then \eqref{eq4.14} implies that $\psi $ is 1--1 if and only if
$k$ is zero.

\subsection{Why the map from $\mathcal{M}$ is proper}\label{sec:4e}

The map $P$ from $\mathcal{M}   \to   \mathbb{R}  \times \hat{O}^{\hat{A}}/\Aut^{\hat{A}}$
is proper if and only if all
sequences in $\mathcal{M}$ with convergent image in $\mathbb{R}  \times
\hat{O}^{\hat{A}}/\Aut^{\hat{A}}$ have convergent subsequences.
The proof that such is the case is given here in three parts.

\step{Part 1}
Let $\{C_{j}\}$$_{j = 1,2,\ldots }$ denote a sequence with convergent
image in $\mathbb{R}  \times  \hat{O}^{\hat{A}}/\Aut^{\hat{A}}$.
The desired convergent subsequence in $\mathcal{M}$ is found by first invoking
 \cite[Proposition~3.7]{T3} to describe the $j \to   \infty $ behavior of a
subsequence from $\{C_{j}\}$ in terms of a limiting data set, $\Xi $.
Here, $\Xi $ is a finite set of pairs where each has the form $(S, n)$ with $S$
being a pseudoholomorphic, multiply punctured sphere in $\mathbb{R}  \times (S^1  \times  S^2)$
and with $n$ being a positive integer.

This first part of the subsection establishes that the conclusions
of \cite[Proposition 3.7]{T3} hold in the case that $K = \mathbb{R} \times (S^1 \times  S^2)$.
The precise version needed here is stated formally as follows.

\begin{proposition}\label{prop:4.6}

Let $\{C_{j}\}_{j = 1,2,\ldots }   \subset \mathcal{M}$  denote an infinite sequence
with convergent image in $\mathbb{R}  \times  \hat{O}^{\hat{A}}/\Aut^{\hat{A}}$.
 There exists a subsequence, hence renumbered consecutively from 1,
 and a finite set, $\Xi $, of pairs of the form $(S, n)$  where $n$  is a positive integer and $S$
 is an irreducible, pseudoholomorphic, multiply punctured sphere; and these have the following
 properties:

\begin{itemize}

\item
$\lim_{j \to \infty }  \int _{C_j } \varpi =  \sum_{(S,n) \in \Xi }^{ } $n$ \int _{S}  \varpi $
for each compactly supported 2--form $\varpi $.

\item
The following limit exists and is zero:
\begin{equation}\label{eq4.17}
\lim_{j \to \infty } \Bigl(\sup_{z \in C_j } \dist(z,  \cup _{\Xi } S) +
\sup_{z \in \cup _{(S,n) \in \Xi } S}  \dist(C_{j}, z)\Bigr)
\end{equation}

\end{itemize}
\end{proposition}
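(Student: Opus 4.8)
The plan is to deduce the proposition from \cite[Proposition~3.7]{T3} by verifying its hypotheses in the case where the ambient region is all of $\mathbb{R}\times(S^1\times S^2)$. That proposition is a compactness statement of the sort in \cite{BEHWZ}: it produces the subsequence, the finite set $\Xi$, and the two bulleted conclusions (including \eqref{eq4.17}), provided one supplies (i) a bound, independent of $j$, on $\int_{C_j\cap K'}\omega$ for each compact $K'\subset\mathbb{R}\times(S^1\times S^2)$ and on $\int_{C_j}d\alpha$; and (ii) uniform control of each $C_j$ in the region where $|s|$ is large, so that no area is lost out the ends as $j\to\infty$. For (i) the essential point is that the asymptotic data set $\hat{A}$ is common to every $C_j$; the substance of the argument is (ii), and this is where the hypothesis that the image in $\mathbb{R}\times\hat{O}^{\hat{A}}/\Aut^{\hat{A}}$ converges is used.

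For the energy bounds, observe first that $\int_{C_j}d\alpha$ is determined by the periods of the limiting Reeb orbits of the ends of $C_j$ and by the local intersection data with the $\theta\in\{0,\pi\}$ loci: by Stokes' theorem on the $|s|\le R$ part of $C_j$ (with $R$ large, where the $|s|\ge R$ part is the standard union of cylinders recalled in \fullref{sec:2a}) it is a boundary term that is read off from $\hat{A}$, hence independent of $j$. Since $\omega=d(e^{-\surd 6 s}\alpha)$ is exact, $\int_{C_j\cap K'}\omega$ is likewise a boundary integral over $\partial K'\cap C_j$; combined with the standard monotonicity (local area) estimates for $J$--pseudoholomorphic subvarieties with respect to the product metric of \eqref{eq1.4}, this yields a uniform local area bound on the $C_j$. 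That bound is exactly what allows a subsequence to be extracted along which $\int_{C_j}\varpi$ converges for every compactly supported $2$--form $\varpi$, and it bounds the number of irreducible pieces that $\Xi$ can contribute inside any fixed compact set.

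The control at $|s|\to\infty$ comes from the analysis recalled in \fullref{sec:2a} (and \cite[Sections~2 and~3]{T4}): each $C_j$ has a number $R$ such that its $|s|\ge R$ part is an embedded disjoint union of half-cylinders, one per end, on which $\theta$ approaches its limiting value exponentially at a rate fixed a priori by $\hat{A}$, with leading coefficient the constant $b$, respectively $\hat{c}$, of \eqref{eq2.4}, respectively \eqref{eq1.9}. Propositions~\ref{prop:3.5} and~\ref{prop:3.6} identify the $\hat{O}^{\hat{A}}$--coordinates and the $\mathbb{R}$--coordinate of $C_j$ with the $|s|\to\infty$ Reeb--orbit limits of its ends and with quantities of the form $-\zeta^{-1}\ln(b)$ or $-(\sqrt{3/2}+p'/p)^{-1}\ln(\hat{c})$. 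Because these coordinates converge along the sequence, the limiting Reeb orbits are eventually locked and the constants $b,\hat{c}$ (and their analogues for the remaining ends) remain in a fixed compact subset of $(0,\infty)$. Hence a single $R$ serves for all $j$, the $|s|\ge R$ portions of the $C_j$ converge to the corresponding portions of a fixed union of cylinders over the limiting Reeb orbits, and no area escapes through the ends. Patching this end behavior with the compact--region conclusion of \cite[Proposition~3.7]{T3} applied over an exhaustion of $\mathbb{R}\times(S^1\times S^2)$ by the compact sets $\{|s|\le R_k\}$, and passing to a diagonal subsequence, yields the finite set $\Xi$ and the two stated limits.

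The main obstacle is precisely the uniformity of $R$, i.e.\ ruling out that the exponential approach of some $C_j$ to a limiting Reeb orbit degenerates (some $b_j\to 0$ or $b_j\to\infty$) along the sequence, which would either draw area in from $|s|=\infty$ or let one of the asymptotic cylinders pinch. The convergence of the $\mathbb{R}$-- and $\hat{O}^{\hat{A}}$--coordinates, interpreted through Propositions~\ref{prop:3.5} and~\ref{prop:3.6} together with \eqref{eq1.9} and \eqref{eq2.4}, is exactly the input that forecloses this; once the $|s|\ge R$ behavior is uniformly pinned, the remaining content is the standard local compactness for pseudoholomorphic subvarieties already packaged in \cite[Proposition~3.7]{T3}, and \eqref{eq4.17} follows by upgrading the resulting current convergence to Hausdorff convergence via the monotonicity estimates.
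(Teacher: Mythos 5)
Your strategy—argue that convergence of the image in $\mathbb{R}\times\hat{O}^{\hat{A}}/\Aut^{\hat{A}}$ uniformly pins the asymptotic behavior so that a single $R$ works for all $j$, then patch with \cite[Proposition~3.7]{T3} on an exhaustion—is internally coherent as a template, but the pivotal step is unjustified. You assert that ``the constants $b,\hat{c}$ (and their analogues for the remaining ends) remain in a fixed compact subset of $(0,\infty)$'' because the $\mathbb{R}$-- and $\hat{O}^{\hat{A}}$--coordinates converge. The parametrization of \fullref{sec:3} does not give this: the $\hat{O}^{\hat{A}}$--factor (\fullref{prop:3.5}) records only the Reeb-orbit \emph{positions} at $|s|\to\infty$ of the concave-side ends, and the single $\mathbb{R}_-$--coordinate (\fullref{prop:3.6}) records the leading coefficient $b$ or $\hat{c}$ of exactly one designated end (or the $s$-coordinate of one $\theta=0$ point). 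The exponential coefficients for the other ends are not read off from any of these coordinates, so convergence of the image does not, by itself, forbid one of them from drifting to $0$ or $\infty$ along the sequence, which is exactly what would make a uniform $R$ impossible. That is the gap.

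The paper does something genuinely different. It takes the first bullet and the compact-set version of the second bullet directly from \cite[Proposition~3.7]{T3}, and proves the full second bullet by contradiction: assuming it fails, \fullref{lem:4.7} produces an $\mathbb{R}$--invariant cylinder $S_*$, sequences $s_{j\pm}$, and a component $C_{j*}$ whose boundary slices reach distance $\tfrac12\varepsilon$ from $S_*$ while the middle slice collapses onto $S_*$. When $\theta$ on $S_*$ lies in $(0,\pi)$, a path-concatenation argument produces a loop in $C_j$ with nonzero intersection number against the constant-$s$ slices of $C_{j*}$, contradicting genus zero; when $\theta\in\{0,\pi\}$, the mountain-pass lemma forces a non-extremal critical point of $\theta$, contradicting $C_j\in\mathcal{M}$. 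The convergence in the $\mathbb{R}$--factor enters only inside the proof of \fullref{lem:4.7}, and only in the subcase where all of $\Xi$ would consist of $\mathbb{R}$--invariant cylinders, where it is used to rule out \eqreft4{19} via a translated-sequence argument. Your proposal omits \fullref{lem:4.7} and the topological obstruction arguments entirely, and these—not an a priori bound on all the $b$'s—are what actually carry the proof.
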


The proof of this proposition appears momentarily. It is employed in the
following manner to prove that the map $P$ from $\mathcal{M}$ to
$\mathbb{R}  \times  \hat{O}^{\hat{A}}/\Aut^{\hat{A}}$ is proper: Part 2 of this
subsection uses \fullref{prop:4.6} to prove that $\Xi $ contains but a single
element, this denoted as $(S, n)$. Part 3 of the subsection proves that $n = 1$
and that $S$ is a subvariety from $\mathcal{M}$. Granted that such is the case,
then \eqref{eq4.17} asserts that $S$ is in fact the limit of $\{C_{j}\}$ with
respect to the topology on $\mathcal{M}$ as defined in \eqref{eq1.13}. This last
conclusion establishes that $P$ is proper.

\begin{proof}[Proof of \fullref{prop:4.6}]
Except for the second point in \eqref{eq4.17},
this proposition restates conclusions from \cite[Proposition~3.7]{T3}. The proof
of the second point in \eqref{eq4.17}, assumes it false so as to derive some
nonsense. For this purpose, note that \cite[Proposition~3.7]{T3} asserts that
the second point in \eqref{eq4.17} holds if the supremums that appear are restricted
to those points z that lie in any given compact subset of $\mathbb{R}  \times (S^1  \times  S^2)$.

The derivation of the required nonsense starts with the following lemma.

\begin{lemma}\label{lem:4.7}

Assume that the  second point in \eqref{eq4.17} does not hold for the given infinite sequence,
$\{C_{j}\}$, of multiply punctured, pseudoholomorphic spheres. Then, there exists an
$\mathbb{R}$--invariant, pseudoholomorphic cylinder, $S_{*}   \subset   \mathbb{R}$;
an infinite subsequence of $\{C_{j}\}$  (hence renumbered consecutively from 1);
and given $\varepsilon  > 0$  but small, there is a real number $s_{0}$,  a sequence
$\{s_{j - }\}_{j =1,2\ldots }   \subset  (-\infty , s_{0}]$  and a corresponding sequence
$\{s_{j + }\}_{j =1,2\ldots }   \subset  [s_{0}, \infty )$ ; all with the following significance:

\begin{itemize}

\item
Both $s_{j - }$  and $s_{j + }$  are regular values of $s$ on $C_{j}$.

\item
If either $\{s_{j - }\}$  and $\{s_{j + }\}$  is bounded, then it is convergent;
but at least one of the two is unbounded.

\item
The $s \in [s_{j - }, s_{j + }]$  portion of $C_{j}$'s intersection with the radius
$\varepsilon $  tubular neighborhood of $S_{*}$  has a connected component, $C_{j*}$,
whose points have distance $\frac{1 }{ 2}\varepsilon $ or less from $S_{*}$.

\item
Both the $s = s_{j - }$  and $s = s_{j + }$  slices of $C_{j*}$  are non-empty and both
contain points with distance $\frac{1 }{2}\varepsilon $  from $S_{*}$.

\item
Let $\delta _{j}$  denote the maximum of the distances from the point on the
$s = \frac{1 }{ 2}(s_{j - }+s_{j + })$  locus in $C_{j*}$  to $S_{*}$.
The corresponding sequence $\{\delta _{j}\}$  is then decreasing with limit zero.
\end{itemize}
\end{lemma}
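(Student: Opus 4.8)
The plan is to prove Lemma~\ref{lem:4.7} by a contradiction/extraction argument that isolates the single Reeb orbit responsible for the failure of the uniform convergence in the second point of \eqref{eq4.17}. By hypothesis the supremum over $C_j$ of $\dist(z,\cup_\Xi S)$ does not tend to zero, while \cite[Proposition~3.7]{T3} guarantees it does tend to zero when restricted to any fixed compact subset of $\mathbb{R}\times(S^1\times S^2)$. Therefore the failure must occur along a sequence of points $z_j\in C_j$ with $|s(z_j)|\to\infty$ and $\dist(z_j,\cup_\Xi S)\ge 2\varepsilon_0$ for some fixed $\varepsilon_0>0$. First I would use the fact (from \cite[Section~2]{T4}, recalled here in \fullref{sec:1a} and \fullref{sec:2a}) that at large $|s|$ each $C_j$ is an embedded union of ends converging to Reeb orbits, so $z_j$ lies far out on an end $E_j\subset C_j$ whose constant-$|s|$ slices limit to some closed Reeb orbit. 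Passing to a subsequence (here is where compactness of the space of closed Reeb orbits in any fixed $\theta$-slab, together with the fact that the data set $\hat A$ only allows finitely many asymptotic labels, is used) I may assume that these orbits all coincide with one fixed closed Reeb orbit $\gamma_*$, and I take $S_*\equiv\mathbb{R}\times\gamma_*$, which is an $\mathbb{R}$-invariant pseudoholomorphic cylinder.

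Next I would fix $\varepsilon>0$ small enough that the radius-$\varepsilon$ tubular neighborhood $\mathcal{N}$ of $S_*$ is a genuine tubular neighborhood carrying a projection onto $S_*$, and small enough that $\mathcal{N}$ is disjoint from every other Reeb-orbit cylinder appearing as an asymptotic limit of subvarieties in $\Xi$ or in $\{C_j\}$; this uses the finiteness of $\Xi$ and the discrete nature of the set $\Lambda_{\hat A}$. Inside $\mathcal{N}$, the portion of $C_j$ with $\theta$ near $\theta_{\gamma_*}$ decomposes, via the preferred parametrizations of \fullref{sec:2b} and Property~3 there, into cylindrical pieces on which $|s|$ is a proper coordinate; the point $z_j$ sits on such a piece, at distance $\ge\tfrac12\varepsilon$ from $S_*$ after shrinking $\varepsilon$. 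I then choose the connected component $C_{j*}$ of $(C_j\cap\mathcal{N})$ through $z_j$ on which the distance to $S_*$ stays below $\varepsilon$, and pick regular values $s_{j-}\le s_0\le s_{j+}$ of $s|_{C_j}$ (using Sard) so that the $s=s_{j\pm}$ slices of $C_{j*}$ both meet the distance-$\tfrac12\varepsilon$ locus — i.e. $s_{j\pm}$ are chosen to be the first exit times, in either direction of $s$, from the region where $C_{j*}$ is within $\tfrac12\varepsilon$ of $S_*$, with $s_0=s(z_j)$ or a nearby regular value. The dichotomy "bounded implies convergent, and at least one unbounded" is forced because if \emph{both} $\{s_{j-}\}$ and $\{s_{j+}\}$ were bounded then $C_{j*}$ would be trapped in a fixed compact set, contradicting the compact-set version of \eqref{eq4.17} from \cite[Proposition~3.7]{T3}; boundedness of one of the sequences yields a convergent subsequence after a further extraction.

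The last bullet — the decreasing-to-zero behavior of $\delta_j=\max\{\dist(\cdot,S_*): \cdot\in C_{j*}\cap\{s=\tfrac12(s_{j-}+s_{j+})\}\}$ — I expect to be the main obstacle, and it is handled as follows. One first observes $\delta_j\le\varepsilon$ by construction and extracts a subsequence along which $\delta_j$ converges; the claim is that the limit is $0$ and, after re-extraction, the sequence can be taken monotone decreasing. Suppose $\delta_j\to\delta_\infty>0$. Rescale: translate $C_{j*}$ in $s$ by $-\tfrac12(s_{j-}+s_{j+})$ so that the midpoint slice sits at $s=0$; because $s_{j+}-s_{j-}\to\infty$ (at least on one side, and the argument is symmetric), the translated pieces converge, by the local compactness of \cite[Proposition~3.7]{T3} applied to the translated sequence (which is still pseudoholomorphic with controlled $\omega$- and $d\alpha$-energy), to a pseudoholomorphic subvariety $S'$ inside the closed $\varepsilon$-neighborhood of $S_*$, with the midpoint slice meeting $\{\dist(\cdot,S_*)=\delta_\infty\}$ and hence $S'\ne S_*$. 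But any pseudoholomorphic subvariety confined to a tubular neighborhood of $S_*=\mathbb{R}\times\gamma_*$ with finite $\int d\alpha$ and bounded below/above $|s|$-extent ruled out, or else it is $\mathbb{R}$-invariant; invoking the description of $\mathbb{R}$-invariant cylinders in \eqref{eq1.14} and the paragraphs following it, the only such $\mathbb{R}$-invariant subvariety through a point at distance $\delta_\infty$ would itself be a Reeb-orbit cylinder $\mathbb{R}\times\gamma'$ with $\gamma'\ne\gamma_*$ at the \emph{same} angle $\theta_{\gamma_*}$ — but then $\gamma'$ would be a second Reeb orbit at that angle arising as an asymptotic limit, forcing (via \eqref{eq1.12} and the ordering structure of $T^{\hat A}$) an extra element of $\hat A$ or an extra puncture, contradicting that the $C_j$ all share the fixed data set $\hat A$ with its fixed graph. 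This contradiction gives $\delta_\infty=0$; a standard diagonal re-extraction then produces the monotone decreasing subsequence, and renumbering from $1$ completes the proof. The delicate point throughout is ensuring the rescaled limit $S'$ is controlled — this is precisely where the author's emphasis (see the outline of \fullref{sec:7}) on replacing the hard analysis of \cite{BEHWZ}-style compactness with the topology of the canonical parametrizations of \fullref{sec:2} does the work, and I would lean on Property~3 and the transition formulas \eqref{eq2.14}--\eqref{eq2.15} to keep the cylindrical pieces $C_{j*}$ under parametric control rather than re-deriving a Gromov-type limit from scratch.
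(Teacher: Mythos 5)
Your overall plan — extract a subsequence of points $z_j\in C_j$ at distance $\geq 2\varepsilon_0$ from $\cup_\Xi S$, use compact-set convergence to force $|s(z_j)|\to\infty$, locate a limiting Reeb orbit $\gamma_*$ and take $S_*=\mathbb R\times\gamma_*$, then argue by contradiction that the midpoint distances go to zero — is a genuinely different route from the paper's. The paper proceeds by a case analysis on the limit data set $\Xi$ from \cite[Proposition~3.7]{T3}: it first records that the failure of \eqref{eq4.17} forces the configuration in \eqreft4{18} for some end $E$ of some $S\in\Xi$, observes that the lemma follows quickly once $\Xi$ contains \emph{any} non--$\mathbb R$-invariant subvariety (using irreducibility of the $C_j$), and reduces the remaining case — all of $\Xi$ consisting of $\mathbb R$-invariant cylinders — to ruling out the alternatives in \eqreft4{19}. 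That last step uses, in an essential way, the standing hypothesis of \fullref{prop:4.6} that the images of $\{C_j\}$ converge in the $\mathbb R$ factor of $\mathbb R\times\hat O^{\hat A}/\Aut^{\hat A}$; indeed, the case analysis through \eqreft32 applies that convergence to translated sequences to get the contradiction.

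There are two substantive gaps in your proposal. The central one is the contradiction you extract from the translated limit $S'$ in the final paragraph. You assert that $S'$ is either of bounded $|s|$-extent (ruled out) or an $\mathbb R$-invariant cylinder $\mathbb R\times\gamma'$ at the same angle, and that the appearance of $\gamma'$ ``forces an extra element of $\hat A$ or an extra puncture.'' This last step is not valid: $\gamma'$ is a Reeb orbit appearing in a Gromov limit of a \emph{translated} subsequence, which has its own limit data set distinct from $\Xi$, and nothing in the constraints \eqreft1{12} or in the structure of $T^{\hat A}$ forbids a Reeb-orbit cylinder from appearing in such a translated limit even when its orbit is not among the asymptotic limits of the original $C_j$'s. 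The paper does not rule out this scenario by contradicting the fixed $\hat A$; rather it falls back on the convergence-in-$\mathbb R$ hypothesis, which your proof never invokes. Without that hypothesis the conclusion of the last bullet is not obtained in the all-$\mathbb R$-invariant-cylinders case, which is exactly the case where \eqreft4{19} would otherwise kick in. (Your dichotomy ``bounded $|s|$-extent or else $\mathbb R$-invariant'' for curves confined to a tube around $S_*$ is also stated without justification; it can in fact be established via Stokes' theorem for $d\alpha$ and positivity of its restriction to a pseudoholomorphic curve, but that argument needs to be made explicit rather than waved at.)

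The second, smaller gap is the choice $s_0 = s(z_j)$ ``or a nearby regular value.'' Since $|s(z_j)|\to\infty$, that $s_0$ depends on $j$; the lemma requires a single real number $s_0$ that lies between $s_{j-}$ and $s_{j+}$ uniformly in $j$. To satisfy the statement you would need to anchor $s_0$ at a fixed location (for example, a value where $C_{j*}$ first enters the $\tfrac12\varepsilon$-tube for all large $j$, which is bounded by compact-set convergence to $\cup_\Xi S$) rather than at $s(z_j)$. You should also address the possibility that the failure of \eqref{eq4.17} is carried by the second summand (points of $\cup_\Xi S$ far from $C_j$) rather than the first, which your opening sentence quietly assumes away.
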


\fullref{lem:4.7} is proved momentarily. To see where this lemma leads, suppose
first that $\theta $ is neither 0 nor $\pi $ on $S_{*}$. In this case,
the conclusions in \cite[Lemma~3.9]{T3} hold. But topological considerations
find that \cite[Lemma 3.9]{T3} and \fullref{lem:4.7} lead to nonsense.

To elaborate, first let $\theta _{*}$ denote the constant value for
$\theta $ on $S_{*}$ and assume that $\theta _{*}$ is neither
0 nor $\pi $. Next, fix $\varepsilon $ small and then $j$ large. By
virtue of what is said in \cite[Lemma~3.9]{T3}, there exists some very small
and $j$--independent constants $\delta _{\pm }$, either both positive and
negative and with the following properties: First, neither $\theta _{*}+\delta _{ + }$
or $\theta _{*}+\delta _{ - }$ is an
$|s|    \to \infty $ limit of $\theta $ on $C_{j}$. Second,
when $j$ is large, $\theta _{*}+\delta _{ + }$ is a value of
$\theta $ on the $s=s_{j + }$ boundary of $C_{j*}$ and $\theta
_{*}+\delta _{ - }$ is a value of $\theta $ on the $s=s_{j -}$ boundary.
Let $z_{ + }$ and $z_{ - }$ denote respective points on the $s=s_{j + }$ and $s=s_{j - }$
boundaries of $C_{j*}$ where $\theta $
has the indicated value. Since both $\delta _{ - }$ and $\delta _{ + }$
have small absolute value, the $\theta =\theta _{*}+\delta_{ - }$ and
$\theta =\theta _{*}+\delta _{ + }$ loci are
circles in the same component of $C_{j}$'s version of $C_{0}- \Gamma $.
Thus, there is a path in this component that lies in the set
where $| \theta -\theta _{*}|    \ge  \min(|\delta _{ - }| , | \delta _{ + }| )$ and runs from
$z_{ - }$ to $z_{ + }$. Denote the latter by $\gamma $. When $j$ is large, the
last point in \fullref{lem:4.7} forbids an intersection between $\gamma $ and the
$s=\frac{1 }{ 2}(s_{ - j}+s_{ + j})$ locus. On the other hand,
there is a path in $C_{j*}$ that runs from $z_{ + }$ to $z_{ - }$ and
does intersect this locus. The concatenation of the latter path with $\gamma$
defines a closed loop in $C_{j}$ that has non-zero intersection number
with the constant s slices of $C_{j*}$. But no such loop exists since
$C_{j}$ has genus zero.

Suppose next that that $\theta  = 0$ or $\pi $ on $S_{*}$. In this
case, the maximum value of $\theta $ on both the $s=s_{j - }$ and $s=s_{j+ }$
slices of $C_{j*}$ is bounded away from zero by a uniform
multiple of $\varepsilon ^{2}$ when $\varepsilon $ is small. Meanwhile all
values of $\theta $ on the $s=\frac{1 }{ 2}(s_{ - j}+s_{ +
j})$ slice of $C_{j*}$ are bounded by $o({\delta _{j}}^{2})$. This
being the case, the mountain pass lemma demands a non-extremal critical
point of $\theta $ on $C_{j}$. Note in this regard that there are at most a
finite number of intersections between any two distinct, irreducible,
pseudoholomorphic subvarieties.

Thus, in all cases, \fullref{lem:4.7} leads to nonsense. Granted the lemma is
correct, then its conclusions are false and so the second point in \eqref{eq4.17} must
hold.\end{proof}

\begin{proof}[Proof of \fullref{lem:4.7}]
Let $\Xi $ denote the data set that is provided
by  \cite[Proposition~3.7]{T3}. Since the second point in \eqref{eq4.17} is supposed to
fail, there exits some subvariety, $S$, from $\Xi $; an end $E \subset  S$; a
constant $R_{0}   \ge 1$; and, given $\varepsilon  > 0$, a divergent
sequence $\{R_{j}\} \subset  [R_{0}, \infty )$; all with the
following properties:

\itaubes{4.18}
 \textsl{Each sufficiently large $j$ version of $C_{j}$  intersects the $|s| \in [R_{0}, R_{j}]$
 portion of the radius $\varepsilon $
 tubular neighborhood of $E$ where the distance to $E$ is no greater than $\frac{1 }{ 2}\varepsilon $.}

\item
 \textsl{Meanwhile, each such $C_{j}$  has a point on the $|s|  = R_{j}$  slice of this neighborhood
 where the distance to $E$ is equal to $\frac{1 }{2}\varepsilon $.}
\eit

Of course, there may be more than one such $S$ and more than one such end in $S$
to which \eqreft4{18} applies. \fullref{lem:4.7} holds if there exists a pair $(S, E)$ as in
\eqreft4{18} where $S$ is not an $\mathbb{R}$ invariant cylinder. \fullref{lem:4.7} also holds
if there exists a pair $(S, E)$ as in \eqreft4{18} where $S$ is an $\mathbb{R}$--invariant
cylinder that intersects a subvariety from some other pair in $\Xi $.
Indeed, such is the case because each $C_{j}$ is irreducible. In fact, for
this very reason, \fullref{lem:4.7} holds if $\Xi $ contains any pair whose
subvariety is a not an $\mathbb{R}$--invariant cylinder.

To finish the argument for \fullref{lem:4.7}, consider now the case when all
subvarieties from $\Xi $ are $\mathbb{R}$--invariant cylinders. Even so, \fullref{lem:4.7}
must hold unless one of the following is true:

\itaubes{4.19}
 \textsl{Let $S$  denote any cylinder from $\Xi $. Given $\varepsilon  > 0$,  there exists a divergent
 sequence $\{R_{j}\} \subset  [0, \infty )$  such that each sufficiently large $j$
 version of $C_{j}$  intersects the $s  \in  (-\infty, R_{j}]$  portion of the radius
 $\varepsilon $ tubular neighborhood of $S$ where the distance to $S$ is no greater than
$\frac{1 }{ 2}\varepsilon $,  and it intersects the $s = R_{j}$  slice at a point with distance to
$S$ equal to $\frac{1 }{2}\varepsilon $}

\item
\textsl{Let $S$  denote any cylinder from $\Xi $. Given $\varepsilon  > 0$, there exists a divergent
sequence $\{R_{j}\} \subset  [0, \infty )$  such that each sufficiently large $j$
 version of $C_{j}$  intersects the $s  \in  [-R_{j}, \infty )$  portion of the radius
 $\varepsilon $  tubular neighborhood of $S$ where the distance to $S$ is no greater than
$\frac{1 }{ 2}\varepsilon $,  and it intersects the $s = -R_{j}$  slice at a point with distance to
$S$ equal to $\frac{1 }{2}\varepsilon $.}
\eit

As is explained next, both possibilities violate the assumed convergence of
the images of $\{C_{j}\}$ in the $\mathbb{R}$ factor $\mathbb{R}  \times
\hat{O}^{\hat{A}}/\Aut^{\hat{A}}$.

In the case that the top point in \eqreft32 holds, this can be seen by making a
new sequence whose $j$'th subvariety is a $j$--dependent, constant translation of
$C_{j}$ along the $\mathbb{R}$ factor of $\mathbb{R}  \times (S^1  \times
S^2)$. Were \eqreft4{19} to hold, a new sequence of this sort could be found
whose limit data set in \cite[Proposition~3.7]{T3} has an irreducible
subvariety with the following incompatible properties: It is not an $\mathbb{R}$--invariant cylinder,
it has the same $\theta $ infimum as each
$\{C_{j}\}$, and there is no non-zero constant $b$ that makes \eqref{eq2.4} hold
for the corresponding end.

Were the second point in \eqreft32 to hold, then one of the limit cylinders from
$\Xi $ would be the $\theta  = 0$ cylinder and then the convergence dictated
by \eqreft4{19} would demand a $(1,\ldots)$ element in $\hat{A}$.

An argument much like that just used to rule out the first point of \eqreft32
rules out the third point as well. In this case, the derived nonsense from
the translated sequence is an irreducible subvariety from the data set of
the corresponding version of \cite[Proposition~3.7]{T3} with the following
mutually incompatible properties: First, it is not the $\theta  = 0$
cylinder but contains an end where the $|s| \to \infty $
limit of $\theta $ is zero. Second, the respective integrals of
$\frac{1 }{ {2\pi }} dt$ and $\frac{1 }{ {2\pi }}d\varphi $
about the constant $|s| $ slices of this end have the form
$\frac{1 }{ m}p$ and $\frac{1 }{ m}p'$ where $(p, p')$ is the
pair from the $(1,\ldots)$ element in $\hat{A}$ and where $m\ge 1$ is some common divisor of
this same pair. Finally, there would be no
non-zero version of the constant $\hat {c}$ that would make \eqref{eq1.9} hold. Note
that the argument for the second property uses the fact that there are no
critical points of $\theta $ with $\theta $ value in $(0, \pi )$ on any
$C_{j}$. Indeed, the lack of critical points implies that the component of
the $C_{j}$ version of $C_{0}- \Gamma $ where the $|s| \to \infty $ limit of
$\theta $ is zero is parametrized by a
$j$--dependent map from a $j$--independent cylinder. Of course, this same cylinder
parametrizes the corresponding component in any translation of $C_{j}$.
Granted this, apply  of \cite[Proposition~3.7]{T3} to the translated sequence to
find the asserted form for the integrals of $\frac{1 }{ {2\pi }} dt$
and $\frac{1 }{ {2\pi }}d\varphi $.
\end{proof}

\step{Part 2}
This part of the argument proves that $\Xi $ contains but a single
element. The proof assumes the converse so as to derive an absurd
conclusion. For this purpose, introduce $\mathcal{S}    \subset   \mathbb{R}\times (S^1  \times  S^2)$
to denote the union of the
subvarieties from $S$ that are not $\mathbb{R}$--invariant cylinders. Then set $Y$
to denote the subset of $\mathcal{S} $ that contains the images of critical
points of $\theta $ from the model curves of the irreducible components of
$\mathcal{S} $, the singular points of $ \cup _{(S,n) \in \Xi }S$, and the
points on $\mathcal{S} $ where $\theta    \in  \{0, \pi \}$. This set $Y$ is
finite.

The first point to make is that $\Xi $ contains at least one element whose
subvariety is not an $\mathbb{R}$--invariant cylinder. Indeed, this follows from
an appeal to \fullref{prop:4.6}. However, $\Xi $ cannot have two elements whose
subvarieties are not $\mathbb{R}$--invariant. To explain, suppose first that
there are two such subvarieties from $\Xi $ and a value of $\theta $ that is
taken simultaneously on both. Since $Y$ is a finite set, this value can be
taken so as to be disjoint from any value of $\theta $ on $Y$, and also
disjoint from any value of $\theta $ from the set $\Lambda
_{\hat{A}}$. Let $\theta _{*}$ denote such a value. Then, the
$\theta =\theta _{*}$ locus in $\mathcal{S} $ is at least two
disjoint circles.

To see where this leads, introduce the notion a `$\theta $--preserving
preimage' in \cite[Step 4 of Section~3.D]{T3}. Note here that a compact
submanifold in $\mathcal{S} - Y$ has a well defined $\theta $--preserving
preimage even if $C_{j}$ has nearby immersion points. This said, each circle
from the $\theta =\theta _{*}$ locus in $\mathcal{S} $ has $\theta$--preserving
preimages in each sufficiently large $j$ version of $C_{j}$. The
set of these preimages gives a set of disjoint, embedded circles on which
$\theta =\theta _{*}$. However, there can be at most one such
circle.

Thus, if $\Xi $ has two subvarieties that are not $\mathbb{R}$--invariant, then
the supremum of $\theta $ on one must be the infimum of $\theta $ on the
other. To rule out this possibility, let $S$ and $S'$ denote the subvarieties
that are involved, and suppose that $\theta _{*}$ is the supremum of
$\theta $ on $S$ and the infimum of $\theta $ on $S'$. Note that $\theta_{*}$ must be the value
of $\theta $ on some bivalent vertex
in $T^{\hat{A}}$. Let $e$ denote the edge in $T^{\hat{A}}$ where the
maximum of $\theta $ is $\theta _{*}$ and let $e'$ denote that where
the minimum of $\theta $ is $\theta _{*}$. Let $P$ denote
the relatively prime integer pair that defines $\theta _{*}$ via
\eqref{eq1.8}. As is explained next, both $Q_{e}$ and $Q_{e'}$ must be
non-zero multiples of $P$, and this is impossible for the following reason:
The $\theta =\theta _{*}$ locus in each $C_{j}$ is a non-empty
union of arcs on which both the $Q = Q_{e}$ and $Q =Q_{e'}$ versions
of \eqref{eq2.2}'s function $\alpha _{Q}$ are positive. Thus, the $Q =P$ version of
$\alpha _{Q}$ is positive at $\theta =\theta _{*}$, and this
contradicts \eqref{eq1.8}.

To see why $Q_{e}$ is proportional to P, let $\delta  > 0$ be very small.
\fullref{prop:4.6} guarantees that the integrals of $\frac{1 }{ {2\pi}} dt$ and
$\frac{1 }{ {2\pi }}d\varphi $ around the $\theta  =\theta _{o}-\delta $ circle in any
large $j$ version of $C_{j}$ is
proportional to its integral around the analogous circle in $S$, and the
latter integral is proportional to $P_{o}$. The analogous argument using the
$\theta =\theta _{*}+\delta $ circles proves the claim about $Q_{e'}$.

The proof that there are no $\mathbb{R}$--invariant cylinders from $\Xi $ is
given next. For this purpose, assume to the contrary that one subvariety
from $\Xi $ is an $\mathbb{R}$--invariant cylinder. Let $S_{*}$ denote
this cylinder. The second point in \eqref{eq4.17} requires that $\theta $'s value on
$S_{*}$ is its value at some bivalent vertex on $T$. Let $o$ denote the
vertex involved. In what follows, $(p, p')$ are the relatively prime integers
that define $\theta _{o}$ via \eqref{eq1.8}.

Given $\varepsilon  > 0$, let $C_{j*}$ denote the subset of $C_{j}$
whose points have distance less than $\varepsilon $ from $S_{*}$. If $j$
is large, the second point in \eqref{eq4.17} requires that $C_{j*}$ have
both a concave side end and a convex side end of $C_{j}$. The next lemma
asserts that there is a path in any large $j$ version of $C_{j*}$ from
the convex side end in $C_{j*}$ to the concave side one.

\begin{lemma}\label{lem:4.8}
If $\varepsilon  > 0$  is small, then all sufficiently large $j$ versions of $C_{j*}$
contain an embedding of $\mathbb{R}$  on which $s$  is neither bounded from above nor below.
Moreover, at large $|s| $  this embedding sits in the $\theta =\theta _{o}$  locus in $C_{j*}$.

\end{lemma}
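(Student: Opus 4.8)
The plan is to build the embedded line inside $C_{j*}$ by first understanding the structure of $C_{j*}$ for large $j$ and small $\varepsilon$, and then using the second point in \eqref{eq4.17} together with the genus-zero condition on $C_{j}$. Since $S_{*}$ is the $\mathbb{R}$--invariant cylinder $\mathbb{R}\times\gamma$ where $\gamma$ is the Reeb orbit at angle $\theta_{o}$, a point at distance less than $\varepsilon$ from $S_{*}$ has $|\theta-\theta_{o}|$ bounded by a uniform multiple of $\varepsilon$. Because each $C_{j}\in\mathcal{M}$ has no critical point of $\theta$ with value in $(0,\pi)$, the portion of $C_{j}$ lying in the relevant slab of $\theta$-values around $\theta_{o}$ is a union of open cylinders on each of which $\theta$ restricts without critical points. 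I would first argue that the component of $C_{j*}$ that carries both the convex-side end and the concave-side end guaranteed by \eqref{eq4.17} is a single such cylinder $K_{j}$ once $\varepsilon$ is small and $j$ is large: any additional component of $C_{j*}$ containing one of those ends would, by the tubular neighborhood structure and the limiting behavior, force a second circle in the $\theta=\theta_{o}$ locus of $C_{j}$, and that is ruled out by the same topological argument used earlier in Part 2 (a compact submanifold of $\mathcal{S}-Y$ has a well-defined $\theta$--preserving preimage in $C_{j}$, and there can be at most one embedded circle with $\theta=\theta_{o}$ since $C_{j}$ has genus zero and the two ends involved have the same limiting Reeb orbit).

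Granted that the relevant part of $C_{j*}$ is the single cylinder $K_{j}$, the embedded $\mathbb{R}$ is then just a properly embedded arc in $K_{j}$ transverse to the $\theta$-level circles, parametrized by $\theta$ near each of $K_{j}$'s two $\theta$-limits and by the $s$-coordinate in between. Concretely I would use a preferred parametrization of $K_{j}$ as in Definition~\ref{def:2.1}: write $K_{j}\cong(\theta_{0j},\theta_{1j})\times\mathbb{R}/(2\pi\mathbb{Z})$ with $s=a$ for the function $a$ of \eqreft25, fix a value $v_{0}\in\mathbb{R}/(2\pi\mathbb{Z})$, and take the image of $(\theta_{0j},\theta_{1j})\times\{v_{0}\}$. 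Since $K_{j}$ contains a convex-side end (where $s\to-\infty$) and a concave-side end (where $s\to+\infty$), and $s$ restricted to either end is proper and without critical points, the function $a(\cdot,v_{0})$ is unbounded both above and below: as $\theta\to\theta_{0j}$ the arc runs into the convex-side end where $s\to-\infty$, and as $\theta\to\theta_{1j}$ it runs into the concave-side end where $s\to+\infty$. This gives the embedding of $\mathbb{R}$ with $s$ neither bounded above nor below. For the final assertion, note that at large $|s|$ the embedded arc lies on the ends of $K_{j}$, where $\theta$ limits to $\theta_{0j}$ or $\theta_{1j}$; but the second point in \eqref{eq4.17} forces $\theta_{0j}\to\theta_{o}$ and $\theta_{1j}\to\theta_{o}$, so for $\varepsilon$ small the arc lies in the set where $|\theta-\theta_{o}|$ is as small as one likes at large $|s|$—and in fact, invoking Property~3 of the preferred parametrizations, these two ends are convex-side ends with $n_{E}=0$, so the $\theta=\theta_{o}$ claim can be sharpened exactly as needed by passing to the relevant sub-end where $\theta$ is pinned to $\theta_{o}$ in the limit.

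The main obstacle I expect is the first step: rigorously pinning down that the ``convex-side part'' and the ``concave-side part'' of $C_{j*}$ produced by \eqref{eq4.17} actually lie in one and the same connected component of $C_{j*}$ (equivalently, on one cylinder $K_{j}$), rather than on two distinct components of $C_{j}\cap\{\operatorname{dist}(\cdot,S_{*})<\varepsilon\}$ that both happen to approximate $S_{*}$. This is where the genus-zero hypothesis and the $\theta$--preserving preimage machinery from \cite[Section~3.D]{T3} must be used carefully, essentially repeating the intersection-number argument from Part 2: a path in $C_{j*}$ joining the two ends, concatenated with a path in the $|\theta-\theta_{o}|\ge\delta$ region joining the same two boundary points (which exists because they lie in the same component of $C_{j}$'s version of $C_{0}-\Gamma$), would otherwise produce a closed loop in $C_{j}$ with nonzero intersection number against constant-$s$ slices of $C_{j*}$, contradicting genus zero. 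Once connectivity is established the rest is the routine preferred-parametrization bookkeeping sketched above.
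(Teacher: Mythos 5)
Your proposal misreads the structure of $C_{j*}$ in a way that undermines the whole argument. Since $\theta$ on $S_*$ is the value at a \emph{bivalent} vertex $o$ of $T$, the level set $\theta=\theta_o$ on $C_j$'s model curve is a component of $\Gamma$ (a union of arcs running out to the concave-side ends at angle $\theta_o$), not an interior circle of a single component $K_e$. Consequently the slab $\{|\theta-\theta_o|<\delta\}$ in $C_j$ straddles $\Gamma_o$ and meets two adjacent components $K_e$, $K_{e'}$ of $C_0-\Gamma$, so the relevant piece of $C_{j*}$ is a multipunctured planar surface, not a single cylinder $K_j$ with a preferred parametrization in the sense of \fullref{def:2.1}. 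Your proposed ``constant-$v$ arc'' doesn't exist in the form you describe, and Property~3 of the preferred parametrizations (which requires $\theta_*$ \emph{not} to be achieved on the closure of $K$) does not apply here, so your argument for the second assertion of the lemma — that the embedding sits exactly in the $\theta=\theta_o$ locus at large $|s|$ — breaks down as well.

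The connectivity step, which you correctly flag as the main obstacle, is where the genuine gap lies. The ``$\theta$--preserving preimage plus genus zero'' argument you cite from Part~2 of \fullref{sec:4e} is not usable here: that argument, and the earlier \fullref{lem:4.7}, either already take $C_{j*}$ to be a connected component by fiat, or apply to an angle chosen \emph{away from} $\Lambda_{\hat{A}}$ precisely so that the $\theta$--level set is a regular circle with a well-defined preimage. Here $\theta_o\in\Lambda_{\hat{A}}$, so neither hypothesis holds. Moreover, your sketch is internally circular — you posit a path in $C_{j*}$ joining the two ends in order to rule out the two ends being in different components, but such a path is exactly what is to be constructed.

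The paper's proof takes a genuinely different route. It embeds a tubular neighborhood $S_*\times D$ of $S_*$ by translating a small pseudoholomorphic transversal disk by the $\mathbb{R}\times S^1$ symmetry of $S_*$, then observes that for $j$ large the intersection of $C_j$ with each fiber disk over the $|\rho|\le\rho_\varepsilon$ part of $S_*$ consists of exactly $n_*$ points counted with positive multiplicity (where $(S_*,n_*)\in\Xi$). The assignment of these $n_*$ points to the base point defines a \emph{continuous} map from an interval in $S_*$ to $\mathrm{Sym}^{n_*}(D_\varepsilon)$, and continuity of this symmetric-product--valued map is what forces every component of the $\rho=\rho_\varepsilon$ slice to connect through $C_{j\varepsilon}$ to a component of the $\rho=-\rho_\varepsilon$ slice. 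The argument is then iterated — using the $n_*+n_+n$ or $n_*+n_-n$ intersection counts on the far side of $\rho_\varepsilon'$, where $S$'s ends also contribute — to push the path out to arbitrarily large $|\rho|$, giving the properly embedded copy of $\mathbb{R}$ with $s$ unbounded in both directions. This counting/continuity mechanism is the key idea your proposal is missing; without it, or an equivalent replacement, the connectivity claim at the heart of the lemma is not established.
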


Accept this lemma for the moment to see where it leads. For this purpose,
note that the 1--form $pd\varphi  - p'dt$ is zero on $S_{*}$ and thus
exact on the radius $\varepsilon $ tubular neighborhood of $S_{*}$. It
can therefore be written as $df$ on this neighborhood where $f$ is a smooth
function that vanishes on $S_{*}$. Now let $\gamma _{j} \subset C_{j*}$ denote a given large
$j$ version of the line from \fullref{lem:4.8}.
Because $f$ vanishes on $S_{*}$, the line integral of pd$\varphi -p'dt$ along $\gamma _{j}$
has small absolute value. In fact, if the
$j$--version of this absolute value is denoted by $\nu _{j}$, then the second
point in \eqref{eq4.17} demands that $\lim_{j \to \infty }  \nu _{j} = 0$. This
last conclusion is nonsense as can be seen using \eqreft25 to reinterpret the
integral of $pd\varphi  - p'dt$ along $\gamma _{j}$ as a $j$--independent
multiple of the sum of one or more of the coordinates of the image of
$C_{j}$ in the simplex $\Delta _{o}$. To explain, let $e$ denote the
incident edge in $T$ that contains $o$ as its largest angle vertex and fix any
parameterization for $K_{e}$. Since the large $|s| $ part of
$\gamma _{j}$ lies in the $\theta =\theta _{o}$ locus, referral to
\eqreft25 finds that $\nu _{j}$ is the absolute value of the integral of
$(p{q_{e}}' - p'q_{e}) dv$ between two missing points on the $\sigma =\theta _{o}$
circle of parametrizing cylinder for $K_{e}$. According to
Part 3 in \fullref{sec:3}c, the latter integral is a fixed multiple of a sum of
one or more of the coordinates from $C_{j}$'s image in the $\Delta _{o}$
factor that is used in \eqref{eq3.12} to define $C_{j}$'s image in
$\hat{O}^{\hat{A}}/\Aut^{\hat{A}}$. In particular, such a sum must
have a positive, $j$--independent lower bound if the image of $\{C_{j}\}$
is to converge in $\hat{O}^{\hat{A}}/\Aut^{\hat{A}}$.

\begin{proof}[Proof of \fullref{lem:4.8}]
There are three steps to the proof.

\substep{Step 1}
Any given $(\rho , \chi )   \in   \mathbb{R}  \times  S^1$ defines a diffeomorphism of
$\mathbb{R}  \times (S^1  \times  S^2)$ by sending $(s, t, \theta, \varphi )$ to
$(s+\rho, t+p\chi, \theta ,\varphi +p'\chi )$. All such
diffeomorphisms act transitively on $S_{*}$ with trivial stabilizer
and so parametrize $S_{*}$ once a point in $S_{*}$ is chosen as
the point where $\rho  = 0$ and $\chi  = 0$. Such a parameterization of
$S_{*}$ is used implicitly in what follows.

Fix a very small radius pseudoholomorphic disk in $\mathbb{R}  \times (S^1  \times  S^2)$
whose closure intersects $S_{*}$ at a
single point, this its center. For example, a disk of this sort can be found
in one of the pseudoholomorphic cylinders where either t or $\varphi $ is
constant (see, eg \cite[Subsection 4(a)]{T4}.) Let $D$ denote such a disk. The
translates of $D$ by the just described $\mathbb{R}  \times  S^1$ group of
diffeomorphisms defines an embedding of $S_{*}  \times D$ into $\mathbb{R} \times(S^1 \times S^2)$
as a tubular neighborhood of $S_{*}$.

\substep{Step 2}
Let $S$ denote the subvariety from $\Xi $
that is not $\mathbb{R}$--invariant. If $S$ intersects $S_{*}$, it does so
at a finite set of points. The subvariety $S$ can also approach $S_{*}$
at large $|s| $ if $S$ has ends whose constant $|s| $
slices converge as $|s| \to \infty $ as a multiple cover
of the Reeb orbit that defines $S_{*}$. In fact such ends must exist
if $S \cap S_{*} = ${\o} since each $C_{j}$ is connected and since
\eqref{eq4.17} holds. By the same token, if there are no such ends, then $S$ must
intersect $S_{*}$.

Given $\varepsilon  > 0$ and small, let $D_{\varepsilon }   \subset D$ denote the radius
$\varepsilon $ subdisk about the origin. If $\varepsilon $
is small enough, then the points where $S$ intersects $S_{*}  \times D_{\varepsilon }$
are of two sorts. The first lie in a small radius ball
about the points where $S$ intersects $S_{*}$. As $\varepsilon    \to 0$, the radii of
these balls can be taken to zero. The second sort are points
where the absolute value of the parameter $\rho $ on $S_{*}$ is
uniformly large. Of course, points of the latter sort exist if and only if $S$
has an end whose constant $|s|$ slices converge as $|s| \to \infty $ to a multiple cover
of the Reeb orbit that
defines $S_{*}$. Assuming that such is the case, then each small
$\varepsilon $ has an assigned positive and large number, $\rho_{\varepsilon }$,
this a lower bound for the absolute value of the
parameter $\rho $ on $S_{*}$ at the points in $S \cap (S_{*}  \times  D_{\varepsilon })$
with distance 1 or more from $S \cap S_{\varepsilon }$. In this regard, the assignment
$\varepsilon \to \rho _{\varepsilon }$ can be made in a continuous fashion. It is perhaps
needless to add that there is no finite $\varepsilon    \to  0$ limit of
the collection $\{\rho _{\varepsilon }\}_{\varepsilon > 0}$.

In the case where $\rho _{\varepsilon }$ is defined, the analysis
 in \cite[Section~2]{T4} can be used to find an even larger
 $\rho _{\varepsilon}'$ together with non-negative integers $n_{ + }$ and $n_{ - }$ with the
following significance: There are precisely $n_{ + }$ intersections between
 $S$ and the disk fiber of $S_{*}\times  D_{\varepsilon }$
over any point where $\rho    \ge   {\rho _{\varepsilon }}'$. Here, all
have intersection number 1 and all occur in the subdisk $D_{\varepsilon /2}$.
There are also precisely $n_{ - }$ intersections between $S$ and the
fiber of $S_{*}  \times D_{\varepsilon }$ over any point where
$\rho    \le -{\rho _{\varepsilon }}'$; all of these have intersection
number 1 and lie in $D_{\varepsilon / 2}$.

\substep{Step 3}
Fix $\varepsilon  > 0$ but very small, and
let $C_{j\varepsilon }   \subset C_{j}$ denote the intersection of
$C_{j}$ with $S_{*}  \times D_{\varepsilon }$. Let $n_{*}$
denote the integer that is paired with $S_{*}$ in $\Xi $. If $j$ is
large, then $C_{j\varepsilon }$ intersects each fiber disk in $S_{*}\times  D_{\varepsilon }$
where $| \rho | \le \rho_{\varepsilon }$ in $n_{*}$ points counted with multiplicity. In
this regard, each such point has positive local intersection number.
Moreover, these intersection points vary continuously with the chosen base
point in $S_{*}$. This is to say that the assignment of the $n_{*}$ intersection points
to the base point defines a continuous map from the
$| \rho | \le |\rho _{\varepsilon }| $
portion of $S_{*}$ to $\Sym^{n_ * }(D_{\varepsilon })$. As a
consequence, each component of the $\rho =\rho _{\varepsilon }$ slice
of any large $j$ version of $C_{j\varepsilon }$ is connected in the
$|\rho | \le \rho _{\varepsilon }$ part of $C_{j\varepsilon} \cap (S_{*} \times D_{\varepsilon })$
to at least one component in the $\rho = -\rho _{\varepsilon }$ slice, and vice versa.

Thus, if $\varepsilon $ is small, then there is no obstruction to choosing a
path between $\rho =\rho _{\varepsilon }$ and $\rho $ = -$\rho
_{\varepsilon }$ slices of any sufficiently large $j$ version of
$C_{j\varepsilon }$.

The argument just used can be repeated to extend the path just chosen, first
as a path from the $\rho ={\rho _{\varepsilon }}'$  slice of
$C_{j\varepsilon }$ to the latter's $\rho $ = -${\rho _{\varepsilon }}'$
slice, and then to arbitrarily large values of $| \rho | $. For
example, to extend the path to where $\rho _{\varepsilon }=
\rho    \le   {\rho _{\varepsilon }}'$, note first that the number of
intersections counted with multiplicity between $C_{j\varepsilon }$ and the
fiber disks over this part of $S_{*}$ may change. Even so, when $j$ is
very large, all such intersections lie either very close to $S$ or very close
to $S_{*}$. In particular, there are precisely $n_{*}$ such
intersections in each disk that lie very much closer to $S_{*}$ than
to $S$. Granted this, the preceding argument implies that any component of the
$\rho =\rho _{\varepsilon }$ slice of $C_{j\varepsilon }$ is connected
to one or more components of the $\rho ={\rho _{\varepsilon }}'$ slice of $C_{j\varepsilon }$.

To explain why this last extension can be continued to arbitrarily large
values of $\rho $, note first that when $j$ is large, then $C_{j*}$ has
precisely $n_{*} + n_{ + }n$ intersections with any given fiber disk
over the $\rho\ge {\rho _{\varepsilon }}'$ portion of $S_{*}$ when counted with multiplicity.
Here, $n$ is the integer that is paired
with $S$ in $\Xi $. Note that all intersections are again positive, and that
these disk intersections now define a continous map from the $\rho    \ge {\rho _{\varepsilon }}'$
portion of $S_{*}$ to $\Sym^{n_ * + n_ +n}(D)$. This then means that any given
component of the $\rho =\rho_{\varepsilon }'$ slice of $C_{j\varepsilon }$ is part of a slice of a
component of the $\rho    \ge   {\rho _{\varepsilon }}'$ part of
$C_{j\varepsilon }$ where $\rho $ has no finite upper bound.
\end{proof}

\step{Part 3}
Let $S$ denote the single element from $\Xi $ and let $S_{0}$ denote the
model curve for S. An argument from Part 2 can be used to prove that the
pull-back of $\theta $ to $S_{0}$ has no non-extremal critical points.
Indeed, were there such a point, then there would be an open interval of
disconnected, compact $\theta $ level sets in $S_{0}$. Most of these level
sets would avoid the set Y, and any of the latter would have $\theta$--preserving
preimages in any sufficiently large $j$ version of $C_{j}$. Of
course, no such thing is possible since $C_{j}$ lacks disconnected, compact
$\theta $ level sets.

A very similar argument implies the following: Let $E \subset S$ denote
any end where the $|s| \to \infty $ limit of $\theta $ is
neither 0 nor $\pi $. Then $E$'s version of \eqref{eq2.4} has integer $n_{E} = 1$.

These last points have the following consequence: Let $T_{S}$ denote a graph
with a correspondence in a pair whose first component is the model curve for
$S$. Then $T_{S}$ is necessarily linear. Moreover, by virtue of \fullref{prop:4.6},
the vertex set of $T_{S}$ enjoys an angle preserving, 1--1
correspondence with that of $T^{\hat{A}}$. Meanwhile, any two $C =C_{j}$ and $C = C_{j'}$
versions of the graph $T_{C}$ must be
isomorphic when $j$ and $j'$ are large because the image of $\{C_{j}\}$
converges in $\hat{O}^{\hat{A}}/\Aut^{\hat{A}}$. Let $T$ denote a
graph in the isomorphism class. The remainder of this Part 3 explains why
the graphs $T_{S}$ and $T$ are isomorphic. The explanation is given in eight
steps.

\substep{Step 1}
Each edge in $T$ has its evident analog in
$T_{S}$ since the vertex sets share the same angle assignments. If $e\in T$
is a given edge, then the corresponding $T$ and $T_{S}$
versions of the integer pair $Q_{e}$ are related as follows: The $T$ version
is $n$ times the $T_{S}$ version where $n$ here is the integer that pairs with $S$
in $\Xi $. This then means that $n = 1$ in the case that the integers that
comprise the suite of $T$ versions have 1 as their greatest common divisor. In
particular if $n$ is larger than 1, then it must divide both integers that
comprise the version of $Q_{(\cdot )}$ whose labeling edge has the
smallest angle vertex in $T$.

\substep{Step 2}
Let $o$ denote a bivalent vertex in $T_{S}$
and let $E \subset S$ denote an end where the $|s| \to \infty $ limit of
$\theta $ is $\theta _{o}$. Since $E$'s version of \eqref{eq2.4}
uses the integer $n_{E} = 1$, the $\theta =\theta _{o}$ locus in the
model curve for $S$ intersects the very large $|s| $ portion of $E$
as a pair of open arcs, one oriented in the increasing $|s| $
direction and the other oriented with $|s| $ decreasing.
Moreover, given a positive and very large number, $R$, there is an arc in the
$\theta    \le   \theta _{o}$ portion of $E$ with the following
properties:

\itaubes{4.20}
\textsl{$|s| \ge R$ on the arc.}

\item
\textsl{The arc starts on one component of the $\theta =\theta _{o}$  locus in the $|s|  \ge R$ part of
$E$ and ends on the other.}

\item
 \textsl{ The function $\theta $  restricts to this arc so as to have but a single critical point, this its minimum.}
\eit

Let $\nu $ denote such an arc. Note that $\nu $ has an analog that shares
its endpoint and sits where $\theta    \ge   \theta _{o}$. The latter is
denoted below by $\nu'$.

\substep{Step 3}
The arc $\nu $ has $\theta $--preserving
preimages in every large $j$ version of $C_{j}$; there are $n$ such preimages,
each with its endpoints on the the $\theta =\theta _{o}$ locus in
$C_{j}$. To see how these appear, let $e$ denote the edge in $T$ whose largest
angle is $\theta _{o}$. Fix a parameterization of the component $K_{e}$ in
$C_{j}$'s version of $C_{0}- \Gamma $. Each $\theta $ preserving
preimage of $\nu $ appears in the corresponding parametrizing cylinder as a
closed arc with both endpoints on the $\sigma =\theta _{o}$ circle.
Denote these arcs as $\{\nu _{j,k}\}_{1 \le k \le n}$.

To say more about these preimages, remember that the constant $|s| $ slices of
$E$ converge as $|s|    \to   \infty $ as a
multiple cover of a $\theta =\theta _{o}$ closed Reeb orbit in $S^1\times  S^2$.
The latter has a tubular neighborhood with a function,
$f$, with the following two properties: First, it vanishes on the Reeb orbit
in question. Second, $df=\frac{1 }{ {2\pi }}(pd\varphi  -p'dt)$ where $p$ and $p'$
are the relatively prime integers that define $\theta_{o}$ via \eqref{eq1.8}.
Because $f$ has limit 0 as $|s|    \to \infty $ on $E$, the integral over $\nu $ of $df$
is small. Moreover, given
$\varepsilon  > 0$, there exists $R_{\varepsilon }$ such that the integral
of $df$ over any $R \ge R_{\varepsilon }$ version of $\nu $ from \eqreft4{20}
has absolute value less than $\varepsilon $.

Granted this, it then follows that the same is true for the integral of $df$
over any $R \ge R_{\varepsilon }$ and sufficiently large $j$ version
of any of the $\theta $--preserving preimages of $\nu $. This
understood, use the parametrization of $K_{e}$ to identify the $\mod(2\pi\mathbb{Z})$
image of the latter integral as that of $(p{q_{e}}' - p'q_{e})dv$ between the two endpoints
of the arc $\nu _{j,k}$. In particular, this
means that when $R$ is large and $j$ is large, the two endpoints of $\nu_{jk}$
on the $\sigma =\theta _{o}$ circle of the parametrizing
cylinder are very close to each other. They divide the circle into one very
short arc and one arc on which the integral of dv is almost $2\pi $.

\substep{Step 4}
Because the images of $\{C_{j}\}$
converge in $\hat{O}^{\hat{A}}/\Aut^{\hat{A}}$, their images
converge in the $\Delta _{o}$ factor of \eqref{eq3.12}. This fact and the final
conclusion from Step 3 have the following consequence: The very short arc in
the $\sigma =\theta _{o}$ parametrizing cylinder circle between the
endpoints of any large $R$ and correspondingly large $j$ version of $\nu_{j,k}$
contains at most one missing point in its interior.

As is explained next, such a $\sigma =\theta _{o}$ arc must contain
precisely one missing point. For this purpose, let $\theta _{*}$
denote the minimum of $\theta $ on $\nu $. Were there no missing point in
the indicated short arc, then one of the $\theta $--preserving preimages of
$\nu $ in $C_{j}$ would be homotopic rel boundary in the
$[\theta , \theta_{*}]$ portion of $C_{j}$ to an arc lying entirely in the
$\theta= \theta _{o}$ locus. Such a homotopy could then be projected back to $S$
to give a homology rel boundary in the model curve for $S$ between the arc
$\nu $ and the disjoint union of an arc in the $\theta =\theta _{o}$
locus and a union of very small radius circles, each surrounding some point
that maps to one of the immersion singularities in S. No such homology is
possible because any constant $|s| $ slice of $E$ generates a
non-trivial homology class in $S_{0}$.

\substep{Step 5}
As a consequence of the result from Step
4, each end of $S$ where $\lim_{| s| \to \infty }  \theta $ is neither
0 nor $\pi $ labels $n$ ends of any sufficiently large $j$ version of $C_{j}$.
These $n$ ends have the same $|s|    \to   \infty $ limit of
$\theta $ as their namesake in S; and by virtue of \fullref{prop:4.6}, they are
all convex side if their namesake is a convex side end. Otherwise, they are
all concave side ends. Here is one way to view this correspondence: Let $E\subset S$
denote an end as in the previous steps. Fix some large $R$, and
the concatenation of the arc $\nu $ with its $\theta    \ge   \theta_{o}$
cousin $\nu '$ defines a closed loop in $E$ that is homologous to a
constant $|s| $ slice. Such a loop has $\theta $--preserving
preimages in each sufficiently large $j$ version of $C_{j}$. Any such preimage
must be a union of one $\theta $--preimage of $\nu $ and one of $\nu '$.
Indeed, it must, in any event, contain the same number of $\nu $ preimages
as $\nu '$ preimages; and said number must be 1 because of the very small
length of one of the arcs between the endpoints of each $\nu _{j,k}$ in
the $\sigma =\theta _{o}$ circle of the relevant parametrizing
cylinder.

Granted the preceding the concatenation of $\nu $ with $\nu' $ has $n$
distinct $\theta $ preserving preimages in all large $j$ versions of $C_{j}$.
Since the aforementioned short arc between the endpoints of each $\nu_{j,k}$
contains one and only one missing point on the $\sigma =\theta_{o}$
circle, each preimage of the $\nu -\nu' $ concatenation is
homologous in $C_{j}$ to the constant $|s| $ slice in some end of
$C_{j}$ where the $|s|    \to   \infty $ limit of $\theta $ is
$\theta _{o}$. Moreover, these preimages account for $n$ distinct ends in
$C_{j}$. Finally, an appeal to \fullref{prop:4.6} finds that distinct
$\lim_{| s| \to \infty }  \theta =\theta _{o}$ ends of $S$ label
disjoint, n-element subsets of such ends in $C_{j}$.

\substep{Step 6}
An end $E \subset S$ corresponds to a
vertex in the $S$ version of the circular graph $\underline {\Gamma }_{o}$.
As such, it comes with an integer weight. Meanwhile, the corresponding $n$
ends in $C_{j}$ correspond to $n$ vertices on the $C_{j}$ version of the graph
$\underline {\Gamma }_{o}$. As is explained here, each of the latter
vertices have the same integer weight as $E$'s vertex.

The conclusions of Step 5 guarantee that the $n+1$ weights involved all have
the same sign. Here is how to compute their magnitudes: The integral of the
form $\frac{1 }{ {2\pi }}(pdt+p'd\varphi )$ over any constant
$|s| $ slice of a relevant end has the form $m (p^{2}+p'^{2})$
where $m$ is the desired magnitude.

Apply this last observation first to the concatenation of $\nu $ and its
$\theta    \ge   \theta _{o}$ cousin $\nu '$ in a given end $E \subset S$.
Since this concatenation is homologous to a constant $|s| $
slice, the form $\frac{1 }{ {2\pi }}(pdt+p'd\varphi )$ integrates
over this concatenation to give $m_{E} (p^{2}+p'^{2})$ where $m_{E}$ is
the absolute value of the integer weight for $E$'s vertex in the $S$ version of
$\underline {\Gamma }_{o}$. Next, apply the observation to any one of the
$\theta $--preserving preimages of the $\nu -\nu '$ concatenation in each
very large $j$ version of $C_{j}$. As there are $n$ of these, the integral over
any one is $m_{E} (p^{2}+p'^{2})$ as well. The desired equality follows
because one of these preimages is homologous to the constant $|s|$
slices any given E-labeled end in $C_{j}$.

\substep{Step 7}
The results of the previous steps imply
that $T_{S}$ is isomorphic to $T$ in the case that $n = 1$. Such an isomorphism
is obtained via the correspondence given in Step 5 between the ends of $S$ and
those of $C_{j}$. In the hypothetical $n> 1$ case, it implies that each $T$
version of the group $\Aut_{o}$ has a $\mathbb{Z}/(n\mathbb{Z})$ subgroup. More
to the point, the following is true: Let ${O^{\hat{A}}}_{v } \subset O^{\hat{A}}$
denote a component whose $\Aut^{\hat{A}}$ orbit
contains the the limit point of the image of $\{C_{j}\}$. Then
$\Aut^{\hat{A}}_{v}$ has a canonical $\mathbb{Z}/(n\mathbb{Z})$ subgroup
since $n$ also divides the integer pair that is associated to the edge in $T$
with the smallest angle vertex.

This step explains an observation that is used in the subsequent step in two
ways: It is used to prove that the image of $S$ in $\mathbb{R}  \times
\hat{O}^{\hat{A}}/\Aut^{\hat{A}}$ is the limit of the images of
$\{C_{j}\}$ in the case that $n= 1$, and it is used to preclude the case
that $n> 1$.

To start, let $E$ and $E'$ denote ends of $S$ where $\lim_{| s| \to \infty} \theta =\theta _{o}$
and such that travel in the oriented
direction along a component of the $\theta =\theta _{o}$ locus in the
model curve of $S$ proceeds from large $|s| $ on $E$ to large $|s| $ on $E'$. The case that
$E = E'$ is allowed here. In any event, let
$\gamma $ denote the component of the $\theta =\theta _{o}$ locus in
question and let $r_{\gamma }$ denote the integral along $\gamma $ of the
1--form $(1-3\cos^{2}\theta _{o}) d\varphi -\surd 6\cos\theta_{o} dt$.

Now, fix $R$ very large and let $z$ denote the endpoint on $\gamma $ of $E$'s
version of the arc $\nu $. Meanwhile, let $z'$ denote the endpoint on $\gamma$ of the $E'$
version of this arc in the case that $E'  \ne  E$. If $E' = E$,
take $z'$ to be the second of the two endpoints of the arc $\nu $. Deform
$\gamma $ slightly if it passes through a point in $Y$ so that the result
misses $Y$, lives where $\theta    \le   \theta _{o}$, and agrees with
$\gamma $ on $E$ and $E'$. Let $\gamma _{R}$ denote the portion of such a
deformation that runs between $z$ and $z'$. If $R$ is large, then $r_{\gamma}$ is
very nearly the integral along $\gamma _{R}$ of the pull-back
of the 1--form $(1-3\cos^{2}\theta _{o}) d\varphi -\surd 6\cos\theta _{o} dt$.
Use $r_{\gamma ,R}$ to denote the latter integral.
Thus, the $R \to \infty $ limit of $\{r_{\gamma ,R}\}$ is $r_{\gamma}$.

With $R$ fixed and then $j$ very large, the arc $\gamma _{R}$ has $n$ disjoint,
$\theta $ preserving preimages in any parametrizing cylinder for the
$C_{j}$ version of the component $K_{e}$. Any such preimage runs from very
close to one of $E$'s missing points on the $\sigma =\theta _{o}$ circle
in the oriented direction to a point that is very close to the subsequent
missing point, this one labeled by $E'$. This understood, it then follows from
\fullref{prop:4.6} that the integral of the $Q =Q_{e}$ version of $\alpha_{Q}(\theta _{o})dv$
between these two missing point is very nearly $r_{\gamma }$.

Now, this arc between the two missing points labels one of the coordinates
for $C_{j}$ in the symplex $\Delta _{o}$ that appears in \eqref{eq3.12}, and it
follows from what has just been said that the value of this coordinate is
very nearly $r_{\gamma }$.

\substep{Step 8}
In the case that $n = 1$, the results of the
preceding step assert that the assigned point to any large $j$ version of
$C_{j}$ in any given $\Delta _{o}$ is very near that assigned to $S$. The
implication is that the $j \to \infty $ limit of these images is that of
$S$.

In the hypothetical $n> 1$ case, the results from the preceding step imply
that the image of any large $j$ version of $C_{j}$ in any given $\Delta_{o}$ is very close
to the set of points that are fixed by the $\mathbb{Z}/(n\mathbb{Z})$ subgroup version of
$\Aut_{o,v}$. In fact, the results from
the preceding step imply that the image of $C_{j}$ in the space $\times_{o}  \Delta _{o}$
is very near the fixed set of the canonical $\mathbb{Z}/(n\mathbb{Z})$ subgroup of
$\times _{o} \Aut_{o,v}   \subset \Aut^{\hat{A}}$. Granted the observations
from \fullref{prop:4.6}, this
then implies that the image in $\hat{O}^{\hat{A}}/\Aut^{\hat{A}}$
of any large $j$ version of $C_{j}$ is very close in
$O^{\hat{A}}/\Aut^{\hat{A}}$ to $(O^{\hat{A}}- \hat{O}^{\hat{A}})/\Aut^{\hat{A}}$.

This last conclusion rules out the $n> 1$ case because it is incompatible
with the initial assumption of the convergence in
$\hat{O}^{\hat{A}}/\Aut^{\hat{A}}$ of the image of $\{C_{j}\}$.

\setcounter{section}{4}
\setcounter{equation}{0}
\section{The first chapter of story when $N_{ - }+\hat{N}+\text{\c{c}}_{ - }+
\text{\c{c}}_{ + }$ is greater than 2}\label{sec:5}

Introduce as in \fullref{sec:1c}, the larger space
${\mathcal{M}^{* }}_{\hat{A}}$ whose elements consist of honest
subvarieties in the sense of \eqreft15 along with `multiple
covers' of honest subvarieties. \fullref{sec:1c} also
introduced a stratification of ${\mathcal{M}^{* }}_{\hat{A}}$. The
first subsection below summarizes results about the local
structure of ${\mathcal{M}^{* }}_{\hat{A}}$ and its
stratification. The remaining subsections contain the proofs of
these results.

\subsection{The local structure of ${\mathcal{M}^{* }}_{\hat{A}}$ and its stratification}\label{sec:5a}

As outlined in the first section, the space ${\mathcal{M}^{*
}}_{\hat{A}}$ consists of equivalence classes of pairs $(C_{0},
\phi )$ where $C_{0}$ is a complex curve homeomorphic to an $N_{
+ }+N_{ - }+\hat {N}$ times punctured sphere and $\phi $ is a
proper, pseudoholomorphic map from $C_{0}$ into
$\mathbb{R}\times  (S^1 \times S^{2})$ whose image is a
subvariety as defined in \eqreft15. Moreover, the pair
$(C_{0}, \phi )$ must be compatible with the data set $\hat{A}$
in the following sense: First, there is a 1--1 correspondence
between the ends of $C_{0}$ and the 4--tuples in $\hat{A}$; and
this correspondence must pair an end and a 4--tuple if and only if
the 4--tuple comes from the end as described in
\fullref{sec:1a}. Second, the integers $\text{\c{c}}_{ +
}$ and $\text{\c{c}}_{ - }$ are the respective intersection
numbers between $C_{0}$ and the $\theta  = 0$ and $\theta =\pi $
cylinders. To be precise here, note that there is a finite number
of $\theta  = 0$ and $\theta =\pi $ points in $C_{0}$. This
understood, a sufficiently generic but compactly supported
perturbation of $\phi $ gives an immersion of $C_{0}$ into
$\mathbb{R}\times  (S^1\times  S^2)$ that has transversal
intersections with the $\theta  = 0$ and $\theta =\pi $ loci. The
latter has a well defined intersection number with both loci, and
these are respectively $\text{\c{c}}_{ + }$ and $\text{\c{c}}_{-
}$. As noted in \fullref{sec:1c}, pairs $(C_{0}, \phi )$
and $(C_{0}, \phi')$ define the same point in ${\mathcal{M}^{*
}}_{\hat{A}}$ if $\phi '$ is obtained from $\phi $ by composing
with a holomorphic diffeomorphism of $C_{0}$.

A base for the topology on ${\mathcal{M}^{* }}_{\hat{A}}$ is
depicted in \eqref{eq1.24}. \fullref{thm:1.3} asserts that
${\mathcal{M}^{* }}_{\hat{A}}$ is a smooth orbifold whose
singular points consist of those pairs $(C_{0}, \phi )$ where
there is a holomorphic diffeomorphism that fixes $\phi $.
\fullref{thm:1.3} also asserts that the inclusion of
$\mathcal{M}_{\hat{A}}$ in ${\mathcal{M}^{* }}_{\hat{A}}$ is a
smooth embedding onto an open subset. \fullref{thm:1.3} is
proved below so grant it for the time being.

As noted in \fullref{sec:1c}, the strata of ${\mathcal{M}^{* }}_{\hat{A}}$
are indexed by ordered triples of the form $(B, c, \mathfrak{d})$
where $B  \subset \hat{A}$ is a
set of $(0,-,\ldots)$ elements, $c$ is a
non-negative integer no greater than $N_{ + }+N_{ - }+\hat {N}+\text{\c{c}}_{ - }+
\text{\c{c}}_{ + }-2-|B| $ and $\mathfrak{d}$ is a partition of the integer
$d \equiv N_{ + }+|B| +c$ as a sum of positive integers. By
way of a reminder, the stratum
$\mathcal{S} _{B,c,\mathfrak{d}}\subset{\mathcal{M}^{* }}_{\hat{A}}$ lies in the subset,
$\mathcal{S}_{B,c}$ that consists of the equivalence classes of pairs
$(C_{0},\phi )$ that have two properties stated next. In these
statements and subsequently, functions on $\mathbb{R}\times (S^1\times  S^{2})$
and their pull-backs via $\phi $ are not distinguished by
notation except in special circumstances. Here is the first property: The
curve $C_{0}$ has precisely $c$ critical points of $\theta $ where the value
of this function is neither 0 nor $\pi $. Here is the second: The ends that
correspond to elements in $B$ are the sole convex side ends of $C_{0}$ where
the $|s|\to\infty $ limit of $\theta $ is neither 0 nor
$\pi $ and whose version of \eqref{eq2.4} has a strictly positive integer $n_{E}$.
To define $\mathcal{S} _{B,c,\mathfrak{d}}$, introduce the map $I_{d}$ to denote the space
of unordered $d$--tuples of points in $(0, \pi )$ and the map $f\co  \mathcal{S}_{B,c}\to I_{d}$
that sends a given $(C_{0}, \phi )$ to the $d$--tuple that consists of the critical
values in $(0, \pi )$ of $\theta $'s
pull-back to $C_{0}$ and the $|s|\to\infty $ limits in
$(0, \pi )$ of $\theta $ on the concave side ends of $C_{0}$ and on the ends
that correspond to the 4--tuples in $B$. The stratum $\mathcal{S} _{B,c,\mathfrak{d}}$ is the
inverse image via $f$ of the stratum in $I_{d}$ that is labeled by the
partition $\mathfrak{d}$.

The next proposition describes the local structure of
$\mathcal{S} _{B,c,\mathfrak{d}}$. The proposition speaks of a
locally constant function on $\mathcal{S} _{B,c,\mathfrak{d}}$
whose value at a given point defined by some pair $(C_{0}, \phi
)$ is the number of distinct critical values of $\theta $ in the
subset of the critical values in $(0, \pi )$ that do not arise
via \eqref{eq1.8} from an integer pair component of either a
$(0,+,\ldots)$ 4--tuple in $\hat{A}$ or a 4--tuple in $B$. Let $m$
denote this locally constant function.

\begin{proposition}\label{prop:5.1}

If not empty, then the stratum $\mathcal{S} _{B,c,\mathfrak{d}}$ is a smooth orbifold in
$\mathcal{M}^{* }$ that intersects $\mathcal{M}^{* }-\mathbb{R}$ as a smooth manifold.
In this regard, a given component has dimension $N_{ + }+|B| +c+m+2$.

\end{proposition}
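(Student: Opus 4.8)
The plan is to establish the assertion locally, starting from the orbifold chart around a point of the stratum that is furnished by \fullref{thm:1.3}. Let $(C_{0},\phi)$ represent a point of $\mathcal{S}_{B,c,\mathfrak{d}}$. By \fullref{thm:1.3} there is a neighborhood of this point in ${\mathcal{M}^{*}}_{\hat{A}}$ of the form $\tilde{U}/G$ with $G$ finite and $\tilde{U}$ a ball in Euclidean space of dimension $N_{+}+2(N_{-}+\hat{N}+\text{\c{c}}_{\hat{A}}-1)$; the locus $\mathcal{R}$ is, near here, the image of the points of $\tilde{U}$ with non-trivial $G$-stabilizer. It therefore suffices to show that the preimage of $\mathcal{S}_{B,c,\mathfrak{d}}$ in $\tilde{U}$ is a smooth, $G$-invariant submanifold whose dimension is $N_{+}+|B|+c+m+2$: the quotient is then a suborbifold, smooth wherever $G$ acts freely, hence smooth along the complement of $\mathcal{R}$ (the symbol $\mathcal{M}^{*}-\mathbb{R}$ in the statement is to be read $\mathcal{M}^{*}-\mathcal{R}$).

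First I would make the defining conditions explicit. A point of $\tilde{U}$ lies over $\mathcal{S}_{B,c,\mathfrak{d}}$ precisely when the pull-back of $\theta$ has exactly $c$ critical points of value in $(0,\pi)$, when the convex-side ends on which the integer $n_{E}$ of \eqref{eq2.4} is positive and the $|s|\to\infty$ limit of $\theta$ lies in $(0,\pi)$ are exactly those labeled by $B$, and when the unordered tuple formed from the critical values of $\phi^{*}\theta$ together with the angles attached through \eqref{eq1.8} to the $(0,+,\ldots)$ elements of $\hat{A}$ and to the elements of $B$ realizes the partition $\mathfrak{d}$. Motion within $\tilde{U}$ is tracked through the preferred parametrizations of \fullref{sec:2b} by the pairs $(a,w)$ obeying \eqref{eq2.6}; the local normal form \eqref{eq2.11} shows that a critical point carrying a zero of $d\theta$ of order $k$ behaves like $k$ simple critical points in collision, so deforming $(C_{0},\phi)$ one may split and merge critical points and slide critical values, the dependence being read off from \eqref{eq2.4} and \eqref{eq2.10}. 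The condition distinguishing the ends of $B$ by $n_{E}>0$ is closed, with complement the generic $n_{E}=0$ behavior of Property~3 in \fullref{sec:2b}.

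The central step — and the one I expect to be the main obstacle — is the transversality claim that these conditions are mutually independent: that on $\tilde{U}$ the map recording the critical values of $\phi^{*}\theta$, together with the discrete data of the critical-point count and of the set $B$, is a submersion onto the relevant open set, so that $\mathcal{S}_{B,c,\mathfrak{d}}$ appears as a clean intersection of the critical-point-count locus, the locus where $B$ is as prescribed, and the preimage under the map $f$ of \fullref{sec:5a} of the appropriate stratum of $I_{d}$. I would prove this by solving the linearization of the system \eqref{eq2.6} — the analogue of \eqref{eq4.3} — with prescribed first-order variation of the individual critical values and of the constants $b$ that appear in \eqref{eq2.4} at the $B$-labeled ends: a prescribed infinitesimal motion of one critical value, or one such constant, is realized by a deformation supported near the corresponding level set or end, and, distinct level sets being disjoint, such deformations superpose. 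The real-analyticity and unique-continuation input needed here is exactly that already exploited in the proof of \fullref{lem:4.3}. That the resulting submanifold is $G$-invariant is immediate, since $G$ preserves the critical-point count, the set $B$, and every coincidence pattern among angles.

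The remaining task is the dimension bookkeeping. By the submersion statement the codimension of $\mathcal{S}_{B,c,\mathfrak{d}}$ inside $\tilde{U}$ equals the total number of independent conditions: those fixing the critical-point count at $c$, those forcing the $|B|$ convex-side ends to have $n_{E}>0$, and those forcing the coincidences recorded by $\mathfrak{d}$, the last of which contribute $c$ less the number of distinct critical values — and among the distinct critical values exactly $m$ fail to coincide with an angle distinguished by a $(0,+,\ldots)$ element of $\hat{A}$ or by an element of $B$. Subtracting this total from $\dim{\mathcal{M}^{*}}_{\hat{A}}=N_{+}+2(N_{-}+\hat{N}+\text{\c{c}}_{\hat{A}}-1)$ and simplifying leaves $N_{+}+|B|+c+m+2$ for each component. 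The function $m$ is locally constant because the partition $\mathfrak{d}$ and the set of angles distinguished by $\hat{A}$ and $B$ are. Together with the orbifold observation of the first paragraph this proves \fullref{prop:5.1}.
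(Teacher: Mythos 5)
Your architecture matches the paper's: reduce to the chart $\tilde{U}/G$ around $(C_{0},\phi)$ furnished by \fullref{thm:1.3}, show the preimage of $\mathcal{S}_{B,c,\mathfrak{d}}$ in $\tilde{U}$ is a $G$--invariant submanifold of the claimed dimension, and observe that $G$ preserves the defining data; the reading of the typo $\mathcal{M}^{*}-\mathbb{R}$ as $\mathcal{M}^{*}-\mathcal{R}$ is also correct, and the list of defining conditions for the stratum is the right one.

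The gap is in your ``central step.'' You propose to prove the submersion/transversality claim by producing, for each prescribed first-order variation of a critical value or of a constant $b$ from \eqref{eq2.4}, a deformation ``supported near the corresponding level set or end,'' and then superposing. Tangent vectors to ${\mathcal{M}^{*}}_{\hat{A}}$ at the point $(C_{0},\phi)$ are not arbitrary compactly-supported perturbations of $(a,w)$: they are elements of $\kernel(D_{C})$, a finite-dimensional space of globally-defined solutions of an elliptic first-order system, and one cannot localize such a solution to a neighborhood of a level set nor build one by local surgery and superposition. So the surjectivity you need is asserted, not established. What the paper does in \fullref{sec:5d} is the dual argument: it writes down candidate coordinate functions on the stratum --- the pairs $(\theta_{z},r_{z})$ at critical points (Part~1), the $|s|\to\infty$ limits $\varpi_{\pm}$ at the relevant ends (Part~2), and the complex function $\varpi'$ (Part~3) --- and then shows that the subspace $K^{*}\subset\kernel(D_{C})$ on which all of their differentials vanish is $\{0\}$, citing the argument from \cite[Proposition~2.13]{T3}; this is an injectivity/unique-continuation statement (in the spirit of \fullref{lem:4.3}, which you correctly identify as the relevant kind of input), not a local-existence statement. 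Combined with the index identity $\dim\kernel(D_{C})=\mathrm{index}(D_{C})$ from \fullref{prop:5.2}, the vanishing $K^{*}=\{0\}$ gives both the submanifold structure and the dimension count via \fullref{lem:5.4}, and none of your guesswork about how local deformations interact is needed. Your proposal never mentions $D_{C}$, $K^{*}$, or the citation to \cite[Proposition~2.13]{T3}, and without them the transversality step is not proved.
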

\noindent A description of the components of any given version of
$\mathcal{S} _{B,c,\mathfrak{d}}$ is provided in Sections~\ref{sec:6} and~\ref{sec:8}.

The proof of \fullref{thm:1.3} has five parts and these are presented the next two
subsections. The final subsection contains the proof of \fullref{prop:5.1}.

\subsection{Parts 1--4 of the proof of Theorem \ref{thm:1.3}}\label{sec:5b}

The proof is very much like that in \cite[Proposition~2.9]{T3}. The five parts
that follow focus on the points that differ.

\step{Part 1}
Suppose that $(C_{0}, \phi )$ defines a point in ${\mathcal{M}^{* }}_{\hat{A}}$.
There is, in all cases, a fixed radius ball
subbundle $B_{1}\subset\phi ^*T_{1,0}(\mathbb{R}\times  (S^1\times  S^{2}))$
and an exponential map, $e$, that maps $B_{1}$ into
$\mathbb{R}\times  (S^1\times  S^{2})$ so as to embed each fiber
and send the zero section to $C_{0}$. As noted in  \cite[Section~2.D]{T3} for
pairs that map to $\mathcal{M}_{\hat{A}}$, the bundle $\phi^*T_{1,0}(\mathbb{R}\times
(S^1\times S^{2}))$ splits
as a direct sum, $W  \oplus  N$, of complex line bundles such that the
differential, $\phi _{* }$, of $\phi $ maps $T_{1,0}C_{0}$ into $W$,
and $N$ restricts to the points where $\phi _{* }\ne 0$ as the
pull-back normal bundle. In this regard, $e$ can be chosen so as to embed the
fibers of $B_{1}\cap N$ and of $B_{1}\cap  W$ as
pseudoholomorphic disks. Note that in case where $\phi $ is not almost
everywhere 1--1, there is a complex curve, $C_{1}$, with an attending, almost
everywhere 1--1, pseudoholomorphic map, $\phi _{1}$, to $\mathbb{R}\times
(S^1\times S^2)$ whose image is $C \equiv\phi (C_{0})$. In
this case, $\phi $ factors as $\phi _{1} \circ \psi $ where $\psi $ is
a holomorphic, branched covering map to $C_{1}$. The
$\phi_{1}^*T_{1,0}(\mathbb{R}\times  (S^1\times  S^2))$
decomposition as $W  \oplus N$ then pulls back by $\psi $ to give the $W \oplus N$
decomposition for $\phi ^*T_{1,0}(\mathbb{R}\times  (S^1\times  S^2))$.
Because of this factoring property, the map $e$ can be
chosen to be invariant under the action on $\phi ^*T_{1,0}(\mathbb{R}\times(S^1\times  S^2))$
of the group of holomorphic diffeomorphisms of $C_{0}$ that fix $\phi $.
Such a choice is assumed in what follows.

\step{Part 2}
\cite[Section~2.D]{T3} described an operator, $D_{C}$, whose domain is a
certain Hilbert space of sections of $\phi ^*T_{1,0}(\mathbb{R}\times(S^1\times  S^2))$
and whose range is a Hilbert space of sections
of $\phi ^*T_{1,0}(\mathbb{R}\times  (S^1\times  S^2))
\otimes T^{0,1}C_{0}$. The discussion in \cite[Section~2.D]{T3} involves
only pairs $(C_{0}, \phi )$ that define points in $\mathcal{M}_{\hat{A}}$,
but the story generalizes in an almost verbatim fashion to define
$D_{C}$ for any pair that defines a point in ${\mathcal{M}^{* }}_{\hat{A}}$.

By way of a reminder, $D_{C}$ is defined from an operator, $\underline {D}$,
whose kernel is the space of first order deformations of $\phi $ that result
in maps that are pseudoholomorphic with respect to the given complex
structure on $C_{0}$ and the almost complex structure $J$. The salient
features of $\underline {D}$ are as follows: First, $\underline {D}$ is a first
order differential operator that maps sections of
$\phi ^*T_{1,0}(\mathbb{R}\times  (S^1\times  S^2))$ to sections of
$\phi^*T_{1,0}(\mathbb{R}\times  (S^1\times  S^2))$ so as to map
sections of $W$ to those of $W \otimes T^{0,1}C_{0}$. In
particular, if $v$ is a section of $T_{1,0}C_{0}$, then $\phi _{* }v$
is a section of $W$ and $\underline {D}\phi _{* }v = \phi _{* }(\bar {\partial }v)$.
Second, composing \underline {D} with orthogonal
projection onto the $N$ summand in $\phi ^*T_{1,0}(\mathbb{R}\times  (S^1\times  S^2))$
defines an $\mathbb{R}$--linear operator
that sends a section, $\eta $, of $N$ to a section of the form
\begin{equation}\label{eq5.1}
\bar {\partial }\eta+ \nu \eta+ \mu
\bar {\eta };
\end{equation}
here $\nu$ is a fixed section of $T^{1,0}C_{0}$ and $\mu $ is a
fixed section of $N^{2 } \otimes T^{1,0}C_{0}$.

The operator $\underline {D}$ has an extension as a Fredholm
operator that maps a certain weighted Hilbert space completion of
its range to that of its domain. The inner products that define the
range and domain Hilbert spaces for sections of the $N$ and $N
\otimes T^{1,0}C_{0}$ summands are as depicted in \cite[Equation (2.7)]{T3}. Similar
weighted norms are used for the respective $W$
and $W  \otimes T^{1,0}C_{0}$ summands, but the latter insure that
all sections are square integrable. In this regard, some care must
be taken when there are holomorphic diffeomorphisms of $C_{0}$ that
preserve $\phi $. To elaborate, note first that $\phi $ is finite to
one, and as a consequence, the set of diffeomorphisms of $C_{0}$
that preserve $\phi $ defines a finite group. Let $G_{C}$ denote the
latter. The norms used for these Hilbert spaces can and should be
taken to be $G_{C}$ invariant.

\step{Part 3}
In the case that $C_{0}$ is a disk or cylinder, the operator $D_{C}$ is
$\underline {D}$ in the just described Fredholm context. When $C_{0}$
has negative Euler characteristic, $D_{C}$ is obtained from this Fredholm
$\underline {D}$ by composing with an orthogonal projection on the latter's
range. The definition of this projection requires the choice of some
$3(N_{+ }+N_{ - }+\hat {N}-1)$ dimensional, $G_{C}$--invariant vector space of
sections of $T_{1,0}C_{0}\otimes T^{1,0}C_{0}$ that projects
isomorphically to the cokernel of $\bar {\partial }$. Let $V$ denote the
latter choice and let $\prod $ denote the orthogonal projection onto $\phi_{* }$V.
Then $D_{C} = (1-\prod )\underline {D}$.

The following proposition describes the important facts about the kernel and
cokernel of $D_{C}$. The proof uses verbatim arguments from the proof of
\cite[Propositions~2.9]{T3} and so is omitted.

\begin{proposition}\label{prop:5.2}

Suppose that $(C_{0}, \phi )$ defines a point in ${\mathcal{M}^{* }}_{\hat{A}}$.
Then the operator $D_{C}$ has index $N_{ + }+2(N_{ - }+\hat {N}+\text{\rm\c{c}}_{ + }
+\text{\rm\c{c}}_{ -}-1)$. Moreover, this is its kernel dimension as its cokernel is trivial.

\end{proposition}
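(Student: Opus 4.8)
The plan is to follow the template of the proof of \cite[Proposition~2.9]{T3}, which handles the case of honest subvarieties in $\mathcal{M}_{\hat{A}}$, and to check that every step extends to the multiply covered setting. The strategy has two halves: first, an index computation for the Fredholm operator $\underline{D}$ and hence for $D_{C}$; second, a vanishing theorem for the cokernel, which then forces the kernel dimension to equal the index.

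For the index, I would argue as follows. The operator $\underline{D}$, restricted to sections of $N$, has the Cauchy--Riemann form $\bar\partial\eta + \nu\eta + \mu\bar\eta$ of \eqref{eq5.1}; on sections of $W$ it agrees with the pull-back of $\bar\partial$ on $T_{1,0}C_{0}$. So $\underline{D}$ is block-triangular with respect to the $W\oplus N$ splitting, and its index is the sum of the index of $\bar\partial$ acting on sections of $W$ (equivalently on $T_{1,0}C_{0}$, with the chosen weighted norms near the punctures) and the index of the $N$-component operator (a real-linear Cauchy--Riemann operator on the line bundle $N$ with the weights from \cite[(2.7)]{T3}). The Euler characteristic of an $(N_{+}+N_{-}+\hat N)$-punctured sphere and the asymptotic weights, which are dictated by the $4$-tuples in $\hat{A}$ and by the $\theta=0,\pi$ intersection data via the Conley--Zehnder-type indices, produce exactly the stated count $N_{+}+2(N_{-}+\hat N+\text{\c{c}}_{+}+\text{\c{c}}_{-}-1)$. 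In the case of negative Euler characteristic, passing from $\underline{D}$ to $D_{C}=(1-\prod)\underline{D}$ subtracts $\dim V = 3(N_{+}+N_{-}+\hat N-1)=\dim\operatorname{cokernel}(\bar\partial)$, which is precisely the correction that converts the $\bar\partial$-on-$W$ index into the contribution coming from the Teichm\"uller deformations of $C_{0}$; the net index is unchanged from the formula. This is identical bookkeeping to \cite{T3}, and since the weighted norms and the asymptotic behavior of $\phi$ near the punctures are exactly those of \cite{T3} (the $4$-tuple data is the same, by the compatibility of $(C_{0},\phi)$ with $\hat{A}$), the computation goes through verbatim.

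For the cokernel vanishing, I would invoke the same positivity/unique-continuation argument used in \cite{T3}: an element of the cokernel of $D_{C}$ pairs trivially against the image of $\underline{D}$ and against $\phi_{*}V$, hence is annihilated by the formal adjoint of the $N$-component Cauchy--Riemann operator; the special geometry of $J$ and the form $\omega$ on $\mathbb{R}\times(S^{1}\times S^{2})$ (in particular the positivity encoded in \eqref{eq1.4} and the self-duality of $\omega$) forces such a section to vanish identically, either by a Weitzenb\"ock/energy argument on the compact part combined with the exponential decay forced by the weights on the ends, or by the local form \eqref{eq4.6}-type analyticity and the absence of non-extremal critical points of $\theta$. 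Once the cokernel is trivial, $\dim\ker D_{C}=\operatorname{index} D_{C}=N_{+}+2(N_{-}+\hat N+\text{\c{c}}_{+}+\text{\c{c}}_{-}-1)$, which is the assertion.

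The one genuinely new point to verify — and the place I expect the main obstacle — is the role of the automorphism group $G_{C}$ of holomorphic diffeomorphisms of $C_{0}$ fixing $\phi$, which is nontrivial precisely on the locus $\mathcal{R}$ and which is absent in the honest-subvariety case treated by \cite{T3}. I would need to confirm that the weighted Hilbert space norms, the exponential map $e$ (chosen $G_{C}$-invariant in Part~1 of the proof), the space $V$ of $T_{1,0}C_{0}\otimes T^{1,0}C_{0}$ sections (chosen $G_{C}$-invariant in Part~3), and the projection $\prod$ are all genuinely $G_{C}$-equivariant, so that $D_{C}$ is a $G_{C}$-equivariant operator; then the index and cokernel computations are insensitive to $G_{C}$ because one can work $G_{C}$-equivariantly throughout, the averaging over $G_{C}$ preserving Fredholmness and not changing the index. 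The subtlety is only that one must be slightly careful that $V$ of the stated dimension $3(N_{+}+N_{-}+\hat N-1)$ can actually be chosen $G_{C}$-invariant and still projecting isomorphically onto $\operatorname{cokernel}(\bar\partial)$ — but since $\operatorname{cokernel}(\bar\partial)\cong H^{0}(C_{0};K_{C_{0}}^{2})$ carries a natural $G_{C}$-action and $G_{C}$ is finite, averaging a generic choice over $G_{C}$ produces such a $V$. With that checked, the rest is, as in \cite{T3}, routine.
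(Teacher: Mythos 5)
Your proposal takes essentially the same route as the paper, which dispatches \fullref{prop:5.2} with the single remark that the proof is verbatim that of \cite[Proposition~2.9]{T3} once the setup in Parts 1--3 is in place. You have correctly identified the only genuinely new ingredient --- $G_{C}$-equivariance of the exponential map, of the weighted Hilbert space norms, and of the space $V$ --- which the paper handles exactly as you suggest, by building the invariance into the choices made in Parts 1--3 so that the \cite{T3} argument applies unchanged.
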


\step{Part 4}
Here is the significance of $D_{C}$: A small ball in the vector space $V$
parametrizes the complex structures on $C_{0}$ that are near to the given
one. This understood, an element in the kernel of $D_{C}$ gives a
deformation of $\phi $ that is pseudoholomorphic to first order with respect
to a complex structure that is parametrized by a point in $V$. More to the
point, the implicit function theorem can be employed in a relatively
standard manner to obtain the following description of a neighborhood of the
point defined by $(C_{0}, \phi )$ in ${\mathcal{M}^{* }}_{\hat{A}}$:
There is a ball, $B  \subset  \kernel(D_{C})$, a smooth function, $f$,
from $B$ to $\cokernel(D_{C})$ that vanishes with its differential at zero, and
a homeomorphism between a neighborhood of $(C_{0}, \phi )$'s point in
${\mathcal{M}^{* }}_{\hat{A}}$ and the quotient of $f^{-1}(0)$
by the action of the group $G_{C}$ on the kernel of $D_{C}$. To
elaborate, this homeomorphism comes from a $G_{C}$--equivariant map, $F$, from
$B$ to the domain space of $D_{C}$ that maps the origin to 0 with differential
at 0 the identity on $\kernel(D_{C})$. The homeomorphism is obtained by
restricting the composition $e \circ F$ to $f^{-1}(0)$.

It is worth a moment now to say something about why these local
charts map onto a neighborhood of $(C_{0}, \phi )$ as defined by
\eqref{eq1.24}. The point here is that if $({C_{0}}', \phi ')$ is
close to $(C_{0}, \phi )$ in the sense of \eqref{eq1.24}, then
the map $\phi ' \circ \psi $ can be obtained from $\phi $ by
composing the exponential map from $\phi
^*T_{1,0}(\mathbb{R}\times (S^1\times  S^2))$ with a small normed
section. A change in the diffeomorphism changes the section, and
 \cite[Proposition~2.2]{T4} can be used to find diffeomorphisms that
give small normed section in the domain of $D_{C}$. Granted this,
the implicit function theorem asserts that there is a unique such
small normed section from the image of $F$.

Note that in the case that $\cokernel(D_{C}) = \{0\}$, then
$\kernel(D_{C})/G_{C}$ is a local Euclidean orbifold chart for a
neighborhood of $(C_{0}, \phi )$'s point in ${\mathcal{M}^{* }}_{\hat{A}}$.
As is usually the case with implicit function theorem
applications of the sort just described, these charts fit together to give a
smooth orbifold structure to the set of points in ${\mathcal{M}^{* }}_{\hat{A}}$
that are defined by pairs $(C_{0}, \phi )$ with
trivial $D_{C} \cokernel$. Granted this, the assertion in \fullref{thm:1.3} about
the local structure of ${\mathcal{M}^{* }}_{\hat{A}}$ follows
directly from \fullref{prop:5.2}.

\subsection{Part 5 of the proof of Theorem \ref{thm:1.3}}\label{sec:5c}

This part explains why the set inclusion of $\mathcal{M}_{\hat{A}}$
into ${\mathcal{M}^{* }}_{\hat{A}}$ is a topological embedding.
Note that the equivalence of the two topologies is, in fact, implied by the
statement of \cite[Proposition~3.2]{T4}. However, the arguments in \cite{T4} for
this proposition focused for the most part on issues that are not present in
the analogous compact symplectic manifold assertion and so left the proof of
the equivalence to the reader. In its deference to \cite[Proposition~3.2]{T4},
the proof of \cite[Proposition~2.9]{T3} does not address the implied
equivalence between the two topologies on $\mathcal{M}_{\hat{A}}$. The
explanation that follows has nine steps.

\step{Step 1} The inclusion
$\mathcal{M}_{\hat{A}}\to{\mathcal{M}^{* }}_{\hat{A}}$ is
continuous since the condition for closeness given by
\eqref{eq1.24} implies that given in \eqref{eq1.13}. Thus, to
prove it an embedding, it is enough to prove that the condition
for closeness in \eqref{eq1.13} implies that in \eqref{eq1.24}.
This understood, the task is as follows: Fix $C
\in\mathcal{M}_{\hat{A}}$ and suppose that some positive, but
small $\kappa $ is given. Find some positive $\kappa '$ such that
when $C' \in \mathcal{M}_{\hat{A}}$ obeys the $\kappa '$ version
of \eqref{eq1.13}, then there is a diffeomorphism between $C$'s
model curve and that of $C'$ that makes the $\kappa $ version of
\eqref{eq1.24} hold. In what follows, $C_{0}$ and ${C_{0}}'$ are
the respective model curves for $C$ and $C'$ while $\phi $ and
$\phi '$ are their respective pseudoholomorphic maps to
$\mathbb{R}\times (S^1\times S^2)$.

To construct the required diffeomorphism, let $\vartheta\subset C_{0}$
denote the set of points that are mapped by $\phi $ to singular
points of $C$. The bundle $N$ restricts to $C_{0}-\vartheta $ as the
normal bundle to the embedding. Fix $\varepsilon >0$ but very much
less than one, and $\delta\in (0, \varepsilon ^{4})$. Now let $U$
denote the union of the radius $\delta $ disks about the points in
$\vartheta $ and the $|s|\ge  1+ 1/\varepsilon $ portions
of $C_{0}$. In this regard, choose $\varepsilon $ so that the
$|s|\ge  1+1/\varepsilon $ part of $U$ is far out on the ends of $C_{0}$.

With $\varepsilon $ and $\delta $ chosen, there is an exponential map that
is defined on a small, constant radius disk bundle in $N$ over
$C_{0}-U$ so as to embed this disk bundle as a tubular neighborhood of
$\phi (C_{0}-U)$ and to embed each fiber as a pseudoholomorphic disk.
Note that there is quite a bit of freedom here with the choice for this
exponential map and this freedom is used in what follows to fine tune things
near the boundary of $C_{0}-U$. In any event, suppose that the disk
bundle and the exponential map have been fixed. Let $N_{1}\subset N$
denote the disk bundle and $e\co  N_{1}\to\mathbb{R}\times  (S^1\times S^2)$ the exponential map.

Now if $C'$ is very close to $C$ in the sense of \eqref{eq1.13},
then $C'$ must intersect the image of each fiber of $N_{1}$ over
$C_{0}-U$ in precisely one point with multiplicity one. Indeed,
because $C$ and $C'$ come from the same version of
$\mathcal{M}_{\hat{A}}$, the net intersection number with any
given fiber must be one. Meanwhile, all such intersection points
count with positive weight by virtue of the fact that the fibers
are embedded as pseudoholomorphic disks.

Because $C'$ intersects the image of each fiber of $N_{1}$ over $C_{0}-U$ just
once, it can be written in the tubular neighborhood of $\phi (C_{0}-U)$
as the image of $e \circ \eta $ where $\eta $ is a very small normed
section of $N_{1}$.

The characterization of $C'$ as the image of $e \circ \eta $
defines a diffeomorphism, $\psi $, between $C_{0}-U$ and a part
${C_{0}}'$ by demanding that $\phi ' \circ \psi  = e \circ \eta
$. This diffeomorphism is such as to make $\dist(\phi , \phi '
\circ \psi )$ and the ratio $r(\psi )$ from \eqref{eq1.24} both
very small on the whole of $C_{0}-U$ in the case that $C'$ is
very close to $C$ in the sense of \eqref{eq1.13}. The argument as
to why $r(\psi )$ is small is deferred to Step 3.

\step{Step 2} This step constitutes a digression make four points
about 4--dimensional pseudoholomorphic geometry. To set the stage,
let $X$ denote the 4--manifold and $J$ an almost complex structure on
$X$. The relevant case is that where $X =\mathbb{R}\times  (S^1
\times S^2)$ and $J$ is the almost complex structure that is
described in \fullref{sec:1}. Let $D$ denote a standard disk
in $\mathbb{C}$, and suppose that an embedding of $D\times D$ into a
$X$ has been specified with the following specific property: The
image of $D\times  0$ and the image of each $\{(z \times  D)\}_{z
\in D}$ disk is pseudoholomorphic.

\begin{point}
There is a ball about the image of $(0, 0)$ in $X$ with complex coordinates $(x, y)$
that have three properties: First, $y = 0$ is in the image of the disk $D\times  0$.
Second, each constant $x$ disk lies in the image of some disk from the collection
$\{z\times  D\}_{z \in D}$. Finally, $T^{1,0}X$ is spanned over this coordinate chart
by the 1--forms
\begin{equation}\label{eq5.2}
\nu \equiv dx + \sigma  d\bar {x} \qquad\text{and}\qquad \nu' \equiv dy+ \sigma 'd\bar {x},
\end{equation}
where $\sigma $ and $\sigma '$ vanish both at the origin and along the whole $y = 0$ locus.
\end{point}
\noindent The proof that such coordinates exist is straightforward and left to the reader.

To make the remaining points, let $B$ denote the coordinate chart just
described, let $\Omega\subset\mathbb{C}$ denote a disk and let $w\co \Omega\to B$
denote a proper, pseudoholomorphic map.

\begin{point}
The pull-back of $x$ to $\Omega $ obeys
$\bar {\partial }x +\sigma \bar {\partial }\bar {x} = 0$.
Indeed, this follows by virtue of the fact that $w^*\nu$ is a section of $T^{1,0}\Omega $.
As a consequence, $| \bar {\partial }x|  \ll | \partial x| $ when
$|y| $ is small on the image of $\Omega $. Note that this implies that the critical
points of the pull-back of $x$ are the zeros of $\partial x$.
\end{point}

\begin{point}
Let $z  \in\Omega $ denote a zero of $\partial x$.
There is holomorphic coordinate, $w$, for a neighborhood of $z$ such that
$\partial x = w^{q}+{\mathcal{O}}(|w| ^{q + 1})$ where $q$ is a positive integer.
\end{point}
\noindent Indeed, this can be seen from the following considerations: The holomorphic
derivative of the equation from Point 2 gives one for $\partial x$ that has
the form $\bar{\partial}(\partial x)+\gamma\partial x+\hat{\gamma}\overline{\partial x}= 0$,
where $\gamma $ and $\hat {\gamma}$ are smooth functions on $\Omega $.
This last equation implies that
$\partial x$ vanishes near $z$ to leading order as a holomorphic function.

The fourth point is an immediate consequence of the latter two:

\begin{point}
Viewed as mapping $w^{-1}(B)$ to $\mathbb{C}$, the function $x$
looks locally like a ramified covering map onto its image.
\end{point}

\step{Step 3} This step explains why $r(\psi )$ is small at all
points in $C_{0}-U$ when $C'$ obeys a sufficiently small
$\kappa'$ version of \eqref{eq1.13}. To start, remark that by
virtue of what is said in Step 2, any given point in $\phi
(C_{0}-U)$ has local complex coordinates $(x, y)$ with the
following three properties: First, each $x = \constant$ disk is
the image of a fiber of $N_{1}$. Second, the disk where $y = 0$
is in $C$. Finally, $T^{1,0}(\mathbb{R}\times  (S^1\times  S^2))$
is spanned by the 1--forms $\nu $ and $\nu '$ as in \eqref{eq5.2}.

Let $B$ denote the domain in $\mathbb{R}\times  (S^1\times S^2)$
of these coordinates. The map $\psi ^{ - 1 }$ on the $\phi '$--inverse image
of $B$ is the composition of $\phi '$ with the projection to the $y = 0$ locus.
This being the case, the fact that $\phi '$ is pseudoholomorphic implies
that $\nu $ must pull-back via $\phi '$ to ${C_{0}}'$ as a form of type $(1,0)$,
and this implies that $r_{z}(\psi )=| \bar {\partial }x|/| \partial x| $
is the value of $| \sigma | $ at $(e \circ \eta )(z)$.

Granted the preceding, there exists $(\varepsilon
,\delta)$--dependent constants $\kappa _{0} > 0$ and $c_{0}$ with
the following significance: If $\kappa ' < \kappa _{0}$ and if
$C'$ obeys the $\kappa' $ version of \eqref{eq1.13}, then $\psi $
is well defined on $C_{0}-U$. Moreover, both $\dist(\phi , \phi '
\circ \psi )$ and $r_{(\cdot )}(\psi )$ are bounded by an
expression of the form $c_{0}\kappa' $ at all points in $C_{0}-U$.

\step{Step 4} To extend $\psi $, so as to make \eqref{eq1.24}
hold for all $z$ and very small $\kappa $, note first that
topological considerations imply that the complement $o$f $\psi
(C_{0}-U)$ in ${C_{0}}'$ must be diffeomorphic to $U$, thus a
union of some number of cylinders and some number of disks.
Moreover, each cylinder must bound one of the $|s|\sim
1/\varepsilon $ circles in the boundary of $\psi (C_{0}-U)$ and
each disk must bound one of the radius ${\mathcal{O}}(\varepsilon
)$ circles in the boundary.

The cylinder story is simpler than that for the disks, so it is treated
first. For this purpose, let $S \subset  U$ denote one of the cylinder
components, and let $\gamma $ denote its boundary circle. Let $\gamma ' \subset {C_{0}}'$
denote $\psi (\gamma )$ and let $S'  \subset {C_{0}}'$
denote the component of the complement of $\psi (C_{0}-U)$ whose
boundary circle is $\gamma '$. When $\varepsilon $ is large, both $S$ and
$S'$ are very close to an $\mathbb{R}$--invariant, pseudoholomorphic cylinder,
$S_{0}$. In this regard, take $S_{0}$ so that a multiple cover of its
defining Reeb orbit is the $|s|\to\infty $ limit of the
constant $|s| $ slices of $S$. Fix a point $x  \in S_{0}$ and a
pseudoholomorphic disk, $D$ with center at $x$ that is normal to $S_{0}$. Since
$TS_{0}$ is an orbit of the product of $\mathbb{R}$ with a 1--parameter subgroup
from the group $\mathbb{T}$ generated by $\partial _{t}$ and $\partial_{\varphi }$,
the corresponding $\mathbb{R}\times  S^1$ group can be
used to translate a small subdisk in $D$ centered at $x$ to each point in $S$; and
these translates foliate a tubular neighborhood of $S_{0}$ in
$\mathbb{R}\times  (S^1\times S^2)$ by pseudoholomorphic disks. The
exponential map on $N_{1}$ from Step 1 can and should be chosen so as to map
each fiber of $N_{1}$ near $\gamma $ into one of these $\mathbb{R}\times S^1$ translates of $D$.

Near $\gamma $, both $S$ and $S'$ intersect each fiber the same number of times
and in distinct points. Let $m$ denote this number. Let $\pi $ denote the
projection from the tubular neighborhood of $S_{0}$ to $S_{0}$ that moves
any given point to the center point of its particular translate of $D$. As
will now be explained, the restriction of $\pi $ to either $S$ or $S'$ defines a
degree $m$, unramified covering of $\pi (S)$. To see why, use the first point
in Step 2 to put coordinates $(x, y)$ on a neighborhood of any given point in
$S_{0}$ where the $x = \constant$ slices are the translates of $D$, and where the
$y = 0$ locus corresponds to $S_{0}$. Moreover, the 1--forms $\nu $ and $\nu '$
from \eqref{eq5.2} span $T_{1,0}(\mathbb{R}\times  (S^1\times  S^2))$ on
this neighborhood.

As set up, the projection $\pi $ on the parts of $S$ and $S'$ in this
neighborhood is the function $x$. This understood, the fourth point of Step 2
implies that $\pi $ restricts to either $S$ or $S'$ as a degree $m$, ramified
cover over $\pi (S)$. As such, its critical points are isolated, and each
counts positively to a ramification number. As both $S$ and $S'$ are cylinders,
the ramification number must be zero and so $\pi $ maps $S$ and $S'$ to $\pi
(S)$ as honest degree $m$ covers.

Now introduce the fibered product $S\times _{\pi } S'$, this the
subspace in $S\times S'$ of pairs with the same image via $\pi $
in $S_{0}$. The latter is a smooth manifold with projections to
$S$ and to $S'$. In fact, because $\pi $ is non-singular on both
$S$ and $S'$, these two projections are covering maps. Moreover,
each is trivial because $\pi $ has the same degree on $S$ as it
has on $S'$.  Thus, both such projections have sections. In
particular, there is a unique section over $S$ whose restriction
to $\gamma $ composes with the projection to $S'$ as the map
$\psi $. The latter section thus composes with projection to $S'$
so as to extend $\psi $ as a diffeomorphism from $S$ to $S'$.
This extension obeys any given small $\kappa $ version of
\eqref{eq1.24} over $S$ if $\varepsilon $ is small and then
$\kappa'$ very small.

\step{Step 5}
Suppose now that $\gamma $ is a circle in $\partial U$ that lies very
close to some point $z \in\Xi $ and let $\gamma '$ denote its $\psi $
image in ${C_{0}}'$. Let $D$ denote the disk in $C_{0}$ that $\gamma $ bounds
and let $D'  \subset {C_{0}}'$ denote the disk that $\gamma '$ bounds.
Since the whole of $D'$ is not in $\psi (C_{0}-U)$, its $\phi '$
image must lie very close to $\phi (z)$ and thus very close to $\phi (D)$.
In fact, the distance between any point of $\phi '(D')$ and any point of
$\phi(D)$ will be ${\mathcal{O}}(\varepsilon )$ when $\varepsilon $ is small.
As is argued in the subsequent steps, there are diffeomorphism between $D$ and
 $D'$ that extend $\psi $ with small $r(\psi )$.

To see why $\psi $ can be extended to map from $D$ to $D'$ with small
$r(\psi)$, remark that when $\varepsilon $ is small, then the results from Step 2
can be used to find a holomorphic coordinate, $u$, that is defined on the
radius $4\varepsilon $ disk centered at $z$, and complex coordinates $(x, y)$
centered at $\phi (z)$ with the following five properties: First, $\phi $ on
$D$ has the form
\begin{equation}\label{eq5.3}
\phi (u) = \bigl(u^{p + 1}, 0\bigr) + {\mathcal{O}}\bigl(|u| ^{p + 2}\bigr),
\end{equation}
where $p$ is a non-negative integer. Moreover,
\begin{equation}\label{eq5.4}
\phi^* dx = (p+1)u^{p} du + {\mathcal{O}}\bigl(|u| ^{p + 1}\bigr) \qquad \text{and}\qquad
\phi^*dy = {\mathcal{O}}\bigl(|u| ^{p + 1}\bigr).
\end{equation}
Second, the constant $x$ disks and the $y = 0$ disk are pseudoholomorphic.
Third, the forms $\nu $ and $\nu '$ from \eqref{eq5.2} span
$T^{1,0}(\mathbb{R}\times  (S^1\times  S^2))$ over the domain of these
coordinates. Finally, the $x = \constant$ disks where $|u|\ge 2\delta $
contain the image of the fiber disks in the bundle $N_{1}$ from
Step 1. Use $B$ in what follows to denote the domain of the $(x, y)$
coordinates.

Consider first the pull-back of $x$ to $\phi ^{-1}(B)$. Take
$\varepsilon $ small, and granted that $x = u^{p + 1}+{\mathcal{O}}(|u|^{p + 2})$,
if $\varepsilon $ is small, then the restriction of
$\partial x$ to the $u$ coordinate chart is non-zero away from the origin.
Moreover, with $W \subset C_{0}$ denoting the inverse image via $x$ of the
radius $\varepsilon ^{p + 1}$ disk about 0 in $\mathbb{C}$, the map $x$ sends
$W$ to the radius $\varepsilon ^{p + 1}$ disk in $\mathbb{C}$ as a degree
$p+1$ ramified cover with a single ramification point where $\partial x$
vanishes with degree $p$. Note that $W$ is a disk.

Let $W'  \subset {C_{0}}'$ denote the inverse image via $x$ of the
same radius $\varepsilon ^{p + 1}$ disk in $\mathbb{C}$. When $\varepsilon $
and $\kappa'$ are small, then $W'$ is also a disk since its boundary is a
small radius, embedded circle in $D'$. Since $x$ is pulled up from $W$ near the
boundary of $W'$, it has degree $p+1$ there as a map to the radius $\varepsilon
^{p + 1}$ circle in $\mathbb{C}$. Moreover, an appeal to the second point in
Step 2 finds that
\begin{equation}\label{eq5.5}
|\bar {\partial }x|  < c_{\varepsilon }\kappa '| \partial x|
\end{equation}
on the whole of $W'$ where $c_{\varepsilon }$ is determine once and for all by
$\varepsilon $. This last equation implies that all zeros count with
positive multiplicity. Note that all occur in the complement of the $\psi $
image of the $|u|  > 2\delta $ portion of $W$.

Here is the final remark for this step: According to the fourth point, $x$
maps $W'$ to $\mathbb{C}$ as a degree p+1, ramified cover. As it turns out, the
sum of the orders of vanishing of $\partial x$ at its zeros in $W'$ is equal
to $p$, but this fact is not proved directly.

\step{Step 6}
The argument for a small $r(\psi )$ extension of $\psi $ over $W'$ is
simplest in the case that $\partial x$ on $W'$ is zero at a single point and
this is also the only zero of $\partial \psi $ in $W'$. In the latter case,
define first a $C^{1}$ extension as follows: The function $x$ on both $W$ and $W'$
has a $p$'th root, this denoted by $x^{1 / p}$. On both $W$ and $W'$ this $p$'th
root provides a $C^{1}$ homeomorphism onto the radius $\varepsilon $ disk in
$\mathbb{C}$ centered at 0. This function is smooth and, in both cases, maps
the complement of $x^{-1}(0)$ diffeomorphically to the complement of 0
in the centered, radius $\varepsilon $ disk. The composition of the map
$x^{1 / p}$ from $W$ with its inverse to $W'$ thus defines a $C^{1}$
homeomorphism between $W$ and $W'$ that is smooth except at a single point. Let
$\psi _{0}$ denote the latter. Then $| \bar {\partial }\psi_{0}|  \ll |\partial \psi _{0}|$
when $\varepsilon $ is small by appeal to \eqref{eq5.5}. Moreover, $| \partial
\psi _{0}| $ is uniformly positive while $\bar {\partial }\psi
_{0}$ is zero at the one non-smooth point. This understood, a suitable
perturbation of $\psi _{0}$ then gives a smooth diffeomorphism, $\psi $,
with small $r(\psi )$.

\step{Step 7}
To proceed with the general case, consider that $x$, when viewed as a map
from $W'$ to $\mathbb{C}$, pulls back the complex structure from $\mathbb{C}$ on the
complement of the points where $\partial x$ vanishes. Indeed, this follows
from \eqref{eq5.5}. This pull-back complex structure extends over the zeros of
$\partial x$ to define a complex structure on $W'$ that makes $x$ a holomorphic map.

As will now be explained, there is a holomorphic coordinate for this new
complex structure on $W'$ that makes $x$ out to be a polynomial. To find such a
coordinate, remark that near the boundary of $W'$, $x$ is pulled up from $W$. In
particular, when $\varepsilon $ is small, $x$ has a $(p+1)$'st root near the
boundary of $W'$ that maps the boundary of $W'$ in a 1--1 fashion to the radius
$\varepsilon $ disk in $\mathbb{C}$. This understood, let $D_{\infty }$ denote
the complement in $\mathbb{C}\cup\infty $ of the radius $\varepsilon $
disk about the origin, and let $\tau $ denote a holomorphic coordinate on
$D_{\infty }$ that vanishes at 0 and has constant norm $1/\varepsilon $ on
the boundary of $D_{\infty }$. Now let $M$ denote the complex curve obtained
from the disjoint union of $D_{\infty }$ and $W'$ by identifying the boundary
of $D_{\infty }$ with the boundary of $W'$ by pairing points with $1/\tau  =x^{1 / (p + 1)}$.
Of course, $M$ is $\mathbb{CP}^{1}$ with strangely
presented holomorphic coodinate patches. The point of this
construction is that $x$ extends to the whole of $M$ as a degree $p+1$ holomorphic
map from $\mathbb{CP}^{1 }$ as $M$ to $\mathbb{CP}^{1}$ as $\mathbb{C}
\cup\infty $. Moreover, this extension has the property that the inverse
image by $x$ of the point $\infty $ is a single point, this the origin in
$D_{\infty }$. Thus, with the complement of the origin in $D_{\infty }$
viewed as $\mathbb{C}\subset M$, this extension of $x$ is a polynomial of
degree $p+1$ when written with the standard holomorphic coordinate on $\mathbb{C}$.

\step{Step 8}
Granted that $\varepsilon $ and $\kappa'$ are small, the next step
provides a holomorphic coordinate, $\tau $, on $\mathbb{C}$ such that
\begin{equation}\label{eq5.6}
|x(\tau )-\tau ^{p + 1}|\le\varepsilon ^{p +
2} \qquad\text{and}\qquad | x'(\tau )- (p+1)\tau ^{p}|\le\varepsilon
^{p + 1}
\end{equation}
at all points where $|x|\ge (\frac{1}{2}\varepsilon )^{p + 1}$.
Here, $x'$ denotes the $\tau $--derivative of $x$.

To see what this brings, identify $W'$ with its image in $\mathbb{C}$ via
$M$. On the $|x|\ge (\frac{1}{2}\varepsilon)^{p + 1}$ portion of $W'$,
the function $x$ is pulled up from $W$ via $\psi^{-1}$. On $W$,
$|x -u^{p + 1}|\le  c\cdot \varepsilon ^{p + 2}$ where $c$ is a fixed constant.
Thus the fact that $|x - \tau ^{p + 1}| <\varepsilon ^{p + 2}$ where
$|x|\ge (\frac{1}{2}\varepsilon )^{p + 1}$ implies that $\tau $ can be chosen so that
\begin{equation}\label{eq5.7}
|u - \psi ^*\tau |\le\varepsilon ^{2} \qquad\text{and}\qquad |du -\psi^ *d\tau
|\le\varepsilon
\end{equation}
where $|x|\ge (\frac{1}{2}\varepsilon )^{p+ 1}$ on $W$.

With this in mind, fix a favorite smooth function $\beta \co [0, \infty )\to  [0, 1]$
with value 1 on $[0, \frac{5}{8}]$, value 0 on
$[\frac{7}{8}, \infty )$, and with $|d\beta |  <8$. With $\beta $ chosen, define
$\lambda \co  W  \to W'$ by setting
\begin{equation}\label{eq5.8}
\lambda \ast \tau= (1-\beta (|u| /\varepsilon ))\psi ^*\tau +\beta (|u| /\varepsilon ) u.
\end{equation}
Thus, $\lambda $ extends $\psi $. Moreover, if $\varepsilon $ is small, then
the inequalities in \eqref{eq5.7} guarantee that $\lambda $ is a diffeomorphism.

It remains now to explain why $r(\lambda )$ is very small if both
$\varepsilon $ and $\kappa'$ are small. For this purpose, note that
$r(\lambda )$ is uniformly ${\mathcal{O}}(\varepsilon )$ where
$|x|\ge(\frac{1}{2}\varepsilon )^{p + 1}$ since the
differentials of $\lambda $ and $\psi $ differ there by ${\mathcal{O}}(\varepsilon )$.
On this rest of $W$, the function $r(\lambda )$ is the
ratio of $| \bar {\partial }\tau | $ to $| \partial\tau | $ where
$\bar {\partial }$ and $\partial $ are defined by the
restriction to $W'$ of the almost complex structure from $\mathbb{R}\times
(S^1\times S^2)$ and $\tau $ is considered here a function on $W'$.
Indeed, such is the case since the pull-back via $\lambda $ of $\tau $ is a
holomorphic function on $W$. To compute this ratio, remember that $x$ on $W'$ is a
holomorphic function of $\tau $ so $\bar {\partial }x = x'\bar {\partial}\tau $
and $\partial x = x'\partial \tau $. Thus, $| \bar{\partial }\tau | /| \partial \tau |
=|\bar {\partial }x| /| \partial x| $ and this is very small if $\varepsilon $
and $\kappa'$ are small.

\step{Step 9}
The claim that \eqref{eq5.7} holds when $\varepsilon $ is small is an immediate
consequence of the following lemma:

\begin{lemma}\label{lem:5.3}

Fix an integer $p \ge 0$, and $\varepsilon $, $\varepsilon ' > 0$.
There exists $\rho\in (0, \varepsilon )$ with the following property: Let
$f$ denote a non-trivial polynomial of degree $p+1$ on $\mathbb{C}$ such that the
locus where $|f| =\rho $ is a simple closed curve. Then there is a holomorphic
coordinate $\tau $ on $\mathbb{C}$ such that
$| f-\tau ^{p +1}|\le\varepsilon '\varepsilon ^{p + 1}$ and
$| f' -(p+1)\tau ^{p}|\le\varepsilon '\varepsilon^{p}$ where
$| f| >(\frac{1}{2}\varepsilon )^{p + 1}$.
\end{lemma}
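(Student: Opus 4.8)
The plan is to prove this by a normalization-then-continuity argument, reducing the statement about an arbitrary degree $p+1$ polynomial to a statement about polynomials that are already close to $\tau^{p+1}$. First I would write $f(z) = a_{p+1} z^{p+1} + a_p z^p + \cdots + a_0$ with $a_{p+1} \neq 0$, and use an affine change of coordinate $z \to \alpha z + \beta$ to kill the $z^p$ term and normalize the leading coefficient: there is a unique (up to a $(p+1)$-st root of unity) holomorphic coordinate $\zeta$ on $\mathbb{C}$ in which $f = \zeta^{p+1} + b_{p-1}\zeta^{p-1} + \cdots + b_0$. The point of choosing $\rho$ small will be to force the lower-order coefficients $b_{p-1}, \dots, b_0$ to be small. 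Indeed, the hypothesis that $\{|f| = \rho\}$ is a simple closed curve says that $f^{-1}$ of the radius-$\rho$ disk is a single topological disk, hence contains at most one zero of $f'$ counted without multiplicity, and in fact — since $f$ restricted to that disk is a proper degree $p+1$ map to the $\rho$-disk — it must contain all $p$ zeros of $f'$ and all $p+1$ zeros of $f$. So all roots of $f$ lie in $f^{-1}(D_\rho)$, and as $\rho \to 0$ this set shrinks; in the normalized coordinate this forces each $|b_j|$ to be bounded by a $\rho$-dependent quantity tending to $0$.

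The core estimate is then the following rescaled statement: there is a constant $\delta_0 = \delta_0(p, \varepsilon, \varepsilon')$ such that if $g(\tau) = \tau^{p+1} + \sum_{j<p} b_j \tau^j$ with all $|b_j| < \delta_0$, then on the region $|g| > (\tfrac12 \varepsilon)^{p+1}$ one has $|g - \tau^{p+1}| \le \varepsilon' \varepsilon^{p+1}$ and $|g' - (p+1)\tau^p| \le \varepsilon' \varepsilon^p$. This is elementary: on $|g| > (\tfrac12\varepsilon)^{p+1}$ one has a lower bound $|\tau| \ge c(p)\varepsilon$ once the $b_j$ are small (because $g$ is uniformly close to $\tau^{p+1}$ on a fixed annulus and the maximum principle / Rouché pins down where $|g|$ can be large), and then $|g - \tau^{p+1}| \le \sum_j |b_j| |\tau|^j$ and $|g' - (p+1)\tau^p| \le \sum_j j|b_j||\tau|^{j-1}$ are controlled by comparing with $|\tau|^{p+1}$ and the trivial bound $|\tau| \le \varepsilon'' \varepsilon^{-1}|g|^{1/(p+1)}$ valid in the same region. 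One chooses $\delta_0$ small enough — depending only on $p$, $\varepsilon$, $\varepsilon'$ — to absorb these into the desired bounds. Then I would translate this back: pick $\rho$ small enough that the normalization forces $|b_j| < \delta_0$ for all $j$, and set $\tau = \zeta$, the normalized coordinate.

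The main obstacle I expect is the quantitative link between the geometric hypothesis "$\{|f|=\rho\}$ is a simple closed curve" and the algebraic conclusion "the normalized lower-order coefficients are small." One has to argue that simple-connectivity of the $\rho$-sublevel set really does trap all critical points and roots — this is a degree/monodromy argument (the restriction $f \colon f^{-1}(D_\rho) \to D_\rho$ is proper of degree $p+1$ onto a disk, so Euler characteristic / Riemann–Hurwitz gives that the total ramification equals $p$, forcing every critical value to lie in $D_\rho$) — and then that confinement of all roots to a small region is equivalent, via the elementary symmetric functions, to smallness of the coefficients after recentering and rescaling. A mild subtlety is that the normalizing scale $\alpha$ (chosen so that the leading coefficient becomes $1$) depends on $f$, so one must check that "small $b_j$ in the $\zeta$-coordinate" is the invariant notion that survives, which it is, since the two inequalities in the lemma are themselves stated in a coordinate $\tau$ that we are free to choose. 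Everything else — the affine normalization, the Rouché comparison on a fixed annulus, the final inequalities — is routine and would be dispatched with the standard estimates.
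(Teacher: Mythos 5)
Your proposal is correct and follows essentially the same route as the paper's proof. The paper factors $f=\prod_{b\in\Lambda}(\tau-b)$ in a coordinate normalized so that the product is monic and $\sum_{b\in\Lambda}b=0$ (your monic form with the $\zeta^p$ term killed is the same normalization), asserts without proof the quantitative confinement $\max_{b\in\Lambda}|b|\le c_0\rho^{1/(p+1)}$ when the $|f|=\rho$ locus is connected, and then obtains both displayed estimates by a short chain of elementary inequalities, ending with the choice $\rho\le(c_5\varepsilon'\varepsilon)^{p+1}$. You correctly single out the root-confinement estimate as the crux and sketch the right mechanism for it (that the sub-level set is a single disk, plus a rescaling/compactness argument), and your remaining steps match the paper's. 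One stray parenthetical -- that the sub-level disk ``contains at most one zero of $f'$ counted without multiplicity'' -- is not right as stated (a disk can contain many), but you immediately supersede it with the correct Riemann--Hurwitz observation that all $p$ zeros of $f'$ lie inside, and the errant remark plays no role in what follows.
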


\begin{proof}[Proof of  \fullref{lem:5.3}]
A degree $p+1$ polynomial determines, up to a
$(p+1)'$st root of unity, a holomorphic coordinate, $\tau $, for $\mathbb{C}$ and
a set, $\Lambda $, of $p+1$ not necessarily distinct complex numbers such that
\begin{equation}\label{eq5.9}
f(\tau )=\prod _{b \in \Lambda }(\tau  - b) \text{ and such that }
\sum _{b \in \Lambda }b = 0.
\end{equation}
It follows from this representation of $f$ that there exists a constant,
$c_{0}$, such that no point in $\Lambda $ has absolute value greater than
$d \equiv c_{0}\rho ^{1 / (p + 1)}$ if the $|f| =\rho $
locus is connected. Here $c_{0}$, and constants $\{c_{j}\}_{1 \le j
\le 5}$ that follow are independent of $f$.

Meanwhile, $| f(\tau )|\le c_{1} d^{p + 1}$ where
$|\tau |\le  2d$. Thus,the locus where $| f(\tau)|\ge (\frac{1}{2}\varepsilon )^{p + 1}$ must
occur where $| \tau |  > 2d$ in the case that $\rho \leq c_{2}\varepsilon ^{p + 1}$.
However, $| f(\tau )-\tau^{p + 1}|\le c_{3} d | \tau | ^{p}$ at the
points $| \tau |\ge  2d$. Thus, $| f-\tau ^{p +1}|\le c_{4}\rho ^{1 / (p + 1)}\varepsilon ^{p}$
where $| f|\ge (\frac{1}{2}\varepsilon)^{p + 1}$. This understood, there is a
fifth $f$--independent constant,
$c_{5}\in  (0, 1/c_{4})$ such that $\rho\le (c_{5}\varepsilon '\varepsilon )^{p + 1}$
makes the lemma's claim true.
\end{proof}

\subsection{The proof of Proposition \ref{prop:5.1}}\label{sec:5d}

The proof of this proposition is almost verbatim that of \cite[Proposition~2.13]{T3}.
The following three parts of the proof focus on the salient differences.

\step{Part 1}
Suppose that $(C_{0}, \phi )$ defines a point in $\mathcal{S} _{B,c,\mathfrak{d}}$.
This first part of the proof describes what turns out to be the part of a
complete set of local coordinates for a neighborhood of this point in
$\mathcal{S} _{B,c,\mathfrak{d}}$. To start, let $\Crit(C)  \subset C_{0}$ denote the
set of size $c$ whose elements are the critical points of $\theta $ where
$\theta\in  (0, \pi )$. If $({C_{0}}', \phi ')  \in\mathcal{S}_{B,c,\mathfrak{d}}$
defines a point near to that of $(C_{0}, \phi )$, then
$({C_{0}}', \phi '$) is represented by a small normed section in the
$(C_{0}, \phi )$ version of $\kernel(D_{C})$. Such a representation allows
the $c$ critical points of $\theta $ on ${C_{0}}'$ to be partnered with the
points in $\Crit(C)$ so that a ${C_{0}}'$ critical point maps very close to the
image of its partner in $\mathbb{R}\times  (S^1\times S^2)$.
This pairing of critical points also preserves the conditions that are
defined by the partition $d$. In addition, the degree of vanishing of $d\theta$
at any given critical point in ${C_{0}}'$ is the same as that of its partner
in $C_{0}$.

Keeping these facts in mind, let $z \in  \Crit(C)$. Fix a small ball, $B$,
centered on $z$'s image in $\mathbb{R}\times  (S^1\times S^2)$
whose closure excludes the images of any other point in $\Crit(C)$. Introduce
the function, $r$, on $B$ as defined in \eqref{eq2.9}. If $z'$ is the critical point in
${C_{0}}'$ that is paired with $z$, then the pair $(\theta (z'), r(z'))$ is well
defined and in this way, some $2c$ functions, $\{(\theta _{z},r_{z})\}_{z \in \Crit(C)}$,
are defined on a neighborhood of $(C_{0},\phi _{0})$'s point in
$\mathcal{S} _{B,c,\mathfrak{d}}$, at least in the case where
the group $G_{C}$ is trivial. In the case $G_{C}\ne  \{1\}$, then
these functions are distinguishable only modulo the action of $G_{C}$.

Note that the collection $\{\theta _{z}\}_{z \in \Crit(C)}$ defines
at most $m$ independent functions on a neighborhood of $(C_{0}, \phi )$'s
point in $\mathcal{S} _{B,c,\mathfrak{d}}$.

\step{Part 2}
This part of the proof supplies additional coordinates for a neighborhood
in $\mathcal{S} _{B,c,\mathfrak{d}}$ of the point defined by $(C_{0}, \phi )$. To start,
let $E  \subset C_{0}$ denote either one of the $N_{ + }$ convex side
ends where $\lim_{| s| \to \infty }\theta $ is neither 0 nor $\pi$,
or one of the ends that correspond to a 4--tuple in $B$. In any case, let
$(p, p')$ denote the integer pair from the corresponding 4--tuple. Then the
$\mathbb{R}/(2\pi \mathbb{Z})$ valued function $p\varphi  - p't$ has an
$\mathbb{R}$ valued lift on $E$ with a well defined $|s|\to\infty $
limit. Now, as in Part 1, if $({C_{0}}', \phi ')$ defines a point in
$\mathcal{S} _{B,c,\mathfrak{d}}$ near to that of $(C_{0}, \phi )$, then the ends of
${C_{0}}'$ can be partnered with those of $C_{0}$ so that partners share the
same $\hat{A}$ 4--tuple and map very near each other in
$\mathbb{R}\times  (S^1\times S^2)$. This understood, the function $p\varphi -p't$
has an $\mathbb{R}$--valued lift on $E$'s partner in ${C_{0}}'$ with a well defined
$|s|\to\infty $ limit that is very close to the
corresponding limit on $E$.

The assignment of these limits to the points near the image of $(C_{0}, \phi )$
define a collection of $N_{ + }+|B| $ functions
(modulo the action of $G_{C})$ on a neighborhood of $(C_{0}, \phi )$'s
point in $\mathcal{S} _{B,c,\mathfrak{d}}$. Let $\{\varpi _{ + 1}, \ldots \}$ denote
the $N_{ + }$ functions so defined (modulo the $G_{C}$ action) from the
convex side ends, and let $\{\varpi _{ - 1}, \ldots \}$ denote the
corresponding collection of $|B| $ functions that come from the
4--tuples in $B$.

\step{Part 3}
Two more functions are defined here for a neighborhood of $(C_{0}, \phi )$'s
point in $\mathcal{S} _{B,c,\mathfrak{d}}$. For this purpose, choose either an
$(1,\ldots)$ element from $\hat{A}$, or a $(0,-,\ldots)$ 4--tuple that is not from $B$,
or a point $z \in C_{0}$ where $\theta $ is zero. In all three cases, a complex valued
function, $\varpi'$, is defined modulo the $G_{C}$ action on a neighborhood of
$(C_{0}, \phi )$'s point in $\mathcal{S} _{B,c,\mathfrak{d}}$ as in the statement
of \cite[Proposition~2.13]{T3}.

To summarize the story, the function is defined on the point
defined by some pair $({C_{0}}', \phi ')$ by first identifying
the latter with an element near zero in $\kernel(D_{C})$. This
done, then if $\varpi '$ is defined from an end of $C_{0}$, there
is a partnered end, $E'  \subset {C_{0}}'$. If, as before, $(p,
p')$ denotes the integer pair from the corresponding element in
$\hat{A}$, then the phase of the complex number $\varpi '$ is
proportional to the $|s|\to\infty $ limit of $p'\varphi  - pt$ on
$E'$. The absolute value of the complex number is proportional to
the logarithm of the constant $b$ that appears in the $E'$
version of \eqref{eq2.4} in the case that $E'$ corresponds to a
$(0,-,\ldots)$ element. In the case that $E'$ corresponds to a
$(1,\ldots)$ element form $\hat{A}$, the absolute value is
proportional to the logarithm of the constant $\hat {c}$ that
appears in the $E'$ version of \eqref{eq1.9}.

When $\varpi '$ is defined from a $\theta  = 0$ point $z \in C_{0}$, the
chosen $\kernel(D_{C})$ element for $(C_{0},\phi )$ partners the
$\theta  = 0$ points in ${C_{0}}'$ with those in $C_{0}$ so that the partner
of $z$ is mapped very close to $z$'s image in $\mathbb{R}\times  (S^1\times S^2)$
and makes the same local contribution as does $z$ to the
count for $\text{\c{c}}_{ + }$. This understood, $\varpi '$ is assigned the value
of a holomorphic coordinate for a small disk in the $\theta  = 0$ cylinder
that is centered on $z$'s image.

\step{Part 4}
Granted the preceding, introduce the vector space $K^{* }\subset \kernel(D_{C})$
as defined in \cite[(2.23)]{T3}. The asserted structure of
$\mathcal{S} _{B,c,\mathfrak{d}}$ near the point defined by $(C_{0}, \phi )$ follows via
the implicit function theorem with a proof that $K^{* } = \{0\}$.
The argument for this conclusion, and the argument that proves $K^{* }= \{0\}$
are essentially verbatim copies of the arguments given
for \cite[Proposition~2.13]{T3}. Nothing new is needed to treat the case where
$\phi $ is not almost everywhere 1--1.

Note that the proof that $K^{* } = \{0\}$ proves more, for it
establishes the following:

\begin{lemma}\label{lem:5.4}

If $G_{C}$ is trivial, then the collection $\{\varpi _{ + 1},\ldots \}$,
$\{\varpi _{ - 1},\ldots \}, \varpi '$,\linebreak $\{r_{z}\}_{z \in \Crit(C)}$
and a certain subset of $m$ functions from the collection
$\{\theta _{z}\}_{z \in \Crit(C)}$ define coordinates for
$\mathcal{S} _{B,c,\mathfrak{d}}$ near the point defined by $(C_{0}, \phi )$.
Here, the $m$ functions from the collection $\{\theta _{z}\}_{z \in \Crit(C)}$
are chosen so that the values of the chosen versions of
$\theta_{z}$ at $(C_{0}, \phi )$ account for the critical values of $\theta $
that avoid the $|s|\to\infty $ limits of $\theta$ on the concave side ends in
$C_{0}$ and on the ends that correspond to 4--tuples from $B$.
In the case that $G_{C}\ne  \{1\}$, then the analogous collection of $N_{ + }+|B| +c+m$
functions give smooth orbifold coordinates on a neighborhood in
$\mathcal{S} _{B,c,\mathfrak{d}}$ of the point defined by $(C_{0}, \phi )$.

\end{lemma}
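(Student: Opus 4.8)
The plan is to assemble the $N_+ + |B| + c + m$ functions described in Parts 1--3 into a single local chart for $\mathcal{S}_{B,c,\mathfrak{d}}$ and to verify that this assignment is a local diffeomorphism onto an open set in $\mathbb{R}^{N_+ + |B| + c + m}$ (respectively, a $G_C$-equivariant such map when $G_C \neq \{1\}$). By Proposition \ref{prop:5.1}, a component of $\mathcal{S}_{B,c,\mathfrak{d}}$ has dimension exactly $N_+ + |B| + c + m + 2$, and by the proof of that proposition (via the implicit function theorem applied to $D_C$ on the vector subspace $K^* \subset \kernel(D_C)$ cut out by the stratum constraints) a neighborhood of $(C_0,\phi)$ in $\mathcal{S}_{B,c,\mathfrak{d}}$ is modeled, modulo the $G_C$-action, on a ball in $K^*$. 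So it suffices to show that the differentials at $(C_0,\phi)$ of the listed functions are linearly independent as functionals on $K^*$, i.e. that together with the two further functions hidden in the complex-valued $\varpi'$ they span the cotangent space. Since there are $N_+ + |B| + m + c$ real functions in the list plus the two real components of $\varpi'$, this gives $N_+ + |B| + c + m + 2$ functions total, matching the dimension; hence linear independence of their differentials is equivalent to their forming a coordinate system.

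The key steps, in order, are as follows. First, I would recall from \cite[(2.23)]{T3} and the proof of \cite[Proposition~2.13]{T3} the precise definition of $K^*$ and the fact, established there, that $K^* = \{0\}$; the arguments given for that are, as noted in Part 4, essentially verbatim, and nothing about the possible non-injectivity of $\phi$ interferes, because $K^*$ is defined using the operator $D_C$ and the behavior of elements of $\kernel(D_C)$ near the critical points of $\theta$ and near the ends, all of which pull back unproblematically through the branched cover $\psi$ when $\phi = \phi_1 \circ \psi$. Second, I would observe that the differentials of $\{r_z\}_{z \in \Crit(C)}$ detect, to first order, the motion of each critical point of $\theta$ transverse to its level set (via the function $r$ from \eqref{eq2.9}, for which $d\theta \wedge dr$ vanishes only at $\theta$-critical points), while the differentials of the chosen $m$ functions $\{\theta_z\}$ detect the motion of the $m$ independent critical values of $\theta$ that are not pinned to the Reeb-orbit angles coming from the concave-side ends or the $B$-ends. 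Third, the differentials of $\{\varpi_{+k}\}$ and $\{\varpi_{-k}\}$ detect the $|s| \to \infty$ limits of $p\varphi - p' t$ on the $N_+$ convex-side ends and the $|B|$ ends labeled by $B$, and the two components of $\varpi'$ detect the remaining degree of freedom (either a $\theta = 0$ intersection point, or the constant $b$ or $\hat{c}$ in \eqref{eq2.4}/\eqref{eq1.9} for a $(0,-,\ldots)$ or $(1,\ldots)$ end). Fourth, I would check that these groups of functions are "decoupled" to leading order: an element of $\kernel(D_C)$ that has vanishing $\theta$-variation and vanishing $r$-variation at every critical point, vanishing asymptotic-phase variation on every relevant end, and vanishing $\varpi'$-variation, lies in $K^*$, hence is zero. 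This last implication is exactly the content of the identity $K^* = \{0\}$ once $K^*$ is correctly identified, so the proof reduces to citing that identity.

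The main obstacle, and the only place genuine work is needed, is establishing that the correct vector subspace to consider is $K^*$ as defined in \cite[(2.23)]{T3} and that $K^* = \{0\}$ in the present, possibly multiply-covered, setting --- i.e. verifying that the proof of \cite[Proposition~2.13]{T3} transports without change. Concretely, one must check: (i) the asymptotic analysis of $\kernel(D_C)$ elements on the ends, which underlies the claim that the $\varpi_{\pm k}$ and $\varpi'$ functions capture independent asymptotic data, still applies when the end of $C_0$ covers a Reeb cylinder with multiplicity greater than one (here one uses that $D_C$ is built from $\underline D$, which acts fiberwise and commutes with $\psi$, together with the weighted norms of \cite[(2.7)]{T3}); and (ii) near each critical point of $\theta$, the local model \eqref{eq2.11} and the function $r$ behave as in \cite{T3}, which they do, being statements purely about the pseudoholomorphic map $\phi$ near a single point. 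Granting these two routine verifications, the equality $K^* = \{0\}$ follows by the cited argument, the implicit function theorem gives the orbifold chart, and the listed functions — being $N_+ + |B| + c + m$ in number from the explicit list plus the two real components of $\varpi'$, matching the stratum dimension — are coordinates (resp. orbifold coordinates intertwining the $G_C$-action) on a neighborhood of $(C_0,\phi)$ in $\mathcal{S}_{B,c,\mathfrak{d}}$, which is the assertion of the lemma.
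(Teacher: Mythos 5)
Your proposal is correct and follows essentially the same route the paper takes: the paper's entire argument (Part 4 of \fullref{sec:5d}) consists in introducing $K^{*}\subset\kernel(D_{C})$ as in \cite[(2.23)]{T3}, noting that $K^{*}=\{0\}$ by a verbatim transcription of the \cite[Proposition~2.13]{T3} argument (with nothing new needed for multiply-covered $\phi$), and observing that this together with the implicit function theorem yields the coordinate statement. You have reconstructed the same logic and correctly identified both the role of $K^{*}$ as the simultaneous annihilator of the listed differentials and the two routine checks (asymptotic behavior on covered ends, local model near $\theta$-critical points) needed to transport the \cite{T3} argument to the present setting.
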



\setcounter{section}{5}
\setcounter{equation}{0}
\section{Slicing the strata}\label{sec:6}

This section describes the structure of each component of any given
stratum from \fullref{prop:5.1}. To explain the point of
view here, let $\mathcal{S}_{b,c,d }$ denote a given stratum and let
$\mathcal{S} $ denote a given component. The component $\mathcal{S} $ is then
mapped to the $m$'th symmetric product of $(0, \pi )$ using the
critical values of $\theta $ that do not coincide with angles from
$\Lambda _{\hat{A}}$. As is explained in \fullref{sec:8}, this
map fibers $\mathcal{S} $ over a certain $m$--dimensional simplex and so the
structure of $\mathcal{S} $ is determined by that of a typical fiber. The
subsections that follow focus on the structure of such a fiber. The
principle results are in Theorems~\ref{thm:6.2} and \ref{thm:6.3}
and in Propositions~\ref{prop:6.4} and~\ref{prop:6.7}. Theorems~\ref{thm:6.2} and
\ref{thm:6.3} are proved in \fullref{sec:7}. Sections~\ref{sec:8} and~\ref{sec:9}
use the results from this section to fully paint the picture
of $\mathcal{S} $ as a fiber bundle.
%

\subsection{Graphs for the stratification}\label{sec:6a}

The constructions from \fullref{sec:2a} and Part 3 of \fullref{sec:2c} associate a
graph, $T_{(\cdot )}$, to each pair $(C_{0}, \phi )$ from ${\mathcal{M}^{*}}_{\hat{A}}$.
This graph has labeled edges and vertices that
reflect the structure of the level sets of the function $\theta $ on the
subvariety. As it turns out, certain aspects of these graphs are constant on
any given component of any given strata in ${\mathcal{M}^{*}}_{\hat{A}}$ and
serve to classify these components. This
subsection describes in more detail the graphs that are involved and the
manner in which they classify components of the stratification.

To start, remember that a graph, $T$, of the sort under consideration is
contractible and has labeled vertices and labeled edges. What follows
summarizes what is involved.

\substep{The vertex labels}
Each vertex in $T$ is labeled in part by an angle in $[0, \pi ]$ subject to various
constraints, the first of which are as follows:

\itaubes{6.1}
\textsl{The two vertices on any given edge have distinct angles.}

\item
\textsl{No multivalent vertex angle is extremal in the set of the angles of the vertices on
the union of its incident edges.}
\eit

The vertices have additional labels. To elaborate, a subset of the angle 0
vertices are labeled via a 1--1 correspondence with the set of $(1,\ldots)$ elements in
$\hat{A}$. The remaining angle 0 vertices are
labeled by positive integers that sum to $\text{\c{c}}_{ + }$. Likewise, a subset
of angle $\pi $ vertices are labeled via a 1--1 correspondence with the set
of $(-1,\ldots)$ elements in $\hat{A}$; and the remainder are
labeled by negative integers that sum to $-\text{\c{c}}_{ - }$.

Each vertex with angle in $(0, \pi )$ is labeled jointly by a subset of the
$(0,\ldots)$ elements in $\hat{A}$ and a certain sort of graph.
To describe these labels, remark first that distinct vertices are assigned
disjoint subsets, and that the union of these subsets is the whole set of
$(0,\ldots)$ elements in $\hat{A}$. The empty subset can only be
assigned to vertices with three or more incident edges. Meanwhile, a
monovalent vertex must get a singleton set with a $(0,-,\ldots)$ element.
Finally, the integer pair component of any element from
an assigned subset defines the corresponding vertex angle via \eqref{eq1.8}. The
subset of $\hat{A}$ that is assigned to a vertex $o$ is denoted in what follows by
$\hat{A}_{o}$.

The graph that is assigned to the vertex $o$ is denoted by $\underline {\Gamma}_{o}$.
The latter graph is connected, has labeled vertices and oriented,
labeled arcs. Here, the edges of $\underline {\Gamma }_{o}$ are called
`arcs' to avoid confusing them with the edges in $T$ that are incident to $o$.
The first Betti number of $\underline {\Gamma }_{o}$ must be one less than
the number of incident edges in $T$ to $o$. In particular, this means that
$\underline {\Gamma }_{o}$ is a single point when $o$ is monovalent. In
addition, each vertex in $\underline {\Gamma }_{o}$ has an even number of
incident half-arcs with half oriented so as to point towards the vertex and
half are oriented to point away.

The labeling of the vertices in $\underline {\Gamma }_{o}$ is as follows:
Each vertex is labeled with an integer, subject to two constraints. First,
the number of elements in $\hat{A}_{o}$ that are identical to a given 4--tuple
is equal to the number of vertices in $\underline {\Gamma }_{o}$ where the
sign of the label is the second component of the 4--tuple and where the
absolute value of the label is the greatest commons divisor of the integer
pair from the 4--tuple. Here is the second constraint: Vertices with label 0
must have four or more incident half-arcs.

Each arc in $\underline {\Gamma }_{o}$ is labeled by a pair of $o$'s
incident edges subject to constraints that will now be described. For this
purpose, partition the incident edge set to $o$ as $E_{ - } \cup E_{ +}$
where $E_{ - }$ contains the edges on which $o$ is the largest angle vertex
and $E_{ + }$ those on which $o$ is the smallest angle vertex. The label of
any given arc must contain one edge from $E_{ - }$ and one from $E_{ + }$.
To say more, let $e$ denote an incident edge to $o$. The collection of arcs
whose label contains $e$ concatenate to define an oriented, immersed loop in
$\underline {\Gamma }_{o}$ such that a traverse of this loop crosses no
arc more than once. Thus, this loop is the image of an abstract, oriented,
circular graph, $\ell _{oe}$, via an immersion that maps vertices to
vertices and is 1--1 and orientation preserving on the edges. The collection
$\{\ell _{oe}\}_{e \text{ is incident to } o}$ is then constrained by the
rules in \eqref{eq2.17}. Here, and in what follows, $\ell _{oe}$ is used to denote
both the abstract circular graph and its image in $\underline {\Gamma}_{o}$
since the former can be recovered from the latter. For reference
below: Properties 1--4 from Part 3 of \fullref{sec:2c} are valid here; Properties 1
and 2 are assumed, while Properties 3 and 4 then follow automatically.

The label of any given vertex in $T$ determines an ordered pair of
integers. When $o$ denotes a vertex, then $P_{o}$ or $(p_{o},{p_{o}}')$ is used to denote
the associated integer pair. Here are the rules for this assignment:

\itaubes{6.2}
\textsl{If $o$ is a monovalent vertex with an assigned integer $m$,  then $P_{o} = (0, -m)$.}

\item
\textsl{If $o$  is a monovalent vertex with label $(\pm 1,\ldots)$  or $(0,-,\ldots)$,  then $P_{o}$
is the label's integer pair component.}

\item
\textsl{If $o$  is a multivalent vertex with angle in $(0, \pi )$,  then $P_{o}$  is obtained
by subtracting the sum of the integer pair components of the $(0,-,\ldots)$  elements in
$\hat{A}_{o}$  from the sum of the integer pair components of the $(0,+,\ldots)$  elements in
$\hat{A}_{o}$.}
\eit

\substep{The edge labels}
Each edge in $T$ is labeled by an non-zero ordered pair of integers. When $e$ denotes an edge, its
corresponding pair is denoted as $Q_{e}$ or as $(q_{e}, {q_{e}}')$. These
labels are determined by the branching of $T$ and the data from $\hat{A}$
according to the rules that follow. Note that these rules determine the
labels via an induction that starts with the edges whose minimal angle
vertices are monovalent. Note as well that this induction uses the fact that
 $T$ is a tree. Here are the rules:

\itaubes{6.3}
\textsl{If $e$  is incident to a monovalent vertex, $o$, then $Q_{e}=\pm P_{o}$  where the $+$
sign is used in the following cases:}

\begin{enumerate}
\leftskip 25pt
\item[\rm(a)]
\textsl{The vertex label consists of either a $(1,-,\ldots)$  element from $\hat{A}$, or a positive
integer, or a $(-1,+,\ldots)$  element from $\hat{A}$}

\item[\rm(b)]
\textsl{The vertex angle is in $(0,\pi )$  and it is the lesser of the two vertex angles from $e$.}
\end{enumerate}

\item
\textsl{If $o$  is a multivalent vertex, then $\sum _{e \in E_-} Q_{e}-\sum _{e \in E_ + }Q_{e} =P_{o}$.}
\eit

There is one additional constraint:

\qtaubes{6.4}
\textsl{Let $e$  denote a given edge, and let $\theta _{ - }< \theta _{ + }$  denote the angles of
the vertices on $e$.  The function
$\alpha _{Q_e } (\theta ) = (1-3\cos^{2}\theta){q_{e}}'-\surd 6 \cos\theta q_{e}$
is positive on $(\theta _{ - }, \theta_{ + })$  and zero at an endpoint if and only if
the angle is in $(0, \pi )$  and the corresponding vertex on $e$  is monovalent.}
\endqtaubes

As done for example in \cite[Section 5]{T3}, this last condition can be
rephrased as inequalities that reference only the integer pairs from the
vertex angles.

\substep{Graph isomorphisms}
Let $T$ and $T'$ denote graphs of the sort just described. An isomorphism, $\iota $,
from $T$ to $T'$ consists of a 1--1 and onto map from $T$'s vertex set
to the vertex set of $T'$ along with a compatible map from $T$'s edge
set to the set of $T'$ edges. Both maps must preserve all labels.
Thus, the respective integer pairs of an edge in $T$ and its $\iota$
image in $T'$ agree; the respective angles of a vertex in $T$ and
its $\iota $ image in $T'$ agree; and the labels of a vertex in $T$
and its $\iota $ image in $T'$ are themselves isomorphic in the
following sense: First, $P_{\iota (o)}= P_{o}$ for all
vertices $o\in T$. Second, there is an associated collection of
isomorphisms that are labeled by $T$'s multivalent vertices where the
version, $\hat {\iota }_{o}$, labeled by vertex $o$ is an
isomorphism from the graph $\underline {\Gamma }_{o}$ to $\underline
{\Gamma }_{\iota (o)}$. Thus, $\hat {\iota }_{o}$ consists of a 1--1
and onto map from the set of vertices in $\underline {\Gamma }_{o}$
to the set of vertices in $\underline {\Gamma }_{\iota (o)}$ along
with a compatible 1--1 and onto map from the set of oriented arcs in
$\underline {\Gamma }_{o}$ to the corresponding set in
$\underline {\Gamma }_{\iota (o)}$. Both of these set maps must also
preserve labels. Thus,

\bit

\item
\textsl{the vertex set map must preserve the integer labels of the vertices,}

\item
\textsl{the arc set map sends an arc with label $(e, e')$  to one with label $(\iota (e), \iota (e'))$.}
\begin{equation}\label{eq6.5}
\end{equation}

\eit

An isomorphism from $T$ to itself is deemed an automorphism of $T$. The set of
such automorphisms is designated as $\Aut(T)$. Composition makes $\Aut(T)$ into a
group and it is viewed as such in what follows.

\substep{Graph homotopy}
It proves useful to introduce an
equivalence between graphs that is weaker than graph isomorphism. For this
purpose, graphs $T$ and $T'$ are said to be `homotopic' when there is a one
parameter family, $\{T_{\tau }\}_{\tau \in [0,1]}$ of graphs with
the following two properties: First, $T_{0}= T$ and $T_{1} = T'$. Second,
the various $T_{\tau }$ differ one from another only in their vertex angles,
and these angles change as continuous functions of $\tau $ with the
constraint that the number of distinct, multivalent vertex angles is
independent of $\tau $. In any event, only the vertex angles can change;
neither edge labeled integer pairs nor vertex labeled graphs are modified in
any way.

\fullref{sec:2a} and Part 3 of \fullref{sec:2c} explain how a graph such as $T$ can be
assigned to any given pair $(C_{0}, \phi )$ from any given element in
${\mathcal{M}^{*}}_{\hat{A}}$. As remarked at the end of \fullref{sec:2a},
the isomorphism type of the assigned graph depends only on the given
element in ${\mathcal{M}^{*}}_{\hat{A}}$. The following is a consequence:

\begin{proposition} \label{prop:6.1}

Two elements in any given component of any given stratum have homotopic graphs, and
elements in either distinct strata or in distinct components of the same stratum have
graphs that are not homotopic.

\end{proposition}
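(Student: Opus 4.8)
The plan is to prove \fullref{prop:6.1} in two directions, corresponding to its two assertions. For the first assertion, I would fix a component $\mathcal{S}$ of a stratum $\mathcal{S}_{B,c,\mathfrak{d}}$ and show that the isomorphism type of the graph $T_{(\cdot)}$ is constant on $\mathcal{S}$. Since $\mathcal{S}$ is connected, it suffices to show that each point $(C_0,\phi)\in\mathcal{S}$ has a neighborhood in $\mathcal{S}$ on which the isomorphism type is constant. This is precisely the content of the local coordinate description from \fullref{lem:5.4} (and the discussion preceding \fullref{prop:5.1}): representing a nearby $(C_0',\phi')$ by a small-normed section of $\kernel(D_C)$ partners the critical points of $\theta$ on $C_0'$ with those on $C_0$ preserving their degrees of vanishing, partners the ends preserving their $\hat A$ 4--tuples, and preserves the conditions defined by the partition $\mathfrak{d}$. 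Since the edges of $T$ correspond to components of $C_0-\Gamma$, the vertices to critical level sets and ends, and all the labels (edge integer pairs $Q_e$, vertex graphs $\underline\Gamma_o$, vertex integer weights) are determined by these partnered combinatorial and homological data, the identification of the partnered objects induces an isomorphism $T_C\to T_{C'}$ for $(C_0',\phi')$ sufficiently close. The key input here is that the combinatorics of the level sets of $\theta$ cannot jump under a small deformation that preserves the number and degrees of critical points and the behavior at infinity; this is exactly why the stratification by $(B,c,\mathfrak{d})$ was set up. I would also note that distinct components of the same stratum, or points in two different strata, differ in one of the discrete invariants $(B,c,\mathfrak{d})$ or in which component of the fiber over $I_{d,\mathfrak{d}}$ they lie in, and in each such case some label of $T$ (the number of non-extremal critical points, the positions of critical values relative to $\Lambda_{\hat A}$, the edge integer pairs, or the combinatorial type of some $\underline\Gamma_o$) must change, so the graphs cannot even be homotopic.

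More precisely, for the second assertion I would argue contrapositively: if $(C_0,\phi)$ and $(C_0',\phi')$ have homotopic graphs $T$ and $T'$, then by definition $T$ and $T'$ share all edge integer pairs $\{Q_e\}$, all vertex integer weights in the $\underline\Gamma_o$, all vertex labels by subsets of $\hat A$, and they have the same number of distinct multivalent vertex angles; only the numerical values of the vertex angles may differ. From this data one reads off: the integer $c$ (the number of critical points of $\theta$ in $(0,\pi)$, i.e. the number of vertices in the various $\underline\Gamma_o$, counted with their ramification multiplicities $m+1$); the set $B$ (the monovalent vertices carrying a $(0,-,\ldots)$ singleton that correspond to the $n_E>0$ convex ends — these are exactly the monovalent vertices with angle in $(0,\pi)$ whose incident edge's $\alpha_{Q_e}$ does \emph{not} vanish at the vertex angle, by Property 3 of \fullref{sec:2b} and \eqref{eq2.4}); and the partition $\mathfrak{d}$ of $d=N_++|B|+c$ recording the coincidences among the relevant angles, which is constant under a graph homotopy since the homotopy is required to keep the number of distinct multivalent vertex angles fixed and to fix the edge data (hence fix which vertex angles come from $\Lambda_{\hat A}$ via \eqref{eq1.8} and which are free critical values). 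Therefore homotopic graphs force the two pairs into the same stratum $\mathcal{S}_{B,c,\mathfrak{d}}$; and moreover into the same \emph{component}, because the fiber-over-$I_{d,\mathfrak{d}}$ data is exactly the combinatorial arrangement of critical values and $\Lambda_{\hat A}$ angles along the linear order $[0,\pi]$, which is what the homotopy class of the (contractible, linearly-orderable-by-angle) graph records.

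I would organize the write-up as: (1) recall from \fullref{lem:5.4} the local coordinates on $\mathcal{S}_{B,c,\mathfrak{d}}$ and observe that on such a chart the partnering of critical points and ends gives a canonical graph isomorphism, hence the isomorphism type is locally constant on $\mathcal{S}_{B,c,\mathfrak{d}}$; (2) deduce by connectedness that it is constant on each component $\mathcal{S}$ — in fact the homotopy type is constant since a path in $\mathcal{S}$ moves the vertex angles continuously while keeping all other labels fixed and keeping the number of distinct multivalent angles fixed (a path staying in one component of the fiber over $I_{d,\mathfrak{d}}$); (3) conversely, extract $(B,c,\mathfrak{d})$ and the fiber-component data from the homotopy class of $T$ as sketched above, so that homotopic graphs imply the same component. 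The main obstacle I anticipate is step (3)'s bookkeeping: one must check carefully that \emph{every} piece of data distinguishing two components — including subtleties like whether a critical value of $\theta$ happens to coincide with an angle arising via \eqref{eq1.8} from a $(0,+,\ldots)$ or $B$ element — is faithfully recorded by (and only by) the homotopy class of the labeled graph, using Properties 1--6 of \fullref{sec:2b}, the blow-up construction of \fullref{sec:2c}, and the defining inequality \eqreft6{4}. Once that dictionary between graph-homotopy-classes and triples-plus-fiber-components is nailed down, the proposition follows formally; I would keep the argument at the level of "the graph records exactly this data, and this data is exactly what indexes components," deferring the detailed fiber-bundle picture to Sections~\ref{sec:8} and~\ref{sec:9} as the paper already signals.
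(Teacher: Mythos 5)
Your step (1) and step (2), establishing that the graph homotopy type is constant on each component of $\mathcal{S}_{B,c,\mathfrak{d}}$, are essentially correct and match the paper's use of \fullref{lem:5.4} and \fullref{prop:8.1} (the fibration by $\mathfrak{p}$). The trouble is in step (3), the direction that is actually the substantive half of \fullref{prop:6.1}. You argue that the homotopy class of $T$ determines the triple $(B,c,\mathfrak{d})$ and the ``fiber-component data,'' and conclude that homotopic graphs force elements into the same component. But this is circular: the question being asked is precisely whether two \emph{distinct} components of the same stratum $\mathcal{S}_{B,c,\mathfrak{d}}$ could both produce elements with homotopic graphs, and your dictionary argument silently assumes the answer is no. Two such components $\mathcal{S}_1$ and $\mathcal{S}_2$ would, by \fullref{prop:8.1}, fiber over respective open simplex products $\Delta_1^k$ and $\Delta_2^k$ inside $I_{d,\mathfrak{d}}$, and if these images are \emph{disjoint} pieces of the same ``combinatorial arrangement'' your step (3) gives no contradiction. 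Nothing in the local theory (\fullref{lem:5.4}) rules out the image of $\mathfrak{p}$ being a proper subset of the full combinatorial cell, nor rules out two disconnected such images with the same edge/vertex labelings.

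What closes this gap in the paper is \fullref{lem:8.5} (with \fullref{prop:8.2} and the degeneration analysis of \fullref{sec:7d}): if $T_1$ and $T_2$ are homotopic graphs with nonempty $\mathcal{M}^*_{\hat{A},T_1}$ and $\mathcal{M}^*_{\hat{A},T_2}$, then for \emph{every} intermediate assignment of vertex angles the corresponding $\mathcal{M}^*_{\hat{A},T_\theta}$ is also nonempty. This convexity of the nonemptiness locus in the angle parameters is what forces the images $\Delta_1^k$ and $\Delta_2^k$ to be connectable, hence $\mathcal{S}_1=\mathcal{S}_2$. That statement is genuinely analytic (it is proved by the Scenario~\ref{scn:1}/Scenario~\ref{scn:2} contradiction via the existence conditions in \fullref{prop:8.2} and the sequential-limit machinery from Part~3 of \fullref{sec:7d}) and cannot be obtained from the combinatorial labeling of the graph alone. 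Your proof needs this ingredient made explicit; without it, the conclusion that homotopic graphs imply the same \emph{component} (rather than merely the same stratum) is unjustified.
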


This proposition is proved in \fullref{sec:8a}.

\substep{The subspace ${\mathcal{M}^{*}}_{\hat{A},T}$}
Suppose that $T$ is a graph of the sort just described and let ${\mathcal{M}^{*}}_{\hat{A},T}$ denote the
subset of elements in ${\mathcal{M}^{*}}_{\hat{A}}$ that are defined
by a pair whose graph is isomorphic to $T$. Here is why such a space
is relevant: Let $\mathcal{S} $ denote a component of some stratum
$\mathcal{S} _{B,c,\mathfrak{d}}$ from \fullref{prop:5.1}. A map from
$\mathcal{S} $ to the $m$'th symmetric power of the interval $(0, \pi )$ is
defined by taking the distinct critical values of $\theta $ in $(0,\pi )$
that do not arise via \eqref{eq1.8} from the integer pair of
either a $(0,+,\ldots)$ element from $\hat{A}$ or an
element from $B$. The typical fiber of this map is a version of
${\mathcal{M}_{\hat{A},T}}$ for a graph $T$.

The rest of this section contains a description of ${\mathcal{M}^{*}}_{\hat{A},T}$.

\subsection{The structure of ${\mathcal{M}^{*}}_{\hat{A},T}$}\label{sec:6b}

By way of a preview for what is to come, \fullref{thm:6.2} below
asserts that ${\mathcal{M}^{*}}_{\hat{A},T}$, when non-empty, is
diffeomorphic as an orbifold to a space of the form $\mathbb{R}
\times  O_{T}/\Aut(T)$ where $O_{T}$ is a version of \eqref{eq3.12}
where $\Aut(T)$ acts. As $O_{T}$ is connected, this implies that
${\mathcal{M}^{*}}_{\hat{A},T}$ is a connected component of a stratum
of ${\mathcal{M}^{*}}_{\hat{A}}$.

The detailed description of $O_{T}$ requires some preliminary stage setting,
and this is done next in four parts. The length of this stage setting
preamble is due for the most part to the subtle nature of the $\Aut(T)$ action.

\step{Part 1}
This part of the digression describes the relevant version of \eqref{eq3.12}
leaving aside the $\Aut(T)$ action. To start the story, let $o$ denote a
multivalent vertex in $T$ and let $\theta _{o}$ denote its assigned angle.
Meanwhile, let $E_{o}$ denote the set of incident edges to $o$, and let
$\Arc(\underline {\Gamma }_{o})$ denote the set whose elements are the arcs
in $\underline {\Gamma }_{o}$. Now define $\Delta _{o}$ to be the set of
maps $r\co  \Arc(\underline {\Gamma }_{o})   \to (0, \infty )$ that obey
\begin{equation}\label{eq6.6}
\sum_{\gamma \in \ell _{o,e} } r(\gamma ) = 2\pi   \alpha _{Q_e }(\theta _{o})
\end{equation}
for all multivalent vertices $o \in T$ and edges $e \in  E_{o}$. Note
that $\Delta _{o}$ is a simplex whose dimension is one less then the
number of vertices on $\underline {\Gamma }_{o}$.

Each multivalent vertex in $T$ is also assigned a real line. If $o$ designates
such a vertex, then $\mathbb{R}_{o}$ is used to denote its associated line.
Let $\mathbb{R}_{ - }$ denote an auxiliary copy of $\mathbb{R}$. Then
$\mathbb{R}_{ - }  \times _{o}(\mathbb{R}_{o}  \times   \Delta _{o})$
appears below in the role that \eqref{eq3.14} plays in \eqref{eq3.12}. Here, and in what
follows, the symbol $\times _{o}$ appears with no accompanying definition
signifies a product that is indexed by the set of multivalent vertices in $T$.

To define the group actions in the case at hand, it is necessary to first
choose an $\Aut(T)$ invariant vertex in $T$. To obtain such a vertex, note that
any connected, non-empty, $\Aut(T)$ invariant subgraph with the least number of
vertices amongst all subgraphs of this sort is necessarily a 1--vertex graph.
Here is why: Were such a subgraph to have two or more vertices, there would
be one that was both monovalent as a subgraph vertex and not $\Aut(T)$
invariant. Removing this vertex, the interior of its incident edge, and
their orbits under $\Aut(T)$ would result in a non-empty, proper, $\Aut(T)$
invariant subgraph of the original.

Fix a smallest angle, $\Aut(T)$--invariant vertex and denote it as
$\diamondsuit $. This done, it proves convenient to introduce
$\mathcal{V} $ to denote the set of multivalent vertices in $T-\diamondsuit $.
Each $o \in \mathcal{V} $ labels a copy of
$\mathbb{Z}$ and these groups are going to act in a mutually
commuting fashion on $\mathbb{R}_{ - }  \times  (\times_{\hat{o}}\, \mathbb{R}_{\hat{o}})$.
To describe these
actions, note first that when $o$ is a multivalent vertex in $T-\diamondsuit $,
then $T-o $ has a unique component that contains the
vertex $\diamondsuit $. Let $T_{o}$ denote the closure in $T$ of the
complement of this component.

Granted this notation, then $1  \in   \mathbb{Z}_{o}$ is defined to act
trivially on $\mathbb{R}_{ - }$ and on $\mathbb{R}_{\hat{o}}$ in the case
that $\hat{o}  \notin T_{o}$. In the case that $\hat{o}  \in T_{o}$, then
$1 \in   \mathbb{Z}_{o}$ acts on $\mathbb{R}_{\hat{o}}$ as the
translation
\begin{equation}\label{eq6.7}
- 2\pi   \frac{\alpha_{Q_e}(\theta _{\hat{o}} )}
{\alpha _{Q_{\hat{e} }}(\theta _{\hat{o}} )},
\end{equation}
where $e$ and $\hat{e}$ designate the respective edges that connect $o$ to $T- T_{o}$ and $\hat{o}$
to $T- T_{\hat{o}}$.

Meanwhile, define an action of the group $\mathbb{Z}  \times   \mathbb{Z}$ on
$\times _{\hat{o} \in \mathcal{V} }  \mathbb{R}_{\hat{o}}$ by
having an integer pair $N= (n, n')$ act as the translation by
\begin{equation}\label{eq6.8}
- 2\pi   \frac{\alpha _N (\theta _{\hat{o}})}{\alpha _{Q_{\hat{e}}} (\theta_{\hat{o}} )}.
\end{equation}
This action commutes with that just defined $\times _{o \in \mathcal{V} }\mathbb{Z}_{o}$.

The extension of the $\mathbb{Z}\times \mathbb{Z}$
action to $\mathbb{R}_{ - }$ and $\mathbb{R}_{\diamondsuit }$ requires an additional choice,
this the choice of a distinguished
$\Aut(T)$ orbit in the set of incident edges to $\diamondsuit $. Let
$\hat{E}$ denote this distinguished orbit, let $m_{\hat{E}}$
denote the number of edges in $\hat{E}$, and let $Q_{\hat{E}}= (q_{\hat{E}}, {q_{\hat{E}}}')$
denote the integer
pair that is associated to the edges that comprise $\hat{E}$. The
action of $N$ on $\mathbb{R}_{\diamondsuit }$ is then defined by the
version of \eqref{eq6.8} that uses $\diamondsuit $ for $\hat{o}$ and
$Q_{\hat{E}}$ for $Q_{{\hat{e}}}$. Meanwhile, the action
of $N$ on $\mathbb{R}_{ - }$ is defined so that $(n, n')$ acts as the
translation by $-2\pi m_{\hat{E}} (n'q_{\hat{E}} - n{q_{\hat{E}}}')$.

Granted all of the above, set
\begin{equation}\label{eq6.9}
O_{T}   \equiv  (\times _{o}\Delta _{o})  \times
\bigl[\mathbb{R}_{ - }  \times (\times _{o}
\mathbb{R}_{o})\bigr]/\bigl[(\mathbb{Z}  \times   \mathbb{Z})  \times
(\times _{\hat{o} \in \mathcal{V}}\mathbb{Z}_{\hat{o}})\bigr].
\end{equation}
By way of reminder, $\times _{o}$ here designates a product that is
indexed by the full set of multivalent vertices in $T$, while $\times_{\hat{o} \in \mathcal{V} }$
designates a product that is indexed by
the set of vertices in $T- \diamondsuit $. It is left to the reader
to verify that $O_{T}$ is a smooth manifold. This $O_{T}$ is the desired
version of \eqref{eq3.12}.

\step{Part 2}
This part of the story constitutes a digression to provide a sort of
inductive description of $\Aut(T)$ using as components a subgroup of
automorphisms of $\underline {\Gamma }_{\diamondsuit }$ and a collection of
cyclic groups that are labeled by the multivalent vertices in $T- \diamondsuit $.
To start this description, let $o$ denote a vertex in
$\mathcal{V} $. Then $T- o$ has a one component that contains
$\diamondsuit $. Let $T_{o}$ denote the closure of the remaining components.
This is a connected, contractible subgraph of $T$. The vertex $o$ also has a
distinguished incident edge, this the edge $e \equiv e(o)$ that
connects $o$ to $T- T_{o}$. Let $\ell _{o}$ to denote the loop $\ell_{oe(o)}$.

Now, let $\Aut(T_{o})   \subset  \Aut(T)$ denote the subgroup that acts
trivially on the data from $T- T_{o}$. This group fixes $o$ and so it
has a canonical homomorphism in the group of automorphisms of the pair
$\{\underline {\Gamma }_{o}, E_{o}\}$, where $E_{o}$ here designates
the set of incident edges to $o$. Let $\Aut_{o}$ denote the image of
$\Aut(T_{o})$ in this last group. As $\Aut_{o}$ fixes the incident edge $e(o)$,
so it preserves the loop $\ell _{o}$ and thus has an image in the cyclic
group of automorphisms of the abstract version of $\ell _{o}$. In fact,
$\Aut_{o}$ is isomorphic to its image in this cyclic group. Such is the case
by virtue of the following fact:
\begin{equation}\text{\textsl{An automorphism of $\underline {\Gamma }_{o}$  that fixes any arc must be trivial.}}
\label{eq6.10}
\end{equation}
Indeed, suppose that $\gamma $ is fixed and that $e'$ is one of $\gamma $'s
labeling edges. Then the automorphism must act trivially on $\ell_{oe'}$
and so fixes all of its arcs. It then follows from Property
3 of Part 3 in \fullref{sec:2c} that the maximal collection of fixed arcs must
constitute the whole of $\underline {\Gamma }_{o}$.

Let $n_{o}$ denote the order of the cyclic group $\Aut_{o}$. Since the
automorphism group of $\ell _{o}$ has a canonical generator, so does
$\Aut_{o}$, this smallest power of the generator of $\Aut(\ell _{o})$ that
resides in $\Aut_{o}$. This generator provides a canonical isomorphism
between $\Aut_{o}$ and $\mathbb{Z}/(n_{o}\mathbb{Z})$.

The preceding implies an exact sequence
\begin{equation}\label{eq6.11}
1 \to   \times _{o'} \Aut(T_{o'})   \to
\Aut(T_{o})   \to   \mathbb{Z}/(n_{o}\mathbb{Z})   \to 1,
\end{equation}
where the product in \eqref{eq6.11} is indexed by the vertices in $T_{o}$ that share
some edge with $o$. There is a similar exact sequence in the case that $o =\diamondsuit $
but where $\mathbb{Z}/(n_{o}\mathbb{Z})$ is replaced by
$\Aut_{\diamondsuit }$. Of course, if $\Aut _{\diamondsuit }$ fixes an edge in
$E_{\diamondsuit }$, then $\Aut _{\diamondsuit }=\mathbb{Z}/(n_{\diamondsuit }\mathbb{Z})$.

With regards to the various versions of $\Aut _{o}$, keep in mind the
following:
\begin{equation}\label{eq6.12}
\text{\textsl{An element in $\Aut_{o}$  is trivial if it fixes more than two edges in $E_{o}$.}}\end{equation}
To see why such is the case, first construct a closed, 2--dimensional cell
complex by taking the 1--skeleton to be $\underline {\Gamma }_{o }$ and then
attaching a disk to each $\ell _{o(\cdot )}$. An Euler class computation
finds that this complex is a 2--sphere. The $\Aut _{o}$ action on $\underline {\Gamma }_{o}$
extends to the action on the sphere if it is agreed that
its action on the 2--cells is obtained by a linear extension. This extended
action is then a piecewise linear, orientation preserving action. In
particular, if an edge is fixed by some element, the action on the
corresponding disk is a rotation through a rational fraction of $2\pi $ and
has the origin as its fixed point. Since $\Aut(T)$ is finite, such an action
can have at most two fixed points unless it is trivial.

The last point to make in this part of the subsection is that the various
versions of \eqref{eq6.11} all split and these splitting can be used to write $\Aut(T)$
as the iterated semi-direct product
\begin{equation}\label{eq6.13}
\Aut(T)  \approx  \Aut _{\diamondsuit }  \times  \left[\times _{o}
\left[\mathbb{Z}/(n_{o}\mathbb{Z})  \times  \bigl[\times _{o'}
[\mathbb{Z}/(n_{o'}\mathbb{Z})  \times \cdots]\cdots\bigr]\right]\right].
\end{equation}
Here, the left most subscripted product is indexed by the vertices
that share edges with $\diamondsuit $; meanwhile, any subscripted
product, $\times _{\hat{o}'}$, that appears as
$\mathbb{Z}/(n_{\hat{o}}\mathbb{Z})  \times  [\times _{\hat{o}'}\cdots]$ is indexed by the vertices in
$T_{\hat{o}}$ that share an edge with $\hat{o}$. Moreover, such
an isomorphism identifies any given $o \in  T- \diamondsuit $
version of $\Aut(T_{o})$ with the semi-direct product
\begin{equation}\label{eq6.14}
\mathbb{Z} / (n_{o}\mathbb{Z})  \times  \bigl[\times _{o'}
[\mathbb{Z}/(n_{o'}\mathbb{Z})  \times \cdots]\cdots\bigr].
\end{equation}
It is important to keep in mind that the isomorphism in \eqref{eq6.13} is not
canonical. Rather, it requires the choice of a `distinguished' vertex in
each $o \ne \diamondsuit $ version of $\ell _{o}$ subject to the
following constraint: Let $\upsilon _{o}$ denote the distinguished vertex
for $o$ and let $\iota \in \Aut(T)$. Then $\upsilon _{o}$ and $\upsilon_{\iota \cdot o}$
define the same $\Aut(T)$ orbit in the set of vertices
in $ \cup _{\hat{o} \ne \diamondsuit }  \ell _{\hat{o}}$.
Assume in what follows that such choices have been made.

To explain how \eqref{eq6.13} arises, note that the effect of an automorphism on any
$o\ne  \diamondsuit $ version of $\underline {\Gamma }_{o}$ is
determined completely by its affect on a single vertex in $\ell _{o}$,
thus on $\upsilon _{o}$. This follows by virtue of the fact that any given
$\iota    \in  \Aut(T)$ must map $\ell _{o}$ to $\ell _{\iota (o)}$ so
as to preserve the cyclic order of the vertices. Granted this, a lift to
$\Aut(T)$ of any element in any given version of $\Aut _{o}$ can be defined in an
iterated fashion along the following lines: Let $\iota $ denote the given
element. The lift is defined to live in $\Aut(T_{o})$. The first step to
defining this lift specifies its action on $ \cup _{o'}\underline {\Gamma }_{o'}$,
where the union is labeled by the vertices in
$T_{o}- o$ that share edges with $o$. Let $o'$ denote such a vertex.
Then $\iota (o')$ is determined apriori by the action of $\iota $ on $o$'s
incident edge set. The corresponding isomorphism from $\underline {\Gamma}_{o'}$
to $\underline {\Gamma }_{\iota (o')}$ is defined
by requiring that $\upsilon _{o'}$ go to $\upsilon _{\iota(o')}$.
Note that this now determines how $\iota $ acts on the set
$\cup _{o'}E_{o'}$ whose elements are the incident edges
to the various vertices in $T_{o}- o$ that share an edge with $o$.
To continue, suppose that $o'' \in T_{o'}- o'$ shares an
edge with $o'$. Then $\iota (o'')$ is determined apriori by the just defined
action of $\iota $ on $ \cup _{o'}E_{o'}$. The
isomorphism between $\underline {\Gamma }_{o''}$ and
$\underline {\Gamma }_{\iota (o'')}$ is again determined by
the requirement that $\iota (\upsilon _{o''})=\upsilon
_{\iota (o'')}$. Continuing in this vein from a given vertex
$\hat{o}  \in T_{o}$ to those in $T_{\hat{o}}- \hat{o}$ that
share edges with $\hat{o}$ ends with an unambiguous definition for the lift of
$\iota $. In this regard, note that the lifts of elements $\iota $ and
$\iota '$ from a given $\Aut _{o}$ must multiply to give the lift of $\iota\cdot \iota '$.
Indeed, this is guaranteed by the afore-mentioned fact
that the affect of an automorphism on any given $o\ne \diamondsuit $
version of $\underline {\Gamma }_{o}$ is determined by its affect on
$\upsilon _{o}$.

\step{Part 3}
The desired action of $\Aut(T)$ on $O_{T}$ is described by first introducing
a certain central extension of $\Aut(T)$ by $\mathbb{Z}  \times   \mathbb{Z}$ and
describing an action of this extension on
\begin{equation}\label{eq6.15}
(\times _{o}  \Delta _{o})  \times  (\mathbb{R}_{ - }  \times
\mathbb{R}_{\diamondsuit })  \times  [\times _{\hat{o} \in
\mathcal{V} }  \mathbb{R}_{\hat{o}}]/[\times
_{\hat{o}' \in \mathcal{V} }  \mathbb{Z}_{\hat{o}'}].
\end{equation}
Here, the symbol $\times _{o}  \Delta _{o}$ designates the product
of the simplices that are indexed by the multivalent vertices in
$T$. The desired extension of $\Aut(T)$ is denoted in what follows by
$\Auth(T)$. It is induced from a $\mathbb{Z}  \times
\mathbb{Z}$ central extension of $\Aut _{\diamondsuit }$ in the
following manner: Let $\Auth_{\diamondsuit }$ denote the
extension of $\Aut _{\diamondsuit }$. Then $\Auth(T)$ is the set
of pairs $(\iota, g)\in \Auth_{\diamondsuit }  \times \Aut(T)$ that have the same image in
$\Aut _{\diamondsuit }$. Note that
the isomorphism given in \eqref{eq6.13} is covered by an analogous
$\Auth(T)$ version that has $\Auth(T)$ on the left hand side
and $\Auth_{\diamondsuit }$ instead of $\Aut _{\diamondsuit }$
on the right.

To define $\Auth_{\diamondsuit }$, it is necessary to
reintroduce the `blow up' from Part 6 of \fullref{sec:2c} of
$\underline {\Gamma }_{\diamondsuit }$. For this purpose, note that
the any given ${\underline {\Gamma }^{*}}_{o}$ is available if
Properties 1--4 from Part 3 of \fullref{sec:2c} are satisfied. As
observed in \fullref{sec:6a}, these properties are present; thus
any such ${\underline {\Gamma }^{*}}_{o}$ is well defined.

Granted the preceding, introduce ${\underline {\Gamma}^{*}}_{\diamondsuit }$.
Likewise, reintroduce the cohomology class
$\phi _{\diamondsuit }   \in  H^{1}({\underline {\Gamma}^{*}}_{\diamondsuit };
\mathbb{Z}  \times   \mathbb{Z})$ from
this same Part 6 of \fullref{sec:2c}. Use $\phi _{\diamondsuit}$
to define a $\mathbb{Z}  \times \mathbb{Z}$ covering space over
${\underline {\Gamma }^{*}}_{\diamondsuit }$, this denoted in what
follows by $\bar {\Gamma }^{*}$. The group of deck transformations
of $\bar {\Gamma }^{*}$ is the group $\mathbb{Z}  \times\mathbb{Z}$.
This understood, the group $\Aut _{\diamondsuit }$ has a
central, $\mathbb{Z}  \times   \mathbb{Z}$ extension that acts as a
group of automorphisms of the graph $\bar {\Gamma }^{*}$. The latter
group is $\Auth_{\diamondsuit }$.

The action of $\Auth(T)$ on the space in \eqref{eq6.15} is defined in the five steps
that follow. In this regard, the first three steps reinterpret the various
factors in \eqref{eq6.15}.

\substep{Step 1}
This first step reinterprets the factor
$\mathbb{R}_{\diamondsuit }  \times   \Delta _{\diamondsuit }$. For this
purpose, let $\Vertt_{\hat{E}}$ denote the set of vertices in $\bar{\Gamma }^{*}$
that project to some $e \in  \hat{E}$ version of $\ell_{\diamondsuit e}$.
The plan is to interpret the product $\mathbb{R}_{\diamondsuit }  \times \Delta _{\diamondsuit }$
as a fiber bundle over $\Delta _{\diamondsuit }$, this denoted by $\mathbb{R}^{\Delta }$.
In particular, $\mathbb{R}^{\Delta }$ is the linear subspace
in $\Maps(\Vertt_{\hat{E}}; \mathbb{R})  \times   \Delta _{\diamondsuit }$ where the pair
$(\tau , r)$ obeys the following constraint: Let
$\upsilon $ and $\upsilon'$ denote a pair of vertices from
$\Vertt_{\hat{E}}$ and let $N \in   \mathbb{Z}  \times   \mathbb{Z}$ denote
any element that maps $\upsilon $ to the component of $\bar {\Gamma}^{*}$ that contains $\upsilon'$.
Then
\begin{equation}\label{eq6.16}
\tau (\upsilon ') = \tau (\upsilon ) + 2\pi   \frac{1}{\alpha
_{Q_{\hat{E}} } (\theta _\diamondsuit )} \biggl(\sum _{\gamma }  \pm \hat
{r}(\gamma ) - 2\pi   \alpha _{N}(\theta _{\diamondsuit })\biggr),
\end{equation}
where the notation is as follows: First, the sum is over the arcs in $\bar
{\Gamma }^{*}$ whose concatenated union defines a path that takes
$N\upsilon $ to $\upsilon'$. Second, $\hat {r}(\gamma ) = r(\gamma )$
if $\gamma $ projects to an arc in $\underline {\Gamma }_{\diamondsuit }$;
if not, then $\hat {r}(\gamma ) = 0$. Third, the $+$ sign appears when the
arc is traversed in its oriented direction and the minus sign is used when
the arc is traversed opposite to its orientation. Note that \eqref{eq6.6} guarantees
that the expression on the right hand side of \eqref{eq6.16} is independent of the
precise choice for $N$ or for the path in question granted the given
constraints on both.

\substep{Step 2}
This step reinterprets the
$\mathbb{R}_{ - }$ factor that appears in \eqref{eq6.15}. For this
purpose, suppose that $e$ is an incident edge of $\diamondsuit $,
and then reintroduce the lift ${\ell ^{*}}_{\diamondsuit e}\subset{\underline {\Gamma}^{*}}_{\diamondsuit }$
of the loop $\ell_{\diamondsuit ,e}$. Let $L_{e}$ denote the set of components of the
inverse image of ${\ell ^{*}}_{\diamondsuit e}$ in $\bar {\Gamma}^{*}$. In this regard,
each component of this inverse image is a
linear subgraph of $\bar {\Gamma }^{*}$. Note that the deck
transformation group $\mathbb{Z}  \times \mathbb{Z}$ has a
transitive action on $L_{e}$ whereby the stabilizer of any given
element is $\mathbb{Z}\cdot Q_{e}$. Let $\Lambda    \equiv \times_{e \in \hat{E}}L_{e}$.

The plan is to identify $\mathbb{R}_{ - }$ as a certain subspace in
$\Maps(\Lambda ; \mathbb{R})$. In particular, a map, $\tau _{ - }$, is in
$\mathbb{R}_{ - }$ when the following is true: Given $L \in  \Lambda $,
and then $N= (n, n')\in \mathbb{Z}  \times   \mathbb{Z}$ and $e \in \hat{E}$, let $L'$
denote the element in $\Lambda $ that is obtained from $L$ by
acting on $e$'s entry by $N$. Then
\begin{equation}\label{eq6.17}
\tau _{ - }(L') = \tau _{ - }(L) - 2\pi \bigl(n'q_{\hat{E}} - n {q_{\hat{E}}}'\bigr).
\end{equation}
The conditions depicted here define a 1--dimensional affine line in
$\Maps(\Lambda ; \mathbb{R})$. The latter is is denoted in what follows as
$\mathbb{R}^{ - }$.

\substep{Step 3}
This step reinterprets the factor $[\times_{o \in \mathcal{V} }
\mathbb{R}_{o}]/[\times _{o \in \mathcal{V} }\mathbb{Z}_{o}]$ in \eqref{eq6.15}.
For this purpose, suppose that $e$ is incident to
$\diamondsuit $ and introduce $\mathcal{V} (e)$ to denote the set of vertices
in the component of $T- \diamondsuit $ that contains $e- \diamondsuit $. Set
$W_{e} \equiv  [\times _{\hat{o} \in \mathcal{V} (e)}  \mathbb{R}_{\hat{o}}]/[\times _{\hat{o} \in
\mathcal{V} (e)}  \mathbb{Z}_{\hat{o}}]$ and so write
\begin{equation}\label{eq6.18}
[\times _{o \in \mathcal{V} }  \mathbb{R}_{o}]/[\times _{o \in
\mathcal{V} }  \mathbb{Z}_{o}] = \times _{e \in E_\diamondsuit }  W_{e}.
\end{equation}
Now let $\hat{U}_{e}   \subset \Maps(L_{e}; W_{e})$ denote the subspace
of $\maps x\co  L_{e}   \to W_{e}$ that have the following property: If
$\ell    \in  L_{e}$ and $N \in   \mathbb{Z}  \times   \mathbb{Z}$, then
$x(N\cdot \ell )$ and $x(\ell )$ have respective lifts to
$\times_{\hat{o} \in \mathcal{V} (e)}  \mathbb{R}_{\hat{o}}$ whose
coordinates obey
\begin{equation}\label{eq6.19}
x(N\cdot \ell )_{\hat{o}} = x(\ell )_{\hat{o}} - 2\pi
  \frac{\alpha _N (\theta _{\hat{o}} )}{\alpha _{Q_{\hat{e}} } (\theta _{\hat{o}} )}
\end{equation}
for each $\hat{o} \in \mathcal{V} (e)$.

Note that this condition is well defined even though the integer multiples
of $Q_{e}$ act trivially on $L_{e}$. Indeed, such is the case because $\mathbb{Z}\cdot Q_{e}$
also acts trivially on $W_{e}$. Since the action on
$L_{e}$ of $\mathbb{Z}  \times   \mathbb{Z}$ is transitive, the space
$\hat{U}_{e}$ is diffeomorphic to $W_{e}$.

\substep{Step 4}
The space depicted in \eqref{eq6.15} is diffeomorphic to
\begin{equation}\label{eq6.20}
\mathbb{R}^{ - }  \times \mathbb{R}^{\Delta }  \times \Bigl[\times _{e
\in E_\diamondsuit } \bigl(\hat{U}_{e}  \times  \bigl(\times
_{\hat{o} \in \mathcal{V} (e)}\Delta
_{\hat{o}}\bigr)\bigr)\Bigr].
\end{equation}
The group $\Auth(T)$ will act on this version of \eqref{eq6.15}. This step describes
the action of the subgroups $\{\mathbb{Z}/(n_{o}\mathbb{Z})\}_{o \in\mathcal{V} }$
that appear in the semi-direct decomposition as depicted in the
$\Auth(T)$ version of \eqref{eq6.13}.

The definition of these actions requires an additional set of choices to be
made for each vertex in $\mathcal{V} $: A `distinguished' edge must be
designated from each non-trivial $\mathbb{Z}/(n_{o}\mathbb{Z})$ orbit in
$E_{o}$ for each $o \in   \mathcal{V} $. To explain, note first that \eqref{eq6.12}
has the following consequence: The $\mathbb{Z}/(n_{o}\mathbb{Z})$ action on an
orbit in $E_{o}$ is either free or trivial. Thus, any non-trivial orbit has
$n_{o}$ elements and a canonical cyclic ordering. The choice of a
distinguished element provides a compatible linear ordering with the
distinguished element at the end. These distinguished edges should be chosen
in a compatible fashion with the $\Aut(T)$ action. This is to say that when $o\in \mathcal{V} $
and $e' \in E_{o}$ is the distinguished edge
in its $\Aut _{o}$ orbit, then the following is true: Let $\iota \in \Aut(T)$
denote an automorphism that sends the distinguished vertex
in $\underline {\Gamma }_{o}$ to that in $\underline {\Gamma}_{\iota (o)}$.
Then $\iota (e')$ is the distinguished edge in its  $\Aut _{\iota (o)}$ orbit.

Let $\iota    \in \Auth(T)$ now denote the generator of the
$\mathbb{Z}/(n_{o}\mathbb{Z})$ subgroup. This element acts
trivially on $\mathbb{R}^{ - }$, $\mathbb{R}^{\Delta }$. Its action
is also trivial on $\hat{U}_{e}\times (\times _{\hat{o} \in \mathcal{V} (e)}  \Delta _{\hat{o}})$
unless $\mathcal{V} (e)$ contains $o$.

The affect of $\iota $ on the relevant version of
$\hat{U}_{e}\times (\times _{\hat{o} \in \mathcal{V} (e)}  \Delta_{\hat{o}})$
is defined from a certain action of $\iota $ on
$W_{e}  \times (\times _{\hat{o} \in \mathcal{V} (e)} \Delta_{\hat{o}})$.
In this regard, note that $\iota _{o}$ has the
action on $\times _{\hat{o} \in \mathcal{V} (e)} \Delta_{\hat{o}}$
that sends a map $r\co  \Arc(\underline {\Gamma}_{\hat{o}})   \to (0, \infty )$ to the map
$\iota \cdot r\co  \Arc(\underline {\Gamma }_{\iota (\hat{o})}) \to (0,\infty )$ that obeys
\begin{equation}\label{eq6.21}
(\iota \cdot r)(\iota (\gamma )) = r(\gamma )
\text{ for all }
\gamma \in \Arc(\underline {\Gamma }_{\hat{o}}).
\end{equation}
The action that is described momentarily on
$W_{e}  \times (\times_{\hat{o} \in \mathcal{V} (e)}  \Delta _{\hat{o}})$ is
intertwined by the projection to $\times _{\hat{o} \in\mathcal{V} (e)}  \Delta _{\hat{o}}$
with the action that is
depicted by \eqref{eq6.21}. In any event, an action of
$\mathbb{Z}/(n_{o}\mathbb{Z})$ on $W_{e}  \times (\times_{\hat{o} \in \mathcal{V} (e)}  \Delta _{\hat{o}})$
induces one on $\Maps(L_{e}; W_{e})  \times (\times_{\hat{o} \in \mathcal{V} (e)}  \Delta _{\hat{o}})$
by composition, thus $\iota $ sends a pair $(x, r)$ with $x\co  L_{e} \to W_{e}$ and
$r \in (\times _{\hat{o} \in \mathcal{V} (e)} \Delta _{\hat{o}})$ to the pair whose second component is
$\iota \cdot r$ and whose first component is obtained from $x$ by
composing with $\iota $'s affect on the $W_{e}$ factor in
$W_{e}\times (\times _{\hat{o} \in \mathcal{V} (e)}  \Delta_{\hat{o}})$.
As it turns out, this action of $\iota $
preserves the relation in \eqref{eq6.19} and so the induced action
on $\Maps(L_{e}; W_{e})\times (\times _{\hat{o} \in\mathcal{V} (e)}  \Delta _{\hat{o}})$
induces the required
action on $\hat{U}_{e}  \times (\times _{\hat{o} \in \mathcal{V} (e)}  \Delta _{\hat{o}})$.

To define the action of $\iota $ on $W_{e}\times(\times_{\hat{o} \in\mathcal{V} (e)} \Delta _{\hat{o}})$
consider its affect on the image of some given point $(\tau , r)$
where $\tau  \in   \times _{\hat{o} \in \mathcal{V} (e)}\mathbb{R}_{\hat{o}}$.
For this purpose, use $\tau_{\hat{o}}$ to denote the $\mathbb{R}_{\hat{o}}$
coordinate of $\tau $, and use $r_{\hat{o}}$ to denote the
$\Delta _{\hat{o}}$ coordinate. Also, use $[\tau , r]$ to
denote the image point of $(\tau, r)$ in
$W_{e}\times (\times_{\hat{o} \in \mathcal{V} (e)}  \Delta _{\hat{o}})$. The
point $\iota \cdot [\tau , r]$ has a lift, $(\tau ', \iota \cdot r)$ with
${\tau '}_{\hat{o}}=\tau _{\hat{o}}$ unless
$\hat{o}  \in T_{o}$. Meanwhile,
\begin{equation}\label{eq6.22}
\tau '_{o}=\tau _{o} - 2\pi   \frac{1 }{ {\alpha
_{Q_{e(o)} } (\theta _o )}}\sum _{\gamma }r(\gamma ),
\end{equation}
where $e(o)$ here designates the edge that connects $o$ to $T- T_{o}$ and
where the sum is over the set of arcs on the oriented path in $\underline {\Gamma }_{o}$
between $\iota ^{- 1}(\upsilon _{o})$ and $\upsilon _{o}$.

Consider next the case that $\hat{o} \in T_{o}- o$ and that $\hat{o}$'s
component of $T_{o}- o$ is fixed by $\iota _{o}$. This is to say
that $\hat{o}$'s component is connected to $o$ by an edge that is fixed by the
$\mathbb{Z}/(n_{o}\mathbb{Z})$ action on $o$'s incident edge set. Let $e'(o)$
denote the latter edge. Then
\begin{equation}\label{eq6.23}
{\tau '}_{\hat{o}}=\tau _{\hat{o}} - 2\pi   \frac{1
}{ {n_o }}  \frac{\alpha _{Q_{e(o)} - Q_{e' (o)} } (\theta _{\hat{o}}
)}{\alpha _{Q_{\hat{e}} } (\theta _{\hat{o}})} ,
\end{equation}
where $\hat{e}$ here denotes the edge that connects $\hat{o}$ to $T- T_{\hat{o}}$.

Finally, suppose that $\hat{o}$ is in a component of $T_{o}- o$ that
is not fixed by $\iota $. In this case,
\begin{equation}\label{eq6.24}
\tau '_{\iota (\hat{o})}=\tau _{\hat{o}}-\varepsilon
_{\hat{o}} 2\pi   \frac{\alpha _{Q_{e(o)} } (\theta _{\hat{o}}
)}{\alpha _{Q_{\hat{e}} } (\theta _{\hat{o}} )} ,
\end{equation}
where $\varepsilon _{\hat{o}} = 0$ unless the edge that connects
$\hat{o}$'s component of $T_{o}- o$ to $o$ is a `distinguished' edge
in its $\mathbb{Z}/(n_{o}\mathbb{Z})$ orbit; in the latter case, $\varepsilon_{\hat{o}} = 1$.

It is left for the reader to verify that action just defined for
$\iota $ on $W_{e}  \times (\times _{\hat{o} \in \mathcal{V}(e)}  \Delta _{\hat{o}})$ has
$\iota ^{n_o }$ acting as the
identity. The reader is also asked to verify that the suite of these
actions as $o$ varies through $\mathcal{V} $ defines compatible
actions of the various versions of \eqref{eq6.14} on the space in \eqref{eq6.20}.

\substep{Step 5}
This step explains how the
$\Auth_{\diamondsuit }$ subgroup of $\Auth(T)$ acts on
the space in \eqref{eq6.20}. The explanation starts by describing
the action on the factor $\mathbb{R}^{\Delta }$. In this regard,
keep in mind that $\mathbb{R}^{\Delta }$ is a real line bundle over
$\Delta _{\diamondsuit }$ and that there is an $\Aut _{\diamondsuit }$
action on $\Delta _{\diamondsuit }$ that has any given $\iota    \in  \Aut _{\diamondsuit }$
acting to send a map $r\co  \Arc(\underline{\Gamma }_{\diamondsuit }) \to (0, \infty )$
to the map $\iota \cdot r$ that is given by the $\hat{o} = \diamondsuit $ version of
\eqref{eq6.21}. The projection map from $\mathbb{R}^{\Delta }$ to
$\Delta _{\diamondsuit }$ will intertwine the desired action of
$\Auth_{\diamondsuit }$ with that of $\Aut _{\diamondsuit }$ on
$\Delta _{\diamondsuit }$. In any event, the action on
$\mathbb{R}^{\Delta }$ is induced by the action on
$\Maps(\Vertt_{\hat{E}}; \mathbb{R})  \times \Delta_{\diamondsuit }$ that has
$\iota \in \Auth_{\diamondsuit}$ sending a pair $(\tau , r)$ to the pair
$(\iota \cdot \tau ,\iota \cdot r)$ where $\iota \cdot \tau $ is defined by setting
$(\iota \cdot \tau )(\iota (\upsilon )) \equiv   \tau (\upsilon)$ for all
$\upsilon \in  \Vertt_{\hat{E}}$. This action
preserves the relation in \eqref{eq6.16} so restricts to define an
action of $\Auth_{\diamondsuit }$ on $\mathbb{R}^{\Delta }$.

To continue, consider next the action of $\Auth_{\diamondsuit }$ on $\mathbb{R}^{ - }$.
The action in this case is induced from the action on
$\Maps(\Lambda , \mathbb{R})$ that has $\iota \in \Auth_{\diamondsuit}$ sending a given map
$\tau _{ - }$ to the map $\iota \cdot \tau _{- }$ whose value on any given
$\iota (L)$ is that of $\tau _{ - }$ on $L$.

The final point is that of the $\Auth_{\diamondsuit }$ action on the
bracketed factor that appears in \eqref{eq6.20}. To set the stage for the
discussion, note that $\Aut _{\diamondsuit }$ acts on the simplex product.
Here, the affect of $\iota  \in \Aut _{\diamondsuit }$ on any
given $\hat{o} \in   \mathcal{V} $ version of $\Delta _{o}$ sends $r \in \Delta _{o}$ to the map
$\iota \cdot r  \in   \Delta _{\iota (o)}$
whose values are given by the rule in \eqref{eq6.21}. The projection to
$\times_{\hat{o} \in \mathcal{V} }  \Delta _{\hat{o}}$ from the bracketed factor in \eqref{eq6.20}
will intertwine the desired
$\Auth_{\diamondsuit }$ action with that just described on
$\times_{\hat{o} \in \mathcal{V} }  \Delta _{\hat{o}}$. The action of
$\Auth_{\diamondsuit }$ on the bracketed factor in \eqref{eq6.20} is induced by an
action on
\begin{equation}\label{eq6.25}
\times _{e \in E_\diamondsuit } \Bigl(\Maps(L_{e}; W_{e})  \times
\bigl(\times _{\hat{o} \in \mathcal{V} (e)}^{ }\Delta_{\hat{o}}\bigr)\Bigr).
\end{equation}
To describe the latter action, let $w$ denote a point in
\eqref{eq6.25}, thus a tuple whose coordinates are indexed by
$\diamondsuit $'s incident edges. The coordinate with label
$e \in  E_{\diamondsuit }$ consists of a map, $x_{e}$, from $L_{e}$ to
$W_{e}$ together with a maps,
$\{r_{\hat{o}}\co \Arc(\underline {\Gamma }_{\hat{o}})\to (0,\infty)\}_{\hat{o} \in \mathcal{V} (e)}$.
Here, $x_{e}$ has lifts
as a map of $L_{e}$ to $\times _{\hat{o} \in \mathcal{V} (e)}\mathbb{R}_{\hat{o}}$
that assigns a real number to each
$(\ell , \hat{o})  \in L_{e}  \times   \mathcal{V} (e)$. Fix such
a lift and use $\tau _{\hat{o}}(\ell )   \in   \mathbb{R}$ in
what follows to denote the lift's assignment to a given $(\ell ,\hat{o})$.

With the preceding understood, a given $\iota    \in \Auth_{\diamondsuit }$ sends $w$ to
the tuple $\iota \cdot \mathfrak{m}$ whose coordinate with label $e$ is denoted by ${x'}_{e}$.
The collection
$\{{x'}_{(\cdot )}\}$ thus defines $\iota \cdot \mathfrak{m}$ given the
intertwining requirement vis a vis the action of $\Aut _{\diamondsuit }$ on
$\times _{\hat{o}}  \Delta _{\hat{o}}$. This understood, the
various versions of ${x'}_{(\cdot )}$ are defined by requiring that any given
version of ${x'}_{\iota (e)}$ lift so as to assign the real number
$\tau_{\hat{o}}(\ell )$ to the pair $(\iota (\ell ), \iota (\hat{o}))$
for all pairs $(\ell, \hat{o}) \in  L_{e}  \times   \mathcal{V} (e)$.

It is left as an exercise with the definitions to verify that the rule just
given specifies an action of $\Auth_{\diamondsuit }$ on \eqref{eq6.25} that
preserves the subspace from the bracketed term in \eqref{eq6.20}.

Granted that $\Auth_{\diamondsuit }$ acts as just described on the space
in \eqref{eq6.20}, it is a straighforward task to verify that the action is
compatible vis-\`{a}-vis the $\Auth(T)$ version of \eqref{eq6.13} with those defined
in Step 4 of the various versions of \eqref{eq6.14}. Such being the case, then these
various actions define an action of $\Auth(T)$ on the space in \eqref{eq6.20}. This
is the desired action.

\step{Part 4}
Introduce as notation ${O^{*}}_{T}$ to denote the space depicted in
\eqref{eq6.20}. Then the space $O_{T}$ is diffeomorphic to the
quotient of ${O^{*}}_{T}$ by the $\mathbb{Z}  \times   \mathbb{Z}$
subgroup in $\Auth(T)$ that maps to the identity in $\Aut(T)$. The
group $\Aut(T)$ now acts on $O_{T}$ with the latter now viewed as
${O^{*}}_{T}/(\mathbb{Z}  \times  \mathbb{Z})$. Let $\hat{O}_{T}   \subset O_{T}$ denote the set of points
where the $\Aut(T)$ stabilizer is the identity. Thus, $\hat{O}_{T}$ is the
$\mathbb{Z}  \times   \mathbb{Z}$ quotient of the points in \eqref{eq6.20} with trivial
$\Auth(T)$ stabilizer. In any event, $\hat{O}_{T}/\Aut(T)$ is a smooth manifold
whose dimension is equal to one more than the number of vertices in
$ \cup_{o}\underline {\Gamma }_{o}$; the union here is indexed by
multivalent vertices in $T$. Meanwhile, $O_{T}/\Aut(T)$ is a smooth orbifold.

With the preceding understood, consider:

\begin{theorem}\label{thm:6.2}

If non-empty, ${\mathcal{M}^{*}}_{\hat{A},T}$
is diffeomorphic as an orbifold to $\mathbb{R}$\linebreak
$\times O_{T}/\Aut(T)$. In particular the complement in
${\mathcal{M}^{*}}_{\hat{A},T}$ of $\mathcal{R}$
 is diffeomorphic to $\mathbb{R}\times  \hat{O}_{T}/\Aut(T)$.

\end{theorem}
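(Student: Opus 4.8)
The plan is to follow the architecture of \fullref{sec:3} and \fullref{sec:4}, where the linear case \fullref{thm:3.1} was treated, and to upgrade each step to accommodate the branching of $T$ and the consequent nontrivial structure of $\Aut(T)$ and its central extension $\Auth(T)$. First I would construct an explicit map $\mathfrak{B}_T\colon {\mathcal{M}^{*}}_{\hat{A},T}\to\mathbb{R}\times O_T/\Aut(T)$. Given $(C_0,\phi)$ whose graph is isomorphic to $T$, one fixes a correspondence of $T$ in $(C_0,\phi)$, an admissible identification between each $\underline{\Gamma}_o$ and $\hat{A}_o$, a distinguished edge-orbit through $\diamondsuit$ with a parametrization of its component $K_e\subset C_0-\Gamma$, and a lift to $\mathbb{R}$ of the $\mathbb{R}/(2\pi\mathbb{Z})$ coordinate at each distinguished vertex. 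The $\mathbb{R}$ coordinate is read off from the $|s|\to\infty$ asymptotics — the constant $b$ in \eqref{eq2.4}, the constant $\hat{c}$ in \eqref{eq1.9}, or the $\theta=0$ intersection point — exactly as in \fullref{sec:3b}; each $\Delta_o$ coordinate is the integral of $(1-3\cos^{2}\theta)d\varphi-\surd 6\cos\theta\,dt$ over an arc of $\Gamma_o$ divided by $2\pi\alpha_{Q_e}(\theta_o)$; and the $\mathbb{R}_-$ and $\mathbb{R}_o$ coordinates are produced by running the parametrizing algorithm of Part~4 of \fullref{sec:2c} outward along $T$ from $\diamondsuit$, recording the lift at each distinguished vertex. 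The rest of \fullref{sec:6b} is devoted to making this precise.

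Next I would check that $\mathfrak{B}_T$ descends to ${\mathcal{M}^{*}}_{\hat{A},T}\to\mathbb{R}\times O_T/\Aut(T)$, i.e.\ that the image is independent of all the above choices. This is the branched-graph analogue of \fullref{sec:3d}, and the new feature is that the canonical parametrization now depends on a choice of concatenating path set, whose effect is governed by \fullref{lem:2.3} through the homomorphism $\phi_o$ on $H_1({\underline{\Gamma}^{*}}_o)$. The action of $\Auth(T)$ on the space \eqref{eq6.20} defined in Steps~4 and~5 of Part~3 of \fullref{sec:6b} is precisely engineered to absorb these ambiguities: the $\mathbb{Z}\times\mathbb{Z}$ kernel of $\Auth(T)\to\Aut(T)$ handles the parametrization and lift choices at $\diamondsuit$, the $\mathbb{Z}/(n_o\mathbb{Z})$ factors handle the distinguished-vertex choices deeper in $T$, and $\Aut(T)$ itself handles the choice of identifications $\underline{\Gamma}_o\cong\hat{A}_o$; so once \fullref{lem:2.3} is in hand this step is bookkeeping. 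At the same time one checks that $\mathcal{R}$, namely the pairs with nontrivial $G_C$, maps onto the locus where the $\Aut(T)$ stabilizer is nontrivial, so that the second assertion of the theorem follows from the first.

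Then I would show $\mathfrak{B}_T$ is a local diffeomorphism and is injective. By \fullref{prop:6.1}, ${\mathcal{M}^{*}}_{\hat{A},T}$ is a single component of a single stratum, and \fullref{prop:5.1} together with \fullref{lem:5.4} provide, near each point, local (orbifold) coordinates built from the limiting angle data $\{\varpi_{+i}\}$, $\{\varpi_{-i}\}$, $\varpi'$, the $\{r_z\}$, and $m$ of the critical-angle functions $\{\theta_z\}$. On ${\mathcal{M}^{*}}_{\hat{A},T}$ the critical values of $\theta$ are pinned to the vertex angles of $T$, so via \eqref{eq1.8}, \eqref{eq2.4} and \eqref{eq2.9} these translate directly into local coordinates on $O_T$, with Propositions~\ref{prop:3.5} and~\ref{prop:3.6} (in their evident generalizations) interpreting the $\mathbb{R}_-$, $\mathbb{R}_o$ and $\Delta_o$ parameters; since $O_T/\Aut(T)$ has the dimension dictated by \fullref{prop:5.1}, the lifted map is an open embedding into $O_T$. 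Injectivity is handled as in \fullref{sec:4b}: if $C$ and $C'$ have the same image one arranges compatible parametrizations so that the conditions of \eqreft41 hold, whence \fullref{lem:4.2} builds the continuous functions $\hat{a}$ and $\hat{w}$, the set $G=\{\hat{w}=0\}$ is nonempty because the agreeing $\mathbb{R}_-$ coordinates force $\hat{w}$ to have vanishing average on the relevant constant-$\theta$ slices, and \fullref{lem:4.1} (with the agreeing $\mathbb{R}$ coordinates) gives $C=C'$. The only genuinely new point is verifying that agreement of all the $\Delta_o$ and $\mathbb{R}_o$ coordinates, together with the combinatorial match of the $T_C$ and $T_{C'}$ versions of the $\underline{\Gamma}_o$, supplies the compatibility required by \eqreft42 — which is again where \fullref{lem:2.3} enters.

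Finally, properness. One takes a sequence $\{C_j\}\subset{\mathcal{M}^{*}}_{\hat{A},T}$ whose image converges, applies the compactness package of \cite[Proposition~3.7]{T3} as packaged in \fullref{prop:4.6} to extract a limiting data set $\Xi$, and must show $\Xi$ consists of one pair $(S,n)$ with $n=1$ and $S\in{\mathcal{M}^{*}}_{\hat{A},T}$. The arguments of Parts~2 and~3 of \fullref{sec:4e} carry over: disconnected compact $\theta$-level sets are impossible on a genus-zero curve, so $\theta$ on each limit model curve has exactly the prescribed critical structure and each non-$\{0,\pi\}$-limiting end has $n_E=1$ in \eqref{eq2.4}; $\theta$-preserving preimages of short arcs near the relevant Reeb orbits then match the ends, vertices and integer weights of the graph of $S$ with those of $C_j$; and the convergence of the $\Delta_o$ coordinates both forbids extra limit components — a thin tube across an $\mathbb{R}$-invariant cylinder at a coincident critical value would force a simplex coordinate to collapse — and forbids $n>1$, which would drive the image toward the non-free locus and contradict convergence in $O_T/\Aut(T)$. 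The hard part will be exactly this properness step in the branched setting: controlling the bubbling of thin pseudoholomorphic pieces along $\mathbb{R}$-invariant cylinders at coincident critical values of $\theta$ and tracking the induced combinatorics of the circular graphs $\ell_{oe}$, their blow-ups $\ell^{*}_{oe}$, and the class $\phi_o$ — it is precisely here, as announced for \fullref{sec:7}, that the canonical parametrizations of \fullref{sec:2} are deployed to replace the analytic compactness arguments of \cite{BEHWZ} with topology.
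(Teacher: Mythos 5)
Your proposal follows essentially the same architecture as the paper's proof: construct the map (Section 6.C), verify independence of choices via \fullref{lem:2.3} and the $\Auth(T)$ action (Section 6.D, \fullref{lem:6.5} and \fullref{lem:6.6}), establish that the lift to ${{\mathcal{M}^{*}}_{\hat{A},T}}^{\Lambda}\to\mathbb{R}\times O_T$ is a local diffeomorphism via \fullref{prop:5.1} and \fullref{lem:5.4} and that it is injective via the $\hat{w}$-argument and \fullref{lem:4.1}, identify $\mathcal{R}$ with the non-free locus, and finally prove properness by the $\theta$-preserving preimage and simplex-coordinate arguments generalizing \fullref{sec:4e}. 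The one small slip is that \fullref{prop:6.1} is not what underwrites the local-diffeomorphism step (that is \fullref{prop:5.1} and \fullref{lem:5.4}); otherwise the plan matches the paper step for step.
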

\noindent
There is an analogy in this case with the story told in
\fullref{prop:3.6} as there are orbifold diffeomorphisms
from ${\mathcal{M}^{*}}_{\hat{A},T}$ that give a direct geometric
interpretation to the various factors that enter the definition of
$\mathbb{R}  \times O_{T}/\Aut(T)$. An elaboration requires a short
digression for two new notions.

The first notion is that of the space ${{\mathcal{M}^{*}}_{\hat{A},T}}^{\Lambda }$
whose elements consist of equivalence
classes of 4--tuples $(C_{0}, \phi , T_{C})$ where $(C_{0}, \phi )$
defines a point in ${\mathcal{M}^{*}}_{\hat{A},T}$ while $T_{C}$ is
a fixed correspondence in $(C_{0}, \phi )$. The equivalence relation
equates $(C_{0}, \phi ', {T_{C}}')$ with $(C_{0}, \phi , T_{C})$ in
the case that there is a holomorphic diffeomorphism $\psi \co  C_{0}   \to C_{0}$ such that
$\phi ' = \phi  \circ \psi $, and such that ${T_{C}}'$ is obtained from $T_{C}$ as follows:
If $T_{C}$ identifies a given component
$K \subset C_{0}- \Gamma $ with an edge $e \in T$, then
${T_{C}}'$ identifies $\psi ^{- 1}(K)$ with $e$; and if $T_{C}$
identifies any given arc $\gamma    \subset   \Gamma $ with an arc,
$\gamma _{*}$, in a version of $\Gamma _{(\cdot )}$ from $T$, then
${T_{C}}'$ identifies $\psi ^{- 1}(\gamma )$ with $\gamma _{*}$.

Note that ${{\mathcal{M}^{*}}_{\hat{A},T}}^{\Lambda }$ is a smooth
manifold since the group $G_{C}$ of holomorphic diffeomorphisms of $C_{0}$
that fix $\phi $ acts freely on the set of corrrespondences of $T$ in
$(C_{0}, \phi )$. Note in addition that the tautological map from
${{\mathcal{M}^{*}}_{\hat{A},T}}^{\Lambda }$ to ${\mathcal{M}^{*}}_{\hat{A},T}$
restricts over ${\mathcal{M}_{\hat{A},T}}$
as a covering space map with fiber $\Aut(T)$.

To describe the second notion, fix a multivalent vertex $o \in T$
and a vertex $\upsilon    \in \underline {\Gamma }_{o}$ with
non-zero integer label. In what follows, use $(\hat {p}_{o}, {\hat{p}_{o}}')$
to denote the relatively prime integer pair that defines
the angle $\theta _{o}$ via \eqref{eq1.8}. Define a map,
$\Psi_{\upsilon }\co  O_{T}   \to \mathbb{R}/(2\pi \mathbb{Z})$ as
follows: $\Map \mathbb{R}_{o}  \times \Delta _{o}$ to
$\mathbb{R}/2\pi \mathbb{Z}$ by the rule that sends $v \in  \mathbb{R}_{o}$ and
$r \in   \Delta _{o}$ to
\begin{equation}\label{eq6.26}
\bigl(\hat {p}_{o}{q_{e}}' - {\hat {p}_{o}}'q_{e}\bigr) v + \frac{({\hat {p}_o} ^2
+ {\hat {{p}_{o }}} ^{'2}\sin ^2(\theta _o ))^{1 / 2}}{(1 + 3\cos ^4(\theta
_o ))^{1 / 2}}  \sum _{\gamma }  \pm r(\gamma )  \mod(2\pi \mathbb{Z}).
\end{equation}
Here, the notation is as follows: First, $e$ is the distinguished edge for $o$.
Second, the sum is indexed by the arcs in any ordered set of arcs from
$\underline {\Gamma }_{o}$ that concatenate end to end so as to define a
path that starts at $\underline {\Gamma }_{o}$'s distinguished vertex and
ends at $\upsilon $. Finally, the $+$ sign in this sum is taken if and only
if the indicated arc is traversed in its oriented direction on the path to
$\upsilon $. By virtue of \eqref{eq6.6}, this map is insensitive to the choice for
the concatenating set of arcs. As the map in \eqref{eq6.26} is also insensitive to
the group actions that define $O_{T}$, so it descends as a map from $O_{T}$
to $\mathbb{R}/2\pi \mathbb{Z}$. The latter is the map $\Psi _{\upsilon }$.

\begin{theorem}\label{thm:6.3}

There are orbifold diffeomorphisms from
${\mathcal{M}^{*}}_{\hat{A},T}$  to $\mathbb{R}\times O_{T}/\Aut(T)$  with the following properties:

\bit

\item
The projection to the $\mathbb{R}$  factor intertwines $\mathbb{R}$'s  action on
${\mathcal{M}^{*}}_{\hat{A},T}$  with its translation action on $\mathbb{R}$.

\item
The orbifold diffeomorphism is covered by a diffeomorphism from ${\mathcal{M}_{\hat{A},T}}^{\Lambda }$  to
$\mathbb{R}  \times O_{T}$  with the following property:  Let $o \in T$
be a multivalent vertex and let $\upsilon $  be a vertex
in $\underline {\Gamma }_{o}$  with weight $m_{o}   \ne  0$.  Let
 $(C_{0}, \phi , T_{C})   \in {{\mathcal{M}^{*}}_{\hat{A},T}}^{\Lambda }$,  and let $E$ denote the end
in $C_{0}$  that corresponds via $T_{C}$  to $\upsilon $.  Then
the $\mathbb{R}/(2\pi \mathbb{Z})$  valued function $\hat{p}_{o}\varphi -{{\hat{p}_{o}}}'t$  on $E$
has a unique limit as $|s| \to \infty $,  and this limit is obtained by composing the map
$\Psi _{u}$  with the map from ${{\mathcal{M}^{*}}_{\hat{A},T}}^{\Lambda }$  to $O_{T}$.

\eit
\end{theorem}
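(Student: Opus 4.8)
The plan is to prove Theorem~\ref{thm:6.3} by building the diffeomorphism from the ground up, mirroring the construction in Sections~\ref{sec:3b}--\ref{sec:3d} for the linear-graph case, but now accounting for the branching of $T$ and the nontrivial $\Aut(T)$ action. First I would fix, as in Part~4 of Section~\ref{sec:2c}, the auxiliary choices needed to write $O_T$ via \eqref{eq6.20}: an $\Aut(T)$-invariant smallest-angle vertex $\diamondsuit$, a distinguished $\Aut(T)$-orbit $\hat E$ of incident edges to $\diamondsuit$, distinguished vertices $\upsilon_o$ on each $\ell_o$ and distinguished edges in each nontrivial $\mathbb{Z}/(n_o\mathbb{Z})$-orbit in $E_o$, all chosen compatibly with the $\Aut(T)$ action. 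Given $(C_0,\phi,T_C)\in{{\mathcal{M}^{*}}_{\hat{A},T}}^{\Lambda}$, I would then produce a point in $\mathbb{R}\times O_T$ by: (i) choosing a parametrization of the component $K_e\subset C_0-\Gamma$ labeled by the edge containing $\diamondsuit$ as its smallest-angle vertex, and using \eqref{eq3.15}-type integrals to read off the $\mathbb{R}_-$ (equivalently $\mathbb{R}^-$) and $\mathbb{R}$-factor data; (ii) for each multivalent vertex $o$, using the integrals of $(1-3\cos^2\theta)d\varphi-\surd6\cos\theta\,dt$ over the arcs of $C_0$'s version of $\Gamma_o$ divided by $2\pi\alpha_{Q_e}(\theta_o)$ to get the point of $\Delta_o$, exactly as in Part~3 of Section~\ref{sec:3c}, now noting these automatically satisfy \eqref{eq6.6} because each $\ell_{oe}$ is a closed loop; (iii) running the parametrizing algorithm of Part~4 of Section~\ref{sec:2c}, inductively from $\diamondsuit$ outward along $T$, using the canonical $\mathbb{R}$-lifts of distinguished missing points, to assign the $\mathbb{R}_{\hat{o}}$-coordinates. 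The geometric-interpretation clause is then built in by construction: the identity \eqref{eq3.23} for $\hat{p}_o\varphi-{\hat{p}_o}'t$ in terms of $(\sigma,v)$ coordinates shows that at $\sigma=\theta_o$ the coefficient of $w$ vanishes, so the $|s|\to\infty$ limit on the end $E$ corresponding to $\upsilon$ is $(\hat{p}_o q_e'-{\hat{p}_o}'q_e)v_\upsilon$ where $v_\upsilon$ is the lifted coordinate of $\upsilon$'s missing point; comparing with \eqref{eq6.26} and the arc-sum correction coming from the canonical lift produces precisely $\Psi_\upsilon$.

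Second, I would verify the map is well defined, i.e. insensitive to all the choices in the input data. This follows the five-part analysis of Section~\ref{sec:3d}, but the genuinely new ingredient is that changing the concatenating path set in the parametrizing algorithm now changes the canonical parametrization by the action of $-\phi_o([\mu^*])$ (Lemma~\ref{lem:2.3}); one checks that the corresponding shift in the $\mathbb{R}_{\hat o}$-coordinates is exactly absorbed by the $\mathbb{Z}\times\mathbb{Z}$ and $\times_{\hat o}\mathbb{Z}_{\hat o}$ actions in \eqref{eq6.9}, using \eqref{eq1.8}/\eqref{eq6.7}/\eqref{eq6.8}. A change of correspondence $T_C$ by an element of $\Aut(T)$ changes the output point by the corresponding element of $\Aut(T)$ acting on $O_T$ as defined in Part~3 of Section~\ref{sec:6b} — this is where the compatibility of the distinguished-vertex and distinguished-edge choices with the $\Aut(T)$ action is used, and where one must match the inductive semidirect-product structure \eqref{eq6.13} of $\Aut(T)$ with the step-by-step structure of the parametrizing algorithm. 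This shows the map descends to a well-defined continuous map ${\mathcal{M}^{*}}_{\hat{A},T}\to\mathbb{R}\times O_T/\Aut(T)$, and its lift to ${{\mathcal{M}^{*}}_{\hat{A},T}}^{\Lambda}\to\mathbb{R}\times O_T$.

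Third, I would prove this map is an orbifold diffeomorphism. Injectivity: given two elements of ${\mathcal{M}^{*}}_{\hat{A},T}$ with the same image, Part~1 of Section~\ref{sec:4b} shows their graphs are isomorphic, and then Parts~2--3 there, combined with Lemmas~\ref{lem:4.1}--\ref{lem:4.3}, show the two subvarieties agree — the branching changes nothing in that argument beyond bookkeeping over the multivalent vertices. Local diffeomorphism: \fullref{prop:5.1} identifies the dimension of each component of $\mathcal{S}_{B,c,\mathfrak d}$, hence of ${\mathcal{M}^{*}}_{\hat{A},T}$ (with $m=0$, $c=0$ here after slicing), as one plus the number of vertices in $\cup_o\underline{\Gamma}_o$, matching $\dim(\mathbb{R}\times\hat O_T/\Aut(T))$; and Lemma~\ref{lem:5.4} supplies the coordinates on ${\mathcal{M}^{*}}_{\hat{A},T}$, which by \eqref{eq3.23}, \eqref{eq6.26} and the analogue of \fullref{prop:3.5} are exactly the pullbacks of the $\{\Psi_\upsilon\}$, $\Psi_-$ and the simplex coordinates, so the differential is invertible. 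Surjectivity and properness: one adapts Section~\ref{sec:4e} — given a sequence in ${\mathcal{M}^{*}}_{\hat{A},T}$ whose image converges in $\mathbb{R}\times O_T/\Aut(T)$, \fullref{prop:4.6} (valid verbatim here) extracts a limit data set $\Xi$, and the topological arguments of Parts~2--3 of Section~\ref{sec:4e} — which use only that the $C_j$ have genus zero, that the simplex coordinates have positive lower bounds, and that $\theta$ has no non-extremal critical points — show $\Xi$ is a single unmultiplied subvariety lying in ${\mathcal{M}^{*}}_{\hat{A},T}$ with the prescribed image; this requires the multivalent analogues of the graph-identification argument in Part~3 there, now comparing $\underline{\Gamma}_o$-structures rather than just linear graphs. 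The orbifold points are precisely $\mathcal{R}$, appearing as the image of the non-free $\Auth(T)$-stabilizer locus, which by \fullref{prop:4.4}/\fullref{prop:4.5} adapted to $\Aut(T)$ is the fixed set of subgroups of the canonical cyclic subgroup; away from $\mathcal{R}$ one gets the smooth diffeomorphism onto $\mathbb{R}\times\hat O_T/\Aut(T)$, completing also the statement of \fullref{thm:6.2}.

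The main obstacle I expect is the $\Aut(T)$-equivariance of the constructed map — specifically, matching the iterated semidirect-product bookkeeping of $\Aut(T)$ in \eqref{eq6.13}--\eqref{eq6.14} against the inductive, vertex-by-vertex parametrizing algorithm, and checking that the shifts in the $\mathbb{R}_{\hat o}$-coordinates produced by a change of correspondence are given exactly by the intricate formulas \eqref{eq6.22}--\eqref{eq6.24} together with the $\Auth_\diamondsuit$-action of Step~5. This is the place where the ``blow-up'' graph ${\underline{\Gamma}^{*}}_o$, the class $\phi_o$, and Lemma~\ref{lem:2.3} all must be deployed together, and where the greatest risk of a sign or indexing error lies; I would budget most of the detailed work there, and treat the injectivity, local-structure, and properness steps as essentially faithful adaptations of Sections~\ref{sec:4b}, \ref{sec:4e} and \ref{sec:5d}.
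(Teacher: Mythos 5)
Your proposal is correct and follows essentially the same route as the paper: build the map $\mathfrak X$ as in Section~\ref{sec:6c}, prove well-definedness via Lemmas~\ref{lem:6.5}--\ref{lem:6.6} (Section~\ref{sec:6d}), and establish that it is a proper local diffeomorphism that is injective and respects $\mathcal{R}$ by adapting the arguments of Sections~\ref{sec:4b}, \ref{sec:4e} and \ref{sec:5d} (this is Section~\ref{sec:7}). The one organizational difference is that the paper proves \fullref{thm:6.2} first and then disposes of \fullref{thm:6.3} in a single short paragraph in Section~\ref{sec:7a} by observing, exactly as you do, that both bulleted properties are immediate from the form of the parametrizations in \fullref{def:2.1} and the definitions of Section~\ref{sec:6c} — the $\mathbb{R}$-equivariance and the identification via \eqref{eq3.23}/\eqref{eq6.26} of the $\Psi_\upsilon$-maps with the $|s|\to\infty$ limits of $\hat p_o\varphi-\hat p_o't$ are built into the construction.
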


As should be clear from the definitions in the next subsection, the other
parameters that enter $O_{T}$'s definition can also be given direct
geometric meaning.

Theorems~\ref{thm:6.2} and \ref{thm:6.3} are proved in \fullref{sec:7}.

The following proposition says something about $O_{T}- \hat{O}_{T}$
and the corresponding stabilizers in $\Aut(T)$.

\begin{proposition}\label{prop:6.4}

The $\Aut(T)$ stabilizer of any point in $O_{T}$  projects isomorphically onto a subgroup
of $\Aut_{\diamondsuit }$.  Moreover, two stabilizer subgroups are conjugate in $\Aut(T)$
if and only if they have conjugate images in $\Aut_{\diamondsuit}$.

\end{proposition}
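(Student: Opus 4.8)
\textbf{Proof proposal for Proposition \ref{prop:6.4}.}

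The plan is to use the iterated semi-direct decomposition of $\Aut(T)$ in \eqref{eq6.13} together with the explicit description of the $\Auth(T)$ action on ${O^{*}}_{T}$ from Part 3 of \fullref{sec:6b}, proceeding vertex by vertex outward from $\diamondsuit $. First I would fix a point in ${O^{*}}_{T}$ lying over a given point of $O_{T}$, and suppose that some $g \in \Aut(T)$ fixes its image in $O_{T}$; lifting $g$ to $\hat{g} \in \Auth(T)$, this means $\hat{g}$ acts on the chosen point in ${O^{*}}_{T}$ by an element of the central $\mathbb{Z} \times \mathbb{Z}$. Write $\hat{g}$ via the $\Auth(T)$ version of \eqref{eq6.13} as a product of its image $\hat{g}_{\diamondsuit } \in \Auth_{\diamondsuit }$ and the components $\{g_{o} \in \mathbb{Z}/(n_{o}\mathbb{Z})\}_{o \in \mathcal{V}}$. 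The goal is to prove that every $g_{o}$ with $o \in \mathcal{V}$ is trivial, so that $g$ is determined by its image in $\Aut_{\diamondsuit }$, and that this image can be an arbitrary subgroup of $\Aut_{\diamondsuit }$ (up to conjugacy).

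The key step is an induction on the distance of $o \in \mathcal{V}$ from $\diamondsuit $. Consider an $o$ that shares an edge with $\diamondsuit $ (the base of the induction) and look at the action of $g$ on the factor $\hat{U}_{e} \times (\times_{\hat{o} \in \mathcal{V}(e)} \Delta_{\hat{o}})$ where $e$ is the incident edge of $\diamondsuit $ in the component $\mathcal{V}(e) \ni o$. Projecting to $\Delta_{o}$ and using \eqref{eq6.21}, the requirement that $g$ fix the image point forces $g_{o}$ to fix $r_{o} \in \Delta_{o}$; but the generator of $\Aut_{o}$ acts on $\Delta_{o}$ by the cyclic permutation of arc-coordinates induced by a non-trivial element of $\Aut(\ell_{o})$, and by \eqref{eq6.10} this permutation is free on arcs. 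Hence $g_{o}$ must be trivial unless the relevant coordinate lies in the fixed locus — and crucially one must track that the $\mathbb{R}_{\hat{o}}$-coordinates (via \eqref{eq6.22}, \eqref{eq6.23}, \eqref{eq6.24}) cannot compensate, because the central $\mathbb{Z} \times \mathbb{Z}$ acts trivially on these interior $\mathbb{R}_{\hat{o}}$ factors when $\hat{o} \neq \diamondsuit $. This is where \eqref{eq6.12} enters: a nontrivial $g_{o}$ fixes at most two edges in $E_{o}$, so it genuinely permutes the components of $T_{o} - o$, hence genuinely permutes the corresponding $\mathbb{R}_{\hat{o}}$ and $\Delta_{\hat{o}}$ factors, which is incompatible with fixing the point. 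Once $g_{o}$ is shown trivial for the vertices nearest $\diamondsuit $, the same argument applies verbatim to the next layer, since $T_{o}$ with its distinguished edge plays the role that $T$ with $\hat{E}$ played; this gives the inductive step. The conclusion is that $g$ lies in the image of $\Aut(T_{o})$-trivial elements, i.e. $g$ is the canonical lift of its image $\bar{g} \in \Aut_{\diamondsuit }$, so the stabilizer projects isomorphically into $\Aut_{\diamondsuit }$.

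For the converse direction — that any subgroup $G \le \Aut_{\diamondsuit }$ is realized as a stabilizer, and that conjugacy of stabilizers in $\Aut(T)$ matches conjugacy of images in $\Aut_{\diamondsuit }$ — I would exhibit a fixed point directly. The action of $\Auth_{\diamondsuit }$ on $\mathbb{R}^{\Delta } \times \mathbb{R}^{-}$ and on the bracketed factor of \eqref{eq6.20} described in Step 5 of Part 3 is, after projecting to the simplex product $\times_{\hat{o}} \Delta_{\hat{o}}$, the linear permutation action of $\Aut_{\diamondsuit }$; this has nonempty fixed loci (barycenters of the appropriate faces), and one lifts such a fixed simplex point to a $G$-fixed point of ${O^{*}}_{T}$ by solving the relevant linear equations \eqref{eq6.16}, \eqref{eq6.17}, \eqref{eq6.19} — solvable because $G$ acts by graph automorphisms preserving all the integer-pair data, so the cohomological obstruction lands in the trivial part. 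To get exactly stabilizer $G$ (not larger), choose the simplex point generically within the $G$-fixed face. Finally, if $x, x' \in O_{T}$ have stabilizers $G, G'$ with $G' = h G h^{-1}$ for $h \in \Aut(T)$, then $hx$ has stabilizer $G'$; and the first part of the proposition shows $G, G'$ map isomorphically to their images $\bar{G}, \bar{G'} \le \Aut_{\diamondsuit }$, so conjugacy of $G, G'$ in $\Aut(T)$ forces conjugacy of $\bar{G}, \bar{G'}$ in $\Aut_{\diamondsuit }$ (project the conjugating element), and conversely a conjugating element of $\Aut_{\diamondsuit }$ lifts canonically to $\Aut(T)$ and conjugates the stabilizers.

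\textbf{Main obstacle.} The delicate point is the bookkeeping in the inductive step: one must show that a nontrivial component $g_{o}$ cannot be "undone" by the combined effect of the central $\mathbb{Z} \times \mathbb{Z}$ action and the interior translations \eqref{eq6.22}–\eqref{eq6.24} acting on the $\mathbb{R}_{\hat{o}}$ factors. This requires keeping careful track of which factors the central extension acts on trivially — essentially all the $\mathbb{R}_{\hat{o}}$ with $\hat{o} \neq \diamondsuit $ in the presentation \eqref{eq6.20} — so that the genuine permutation of components forced by \eqref{eq6.12} leaves a genuine nonfixed coordinate. The rest is linear algebra over the simplices and a cohomology vanishing that is immediate from the $\Aut(T)$-equivariance of the labeling data.
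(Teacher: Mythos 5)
There is a genuine gap in both halves of the argument, and in both cases the gap is precisely what the paper's proof is built to handle.

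For the first assertion (that a stabilizing element with trivial image in $\Aut_{\diamondsuit}$ must be the identity), your inductive reduction is sound, but the key step is not. A nontrivial $g_{o}$ does have fixed points in $\Delta_{o}$ — any barycenter of a $g_{o}$--invariant face is fixed — so ``the permutation is free on arcs'' doesn't rule out fixing $r_{o}$. You acknowledge this and fall back on the $\mathbb{R}_{\hat{o}}$--coordinates, but the claim that ``the central $\mathbb{Z}\times\mathbb{Z}$ acts trivially on these interior $\mathbb{R}_{\hat{o}}$ factors'' is false: in the presentation \eqref{eq6.15} an integer pair $N$ translates $\mathbb{R}_{\hat{o}}$ by \eqref{eq6.8}, and in \eqref{eq6.20} the central subgroup acts by deck translations on $L_{e}$, which via \eqref{eq6.19} is a nontrivial translation on each $\mathbb{R}_{\hat{o}}$ slot. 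Indeed the whole point of \fullref{prop:6.7} is that a nontrivial $g_{o}$ component \emph{can} be absorbed by the central $\mathbb{Z}\times\mathbb{Z}$ together with a nontrivial $\Aut_{\diamondsuit}$ component — that is how canonical diagonal subgroups acquire fixed points. What actually forces $g_{o}=1$ when the $\Aut_{\diamondsuit}$ component is trivial is the synchronization constraint recorded in \eqref{eq6.42} (and its variant \eqref{eq6.49}): the rational numbers $\iota_{o}/n_{o}$ appearing there must all be congruent modulo $\mathbb{Z}$ to $-\iota_{\diamondsuit}/n_{\diamondsuit}$, so if $\iota_{\diamondsuit}=0$ each $\iota_{o}$ must be an integer and hence zero. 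The paper sidesteps the $\Auth(T)$ formalism for this part: once $\iota\in\Aut(T_{o})$ it reverts to the picture \eqref{eq6.9} and reruns the inductive algebra from Propositions \ref{prop:4.4}--\ref{prop:4.5}. If you want to stay with \eqref{eq6.20}, you need the explicit fixed-point conditions of Parts 1--6 preceding \fullref{lem:6.8}, not the trivial-central-action shortcut.

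For the conjugacy statement, the ``lift the conjugating element canonically'' step is the gap. If $\bar{h}\bar{G}\bar{h}^{-1}=\bar{G}'$ in $\Aut_{\diamondsuit}$ and $h$ is the canonical lift, then $hGh^{-1}$ has image $\bar{G}'$, but that does not make it equal to $G'$: there can be several subgroups of $\Aut(T)$ projecting isomorphically onto the same $\bar{G}'\le\Aut_{\diamondsuit}$, differing by nontrivial components in the various $\Aut(T_{o})$'s, and a priori the stabilizer $G'$ need not be the canonical one. Showing that $hGh^{-1}$ can then be further conjugated into $G'$ by an element of $\times_{o}\Aut(T_{o})$ is exactly the content of Lemmas \ref{lem:6.8} and \ref{lem:6.9}, whose proofs require the fixed-point formulas \eqref{eq6.42}--\eqref{eq6.45}; that induction on generation number is the ingredient you are missing. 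Your forward direction (project the conjugating element) is fine, as is the observation that a $G$-fixed point exists in the face of $\times_{o}\Delta_{o}$ lying over the appropriate fixed locus, but the ``realize exactly $G$'' claim also needs the first assertion's constraint to rule out accidental enlargement — which, again, requires the corrected argument above.
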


The next subsection defines a version of \fullref{thm:6.3}'s map and the
subsequent subsection proves that this map is continuous. The final
subsection provides a simpler picture of the $\Aut(T)$ action on $O_{T}$ when
 $\Aut _{\diamondsuit }$ fixes some incident edge to $\diamondsuit $. For
example, this is the case when $T$ is a linear graph; and the results from
this subsection can be used to deduce \fullref{thm:3.1} from \fullref{thm:6.2}. The
final subsection also contains the proof of \fullref{prop:6.4}.

\subsection{The map from ${\mathcal{M}^{*}}_{\hat{A},T}$ to $\mathbb{R}  \times  O_{T}/\Aut(T)$}
\label{sec:6c}

Suppose that $(C_{0}, \phi )$ gives rise to an element in
${\mathcal{M}^{*}}_{\hat{A},T}$. The image of this element in
$\mathbb{R}  \times O_{T}/\Aut(T)$ is obtained from a point that is
assigned to $(C_{0}, \phi )$ in
$\mathbb{R}_{ - }  \times (\times _{o}(\mathbb{R}_{o}  \times  \Delta _{o}))$.
That latter assignment requires extra choices, some
made just once for all $(C_{0}, \phi )$ and others that are
contingent on the particular pair. As is explained below, the
contingent choices are not visible in $\mathbb{R}  \times O_{T}/\Aut(T)$.
The story on the map to $\mathbb{R}  \times O_{T}/\Aut(T)$ is told in four parts.

\step{Part 1}
This part and Part 2 describe the choices that do not depend on the
given point in ${\mathcal{M}^{*}}_{\hat{A},T}$ nor on its
representative pair $(C_{0}, \phi )$.

The first choice in this regard is an $\Aut(T)$ orbit, $\hat{E}'$, of edges that
end in vertices of $T$ that are labeled by the smallest angle. This angle is
denoted in what follows by $\theta _{ - }$.

The next series of choices involve the vertex $\diamondsuit $. The first of
these involves the already chosen $\Aut(T)$ orbit $\hat{E}$ in the set of incident
edges to $\diamondsuit $. Choose a `distinguished' edge in $\hat{E}$, and a
corresponding `distinguished' vertex on the latter's version of the loop
$\ell _{\diamondsuit (\cdot )}$. Let $\ell _{\diamondsuit }$ denote the
corresponding distinguished version of $\ell _{\diamondsuit (\cdot )}$ and
let $\upsilon _{\diamondsuit }   \in   \ell _{\diamondsuit }$ denote
the distinguished vertex.

To continue this series of choices at $\diamondsuit $, select a
concatenating path set that connects the vertex $\upsilon _{\diamondsuit}$
to each version of $\ell _{\diamondsuit (\cdot )}^{ } \ne   \ell_{\diamondsuit }$.
The definition of such a path set is provided in
\fullref{def:2.2}. However, the selection must be constrained by the two
conditions in the version of \eqreft2{18} where the vertex is $\diamondsuit $ and
 $e$ is the distinguished edge.

An analogous, but more constrained set of choices must be made for each
multivalent vertex $o \in   \mathcal{V} $. To elaborate, let $e$ denote now
the edge that connects $o$ to $T- T_{o}$ and let $e' \ne e$ denote
another incident edge to $o$ that is fixed by the $\mathbb{Z}/(n_{o}\mathbb{Z})$
action. Choose a concatenating path set to connect $\upsilon _{o}$ to
$\ell _{oe'}$ subject to the two conditions in \eqreft2{18}. Such a
concatenating path set must also be chosen when $e'$ is not fixed by
$\mathbb{Z}/(n_{o}\mathbb{Z})$, but additional care must be taken. To describe what
is involved here, let $E$ denote a non-trivial $\mathbb{Z}/(n_{o}\mathbb{Z})$
orbit in $o$'s incident edge set and let $e' \in E$ denote the distinguished
edge. Choose a concatenating path set from $\upsilon _{o}$ to $\ell_{oe'}$
subject to the constraints in \eqreft2{18}. Let $\{\nu _{1},\ldots , \nu _{N}\}$
denote this chosen set. Now, let $\iota  \in   \mathbb{Z}/(n_{o}\mathbb{Z})$ and let
$\nu ^{\iota }\subset   \ell _{o}$ denote the path that starts at $\upsilon _{o}$ and
proceeds opposite the oriented direction along $\ell _{o}$ to its end at
$\iota (\upsilon _{o})$. The set of paths $\{\iota (\nu _{1})\circ \nu ^{\iota }, \iota (\nu _{2}),
\ldots , \iota (\nu_{N})\}$ constitutes a concatenating path set that starts at
$\upsilon_{o}$ and ends at $\ell _{o\iota (e')}$. Here, $\iota (\nu_{1}) \circ \nu ^{\iota }$
denotes the concatenation of the two
paths. This new set obeys the $\iota (e')$ version of \eqreft2{18}. Use the
various versions of this set for the required concatenating paths for the
elements in $E- e'$.

The preceding choices of concatenating path sets at the vertices in
$\mathcal{V} $ must be made so as to be compatible with the $\Aut(T)$ action in
the following sense: Let $o \in   \mathcal{V} $, let $e' \in E_{o}$, and
let $\{\nu _{1}, \ldots , \nu _{N}\}$ denote the chosen
concatenating path set that starts at the vertex $\upsilon _{o}$ and ends
on $\ell _{oe'}$. If $\iota  \in \Aut(T)$ maps the distinguished vertex on
$\ell _{o}$ to that on $\ell _{\iota (o)}$, then
$\{\iota (\nu _{1}), \ldots, \iota (\nu _{N})\}$ is the
chosen concatenating path set that starts at $\upsilon _{\iota (o)}$ and
ends on $\ell _{\iota (o)\iota (e')}$.

\step{Part 2}
Choose a vertex, $\hat {\upsilon }$, in $\bar {\Gamma }^{*}$ that
projects to $\upsilon _{\diamondsuit }$. The choice for this vertex has
three consequences. First, it trivializes the fiber bundle $\mathbb{R}^{\Delta }$
in the following manner: Let $(\tau,r)\in\Maps(\Vertt_{\hat{E}};\mathbb{R})\times\Delta _{\diamondsuit }$
denote any given pair in $\mathbb{R}^{\Delta }$. The trivializing map then
sends $(\tau, r)$ to $(\tau (\hat {\upsilon }), r)$. With this
trivialization understood, write $\mathbb{R}^{\Delta }$ as
$\mathbb{R}_{\diamondsuit }  \times   \Delta _{\diamondsuit }$ where
$\mathbb{R}_{\diamondsuit }$ is a copy of $\mathbb{R}$.

The selection of $\hat {\upsilon }$ identifies $\mathbb{R}^{ - }$
with a fixed copy of $\mathbb{R}$, this labeled as $\mathbb{R}_{ -}$
in what follows. To describe how the identification comes about,
let $e$ denote the chosen distinguished edge in $\hat{E}$. Then $\hat{\upsilon }$
sits on a unique line, $\ell $, in $L_{e}$. Now, let
$e'$ denote some other edge in $\hat{E}$. The $e'$ version of the
concatenating path set defines a path in $\underline {\Gamma}_{\diamondsuit }$
that starts at $\upsilon _{o}$ and ends on $\ell_{\diamondsuit e'}$.
This path has a canonical lift to $\underline{\Gamma }^{*}$ and the latter has a
unique lift to a path in $\bar{\Gamma }^{*}$ that starts at $\hat {\upsilon }$ and ends on a
particular line in $L_{e'}$. The collection consisting of $\ell $
and these other lines specifies a point $L \in   \times_{{\hat{e}} \in \hat{E}}L_{{\hat{e}}}$. As a
map, $\tau _{ - }   \in   \mathbb{R}^{ - }$ is determined by its
value on $L$, so the assignment $\tau _{ - }   \to   \tau _{ - }(L) \in   \mathbb{R}$
identifies $\mathbb{R}^{ - }$ with a fixed line.

The selection of $\hat {\upsilon }$ also identifies each $\hat{U}_{e}$ with
the corresponding space
$W_{e}   \equiv  [\times _{\hat{o} \in \mathcal{V} (e)}  \mathbb{R}_{\hat{o}}]/[\times _{\hat{o} \in
\mathcal{V} (e)}  \mathbb{Z}_{\hat{o}}]$ that appears in \eqref{eq6.18}. Here is
how: If $e$ is the distinguished edge from $\hat{E}$,  then the lift
$\hat{\upsilon }$ sits on a unique $\ell    \in  L_{e}$. Then, the assignment
to a given $x \in  \Maps(L_{e}; W_{e})$ of $x(\ell ) \in W_{e}$ identifies
$\hat{U}_{e}$ with $W_{e}$. If $e'$ is any other edge from
$E_{\diamondsuit }$, the choice of $\hat {\upsilon }$ and the $e'$ version of
the concatenating path set determines a path from $\hat {\upsilon }$ to a
line in $L_{e'}$. The values of $x \in  \Maps(L_{e'},W_{e'})$ on the latter identify
$\hat{U}_{e'}$ with $W_{e'}$.

\step{Part 3}
Granted this identification between \eqref{eq6.20} and \eqref{eq6.15}, a point will be
assigned to the given pair $(C_{0}, \phi )$ in $\mathbb{R}$, in each copy of
$\Delta _{o}$, in each copy of $\mathbb{R}_{o}$, and in $\mathbb{R}_{ - }$.
These assignments all require additional choices that must be made
separately for each pair. These choices are listed below:

\step{Choice 1}
\textsl{A choice of a correspondence of $T$  in $(C_{0}, \phi )$.}

This chosen correspondence is used implicitly in the subsequent choices.

\step{Choice 2}
\textsl{A parameterization for the component in $C_{0}- \Gamma $  that corresponds to the
distinguished edge in $\hat{E}$.}

\noindent The next set of choices are labeled by the multivalent vertices in $T$. The
choice at any given vertex $o$ is contingent on an apriori specification of a
canonical parameterization of the component of $C_{0}- \Gamma $
that corresponds to a certain edge in $E_{o}$. In the case that the $o=\diamondsuit $,
the incident edge is the distinguished edge in $\hat{E}$ and
the parameterization is that provided by Choice 2. In the case that $o \in   \mathcal{V} $,
the edge in question connects $o$ to $T- T_{o}$.

\step{Choice 3} (at $o$)\qua
\textsl{A lift to $\mathbb{R}$  for the $\mathbb{R}/(2\pi \mathbb{Z})$  coordinate of the point on the boundary
of the associated parametrizing cylinder that corresponds to $\upsilon _{o}$.}

The canonical parameterization that is used to make any given $o \in \mathcal{V} $
version of this last choice is determined in an inductive
fashion by the choices for those $\hat{o}  \in T$ where $T_{\hat{o}}$
contains $o$. This induction works as follows: Suppose that $o$ is a multivalent
vertex in $T$ and that a canonical parameterization has been given to the
component of $C_{0}- \Gamma $ whose labeling edge supplies the loop
$\ell _{o}$ in $\underline {\Gamma }_{o}$. Let $e'$ denote any other
incident edge to $o$. Chosen already is a concatenating path set that obeys
the corresponding version of the constraints in \eqreft2{18}. The parametrizing
algorithm from Part 4 of \fullref{sec:2c} uses this path set with $o$'s version of
Choice 3 to specify the canonical parametrization for the $e'$ component of
$C_{0}- \Gamma $.

\step{Part 4}
There are three cases to distinuish in order to describe the data for the
value of the $\mathbb{R}$ factor in $\mathbb{R}  \times O_{T}/\Aut(T)$ on a point that comes from
${\mathcal{M}^{*}}_{\hat{A},T}$.

\substep{Case 1}
In this case $\theta _{ - }   \ne 0$. The orbit $\hat{E}'$ corresponds to a set of convex side ends in
$C$ where the $|s|    \to   \infty $ limit of $\theta $ is $\theta _{- }$. Use $\hat{E}'$
to also denote this set of ends. Associated to
each end $E \in \hat{E}'$ is the real number $b \equiv  b(E)$
that appears in \eqreft23. Here, $b(E)$ must be positive since
$\theta _{ - }$ is the infimum of $\theta $ on $C$. The map to
$\mathbb{R}$ assigns to $(C_{0}, \phi )$ the real number
$-\zeta^{- 1 }\sum _{E \in \hat{E}'} \ln(b(E))$ where
$\zeta\equiv   \surd 6 \sin^{2}\theta _{E} (1+3\cos^{2}\theta _{E})/(1+3\cos^{4}\theta _{E})$.

\substep{Case 2}
In this case $\theta _{ - } = 0$ and
there are no $(1,\ldots)$ elements in $\hat{A}$. Here, $\hat{E}'$
corresponds to a set of disjoint disks in $C_{0}$ whose centers are $\theta = 0$ points.
This understood, then the image of $(C_{0}, \phi )$ in the
$\mathbb{R}$ factor of $\mathbb{R}  \times O_{T}/\Aut(T)$ is the
sum of the $s$--coordinates of the centers of these disks.

\substep{Case 3}
In this case $\theta _{ - } = 0$ and
$\hat{E}'$ corresponds to a set of ends in $C_{0}$ whose constant $|s| $ slices limit to the
$\theta  = 0$ cylinder as $|s|\to   \infty $. Let $\hat{E}'$ also denote this set of ends. Each
$E \in\hat{E}'$ defines the positive constant $\hat {c}   \equiv   \hat {c}(E)$
that appears in \eqref{eq1.9}. Note that the integer $p$ and $p'$ that appear in \eqref{eq1.9}
comprise the pair from the corresponding $(1,\ldots)$
element in $\hat{A}$. The image of $C$ in $\mathbb{R}$ is
$-(\sqrt {\frac{3}{ 2}} +\frac{{p' } }{ p})^{- 1}  \sum _{E \in\hat{E}'} \ln(\hat {c}(E))$.

The point assigned $(C_{0}, \phi )$ in any given version of $\Delta
_{o}$ is defined as follows: As a point in $\Delta _{o}$ is a map
from $\Arc(\underline {\Gamma }_{o})$ to $(0, \infty )$, it is
sufficient to provide a positive number to any given arc subject to
the constraints in \eqref{eq6.6}. For this purpose, let
$\gamma\subset \underline {\Gamma }_{o}$ denote an arc. Then $\gamma $
corresponds via $T_{C}$ to a component of the locus $\Gamma $ in
$C_{0}$. The integral over this component of the pull-back of
$(1-3\cos^{2}\theta ) d\varphi -\surd 6 \cos\theta  dt$ is the
value on $\gamma $ of $C_{0}$'s assigned point in $\Delta _{o}$.

The point assigned $C_{0}$ in any given version of $\mathbb{R}_{o}$ is the
chosen lift from $o$'s version of Choice 3 above.

The point assigned to $C_{0}$ in $\mathbb{R}_{ - }$ is obtained as
follows: Each $e' \in  \hat{E}$ labels a component of $C_{0}-\Gamma $;
keep in mind that each such component has been given a
parametrization. Let $w_{e'}$ denote the function $w$ that appears
in the corresponding version of \eqreft25. Now let $\sigma$ be
any value that is taken by $\theta $ on each $e' \in\hat{E}$ labeled component of
$C_{0}- \Gamma $. With $\sigma $
understood, then $C_{0}$'s assigned value in $R_{ - }$ is
\begin{equation}\label{eq6.27}
-\frac{1 }{ {2\pi }}\alpha _{Q_{\hat{E}} } (\sigma )  \sum
_{e' \in \hat{E}}  {\textstyle\int}  w_{e'}(\sigma , v) dv.
\end{equation}

\subsection{Continuity for the map to $O_{T}/\Aut(T)$}\label{sec:6d}

This subsection makes two key assertions. Here is the first
assertion: The assignment to $(C_{0}, \phi )$ of its point in
$\mathbb{R}  \times O_{T}/\Aut(T)$ is independent of the choices
that are made in Part 3 of the preceding subsection.

The second assertion is that the image of $(C_{0}, \phi )$ in
$\mathbb{R}\times O_{T}/\Aut(T)$ factors through
${\mathcal{M}^{*}}_{\hat{A},T}$. Granted that such is the case, the
constructions in the preceding subsection define a map from
${\mathcal{M}^{*}}_{\hat{A},T}$ to $\mathbb{R}  \times O_{T}/\Aut(T)$
and it follows from the first assertion using \fullref{lem:5.4}
that this is a continuous map.

The remainder of this subsection has nine parts that justify these two
assertions.

\step{Part 1}
The first observation concerns Choice 3 from the preceding subsection. A
referral to the conclusions of Cases 2 and 4 of Part 5 from \fullref{sec:2c} finds
that changes in any $o \in   \mathcal{V} $ version of Choice
3 are already invisible in the space that is depicted in \eqref{eq6.15}. A referral
to these same cases in Part 5 of \fullref{sec:2c} finds that a change in
$\diamondsuit $'s version of Choice 3 or a change in Choice 2 moves
$(C_{0}, \phi )$'s assigned point in \eqref{eq6.15} along its orbit under the
$\mathbb{Z}  \times   \mathbb{Z}$ action that defines the quotient in \eqref{eq6.9}.
Note in this regard that any change in $\diamondsuit $'s version of Choice 3
will move $C$'s point in \eqref{eq6.15} along the $\mathbb{Z}\cdot Q_{\hat{E}}$
subgroup in $\mathbb{Z}  \times   \mathbb{Z}$. In any event, a change in Choice
2 or any version of Choice 3 is invisible in $O_{T}$. The following lemma
states this conclusion in a formal fashion:

\begin{lemma}\label{lem:6.5}

The constructions in the preceding subsection
defines a continuous map from ${{\mathcal{M}^{*}}_{\hat{A},T}}^{\Lambda }$ to
$\mathbb{R}  \times O_{T}$.

\end{lemma}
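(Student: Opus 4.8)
\textbf{Proof proposal for Lemma \ref{lem:6.5}.}

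The plan is to assemble the continuity claim from three separate observations, each of which reduces to facts already established earlier in the paper. First I would verify that the map from $({{\mathcal{M}^{*}}_{\hat{A},T}}^{\Lambda })$ to $\mathbb{R}\times O_T$ is well-defined, i.e. that the image in $\mathbb{R}\times O_T$ is genuinely independent of the choices made in Part 3 of the previous subsection (Choice 1 is built into the definition of $({{\mathcal{M}^{*}}_{\hat{A},T}}^{\Lambda })$, so only Choices 2 and 3 are at issue). For the $o\in\mathcal{V}$ versions of Choice 3, I would invoke Cases 2 and 4 of Part 5 of \fullref{sec:2c}: changing the $\mathbb{R}$-lift of $\upsilon_o$ either leaves the assigned point in \eqref{eq6.15} literally unchanged or moves it by the action of some element of $\times_{\hat{o}\in\mathcal{V}}\mathbb{Z}_{\hat{o}}$, which is exactly the group quotiented out in passing from \eqref{eq6.15} to the factors assembled in \eqref{eq6.20}. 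For a change in $\diamondsuit$'s version of Choice 3, the same two cases of Part 5 of \fullref{sec:2c}, together with Case 2 of Part 5 applied down the tree via \fullref{lem:3.2}, show the assigned point moves along the $\mathbb{Z}\cdot Q_{\hat{E}}$ subgroup of $\mathbb{Z}\times\mathbb{Z}$; a change in Choice 2 moves it along the full $\mathbb{Z}\times\mathbb{Z}$ orbit. In all cases the motion lies inside the group $(\mathbb{Z}\times\mathbb{Z})\times(\times_{\hat{o}\in\mathcal{V}}\mathbb{Z}_{\hat{o}})$ whose quotient is $O_T$ by \eqref{eq6.9}, so the image in $O_T$ is unchanged; meanwhile the $\mathbb{R}$-factor image depends only on the asymptotic constants $b(E)$, $\hat{c}(E)$ or the $s$-coordinates of the $\theta=0$ points of $C$, which are intrinsic to $(C_0,\phi)$. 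This establishes that the construction descends to a genuine map on $({{\mathcal{M}^{*}}_{\hat{A},T}}^{\Lambda })$.

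Second, I would check continuity. The coordinates assembled in Part 4 of \fullref{sec:6c} — the numbers $-\zeta^{-1}\sum\ln b(E)$ or its analogues in the three cases, the arc integrals of the pull-back of $(1-3\cos^2\theta)d\varphi-\surd6\cos\theta\,dt$ defining the $\Delta_o$ coordinates, the lifts from Choice 3 defining the $\mathbb{R}_o$ coordinates, and the expression \eqref{eq6.27} defining the $\mathbb{R}_-$ coordinate — are precisely (up to the invariant recombination just discussed) the functions shown to be smooth coordinates on $\mathcal{S}_{B,c,\mathfrak{d}}$ near each point in \fullref{lem:5.4}. Indeed $-\zeta^{-1}\ln b(E)$ and $-(\sqrt{3/2}+p'/p)^{-1}\ln\hat c(E)$ are the absolute-value parts of the functions $\varpi'$ and $\varpi_{\pm k}$ from \fullref{sec:5d}; the arc integrals and \eqref{eq6.27} are expressed via \eqreft25 in terms of the functions $a$ and $w$ on the preferred parametrizing cylinders, which vary continuously with $(C_0,\phi)$ by the implicit-function-theorem description in Part 4 of \fullref{sec:5b}; and the lifts in Choice 3 vary continuously once a local correspondence of $T$ is fixed, which is exactly the extra structure carried by $({{\mathcal{M}^{*}}_{\hat{A},T}}^{\Lambda })$. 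Hence each coordinate is a continuous function of the point in $({{\mathcal{M}^{*}}_{\hat{A},T}}^{\Lambda })$, and composing with the (continuous) quotient projection $\mathbb{R}_-\times(\times_o(\mathbb{R}_o\times\Delta_o))\to O_T$ gives a continuous map to $\mathbb{R}\times O_T$.

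Third, and least substantial, I would note that the target $\mathbb{R}\times O_T$ is Hausdorff and the source is the smooth manifold $({{\mathcal{M}^{*}}_{\hat{A},T}}^{\Lambda })$ introduced in \fullref{sec:6b}, so no further bookkeeping is needed; continuity of each coordinate function is continuity of the map. The main obstacle I anticipate is bookkeeping the choice-change argument cleanly: one must track how a change in Choice 2 or in the $\diamondsuit$ version of Choice 3 propagates through the iterated canonical-parametrization scheme of Part 4 of \fullref{sec:2c} and verify that the resulting shift of the point in \eqref{eq6.15} lands exactly in the lattice $(\mathbb{Z}\times\mathbb{Z})\times(\times_{\hat{o}\in\mathcal{V}}\mathbb{Z}_{\hat{o}})$ and not in some larger group — this is where \fullref{lem:2.3}, \fullref{lem:3.2}, and the compatibility of the concatenating path sets with the $\Aut(T)$ action (fixed in Part 1 of \fullref{sec:6c}) all get used, and it is essentially the same computation that underlies the invariance arguments of \fullref{sec:3d}. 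Once that is in hand, the continuity statement itself is immediate.
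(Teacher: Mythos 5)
Your proposal handles the independence of Choices~2 and~3 (citing Cases~2 and~4 of Part~5 of \fullref{sec:2c}) and appeals to \fullref{lem:5.4} for continuity — both of which are indeed ingredients the paper uses, though it places them in the paragraph immediately preceding \fullref{lem:6.5} and the paragraph after \fullref{lem:6.6} respectively, not in the proof of \fullref{lem:6.5} itself. What your proposal omits is the actual content of the paper's proof of \fullref{lem:6.5}: showing that the assigned point in $\mathbb{R}\times O_T$ descends to the equivalence classes that constitute ${{\mathcal{M}^{*}}_{\hat{A},T}}^{\Lambda}$.

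You write that ``Choice~1 is built into the definition of ${{\mathcal{M}^{*}}_{\hat{A},T}}^{\Lambda}$, so only Choices 2 and 3 are at issue.'' This is where the gap lies. An element of ${{\mathcal{M}^{*}}_{\hat{A},T}}^{\Lambda}$ is an \emph{equivalence class} of triples $(C_0,\phi,T_C)$, where $(C_0,\phi,T_C)\sim(C_0,\phi\circ\psi,T_C')$ whenever $\psi$ is a holomorphic diffeomorphism of $C_0$ and $T_C'$ is the correspondence obtained from $T_C$ by pulling back via $\psi$. The constructions of \fullref{sec:6c} produce a point in $\mathbb{R}\times O_T$ from a \emph{representative} triple, not from the equivalence class directly. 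Having fixed Choices~2 and~3, one still must verify that $(C_0,\phi,T_C)$ and $(C_0,\phi\circ\psi,T_C')$ are sent to the same point. This is exactly what the paper does: it observes that the $\phi$-image of any $T_C$-labeled subset of $C_0$ equals the $\phi'$-image of the corresponding $T_C'$-labeled subset, from which the $\mathbb{R}$ and $\Delta_o$ factors agree; and then it arranges the $\psi$-pulled-back canonical parametrization for $\psi^{-1}(K_e)$ and propagates the agreement of the $\mathbb{R}$-lifts inductively down the tree to conclude that the $\mathbb{R}_-$ and $\mathbb{R}_o$ factors agree as well. Without this step you have defined a map on triples, not on ${{\mathcal{M}^{*}}_{\hat{A},T}}^{\Lambda}$, and the lemma as stated is not yet proved.
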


\begin{proof}[Proof of \fullref{lem:6.5}]
The conclusions made just
prior to the lemma assert that the constructions of the preceding
subsection assign a unique point in $\mathbb{R}  \times O_{T}$ to
each triple of the form $(C_{0}, \phi , T_{C})$ where $(C_{0},\phi )$
defines an equivalence class in
${\mathcal{M}^{*}}_{\hat{A},T}$ and $T_{C}$ is a correspondence in
$(C_{0}, \phi )$ for $T$. To prove the lemma, it is enough to prove
that the point assigned this triple depends only on its equivalence
class in ${{\mathcal{M}^{*}}_{\hat{A},T}}^{\Lambda }$.

For this purpose, suppose that $\psi $ is a holomorphic
diffeomorphism of $C_{0}$ and let $\phi ' = \phi  \circ \psi $.
Let ${T_{C}}'$ denote the correspondence that is obtained from $T_{C}$
as follows: When $e \in T$ is an edge and $K_{e}$ its
corresponding component in $C_{0}- \Gamma $ as given by $T_{C}$,
then ${T_{C}}'$ makes $e$ correspond to $\psi ^{- 1}(K_{e})$.
Likewise, if $\gamma    \subset \underline {\Gamma }_{o}$ is an arc,
then it corresponds via $T_{C}$ to an arc, $\gamma _{C}$, in
$C_{0}$. The arc $\gamma $ corresponds via ${T_{C}}'$ to $\psi ^{ -1}(\gamma _{C})$.
This is designed so that the $\phi $ image of any
$T_{C}$ labeled subset of $C_{0}$ is identical to the $\phi '$ image
of the corresponding ${T_{C}}'$ labeled subset.

Granted the preceding conclusion, then $(C_{0}, \phi , T_{C})$
and $(C_{0}, \phi ', {T_{C}}')$ have the same image in the
$\mathbb{R}$ factor of $\mathbb{R}  \times O_{T}/\Aut(T)$ since the
respective images in this factor are defined from the $\phi $ and
$\phi'$ images of respective components with the same edge labels
in the $\phi ^*\theta $ and $\phi'^*\theta $ versions of $C_{0}-\Gamma $.
A similar line of reasoning as applied to the arcs in any
given version $\underline {\Gamma }_{o}$ explains why the images of
$(C_{0}, \phi , T_{C})$ and $(C_{0}, \phi ', {T_{C}}')$ agree
in the corresponding $\Delta _{o}$ factor of $O_{T}$.

To continue, let $e$ now denote the distinguished edge in the $\Aut(T)$
orbit $\hat{E}$ from $\diamondsuit $'s incident edge set. The chosen
canonical parametrization for $K_{e}$ can be pulled back via $\psi $
and this pull-back can be used for the canonical parametrization of
$\psi ^{- 1}(K_{e})$. If this is done, then the other choices from
Part 3 of \fullref{sec:6c} can be made so that the canonical
parametrization for any given $\psi ^{- 1}(K_{{\hat{e}}})$ is
the pull-back via $\psi $ of that for $K_{{\hat{e}}}$. Indeed,
since the $\phi $ image of $K_{{\hat{e}}}$ is the same as the
$\phi '$ image of $\psi ^{- 1}(K_{{\hat{e}}})$, the lifts to
$\mathbb{R}$ that are used to inductively define these
parametrizations can be taken to agree at each stage of the
induction. In particular, doing so guarantees that the respective
assignments to $(C_{0}, \phi , T_{C})$ and $(C_{0}, \phi ',{T_{C}}')$ in $\mathbb{R}_{ - }$
and in each $\mathbb{R}_{o}$ factor
of $\mathbb{R}_{ - }  \times (\times _{o}  \mathbb{R}_{o})$ also agree.

\step{Part 2}
Left yet to discuss is the affect in a change for the correspondence of $T$
in $(C_{0}, \phi )$. Now any change from the original correspondence can
be viewed as the result of using the original correspondence after acting on
 $T$ by an element $\iota    \in  \Aut(T)$. Such is the view taken here. With
this understood, consider:
\end{proof}

\begin{lemma}\label{lem:6.6}

The change in the original correspondence by
the action of $\iota    \in  \Aut(T)$  changes the assigned point in
$\mathbb{R}  \times O_{T}$  by the $\Aut(T)$  action of this same
$\iota $.

\end{lemma}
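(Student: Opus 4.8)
The plan is to reduce the general statement to the five cases of Part 5 of \fullref{sec:2c} and the conclusion of \fullref{lem:2.3}, applied vertex by vertex along $T$. First I would unwind what a change of correspondence by $\iota\in\Aut(T)$ does to the data listed in Part 3 of \fullref{sec:6c}: it carries the distinguished edge in $\hat{E}$ to $\iota$ of it, the distinguished vertex $\upsilon_\diamondsuit$ on $\ell_\diamondsuit$ to $\iota(\upsilon_\diamondsuit)$ on $\ell_{\iota(\diamondsuit)}=\ell_\diamondsuit$ (since $\diamondsuit$ is $\Aut(T)$--invariant), and similarly carries each concatenating path set, each parametrization chosen in Choice~2, and each lift chosen in Choice~3, to its $\iota$--image. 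Because all of these choices were required in Part~1 of \fullref{sec:6c} to be made compatibly with the $\Aut(T)$ action, the new data is precisely the data obtained by applying $\iota$ throughout. The goal is then to track how each coordinate assigned to $(C_0,\phi)$ in \eqref{eq6.20}---the $\mathbb{R}$ coordinate, the coordinates in the factors $\Delta_o$, and those in $\mathbb{R}_-$ and the various $\mathbb{R}_o$---transforms, and to match the result with the $\Auth(T)$ action on \eqref{eq6.20} described in Steps 4 and 5 of Part~3 of \fullref{sec:6b}, which descends to the asserted $\Aut(T)$ action on $O_T$.

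The $\mathbb{R}$ coordinate is the easiest: all three of Cases~1--3 of Part~4 of \fullref{sec:6c} define it from the ends in the orbit $\hat{E}'$ via a symmetric sum over that orbit, so it is manifestly invariant under $\iota$, as is the translation action of $\iota$ on the $\mathbb{R}$ factor (which is trivial). For the $\Delta_o$ factors, the value on an arc $\gamma\in\underline{\Gamma}_o$ is the integral of $(1-3\cos^2\theta)d\varphi-\surd6\cos\theta\,dt$ over the component of $\Gamma$ that corresponds to $\gamma$ via the correspondence; changing the correspondence by $\iota$ replaces $\gamma$ by $\iota(\gamma)\in\underline{\Gamma}_{\iota(o)}$ and the corresponding component of $\Gamma$ is unchanged as a subset of $C_0$, so the new assignment to $\iota(\gamma)$ equals the old assignment to $\gamma$. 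This is exactly the rule \eqref{eq6.21} defining the $\Aut(T)$ action on $\times_o\Delta_o$. The substantive work is with the $\mathbb{R}_-$ and $\mathbb{R}_o$ factors. I would proceed inductively from $\diamondsuit$ outward along $T$. At $\diamondsuit$: since the canonical parametrization of $K_e$ (for $e$ the new distinguished edge $\iota(e_{\mathrm{old}})$) is the $\psi$--free pull-back relabeling of the old one and the lift in Choice~3 is relabeled accordingly, formula \eqref{eq6.27} shows the $\mathbb{R}_-$ assignment is unchanged once one accounts for the relabeling $\hat{E}\ni e'\mapsto\iota(e')$ of the summation index; comparing with Step~5 of Part~3 of \fullref{sec:6b}, this is precisely the $\Auth_\diamondsuit$ action on $\mathbb{R}^-$, which sends $\tau_-$ to the map whose value on $\iota(L)$ is $\tau_-(L)$. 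The $\mathbb{R}_\diamondsuit$ assignment transforms in the parallel way under the action on $\mathbb{R}^\Delta$ described in Step~5.

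For a vertex $o\in\mathcal{V}$, I would invoke \fullref{lem:2.3} and Cases 2--4 of Part~5 of \fullref{sec:2c}: the canonical parametrization of $K_{e(o)}$ is obtained by running the parametrizing algorithm along the chosen concatenating path set, which under the change of correspondence is replaced by its $\iota$--image, and \fullref{lem:2.3} identifies the effect of this replacement (composed with the relabeling of the starting vertex from $\upsilon_o$ to $\iota(\upsilon_o)$ along $\ell_o$) as the $\mathbb{Z}\times\mathbb{Z}$ action of $-\phi_o([\mu^*])$, where $\mu^*$ is the relevant loop in $\underline{\Gamma}^*_o$; the homology class $[\mu^*]$ is exactly the sum of arc classes on the oriented path in $\underline{\Gamma}_o$ from $\iota^{-1}(\upsilon_o)$ to $\upsilon_o$, and $\phi_o$ sends this to the combination of $Q_{e(o)}$, $Q_{e'(o)}$ and $m_\upsilon P_o$ terms that appears in the definitions \eqref{eq6.22}--\eqref{eq6.24} of the $\mathbb{Z}/(n_o\mathbb{Z})$ action on $W_{e}\times(\times_{\hat{o}}\Delta_{\hat{o}})$. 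The main obstacle, and the place where care is needed, is bookkeeping the compatibility of these vertexwise transformations with the iterated semidirect product decomposition \eqref{eq6.13}: one must check that the integer-pair shifts produced by \fullref{lem:2.3} at a vertex $\hat{o}\in T_o$ receive the correct contributions both from $\iota_o$ acting on $o$ and, in cascade, from whatever $\iota$ does to the intermediate vertices between $\diamondsuit$ and $\hat{o}$. This is precisely what \eqref{eq6.22}--\eqref{eq6.24} were engineered to record---the $\varepsilon_{\hat{o}}$ factors and the distinguished-edge conventions handle the branching---so the verification amounts to matching the algorithm's output against those formulas term by term, which I would carry out by the same inductive sweep used to define the canonical parametrizations, rather than by any new idea.
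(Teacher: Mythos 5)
Your overall strategy matches the paper's: decompose $\Aut(T)$ via the semidirect product \eqref{eq6.13}, handle the $\mathbb{Z}/(n_o\mathbb{Z})$ subgroups ($o\in\mathcal{V}$) and the $\Aut_\diamondsuit$ subgroup separately, and in each case reduce the vertexwise transformation of the $\mathbb{R}_{(\cdot)}$ factors to \fullref{lem:2.3} and Cases 2--5 of Part 5 of \fullref{sec:2c}, matching against \eqref{eq6.21}--\eqref{eq6.24} and the $\Auth_\diamondsuit$ action from Step 5 of Part 3 of \fullref{sec:6b}. Your treatment of the $\Delta_o$ factors and of the $\mathbb{Z}/(n_o\mathbb{Z})$ subgroups is correct in outline and parallels the paper's Parts 2--5.

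The gap is in your treatment of $\Aut_\diamondsuit$. The claim that formula \eqref{eq6.27} shows the $\mathbb{R}_-$ assignment is ``unchanged once one accounts for the relabeling $\hat{E}\ni e'\mapsto\iota(e')$ of the summation index'' is not right, and not just a matter of phrasing. When $\iota$ moves the distinguished edge in $\hat{E}$, the new Choice 2 must parametrize a \emph{different} component of $C_0-\Gamma$ (namely, the one the original correspondence assigned to $\iota^{-1}(e)$), and the relation between the old and new canonical parametrizations of each $e'\in E_\diamondsuit$--labeled component is governed by integer pairs $N_{e'}$ obtained by applying \fullref{lem:2.3} to specific closed loops $\mu_{e'}^*$ in $\underline{\Gamma}^*_\diamondsuit$; the new $\mathbb{R}_-$ assignment differs from the old by $-2\pi\sum_{e'\in\hat{E}}({n_{e'}}'q_{\hat{E}}-n_{e'}{q_{\hat{E}}}')$, which is generically nonzero. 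To see that this shift (and the parallel shifts in $\mathbb{R}_\diamondsuit$ and the $W_e$ factors) matches the $\Auth_\diamondsuit$ action on \eqref{eq6.20}, one must exhibit a particular lift $\hat\iota\in\Auth_\diamondsuit$ of $\iota$ --- defined as the unique lift sending a certain vertex $\hat\upsilon'\in\bar\Gamma^*$ (the endpoint of a lifted concatenating path from $\hat\upsilon$) to $\hat\upsilon$ --- and then verify term by term that the $N_{e'}$ exactly implement the action of $\hat\iota^{-1}$ on $\Lambda$, on $\mathrm{Vert}_{\hat{E}}$, and on $L_{e'}$ (Parts 7--9 of \fullref{sec:6d}). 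You allude to Steps 4 and 5 of Part 3 of \fullref{sec:6b} but do not identify this lift $\hat\iota$ or the loops $\mu^*_{e'}$; without those, there is nothing to feed into \fullref{lem:2.3}, and the ``matching against the formulas term by term'' has no actual content.
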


Note that the first of the two assertion made at the outset of this
subsection is an immediate consequence of \fullref{lem:6.6}. Meanwhile, the second
of these assertions is a direct corollary to Lemmas~\ref{lem:6.5} and~\ref{lem:6.6}.

The remainder of this part and the subsequent parts of this subsection
contain the folowing proof.

\begin{proof}[Proof of \fullref{lem:6.6}]
Consider first the case where $o$ is a vertex in
$\mathcal{V} $ and the automorphism $\iota $ generates the corresponding
$\mathbb{Z}/(n_{o}\mathbb{Z})$ subgroup. The composition of the original
correspondence with $\iota $ does not change the identification between
\eqref{eq6.20} and \eqref{eq6.15}. This said, the new correspondence does not change the
assignment to $(C_{0}, \phi )$ in either $\mathbb{R}_{ - }$ nor in any
$\hat{o} \in  T- T_{o}$ version of $\mathbb{R}_{\hat{o}}\times   \Delta _{\hat{o}}$.

To analyze the change of $(C_{0}, \phi )$'s assigned point in the
remaining factors $\mathbb{R}_{\hat{o}}  \times   \Delta _{\hat{o}}$,
let $\hat{o}$ denote a vertex in $T_{o}$ and let $\gamma $
denote an arc in $\underline {\Gamma }_{\hat{o}}$. The original
correspondence identified $(\hat{o}, \gamma )$ with a component of the
complement of the critical point set in the $\theta =\theta_{\hat{o}}$
locus in $C_{0}$. The new one identifies the latter
component now with the arc $\iota (\gamma )$ in $\underline {\Gamma}_{\iota (\hat{o})}$.
Thus, the new map for $C_{0}$ in $\Delta_{\iota (\hat{o})}$ gives the value on
$\iota (\gamma )$ that the original map for $C$ in $\Delta _{\hat{o}}$ gives $\gamma $. This is
what \eqref{eq6.21} asserts.
\end{proof}

\step{Part 3}
Consider now the change in the assignment to $\mathbb{R}_{o}$. Let $e$ denote
the edge that connects $o$ to $T- T_{o}$. Choices 2 and 3 from Part 3
of the preceding subsection can be made for the new correspondence so that
the original parametrization of $K_{e}$ is unchanged. The original
correspondence identifies the distinguished vertex $\upsilon _{o}$ with a
particular missing or singular point on the $\sigma =\theta _{o}$
circle of the parametrizing cylinder for $K_{e}$. Let $\upsilon $ denote the
latter point and let $\upsilon'$ denote the missing or singular point on
this same circle that is identified by the original correspondence with
$\iota ^{- 1}(\upsilon _{o})$. The new correspondence identifes
$\upsilon '$ with $\upsilon _{o}$. As a consequence, the $o$ version of
Choice 3 for the new correspondence can be taken to be a lift of the
$\mathbb{R}/(2\pi \mathbb{Z})$ coordinate of $\upsilon'$. This is what \eqref{eq6.22}
asserts.

\step{Part 4}
Turn next to the change in some $\mathbb{R}_{\hat{o}}$ in the case
that $\hat{o}$ is a vertex in $T_{o}-o$. Here, there are two cases to
consider. The first case is that where $\hat{o}$ is in a component of
$T_{o}-o$ that is fixed by $\iota $. Thus, the component connects to
 $o$ through an $\iota $--invariant edge in $E_{o}$. Let $e'$ denote the latter.
The change in the assignment to $\mathbb{R}_{\hat{o}}$ is determined up
to Choice 3 modifications by the change in the canonical parametrization of
$K_{e'}$. Indeed, if the parametrization changes due to the action of
some integer pair $N$, then the conclusions in Part 5 of \fullref{sec:2c} imply that
the new assignments to the versions of $\mathbb{R}_{\hat{o}}$ that
connect to $o$ through $e'$ can be made so that each differs from the original
by the addition of the number depicted in \eqref{eq6.8}.

Granted the preceding, let $\{\nu _{1}, \ldots, \nu _{N}\}$
denote the concatenating path set that connects $\upsilon _{o}$ to a
vertex in $\ell _{oe'}$. The new parametrization can be viewed as
one that is obtained via the parametrizing algorithm of
\fullref{sec:2c} by using the original parametrization of
$K_{e}$, the original $\mathbb{R}$ lift of the $\mathbb{R}/(2\pi\mathbb{Z})$
coordinate of the point $\upsilon $ on the $\sigma=\theta _{o}$
circle in the parametrizing domain, but a different
concatenating path set from $\upsilon _{o}$ to a vertex on $\ell_{oe'}$. To explain,
let $\nu ^{\iota }$ denote the path in $\ell_{o}$ that starts at $\upsilon _{o}$ and runs opposite the
orientation to $\iota ^{- 1}(\upsilon _{o})$. The new concatenating
path set is $\{\iota ^{- 1}(\nu _{1}) \circ \nu ^{\iota }, \iota^{- 1}(\nu _{2}),
\ldots,\iota ^{- 1}(\nu _{N})\}$. Let $\gamma_{N}   \subset   \nu _{N}$ denote the final arc.
Thus, $\gamma _{N}\subset   \ell _{oe'}$. Let $\nu '  \subset   \ell _{oe'}$ denote
the oriented path in $\ell _{oe'}$ that starts with $\iota ^{-1}(\gamma _{N})$
and then runs from its ending vertex in the
oriented direction on $\ell _{oe'}$ to the end vertex of $\gamma_{N}$,
then finishes by traversing $\gamma _{N}$ against its
orientation to its start vertex. The ordered set $\{\iota ^{- 1}(\nu_{1}) \circ \nu ^{\iota },
\iota ^{- 1}(\nu _{2}), \ldots, \iota ^{- 1}(\nu _{N}), \nu ', {\nu _{N}^{- 1}}, \ldots, {\nu_{1}}^{ - 1}\}$ is a concatenating path set that starts at $\upsilon_{o}$
and has final arc ending at $\upsilon _{o}$. Let $\mu $ denote
the oriented loop in $\underline {\Gamma }_{o}$ that is obtained by
traversing the constituent paths from left most to right most in
this ordered set, taking into account that the the final arc of any
one is the initial arc of the next. This loop has a canonical lift,
$\mu ^{*}$, to a loop in ${\underline {\Gamma }^{*}}_{o}$. According
to \fullref{lem:2.3}, the new parametrization of $K_{e'}$ is
obtained from the old by the action of the integer pair
$-\phi_{o}([\mu ^{*}])$ where $[\mu ^{*}]$ denotes the homology class of
the oriented loop $\mu ^{*}$ and $\phi _{o}$ denotes the canonically
defined class in $H^{1}({\underline {\Gamma }^{*}}_{o}; \mathbb{Z}
\times   \mathbb{Z})$.

The computation of $\phi _{o}([\mu _{*}])$ relies on the fact
that $\phi _{o}$ is $\mathbb{Z}/(n_{o}\mathbb{Z})$ invariant. Thus, as
$\iota $ is the generator,
\begin{equation}\label{eq6.28}
\phi _{o}([\mu _{*}]) = \frac{1 }{ {n_o }}\phi
_{o}\Bigl([\mu _{*}] + [(\iota (\mu ))_{*}] + \cdots + \bigl[\bigl(\iota ^{n_o - 1}(\mu )\bigr)_{*}\bigr]\Bigr).
\end{equation}
This understood, the sum of homology classes that appears here is the class
that is obtained by traversing $\ell _{oe'}$ once in its oriented
direction while traversing $\ell _{oe}$ once in the direction that is
opposite to its orientation. Thus,
\begin{equation}\label{eq6.29}
\phi _{o}([\mu _{*}]) = \frac{1 }{ {n_o }}(Q_{e'} - Q_{e}) .
\end{equation}
With the preceding understood, the resulting change in the assignment to
$(C_{0}, \phi )$ in $\mathbb{R}_{\hat{o}}$ is that given by \eqref{eq6.23}.

\step{Part 5}
This part considers the case that $\hat{o} \in T_{o}-o$ is a
vertex that lies in a component that is not fixed by the
$\mathbb{Z}/(n_{o}\mathbb{Z})$ action. Let $e' \in  E_{o}$
again denote the edge that connects $\hat{o}$'s component of $T_{o}-o$ to $o$.
Let $E \subset E_{o}$ denote the orbit of $e'$ for the
$\mathbb{Z}/(n_{o}\mathbb{Z})$ action. Let $\hat{e}$ denote the edge
in $T$ that connects $\hat{o}$ to $T- T_{\hat{o}}$. The
component of $C_{0}- \Gamma $ that is labeled by the new
correspondence by $\iota (\hat{e})$ is the component that the old
one assigned to $\hat{e}$. The parametrization of this component will
change by the action of an integer pair, $N$. This understood, the new
assignment in $\mathbb{R}_{\iota (\hat{o})}$ is thus obtained
from the old assignment in $\mathbb{R}_{\hat{o}}$ by adding
the term in \eqref{eq6.8}.

The key point now is that the integer pair $N$ is the same for all vertices in
$\hat{o}$'s component of $T_{o}-o$. In particular, the integer pair $N$ is
the pair that changes the parameterization of the component that is labeled
by $e'$ using the old parametrization. Let $K'$ denote the latter. To explore
the parametrization change, let $\{\nu _{1}, \ldots, \nu _{N}\}$
denote the chosen concatenating path set that is used originally to give the
canonical parametrization to $K'$. Now, there are two cases to consider. In
the first, $e'$ is the distinguished edge in its orbit. In this case, $\iota(e')$
is the first edge and the concatenating path that gives its new
orientation is $\{\nu _{1} \circ \nu , \nu _{2}, \ldots, \nu_{N}\}$ where
$\nu $ is the path that circumnavigates $\ell _{o}$ in
the direction opposite to its orientation. According to the conclusions from
Part 5 of \fullref{sec:2c}, this then implies that $N= Q_{e}$. In the second
case, $e'$ is not the distinguished edge. In this case, the same concatenating
path set as the original gives the new parametrization of $K'$ and so $N = 0$.
Thus, the affect of $\iota $ on the $\{\mathbb{R}_{\hat{o}}\}$
assignments are as depicted by \eqref{eq6.24}.

\step{Part 6}
This and Parts 7--9 concern the effect on $(C_{0}, \phi )$'s assigned
point in\break $O_{T}/\Aut(T)$ when the change in the correspondence is obtained
using an element in \eqref{eq6.13}'s $\Aut _{\diamondsuit }$ subgroup. Let $\iota $
denote such an element. The first point to consider is the affect of $\iota$
on the assignment to the simplex $\Delta _{\diamondsuit }$. Let $r$ denote
the original assignment and $r'$ the new one. If the original correspondence
identifies a given component of the $T_{C}$ version of $\Gamma_{\diamondsuit }$
with a given arc $\gamma $ in the $T$ version of
$\underline {\Gamma }_{\diamondsuit }$, then the new correspondence
identifies this same component with $\iota (\gamma )$. But this means that
$r'$ has the same value on $\iota (\gamma )$ as $r$ does on $\gamma $. Thus,
$r' = \iota (r)$.

This line of reasoning finds the analogous formula for the change in
the assignment to $\times _{o \in \mathcal{V} }  \Delta _{o}$. To be
precise, use $r_{o}$ to denote the original assignment to $\Delta_{o}$ and
$r'_{o}$ the new one. Then $r'_{\iota (o)}(\iota (\gamma)) = r_{o}(\gamma )$ for all
$\gamma \in \underline {\Gamma}_{o}$.

\step{Part 7}
The story on the other aspects of $(C_{0}, \phi )$'s assignment in
$O_{T}/\Aut(T)$ starts here by considering the new assignment for
$(C_{0}, \phi )$ in the space depicted in \eqref{eq6.15}. This assigned point has a partner
in the space depicted in \eqref{eq6.20}. The original assignment for $(C_{0}, \phi)$
also provides a point for $(C_{0}, \phi )$ in the space
depicted in \eqref{eq6.20}. These two points are compared in what follows.

The new assignment in \eqref{eq6.15} requires new versions of Choice 2 and Choice 3
from Part 3 of the preceding subsection. This is because $\iota $ can move
the distinguished edge in $\hat{E}$. To make these new choices, let $e$ denote the
original distinguished edge in $\hat{E}$. The component of $C_{0}- \Gamma $
that corresponds via the new correspondence to $e$ is the
component, denoted here by $K$, that corresponds via the original to $\iota^{- 1}(e)$.
Choice 2 requires a parametrization of $K$.  A convenient
choice is the canonical parametrization as defined by the original choices.

The $\diamondsuit $ version of Choice 3 requires a lift to $\mathbb{R}$ of the
point on the $\sigma =\theta _{\diamondsuit }$ circle of $K$'s
parametrizing cylinder that corresponds via the new correspondence to the
distinguished vertex $\upsilon _{\diamondsuit }$. This is the point that
corresponds to $\iota ^{- 1}(\upsilon _{\diamondsuit })$ via the
original correspondence. A convenient choice is obtained as follows: Let
$\{\nu _{1}, \ldots, \nu _{N}\}$ denote the concatenating path
set that is used to define the original parametrization for $K$.  Note that the
ending vertex of $\nu _{N}$ corresponds via the original correspondence to
a certain point on the $\sigma =\theta _{\diamondsuit }$ circle in the
original parametrizing cylinder of $K$.  Use $z$ to denote this point. The
parametrizing algorithm from \fullref{sec:2c} provides an $\mathbb{R}$ lift of the
$\mathbb{R}/(2\pi \mathbb{Z})$ coordinate of $z$. Subtract from this the
integral of $dv$ in the oriented direction on the $\sigma =\theta_{\diamondsuit }$
circle along the segment that starts at $\iota ^{- 1}(\upsilon _{\diamondsuit })$
and ends at $z$. Let $\tau _{\new}$
denote the resulting real number. Use $\tau _{new}$ for the new version of
$\diamondsuit $'s Choice 3.

\step{Part 8}
This same $\tau _{\new}$ is the new assignment for $(C_{0}, \phi )$ in
the $\mathbb{R}_{\diamondsuit }$ factor in \eqref{eq6.15}. The story on the new
assignments to the other factors requires the preliminary digression that
follows. To start the digression, let $e$ be an incident edge to $\diamondsuit
$, and let $\{{\nu^{e}}_{1}, \ldots, {\nu^{e}}_{N(e)}\}$ denote
the chosen concatenating path set that connects $\upsilon _{\diamondsuit}$
to a vertex on $\ell _{oe}$. Let $\hat{e}$ denote the edge that is mapped
by $\iota $ to the distinguished vertex in $\hat{E}$. The $\hat{e}$ version of this
concatenating path set is denoted at times as in Part 7, thus by
$\{\nu_{1}, \ldots, \nu _{N}\}$.

To continue, let $\nu $ denote the path in $\ell _{o{\hat{e}}}$ that
starts at the beginning of the final arc in $\nu _{N}$ and, after reaching
the end of this arc, then proceeds against the orientation to the point
$\iota ^{- 1}(\upsilon _{\diamondsuit })$. Then both
$\{{\nu^{e}}_{1}, \ldots, {\nu^{e}}_{N(e)}\}$ and
$\{\nu _{1}, \ldots, \nu _{N}, \iota ^{- 1}({\nu^{\iota (e)}}_{1})\circ \nu,
\iota ^{- 1}({\nu ^{\iota (e)}}_{2}), \ldots, \iota ^{- 1}({\nu ^{\iota (e)}}_{N(\iota (e))})\}$ define
concatenating path sets that run from $\upsilon _{\diamondsuit }$ to $\ell_{\diamondsuit e}$.
Let $\nu' $ denote the path in $\ell_{\diamondsuit e}$ that starts with the last arc in
$\iota ^{- 1}({\nu ^{\iota (e)}}_{N(\iota (e))})$, then continues in the oriented
direction from the latter's end to the start of the final arc in $\nu^{e}_{N(e)}$
and then traverses this last arc in reverse. So defined, the set\vrule
width 0pt depth 10pt
\begin{equation}\label{eq6.30}
\bigl\{\nu _{1}, ... , \nu _{N}, \iota ^{- 1}({\nu^{\iota
(e)}}_{1}) \circ \nu , \iota ^{- 1}({\nu ^{\iota (e)}}_{2}), ...
, \iota ^{- 1}({\nu ^{\iota (e)}}_{N(\iota (e))}), \nu ', {{\nu
^{e}}_{N(e)}}^{-1}\!\!, ... ,{{\nu^{e}}_{1}}^{- 1}\bigr\}
\end{equation}
concatenates by gluing the final arc of each constituent path to the first
arc of the subsequent one so has to give a loop that starts and ends at
$\upsilon _{\diamondsuit }$. Let $\mu _{e}$ denote this loop.

As constructed, the loop $\mu _{e}$ as a canonical lift as a
loop in ${\underline {\Gamma}^{*}}_{\diamondsuit }$, this denoted by
${\mu _{e}}^{*}$. Let $N_{e} = (n_{e}, {n_{e}}')$ denote the value
of $-\phi _{\diamondsuit }$ on $[{\mu _{e}}^{*}]$. According to
\fullref{lem:2.3}, the parameterization of the component of
$C_{0}- \Gamma $ that originally corresponded to $e$ is changed with
the new choices by the action of the integer $N_{e}$.

As a consequence of this last conclusion, the new assignment to $(C_{0}, \phi )$ in the
$\mathbb{R}_{ - }$ factor of \eqref{eq6.15} is obtained from the old
by adding $-2\pi   \sum _{e \in \hat{E}} ({n_{e}}'q_{\hat{E}}- n_{e}{q_{\hat{E}}}')$.

Here is a second consequence: The new assignment to $(C_{0}, \phi )$ in
the $\times _{o \in \mathcal{V} } \mathbb{R}_{o}$ factor can be made
so that if $\tau _{o}$ denotes the original assignment in $\mathbb{R}_{o}$
and ${\tau _{o}}'$ the new one, then
\begin{equation}\label{eq6.31}
{\tau _{\iota (o)}}'
=
\tau _{o}- 2\pi
\frac{\alpha_{N_e } (\theta _{\hat{o}} )}{\alpha _{Q_{\hat{e}} } (\theta _{\hat{o}} )} ,
\end{equation}
where $e$ denotes the edge that connects $o$'s component of $T- \diamondsuit $ to $\diamondsuit $.

Note for the future that there is one more path in $\underline{\Gamma }_{\diamondsuit }$
that plays an important role in the
story, this the path that is constructed from the paths in the set
$\{\nu _{1}, \ldots, \nu _{N}, \nu \}$ by identifying the final
arc in any $\nu _{j}$ with the starting arc in the subsequent path.
This path is denoted below as $\mu $. It starts at $\upsilon_{\diamondsuit }$
and ends at $\iota ^{- 1}(\upsilon _{o})$. The
path $\mu$ has a canonical preimage in ${\underline {\Gamma}^{*}}_{\diamondsuit }$.
The latter is denoted in what follows by $\mu ^{*}$.

\step{Part 9}
It remains now to compare the new and old assignments in \eqref{eq6.15} by viewing
the latter as points in \eqref{eq6.20}. There are three steps to this task.

\substep{Step 1}
For this purpose, use $(\tau, r) \in \Maps(\Vertt_{\hat{E}})  \times \Delta _{\diamondsuit }$
to denote $(C_{0}, \phi )$'s original assignment in $\mathbb{R}^{\Delta }$. Use
$(\tau ', r')$ to denote the new assignment. As noted already, $r' = \iota (r)$.
To see about $\tau '$, let $\hat {\upsilon }$ denote the chosen point
in $\bar {\Gamma }^{*}$ over $\upsilon _{\diamondsuit }$. The path
$\mu ^{*}$ has a lift to $\bar {\Gamma }^{*}$ that starts at
$\hat {\upsilon }$ and ends at some vertex $\hat {\upsilon }'$, a vertex
that maps to $\iota ^{- 1}(\upsilon _{\diamondsuit })$. According
to \eqref{eq6.16}, the value of $\tau $ on $\hat {\upsilon }'$ is $\tau _{new}$.
Meanwhile, $\tau _{\new}$ is also equal to the value of $\tau '$ on $\hat{\upsilon }$.
Indeed, such is the case because Part 7 found $\tau _{\new}$
to be $(C_{0}, \phi )$'s new $\mathbb{R}_{\diamondsuit }$ assignment in
\eqref{eq6.15}.

Now, $\iota $ has a unique lift in $\Auth_{\diamondsuit }$ that sends
$\hat {\upsilon }'$ to $\hat {\upsilon }$. Use $\hat {\iota }$ to
denote the latter. The conclusions of the last paragraph assert that
$\tau'(\hat {\upsilon })=\tau _{new}=\tau (\hat {\iota }^{- 1}\hat {\upsilon })$.
Thus, $\tau ' = \hat {\iota }(\tau )$.

\substep{Step 2}
To study the $\mathbb{R}^{ - }$
factors, let $\tau _{ - }$ denote the original assignment and
${\tau_{ - }}'$ the new one. The values for $\tau _{ - }$ and $\tau _{ -}'$
were defined by specifying them on a particular element in the
set $\Lambda $. Denote this element by $\mathcal{L}$. To elaborate, the
component of $L$ with label any given $e \in  \hat{E}$ was obtained
as follows: Construct the path $\nu ^{e}$ in $\underline {\Gamma}_{o}$
from $\{{\nu^{e}}_{1}, \ldots, {\nu^{e}}_{N(e)}\}$ by gluing
the end arc of any $j< N(e)$ version to the starting arc of the
subsequent version. This path has a canonical lift to ${\underline{\Gamma }^{*}}_{o}$
and thus a canonical lift to $\bar {\Gamma}^{*}$ as a path that starts at $\hat {\upsilon }$. The end
vertex of this lift lies on an inverse image of ${\ell^{*}}_{\diamondsuit e}$.
This is the component of $L$ that is labeled by $e$.

The conclusions in Part 7 about the $\mathbb{R}_{ - }$ factors can now be
summarized by the relation $\tau '(L) = \tau (L) - 2\pi   \sum _{e\in \hat{E}} ({n_{e}}'q_{\hat{E}}-
n_{e}{q_{\hat{E}}}')$. As will now be explained, this is also the
value of $\tau $ on $\hat {\iota }^{- 1}(L)$. To see why, let $e \in \hat{E}$,
let $\ell $ denote $e$'s component of $L$ and let $\ell '$ denote the
component for $\iota (e)$. Thus, $\hat {\iota }^{- 1}(\ell ')$ is
in $L_{e}$ and this is $e$'s component of $\hat {\iota }^{- 1}(L)$. As
such, it is obtained from $\ell $ by the action of some element in $\mathbb{Z}\times \mathbb{Z}$.
To find this element, note that as $\ell $ was defined
by the set $\{{\nu^{e}}_{1}, \ldots, {\nu^{e}}_{N(e)}\}$, so
$\hat {\iota }^{- 1}(\ell ')$ is defined by the ordered set
$\{\nu _{1}, \ldots, \nu _{N}, \iota ^{- 1}({\nu^{\iota (e)}}_{1}) \circ \nu ,
\iota ^{- 1}({\nu ^{\iota(e)}}_{2}), \ldots, \iota ^{- 1}({\nu ^{\iota (e)}}_{N(\iota (e))})\}$. This then implies that $\hat {\iota }^{- 1}(\ell ')$
can be obtained from $\ell $ by the action of $N_{e}$.

Granted that such is the case for all $e \in \hat{E}$,  then \eqref{eq6.17} gives
$\tau (\hat {\iota }^{- 1}(L))$ the desired value.

\substep{Step 3}
The final concern is that of the
assignments to any given version of $\hat{U}_{(\cdot )}$. For this purpose,
suppose that $e$ is an incident edge to $\diamondsuit $. The old and new
correspondences each assign $(C_{0}, \phi )$ a map from $L_{e}$ to
$W_{e}= (\times _{o \in \mathcal{V} (e)}  \mathbb{R}_{o})/(\times _{e \in
\mathcal{V} (e)}  \mathbb{Z}_{o})$. These maps are defined by lifts to
$\times_{o \in \mathcal{V} (e)}\mathbb{R}_{o}$. In what follows,
$\tau_{o}(\ell )$ and ${\tau _{o}}'(\ell )$ are used to denote the
respective old and new values on a given $\ell    \in L_{e}$ for the
factor labeled by a given $o \in   \mathcal{V} (e)$.

Now, both maps to $W_{e}$ are determined by their values on a particular
element in $L_{e}$. The latter, $\ell ^{e}$, is determined as follows: As
described previously, the paths that comprise the set
$\{{\nu^{e}}_{1}, \ldots, {\nu^{e}}_{N(e)}\}$ glue together in a
sequential fashion so as to define a path from $\upsilon _{\diamondsuit }$
to a vertex on $\ell _{oe}$. This path then lifts to a path that starts at
$\hat {\upsilon }$ and ends at a vertex on a unique preimage of
$\ell_{oe}$ in $\bar {\Gamma }^{*}$. This preimage is $\ell ^{e}$.

The components of $\Gamma $ that are assigned by the new correspondence to
vertices in $\mathcal{V} (\iota (e))$ are assigned to vertices in $\mathcal{V}(e)$
by the old correspondence. This understood, let $o \in   \mathcal{V}(e)$.
Equation \eqref{eq6.31} describes the relationship between $\tau _{\iota(o)}'(\ell ^{\iota (e)})$
and $\tau _{o}(\ell )$.

As is explained next, \eqref{eq6.31} is also the formula for $\hat {\iota }(x)$ on
$\ell ^{\iota (e)}$. Indeed, by definition,
$\hat {\iota }(x)(\hat{\iota }(\ell ^{e})) = x(\ell ^{e})$. To see what this means, note
that $\hat {\iota }(\ell ^{e})   \in  L_{\iota (e)}$ and so
$\ell ^{\iota (e)}$ is obtained from $\hat {\iota }(\ell ^{e})$ by the
action of some integer pair. As argued at the end of Step 2 in the case when
 $e \in \hat{E}$,  this integer pair is $N_{e}$. This being the case, then
\eqref{eq6.19} implies that $x'$ equal $\hat {\iota }(x)$.

\subsection{More about $\Aut(T)$}\label{sec:6e}

The story told by Theorems~\ref{thm:6.2} and \ref{thm:6.3} simplifies to some extend when
 $\Aut _{\diamondsuit }$ fixes one of $\diamondsuit $'s incident edges. As is
explained below, there is no need in this case to use the space in \eqref{eq6.20}
because the $\Aut(T)$ action is readily visible on the space in \eqref{eq6.9}.

To begin the story in this case, let $e$ denote an incident edge to
$\diamondsuit $ that is fixed by $\Aut(T)$. Agree to use $e$ for the
distinguished incident edge orbit. Since $\Aut _{\diamondsuit }$ fixes $e$, it
must act as a subgroup of the group of automorphisms of the labeled graph
$\ell _{\diamondsuit e}   \equiv   \ell _{\diamondsuit }$. Thus,
 $\Aut _{\diamondsuit }$ is a cyclic group whose order is denoted by
$n_{\diamondsuit }$. This implies that $\Aut(T)$ is isomorphic to the $o=\diamondsuit $
version of \eqref{eq6.14}. Granted these remarks, view
 $\Aut _{\diamondsuit }=\mathbb{Z}/(n_{\diamondsuit }\mathbb{Z})$ as a
subgroup of $\Aut(T)$ using this same version of \eqref{eq6.14}.

Define the action of $\Aut _{\diamondsuit }$ on the space in \eqref{eq6.9} via an
action of $\Auth_{\diamondsuit }$ on the space in \eqref{eq6.15}. In this regard,
note that the group $\Auth_{\diamondsuit }$ is isomorphic now to
\begin{equation}\label{eq6.32}
\biggl[\biggl(\frac{1 }{ {n_\diamondsuit }}\mathbb{Z}\biggr)  \times (\mathbb{Z}  \times   \mathbb{Z})\biggr]/\mathbb{Z},
\end{equation}
where the notation is as follows: The first factor in the brackets arises as
the $\mathbb{Z}$ extension
\begin{equation}\label{eq6.33}
1  \to   \mathbb{Z}\to \frac{1 }{{n_\diamondsuit }}\mathbb{Z}\to
\mathbb{Z}/(n_{\diamondsuit }\mathbb{Z})\to 1,
\end{equation}
where the $\mathbb{Z}$ action is that of the subgroup that covers the identity
in $\mathbb{Z}/(n_{\diamondsuit }\mathbb{Z})$. Meanwhile, the $\mathbb{Z}$ action
on $\mathbb{Z}  \times   \mathbb{Z}$ is the subgroup $\mathbb{Z}\cdot Q_{e}$
whereby  $1 \in   \mathbb{Z}$ acts as $-Q_{e}$. This understood, the
equivalence class in \eqref{eq6.32} of a pair $(z, N= (n, n'))$ with
$z\cdot n_{\diamondsuit }   \in   \mathbb{Z}$ and $N \in   \mathbb{Z}  \times \mathbb{Z}$
acts on the factor $\mathbb{R}_{ - }$ in \eqref{eq6.15} as the translation
by $-2\pi (n'q_{e}- n{q_{e}}')$. Meanwhile, it acts on $\mathbb{R}_{\diamondsuit }$ as the
translation by the $\hat{o} = \diamondsuit $ version of
\begin{equation}\label{eq6.34}
-2\pi \frac{\alpha _{N + zQ_e } (\theta _{\hat{o}} )}{\alpha _{Q_{\hat{e}} } (\theta _{\hat{o}} )}.
\end{equation}
The action of $\iota  = [z, N]$ on the factors $\times _{o}  \Delta_{o}$
is such that if $r \in   \Delta _{o}$ and $\gamma $ is an arc in
$\underline {\Gamma }_{o}$, then the resulting $\iota (r)(\iota(\gamma ))$ equals
$r(\gamma )$; here, $\iota (\gamma )$ is the image of
$\gamma $ under the induced map from the set of arcs in $\underline {\Gamma}_{o}$
to the set in $\underline {\Gamma }_{\iota (o)}$.

The description of the action on $(\times _{o \in \mathcal{V} }\mathbb{R}_{o})/
(\times _{o \in \mathcal{V} }  \mathbb{Z}_{o})$
requires distinguishing two separate cases. For this purpose, keep
in mind the following: When $E \subset E_{\diamondsuit }$ is an
 $\Aut _{\diamondsuit }$ orbit, then $\Auth_{\diamondsuit }$
preserves the factor $\times _{e' \in E}[(\times _{o \in
\mathcal{V} (e')}  \mathbb{R}_{o})/(\times _{o \in \mathcal{V} (e')}\mathbb{Z}_{o})]$.
Up for discussion first is the case where $E$ is
a single edge. In this regard, note that $\mathcal{V} (e) = ${\o} so
there is at most one such $E$ unless $\Aut _{\diamondsuit }$ is
trivial. Let $e'$ denote the fixed edge. Then the action of $(z, N)
\in \Auth_{\diamondsuit }$ comes from the following action on
$\times _{o \in \mathcal{V} (e')}  \mathbb{R}_{o}$: Let $\tau $ denote
a point in this product and let $\hat{o} \in \mathcal{V} (e')$. Then
the value of $\iota (\tau )$ in the factor labeled by any
given $\iota (\hat{o})$ is obtained by subtracting
\begin{equation}\label{eq6.35}
\frac{2\pi }{\alpha _{Q_{\hat{e}} } (\theta _{\hat{o}} )}\biggl[\frac{1 }{
{n_\diamondsuit }}  \alpha _{Q_e - Q_{e' } } (\theta
_{\hat{o}})+\alpha _{N + zQ_e } (\theta _{\hat{o}})\biggr]
\end{equation}
from the value of $\tau $ in the factor labeled by $\hat{o}$. Here, $\hat{e}$ is the
edge that connects $\hat{o}$ to $T- T_{\hat{o}}$.

If $E \subset E_{\diamondsuit }$ is a non-trivial orbit of
 $\Aut _{\diamondsuit }$, then $E$ has $n_{\diamondsuit }$ edges and
they have a canonical cyclic ordering. The definition of the
$\Auth_{\diamondsuit }$ action on $\times _{e' \in E}[(\times _{o \in \mathcal{V} (e')}
\mathbb{R}_{o})/$\linebreak
$(\times _{o \in \mathcal{V} (e')}\mathbb{Z}_{o})]$ requires the choice of a
distinguished edge in $E$ and the definition of a compatible linear
ordering with the distinguished edge last. Granted such a choice,
define the action of $(z, N) \in \Auth_{\diamondsuit }$ from
the following action on $\times _{o \in \mathcal{V} (e')}\mathbb{R}_{o}$:
Let $\tau $ denote a point in this product and let
$\hat{o} \in   \mathcal{V} (e')$. Then the value of $\iota (\tau )$ in the factor labeled
by any given $\iota (\hat{o})$ is obtained by subtracting
\begin{equation}\label{eq6.36}
\frac{2\pi }{\alpha _{Q_{\hat{e}} } (\theta _{\hat{o}} )}\biggl[\varepsilon
_{\hat{o}}  \alpha _{Q_e } (\theta _{\hat{o}})+\alpha
_{N + zQ_e } (\theta _{\hat{o}})\biggr]
\end{equation}
from the value of $\tau $ on $\hat{o}$. Here, $\varepsilon _{\hat{o}} = 0$
unless the edge that connects $\hat{o}$'s component of $T_{\diamondsuit}- \diamondsuit $ to
$\diamondsuit $ is the distinguished edge in
$E$. In this case, $\varepsilon _{\hat{o}} = 1$.

The identification given above between \eqref{eq6.15} and \eqref{eq6.20} intertwines these
$\Auth(T)$ actions if the various concatenating path sets are chosen in an
appropriate fashion. This understood, the version given here of the $\Aut(T)$
action on $O_{T}$ can be used for $\hat{O}_{T}/\Aut(T)$ in the statements of
Theorems~\ref{thm:6.2} and \ref{thm:6.3}.

With regards to $O_{T}-\hat{O}_{T}$, more can be said here than is
stated in \fullref{prop:6.4}. For this purpose, various
notions must be introduced. The first is the integer, $m_{ - }$,
this the least common divisor of the integers that comprise the pair
$Q_{e}$. Next, let $T^{F}   \subset T$ denote the subgraph on which
$\Aut(T)$ acts trivially. Thus, the edge $e$ is in $T^{F}$, but $T^{F}$
may well be bigger. In any event, $T^{F}$ is connected. Let $k_{T}$
denote the greatest common divisor of the integers in the set that
consists of $m_{ - }$ and the versions of $n_{o}$ for vertices $o \in  T^{F}$.

The next notion is that of a `canonical diagonal subgroup' of
$\Aut(T)$. To define this notion, note that if $o \in  T^{F}$, then
$\mathbb{Z}/(n_{o}\mathbb{Z})$ has a unique
$\mathbb{Z}/(k_{T}\mathbb{Z})$ subgroup and the latter has a
canonical generator; this is the element that is the smallest
multiple of the generator of $\mathbb{Z}/(n_{o}\mathbb{Z})$.
Granted the preceding, a subgroup of $\Aut(T)$ is a canonical diagonal
subgroup when it has the following two properties: First, it is
cyclic of order $k_{T}$. Second, it has a generator that maps to the
canonical generator of each $\mathbb{Z}/(k_{T}\mathbb{Z})$
subgroup of each $o \in  T^{F}$ version of
$\mathbb{Z}/(n_{o}\mathbb{Z})$. According to
\fullref{prop:6.4}, any two canonical diagonal subgroups are
conjugate in $\Aut(T)$.

With the introduction now over, consider:

\begin{proposition} \label{prop:6.7}

The stabilizer in $\Aut(T)$  of any given point in $O_{T}$  is a subgroup of some
canonical diagonal subgroup. Conversely, if $G$  is a subgroup of a canonical diagonal
subgroup of $\Aut(T)$,  then the fixed point set of $G$  is the product of its
corresponding fixed point set in $\times _{o}  \Delta_{o }$ and a product of circles,
one that corresponds to the factor $\mathbb{R}_{ - }$  and the rest labeled in a
canonical fashion by the various orbits of $G$  in $T$'s  multivalent vertex set.

\end{proposition}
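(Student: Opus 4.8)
The plan is to adapt the argument that proves Propositions~\ref{prop:4.4} and~\ref{prop:4.5} to the present, non-linear situation, using the concrete model for the $\Aut(T)$ action on $O_T$ provided in \fullref{sec:6e}. Under the standing hypothesis that $\Aut_{\diamondsuit}$ fixes an incident edge $e$ to $\diamondsuit$, one has $\Aut_{\diamondsuit}=\mathbb{Z}/(n_{\diamondsuit}\mathbb{Z})$ and $O_T$ is the quotient of $(\times_o\Delta_o)\times[\mathbb{R}_-\times(\times_o\mathbb{R}_o)]$ by $(\mathbb{Z}\times\mathbb{Z})\times(\times_{\hat o\in\mathcal{V}}\mathbb{Z}_{\hat o})$, with $\Aut(T)$ acting through lifts in the extension $\Auth(T)\cong\bigl[\bigl(\frac{1}{n_{\diamondsuit}}\mathbb{Z}\bigr)\times(\mathbb{Z}\times\mathbb{Z})\bigr]/\mathbb{Z}$ of \eqref{eq6.32}, whose effect on the individual factors is given explicitly by \eqref{eq6.34}, \eqref{eq6.35} and \eqref{eq6.36}. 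As in \fullref{sec:4c}, the $\Aut(T)$-stabilizer of the image of a point $\mathfrak{p}$ of the cover is computed by asking which $\iota\in\Aut(T)$ admit a lift $\hat\iota\in\Auth(T)$ fixing $\mathfrak{p}$; by \fullref{prop:6.4} this stabilizer projects isomorphically onto a cyclic subgroup of $\Aut_{\diamondsuit}$, whose order I will call $k$, and whose generator I will call $g$.

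The core of the proof is an induction through the multivalent vertices of $T$ that proceeds outward from $\diamondsuit$ — equivalently, through the vertices in order of increasing angle, exactly as in Steps~2--6 of the proof of Propositions~\ref{prop:4.4} and~\ref{prop:4.5}. At $\diamondsuit$ itself, fixing the $\mathbb{R}_-$-coordinate of $\mathfrak{p}$ forces the $\mathbb{Z}\times\mathbb{Z}$-part of $\hat g$ to have the form $r_-Q_e$ with $m_-r_-\in\mathbb{Z}$ (here one uses that the distinguished edge $e$ lies in $T^{F}$, so $Q_{\hat E}=Q_e$, together with the $\mathbb{R}_-$-translation rule of \fullref{sec:6e}), while fixing the $\Delta_{\diamondsuit}$- and $\mathbb{R}_{\diamondsuit}$-coordinates, via the $\Aut_{\diamondsuit}$-action on $\Delta_{\diamondsuit}$ and \eqref{eq6.34}, forces $kr_-\in\mathbb{Z}$ and hence $k\mid m_-$. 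Passing to a vertex $o$: if $o\in T^{F}$, then $g$ fixes $o$ and acts on $\underline{\Gamma}_o$ as a power of the canonical generator of $\mathbb{Z}/(n_o\mathbb{Z})$, and requiring the $\Delta_o$- and $\mathbb{R}_o$-coordinates of $\mathfrak{p}$ to be fixed, through \eqref{eq6.35}, forces $k\mid n_o$ and pins $g|_{\underline{\Gamma}_o}$ to the appropriate power of that generator; if $o\notin T^{F}$, then $g$ genuinely permutes $o$ within its orbit, and \eqref{eq6.36} expresses the $\mathbb{R}_{g(\hat o)}$-coordinate of any fixed $\mathfrak{p}$ in terms of its $\mathbb{R}_{\hat o}$-coordinate while forcing the $\Delta$-coordinates along the orbit to agree under the induced identifications. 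Carrying this to the end of the tree shows $k\mid k_T$ and that $g$ restricts to $T^{F}$ as a power of the canonical generator, so the stabilizer is contained in a conjugate of a canonical diagonal subgroup; the conjugacy half of \fullref{prop:6.4} then delivers the first assertion in the stated form.

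For the converse, let $G$ be a subgroup of a canonical diagonal subgroup. Running the same computations in reverse, the $G$-fixed locus in $\times_o\Delta_o$ is the product, over the $G$-orbits of multivalent vertices and over the arcs of a representative $\underline{\Gamma}_{(\cdot)}$ in each orbit, of the sub-simplices cut out by the $G$-equivariant constraints \eqref{eq6.6}; this set is non-empty and is precisely the ``corresponding fixed point set in $\times_o\Delta_o$'' of the proposition. Over it, the conditions that a point of the cover be $G$-fixed, read off from the $\mathbb{R}_-$-translation rule and from \eqref{eq6.34}, \eqref{eq6.35} and \eqref{eq6.36}, determine the $\mathbb{R}_{\hat o}$-coordinates of every vertex in a $G$-orbit from the coordinate of one representative and constrain the $\mathbb{R}_-$-coordinate only modulo $2\pi$ times a $G$-dependent divisor of $m_-$; after passing to the quotient $O_T$, the residual parameters are exactly one circle from $\mathbb{R}_-$ and one circle for each $G$-orbit of multivalent vertices, which is the asserted product structure, and comparison with the fiber-bundle projection $O_T\to\times_o\Delta_o$ identifies this locus with the restriction of that projection over the $\Delta$-fixed set. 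I expect the main obstacle to be the bookkeeping of the lattice $(\mathbb{Z}\times\mathbb{Z})\times(\times_{\hat o\in\mathcal{V}}\mathbb{Z}_{\hat o})$: converting ``fixed modulo the lattice'' into the divisibilities $k\mid m_-$ and $k\mid n_o$, and then, on the quotient, checking that the $G$-fixed locus carries \emph{exactly} one circle per $G$-orbit plus one from $\mathbb{R}_-$ — neither collapsing circles that should persist nor producing spurious higher-dimensional tori — which requires tracking which lattice directions act effectively along the fixed set; the case distinction in \fullref{sec:6e} between a singleton $\Aut_{\diamondsuit}$-orbit of edges in $E_{\diamondsuit}$, governed by \eqref{eq6.35}, and a non-trivial such orbit, governed by \eqref{eq6.36}, must be carried through in both directions.
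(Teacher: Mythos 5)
Your proposal follows essentially the same route as the paper's proof: fix the standing hypothesis that $\Aut_{\diamondsuit}$ fixes an incident edge of $\diamondsuit$, lift the putative stabilizing element into the extension $\Auth(T)$ of \eqref{eq6.32}, read off the fixed-point conditions on each factor from the explicit translation formulas \eqref{eq6.34}--\eqref{eq6.36}, and induct outward along the tree (the paper's version is the chain of equations \eqref{eq6.46}--\eqref{eq6.49}, culminating in the observation that all the quantities $\iota_o/n_o$ for $o\in T^F$ must coincide with $\iota_\diamondsuit/n_\diamondsuit$, after which the converse is dispatched using the various versions of \eqref{eq6.43}). The only spot worth tightening is the line deducing $k\mid m_-$ directly from ``$m_-r_-\in\mathbb{Z}$ and $kr_-\in\mathbb{Z}$'' --- on its own that implication does not hold, and the actual argument must use the full content of the $\mathbb{R}_\diamondsuit$-condition (the analogue of \eqref{eq6.47}--\eqref{eq6.48}) linking $r_-$ to $\iota_\diamondsuit/n_\diamondsuit$, whose denominator is what forces the divisibility.
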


The proof of this proposition uses arguments from the proof of \fullref{prop:6.4}
and so the latter proof is offered first.

\begin{proof}[Proof of \fullref{prop:6.4}]
To prove the first assertion, suppose for
argument's sake that an element $\iota    \in  \Aut(T)$ maps to the identity
in $\Aut _{\diamondsuit }$ yet fixes a given point in $O_{T}$. This means that
$\iota $ is determined by its components in the versions of $\Aut(T_{(\cdot)})$
that are labeled by the vertices that share an edge with $\diamondsuit$.
Thus, it is sufficient to consider the case where $\iota    \in\Aut(T_{o})$
for some $o \in  T- \diamondsuit $. As such, it is
permissable to view $O_{T}$ as in \eqref{eq6.9}. Granted this, then the arguments
given in \fullref{sec:3c} to prove Propositions~\ref{prop:4.4} and~\ref{prop:4.5}
can be borrowed almost verbatim to prove that $\iota $ is the identity element.

The proof of the last part of the proposition uses an induction argument
that moves from any given $o \in T$ to the vertices in $T_{o}- o$ that share its incident edges.
The phrasing of the induction step
uses the notion of the `generation number' of a multivalent vertex in $T$.
Here, $\diamondsuit $ is the only generation 0 vertex, and a vertex $o$ has
generation $k > 0$ when it shares an edge with a generation $k-1$ vertex in
$T- T_{o}$. Use $\Vertt(k)$ to denote the set of generation $k$ vertices.
\end{proof}
The lemma that follows facilitates the induction.

\begin{lemma}\label{lem:6.8}
Suppose that $k > 0$  and that $\iota $  and
$\iota '$  both stabilize points in $O_{T}$  and have the same
image in $\Aut(T)/(\times _{o \in \Vertt(k)}$ $\Aut(T_{o}))$.
Then, there exists some\break $j \in  \times _{o \in \Vertt(k + 1)}
\Aut(T_{o})$  such that $j\iota j^{- 1}$  and $\iota' $
have the same image in\break $\Aut(T)/(\times _{o \in \Vertt(k + 1)}  \Aut(T_{o}))$.

\end{lemma}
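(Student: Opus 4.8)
The plan is to adapt, essentially verbatim, the inductive argument that establishes Propositions \ref{prop:4.4} and \ref{prop:4.5}, now organized by the generation number of the multivalent vertices of $T$. Set $N_k\equiv\times_{o\in\Vertt(k)}\Aut(T_o)$; in terms of the semi-direct decomposition \eqref{eq6.13} this is the subgroup whose components are trivial on all data of generation less than $k$. These subgroups are normal in $\Aut(T)$ (an automorphism fixes $\diamondsuit$ and preserves generations, hence carries $\Aut(T_o)$ to $\Aut(T_{\iota(o)})$), they are nested, and $N_k/N_{k+1}\cong\times_{o\in\Vertt(k)}\Aut_o$, compatibly with \eqref{eq6.14}. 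The first task is the rigidity statement that underlies \fullref{prop:6.4}: if $\iota\in\Aut(T)$ stabilizes a point $x\in O_T$, then the component of $\iota$ in any given version of $\Aut_o$ is forced by the components of $\iota$ at vertices of lower generation together with the $\Delta_{(\cdot)}$ data of $x$. This is proved vertex by vertex in order of increasing generation. As in Steps 2--5 of the proof of \fullref{prop:4.4}, one writes the fixed-point equation for the $\mathbb{R}_o$ coordinate of $x$ --- the $O_T$ analog of \eqref{eq4.10}--\eqref{eq4.13}, built now from the translation amounts \eqref{eq6.7}, \eqref{eq6.8} and \eqref{eq6.22}--\eqref{eq6.24} --- and uses the edge-label identity $\sum_{e\in E_-}Q_e-\sum_{e\in E_+}Q_e=P_o$ from \eqreft63 to rewrite the equation at a generation-$(k{+}1)$ vertex as the equation already solved at its generation-$k$ predecessor. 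The upshot, exactly as in \fullref{prop:6.4}, is that $\iota$ is a power of the generator of a canonical diagonal subgroup determined by $x$.

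Granted the rigidity, the proof of \fullref{lem:6.8} runs as follows. Assuming $\iota\equiv\iota'\bmod N_k$, reduce modulo $N_{k+1}$: the element $\iota'^{-1}\iota$ then projects to some $\prod_{o\in\Vertt(k)}\bar{g}_o\in N_k/N_{k+1}=\times_{o\in\Vertt(k)}\Aut_o$. Applying the rigidity statement to the point stabilized by $\iota$ and to the point stabilized by $\iota'$, and using that the components of $\iota$ and of $\iota'$ at all lower-generation vertices agree, each $\bar{g}_o$ is pinned down; the role of the conjugating element $j$ is to realign the generation-$(k{+}1)$ layer of $\iota$ with that of $\iota'$, equivalently to realign the ``distinguished vertex'' choices $\{\upsilon_o\}_{o\in\Vertt(k+1)}$ that enter the non-canonical splittings \eqref{eq6.13}, \eqref{eq6.14}, so that the forced agreement is visible as $j\iota j^{-1}\equiv\iota'\bmod N_{k+1}$. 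Concretely, one takes $j=\prod_{o'\in\Vertt(k+1)}j_{o'}$ with $j_{o'}\in\Aut(T_{o'})$ and computes, via \fullref{lem:2.3} together with the change-of-parametrization formulas behind \eqref{eq6.22}--\eqref{eq6.24} (the $O_T$ incarnation of the discrepancy computation in \eqref{eq6.28}, \eqref{eq6.29}), that each $j_{o'}$ can be chosen so that the corresponding discrepancy --- a single integer pair fed through \eqref{eq6.8} --- is cancelled, components not moved by $\iota'^{-1}\iota$ contributing nothing by \eqref{eq6.24}.

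The main obstacle is the bookkeeping forced by the non-canonical nature of \eqref{eq6.13}: the very notion of ``the generation-$k$ layer of $\iota$'' depends on the chosen distinguished vertices on the loops $\ell_o$, and conjugation by $j$ interacts with those choices through the path-dependence of the parametrizing algorithm, controlled by \fullref{lem:2.3}. One must check that a single $j\in N_{k+1}$ resolves all the generation-$(k{+}1)$ discrepancies at once; this is handled exactly as in the proof of \fullref{lem:6.6}, where the analogous discrepancies for moved and unmoved components of $T_o-o$ are computed in \eqref{eq6.21}--\eqref{eq6.29}, and where one uses that $j_{o'}$ has order dividing $n_{o'}$ so the realignment is available. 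Once \fullref{lem:6.8} is in hand, iterating it (the generations of $T$ are bounded, so finitely many applications suffice) and composing the successive conjugators --- which lie in the nested subgroups $N_2\supset N_3\supset\cdots$, hence eventually in the trivial group --- conjugates any stabilizer generator with a prescribed image in $\Aut_{\diamondsuit}$ to any other, which is the conjugacy half of \fullref{prop:6.4}.
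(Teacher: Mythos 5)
Your proposal has two serious gaps, and the first would have flagged a typo in the lemma's statement.

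A conjugating element drawn from $N_{k+1}\equiv\times_{o\in\Vertt(k+1)}\Aut(T_{o})$ cannot do what the lemma asks. This subgroup is normal in $\Aut(T)$, so for any $j\in N_{k+1}$ one has $j\iota j^{-1}N_{k+1}=\iota N_{k+1}$; conjugation by $j$ is invisible modulo $N_{k+1}$. With $j$ restricted to $N_{k+1}$ the conclusion would therefore be equivalent to the conjugation-free assertion $\iota\equiv\iota'\bmod N_{k+1}$, which is false in general. The paper's own proof constructs $j$ in $\times_{o\in\Vertt(k)}\Aut_{o}$, and the proof of \fullref{lem:6.9} cites the lemma as supplying ``some $j\in\times_{o\in\Vertt(k)}\Aut(T_{o})$''; the $\Vertt(k+1)$ in the statement is a misprint for $\Vertt(k)$. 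Your $j=\prod_{o'\in\Vertt(k+1)}j_{o'}$ acts trivially on every $\underline{\Gamma}_{o}$ and every loop $\ell_{o}$ with $o\in\Vertt(k)$, and ``realigning the distinguished vertices $\{\upsilon_{o}\}_{o\in\Vertt(k+1)}$'' shifts the splitting \eqref{eq6.13}--\eqref{eq6.14} only at generation $k+1$. The generation-$k$ components $\bar{g}_{o}$, the ones that must be cancelled, are untouched, so the mechanism you describe cannot achieve the required realignment.

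Even with $j$ placed at the correct generation, the assertion that ``each $j_{o'}$ can be chosen so that the corresponding discrepancy is cancelled'' skips the solvability condition that is the actual content of the lemma. The requirement on $j$ is the paper's \eqref{eq6.44}, namely $\iota'_{o}-\iota_{o}=j_{o}-j_{\iota(o)}$ for every $o\in\Vertt(k)$. Over an $\iota$-orbit $M$ this is a cyclic linear system, solvable precisely when the orbit sum $\sum_{o\in M}(\iota_{o}-\iota'_{o})$ vanishes in $\mathbb{Z}/(n_{\hat{o}}\mathbb{Z})$, which is \eqref{eq6.45}. Your appeal to rigidity does not deliver this: the fixed-point equation at a vertex that $\iota$ moves, \eqref{eq6.43}, carries the term $\sum_{\gamma}r_{\hat{o}}(\gamma)$ and so depends on the simplex data of the particular stabilized point, which may differ between the points stabilized by $\iota$ and by $\iota'$. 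Hence the individual discrepancies $\bar{g}_{o}$ need not vanish, and rigidity alone says nothing about their orbit sums. The simplex-free rigidity \eqref{eq6.42} applies only at vertices fixed by $\iota$, and the paper's route to \eqref{eq6.45} is exactly to invoke that there, applied to the powers $\iota^{|M|}$ and $\iota'^{|M|}$: these fix $\hat{o}\in M$, still agree modulo $N_{k}$, and their $\hat{o}$-components are the orbit sums $\sum_{o\in M}\iota_{o}$ and $\sum_{o\in M}\iota'_{o}$, which are thereby forced to coincide. This step is the crux of the lemma and is absent from your argument.
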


This lemma is proved below. The actual induction step is made using the
following generalization:

\begin{lemma}\label{lem:6.9}

Suppose that $k > 0$  and suppose that $G$  and
$G'$  are subgroups of $\Aut(T)$  that stabilize points in $O_{T}$  and
have the same image in $\Aut(T)/(\times _{o \in \Vertt(k)}$ $\Aut(T_{o}))$.
Then, there exists some $j  \in   \times _{o \in\Vertt(k + 1)} \Aut(T_{o})$  such that
$jGj^{- 1}$  and $G'$  have the
same image in $\Aut(T)/(\times _{o \in \Vertt(k + 1)}\Aut(T_{o}))$.
\end{lemma}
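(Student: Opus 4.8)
The plan is to prove \fullref{lem:6.9} by imitating the argument behind \fullref{lem:6.8}, adding the care needed to treat a whole subgroup at once rather than a single automorphism. Recall from \eqref{eq6.13} and the exact sequences \eqref{eq6.11} that for $k\ge 1$ the normal subgroup $\times_{o\in\Vertt(k+1)}\Aut(T_o)$ sits inside $\times_{o\in\Vertt(k)}\Aut(T_o)$ with quotient $\times_{o\in\Vertt(k)}(\mathbb{Z}/(n_o\mathbb{Z}))$, the product of the cyclic groups $\Aut_o$. So the assertion of the lemma amounts to this: after replacing $G$ by $jGj^{-1}$ for a suitable $j\in\times_{o\in\Vertt(k+1)}\Aut(T_o)$, the images of $G$ and of $G'$ in each $o\in\Vertt(k)$ version of $\mathbb{Z}/(n_o\mathbb{Z})$ coincide. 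Since the hypothesis already gives agreement of $G$ and $G'$ modulo $\times_{o\in\Vertt(k)}\Aut(T_o)$, this matching of the generation-$k$ data is precisely what is needed. The conjugating element $j$ is stated for use in the inductive proof of \fullref{prop:6.4}, where the elements produced at successive generations are composed; conjugation by $\times_{o\in\Vertt(k+1)}\Aut(T_o)$ cannot disturb the matching already achieved at generations at most $k-1$.

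The first step is to fix a canonical identification between $G$ and $G'$. Here I would invoke the part of \fullref{prop:6.4} proved independently of this lemma: since $G$ and $G'$ stabilize points of $O_T$, the projection to $\Aut_\diamondsuit$ is injective on each of them, hence so is the projection onto $\Aut(T)/(\times_{o\in\Vertt(k)}\Aut(T_o))$. As the hypothesis gives that $G$ and $G'$ have the same image under this last projection, there is a unique isomorphism $\Phi\colon G\to G'$ with $g$ and $\Phi(g)$ mapping to the same element of $\Aut(T)/(\times_{o\in\Vertt(k)}\Aut(T_o))$ for every $g\in G$. It is the generation-$k$ parts of $g$ and $\Phi(g)$ that must be shown to agree.

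The heart of the matter is a synchronization statement generalizing the role of the integer $k_v$ in the proofs of Propositions~\ref{prop:4.4} and~\ref{prop:4.5}, and which for a single automorphism is the content of \fullref{lem:6.8}. Fix $g\in G$ and let $x\in O_T$ be a point it stabilizes. Then $g$ must fix the image of $x$ in every simplex $\Delta_o$, so the cyclic permutation of $\Arc(\underline{\Gamma}_o)$ induced by the $\Aut_o$ component of $g$ preserves the arc-length vector that $x$ records there; and $g$ must also fix the coordinates of $x$ in $\mathbb{R}_{-}$ and in each $\mathbb{R}_o$. Reading these last constraints through the translation formulas \eqref{eq6.22}, \eqref{eq6.23} and \eqref{eq6.24}, in the manner that \eqref{eq3.19} and \fullref{lem:3.2} are used in \fullref{sec:3d}, forces the rotation amounts of $g$ at the vertices $o$ in a single orbit of the generation-less-than-$k$ part of $g$ to be a common fraction of a full turn, and forces this fraction to be compatible from one generation to the next; equivalently, $g$ lies in a diagonal-type subgroup whose `direction' is determined by the image of $g$ in $\Aut(T)/(\times_{o\in\Vertt(k)}\Aut(T_o))$. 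Running the same analysis with $x'\in O_T$ in place of $x$ and $\Phi(g)$ in place of $g$ yields the identical constraint, because $g$ and $\Phi(g)$ have the same image in $\Aut(T)/(\times_{o\in\Vertt(k)}\Aut(T_o))$. Hence $g$ and $\Phi(g)$ have equal components in every $o\in\Vertt(k)$ version of $\mathbb{Z}/(n_o\mathbb{Z})$.

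It then remains to package this element-by-element conclusion into a single conjugating $j$. Since the diagonal direction extracted in the previous paragraph depends only on the common image of $G$ and $G'$ in $\Aut(T)/(\times_{o\in\Vertt(k)}\Aut(T_o))$, and not on the individual element, the generation-$k$ components of $G$ and $G'$ in fact already agree, so $j$ may be taken to be the identity and the lemma follows; the general $j$ is retained in the statement only to streamline the compounding induction in \fullref{prop:6.4}. The main obstacle is the synchronization argument itself: one must track carefully, via \eqref{eq6.22}--\eqref{eq6.24}, how the constraint imposed by a fixed point propagates across the vertices of one orbit and from generation to generation, and verify that the resulting fraction is read off from the shared lower-generation data in the same way for $G$ as for $G'$.
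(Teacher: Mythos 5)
Your plan starts out along the same lines as the paper's: reduce the assertion to a statement about the generation-$k$ data, and handle it element by element through the fixed-point analysis. But the key claim in your third paragraph --- that the synchronization forces the generation-$k$ components of $g$ and $\Phi(g)$ to agree outright, so $j$ may be taken to be the identity --- is false, and the rest of the argument collapses with it.

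The error is in what the stabilizer condition actually pins down. Reading through \eqref{eq6.40}--\eqref{eq6.43}, the fixed-point equations determine the $\mathbb{Z}/(n_{o}\mathbb{Z})$ component of $g$ at a vertex $o\in\Vertt(k)$ only when $g$ \emph{fixes} $o$ (this is the content of \eqref{eq6.42}). When $o$ lies in a nontrivial orbit of $g$, the relevant equation is \eqref{eq6.43}, and going once around the orbit ties together only the \emph{sum} of the rotation amounts over the orbit, not the individual amounts at each vertex; the individual $\mathbb{R}_{o}$--coordinates of the fixed point give exactly the extra freedom needed to satisfy \eqref{eq6.43} for a whole conjugacy class of possible generation-$k$ data. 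This is precisely why \fullref{lem:6.8} needs a nontrivial conjugator at all: its proof shows $\iota_{o}=\iota'_{o}$ only at $\iota$--fixed vertices of $\Vertt(k)$, and then solves \eqref{eq6.44} for a generally nonzero collection $\{j_{o}\}$ on the nontrivial orbits. So the passage in your argument from ``$g$ and $\Phi(g)$ yield the identical constraint'' to ``$g$ and $\Phi(g)$ have equal $\mathbb{Z}/(n_o\mathbb{Z})$ components at every $o\in\Vertt(k)$'' does not follow: identical constraints pick out the same conjugacy class of solutions, not the same solution.

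Once you grant that a nontrivial $j=j(g)$ may be needed for each $g\in G$, the real content of the lemma becomes the uniformity statement --- that a \emph{single} $j$ works simultaneously for every element of $G$ --- and your proposal does not address this at all. The paper handles it by a maximality argument: it picks a maximal $H\subset G$ admitting a common conjugator, takes $\iota\in G-H$, applies \fullref{lem:6.8} to get $j$ with $j\iota j^{-1}$ matching its partner $\iota'\in G'$, and then checks that the commutators $[j,h]$ for $h\in H$ already lie in $\times_{o\in\Vertt(k+1)}\Aut(T_{o})$. That last check is the crux; it uses the observation that the assignment $o\mapsto j_{o}$ from \eqref{eq6.44} is constant on $H$--orbits in $\Vertt(k)$, and proving this constancy requires another appeal to \fullref{lem:6.8} for the element $h^{-1}\iota$ together with the naturality of the cyclic group actions under $\Aut(T)$. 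None of this is present or implicit in your argument. You would need to either supply the maximality/orbit-constancy step, or find an alternative reason that the per-element conjugators can be chosen coherently.
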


\begin{proof}[Proof of \fullref{lem:6.9}]
By appeal to \fullref{lem:6.8} there are non-trivial subgroups in $G$ and in $G'$
that have the same image in $\Aut(T)/(\times _{o \in \Vertt(k +1)\_}\Aut(T_{o}))$.
Take $H \subset G$ and $H'  \subset  G'$ to be
a maximal group of this sort in the following sense: There is no
subgroup of $G$ that properly contains $H$ and has a partner in $G'$
with the same image in $\Aut(T)/(\times _{o \in \Vertt(k +1)\_}\Aut(T_{o}))$.
The argument that follows deriving nonsense if $H  \ne G$.

To start this derivation, suppose that $\iota    \in  G- H$ and let
$\iota '  \in  G'$ denote the element that shares $\iota $'s image
in $\Aut(T)/(\times _{o \in \Vertt(k)} \Aut(T_{o}))$. A second appeal
to \fullref{lem:6.8} finds some $j \in   \times _{o \in \Vertt(k)} \Aut(T_{o})$ such that
$j\iota j^{- 1}=\iota '$. Of
concern is whether the commutators of $j$ with the elements in $H$ are
all in $\times _{o \in \Vertt(k + 1)}\Aut(T_{o})$. If such is the
case, then the group generated by $H$ and $j\iota j^{- 1}$ contains $H$
as a proper subgroup and has the same image in $\Aut(T)/(\times _{o\in \Vertt(k + 1)}\Aut(T_{o}))$
as the group generated by $H$ and
$\iota '$. Of course, this contradicts the assumed maximality of $H$.
In any event, $j$ acts in $\mathbb{Z}/(n_{o}\mathbb{Z})$ as the
translation by some integer, here denoted by $j_{o}$. Thus, $j_{o}\in  \{0, \ldots , n_{o}-1\}$.
Note that the commutators of $j$
with the elements of $H$ all lie in $\times _{o \in \Vertt(k + 1)}\Aut(T_{o})$
if and only if the assignment $o \to j_{o}$ is
constant on $H$ orbits in $\Vertt(k)$.

To see that such must be the case, note first that $\iota $ and $\iota '$
have identical actions on $\Vertt(k)$ since their actions here are determined by
their actions on the set of edges that are incident to the vertices in
$\Vertt(k-1)$. Thus if $\iota $ fixes $o \in  \Vertt(k)$, then so does $\iota '$.
In this case, there is nothing lost by taking $j_{o} = 0$. Suppose next that
$\iota $ does not fix $o$. Now, $\iota $ maps $\ell _{o}$ to $\ell _{\iota(o)}$ as does
$\iota'$, and the affect of the latter is that of $j_{\iota(o)}\iota {j_{o}}^{- 1}$.

Keep this last observation on hold for the moment for the following key
oberservations: First, if $u \in \Aut(T)$, then $n_{o}= n_{u(o)}$.
Second, $u$ induces a map from the vertex set on $\ell _{o}$ to that on
$\ell _{u(o)}$ that respects the cyclic ordering. Third, this map
intertwines the action of $i \in   \mathbb{Z}/(n_{o}\mathbb{Z})$ with its
corresponding action in $\mathbb{Z}/(n_{u(o)}\mathbb{Z})$. Written
prosaically, $u\cdot i = i\cdot u$.

Here now are the salient implications of these last observations:
Suppose that $o \in  \Vertt(k)$ and that $h^{- 1}\iota (o) = o$ for
some  $h \in  H$. By appeal to \fullref{lem:6.8}, there exists an
element $i_{o}   \in \Aut(T_{o})$ such that $i_{o}(h^{- 1}\iota)i_{o}^{- 1} = h^{- 1}\iota '$ in
$\mathbb{Z}/(n_{o}\mathbb{Z})$ and so an appeal to the conclusions
of the preceding paragraph finds that $h^{- 1}\iota  = h^{- 1}\iota'$ in
$\mathbb{Z}/(n_{o}\mathbb{Z})$. Meanwhile, the conclusions
from the preceding two paragraphs imply that $h^{- 1}\iota ' =(j_{h(o)}{j_{o}}^{- 1}) h^{- 1}\iota $ in
$\mathbb{Z}/(n_{o}\mathbb{Z})$. Thus, $j_{h(o) } = j_{o}$ and so
the assignment $o \to j_{o}$ is constant on the $H$--orbits in
$\Vertt(k)$. This then means that $jhj^{- 1}h^{- 1}   \subset   \times
_{o \in \Vertt(k + 1)}\Aut(T_{o})$ for all  $h \in  H$.

The proof of \fullref{lem:6.8} requires some knowledge of the conditions that allow
a given $\iota    \in  \Aut(T)$ to have a fixed point in $O_{T}$. What
follows is a digression in six parts to describe both necessary and
sufficient conditions on $\iota $.
\end{proof}

\step{Part 1}
The automorphism $\iota $ has a lift in $\Auth(T)$ with a fixed point in
the space that is depicted in \eqref{eq6.20}. Let $\iota $ also
denote this lift and let $b$ denote a fixed point for this lifted
version of $\iota $, thus a point in \eqref{eq6.20}. To determine
necessary and sufficient conditions for $\iota $ to fix $b$, it proves
useful to make the choices that are described in Parts 1 and 2 of
\fullref{sec:6c} so as to view $b$ as a point in the space that
is depicted in \eqref{eq6.15}. In this incarnation, $b$ is a tuple
$(\tau _{ - }, (\tau _{\diamondsuit }, r_{\diamondsuit }),
(r_{o}, \tau _{o})_{o \in \mathcal{V} })$ where $\tau _{ - }$ and
$\tau _{\diamondsuit }$ are real numbers, each $r_{o}$ is in the
corresponding simplex $\Delta _{o}$ and $(\tau _{o})_{o \in
\mathcal{V} }   \in (\times _{o}  \mathbb{R}_{o})/(\times _{o}
\mathbb{Z}_{o})$.

What follows summarizes how the choices from Parts 1 and 2 of
\fullref{sec:6c} identify \eqref{eq6.20} with \eqref{eq6.15}. To
start, note that the $\mathbb{R}^{\Delta }$ factor of \eqref{eq6.20}
is a pair whose second component is $r_{\diamondsuit }$ and whose
first is a map from $\Vertt_{\hat{E}}$ to $\mathbb{R}$. The
value of this map on the vertex $\hat {\upsilon }$ from Part 2 in
\fullref{sec:6c} gives the $\mathbb{R}_{\diamondsuit }$
factor in \eqref{eq6.15}. The correspondence between the remaining
factors in \eqref{eq6.20} and \eqref{eq6.15} uses an assigned
component of the inverse image in $\bar {\Gamma }^{*}$ of each ${\ell^{*}}_{\diamondsuit (\cdot )}$.
Use $\ell ^{e}$ to denote the
component that is assigned to an edge $e \in E_{\diamondsuit }$.
The value on $\ell ^{e}$ of the $U_{e}$ factor
in \eqref{eq6.20} gives the factor $\times _{o \in \mathcal{V} (e)}
\mathbb{R}_{o}$ in \eqref{eq6.15}. Meanwhile, those $\{\ell
^{e}\}_{e \in \hat{E}}$ that are indexed by the edges in
$\hat{E}$ provide a canonical element in $\Lambda $ and the values of
\eqref{eq6.20}'s $\mathbb{R}^{ - }$ factor on the latter provides
the $\mathbb{R}_{ - }$ factor in \eqref{eq6.15}.

The definition of $\ell ^{e}$ is that used in Part 8 of \fullref{sec:6d}. In
brief, the concatenating path from $\upsilon _{\diamondsuit }$ to $\ell_{\diamondsuit e}$
provides a unique path in $\bar {\Gamma }^{*}$
from $\hat {\upsilon }$ to a vertex that projects to ${\ell ^{*}}_{\diamondsuit e}$.
The latter vertex sits on $\ell ^{e}$.

\step{Part 2}
The most straightforward aspect of the fixed point condition concerns the
collection $(r_{o})_{o \in \mathcal{V} \cup \diamondsuit }$. In
particular, if $\iota $ fixes $b$, then
\begin{equation}\label{eq6.37}
r_{\iota (o)}(\iota (\gamma )) = r_{o}(\gamma )
\end{equation}
for each $o \in   \mathcal{V}    \cup   \diamondsuit $
and for each arc $\gamma    \subset \underline {\Gamma }_{o}$. This
is to say that $\iota $ fixes $b$'s image in $\times _{o}  \Delta _{o}$.

\step{Part 3}
Consider next the condition that $\iota (\tau _{\diamondsuit }) =\tau _{\diamondsuit }$.
For this purpose, construct a path in
$\underline {\Gamma }_{\diamondsuit }$ that starts at $\upsilon_{\diamondsuit }$ and ends
at $\iota ^{- 1}(\upsilon_{\diamondsuit })$ as follows: Let $e$ denote the distinguished
vertex in $E_{\diamondsuit }$ and let $\hat{e} \equiv   \iota ^{ -1}(e)$.
Introduce $\{\nu _{1}, \ldots, \nu _{N}\}$ to denote the
chosen concatenating path set for the edge $\hat{e}$. Thus, the last
vertex on $\nu _{N}$ lies on $\ell _{\diamondsuit {\hat{e}}}$.
Let $\nu $ denote the path that starts with the final arc in $\nu_{N}$
as it is traversed while traveling $\nu _{N}$; after running
along this arc, $\nu $ then proceeds in the oriented
direction along $\ell _{\diamondsuit {\hat{e}}}$ to $\iota ^{-1}(\upsilon _{\diamondsuit })$.
The concatenating path set $\{\nu_{1}, \ldots, \nu _{N}, \nu \}$ defines a unique path in
$\bar{\Gamma }^{*}$ that starts at $\hat {\upsilon }$ and ends at a
vertex on $\ell ^{{\hat{e}}}$ that projects to $\iota ^{-1}(\upsilon _{\diamondsuit })$.
Let $\hat {\upsilon }_{\iota }$
denote this ending vertex. Meanwhile, let $\mu    \subset \underline{\Gamma }_{\diamondsuit }$
denote the path that is obtained from
$\{\nu _{1}, \ldots, \nu _{N}, \nu \}$ by identifying the final
arc in each $\nu _{k}$ with the initial arc in the subsequent path.
Note that $\mu $ is the projection to $\underline {\Gamma}_{\diamondsuit }$ of the
path in $\bar {\Gamma }^{*}$ that was used
to define $\hat {\upsilon }_{\iota }$.

The vertex $\iota ^{- 1}(\hat {\upsilon })$ is some $\mathbb{Z}\times   \mathbb{Z}$
translate of $\hat {\upsilon }_{\iota }$, thus, $N\hat{\upsilon }_{\iota }$ with
$N \in   \mathbb{Z}  \times   \mathbb{Z}$.
Granted all of this, the condition for $\iota $ to fix $\tau_{\diamondsuit }$ is:
\begin{equation}\label{eq6.38}
\sum _{\gamma \subset \mu  }\pm r_{\diamondsuit }(\gamma ) =
\alpha _{N}(\theta _{\diamondsuit }),
\end{equation}
where the sum is over the arcs in $\mu $, and where the $+$
sign is used if and only if the arc is crossed in its oriented
direction. Note that this condition concerns only $r_{\diamondsuit}$ and
$\iota $'s image in $\Auth_{\diamondsuit }$. In
particular, it says nothing about the value of $\tau _{\diamondsuit}$.

\step{Part 4}
The conditions that $\iota $ must satisfy to fix the rest of $b$ involve an
integer pair that is defined for each of $\diamondsuit $'s incident
edges. This integer pair is defined modulo $\mathbb{Z}\cdot Q_{e}$
as follows: If $e \in  E_{\diamondsuit }$, then $\iota(\ell ^{e})$ is a component
of the inverse image of $\ell^{*}_{\diamondsuit \iota (e)}$ and so is equal to a $\mathbb{Z}
\times   \mathbb{Z}$ translate of $\ell ^{\iota (e)}$. Let $R_{e} =(r_{e}, r_{e}')$ denote such a
pair. The condition $\iota (\tau_{ - })=\tau _{ - }$ involves the collection $\{R_{e}\}_{e \in\hat{E}}$:
\begin{equation}\label{eq6.39}
\sum _{e \in \hat{E}} \bigl(r_{e}'q_{\hat{E}} -r_{e}{q_{\hat{E}}}'\bigr) = 0.
\end{equation}
Thus, this condition concerns only $\iota $'s image in
$\Auth_{\diamondsuit }$; it says nothing about $\tau _{ - }$.

By the way, a particular version of $R_{e}$ is $-(N_{e} + N)$,
where $N$ is the integer pair defined in Part 3 and where $N_{e}$ is
defined as in Part 8 of \fullref{sec:6d}.

\step{Part 5}
Granted that $\iota $ fixes $\tau _{ - }$, $\tau _{\diamondsuit }$ and
$(r_{o})$, consider next the conditions for $\iota $'s fixing of
$(\tau _{o})_{o \in \mathcal{V} }$. In this regard, there are two
cases to consider, the first where $\iota $ fixes a given vertex
$\hat{o} \in \mathcal{V} $ and the second where $\iota (\hat{o}) \ne \hat{o}$.
This part tells the story in the case that $\iota $
fixes $\hat{o}$. For this purpose, keep in mind the following feature
of $T$: If $\hat{o}$ is fixed by $\iota $, then so is $o$ if $\hat{o} \in T_{o}$.
With the preceding, let $\iota _{\hat{o}}   \in \{0, \ldots , n_{\hat{o}}-1\} =
\mathbb{Z}/(n_{o}\mathbb{Z})$ denote the image of $\iota $. Then
$\hat{o}$'s version of \eqref{eq6.37} requires that $\iota $ acts so that
\begin{equation}\label{eq6.40}
\tau _{\hat{o}}   \to \tau _{\hat{o}} -
\frac{2\pi }{\alpha _{Q_{e(\hat{o})} } (\theta _{\hat{o}} )} \biggl(\frac{{\iota
_{\hat{o}} } }{ {n_{\hat{o}} }}\alpha _{Q_{e(\hat{o})} } (\theta
_{\hat{o}})+\sum _{o}  \frac{{\iota _o } }{ {n_o
}}\alpha _{Q_{e(o)} - Q_{e' (o)} } (\theta _{\hat{o}}) +
\alpha _{R_e } (\theta _{\hat{o}})\biggr),
\end{equation}
where the vertex labeled sum involves only vertices $o \in   \mathcal{V}- \hat{o}$ where $T_{o}$
contains $\hat{o}$. To explain the notation, $e(o)$
and $e'(o)$ are both incident edges to $o$, the former connecting $o$ to $T- T_{o}$
and the latter connecting $\hat{o}$'s component of $T_{o}- 0$ to $o$. Finally, $e$ is the incident
edge to $\diamondsuit $ that
connects $\hat{o}$'s component of $T- \diamondsuit $ to $\diamondsuit $.

There is a convenient way to rewrite \eqref{eq6.40} that uses the following
observation about $T$: Each vertex in $T- \diamondsuit $ is a
monovalent vertex in a unique subgraph of $T$ whose second monovalent vertex
is $\diamondsuit $. Moreover, if the given vertex is in generation $k$, then
there are $k$ vertices on this graph and their generation numbers increase by
1 as they are successively passed as the graph is traversed from
$\diamondsuit $. The interior vertices in the $\hat{o}$ version of this graph
are precisely the vertices that appear in the sum on the right hand side of
\eqref{eq6.40}. In this regard, if these interior vertices are labeled as
$\{o_{1}, \ldots , o_{k} = \hat{o}\}$ by their generation number, then
$e(o_{j}) = e'(o_{j - 1})$. This understood, then \eqref{eq6.40} implies that
$\iota $ acts on $\tau _{\hat{o}}$ by adding
\begin{equation}\label{eq6.41}
\begin{split}
&- \frac{2\pi }{\alpha _{Q_{e(\hat{o})} } (\theta _{\hat{o}} )}
\biggl(\frac{{\iota _{o_k } } }{ {n_{o_k } }}-\frac{{\iota _{o_{k
- 1} } } }{ {n_{o_{k - 1} } }}\biggr)  \alpha _{Q_{e(o_k )} } (\theta
_{\hat{o}})+\cdots \\
&+ \biggl(\frac{{\iota _{o_2 }
} }{ {n_{o_2 } }}-\frac{{\iota _{o_1 } } }{ {n_{o_1 } }}\biggr)
\alpha _{Q_{e(o_2 )} } (\theta _{\hat{o}}) +
\biggl(\frac{{\iota _{o_1 } } }{ {n_{o_1 } }}\alpha _{Q_e } (\theta
_{\hat{o}})+\alpha _{R_e } (\theta _{\hat{o}})\biggr).
\end{split}
\end{equation}
This last formula has the following consequence when applied to
$\hat{o}$ and to each vertex from the collection $\{o_{j}\}_{1 \le j < k}$: Each
$o \in  \{o_{1}, \ldots , o_{k - 1}, o_{k}= \hat{o}\}$ version of
$\tau _{o}$ is fixed by $\iota $ if and only
if there exists a collection, $\{c_{1}, \ldots , c_{k}\}$, of
integers such that\vrule width 0pt depth 15pt
\begin{equation}\label{eq6.42}
\begin{gathered}
\biggl(\frac{{\iota _{o_1 } } }{ {n_{o_1 } }} - c_{1}\biggr)  \alpha _{Q_e} \bigl(\theta _{o_1 } \bigr)+
\alpha _{R_e } \bigl(\theta _{o_1 } \bigr) = 0.
\\
\biggl(\frac{{\iota _{o_2 } } }{ {n_{o_2 } }}-\frac{{\iota _{o_1 }
} }{ {n_{o_1 } }} - c_{2}\biggr)  \alpha _{Q_{e(o_2 )} } \bigl(\theta _{o_2
} \bigr) + \biggl(\frac{{\iota _{o_1 } } }{ {n_{o_1 } }} - c_{1}\biggr)  \alpha
_{Q_e } \bigl(\theta _{o_2 } \bigr)+\alpha _{R_e } \bigl(\theta _{o_2 } \bigr) = 0,
\\[2pt]
 \text{and so on through}
\\[2pt]
\biggl(\frac{{\iota _{o_k } } }{ {n_{o_k } }}-\frac{{\iota _{o_{k
- 1} } } }{ {n_{o_{k - 1} } }} - c_{k}\biggr)  \alpha _{Q_{e(o_k )} }
\bigl(\theta _{o_k } \bigr)+\cdots + \biggl(\frac{{\iota
_{o_2 } } }{ {n_{o_2 } }}-\frac{{\iota _{o_1 } } }{ {n_{o_1 }
}} - c_{2}\biggr)  \alpha _{Q_{e(o_2 )} } \bigl(\theta _{o_k } \bigr) +
\\
\biggl(\frac{{\iota _{o_1 } } }{ {n_{o_1 } }} - c_{1}\biggr)  \alpha _{Q_e
} \bigl(\theta _{o_k } \bigr)+\alpha _{R_e } \bigl(\theta _{o_k } \bigr) = 0 .
\end{gathered}
\end{equation}

\step{Part 6}
The next case to consider is that where $\hat{o}$ is not fixed by $\iota $. As
before, use k for $\hat{o}$'s generation number. In this case, the effect of
$\iota $ is to map $\tau _{\hat{o}}$ to $\mathbb{R}_{\iota
(\hat{o})}$ and this map has the schematic form
\begin{equation}\label{eq6.43}
\tau _{\hat{o}}   \to   \tau _{\hat{o}} - 2\pi
\frac{\alpha _{Z_{\hat{o}} } (\theta _{\hat{o}} )}{\alpha_{Q_{e(\hat{o})} } (\theta _{\hat{o}} )}-\frac{2\pi }{\alpha
_{Q_{e(\hat{o})} } (\theta _{\hat{o} })}  \sum _{\gamma }
r_{\hat{o}}(\gamma )  \mod(2\pi \mathbb{Z}),
\end{equation}
where $Z_{\hat{o}}$ is an ordered pair of rational
numbers that is determined by the image of $\iota $ in
$\Auth(T)/(\times _{o \in \Vertt(k)}\Aut(T_{o}))$. For
example, when $\hat{o}$ is a first generation vertex, then
$Z_{\hat{o}} = R_{e}$ with $e$ here denoting the edge that
contains both $\hat{o}$ and $\diamondsuit $. Meanwhile, in all cases,
the sum in \eqref{eq6.43} is indexed by the arcs in $\ell_{\hat{o}}$
that are crossed when traveling in the oriented
direction from $\iota ^{- 1}(\upsilon _{\iota (\hat{o})})$ to
$\upsilon _{\hat{o}}$.

\begin{proof}[Proof of \fullref{lem:6.8}]
The automorphisms $\iota $ and $\iota '$ have
identical actions on $\Vertt(k)$ since their actions on this set are determined
by their image in $\Aut(T)/(\times _{o \in \Vertt(k)}$ $\Aut(T_{o}))$. This
understood, if $\iota $ fixes a given $\hat{o}  \in  \Vertt(k)$, then so does
$\iota' $ and it follows from \eqref{eq6.42} that $\iota $ and $\iota '$ have
identical images in $\mathbb{Z}/(n_{\hat{o}}\mathbb{Z})$.

Suppose next that $\hat{o} \in  \Vertt(k)$ is not fixed by $\iota $.
To study this case, note first that any $j \in   \times _{o \in \Vertt(k)}\Aut(T_{o})$
has an image in each $o \in  \Vertt(k)$
version of $\mathbb{Z}/(n_{o}\mathbb{Z})$, and this image is
denoted in what follows as $j_{o}$, a number from the set $\{0, \ldots , n_{o}-1\}$.
Meanwhile, the $\Auth(T)$ version of \eqref{eq6.13} assigns both $\iota $ and $\iota'$
integers in this same set, these are their respective factors in the
$\mathbb{Z}/(n_{o}\mathbb{Z})$ summand. The latter are denoted
here by $\iota _{o}$ and $\iota '_{o}$. This understood, there
exists $j \in   \times _{o \in \Vertt(k)}\Aut _{o}$ such that
$j\iota j^{- 1}=\iota '$ in $\Aut(T)/(\times _{o \in \Vertt(k + 1)}\Aut _{o})$
when there exists a collection $\{j_{o}   \in\mathbb{Z}/(n_{o}\mathbb{Z})\}_{o \in \Vertt(k)}$ such that
\begin{equation}\label{eq6.44}
\iota '_{o}-\iota _{o} = -\bigl(j_{\iota (o)} - j_{o}\bigr)
\end{equation}
for each $o \in  \Vertt(k)$. To see that this equation is solvable, let
$\hat{o}  \in  \Vertt(k)$, let $M$ denote $\hat{o}$'s orbit under the action of $\iota $,
and let $m$ denote the number of elements in $M$. If $M$ consists only of $\hat{o}$,
then by virtue of what was said in the previous paragraph, $\iota _{o} =\iota '_{o}$
and one can take $j_{o} = 0$. In the case that $m> 1$, then
\eqref{eq6.44} is solvable if and only if
\begin{equation}\label{eq6.45}
\sum _{o \in M} (\iota _{o}-\iota '_{o}) = 0  \mod(2\pi n_{\hat{o}}\mathbb{Z}).
\end{equation}
To prove \eqref{eq6.45}, remark that $\iota ^{m}$ and $\iota '^{m}$
both fix $\hat{o}$ and so must be equal in
$\mathbb{Z}/(n_{\hat{o}}\mathbb{Z})$. As $\iota ^{m}$ acts
in $\mathbb{Z}/(n_{\hat{o}}\mathbb{Z})$ as $\sum _{o \in M}
\iota _{o}$ and $\iota '^{m}$ acts as $\sum _{o \in M}\iota'_{o}$,
the equality in \eqref{eq6.45} follows.
\end{proof}

\begin{proof}[Proof of \fullref{prop:6.7}]
Suppose that $G \subset  \Aut(T)$
stabilizes some point. The first observation here stems from \fullref{prop:6.4}:
Since $G$ is isomorphic to its image in $\Aut _{\diamondsuit }=\mathbb{Z}/(n_{\diamondsuit }\mathbb{Z})$,
it must be a cyclic group with one
generator. To see what this means, let $e$ denote the distinguished incident
edge to $\diamondsuit $, now fixed by $\Aut(T)$. Let $\iota    \in G$ be the
generator. As before, use $\iota $ to also denote the lift to $\Auth(T)$ and
let $b$ denote the point in \eqref{eq6.15} that is fixed by this lift.

Let $(z, N)$ denote $\iota $'s image in \eqref{eq6.32}. The assumption that $\iota $
fixes $b$ has the following consequence: The factor in $\mathbb{R}_{ - }$ is
fixed if and only if
\begin{equation}\label{eq6.46}
N = \frac{{k_ - } }{ {m_ - }}Q_{e},
\end{equation}
where $k_{ - }   \in   \mathbb{Z}$ and where $m_{ - }$ is
the greatest common divisor of the pairs that comprise $Q_{e}$.
Meanwhile, $b$'s factor in $\mathbb{R}_{\diamondsuit }$ is then fixed
if and only if
\begin{equation}\label{eq6.47}
\frac{{\iota _\diamondsuit}}{{n_\diamondsuit}}  +
\frac{{k_ - } }{ {m_ - }}  + z = 0,
\end{equation}
where $\iota _{\diamondsuit }   \in  \{0, \ldots , n_{\diamondsuit}-1\}$
is $\iota $'s image in $\Aut _{\diamondsuit }=\mathbb{Z}/(n_{\diamondsuit }\mathbb{Z})$.
This requires that
\begin{equation}\label{eq6.48}
\biggl(\frac{{k_ - } }{ {m_ - }} + z\biggr) = -\frac{{\iota
_\diamondsuit } }{ {n_\diamondsuit }}  \mod(\mathbb{Z}).
\end{equation}
To proceed, suppose next that $\iota $ fixes some vertex $\hat{o} \in T- \diamondsuit $
and that $\hat{o}$ is in generation $k  \ge 1$.
Let $\{o_{1}, \ldots , o_{k} = \hat{o}\}$ denote the vertices in
the $\hat{o}$ version of \eqref{eq6.42}. By virtue of \eqref{eq6.35},
these equations now imply that $\iota $ fixes $\tau_{\hat{o}}$ if and only if
there are integers $\{c_{1},\ldots , c_{k}\}$ such that
\begin{equation}\label{eq6.49}
\begin{gathered}
\biggl(\frac{{\iota _{o_1 } } }{ {n_{o_1 } }}-\frac{{\iota
_\diamondsuit } }{ {n_\diamondsuit }} - c_{1}\bigg)  \alpha _{Q_{e(o_1 )}
} \bigl(\theta _{o_1 } \bigr) + \biggl(\frac{{\iota _\diamondsuit } }{
{n_\diamondsuit }}+\frac{k }{ {m_ - }} + z\biggr)\alpha
_{Q_e } \bigl(\theta _{o_1 } \bigr) = 0.
\\
\biggl(\frac{{\iota _{o_2 } } }{ {n_{o_2 } }}-\frac{{\iota _{o_1 }
} }{ {n_{o_1 } }} - c_{2}\biggr)  \alpha _{Q_{e(o_2 )} } \bigl(\theta _{o_2
} \bigr) + \biggl(\frac{{\iota _{o_1 } } }{ {n_{o_1 } }} -
\frac{{\iota _\diamondsuit } }{ {n_\diamondsuit }} - c_{1}\biggr)\alpha _{Q_{e(o_1 )} }
\bigl(\theta _{o_2 } \bigr)
\\
+ \biggl(\frac{{\iota
_\diamondsuit } }{ {n_\diamondsuit }}+\frac{{k_ - } }{ {m_ -
}} + z\biggr)\alpha _{Q_e } \bigl(\theta _{o_2 } \bigr) = 0,\ \text{ and so on through }
\\
\biggl(\frac{{\iota _{o_k } } }{ {n_{o_k } }}-\frac{{\iota _{o_{k
- 1} } } }{ {n_{o_{k - 1} } }} - c_{k}\biggr)  \alpha _{Q_{e(o_k )} }
\bigl(\theta _{o_k } \bigr)+\cdots + \biggl(\frac{{\iota
_{o_2 } } }{ {n_{o_2 } }}-\frac{{\iota _{o_1 } } }{ {n_{o_1 }
}} - c_{2}\biggr)  \alpha _{Q_{e(o_2 )} } \bigl(\theta _{o_k } \bigr)
\\
+\biggl(\frac{{\iota _{o_1 } } }{ {n_{o_1 } }}-\frac{{\iota
_\diamondsuit } }{ {n_\diamondsuit }} - c_{1}\biggr)  \alpha _{Q_{e(o_1 )}
} \bigl(\theta _{o_k } \bigr) + \biggl(\frac{{\iota _\diamondsuit } }{
{n_\diamondsuit }}+\frac{{k_ - } }{ {m_ - }} + z\biggr)\alpha _{Q_e } \bigl(\theta _{o_k } \bigr) = 0.
\end{gathered}
\end{equation}
Here, a given version of $\iota _{o}$ denotes $\iota $'s image in
$\mathbb{Z}/(n_{o}\mathbb{Z}) = \{0, \ldots , n_{o}-1\}$. Coupled with \eqref{eq6.47},
this set of equations is satisfied if and only if each of the $k$ versions of
$\iota _{o}/n_{o}$ that appear here are identical. The preceding fact
together with \eqref{eq6.48} implies that $\iota $ generates a canonical diagonal
subgroup of $\Aut(T)$.

The remaining assertions of \fullref{prop:6.7} follow directly from the
preceding analysis with the various versions of the formula in \eqref{eq6.43}. The
details here are straightforward and so left to the reader.
\end{proof}


\setcounter{section}{6}
\setcounter{equation}{0}
\section{Proof of Theorems~\ref{thm:6.2} and \ref{thm:6.3}}\label{sec:7}

As the heading indicates, the purpose of this section is to
supply the proofs of the main theorems from the previous section.
The arguments to this end are much like those used to prove
\fullref{thm:3.1}. In fact, there are many places where the arguments
transfer in an almost verbatim form and, except for a comment to
this effect, these parts are left to the reader. In any event, to
start, use $\mathfrak{X}$ in what follows to denote the map that is defined in
\fullref{sec:6c}. \fullref{thm:6.2} is proved by establishing the following
about $\mathfrak{X}$:

\itaubes{7.1}
{\sl The map $\mathfrak{X}$  lifts on some neighborhood of any
given  element in ${\mathcal{M}^{*}}_{\hat{A},T}$
 as a local diffeomorphism from ${{\mathcal{M}^{*}}_{\hat{A},T}}^{\Lambda }$  onto an open set in
$\mathbb{R}  \times O_{T}$.}

\item
\textsl{The map $\mathfrak{X}$  is 1--1  into $\mathbb{R}\times O_{T}/\Aut(T)$.}

\item
$\mathcal{R} \cap {\mathcal{M}^{*}}_{\hat{A},T}$ \textsl{is sent to
$\mathbb{R}\times  (O_{T}- \hat{O}_{T})/\Aut(T)$  and its complement to}
$\mathbb{R} \times  \hat{O}_{T}/\Aut(T)$.

\item
\textsl{The map $\mathfrak{X}$  is proper onto $\mathbb{R}\times O_{T}/\Aut(T)$.}
\eit
Here, ${{\mathcal{M}^{*}}_{\hat{A},T}}^{\Lambda }$ is defined as in
\fullref{thm:6.3} to be the space of equivalence class of
triples $(C_{0}, \phi , T_{C})$ where $(C_{0}, \phi )$ defines a
point in ${\mathcal{M}^{*}}_{\hat{A},T}$ and $T_{C}$ is a
correspondence of $T$ in $(C_{0},\phi )$.

The first subsection below proves \fullref{thm:6.3} that given $\mathfrak{X}$ supplies
\fullref{thm:6.2}'s diffeomorphism.

\subsection{The local structure of the map $\mathfrak{X}$}\label{sec:7a}

The arguments given in this subsection justify the first point in
\eqreft71 and the assertions of \fullref{thm:6.3}. To start,
note that \fullref{lem:6.5} asserts that the map $\mathfrak{X}$ lifts to define
a continuous map from ${{\mathcal{M}^{*}}_{\hat{A},T}}^{\Lambda }$
to $\mathbb{R} \times O_{T}$. This understood, \fullref{prop:5.1} and
\fullref{lem:5.4} imply that the lift is smooth and locally 1--1. The
first point in \eqreft71 follows directly from this last conclusion.

\begin{proof}[Proof of \fullref{thm:6.3} for the map $\mathfrak{X}$]
For the purposes of
this proof, assume that the map $\mathfrak{X}$ is one of \fullref{thm:6.2}'s
diffeomorphisms. The proof of \fullref{thm:6.3} amounts to no
more than verifying the asserted properties of $\mathfrak{X}$. This task is left
to the reader with the following comment: Given the form of the
parametrizations from \fullref{def:2.1}, all of these
properties are direct consequences of the definitions given in
\fullref{sec:6c}.
\end{proof}

\subsection{Why the map $\mathfrak{X}$ is 1--1 from ${\mathcal{M}^{*}}_{\hat{A},T}$ to
$\mathbb{R}\times O_{T}/\Aut(T)$}\label{sec:7b}

The proof that $\mathfrak{X}$ is 1--1 applies \fullref{lem:4.1} as in
\fullref{sec:4b}. To start, suppose that $(C_{0}, \phi )$ and
$({C_{0}}', \phi ')$ have the same image in $\mathbb{R}  \times
O_{T}/\Aut(T)$. By assumption, $T$ has respective correspondences,
$T_{C}$ and ${T_{C}}'$, in $(C_{0}, \phi )$ and in $(C_{0}, \phi ')$
and these can be fixed so that $(C_{0}, \phi )$ and $({C_{0}}',
\phi')$ have the same image in $\mathbb{R}  \times O_{T}$. This is
because any correspondence of $T$ in $(C_{0}, \phi )$ can be
obtained from any other by composing the original with a suitable
automorphism of $T$. And, noted in \fullref{lem:6.6}, such a
change in the correspondence changes the assigned point in
$\mathbb{R}  \times O_{T}$ by the action of the relevant
automorphism.

To say more, the choices in Parts 1 and 2 of \fullref{sec:6c}
should be used to identify $O_{T}$ with the space in \eqref{eq6.9}.
With the identification understood, the assignments to $(C_{0}, \phi)$ and
$(C_{0}, \phi ')$ of their respective points in
\begin{equation}\label{eq7.2}
\mathbb{R}_{ - }  \times \bigl(\mathbb{R}_{\diamondsuit } \times \Delta
_{\diamondsuit }\bigr)  \times \bigl(\times _{o \in \mathcal{V} }(\mathbb{R}_{o}
\times   \Delta _{o})\bigr)
\end{equation}
can be made so as to agree. This is done in a
sequential fashion by first arranging that the respective $\mathbb{R}_{ - }$
assignments agree. For the latter purpose, let $e$
denote the distinguished edge in the distinguished
 $\Aut _{\diamondsuit }$ orbit $\hat{E}$. The assignments to $\mathbb{R}_{- }$
 can be made equal by suitably choosing the respective
pararameterizations of $e$'s component in the $C$ and $C'$ versions of
$C_{0}- \Gamma $. With the $\mathbb{R}_{ - }$ assignments equal, the
next step is to arrange so that the $\mathbb{R}_{\diamondsuit }$
assignments agree. Since the $\mathbb{Z}\cdot Q_{\hat{E}}$ subgroup
of $\mathbb{Z}  \times   \mathbb{Z}$ acts trivially on $\mathbb{R}_{- }$
and since its action on $\mathbb{R}_{\diamondsuit }$ changes
the assignment by the action of $2\pi \mathbb{Z}$, the $C'$ version of
the lift to $\mathbb{R}$ that defines its point in
$\mathbb{R}_{\diamondsuit }$ can be chosen to make it agree with $C$'s
assigned point. When applied inductively, versions of this last
argument prove that any given $o \in \mathcal{V} $ version of the two
assignments in $\mathbb{R}_{o}$ can be made to agree. Here, the
induction moves from any given $o \in \mathcal{V} $ to the vertices
that share its edges in $T_{o}$.

Given that $C$ and $C'$ have the same assignments in \eqref{eq7.2}, then minor
modifications of the arguments used in Part 2 of \fullref{sec:4b} prove
that the conditions in \eqreft41 are met in this case.

The appeal to \fullref{lem:4.1} also requires the condition in
\eqreft42. To see why this condition holds, suppose that $o$ is
a mutlivalent vertex in $T$ and that $\gamma    \subset
\underline{\Gamma }_{o}$ is an arc. Now let $e$ denote one of the
edge labels on $\gamma $ and let $e'$ denote the other. The choices
made for the $(C_{0}, \phi )$ assignment to the space in
\eqref{eq7.2} give parametrizations to both the $e$ and $e'$
components of $C_{0}-\Gamma $. These define $C_{0}$ versions of the
functions $w_{e}$ and $w_{e'}$ whose difference along the interior
of $\gamma $'s image in $\Gamma $ is described by some $N= (n, n')$
version of \eqref{eq2.14} and \eqref{eq2.15}. There are
corresponding ${C_{0}}'$ versions of $w_{e}$ and $w_{e'}$, and the
point here is that their difference is also described by this same $N=
(n, n')$ version of \eqref{eq2.14} and \eqref{eq2.15}. Indeed, such
is the case because the integer $N$ is obtained by using
\fullref{lem:2.3} to compare the respective canonical
parametrizations of the $e'$ component of either version of $C_{0}-
\Gamma $ and its ${C_{0}}'$ counterpart with the parametrization
that gives the $N= 0$ case of \eqref{eq2.14} and \eqref{eq2.15}
across $\gamma $. Since the same integer pair appears in both the
$C_{0}$ and ${C_{0}}'$ versions of \eqref{eq2.14} and
\eqref{eq2.15}, so $\hat {w}_{e}=\hat {w}_{e'}$ along $\gamma $.

Given that the conditions in \eqreft41 and \eqreft42 have
been met, the graph $G$ is well defined. If $G \ne  ${\o}, then
\fullref{lem:4.1} asserts that ${C_{0}}' = C_{0}$ and that $\phi'$
is obtained from $\phi $ by a constant translation along the
$\mathbb{R}$ factor of $\mathbb{R}  \times (S^1  \times  S^2)$. This
means that $\phi =\phi'$ since they share the same assignment in the
$\mathbb{R}$ factor of $\mathbb{R}  \times (S^1 \times  S^2)$. To
see that $G \ne ${\o}, return to the respective $C_{0}$ and
${C_{0}}'$ versions of \eqref{eq6.27}. Subtraction of the ${C_{0}}'$
version from the $C_{0}$ version finds the value 0 for the integral
of $\hat {w}$ around a non-empty union of constant $\theta $ circles
in $C_{0}- \Gamma $. Either $\hat {w} = 0$ on some of these circles,
in which case $G \ne ${\o}, or else $\hat {w}$ is negative on some
and positive on others. But this last case also implies that $G \ne
${\o} since the complement in $C_{0}$ of the critical point set of
$\cos\theta $ is path connected.

\subsection{The images of ${\mathcal{M}^{*}}_{\hat{A},T}\cap   \mathcal{R}$  and
${\mathcal{M}^{*}}_{\hat{A},T}- \mathcal{R}$.}\label{sec:7c}

The first order of business here is to explain why
${\mathcal{M}^{*}}_{\hat{A},T}   \cap   \mathcal{R}$ is mapped to the
image in $\mathcal{R}  \times O_{T}/\Aut(T)$ of the points where
$\Aut(T)$ acts with non-trivial stablizer. For this purpose, suppose
that $(C_{0}, \phi )$ defines a point in
${\mathcal{M}^{*}}_{\hat{A},T}$ and that there is a non-trivial
group of holomorphic diffeomorphisms of $C_{0}$ that fix $\phi $.
Let $\psi $ denote an element in this group. Let $T_{C}$ denote a
correspondence for $T$ in $(C_{0}, \phi )$.  Then 
$T_{C}$ defines a point for $(C_{0}, \phi )$ in
$\mathbb{R} \times O_{T}$. Meanwhile, $\psi $ defines a new
correspondence for $T$ in $(C_{0}, \phi )$ as follows: If $e$ is an
edge of $T$ and $K_{e}   \subset C_{0}- \Gamma $ the component that
originally corresponds to $e$, then $\psi ^{- 1}(K_{e})$ gives the
component for $e$ from the new correspondence. Likewise, if $o$ is a
multivalent vertex and $\gamma $ is an arc in $\underline {\Gamma}_{o}$,
then the new correspondence assigns to $\gamma $ the $\psi$--inverse image
of what is assigned $\gamma $ by the old
correspondence. As noted in \fullref{lem:6.6}, the change of the
correspondence changes the point in $\mathbb{R}  \times O_{T}$ that
is assigned to $(C_{0}, \phi )$ by the action of $\iota $.
Meanwhile, \fullref{lem:6.5} asserts that the new assigned point
is, after all, the same as the original. Thus the assigned point in
$\mathbb{R}  \times O_{T}$ is fixed by $\iota $.

The argument as to why ${\mathcal{M}^{*}}_{\hat{A},T}- \mathcal{R}$
is mapped to $\mathcal{R}  \times \hat{O}_{T}/\Aut(T)$ is much like
the argument in \fullref{sec:4d}. To start, suppose that
$(C_{0}, \phi )$ defines a point in ${\mathcal{M}^{*}}_{\hat{A},T}-\mathcal{R}$.
Fix a correspondence, $T_{C}$, for $T$ in $(C_{0}, \phi)$.
According to \fullref{lem:6.5}, this gives $(C_{0}, \phi )$ a
point in $O_{T}$ that defines the image of its equivalence class in
$O_{T}/\Aut(T)$. Now suppose that $\iota    \in  \Aut(T)$ fixes this
point in $O_{T}$. The assertion to prove is that $\iota $ is the
identity element. The proof has three steps.

\substep{Step 1}
The isomorphism $\iota $ can be
used to change the correspondence $T_{C}$ to a new correspondence
for $T$ in $(C_{0}, \phi )$. The latter is denoted in what follows
as ${T_{C}}^{\iota }$ and it is defined by the following two
conditions: To state the first, let $e$ denote an edge in $T$ and
use $K_{e}$ to denote the component of $C_{0}- \Gamma $ that is
labeled by $e$ using $T_{C}$. Meanwhile use ${K^{\iota }}_{e}$ to
denote the component that is labeled by $e$ using the new
correspondence. Define the collection $\{{K_{e}}^{\iota }\}$ so that
${K^{\iota }}_{\iota (e)} = K_{e}$. To state the second condition,
let $\gamma $ denote an arc in some version of $\underline {\Gamma}_{o}$.
Then the arc in $\Gamma $ that corresponds via $T_{C}$ to
$\gamma $ corresponds via ${T_{C}}^{\iota }$ to $\iota (\gamma )$.

Now, the assignment to $C$ of a point in \eqref{eq7.2} using the
correspondence $T_{C}$ required parametrizations for each component
of $C_{0}- \Gamma $. Recall that these were assigned in a sequential
manner starting with an arbitrary choice for the component that
corresponds to the distinguished incident edge to $\diamondsuit $.
Such a choice and the assignment in $\mathbb{R}_{\diamondsuit }$
assigned parametrizations to all incident edges to $\diamondsuit $.
The sequential nature of the process appears when the
parametrizations were assigned to the components of $C_{0}- \Gamma $
that are labeled by the incident edges to a vertex $o \in \mathcal{V}$.
In particular, the point in $\mathbb{R}_{o}$ and the
parametrization for the component labeled by the edge that connects
$o$ to $T- T_{o}$ give the parametrizations for the components that
are labeled by the edges that connect $o$ to $T_{o}-o$.

Make the choices that assign a point in \eqref{eq7.2} for $C$ using
the correspondence $T_{C}$. Let $b$ denote this point in
\eqref{eq7.2}. Meanwhile, a second set of choices can be made so
that $b$ is also the assigned point in \eqref{eq7.2} for $C$ using the
correspondence ${T_{C}}^{\iota }$. In this way, each component of
$C_{0}- \Gamma $ receives two parametrizations; one in its
incarnation as $K_{e}$ and the other as ${K^{\iota }}_{\iota (e)}$.
This understood, define a map, $\psi _{e}\co K_{e}   \to  {K^{\iota}}_{e }$ as follows: 
Let $\phi _{e}$ denote the parametrizing map from
the relevant cylinder to $K_{e}$, and let ${\phi ^{\iota }}_{e}$
denote the corresponding map to ${K^{\iota }}_{e}$. Note that the
parametrizing cylinders are the same. Set $\psi _{e} \equiv
{\phi ^{\iota }}_{e} \circ (\phi _{e})^{- 1}$.

\substep{Step 2}
This step proves that there is a
continuous map $\psi \co  C_{0}   \to C_{0}$ whose restriction to any
given $K_{e}$ is the map $\psi _{e}$. The proof has two parts; the
first verifies that any given $\psi _{e}$ extends continuously to
the closure of $K_{e}$. The second verifies that corresponding
extensions agree where the closures overlap.

To start the first part, remark that $\psi _{e}$ extends
continuously to the closure of $K_{e}$ when the following is true:
Let $o \in e$ be a multivalent vertex and let $\upsilon $ be any
vertex in $\ell _{oe}$. Then the $\mathbb{R}/(2\pi \mathbb{Z})$
coordinate of the point on the $\sigma  = \theta _{o}$ boundary of
the parametrizing cylinder that corresponds via $\phi _{e}$ to
$\upsilon $ is the same as the $\mathbb{R}/(2\pi \mathbb{Z})$
coordinate of the point that corresponds via ${\phi ^{i}}_{e}$ to
$\upsilon $. Here, one need check this condition at only one vertex
since the condition in \eqref{eq6.37} makes the difference between
the relevant $\mathbb{R}/(2\pi \mathbb{Z})$ values into a constant
function on the vertex set of $\ell _{oe}$.

To check this condition, consider first the case when either $o=\diamondsuit $ and
$e$ is $\diamondsuit $'s distinguished edge, or
else $o \in   \mathcal{V} $ and $e$ is the edge that connects $o$ to
$T- T_{o}$. In these cases, $\ell _{oe}$ contains the distinguished
vertex $\upsilon _{o}   \in \underline {\Gamma }_{o}$. In
particular, the value of the $\mathbb{R}/(2\pi \mathbb{Z})$
coordinate of the point that corresponds via $\phi _{e}$ to
$\upsilon _{o}$ is the reduction modulo $2\pi \mathbb{Z}$ of the
point, $b$, that $C$ is assigned in \eqref{eq7.2} using the
correspondence $T_{C}$. Meanwhile, the analogous ${\phi ^{\iota}}_{e}$
version of this $\mathbb{R}/(2\pi \mathbb{Z})$ value is the
reduction modulo $2\pi \mathbb{Z}$ of the point that $C$ is assigned
in \eqref{eq7.2} using the correspondence ${T_{C}}^{\iota }$. Since
the latter point is also b, so the desired equality holds for the
relevant $\mathbb{R}/(2\pi \mathbb{Z})$ values.

Consider next the case where either $o= \diamondsuit $ and $e$ is
not the distinguished edge, or else $o \in   \mathcal{V} $ and $e$
connects $o$ to $T_{o}-o$. To analyze this case, remember that a
concatenating path set has been chosen whose first path starts at
$\upsilon _{o}$ and whose last path ends at a vertex on $\ell_{oe}$.
Let $\mu (e)  \subset \underline {\Gamma }_{o}$ denote the
path that is obtained from the concatenating path set by identifying
the final arc in all but the last path with the initial arc in the
subsequent path. Note that $\mu (e)$ is a concatenated union of arcs
whose last vertex is on $\ell _{oe}$. In particular, the
$\mathbb{R}/(2\pi \mathbb{Z})$ coordinate that corresponds via
$\phi_{e}$ to this last vertex is
\begin{equation}\label{eq7.3}
\tau _{o}+\frac{2\pi }{\alpha _{Q_e } (\theta _o )}\sum _{\gamma
\in \mu (e)}  \pm r_{o}(\gamma ) \mod (2\pi \mathbb{Z}),
\end{equation}
where the notation is as follows: First, $\tau _{o}$ is
$\mathfrak{b}$'s coordinate in the $\mathbb{R}_{o}$ factor in \eqref{eq7.2} and
$r_{o}$ is $\mathfrak{b}$'s $\Delta _{o}$ factor. Meanwhile, the sum is over the
arcs that are met sequentially in $\mu (e)$ as it is traversed from
$\upsilon _{o}$ to its end, and where the + sign appears with an arc
if and only if it is crossed in its oriented direction. In this
regard, note that a given arc can appear more than once in
\eqref{eq7.3}, and with different signs in different appearances.

Of course, the analogous formula gives $\mathbb{R}/(2\pi \mathbb{Z})$
coordinate that corresponds via ${\phi ^{\iota }}_{e}$ to the final
vertex on $\mu _{e}$. Thus, the $\phi _{e}$ and ${\phi ^{\iota}}_{e}$
versions of the relevant $\mathbb{R}/(2\pi \mathbb{Z})$
coordinate agree.

To verify that the extensions of $\{\psi _{e}\}$ agree on the locus
$\Gamma    \subset C_{0}$, suppose that $o \in T$ is a multivalent
vertex, that $\gamma    \subset \underline {\Gamma }_{o}$ is an arc,
and that $e$ and $e'$ are $\gamma $'s labeling edges. The
correspondence $T_{C}$ identifies $\gamma $ with a component of the
complement in $\Gamma $ of $\theta $'s critical points. Denote this
component by $\gamma ^{\delta }$. Of course, $\gamma $ also
corresponds via ${T_{C}}^{\iota }$ to some other component,
$\gamma^{\iota \delta }= (\iota ^{- 1}(\gamma ))^{\delta }$. The
extension of $\psi _{e}$ maps $\gamma ^{\delta }$ to $\gamma ^{\iota\delta }$
as a diffeomorphism because the $\mathbb{R}/(2\pi\mathbb{Z})$
coordinate of a point that corresponds via $\phi _{e}$
to either of its end vertices is the same as the corresponding ${\phi^{\iota }}_{e}$ coordinate.
Likewise, $\psi _{e'}$ maps $\gamma^{\delta }$ diffeomorphically onto $\gamma ^{\iota \delta }$. The
agreement between these two diffeomorpisms then follows directly
from the formula in \fullref{def:2.1}. In this regard, keep in
mind that the 1--form $\surd 6\cos \theta d\varphi  -(1-3\cos^{2}\theta ) dt$
pulls back to the $\sigma =\theta _{o}$
circle in the parametrizing cylinder for $K_{e}$ as the $Q =Q_{e}$
version of $\alpha _{Q}(\theta _{o}) dv$ while its pull-back to the
circle in the $K_{e'}$ cylinder is the $Q =Q_{e'}$ version.

\substep{Step 3}
Let $\phi \co  C_{0} \to \mathbb{R}\times (S^1  \times  S^2)$ denote
the tautological map onto $C$. With $\psi $ now defined, the plan
is to prove that $\phi  \circ \psi = \phi $. Since $(C_{0},\phi )
\notin \mathcal{R}$,
this then means that $\psi$ is the identity map and so $\iota $ is trivial.

The argument for this employs \fullref{lem:4.1}. To conform with
the notation used in \fullref{lem:4.1}, let ${C_{0}}'$ denote
$C_{0}$ and ${T_{C}}^{\iota }$ as $T_{C'}$. This understood, the fact
that $\psi $ is continuous implies that the respective
parametrizations $\phi _{e}$ and ${\phi ^{\iota }}_{e}$ are compatible
in the sense of the definition in \eqreft41. Next, let
$(a_{e},w_{e})$ denote the functions that appear in the $\phi _{e}$
version of \eqreft25 and let $({a_{e}}', {w_{e}}')$ denote the
functions that appear in the ${\phi ^{\iota }}_{e}$ version of
\eqreft25. Now set $\hat{a}_{e}   \equiv  a_{e}- {a_{e}}'$ and
$\hat {w}_{e} \equiv  w_{e}- {w_{e}}'$. As it turns out, there is a
continuous function on the complement in $C_{0}$ of the $\cos\theta
$ critical points whose pull-back from $C_{0}- \Gamma $ via the maps
$\{\phi_{e}\}$ gives the collection $\{\hat {w}_{e}\}$. This is to
say that the assumption in \eqreft42 is satisified. Indeed, the
proof that the collection $\{\hat {w}_{e}\}$ comes from a continuous
function is identical in all but cosmetics to the proof of the
analogous assertion in the second to last paragraph of the preceding
subsection.

Let $\hat {w}$ denote the function on $C_{0}$ that gives the
collection $\{\hat {w}_{e}\}$ and introduce $G$ to denote the zero
locus of $\hat {w}$ in the complement of the $\cos\theta $ critical
points. According to \fullref{lem:4.1}, either $G = ${\o} or the
closure of $G$ is all of $C_{0}$. Now, $G \ne  ${\o} since the
respective assignments to $C_{0}$ using $T_{C}$ and ${T_{C}}^{\iota }$
in \eqref{eq7.2} agree, and so they agree in the $\mathbb{R}_{ - }$
factor in particular. Thus $G$'s closure is $C_{0}$ and so $\hat {w}
\equiv  0$. Thus $\phi  \circ \psi $ is obtained from $\phi $ via a
constant translation along the $\mathbb{R}$ factor of $\mathbb{R}
\times (S^1  \times  S^2)$. Moreover, this constant translation must
be the identity since the two maps have the same image set. Hence
$\phi  \circ \psi =\phi $ and so $\iota $ must be the identity
automorphism.

%
%

\subsection{Why the map to $\mathbb{R}\times\hat{O}_{T}/\Aut(T)$ is proper}\label{sec:7d}

The proof that \fullref{sec:6c}'s map is proper is given here in three parts, each
the analog of the corresponding part of \fullref{sec:4e}.

\step{Part 1} This part of the argument proves the
${\mathcal{M}^{*}}_{\hat{A},T}$ version of \fullref{prop:4.6}. This
is as follows:

\begin{proposition} \label{prop:7.1}

Let $\{(C_{0j}, \phi _{j})\}_{j =1,2,\ldots }$  denote an infinite sequence of pairs that defines a
sequence in $ \mathcal{M}^{*}_{\hat{A},T}$ with
convergent image in $\mathbb{R}\times O_T/\Aut(T)$.  There
exists a subsequence, hence renumbered consecutively from 1,  and a
finite set, $\Xi $, of pairs of the form $(S, n)$  where $n$  is a
positive integer and $S$  is an irreducible, pseudoholomorphic,
multiply punctured sphere; and these have the following properties
with respect to the sequence of subsets $\{C_{i} \equiv \phi _{i}(C_{0i})\}$  in
$\mathbb{R}\times (S^1  \times  S^2)$:

\bit

\item
$\lim_{j \to \infty }  \int _{C_j } \varpi =
\sum_{(S,n)\in\Xi} n \int _{S} \varpi $
for each compactly supported 2--form $\varpi $.

\item
The following limit exists and is zero:
\begin{equation}\label{eq7.4}
\lim_{j \to \infty } \Bigl(\sup_{z \in C_j } \dist\bigl(z,  \cup _{(S,n)
\in \Xi } S\bigr) + \sup_{z \in \cup _{(S,n) \in \Xi } S}
\dist(C_{j}, z)\Bigr).
\end{equation}

\eit
\end{proposition}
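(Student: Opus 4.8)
The plan is to mirror, essentially verbatim, the proof of \fullref{prop:4.6} in \fullref{sec:4e}; the only structural change is that the convergence hypothesis now concerns the image of $\{(C_{0j},\phi_j)\}$ in $\mathbb{R}\times O_T/\Aut(T)$ rather than in $\mathbb{R}\times\hat{O}^{\hat{A}}/\Aut^{\hat{A}}$. First I would apply \cite[Proposition~3.7]{T3} to the sequence of image subvarieties $\{C_j\equiv\phi_j(C_{0j})\}$ in $\mathbb{R}\times(S^1\times S^2)$. After passing to a subsequence this produces the finite data set $\Xi$ of pairs $(S,n)$, the first bullet of the proposition, and the second bullet in the weaker form in which the supremums are restricted to points lying in a fixed but arbitrary compact subset of $\mathbb{R}\times(S^1\times S^2)$. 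Everything then reduces to upgrading the second bullet to the stated form, uniform over all of $C_j$ and all of $\cup_\Xi S$.

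To do this I would argue by contradiction exactly as in \fullref{sec:4e}. A failure of the uniform bound produces, after a further passage to a subsequence and by an almost word-for-word transcription of \fullref{lem:4.7}, an $\mathbb{R}$--invariant pseudoholomorphic cylinder $S_*$, real sequences $\{s_{j-}\}$ and $\{s_{j+}\}$ of regular values of $s$ on $C_j$, and connected pieces $C_{j*}\subset C_j$ hugging $S_*$ whose mid-slices converge onto $S_*$, all with the five listed properties. The proof of this \fullref{lem:4.7} analog is unchanged, since it references only the image curves $\{C_j\}$ and the set $\Xi$, not the domain curves; in particular \cite[Lemma~3.9]{T3} and the reduction steps \eqreft4{18}--\eqreft4{19} carry over directly, and the two possibilities in \eqreft4{19} are ruled out just as before by invoking the convergence of the $\mathbb{R}$ factor of the image in $\mathbb{R}\times O_T/\Aut(T)$: after a $j$--dependent translation along the $\mathbb{R}$ factor of $\mathbb{R}\times(S^1\times S^2)$ and a re-application of \cite[Proposition~3.7]{T3} to the translated sequence, either possibility yields a limit subvariety with mutually incompatible asymptotics.

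With the \fullref{lem:4.7} analog in hand, I would finish with the same case analysis on the constant value $\theta_*$ of $\theta$ on $S_*$. If $\theta_*\in(0,\pi)$, the topological argument of \fullref{sec:4e} applies verbatim: one produces a closed loop in $C_j$ with nonzero intersection number against the constant-$s$ slices of $C_{j*}$, which is impossible because each $C_{0j}$ is a punctured sphere. If $\theta_*\in\{0,\pi\}$, the mountain-pass argument applies: on the mid-slice of $C_{j*}$ the values of $\theta$ are $o(\delta_j^2)$ while on the $s=s_{j\pm}$ slices they are bounded below by a uniform multiple of $\varepsilon^2$, so the mountain pass lemma forces a non-extremal critical point of $\theta$ on $C_j$ with $\theta$--value close to $\theta_*$. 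Here one invokes the fact, peculiar to ${\mathcal{M}^{*}}_{\hat{A},T}$, that the critical values in $(0,\pi)$ of $\theta$ on the model curve of any element lie in the fixed finite set $\{\theta_o\}$ of multivalent vertex angles of $T$, hence are bounded away from $0$ and $\pi$; this yields the desired contradiction.

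I expect the main obstacle to be exactly this last point — keeping the arguments airtight in the presence of genuine $\theta$--critical points in $(0,\pi)$, which are absent in the \fullref{sec:4} setting. Throughout, one must systematically use that within ${\mathcal{M}^{*}}_{\hat{A},T}$ the vertex angles of the graph $T$, and in particular the admissible $\theta$--critical values, are fixed once and for all, so that near the $\mathbb{R}$--invariant cylinder $S_*$ the relevant component of each $C_{0j}$ is a $j$--independent cylinder on which the parametrizing analysis of \cite[Section~2]{T4} can be applied uniformly in $j$. Apart from this, the argument is a cosmetic relabeling of the proof of \fullref{prop:4.6}, and no new analytic input is needed to accommodate the multiply covered elements of ${\mathcal{M}^{*}}_{\hat{A},T}$, since both \cite[Proposition~3.7]{T3} and the topological arguments apply directly to the honest image subvarieties $\{C_j\}$.
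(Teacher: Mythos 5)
Your outline is right up to and including Step 2 of the paper's argument: reduce to the stated uniform form of the second bullet by contradiction, extract the analog of Lemma~\ref{lem:4.7}, rule out the exceptional cases in \eqreft4{19} using convergence of the $\mathbb{R}$ factor, and then dispose of the case $\theta_*\in\{0,\pi\}$ with the mountain-pass lemma together with the fact that elements of ${\mathcal{M}^{*}}_{\hat{A},T}$ have critical values of $\theta$ fixed in the finite set of vertex angles of $T$. That last observation is exactly what the paper uses.

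The gap is in your treatment of the case $\theta_*\in(0,\pi)$. You assert that the topological loop argument from \fullref{sec:4e} carries over ``verbatim,'' but it does not. That argument constructs, from points $z_\pm$ with $\theta=\theta_*+\delta_\pm$ on the $s=s_{j\pm}$ slices, a path $\gamma$ joining them in a single component of $C_{0j}-\Gamma$ in the region $|\theta-\theta_*|\ge\min(|\delta_-|,|\delta_+|)$; this hinges on the $\theta=\theta_*+\delta_-$ and $\theta=\theta_*+\delta_+$ loci being isotopic circles inside one component of $C_{0j}-\Gamma$. In the $\mathcal{M}$ setting of \fullref{sec:4} this holds because $\theta$ has no critical points in $(0,\pi)$. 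But for ${\mathcal{M}^{*}}_{\hat{A},T}$ the angle $\theta_*$ is forced (by the application of \cite[Lemma~3.9]{T3}) to be the angle $\theta_o$ of a multivalent vertex $o\in T$, so the level $\theta=\theta_*$ meets the singular locus $\Gamma_o$, and the two nearby level circles sit in \emph{different} components of $C_{0j}-\Gamma$. Thus the path $\gamma$ in general cannot be found, and the closed-loop contradiction evaporates. Your closing hedge does not repair this: the relevant piece of $C_{0j}$ near $S_*$ is not a $j$--independent cylinder but rather contains a full copy of the singular level set $\Gamma_o$.

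What actually fills this gap in the paper's proof of \fullref{prop:7.1} is a genuinely different argument (Steps 3 through 6 there): one examines the homology class of the $s=s_*$ slice of $C_{j*}$ inside the $|\theta-\theta_*|<\delta$ neighborhood of $\Gamma_o$, identifies that neighborhood homologically with the blown-up graph $\underline{\Gamma}^{*}_{o}$, and uses the smallness of $\int_{\mathfrak{p}_0(\eta)}x$ on components $\eta$ of this slice to conclude that, were \eqref{eq7.4} to fail, some arc in $\Gamma_o$ would have to be assigned an $\mathcal{O}(\delta_j)$ value by $(C_{0j},\phi_j)$'s image in $\Delta_o$. That contradicts the \emph{hypothesized convergence of the image in $O_T/\Aut(T)$}, specifically the convergence of the factor in $\Delta_o$, which supplies a positive $j$--independent lower bound for every arc value. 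In other words, the new ingredient is not a repackaging of the \fullref{sec:4e} topology but a careful accounting in $\underline{\Gamma}^{*}_{o}$ that leans on the $\Delta_o$--part of the convergence hypothesis — exactly the part of the hypothesis that your sketch never uses.
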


Granted for the moment \fullref{prop:7.1}, the proof that
\fullref{sec:6c}'s map is proper is obtained by the following
line of reasoning: Start with a sequence $\{(C_{0i}, \phi _{i})\}$ with convergent image in
$\mathbb{R}\times O_T/\Aut(T)$. The next part of this subsection proves that
\fullref{prop:7.1}'s set $\Xi $ contains only one element.
Let $(S, n)$ denote this element. The third part of the subsection
proves the following:

\itaubes{7.5}
\textsl{If $n = 1$,  then $S \in   \mathcal{M}_{\hat{A},T}$  and $\{C_{j}\}$  converges to $S$  in
${\mathcal{M}_{\hat{A},T}}$.}

\item
 \textsl{If $n > 1$, then there exists a pair $(S_{n}, \phi )$  where $\phi $  sends $S_{n}$  onto $S$
 as an $n$--fold, branched cover; and this pair defines a point in ${\mathcal{M}^{*}}_{\hat{A},T}$
 that is the limit point of the image in ${\mathcal{M}^{*}}_{\hat{A},T}$  of $\{(C_{0j},\phi _{j})\}$.}
\eit
\noindent Thus, a convergent subsequence exists for any sequence in
${\mathcal{M}^{*}}_{\hat{A},T}$ with convergent image in
$\mathbb{R}\times  O_T/\Aut(T)$ and this property characterizes
a proper map.

It is assumed in the rest of this subsection that a correspondence has been
chosen for $T$ in each $(C_{0j}, \phi _{j})$.

\begin{proof}[Proof of \fullref{prop:7.1}]
As with \fullref{prop:4.6}, all but \eqref{eq7.4}
follows from  \cite[Proposition~3.7]{T3}. In this regard, \eqref{eq7.4} is the version
of \cite[Proposition~3.7]{T3} where the compact set involved, $K$, is replaced by
 $K= \mathbb{R}  \times (S^1  \times  S^2)$. In any event, assume
that \eqref{eq7.4} does not hold so as to derive some patent nonsense. This
derivation has six steps.

\substep{Step 1}
This step proves that the conclusions in
\fullref{lem:4.7} hold if \eqref{eq7.4} does not hold. The argument here is almost the same
as that used for the \fullref{sec:4e}'s version of the lemma. To start, note that
an argument in \fullref{sec:4e} for \fullref{lem:4.7} works as well here to prove that its
conclusions hold except possibly in the case that all subvarieties from $\Xi$
are $\mathbb{R}$--invariant cylinders and that one of the two points in \eqreft4{19}
hold.

To rule out the \eqreft4{19} cases, note first that the smallest vertex angle from
 $T$ must be zero in the \eqreft4{19} cases. The argument is essentially that from
\fullref{sec:4e}: The convergence of the image of $\{(C_{0j},\phi _{j})\}$ in the
$\mathbb{R}$ factor of $\mathbb{R}\times O_T/\Aut(T)$ precludes a non-zero smallest angle because there
would otherwise be a sequence in ${\mathcal{M}^{*}}_{\hat{A},T}$
with the following properties: First, its $j$'th element is $(C_{0}$,
${\phi _{j}}')$ where ${\phi _{j}}'$ is obtained from $\phi _{j}$ by a
$j$--dependent but constant translation along the $\mathbb{R}$ factor
of $\mathbb{R}  \times (S^1  \times  S^2)$. Second, the limit data
set for $\{{\phi _{j}}'(C_{0j})\}$ from
\cite[Proposition~3.7]{T3} has a subvariety that is not an $\mathbb{R}$--invariant
cylinder, has the same $\theta $ infimum as each $C_{j}=\phi_{j}(C_{0j})$.
Third, there is no non-zero constant $b$ for one of
the corresponding ends that makes \eqreft23 hold.

Granted the preceding, assume that the smallest vertex angle from
$T$ is equal to zero. Let $\hat{E}'$ denote the $\Aut(T)$ orbit of edges
in $T$ that is used in \fullref{sec:6c} to define the image of
$\{(C_{0j},\phi _{j})\}$ in the $\mathbb{R}$ factor of
$\mathbb{R}  \times \hat{O}_{T}/\Aut(T)$. Suppose first that
$\hat{E}'$ corresponds to a set of disjoint disks in each $C_{0j}$
that intersect the $\theta  = 0$ cylinder at their center points.
Note that this case occurs only when $\hat{A}$ lacks $(1,\ldots)$
elements. To preclude this \eqreft4{19} case, first associate to
each $C_{0j}$ the set of its $\theta  = 0$ points that correspond to
the edges in $\hat{E}'$. Let $t_{j}$ denote this set. Note that no
sequence whose $j$'th element is from $t_{j}$ can remain bounded as
$j\to   \infty $. This is to say that the set of $|s| $ coordinates
for such a sequence cannot be bounded. Indeed, a bounded sequence
here would force $\Xi $ to have a pair whose subvariety is the
$\theta  = 0$ cylinder. Granted \eqreft4{19}, each $C_{0j}$ would
have an end where the $|s|    \to   \infty $ limit of $\theta $ is
zero. But there are no such ends when $\hat{A}$ lacks $(1,\ldots)$
elements.

Meanwhile, note that as the image of $\{(C_{0j}, \phi _{j})\}$ in
the $\mathbb{R}$ factor of $\mathbb{R}  \times \hat{O}_{T}/\Aut(T)$
converges, so the sequence whose $j$'th component is the sum of the s
values at the points in $t_{j}$ also converges. Granted the
conclusions of the preceding paragraph, this can occur only if there
are two sequences whose $j$'th element is in $t_{j}$, and these are
such that $s$ tends to $\infty $ on one and to $-\infty $ on the
other. However, this conclusion leads afoul of \eqreft4{19} and
the lack of $(1,\ldots)$ elements in $\hat{A}$ when one considers
that there is a path in $C_{0j}$ between the $j$'th point in the one
sequence and the $j$'th point in the other.

To finish the story for the current version of \fullref{lem:4.7},
assume now that $\hat{A}$ has some $(1,\ldots)$ element and so the
edges in $\hat{E}'$ correspond to ends in any given $C_{0j}$ where
$\lim_{| s| \to \infty }  \theta  = 0$. In this case, the image of
$C_{0j}$ in the $\mathbb{R}$ factor of $\mathbb{R} \times \hat{O}_{T}/\Aut(T)$ is,
up to a positive constant factor, minus the
sum of the logarithms of the versions of the constant $\hat {c}$
that appear in \eqref{eq1.9} for the ends in $C_{0j}$ that
correspond to the edges in $\hat{E}'$. Thus, the sequence of such sums
converge. As a consequence, the sequence whose $j$'th element is the
minimal contribution to the $j$'th sum can not diverge towards
$+\infty $, nor can the sequence whose $j$'th element is the maximal
contribution to the $j$'th sum diverge towards $-\infty $. This then
gives the following nonsense: As in the analogous case from the
original proof of \fullref{lem:4.7}, a new sequence can be
constructed with the following mutually incompatible properties:
First, its $j$'th element is $(C_{0j}, {\phi _{j}}')$ where ${\phi_{j}}'$
is obtained from $\phi _{j}$ by a constant, but $j$--dependent
translation along the $\mathbb{R}$ factor in $\mathbb{R}\times (S^1  \times  S^2)$.
Second, the data set from the
corresponding $\{{\phi _{j}}'(C_{0j})\}$ version of
  \cite[Proposition~3.7]{T3} provides a subvariety that
is not the $\theta  = 0$ cylinder, but has an end where the $|s| \to
\infty $ limit of $\theta $ is 0 and is such that the respective
integrals of $\frac{1 }{ {2\pi }} dt$ and $\frac{1 }{ {2\pi }}d\varphi $
about its constant $|s| $ slices have the form $\frac{1}{ m}p$ and $\frac{1 }{ m}p'$ where
$(p, p')$ is the integer pair that
comes from the elements in $\hat{A}$ that correspond to $\hat{E}'$,
and where $m \ge 1$ is a common divisor of this pair. Finally, there
is no non-zero $\hat {c}$ that for the resulting verions of
\eqref{eq1.9}. The argument here for the values of the integrals of
$\frac{1 }{ {2\pi }} dt$ and $\frac{1 }{ {2\pi }}d\varphi $ is
almost verbatim that used in the analogous part of
\fullref{sec:4}'s proof of \fullref{lem:4.7}. Indeed, the
latter argument works because the same domain in $(0, \pi )  \times
\mathbb{R}/(2\pi \mathbb{Z})$ parametrizes all $\hat{E}'$ labeled
components of all $C_{0j}$ version of $C_{0}- \Gamma $; thus this
same domain parametrizes all of the $\hat{E}'$ labeled components of
each version of $C_{0}- \Gamma $ from the translated sequence.

\substep{Step 2}
\fullref{lem:4.7} provides the
$\mathbb{R}$--invariant cylinder $S_{*}$, and given $\varepsilon  >0$,
the real number $s_{0}$, the sequences $\{s_{j - }\}$ and
$\{s_{j + }\}$ and the component $C_{j*}$ of the s $ \in  [s_{j - }, s_{j + }]$
part of each large $j$ version of $C_{0j}$. The
first point to make here is that $\theta $ is neither 0 nor $\pi $
on $S_{*}$. Indeed, were this otherwise, then the mountain pass
lemma would find a non-extremal critical point of $\theta $ on each
large $j$ version of $C_{0j}$ in $C_{j*}$. In particular, the
corresponding sequence of critical values would converge as $j \to \infty $
to either 0 or $\pi $, and this is nonsense since the
critical values on any one $C_{0j}$ are identical to those on any
other.

Thus, $\theta $ on $S_{*}$ must be some angle in $(0, \pi )$. Denote
the latter by $\theta _{*}$. \cite[Lemma~3.9]{T3} can be
used in the present context to draw the following conclusions: Given
$\delta _{0} > 0$, there exist $j$--independent constants $\delta \in(0, \delta _{0})$ and
$R_{ + }, R_{ - }   \ge 0$ such that when $j$ is large,

\itaubes{7.6}
$\theta$ \textsl{has values both greater than $\theta _{*}+\delta $
and less than $\theta _{*}-\delta $  where $s=s_{j - }+R_{ - }$
in} $C_{j*}$.

\item
 \textsl{Either $| \theta -\theta _{*}| $  is strictly greater than $\delta $  where $s=s_{j +}-R_{ + }$
 in $C_{j*}$  or else $\theta $  takes values both
greater than $\theta _{*}+\delta $
 and $\theta _{*}-\delta $  where $s=s_{j + }-R_{ + }$  in $C_{j*}$.}
\eit
As no generality is lost by choosing $R_{\pm }$ so that both the
$s=s_{j - }+R_{ - }$ and $s=s_{j + }-R_{ + }$ are regular values
of s on each $C_{j*}$ and that neither locus in $C_{j*}$ contains a
$\theta $ critical point. Such choices for $R_{\pm }$ are assumed in
what follows.

By contrast, the fourth point in \fullref{lem:4.7} implies the
existence of a sequence $\{\delta _{j}\}$ with limit zero such that
$| \theta -\theta _{*}|  < \delta _{j}$ where $s=\frac{1 }{2}(s_{j - }+s_{j + })$.
Granted all of this, the argument for
\fullref{prop:4.6} in \fullref{sec:4e} works here to
prove that $\theta _{*}$ is either a
critical point of $\theta $ on $C_{0j}$ or else the $|s|    \to \infty $
value of $\theta $ on an end in $C_{j0}$ whose version of
\eqref{eq2.4} has integer $n_{(\cdot )} = 0$. In either case,
$\theta _{*}$ is the angle of some multivalent vertex in $T$ and
$C_{j*}$ must intersect some graph from the collection $\{\Gamma_{o}\}$.
As $C_{j*}$ is connected and $\theta $ has small variation
on $C_{j*}$, it can intersect at most one such graph. Let $o \in T$
denote the labeling vertex.

\substep{Step 3}
Let $s_{*}$ denote a regular value
of s on $C_{j*}$ that is within 1 of $\frac{1 }{ 2}(s_{j - }+s_{j+ })$,
has transversal intersection with the $\theta =\theta _{*}$
locus, and is such that $| \theta -\theta _{*}|  < \delta _{j}$ on
the $s=s_{*}$ slice of $C_{j*}$. By virtue of
  \cite[Proposition~3.7]{T3}, the $s=s_{*}$ slice of
$C_{j*}$ is not null-homologous in the radius $\varepsilon $ tubular
neighborhood of $S_{*}$. Thus this slice is not null-homologous in
the complement of the $\theta  = 0$ and $\theta =\pi $ loci in
$C_{0j}$. Even so, the following is true: If $j$ is large, then this
$s=s_{*}$ slice is not homologous in the $\theta  \notin \{0, \pi \}$ part of
$C_{0j}$ to a union of suitably oriented
slices of ends of $C_{0j}$ where the $|s|    \to   \infty $ limit of
$\theta $ is $\theta _{*}$. Indeed, were this not the case, then
\eqreft75 demands the impossible: The union of the $s=s_{*}$
slice of $C_{j*}$ and a collection of very large, but constant $|s|$
slices of $C_{0j}$ is the boundary of a subset of $C_{j}$'s model
curve where $\theta \notin \{0, \pi \}$ and whose interior has a local extreme point
of $\theta $. This is impossible because, as noted in \fullref{sec:2a}, the only
local minima or maxima of $\theta$ on a pseudoholomorphic subvariety
occur where $\theta $ is respectively 0 or $\pi $.

By the way, a component of the $s=s_{*}$ slice of $C_{j*}$ that is
homologically non-trivial in the radius $\varepsilon $ neighborhood
of $S_{*}$ must intersect the $C_{0j}$ version of the locus
$\Gamma_{o}$ when $j$ is large. To explain, introduce the 1--form
$x=(1-3\cos^{2}\theta _{*}) d\varphi -\surd 6 \cos\theta _{*} dt$. The
latter is exact in the radius $\varepsilon $ tubular neighborhood of
$S_{*}$ so has zero integral over any given component of the
$s=s_{*}$ slice of $C_{j*}$. Now, let $\eta $ denote a component of
the $s=s_{*}$ slice of $C_{j*}$. If $| \theta -\theta _{*}|  > 0$ on
$\eta $, then $\eta $ must sit as an embedded circle in some
component of the $C_{0j}$ version of $C_{0}- \Gamma $. If $\eta $ is
homologically non-trivial in this component, then it is homotopic
there to a constant $\theta $ slice. Thus, the integral of $x$ over
$\eta $ is $\pm \alpha _{Q}(\theta _{*})$ where $Q$ is the integer
pair that is assigned to the edge in $T$ that labels $\eta $'s
component of $C_{0}- \Gamma $. Thus, $\alpha _{Q}(\theta _{*}) = 0$.
However, this is nonsense since \eqreft75 requires that
$\theta=\theta _{*}$ on the closure of this component.

\substep{Step 4}
To say more about the $s=s_{*}$
slice of $C_{j*}$ requires a digression to point out some features
of the small but positive $\delta $ versions of the $| \theta
-\theta _{*}| < \delta $ neighborhood in $C_{0j}$ of the locus
$\Gamma _{o}$.

The first point here is that such a neighborhood is homeomorphic to
a multiply punctured sphere. In particular, the first homology of
such a neighborhood is canonically isomorphic to the first homology
of the graph ${\underline {\Gamma }^{*}}_{o}$. This is to say that a
system of generators for the $| \theta -\theta _{*}| < \delta $
can be obtained as follows: First take a large $|s| $ slice of each
end of $C_{0j}$ that corresponds to a vertex on $\underline {\Gamma}_{o}$.
These form a set, $\{\ell ^{*\upsilon }\}$, where the label
$\upsilon $ runs through the vertices in $\underline {\Gamma }_{o}$
with non-zero integer label. Add to these the set, $\{{\ell^{*}}_{oe}\}$,
where the label $e$ runs through the incident edges to
$o$ and ${\ell ^{*}}_{oe}$ denotes the $| \theta -\theta _{*}|
=\frac{1 }{ 2}\delta $ slice of $e$'s component of the $C_{0j}$
version of $C_{0}-\Gamma $. The collection $\{{\ell ^{*}}_{oe}\} \cup
\{\ell ^{*\upsilon }\}$ then generates the homology of this
neighborhood of $\underline {\Gamma }_{o}$ subject to the one
constraint in \eqref{eq2.22}.

The second point concerns the 1--form $x = (1-3\cos^{2}\theta _{*})
d\varphi -\surd 6 \cos\theta _{*} dt$ where $\theta $ is near
$\theta _{*}$. As remarked previously, this form is exact near
$S_{*}$. To elaborate on this, write $S_{*}$ as $\mathbb{R}\times
\gamma _{*}$ where $\gamma _{*}   \subset  S^1  \times  S^2$ is the
relevant $\theta =\theta _{*}$ Reeb orbit. Now, $x$ is, of course,
pulled back from $S^1  \times  S^2$ and its namesake on $S^1  \times
S^2$ can be written on the radius $\varepsilon $ tubular
neighborhood of $\gamma _{*}$ as $df$ where $f$ is a smooth function
that vanishes on $\gamma _{*}$. Over the whole of the $\theta
=\theta _{*}$ locus in $S^1  \times  S^2$, the form $x$ can be
written as $df_{*}$ where $f_{*}$ is a multivalued function that
agrees with an $\mathbb{R}$--valued lift near $\gamma _{*}$ that
agrees with $f$. The function $f_{*}$ is constant on each $\theta
=\theta _{*}$ Reeb orbit and its values distinguish the various
$\theta =\theta _{*}$ Reeb orbits. In this regard, the values of
$f_{*}$ are in $\mathbb{R}/(2\pi \kappa _{*}\mathbb{Z})$ where
\begin{equation}\label{eq7.7}
{\kappa _{*}}^{2 } =
\frac{{(1 + \cos^4\theta_* )} }{ {(p^{2} + p'^{2}\sin ^2\theta_ * )}}.
\end{equation}
Here $(p, p')$ is the relatively prime integer pair that determines $\theta_{*}$ via \eqref{eq1.8}.

\substep{Step 5}
This step concerns the integral of
the 1--form $x$ over any given component of the $| \theta -\theta_{*}|  > 0$
portion of the $s=s_{*}$ slice of $C_{j*}$. To say more,
let $\eta $ denote such a component. Since $x= df$ near $\eta $
and $\eta $ is close to $S_{*}$ where $f$ is zero, so the integral
of $x$ over $\eta $ has absolute value no greater than some
 $j$ and $\eta $ independent multiple of $\delta _{j}$.

To see the implications of this result, note that $\eta $ sits in
some component of the $C_{j}$ version of $C_{0}- \Gamma $ whose
label is an incident edge to $o$. Fix a parametrization of this
component, and the closure of $\eta $ then corresponds to an
embedded path in the closed parametrizing cylinder whose two
endpoints are on the $\sigma =\theta _{*}$ boundary and whose
interior lies where $0 < | \theta -\theta _{*}|<\delta _{j}$.
As such, this version of $\eta $ is homotopic rel its end points to
an embedded path, $\mathfrak{p}(\eta )$, in the $\sigma =\theta _{*}$ circle of
the parametrizing cylinder.

The path $\mathfrak{p}(\eta)$ may or may not pass through some missing points
on the $\sigma =\theta _{*}$ circle. Let $\mathfrak{p}_{0}(\eta )   \subset $
$\mathfrak{p}(\eta)$ denote the complement of any such missing points. Thus,
$\mathfrak{p}_{0}(\eta )$ corresponds to a properly embedded, disjoint set of
paths in $\Gamma _{o}$ with two endpoints in total, these the
endpoints of the closure of $\eta $. By virtue of the second point
in the preceding step, the integral of $x$ over $\mathfrak{p}_{0}(\eta )$ is
also bounded in absolute value by a $j$ and $\eta $ independent
multiple of $\delta _{j}$. Moreover, with a suitable orientation,
the 1--form $x$ is positive on $\mathfrak{p}_{0}(\eta )$. Thus, the integral of
$x$ over any subset of $\mathfrak{p}_{0}(\eta )$ is also bounded by the same
multiple of $\delta _{j}$.

\substep{Step 6}
Let $V$ now denote the set of
components of the $| \theta -\theta _{*}|  > 0$ part of the
$s=s_{*}$ locus in $C_{j*}$. Let $\sigma _{j}$ denote $ \cup _{\eta\in V} \mathfrak{p}_{0}(\eta )$.
This is the image in $\Gamma _{o}$ via a
proper, piecewise smooth map of a finite, disjoint set of copies of
$S^1$ and $\mathbb{R}$. However, by virtue of what was said at the
end of the previous step, if an arc, $\gamma $, from $\Gamma_{o}$
is contained in $\sigma _{j}$, then the integral of $x$ over
$\gamma $ is bounded by a $j$--independent multiple of $\delta _{j}$.
As a consequence, no large $j$ version of $\sigma _{j}$ contains the
whole of any arc in $\Gamma _{o}$. Here is why: The integral of $x$
over an arc in $\Gamma _{o}$ gives the value on the arc of
$(C_{0j}, \phi _{j})$'s assigned point in the symplex $\Delta_{o}$.
Were $\gamma $ in $\sigma _{j}$, then this assigned point
would give an $\mathcal{O}(\delta _{j})$ value to $\gamma $, and
were such the case for an infinite set of large $j$ versions of
$(C_{0j}, \phi _{j})$, then the image of $\{(C_{0j}, \phi_{j})\}$ in $\Delta _{o}$ could not converge.

In the case $\Gamma _{o}=\underline {\Gamma }_{o}$, then $\sigma_{j}$
is compact. Since no arc is contained in $\sigma _{j}$, it
defines the zero homology class. According to the what was said in
Step 4, it therefore defines the zero homology class in the
$|\theta -\theta _{*}| < \delta $ neighborhood of $\Gamma _{o}$. At
the same time, $\sigma _{j}$ is homologous in the $| \theta -\theta_{*}|< \delta $
neighborhood of $\Gamma _{o}$ to the $s=s_{*}$
slice of $C_{j*}$ and Step 3 found that latter's class is
definitively not zero.  This nonsense proves that $\Gamma _{0}$
can not be compact.

Suppose now that $\Gamma _{o}$ is not the whole of $\underline
{\Gamma }_{o}$. In this case, the closure of $\sigma _{j}$ in
${\underline {\Gamma }^{*}}_{o}$ is the image via the collapsing map
of the image, ${\sigma ^{*}}_{j}$, of a map from a finite set of
circles into ${\underline {\Gamma }^{*}}_{o}$. Since this ${\sigma ^{*}}_{j}$ contains no arc inverse image from $\Gamma _{o}$, its
homology class must be a multiple of sums of those generated by the
vertex labeled loops from the set $\{\ell ^{*\upsilon }\}$. At the
same time, the discussion in Step 4 identifies $H_{1}(\underline
{\Gamma }^{*}; \mathbb{Z})$ with the first homology of the
$|\theta -\theta _{*}|<\delta $ neighborhood of $\Gamma _{o}$ and
with this understood any such ${\sigma ^{*}}_{j}$ is homologous modulo
the classes from $\{[\ell ^{*\upsilon }]\}$ to the $s=s_{*}$ slice
of $C_{j*}$. This then means that the $s=s_{*}$ slice of $C_{j*}$ is
homologous to some union of constant $|s| $ slices of the
$\theta=  \theta _{*}$ ends of $C_{0j}$. However, as noted in Step 3,
this is definitively not the case. This last bit of nonsense
completes the proof of \fullref{prop:7.1}.
\end{proof}

\step{Part 2}
This part of the subsection proves that $\Xi $ has
but a single element. The proof borrows much from the discussion in
Part 2 of \fullref{sec:4e}. In particular, it starts out just
the same by assuming that $\Xi $ has more than one element so as to
derive some patent nonsense. In this regard, note that the
conclusions from \fullref{prop:7.1} require that $\Xi $ have
at least one element that is not an $\mathbb{R}$--invariant cylinder
and the three steps that follow explain why there is at most one
element in $\Xi $ of this sort. Granted that only one subvariety
from $\Xi $ is not an $\mathbb{R}$--invariant cylinder the argument
for precluding $\mathbb{R}$--invariant cylinders from $\Xi $ can be
taken verbatim from Part 2 of \fullref{sec:4e}.

\substep{Step 1}
Choose an infinite subsequence from
$\{(C_{0j}, \phi _{j})\}$ with the following property: Let $o$
denote a multivalent vertex in $T$ and $\upsilon $ a vertex in
$\Gamma _{o}$. For each j, let $x_{j}$ denote the image in $C_{j}$
of the critical point of $\theta $ that corresponds to $\upsilon $.
Then either $\{x_{j}\}$ converges, or $\{s(x_{j})\}$ is unbounded
and strictly increasing or strictly decreasing. Here is the second
property: Agree henceforth to relable this subsequence by the
integers starting at 1.

Define as in Part 2 of \fullref{sec:4e} the subvariety
$\Sigma\subset   \mathbb{R}  \times (S^1  \times  S^2)$ to be the union of
the subvarieties from $\Xi $ that are not $\mathbb{R}$--invariant
cylinders. Let $Y \subset   \Sigma $ denote the critical points of
$\cos(\theta )$ on the irreducible components of $\Sigma $, the
singular points in the subvariety $ \cup _{(S,n) \in \Xi } S$,
and the limit points of the sequences $\{x_{j}\}$ as defined in the
previous paragraph. So defined, $Y$ is a finite set.

\substep{Step 2}
Suppose two subvarieties from $\Xi$ are not $\mathbb{R}$--invariant cylinders. To obtain nonsense from
this assumption, let $S$ and $S'$ denote distinct, subvarieties from
$\Xi $ that are not $\mathbb{R}$--invariant. The argument in the
fourth paragraph of Part 2 in \fullref{sec:4e} that ruled out
two non-$\mathbb{R}$ invariant subvarieties applies here to prove
that $S$ and $S'$ can be chosen to so that there is a value of $\theta$
that is taken simultaneously on $S$ and $S'$. No generality is lost
by choosing this angle to be in $(0, \pi )$, to be distinct from
$\theta $'s values on $Y$, and to be distinct from the angles that are
assigned to the vertices in
 $T$. This being the case, then for each $j$, there are points $z_{j}$ and
${z_{j}}'$ in $C_{0j}$ on which $\theta $ has this same value and such
that the sequence $\{z_{j}\}$ converges to a point in $S$ while
$\{{z_{j}}'\}$ converges to one in $S'$.

As $C_{0j}$ is irreducible, there is a path in $C_{0j}$ that runs from
$z_{j}$ to ${z_{j}}'$. For the present purposes, some paths are better than
others. In particular there exists $R > 1$ and for each $j$, such a path,
$\gamma _{j}$, with the following properties:

\itaubes{7.8}
\textsl{The image of $\gamma _{j}$  in $\mathbb{R}\times(S^1\times S^2)$  avoids the radius
$\frac{1}{R}$  balls about any point in $Y$.}

\item
\textsl{$|s| \le R$ on $\gamma _{j}$.}
\eit
The existence of $R$ and $\{\gamma _{j}\}$ is proved in the
subsequent two steps. Of course, granted \eqreft78, then $S$ and
 $S'$ must coincide because \fullref{prop:7.1} and
\cite[Proposition~3.7]{T3} put the whole of every large
 $j$ version of $\gamma _{j}$ very close to only one subvariety from $\Xi $.

\substep{Step 3}
The proof of the first point in \eqreft78
starts with the assertion that there exists $\varepsilon  > 0$ such that the
following is true:

\qtaubes{7.9}
\textsl{Let $o$ denote a multivalent vertex in $T$.  If $j$ is large, then no arc in the $C_{0j}$  version
of $\Gamma _{o}$  lies entirely in the inverse image of a single radius $\varepsilon $  ball in
$\mathbb{R}  \times (S^1  \times S^2)$.}
\endqtaubes

\noindent The first point of \eqreft78 follows directly from this assertion and
\cite[Lemma~3.10]{T3}.

To prove the assertion, suppose that $\gamma $ is an arc in $\Gamma_{o}$
whose image in $C_{j}$ is in a radius $\rho $ ball. The
integral of the 1--form $(1-\cos^{2}\theta _{o}) d\varphi -\surd 6\cos\theta _{o} dt$ over
$\gamma $ is then bounded by a fixed
multiple of $\rho $. Thus, if $\rho $ is small, so $(C_{0j}, \phi_{j})$'s
assigned point in the simplex $\Delta _{o}$ assigns a small
value to $\gamma $. Since the sequence $\{(C_{0j}, \phi _{j})\}$
has convergent image in $\Delta _{o}$, all such values enjoy a
positive, $j$--independent lower bound.

The proof of the second point in \eqreft78 requires the following assertion:

\qtaubes{7.10}
\textsl{There exists $R > 1$  such that when $j$ is large, then no arc in the
$C_{j}$  version of $\Gamma _{o}$  lies entirely where $|s| \ge  R$.}
\endqtaubes

\noindent To see why, note that by virtue of \fullref{prop:7.1}, if $R$ is such
that the $|s|    \ge R$ part of $\Sigma $ is contained in the ends
of $\Sigma $, then any arc in $\Gamma _{o}$ where $|s| $ is
everywhere larger than $R$ is very close to an
$\mathbb{R}$--invariant cylinder where $\theta =\theta _{o}$. Now, as
explained in Part 1, the closed 1--form $(1-\cos^{2}\theta _{o})
d\varphi -\surd 6\cos\theta _{o}$ can be written as $df$ on some
fixed neighborhood of such a cylinder with $f$ being the pull-back
from a neighorhood of the corresponding Reeb orbit of a smooth
function that vanishes on the Reeb orbit. This then means that the
integral of this 1--form over any $|s| \ge R$ arc is very small if
$R$ is very large and, with $R$ chosen, then $j$ is sufficiently
large. Thus, the point assigned to $(C_{0j}, \phi _{j})$ in
$\Delta _{o}$ gives a very small value to such an arc. Since the
assigned values enjoy a $j$--independent, positive lower bound, there
is an upper bound to the minimum value of $|s| $ on any arc from any
large $j$ version of $\Gamma _{o}$.

The assertion in \eqreft7{10} allows the large $j$ versions of
$\gamma _{j}$ to cross the $C_{0j}$ version of the locus $\Gamma
\subset $ where $|s| $ enjoys a $j$--independent upper bound.
\fullref{prop:7.1} then implies that the large $j$ versions
of $\gamma _{j}$ can be chosen so that $|s| $ also enjoys a
$j$--independent upper bound on the portions of $\gamma _{j}$ in the
$C_{0j}$ version of $C_{0}- \Gamma $. To elaborate on this, keep in
mind that \fullref{prop:7.1} finds a lower bound to $|s| $
on any large $j$ version of $\gamma _{j}$ at angles that are
uniformly bounded away from the $|s|    \to   \infty $ limits of
$\theta $ on the ends of $\Sigma $. To choose $\gamma _{j}$ with a
$j$--independent upper bound for $|s| $ as $\theta $ nears such a
limit, first set the stage by letting $\theta _{*}$ denote the angle
in question and write $(1-3\cos^{2}\theta _{*}) d\varphi-\surd 6\cos\theta _{*} dt$ as
$df_{*}$ where $f_{*}$ is the multivalued
function that is described in the fourth step in Part 1. As $f_{*}$
is constant on the $\theta =\theta _{* }$ Reeb orbits, so the ends of
$\Sigma $ where $\lim_{| s| \to \infty }  \theta =\theta _{*}$
account for only a finite set of values for $f_{*}$. Thus, $|s| $ on
$\gamma _{j}$ will enjoy a $j$--independent upper bound if $\gamma_{j}$
is chosen so that it approaches and crosses the $\theta=\theta _{*}$
locus where $f_{*}$ is uniformly far from its values
on the ends of $\Sigma $ where $\theta _{*}$ is the $|s|\to\infty $ limit of $\theta $.
Granted \eqreft7{10}, such a version
of $\gamma _{j}$ can be chosen in a component of the $C_{0j}$
version of $C_{0}- \Gamma $ using a parametrization as depicted in
\eqreft25. In this regard, the case where $f_{*}$ approaches a
constant as $\theta \to \theta _{*}$ on a given component can
be avoided since it occurs if and only if the part of the component
where $\theta $ is nearly $\theta _{*}$ is entirely in the large
$|s| $ part of some end of $C_{0j}$.

\step{Part 3}
Let $(S, n)$ denote the single element in $\Xi $. This
last part of the story explains why \eqreft75 is true. The story
here starts by making the following point: Because the image of
$\{(C_{0j}, \phi _{j})\}$ converges in $O_{T}/\Aut(T)$, the choices
in Part 2 of \fullref{sec:6c} can be made for each $(C_{0j},\phi _{j})$
so that the resulting sequence in $(\times _{o}  \Delta
_{o})  \times (\mathbb{R}_{ - }  \times   \mathbb{R}_{\diamondsuit
})  \times [\times _{o \in \mathcal{V} }  \mathbb{R}_{o}]$ also
converges. These choices are assumed in what follows.

Let $S_{0}$ denote the model curve for $S$ and let $\phi _{0}$
denote the tautological, almost everywhere 1--1 map from $S_{0}$ onto
 $S$ in $\mathbb{R}\times (S^1  \times  S^2)$. The complement of the
inverse image of $Y$ in $S_{0}$ is embedded in $\mathbb{R}  \times
(S^1  \times  S^2)$ and so has a well defined normal bundle, this
denoted by $N$ in what follows.

Now, fix $\varepsilon  > 0$ but very small and with the following
considerations: First, the various points in $Y$ are pairwise
separated by a distance that is much greater than $\varepsilon $.
View the latter distance as $\mathcal{O}(1)$ relative to
$\varepsilon $. Second, given a point  $p \in Y$, there is only one
value of $\theta $ on the set $Y$ that is also a value of $\theta $
on the radius $\varepsilon $ ball about $p$, this being $\theta (p)$.
Finally, the $|s| \ge 1/\varepsilon $ portion of $S_{0}$ is far
out in the ends of $S_{0}$ and is far from any point that maps to $Y$.

Given such $\varepsilon $, let $S_{\varepsilon }$ denote the portion
of the $|s| \le 1/\varepsilon $ part of $S_{0}$ that is mapped
by $\phi _{0}$ to where the distance from $Y$ is at least
$\varepsilon ^{4}$. There is a subdisk bundle, $N_{\varepsilon } \subset  N$,
over a submanifold of $S$ that has $S_{\varepsilon }$
in its interior and an exponential map $ e\co  N_{\varepsilon }   \to
\mathbb{R} \times (S^1\times  S^2)$ with the following properties:
First, it embeds $N_{\varepsilon }$ as a tubular neighborhood of
this larger submanifold. Second, it embeds each fiber as a
pseudoholomorphic disk. Finally, the function $\theta $ is constant
on each such fiber disk. In what follows, $N_{\varepsilon }$ is not
distinguished notationally from its image in $\mathbb{R}  \times
(S^1  \times S^2)$ via $e$.

If $\varepsilon $ is fixed in advance and $j$ is large, then $\phi_{j}(C_{0j})$
intersects $N_{\varepsilon }$ as an immersed
submanifold such that the composition of $\phi _{j}$ followed by the
projection to $S_{\varepsilon }$ defines a degree $n$, unramified
covering map from ${\phi _{j}}^{- 1}(N_{\varepsilon })$ to
$S_{\varepsilon }$. Let $\pi _{j}$ denote the latter map. Granted
that $\pi _{j}$ is a covering map, then \eqreft75 follows by
analyzing the behavior of $\phi _{j}(C_{0j})$ where $|s| \ge 1/\varepsilon $
and also where the distance to $Y$ is less than
$\varepsilon $. This analysis is presented in fourteen steps. The
first six describe the parts of the large $j$ versions of $C_{0j}$
that map to where the distance to $Y$ is less than $\varepsilon $.

\substep{Step 1}
Let $z$ denote a point in $S_{0}$
that maps to $Y_{*}$ and let $B \subset   \mathbb{R}  \times (S^1
\times  S^2)$ denote the radius $\varepsilon $ ball centered on $z$'s
image. Let $\theta _{*}$ denote the value of $\theta $ at $z$ and
assume here that $\theta _{*} > 0$. If $z$ is a critical point of
$\theta $, let $m$ denote $\deg_{z}(d\theta )$, otherwise set $m=0$.
Fix an embedded circle, $\nu \subset S_{0}$, around $z$ so that its
image lies in $B$, so that it intersects the $\theta =\theta _{*}$
locus transversally in $2m+2$ points,  and so that all points in
$\nu $ are mapped by $\phi _{0}$ to where the distance to $z$'s image
is greater than $\varepsilon ^{2}$. Let $\delta $ denote the maximum
of $| \theta -\theta _{*}| $ on $\nu $. It follows from
\eqref{eq2.11} that $\nu $ can be chosen so that $\delta    \le c(z)\cdot \varepsilon ^{2}$
where $c(z)$ depends only on $z$. With
$\varepsilon $ small, the image of $\nu $ in $S$ has $\theta$--preserving
preimages via $\pi _{j}$ in every large $j$ version of
$C_{0j}$. Let $\nu '$ denote such a circle. Let $k$ denote the degree
of $\pi _{i}$ on $\nu '$. The loop $\nu '$ intersects the
$\theta=\theta _{*}$ locus in $C_{0j}$ transversely in $k(2m+2)$ points.
The upcoming parts of the story explain why
this is possible when $\varepsilon $ is small and $j$ is large only
if $\nu'$ bounds an embedded disk in ${\phi _{j}}^{-1}(B)$ that contains a single
critical point of $\theta $, one where $d\theta $ vanishes with degree $m$.

\substep{Step 2}
A digression is need for some preliminary
constructions. For this purpose, fix attention on a component, $\mu $, of
the $\theta -\theta _{*} > 0$ portion of $\nu'$. The interior of
$\mu $ lies in a component of $C_{0j}$'s version of $C_{0}- \Gamma$.
After parametrizing the latter, $\mu $ can be viewed as an embedded path
in the corresponding parametrizing cylinder with both endpoints on the
$\theta =\theta _{*}$ circle but otherwise disjoint from this
circle. Remark next that $\mu $ is homotopic rel its endpoints in the
parametrizing cylinder to a path, $\mu _{*}$, on the $\theta =\theta _{*}$ circle.
In fact, the concatenation of $\mu $ and $\mu_{*}$
bounds a topologically embedded disk in the parametrizing
cylinder. Use $D_{\mu }$ to denote both the interior of this disk and its
image in $C_{j}$. If 0 $ \le   \theta -\theta _{*}$ on $\mu $,
then $0 < \theta -\theta _{*}<\delta $ on $D_{\mu }$,
and if $0\ge \theta -\theta _{*}$ on $\mu $, then $0 > \theta-\theta _{*}> -\delta $ on $D_{\mu }$.

As is explained next, the path $\mu _{*}$ must be contained in
${\phi_{j}}^{- 1}(B)$ when $\varepsilon $ is small and $j$ is large. To see
why, introduce the 1--form $x \equiv  (1-3\cos^{2}\theta _{*})
d\varphi -\surd 6 \cos\theta _{*} dt$. This form is positive on
$\mu _{*}$ and this leads to nonsense if $\varepsilon $ is small,
$j$ is large and $\mu _{*}$ exits $B$. To demonstrate the nonsense,
let $B_{1}$ denote the ball centered at $z$'s image in $\mathbb{R}
\times (S^1  \times  S^2)$ whose radius is the minimum of 1 and half
the distance between $z$'s image and any other point of $Y$. Introduce
the function $f$ on $B_{1}$ that obeys $df=x$ and that vanishes
on the image of $z$. The integral of $x$ over $\mu $ is the difference
between the values of $f$ on $\mu $'s endpoints, and so is bounded
in absolute value by a $j$--independent multiple of $\varepsilon $.
Since $\mu _{*}$ and $\mu $ are homotopic real boundary in the
parametrizing cylinder, the integral of $x$ over $\mu _{*}$ is
likewise bounded in absolute value by $\varepsilon $.

Keeping this in mind, note that $x$ is positive on the $\theta=\theta _{*}$
locus in $S \cap (B_{1}- B)$. In particular, when
$\varepsilon $ is small, there is a positive and $\varepsilon$--independent
number that is less than the integral of $x$ over each
such component. When $j$ is large, \fullref{prop:7.1}
requires that this same number serve as a lower bound to the
integral of $x$ in $C_{0j}   \cap   \phi ^{- 1}(B_{1}- B)$. Now,
if a large $j$ version of $\mu _{*}$ leaves ${\phi _{j}}^{- 1}(B)$,
then it must leave ${\phi _{j}}^{- 1}(B_{1})$ also since a large $j$
version of $\mu _{*}$ in ${\phi _{j}}^{- 1}(B_{1}- B)$ is a
$\theta $--preserving preimage of a portion of the $\theta =\theta_{*}$ locus in
$S_{\varepsilon }$. Thus, the
$\mathcal{O}(\varepsilon )$ integral of
 $x$ over the large $j$ versions of $\mu _{*}$ precludes it leaving ${\phi_{j}}^{- 1}(B)$.

There are $2k(m+1)$ versions of $\mu $, $\mu _{*}$ and $D_{\mu }$, one
for each component of the part of $\nu'$ where $| \theta -\theta_{*}|  > 0$.
Each such $\mu _{*}$ is in ${\phi_{j}}^{- 1}(B)$.

\substep{Step 3}
The fact that each $\mu _{*}$ is
in ${\phi _{j}}^{-1}(B)$ implies the following: If $\varepsilon  >0$
is chosen sufficently small, then any sufficiently large $j$
versions of $\nu'$ is homotopically trivial in $C_{0j}$. To
elaborate, note first that any loop in the $| \theta -\theta _{*}| < \delta $
part of $C_{0j}$ is homotopic to one that sits entirely
in the $\theta  =\theta _{*}$ locus except at points far out on
ends of $C_{0j}$ where the $|s|    \to   \infty $ limit of $\theta $
is $\theta _{*}$. The fact that any given version of $\mu _{*}$ lies
entirely in ${\phi _{j}}^{- 1}(B)$ implies that $\nu' $ is homotopic
to the loop in the $\theta =\theta _{*}$ circle that is obtained by
the evident concatenation of the $k(2m+2)$ versions of $\mu _{*}$.
This is a loop, $\nu _{*}$, that lives entirely in the $\theta =\theta _{*}$ locus.

If $\theta _{*}$ is not the angle of a multivalent vertex in $T$, then
$\nu _{*}$ lies in some component of $C_{0j}- \Gamma $. Let
 $e$ denote its labeling edge from $T$. If $\nu _{*}$ is not
null-homotopic, then the integral of $x \equiv  (1-3\cos^{2}\theta_{*})
d\varphi -\surd 6 \cos\theta _{*} dt$ over $\nu _{*}$ is a
non-zero, integer multiple of the $Q =Q_{e}$ version of $\alpha_{Q}(\theta _{*})$.
However, as $\phi _{j}(\nu ')$ sits in a small
ball, the integral of $x$ over $\nu'$ is therefore tiny if
$\varepsilon $ is small, and so the integral of $x$ over $\nu _{*}$
must be zero.

If $\theta _{*}$ is a multivalent vertex angle from $T$ and if $\nu_{*}$
is not null homotopic, then it must contain at least one arc
from some version of $\Gamma _{o}$. Such an arc thus sits in ${\phi_{j}}^{- 1}(B)$.
However, if an arc is mapped into $B$, then the
integral of $x \equiv  (1-3\cos^{2}\theta _{*}) d\varphi -\surd 6 \cos\theta _{*} dt$
over the arc is smaller than a
$j$--independent, constant multiple of $\varepsilon $. This means
that the arc is assigned a very small number by $(C_{0j}, \phi_{j})$'s image in
$\times _{o}  \Delta _{o}$ if $\varepsilon $ is
small and $j$ is large. However, such arc assignments enjoy a
$j$--independent, positive lower bound, and so there is no arc in $D'$
if $\varepsilon $ is too small and $j$ is large. Thus, $\nu _{*}$ is
null-homotopic and so is $\nu '$.

\substep{Step 4}
As each version $\mu _{*}$ is in
$\phi ^{- 1}(B)$, it follows that all version of $D_{\mu }$ approach
$\nu '$ from the same side, and therefore the union of the $2k(m+1)$
version of $D_{\mu }$ define an embedded disk in $C_{0j}$ with
boundary $\nu' $. Let $D'$ denote this disk. As $| \theta -\theta
_{*}| \le   \delta $, this disk must lie in $\phi ^{- 1}(B)$ when
$\delta $ is small and $j$ is large. Indeed, were $D'$ to exit $\phi
^{- 1}(B)$, it would have to intersect a $\theta $--preserving
preimage in $C_{0j}$ of an analog of $\nu $ around some other point
in $S_{0}$ that maps to the same point as $z$. Since $D'$ is embedded,
it would then contain this circle and thus contain a point where
$|\theta -\theta _{*}|> \delta $.

As constructed, the complement in $D'$ of the $\theta =\theta _{*}$ locus is the
union of the $2k(m+1)$ versions of $D_{\mu }$. Since all
$D_{\mu }$ are disks, this implies that the $\theta =\theta _{*}$ locus in
$D'$ is connected. Moreover, if either $m$ or $k$ is greater than 1,
then there are at least 4 such disks and so there is a critical point of
$\theta $ inside $D'$. If both $m$ and k equal 1, then there is no critical
point of $\theta $ in $D'$.

If $\varepsilon $ is small and $j$ is large, there can be at most
one such critical point in $D'$. To see this, note first that any two
$\theta $ critical points in $D'$ are joined in $D'$ by a constant
$\theta $ path since the $\theta =\theta _{*}$ locus in $D'$ is
connected. A constant $\theta $ path between two $\theta $ critical
points is a union of arcs in some $C_{0j}$ version of a graph from
the set $\{\Gamma _{(\cdot )}\}$. However, as noted in the previous
step, no such arc can be mapped by $\phi _{j}$ into $B$ if
$\varepsilon $ is small.

Granted that there is but one $\theta $ critical point in $D'$, then the
degree of vanishing of $d\theta $ there must be $k\cdot (m+1) - 1$ since
there are $2k(m+1)$ disk components to the complement of the $\theta =\theta _{*}$
locus in $D'$. Indeed, this follows from \eqref{eq2.11}.

\substep{Step 5}
If $n \ge 2$, then the loop
$\nu $ can have more than one $\theta $--preserving preimage. Suppose
$\nu '$ and $\nu''$ are two distinct $\theta $--preserving preimages
in a large $j$ version of $C_{0j}$. As is explained next, no path in
$C_{0j}$ from $\nu '$ to $\nu''$ is contained entirely in ${\phi_{j}}^{- 1}(B)$
if $\varepsilon $ is small and $j$ is taken very
large. To see why this is the case, note that were such a path to
exist, the variation of $\theta $ on it would be bounded by a
$j$--independent multiple of $\varepsilon $. As a consequence, $\nu'$ and $\nu''$
would have to intersect the same component of the
$\theta =\theta _{*}$ locus in $C_{0j}$ and so there would be a path
on this locus between them. Now, the disk $D'$ is contained in ${\phi_{j}}^{- 1}(B)$ and
$\nu''$ cannot be in $D'$ as the $\theta $ values
on $\nu''$ are identical to those on $\nu $. Thus this hypothetical
path from $\nu'$ to $\nu''$ would have to travel from $\nu'$ on
the portion of the $\theta =\theta _{*}$ locus that avoids $D'$. This
part of the locus is in the tubular neighborhood $N_{\varepsilon }$
and as it projects to the $\theta  =\theta _{*}$ locus in
$S_{\varepsilon }$, it can not hit a $\theta $--preserving preimage
of $\nu $ before it exits the larger ball $B_{1}$.

\substep{Step 6}
This step describes the large $j$
versions of $\phi _{j}(C_{0j})$ near points where $S$ interects the
$\theta  = 0$ locus. A similar story can be told for the points near
the $\theta =\pi $ locus. To start, suppose that $z \in S_{0}$ is a point
where $\theta $ is 0. In this case, choose a loop
$\nu $ in $\phi _{0}^{- 1}(B)$ that bounds a disk in $S_{0}$ whose
center is $z$. In this regard, choose $\nu $ so that the maximum value
of $\theta $ on $\nu $ is much less than that of $\theta $ on the
boundary of $S \cap B$. Take $j$ large and let $\nu ' \subset C_{0j}$
again denote a $\theta $--preserving preimage of $\nu $. As
before, this is an embedded circle. As can be seen using
\fullref{prop:7.1} and the $C_{0j}$ versions of the
parametrizations from \fullref{def:2.1}, the circle $\nu'$ is
embedded in a component of the $C_{0j}$ version of $C_{0}- \Gamma $
whose closure contains a point where $\theta  = 0$. As a
consequence, $\nu'$ bounds a disk in this closure on which the
maximum of $\theta $ is achieved on $\nu'$. Let $D'$ denote this
disk. Were $D'$ to leave ${\phi _{j}}^{- 1}(B)$, its $\phi _{j}$--image
would by necessity intersect the boundary of $B$ very near some
component of $S \cap B$. But, a large $j$ version of such a disk
would then have interior points where $\theta $ was larger than its
maximum on $\nu '$. Thus $D'$ is in ${\phi _{j}}^{- 1}(B)$.

Here is one more point: Let k denote the degree of the covering map
from $\nu'$ to $\nu $. If the local intersection number at $z$
between $S_{0}$ and the $\theta  = 0$ locus is denoted as
$q_{z}$, then the local intersection number between $D'$ and the
$\theta  = 0$ locus is $k\cdot q_{z}$.

\substep{Step 7}
Let $E \subset S_{0}$ denote an end, let $\theta _{*}$
denote the $|s|    \to   \infty $ limit of $\theta $ on $E$, and let
$n_{E}$ denote the integer that appears in $E$'s version of
\eqref{eq2.4}. Assume until told otherwise that $\theta _{*}   \in (0, \pi )$.
Take $R_{0} \gg  1$ so that $|s| $ takes the value
$R_{0}$ on $E$ and so that $R_{0}$ is much greater than the value of
$|s| $ on any point from the set $Y$. As can be seen from
\eqref{eq2.4}, it is also possible to choose $R_{0}$ so that the
$|s|    \ge R_{0}$ portion of the $\theta =\theta _{*}$ locus
consists of $2n_{E}$ properly embedded, half open arcs on which $ds$
restricts without zeros. Take $R_{0}$ so that this last condition
holds. In what follows, it is assumed that $1/\varepsilon $ is much
greater than $R_{0}$.

It follows from $E$'s version of \eqref{eq2.4} that when
$\varepsilon $ is sufficiently small, there exists an embedded loop, $\nu $,
in the $|s| \in [\frac{1 }{ {2\varepsilon }}+2, \frac{1 }{ \varepsilon }-2]$ part of
$E$ with the following
properties: First, $\nu $ is homotopic to the $|s|  =\frac{1 }{ {2\varepsilon }}$ slice of $E$.
Second, $\nu $ intersects the $\theta =\theta _{*}$ locus transversely, and in $2n_{E}$
points. Third, the maximum on $\nu $ of $| \theta -\theta _{*}| $ is less than $\varepsilon $.
Use $\delta $ in what follows to denote this maximum.

If $j$ is sufficiently large, then $\nu $ has $\theta $--preserving
preimages in $C_{j}$. Let $\nu'$ denote such a preimage. Thus, $\nu'$ also
intersects the $\theta =\theta _{*}$ locus transversely, and
the maximum of $| \theta -\theta _{*}| $
on $\nu '$ is also $\delta $. Finally, with k denoting the degree of the
covering map $\pi _{j}$ on $\nu '$, then $\nu' $ intersects the $\theta  =\theta _{*}$
locus $2kn_{E}$ times, each in a transversal
fashion. Note that by virtue of \fullref{prop:7.1}, the loop $\nu '$ is
homotopically non-trivial in the complement in $C_{0j}$ of the $\theta\in  \{0, \pi \}$ locus.

Let $\mu $ denote the closure of a component of the $| \theta-\theta _{*}|  > 0$
portion of $\nu'$. This $\mu $ can be
viewed as an embedded path in a parametrizing cylinder for the component of
the $C_{0j}$ version of $C_{0}- \Gamma $ that contains $\mu $'s
interior. Viewed in the parametrizing cylinder, $\mu $ lies in the interior
save for its two boundary points. In the parametrizing cylinder, $\mu $ is
homotopic rel boundary to a path, $\mu _{*}$, that runs between the
two endpoints of $\mu $ on the $\theta =\theta _{*}$ circle.
This is to say that the concatenation of $\mu $ and $\mu _{*}$
bounds a topological disk in the parametrizing cylinder. Let $D_{\mu }$
denote the interior of this disk, and also its image in $C_{0j}$. As before,
$0 < \theta -\theta _{*}<\delta $ on the disk $D_{\mu }$ when
$0 \le   \theta -\theta _{*}$ on $\mu $, and $0 > \theta-\theta _{*}> \delta $ on
$D_{\mu }$ when $0 \ge   \theta-\theta _{*}$ on $\mu $.

The parametrizing cylinder that contains $\mu _{*}$ may or may not
have missing points on its $\theta =\theta _{*}$ circle. In the
case that some such points lie in $\mu _{*}$, let $\mu _{*0}   \subset   \mu _{*}$
denote their complement. The latter has
a corresponding image in $C_{0j}$. The concatenation of $\mu _{*0}$ and $\mu $ is a
piecewise smooth, properly embedded submanifold
of $C_{0j}$ that bounds the closure of the disk $D_{\mu }$. In this regard,
the complement of the $\theta $--critical points in $\mu _{*0}$ is
smoothly embedded, and a component of this complement that lacks an endpoint
of $\mu $ is the whole of the interior of an arc in some graph from the
$C_{0j}$ version of the collection $\{\Gamma _{(\cdot )}\}$

\substep{Step 8}
When $\varepsilon $ is small, and with $\varepsilon $ chosen,
 $j$ is sufficiently large, the disk $D_{\mu }$ lies entirely in the
$|s| > \frac{1 }{ {2\varepsilon }}$ portion of $C_{0j}$. To see
why, note first that if $R_{0}$ is large, then the
$|s|    \ge R_{0}$ part of any end in $S_{0}$ where $\lim_{| s| \to\infty }  \theta =\theta _{*}$
lies in a small radius tubular
neighborhood of a $\theta =\theta _{*}$ pseudoholomorphic cylinder.
In this tubular neighborhood, the 1--form
$x \equiv (1-3\cos^{2}\theta_{*}) d\varphi -\surd 6 \cos\theta _{*}dt$
is the pull-back by the
projection to $S^1\times S^{2}$ of an exact form, $df$, where
$f$ is a function that vanishes on the Reeb orbit and is constant on
any nearby $\theta  =\theta _{*}$ Reeb orbit. This understood,
the integral of $x$ along any part of the $|s| \ge \frac{1 }{{2\varepsilon }}> R_{0}+3$
portion of the $\theta =\theta _{*}$
locus in any given end of $S_{0}$ is very small in absolute value
with the bound going to zero as $\varepsilon \to  0$. On the
other hand, there exists $\kappa  > 0$ with the following
significance: If $\gamma $ is a connected portion of the $\theta=\theta _{*}$ locus
in an end of $S_{0}$, and if $\gamma $ runs
from where $|s|  =\frac{1 }{ {2\varepsilon }}$ to $|s|  =R_{0}+1$,
then the integral of $x$ over $\gamma $ is greater than $\kappa $.

If $\varepsilon $ is sufficiently small, then the integral of $x$ over $\mu $
will be less than $\kappa $ since the integral is the difference between the
values of $f$ at the two endpoints. This then means that the integral over
$\mu _{*}$ of the pull-back of $x$ to the parametrizing cylinder is
also less than $\kappa $. In this regard, note that $x$ pulls back as $\alpha
_{Q}(\theta _{*})dv$ where $Q$ is the integer pair that is
associated to the edge label from $\mu $'s component of $C_{0}- \Gamma $.

Now, if $j$ is very large and $|s| \le \frac{1 }{{2\varepsilon }}$ on $\mu _{*0}$,
then this part of $\mu _{*0}$ must
be a $\theta $--preserving preimage in $C_{0j}$ of a component of the
$\theta =\theta _{*}$ locus in $S_{0}$ in an end of $S_{0}$
where $\lim_{| s| \to \infty }  \theta =\theta _{*}$. As all such
components run from where $|s| > \frac{1 }{ {2\varepsilon }}$ to
where $|s|  =R_{0}$, so must $\mu _{*}$. Granted the conclusions
of the preceding paragraph, \fullref{prop:7.1} demands that
$x$ have integral greater than $\kappa $ on $x_{*}$. Hence,
$\mu_{*0}$ can not enter the $|s|    \le   \frac{1 }{{2\varepsilon }}$ part of $C_{0j}$.
This implies that the disk $D_{\mu }$ is also forbidden from the
$|s|    \le   \frac{1 }{{2\varepsilon }}$ part of $C_{0j}$.

\substep{Step 9}
There are $2k\cdot n_{E}$ versions each of $\mu $, $\mu _{*}$ and $D_{\mu }$. Since
$|s|  > \frac{1 }{{2\varepsilon }}$ on all versions of $\mu _{*0}$, they all leave
$\nu' $ from the same side of $\nu' $; and this implies that the
closure in $C_{0j}$ of the union of the $2k\cdot n_{E}$ versions of
$D_{\mu }$ is a submanifold of $C_{0j}$ with a piecewise smooth
boundary. One boundary component is $\nu '$, and were there more,
they would sit inside the union of the $2k\cdot n_{E}$ versions of
$\mu _{*0}$. However, when $\frac{1 }{ {2\varepsilon }}$ is large
and $j$ also, then $\nu '$ is the only boundary component. Indeed,
the last remarks of Step 7 imply that any second component would
necessarily contain the whole interior of an arc in some graph from
the $C_{0j}$ version of the collection $\Gamma _{0}$. By virtue of
\fullref{prop:7.1}, this arc would sit very close to the end
$E$ when $\varepsilon $ is small and when $j$ is very large. As
explained in the preceding step, taking $\varepsilon $ small and $j$
large makes the integral of the 1--form $x$ over such an arc less
than any given, positive number. But the integral of $x$ over an arc
from $C_{0j}$'s version of $\{\Gamma _{(\cdot )}\}$ enjoys a
$j$--independent, positive lower bound since the image of the
sequence $\{(C_{0j}, \phi _{j})\}$ in $\times _{o}\Delta _{o}$
converges.

Let $C'$ denote the closure of the union of these $2k\cdot n_{E}$ versions
of $D_{\mu }$. When $\varepsilon $ is small and $j$ is large, this
smooth submanifold of $C_{0j}$ is homeomorphic to the complement of
the origin in the closed unit disk; thus a half open cylinder with
boundary $\nu'$. Indeed, $C'$ is not a disk because $\nu'$ is
homologically non-trivial in the complement of the $\theta  = 0$ and
$\theta =\pi $ loci. Here is why $C'$ has but one puncture: Each
puncture corresponds to an end of $C_{0j}$ where the $|s|    \to
\infty $ limit of $\theta $ is $\theta _{*}$. Thus, each corresponds
to a vertex on a graph from $C_{0j}$'s version of $\{\Gamma _{(\cdot)}\}$.
Since $\theta $ is nearly $\theta _{*}$ on $C'$ and $C'$ is
connected, all such vertices are on the same graph. Let $o$ denote
the corresponding vertex. Since the complement of the $\theta=\theta _{*}$
locus in $C'$ is a union of disks, the part of $\Gamma_{o}$ in $C'$ has connected
closure in $\Gamma _{o}$. Thus, any two
vertices in $\Gamma _{o}$ that label ends in $C'$ are joined by an arc
in $\Gamma _{o}$ whose interior lies entirely in $C'$. As argued in
the preceding paragraph, there are no such arcs when $\varepsilon $
and $j$ are large.

The argument just used explains why $C'$ has no critical points of $\theta $
on $C_{0j}$.

\substep{Step 10}
The conclusions of the preceding
step imply that the end $E \subset S_{0}$ where the $|s| \to \infty$
limit of $\theta $ is in $(0, \pi )$ corresponds to a set of ends
of $C_{0j}$ that are all very close to $E$ in $\mathbb{R} \times
(S^1\times  S^2)$. This collection is in 1--1 correspondence with the
$\theta $--preserving preimages of $\nu $ in the sense that each
pre-image bounds a properly embedded, half open cylinder in $C_{0j}$
whose large $|s| $ part coincides with the large $|s| $ part of its
corresponding end. If $\nu'$ is a $\theta $--preserving preimage of
$\nu $, let $E' \subset C_{0j}$ denote the corresponding end. If $k$
is the degree of the projection from $\upsilon '$ to $\upsilon $,
then the integer $n_{E'}$ in the $E'$  version of \eqref{eq2.4} is
k$\cdot n_{E}$.

Consider next the behavior of $C_{0j}$ near an end $E \subset S_{0}$
where the $|s|    \to   \infty $ limit of $\theta $ is 0. The
pair $(p, p')$ are used in what follows to designate the integers that appear
in $E$'s version of \eqref{eq1.9}.

To start story in this case, take $R_{0} \gg  1$ so that $|s| $
takes the value $R_{0}$ on $E$ and so that $R_{0}$ is much greater
than the value of $|s| $ on any point in $Y$ and any $\theta  = 0$
point in $S$. In particular, choose $R_{0}$ so that \eqref{eq1.9}
describes the $|s|    \ge R_{0}$ portion of $E$. It is assumed here that $\varepsilon $ is
such that $\frac{1 }{ {2\varepsilon }} \gg  R_{0}$.

Now let $\nu $ denote a constant $\theta $ slice of $E$ that lives
where $|s|    \in [\frac{1 }{ {2\varepsilon }}+2, \frac{1 }{\varepsilon }-2]$.
Let $\delta  > 0$ denote the value of $\theta $
on $\nu $. As before, $\delta < \varepsilon $ when $\varepsilon $
is small. When $j$ is very large, the circle $\nu $ has $\theta$--preserving
preimages in $C_{0j}$. Let $\nu'$ denote such a
preimage, and let $k$ denote the degree of the restriction of $\pi_{j}$ as a map from $\nu'$ to $\nu $.

As can be seen using \fullref{prop:7.1} and the $C_{0j}$
versions of the parametrizations from \fullref{def:2.1}, any
small $\varepsilon $ and large $j$ version of $\nu'$ bounds a
cylinder in $C_{0j}$ on which $\theta $ is strictly positive but
limits to zero as $|s|    \to   \infty $. As such, the large $|s| $
part of this cylinder is the large $|s| $ part of an end of $C_{0j}$
whose associated integer pair is $(kp, kp')$. A cylinder in $C_{0j}$
of the sort just described is denoted in what follows by $C'$.

The part of $C_{0j}$ that maps near an end of $S$ where the $|s| \to
\infty $ limit of $\theta $ is $\pi $ looks much the same as the
description just given for the part near an end of $S$ where the
$|s|    \to   \infty $ limit of $\theta $ is 0. To summarize: Each
end of $S_{0}$ where $\lim_{| s| \to \infty }  \theta $ is either 0
or $\pi $ corresponds to one or more ends of each large $j$ version
of $C_{0j}$. If the given end of $S_{0}$ is characterized by the
4--tuple $(\delta =  \pm 1, \varepsilon =\pm , (p, p'))$,
then each of the associated ends of $C_{0j}$ is characterized by a
4--tuple with the same $\delta $ and $\varepsilon $, and with an
integer pair that is some positive multiple of $(p, p')$. Moreover,
these multiples from the associated ends to each $S_{0}$ end add up
to the integer $n$.

\substep{Step 11}
The step considers assertion in
\eqreft75 for the case that the integer $n$ is 1. To argue this
case, let $T_{S}$ denote the graph that is assigned to the pair
$(S_{0}, \phi )$. Granted that $T_{S}$ is isomorphic to $T$, it
follows that $(S_{0}, \phi )$ defines a point in
${\mathcal{M}_{\hat{A},T}}$ and \fullref{prop:7.1} asserts
that $\{(C_{0j}, \phi _{j})\}$ converges to $(S_{0}, \phi )$ in
${\mathcal{M}_{\hat{A},T}}$. As noted in \fullref{thm:1.3}, this
means that $\{(C_{0j}, \phi _{j})\}$ converges to $(S_{0}, \phi)$ in
${\mathcal{M}^{*}}_{\hat{A},T}$ which is the desired conclusion.

With the preceding understood, what follows explains why $T_{S}$ is
isomorphic to $T$. The explanation starts with a summary of results
from the 10 steps just completed. The first point here is that the
ends of $S_{0}$ and those of the large $j$ versions of $C_{0j}$
enjoy a 1--1 correspondence whereby corresponding pairs determine the
same 4--tuple and map very near each other in
$\mathbb{R}  \times (S^1 \times  S^2)$ when $j$ is large. The second point is that the
respective sets of $\theta =0$ points in $S_{0}$ and in the
large $j$ versions of $C_{0j}$ enjoy a 1--1 correspondence whereby
corresponding pairs have identical local intersection numbers and
map very near each other in this locus when $j$ is large. A similar
correspondence exists between the respective sets of $\theta  =\pi $
points in $S_{0}$ and in $C_{0j}$. Granted these two points,
it follows that $(S_{0}, \phi )   \in   \mathcal{M}_{\hat{A}}$.

In the case that the integer $n$ is 1, the set of non-extremal
critical points of $\theta $ on $S_{0}$ enjoy a 1--1 correspondence
with the analogous set in any large $j$ version of $C_{0j}$. This
correspondence is such that partnered critical points have the same
critical value and the same degree of vanishing of $d\theta $.
Moreover, corresponding critical points map very close to each other
when $j$ is large. Granted these conclusions and those of the first
paragraph of this Step 11, it follows that the respective vertices
in the graph $T_{S}$ and in the graph $T$ enjoy a 1--1 correspondence
that partners pairs with equal angle.

To compare the edges of the graphs $T_{S}$ and $T$, remark that by
virtue of \fullref{prop:7.1}, the components of $S_{0}-\Gamma $
enjoy a 1--1 correspondence between those of any large $j$
version of $C_{0j}- \Gamma $ that pairs components that map very
close to each other in $\mathbb{R}  \times (S^1  \times  S^2)$.
Moreover, the respective ranges of $\theta $ on paired components
are identical, and the respective integrals of $\frac{1 }{ {2\pi }}dt$ and of
$\frac{1 }{ {2\pi }}d\varphi $ about the constant $\theta$
slices of paired components are equal. It follows from this that
the edges of $T_{S}$ and $T$ enjoy a 1--1 correspondence that is
consistent with the aforementioned vertex correspondence and
preserves integer pair assignments.

It remains now to consider the respective $T_{S}$ and $T$ versions
of the graphs $\{\underline {\Gamma }_{(\cdot )}\}$ that are
assigned to paired, multivalent vertices. For this purpose, let $o$
denote a multivalent vertex in $T$ and also its partner in $T_{S}$.
The correspondence between the respective sets of non-extremal
critical points of $\theta $ in $S_{0}$ and in $C_{0j}$ together
with that between the respective sets of ends in $S_{0}$ and in
$C_{0j}$ defines a 1--1 correspondence between the vertices in the
$S_{0}$ and $C_{0j}$ versions of $\Gamma _{o}$. Moreover, the latter
correspondence pairs vertices with the same labels and with the same
number of incident half-arcs.

The conclusions of Step 4 imply that any compact portion of the
interior of any arc in the $S_{0}$ version of $\Gamma _{o}$ has a
unique $\theta $--preserving preimage in any sufficiently large $j$
version of $C_{0j}$. This correspondence pairs the arcs in the
$S_{0}$ and $C_{0j}$ versions of $\Gamma _{o}$ so that partnered
arcs have the same edge pair labels. The conclusions of Steps 4 and
9 together imply that this arc correspondence is consistent with the
just described pairing of the vertices of $\underline {\Gamma}_{o}$.
As a result, the two versions of $\Gamma _{o}$ are
isomorphic via an isomorphism that respects the correspondences
described previously between the respective edge and vertex sets of
$T_{S}$ and those of $T$.

Taken together, these correspondences describe the desired isomorphism
between $T_{S}$ and $T$.

\substep{Step 12}
This step establishes the $n>1$
cases of \eqreft75. To start the argument for this case, fix
$\varepsilon $ to be very small and note that each large $j$ version
of $\pi _{j}$ defines ${\phi _{j}}^{- 1}(N_{\varepsilon })$ as a
degree $n$, proper covering space over $S_{\varepsilon }$. The first
task in this step is to explain why the sequence whose $j$'th
component is the supremum over ${\phi _{j}}^{- 1}(N_{\varepsilon })$
of the ratio $| \bar {\partial }\pi _{j}| /| \partial \pi _{j}| $
limits to zero as $j \to   \infty $. To see this, let $z \in S_{\varepsilon }$.
As explained in the first point of Step 2 in
\fullref{sec:5c}, there is a neighborhood of $\phi (z)$ in
$\mathbb{R}  \times (S^1  \times S^2)$ with complex valued
coordinates $(x, y)$ and a disk $D \subset S_{\varepsilon }$ with
center $z$ such that the $y = 0$ slice is the $\phi $--image of $D$, the
$x =$ constant slices are the fibers over $D$ of the bundle
$N_{\varepsilon }$, and the 1--forms in \eqref{eq5.2} span
$T^{1,0}(\mathbb{R}  \times (S^1  \times  S^2))$ over the coordinate
patch. With $D$ identified via $\phi $ with the $y = 0$ slice, the
function $x$ restricts as a holomorphic coordinate on D, and $\pi_{j}$
on ${\pi _{j}}^{- 1}(D)$ is the composition $x \circ \phi _{j}$.
This understood, it follows that the ratio $| \bar {\partial }\pi_{j}| /|\partial \pi _{j}| $
on ${\pi _{j}}^{- 1}(D)$ is the ratio $| \bar {\partial }x| /| \partial x|  =| \sigma | $.
As $\sigma $ vanishes where $y = 0$, \fullref{prop:7.1}
implies that the sequence whose $j$'th element is the supremum over
${\pi _{j}}^{- 1}(D)$ of $| \sigma | $ converges to zero as $j \to \infty $.

The assertion that the sequence whose $j$'th element is the supremum
over ${\pi _{j}}^{- 1}(S_{\varepsilon })$ of $| \bar {\partial }\pi
_{j}| /| \partial \pi _{j}| $ limits to zero as $j \to   \infty $
follows from this analysis on disks by virtue of the fact that
$S_{\varepsilon }$ is compact.

The next task is to extend each large $j$ version of $\pi _{j}$ as a
degree $n$, ramified cover of the whole of $C_{0j}$ to $S_{0}$ so that
\begin{equation}\label{eq7.11}
\lim_{j \to \infty } \sup_{C_{0j} } \bigl(| \bar {\partial }\pi
_{j}| /| \partial \pi _{j}|\bigr) = 0.
\end{equation}
To do so, remark that the complement in $C_{0j}$ of ${\phi _{j}}^{-1}(N_{\varepsilon })$
consists of a disjoint union of disks and
cylinders. The set of disks is partitioned into subsets that are
labeled by the disk components of $S_{0}- S_{\varepsilon }$.
Likewise, the set of cylinders is partitioned into subsets that are
labeled by the cylindrical components of $S_{0}- S_{\varepsilon }$.

What with the conclusions from Steps 9 and 10, the desired extension
of each large $j$ version of $\pi _{j}$ over the cylindrical
components of $C_{0j}- {\pi _{j}}^{- 1}(S_{\varepsilon })$ is
obtained by copying in an almost verbatim fashion the construction
that is described in Step 4 of \fullref{sec:5c}. In this regard,
note that this construction extends $\pi _{j}$ as an \underline{unramified}
cover over the cylindrical components of $C_{0j}- {\pi_{j}}^{- 1}(S_{\varepsilon })$.

The map $\pi _{j}$ is extended over the disk components of $C_{0j}-\phi _{j}(N_{\varepsilon })$
momentarily. Granted that this has
been done, here is how to complete the argument for \eqreft75.
Remark first that by virtue of \eqref{eq7.11}, the large $j$
versions of $\pi _{j}$ can be used to pull-back the complex
structure from $S_{0}$ and so define a new complex structure on
$C_{0j}$ that makes $\pi _{j}$ into a holomorphic map. Let $S_{nj}$
denote $C_{0j}$ with this new complex structure. Note that the pair
$(S_{n,j}, \phi \circ \pi _{j})$ defines an equivalence class in
${\mathcal{M}^{*}}_{\hat{A},T}$. Furthermore, the points that are
defined by the pairs $(S_{0j}, \phi  \circ \pi _{j})$ and
$(C_{0j}, \phi _{j})$ are very close in
${\mathcal{M}^{*}}_{\hat{A},T}$. Indeed, to see this, take $\psi $
in \eqref{eq1.24} to be the identity map. \fullref{prop:7.1}
asserts that $\phi  \circ \pi _{j}$ and $\phi _{j} \circ \psi $ are
very close when $j$ is large. Meanwhile, $r(\psi )=| \bar {\partial
}\pi _{j}| /\partial \pi _{j}| $, and this is very small when $j$
is large by virtue of \eqref{eq7.11}.

To finish the argument, note that the ramification points for the
map $\pi _{j}$ converge in $S_{0}$ as $j \to   \infty $ since none
occur in the cylindrical components of $S_{0}- S_{\varepsilon }$.
Thus, the sequence of complex curves $\{S_{nj}\}$ has a subsequence
that converges to a complex curve. Let $S_{n}$ denote the latter.
The corresponding subsequence of holomorphic maps from the sequence
$\{\pi _{j}\}$ likewise converges to a degree $n$, holomorphic,
ramified covering, $\pi \co  S_{n}   \to S_{0}$. This pair $(S_{n},\phi  \circ \pi )$
defines an equivalence class in
${\mathcal{M}^{*}}_{\hat{A},T}$ that is a limit of the sequence that
is defined by the pairs $\{(S_{0j}, \phi \circ \pi _{j})\}$.
Granted what was said in the preceding paragraph, the point defined
by $(S_{n}, \phi  \circ \pi )$ is necessarily the limit of the
sequence $\{(C_{0j}, \phi _{j})\}$.

The final two steps explain how $\pi _{j}$ is extended as a branched
cover over the disk components of each large $j$ version of $C_{0j}-
{\pi _{j}}^{- 1}(S_{\varepsilon })$ so as to satisfy \eqref{eq7.11}.

\substep{Step 13}
To start, let $z$ denote the center
point of a disk from $S_{0}- S_{\varepsilon }$ and suppose first
that $\theta (z)$ is neither 0 nor $\pi $. Let $B$ denote the radius
$\varepsilon $ ball centered at $z$. Introduce the function $r$ on $B$
as defined in \eqref{eq2.9}. As noted in the ensuing discussion,
\begin{equation}\label{eq7.12}
dr = J\cdot d\theta  + r d\theta
\end{equation}
on $B$ where $|r|    \le  c\cdot \dist(\cdot , \phi (z))$
with $c$ being a constant.

Now, let $D \subset S_{0}$ denote a disk with center $z$ whose
boundary is in $S_{\varepsilon }$ and whose $\phi $ image is in $B$.
According to Property 5 from \fullref{sec:2b}, if $\varepsilon $
is small, there is a holomorphic coordinate on $D$ such that the
function $r+i\theta $ defined on $B$ pulls back as indicated in
\eqref{eq2.11} with $\theta _{*}$ set equal to $\theta (z)$ and with
$m \ge 0$ denoting $\deg_{z}(d\theta )$. This understood, let
$x \equiv   \theta + ir - \theta (z)$. As a consequence
of \eqref{eq2.11}, $\tau    \equiv  x^{1 / (m + 1)}$ defines a class
$C^{1}$, complex valued coordinate on D when $\varepsilon $ is small.
Moreover, by virtue of \eqref{eq7.12},
\begin{equation}\label{eq7.13}
|\bar {\partial }\tau | \le c \rho   | \partial \tau |
\end{equation}
at the points whose image in $B$ has distance $\rho $ or less from $\phi (z)$.
Here, c is independent of $\rho $ when $\rho $ is small.

To continue, let $D' \subset C_{0j}$ denote the disk that bounds a
given component of ${\pi _{j}}^{- 1}(\partial D)$ and let $k$ denote the
degree of $\pi _{j}$ as a map from $\partial D'$ to $\partial D$.
When $j$ is large, $D'$ contains a single disk component of
$C_{0j}-{\phi _{j}}^{- 1}(N_{\varepsilon })$ and is mapped into B by $\phi_{j}$.
Also, $D'$ contains inside it a single critical point of
$\theta $ on $C_{0j}$. The aforementioned Property 5 in
\fullref{sec:2b} provides $D'$ a holomorphic coordinate such that
\eqref{eq2.11} holds with $k\cdot (m+1)-1$ replacing $m$. This
understood, there is a complex valued constant, $x_{j}$, such that
$x-x_{j}$ has a $k\cdot (m+1)$ fold root on $D'$. This is to say that
$\tau '  \equiv  (x- x_{j})^{1 / k(m + 1)}$ defines a $C^{1}$,
complex valued function on $D'$ when $\varepsilon $ is small and $j$
is large. In fact, it follows from \eqref{eq7.12} and the $D'$ version
of \eqref{eq2.11} that $\tau '$ defines a $C^{1}$, complex coordinate
on $D'$ and that \eqref{eq7.13} holds with $\tau '$ replacing $\tau $.

Let $\theta _{j}$ be such that the value of $\theta $ at the $\tau ' = 0$
point in $D'$ is $\theta (z)+\theta _{j}$. thus, $\theta _{j} = 0$ in
the case that $m> 0$. Next, define $\psi \co  D' \to  D$ by declaring that
\begin{equation}\label{eq7.14}
\psi \ast \tau  - \theta _{j}  =\tau '^{k}.
\end{equation}
This $C^{1}$ map realizes $D'$ as a degree $k$ branched cover over $D$ with
a single branch point. Moreover, this map restricts near $\partial D'$
so as to have the following property: The maps $\phi \circ \psi$
and $\phi _{j}$ send any given point to the same constant $\theta$,
pseudoholomorphic submanifold in $B$. Indeed, such is the case by
virtue of the fact that $r$ as well as $\theta $ are constant on any
$\theta =$  constant submanifold in $\mathbb{R} \times (S^1  \times
S^2)$. It follows from this last conclusion that $\tau '$ can be
changed via multiplication by a $k$'th root of unity so that it agrees
with $\pi _{j}$ near $\partial D'$. Thus, $\psi $ extends $\pi _{j}$
over $D'$ as a $C^{1}$, ramified cover.

Here is one last point about this extension of $\pi _{j}$: By virtue
of \eqref{eq7.13} and its $\tau '$ analog, $| \bar {\partial }\psi | \ll | \partial \psi | $
over the whole of $D'$, and the sequence
whose $j$'th element is the supremum of $| \bar {\partial }\psi | /|\partial \psi | $
over the $j$'th version of $D'$ has limit zero as $j\to   \infty $.
Since $\psi $ is differentiable and smooth save at
$\psi ^{- 1}(z)$, it has a deformation that extends $\pi _{j}$ over
 $D'$ as a smooth map with $| \bar {\partial }\pi _{j}| \ll |
\partial \pi _{j}| $ at all points. Moreover, these extensions can
be made for each large $j$ version of $\psi $ so that the resulting
sequence of supremums of $| \bar {\partial }\pi _{j}| /| \partial \pi _{j}| $
has limit zero as $j \to \infty$.

\substep{Step 14}
To finish the story about $\pi_{j}$'s extension to the disk components of
$C_{0j}- \pi _{j}^{-1}(S_{\varepsilon })$, suppose now that $z$ is a point in a disk
component of $S_{0}- S_{\varepsilon }$ where $\theta $ is zero. Let
B denote the radius $\varepsilon $ ball in $\mathbb{R}  \times (S^1
\times S^2)$ centered at $\phi (z)$. The ball $B$ has smooth complex
coordinates $(x, y)$ where
\begin{equation}\label{eq7.15}
x = \sin\theta  \exp\biggl(-i\biggl(\varphi -\frac{{\surd 6\cos \theta } }{
{(1 - 3\cos ^2\theta )}}\hat {t}\biggr)\biggr) \text{ and } y = s - i\hat {t};
\end{equation}
here $\hat {t}$ is the $\mathbb{R}$--valued lift of the
function $t$ to $B$ that vanishes at $\phi (z).$ These coordinates are
such that each $x=$ constant disk is pseudoholomorphic and the $x =0$
disk lies entirely in the $\theta  = 0$ cylinder. In this regard,
there is a complex valued function $\sigma $ on B that vanishes at
$x = 0$ and is such that $dx + \sigma d\bar {x}$ spans
$T^{1,0}(\mathbb{R}  \times (S^1  \times  S^2))$ over the whole of $B$.

Now, let $D \subset S_{0}$ denote a disk centered at $z$ with
boundary in $S_{\varepsilon }$ that is mapped by $\phi $ into $B$. Let
$q$ denote the intersection number between $D$ and the $\theta  = 0$
locus. Since $\phi $ is holomorphic, there is a holomorphic
coordinate, $u$, on $D$ such that $x$ pulls back via $\phi $ as
$u^{q}+\mathcal{O}(|u| ^{q + 1})$. Thus, $\tau    \equiv  x^{1 / q}$
defines a $C^{1}$, complex coordinate on $D$. Moreover, $\tau $ obeys
\eqref{eq7.13} where the constant $c$ is furnished by the expression
for $x$ in \eqref{eq7.15}.

Now fix some very large $j$ and let $D' \subset C_{0j}$ denote a
disk that bounds a component of ${\pi _{j}}^{- 1}(\partial D)$. Let $k$
denote the degree of $\pi _{j}$'s restriction to $\partial D'$. Applying
the argument just given to $D'$ finds that $\tau '  \equiv  x^{1 /
kq}$ defines a $C^{1}$, complex coordinate on $D'$ that obeys its own
version of \eqref{eq7.13} with the constant $c$ used for the $\tau $
version. Granted all of this, define a map, $\psi \co  D' \to  D$ by
requiring that $\psi *\tau =\tau '^{k}$. Arguing as in the case
where $\theta (z)  \ne  0$ finds that such a map $\psi $ can be
modified by multiplication by a $k$'th root of unity so as to provide
a $C^{1}$ extension of $\pi _{j}$ over $D'$. Moreover, this extension
has a smooth perturbation as a degree $k$ ramified cover with
$| \bar{\partial }\pi _{j}| \ll | \partial \pi _{j}| $. Finally, these
extensions can be made for each large $j$ so that the sequence whose
$j$'th element is the supremum of $| \bar {\partial }\pi _{j}| /|\partial \pi _{j}| $
limits to zero as $j \to   \infty $.


\setcounter{section}{7}
\setcounter{equation}{0}
\section{The strata and their classification}\label{sec:8}

This section completes the story started in \fullref{sec:5} by describing in more
detail the strata of ${\mathcal{M}^{* }}_{\hat{A}}$. The following
is a brief summary of the contents: The first subsection describes each
stratum as a fiber bundle over a product of simplices whose typical fiber is
some ${\mathcal{M}^{* }}_{\hat{A},T}$. This result is stated as
\fullref{prop:8.1}. \fullref{thm:6.2} and \fullref{prop:8.1}
thus give an explicit picture of any given component of any given stratum in
${\mathcal{M}^{* }}_{\hat{A}}$.

Meanwhile, \fullref{sec:8b} describes necessary and sufficient conditions on a
graph $T$ that insure a non-empty ${\mathcal{M}^{* }}_{\hat{A},T}$.
These are stated as \fullref{prop:8.2}. The final subsection proves
\fullref{prop:6.1}, this the assertion that the homotopy type of a
graph $T$ arises from at
most one component of at most one stratum of ${\mathcal{M}^{* }}_{\hat{A}}$.
Together, Propositions~\ref{prop:6.1} and~\ref{prop:8.2} provide a complete
classification of the components of the strata that comprise ${\mathcal{M}^{* }}_{\hat{A}}$.

\subsection{The structure of a stratum}\label{sec:8a}

As in Sections~\ref{sec:5a}, let $\mathcal{S} _{B,c,\mathfrak{d}}$
denote a stratum of ${\mathcal{M}^{* }}_{\hat{A}}$ and let
$\mathcal{S} \subset\mathcal{S} _{B,c,\mathfrak{d}}$ denote a
connected component. A graph of the sort introduced in
\fullref{sec:6a} from any equivalence class in $\mathcal{S} $
has some $m$ distinct, multivalent vertex angles that do not
arise via \eqref{eq1.8} from any $(0,+,\ldots)$ element in
$\hat{A}$ nor from an element in $B$. This understood, a
function, $\mathfrak p$, from $\mathcal{S} $ to the $m$'th symmetric product
of $(0, \pi )$ is defined as follows: The value of $\mathfrak p$ on the
equivalence class defined by $(C_{0}, \phi )$ is the set of
$\theta $--values of the $m$ compact but singular $\theta $ level
sets in $C_{0}$. If non-empty, then the inverse image via $\mathfrak p$ of
any given point in the $m$'th symmetric product of $(0,\pi )$ is
a version of ${\mathcal{M}^{* }}_{\hat{A},T}$ where $T$ is has
precisely $m$ distinct, multivalent vertex angles that do not
come via \eqref{eq1.8} from an integer pair of any $(0,+,\ldots)$
element in $\hat{A}$ nor any element in $B$.

With the preceding understood, consider:

\begin{proposition}\label{prop:8.1}

If non-empty, then $\mathcal{S} $ is fibered by $\mathfrak{p}$ over a product of open simplices.
\end{proposition}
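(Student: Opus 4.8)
The proof breaks into three parts: first, that $\mathfrak p$ is a smooth submersion of orbifolds whose fibres are the relevant spaces ${\mathcal{M}^*}_{\hat{A},T}$; second, that the image of $\mathfrak p$ is a product of open simplices; third, that $\mathfrak p$ is locally trivial over this image. To begin the first part, note that a point in $\mathcal{S}$ defined by $(C_0,\phi)$ has a neighborhood on which \fullref{lem:5.4} supplies orbifold coordinates of the form $\{\varpi_{+1},\ldots\}$, $\{\varpi_{-1},\ldots\}$, $\varpi'$, $\{r_z\}_{z\in\Crit(C)}$, together with a distinguished set of $m$ functions from the collection $\{\theta_z\}_{z\in\Crit(C)}$; and these $m$ functions are chosen precisely so that their values at $(C_0,\phi)$ account for the critical values of $\theta$ that avoid the $|s|\to\infty$ limits of $\theta$ on the concave side ends of $C_0$ and on the ends labelled by $B$. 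By construction, the value of $\mathfrak p$ at a nearby point is the unordered set of these same $m$ angles, so in these coordinates $\mathfrak p$ is literally the projection onto the chosen $\{\theta_z\}$ factor. Hence $\mathfrak p$ is a smooth submersion of orbifolds, and its fibre through $(C_0,\phi)$ is cut out locally by fixing these $m$ coordinates. Since the $\phi^*\theta$ critical points of a nearby subvariety are canonically partnered with those of $(C_0,\phi)$ so as to preserve their degrees of vanishing and their combinatorial placement with respect to the level sets of $\theta$, and since the edge labels of the associated graph are determined by $\hat{A}$ and the graph's branching, the homotopy type of the graph $T$ associated to points of $\mathcal{S}$ is locally constant; being locally constant on the connected set $\mathcal{S}$, the underlying combinatorial data of $T$ — the edge integer pairs and the vertex graphs $\underline{\Gamma}_{(\cdot)}$ — is therefore constant on $\mathcal{S}$. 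Consequently the fibre of $\mathfrak p$ over any point of its image is exactly the corresponding version of ${\mathcal{M}^*}_{\hat{A},T}$, which by \fullref{thm:6.2} is a smooth orbifold diffeomorphic to $\mathbb{R}\times O_T/\Aut(T)$.

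For the second part, let $\Lambda_0\subset(0,\pi)$ denote the fixed finite set of angles determined by $\hat A$ and $B$ — those defined via \eqref{eq1.8} by the integer pairs of the $(0,+,\ldots)$ elements of $\hat{A}$ and of the 4--tuples in $B$. For every $(C_0,\phi)\in\mathcal{S}$ the $m$ free vertex angles are distinct points of $(0,\pi)\setminus\Lambda_0$, and the constraints in \eqref{eq6.1}, together with the fixed branching of $T$, force each such angle to lie in a definite one of the open intervals into which $\Lambda_0$ and the endpoints $0$, $\pi$ (or $\theta_-$, $\theta_+$) subdivide $(0,\pi)$; no further constraint is imposed beyond the angles being distinct. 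It follows that $\mathfrak p(\mathcal{S})$ is a connected component of the space of $m$--element subsets of $(0,\pi)\setminus\Lambda_0$. Concretely, if the relevant intervals carrying free vertices are $I_1,\ldots,I_\ell$ with $m_a$ free angles assigned to $I_a$, then this component is diffeomorphic to $\prod_a\{(\sigma_1,\ldots,\sigma_{m_a}): \sigma_1<\cdots<\sigma_{m_a},\ \sigma_i\in I_a\}$, a product of open simplices.

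For the third part, I would produce an explicit trivialization of $\mathfrak p$ over its base. The key point is that as the free angles — equivalently the vertex angles of $T$ — vary over $\mathfrak p(\mathcal{S})$, the combinatorial data of $T$ is unchanged, so the spaces $O_T$ differ only through the explicit appearance of the vertex angles $\theta_{(\cdot)}$ in the defining relations \eqref{eq6.6}, \eqref{eq6.7} and \eqref{eq6.8}; dividing through by the pertinent factors $\alpha_{Q_{(\cdot)}}(\theta_{(\cdot)})$ identifies all these spaces canonically with a single model $O_T^0$ carrying a single $\Aut(T)$ action. Combining the maps $\mathfrak X$ of \fullref{sec:6c} for the fibres, renormalized in this fashion, with $\mathfrak p$ itself then gives a diffeomorphism from $\mathcal{S}$ onto $\mathfrak p(\mathcal{S})\times\mathbb{R}\times O_T^0/\Aut(T)$, intertwining $\mathfrak p$ with projection to the first factor; since $\mathfrak p(\mathcal{S})$ is itself a product of open simplices, a single global trivialization suffices. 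The main obstacle is precisely this last part: one must verify that the rescalings identifying the various $O_T$'s are smooth and compatible both with the $\Aut(T)$ actions and with the orbifold structure along $\mathcal{R}$, and that the renormalized maps $\mathfrak X$ fit together into a smooth map on all of $\mathcal{S}$ rather than only fibrewise. Here I would lean on the geometric interpretations of the $O_T$ parameters furnished by \fullref{thm:6.3}, together with the limit analysis from \fullref{sec:7d} used to prove properness of $\mathfrak X$, to control the behaviour of these parametrizations as one moves within the base.
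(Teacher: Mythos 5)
Your overall route matches the paper's: (i) use \fullref{lem:5.4} with \eqref{eq2.4}, \eqref{eq2.11} and the implicit function theorem to show $\mathfrak p$ is a submersion whose fibers are the spaces ${\mathcal{M}^*}_{\hat{A},T}$; (ii) show the image is a product of open simplices; (iii) invoke \fullref{thm:6.2}'s identification of the fibers with $\mathbb{R}\times O_T/\Aut(T)$ to get the fibration. Parts 1 and 3 are sound, and you correctly flag the one genuine subtlety — that the fiberwise diffeomorphisms $\mathfrak X$ must be shown to fit together smoothly as the base point varies — which the paper's terse ``Theorem~\ref{thm:6.2} implies that $\mathfrak p$ fibers $\mathcal{S}$ over its image'' glosses over.

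However, the middle of your argument contains a claim that is explicitly contradicted by the paper. You assert that ``no further constraint is imposed beyond the angles being distinct'' and conclude that $\mathfrak p(\mathcal{S})$ is a full connected component of the space of $m$--element subsets of $(0,\pi)\setminus\Lambda_{+,B}$, i.e.\ the entire product $\prod_a\{(\sigma_1,\ldots,\sigma_{m_a}):\sigma_1<\cdots<\sigma_{m_a},\ \sigma_i\in I_a\}$. This is too strong: the paper's own proof states that ``the angles from $f$ that lie in a given component of $(0,\pi)-\Lambda_{+,B}$ need not sweep out the whole component as functions on $\mathcal{S}$,'' and the discussion around \fullref{prop:8.2} and conditions \eqreft81 shows precisely where the additional constraints come from — positivity of the various $\alpha_{Q_e}$ on the relevant edge intervals can bound a vertex angle strictly away from the endpoints of its ambient interval $I_a$. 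That $\mathfrak p(\mathcal S)$ nonetheless remains a product of open simplices rather than some more complicated region is not automatic; it is established by the convexity statement of \fullref{lem:8.5} in \fullref{sec:8c}. So what you need is not ``no further constraints'' but rather that the further constraints cut out a product of open simplices, which is a substantive point requiring the analysis of \fullref{lem:8.5}.
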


\begin{proof}[Proof of  \fullref{prop:8.1}]
It follows from \fullref{lem:5.4} with \eqref{eq2.4},
\eqref{eq2.11} and the implicit function theorem that the map $\mathfrak{p}$ is a submersion.
Granted this, then \fullref{thm:6.2} implies that $\mathfrak{p}$ fibers $\mathcal{S} $ over its
image in the $\mathfrak{m}$'th symmetric product of $(0, \pi )$.

To picture the image of $\mathcal{S} $ via $\mathfrak p$, return to the definition in
\fullref{sec:5a} of $\mathcal{S} _{B,c,\mathfrak{d}}$.
The definition involved the subspace $\mathcal{S}_{B,c } \subset{\mathcal{M}^{* }}_{\hat{A}}$
whose elements are defined by pairs $(C_{0},\phi )$ where the following
two conditions are met: First, $C_{0}$ has precisely $c$ critical points of
$\theta $ where $\theta $ is neither 0 nor $\pi $. Second, the ends in
$C_{0}$ that correspond to elements in $B$ are the sole convex side ends of
$C_{0}$ where the $|s|\to\infty $ limit of $\theta $ is
neither 0 nor $\pi $ and whose version of \eqref{eq2.4} has a strictly positive
integer $n_{E}$. Letting $d = N_{ + }+|B| +c$ and $I_{d}$ the
$d$'th symmetric product of $(0, \pi )$, then $\mathcal{S} _{B,c,\mathfrak{d}}$ consists of
the inverse image of the stratum in $I_{d}$ indexed by d via the map that
assigns to a given $(C_{0}, \phi )$ the angles of the critical points in
$(0, \pi )$ of $\theta $ as well as the angles in $(0, \pi )$ that are
$|s|\to\infty $ limits of $\theta $ on the concave side
ends in $C_{0}$ and the ends that correspond to 4--tuples from $B$. As in
\fullref{sec:5a}, denote the map from $\mathcal{S} _{b,c}$ to $I_{d}$ as $f$.

Only $m$ angles in the image of $f$ can vary and their values
define the map $\mathfrak p$. To say more, let $\Lambda _{ + ,B}$ denote
the angles that are defined via \eqref{eq1.8} by the integer
pairs from the $(0,+,\ldots)$ elements in $\hat{A}$ and from the
elements in $B$. The complement of $\Lambda _{ + ,B}$ in $(0, \pi
)$ is a union of open segments. The image of $f$ consists of the
set $\Lambda _{ + ,B}$ and then $m$ angles that are distributed
$(0, \pi )-\Lambda _{ + ,B}$. Each of the latter angles can vary
as a function on $\mathcal{S} $, but only in a single component
of $(0, \pi )-\Lambda _{ + ,B}$. However, keep in mind that two
such angles in the same component can not coincide. Furthermore,
the angles from $f$ that lie in a given component of $(0, \pi
)-\Lambda _{+ ,B}$ need not sweep out the whole component as
functions on $\mathcal{S} $. The picture just drawn implies that
the image of $\mathfrak p$ is a product of simplices.

As is explained below, there are additional constraints on the range of
variation of the angles from $f$.
\end{proof}

\subsection{An existence theorem}\label{sec:8b}

A component of any given stratum of ${\mathcal{M}^{* }}_{\hat{A}}$ has an
associated homotopy type of graph. Though yet to be proved,
\fullref{prop:6.1} asserts that these homotopy types classify the components of
the strata. The next proposition gives necessary and sufficient conditions
for a given homotopy type of graph to arise as the label of a component of
some stratum of ${\mathcal{M}^{* }}_{\hat{A}}$. In this
proposition and subsequently, the angle that is assigned to a given vertex $o$
of a graph $T$ is denoted by $\theta _{o}$. As before, when $e$ is an edge of
$T$, then $e$'s assigned integer pair is denoted by $Q_{e}$ or $(q_{e}, {q_{e}}')$.

\begin{proposition} \label{prop:8.2}

Let $T$ denote a graph of the sort that is described in
\fullref{sec:6a}. Then the space ${\mathcal{M}^{* }}_{\hat{A},T}$ is
non-empty if and only if the following conditions are met: Let $e$ denote an
edge of $T$ and let $o$ and $o'$ denote the vertices at the ends of $e$ where the
convention taken has $\theta _{o' }>\theta _{o}$. Then the $Q= Q_{e}$ version of
$\alpha _{Q}$ is positive on $(\theta _{o},\theta _{o' })$, and it is zero
at an endpoint if and only if the
corresponding vertex is monovalent and labeled by a $(0,-,\ldots)$ element from
$\hat{A}$.

\end{proposition}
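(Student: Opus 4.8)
The plan is to prove \fullref{prop:8.2} by reducing it to the existence criterion in \eqreft1{16} together with the structural results already in hand, using the fibration picture of \fullref{prop:8.1} and the homotopy invariance of graphs (\fullref{prop:6.1}, to the extent it is available, but the forward direction of this proposition only needs the constructions of \fullref{sec:7}). First I would dispose of the \emph{necessity} direction. Suppose ${\mathcal{M}^{*}}_{\hat{A},T}$ is non-empty and let $(C_{0},\phi)$ define a point in it. For any edge $e$ of $T$, the corresponding component $K_{e}\subset C_{0}-\Gamma$ carries a preferred parametrization (\fullref{def:2.1}), and \textbf{Property~1} of \fullref{sec:2b} says exactly that $\alpha_{Q_{e}}$ is positive on the open interval $(\theta_{o},\theta_{o'})$ of $\theta$-values attained on $K_{e}$, and is positive at an endpoint whenever that endpoint angle is attained on the closure of $K_{e}$. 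The endpoint angle $\theta_{o}$ (or $\theta_{o'}$) fails to be attained on $\overline{K_{e}}$ precisely when the corresponding vertex is monovalent of the type described in the third bullet of \eqreft23 — a convex side end with $n_{E}=0$ — which by the labeling rules of \fullref{sec:2a} and \fullref{sec:6a} is exactly a monovalent vertex carrying a $(0,-,\ldots)$ element. At a monovalent vertex with a $(\pm1,\ldots)$ label or an integer label (angle $0$ or $\pi$) the question does not arise since the endpoint angle is $0$ or $\pi$, and $\alpha_{Q_{e}}$ is automatically positive there by the $q=0$, $q'<0$ structure noted in \textbf{Property~6}. Thus the stated inequality and its degeneracy case are forced.

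Next I would handle \emph{sufficiency}, which is the substantive half. Assume the $\alpha_{Q_{e}}$ conditions hold for all edges of $T$. The strategy is to realize ${\mathcal{M}^{*}}_{\hat{A},T}$ as a fiber in the fibration of \fullref{prop:8.1} over a product of simplices, and to populate one fiber. Concretely: the $\alpha_{Q}$-positivity condition in \fullref{prop:8.2} is, edge by edge, a finite list of inequalities among the integer pairs $\{Q_{e}\}$ and the vertex angles $\{\theta_{o}\}$, and (as remarked in \fullref{sec:6a}, following \cite[Section~5]{T3}) it is the same list of inequalities that, for the \emph{linear} subgraphs of $T$, constitutes the condition \eqreft1{16} for the corresponding asymptotic-data-set analysis. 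The plan is to build a graph $\widehat{T}$ from $T$ by retaining its edge integer pairs but perturbing the multivalent vertex angles so that all vertex angles become distinct and linearly ordered — i.e.\ pass to a linear graph homotopic, in the sense of \fullref{sec:6a}, to a "spread-out" version of $T$ at each multivalent vertex. For such a linear configuration the relevant existence statement is \fullref{thm:3.1}: under \eqreft31 and \eqreft1{16}, the moduli space $\mathcal{M}_{\hat{A}}$ contains a nonempty submanifold diffeomorphic to $\mathbb{R}\times\hat{O}^{\hat{A}}/\Aut^{\hat{A}}$. This produces genuine subvarieties whose $\theta$ has no critical values in $(0,\pi)$ and whose graph is linear with the right edge labels.

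The remaining work is to degenerate: starting from such a linear-graph subvariety, one brings the perturbed multivalent vertex angles together to their prescribed common values $\theta_{o}$, thereby creating the singular $\theta$-level sets and the graphs $\underline{\Gamma}_{o}$ demanded by $T$. This is exactly the inverse of the fibration projection $\mathfrak{p}$ of \fullref{prop:8.1}: the base simplices parametrize precisely how far apart the variable critical/limit angles sit, and the interior of the product of simplices is reached by interpolation, while the face where several coordinates coincide is ${\mathcal{M}^{*}}_{\hat{A},T}$ itself. The technical point here — and I expect this to be \textbf{the main obstacle} — is to show that this degeneration is \emph{realizable}: that as the angles coalesce, the subvarieties converge (in the sense of \eqref{eq1.13}, or \eqref{eq1.24} on ${\mathcal{M}^{*}}_{\hat{A}}$) to an honest or multiply-covered pseudoholomorphic punctured sphere whose model curve has exactly the prescribed critical-point and end data. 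The compactness input for this is \fullref{prop:7.1} together with the analysis in Part~3 of \fullref{sec:7d} — which is precisely designed to identify limits of sequences in ${\mathcal{M}^{*}}_{\hat{A},T}$ — but here it must be run "in reverse": one needs that the limiting data set $\Xi$ consists of a single pair whose graph is the target $T$, and that no further degeneration (extra critical points, collisions with $\Lambda_{+,B}$, etc.) is forced. The constraint that the degeneration does not overshoot — that the variable angles really can reach any prescribed distinct values in the open components of $(0,\pi)-\Lambda_{+,B}$ but no identity among them can be broken — is encoded in exactly the $\alpha_{Q_{e}}>0$ hypothesis (via \textbf{Property~1}: an edge can only carry $\theta$-values on which its $\alpha_{Q}$ is positive), so the hypothesis is used both to start the construction and to guarantee the degeneration stays inside the allowed region. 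I would close by noting that the nonemptiness of the generic linear stratum via \fullref{thm:3.1}, combined with the local submersion structure of $\mathfrak{p}$ established in the proof of \fullref{prop:8.1} and the properness half of \fullref{thm:6.2}'s proof, shows the fiber ${\mathcal{M}^{*}}_{\hat{A},T}$ over the relevant face of the simplex product is nonempty, which is the claim.
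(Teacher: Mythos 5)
Your necessity argument is essentially a detailed version of what the paper gestures at, and it is fine in spirit; there is a small loose end at the $\theta=0$ and $\theta=\pi$ monovalent vertices where you appeal to \textbf{Property~6} for the sign of $\alpha_{Q_e}$ but the edge pairs at a $(\pm1,\ldots)$ vertex are $\pm P_o$ rather than $(0,q')$, so the sign check deserves a word more care; but this is cosmetic.

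The sufficiency half has a genuine gap. You propose to pass to a \emph{linear} graph $\widehat{T}$ by perturbing the multivalent vertex angles to become distinct, and then to invoke \fullref{thm:3.1}. This cannot work, for two independent reasons. First, a graph $T$ of the kind considered here — when $N_{-}+\hat{N}+\text{\c{c}}_{-}+\text{\c{c}}_{+}>2$ — necessarily has trivalent (or higher-valent) vertices; making all vertex angles distinct does not make $T$ linear, since the branching is encoded in the $\underline{\Gamma}_o$ data and in the valence of the vertices, not in the angles. \fullref{thm:3.1} governs the subspace $\mathcal{M}\subset\mathcal{M}_{\hat{A}}$ of subvarieties with \emph{linear} graph (a proper, positive-codimension submanifold when $k>0$), and simply does not apply to the generic branching graphs you would need as a base case. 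The base-case existence statement the paper uses for generic graphs is \cite[Theorem~1.3]{T3}, not \fullref{thm:3.1}. Second, even granting a base case, the "degeneration" you describe as bringing the vertex angles together is not an interpolation inside a single $\Delta^k$ of \fullref{prop:8.1}: the fibration $\mathfrak{p}$ in that proposition is internal to one stratum component, but coalescing angles (or merging vertices) moves between strata with distinct homotopy types of graph. That transition is precisely what the paper's \fullref{lem:8.4} handles, via an explicit sequence-and-limit argument with carefully prescribed $\mathbb{R}_{o}\times\Delta_{o}$ coordinates; nothing in \fullref{prop:8.1} or the compactness argument of \fullref{prop:7.1} substitutes for it.

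There is also a wholly missing ingredient: you never address the reduction of vertex valency inside the $\underline{\Gamma}_o$'s. A graph from \fullref{sec:6a} can label a multivalent vertex $o$ with a $\underline{\Gamma}_o$ having vertices of valency $\gg 4$ or with nonzero integer labels; such a graph cannot be reached from a generic one by angle perturbation alone. The paper's \fullref{lem:8.3} supplies a surgery on $\underline{\Gamma}_o$ (splitting a high-valence or integer-labeled vertex into two) and proves that nonemptiness of ${\mathcal{M}^{*}}_{\hat{A},T}$ is preserved in both directions of the surgery. Without some analogue of this step, your chain of reductions cannot terminate at a graph to which any existence theorem applies. In short: replace \fullref{thm:3.1} by \cite[Theorem~1.3]{T3} as the base case for generic graphs, and insert both a valency-reduction surgery (\fullref{lem:8.3}) and a vertex-splitting degeneration (\fullref{lem:8.4}) with their accompanying limit arguments; the compactness machinery you cite from \fullref{sec:7d} is the right tool for the limit arguments, but it must be run inside those lemmas rather than as a substitute for them.
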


To comment on these conditions, remark that if $e$ is an edge,
then its integer pair, $Q_{e}$, may or may not define an angle
via \eqref{eq1.8}. In any event, at least one of $\pm Q_{e}$
defines such an angle. In the case that $Q_{e}$ defines an angle,
denote the latter as $\theta _{e}$, and in the case that $-Q_{e}$
defines an angle, denote the latter by $\theta _{ -e}$. These
angles are the zeros of the function $\alpha _{Q}$. Moreover,
since the derivative of $\alpha _{Q}$ is positive at $\theta
_{e}$ and negative at $\theta _{ - e}$, the following conditions
are necessary and sufficient for $\alpha _{Q}$ to be positive on
$(\theta _{o}, \theta_{o}')$:

\itaubes{8.1}
\textsl{$\theta _{o}>\theta _{e}$ if only $\theta _{e}$ is defined.}

\item
\textsl{$\theta _{o' }<\theta _{ - e}$ if only $\theta _{ - e}$ is defined.}

\item
\textsl{$\theta _{e}<\theta _{o}<\theta _{o' }<\theta _{ -
e}$ if both $\theta _{e}$ and $\theta _{ - e}$ are defined and $\theta _{e}>\theta _{ - e}$.}

\item
\textsl{Either $\theta _{o' }<\theta _{ - e}$ or $\theta _{o}>\theta _{e}$ if both
$\theta _{e}$ and $\theta _{ - e}$ are defined and $\theta _{ - e}<\theta _{e}$.}
\end{itemize}
\noindent In this regard, note that $\theta _{ - e}$ is not defined when
$q_{e} > 0$ and $q_{e}/| {q_{e}}'| <\sqrt {\frac{3}{2}} $. On the
otherhand, $\theta _{e}$ is not defined when $q_{e}< 0$ and
$-q_{e}/| {q_{e}}'| <\sqrt {\frac{3}{2}} $. If both are defined,
then $\theta _{e}<\theta _{ - e}$ if ${q_{e}}'$ is positive, and
$\theta _{e}>\theta _{ - e}$ if ${q_{e}}'$ is negative.

Note that angle inequalities can be interpreted directly in terms
of integer pairs. To elaborate, suppose that $\theta _{Q}$ is
defined via \eqref{eq1.8} from an integer pair $(q, q')$ and that
$\theta _{P}$ is defined from another integer pair, $(p, p')$.
Then

\itaubes{8.2}
\textsl{If $p$ is negative and $q'$ is positive, then $\theta _{P}>\theta _{Q}$.}

\item
\textsl{If $p'$ and $q'$ have the same sign, then $\theta _{P}>\theta _{Q}$ if $p'q - pq' < 0$.}
\end{itemize}
\noindent With \eqreft82 in hand, the conditions in \eqreft81 can be written directly in terms
of the data given in $\hat{A}$ and $T$.

\begin{proof}[Proof of  \fullref{prop:8.2}]
The beginning of \fullref{sec:2a} explains why
the stated conditions are necessary. The strategy to prove that the stated
conditions are sufficient is as follows: A given graph $T$ is approximated by
a sequence of graphs, all mutually homotopic, and chosen so that the
corresponding versions of ${\mathcal{M}^{* }}_{\hat{A},(\cdot )}$
are nonempty. A sequence is constructed whose $j$'th element is from the $j$'th
version of ${\mathcal{M}^{* }}_{\hat{A},(\cdot )}$. The latter
sequence is then seen to converge to an element in ${\mathcal{M}^{* }}_{\hat{A},T}$.
The four steps that follow give the details. In
this regard, note that the convergence arguments are very much the same as
those in \fullref{sec:7d} and so only the novel points are noted.

\substep{Step 1}
To start, say that a graph $T$ of the sort that is described in
\fullref{sec:6a} is generic when it has the following
properties: All multivalent vertices are either bivalent or
trivalent, the trivalent vertex angles are pairwise distinct, and
they are distinct from all bivalent vertex angles. \cite[Theorem~1.3]{T3}
asserts that ${\mathcal{M}^{* }}_{\hat{A},T}$ is nonempty when $T$ is
generic and obeys the stated conditions in
\fullref{prop:8.2}.

\substep{Step 2}
Let $T$ denote a graph from \fullref{sec:6a} and
let $o \in T$ denote a bivalent vertex. Consider modifying $T$ by replacing
the graph $\underline {\Gamma }_{o}$ by a `less valent' graph,
${\underline {\Gamma }_{o}}'$. This is done as follows: Suppose first that
$\underline {\Gamma }_{o}$ has a vertex with valency greater than 4. Let $\upsilon $
denote the latter. The new graph, ${\underline {\Gamma }_{o}}'$, is obtained
from $\underline {\Gamma }_{o}-\upsilon $ by attaching the now
dangling incident half-arcs that are incident to $\upsilon $ to the vertices
in a graph with two vertices and one arc between them. Three of the dangling
half-arcs are attached to one of these two vertices and the remaining
half-arcs are attached to the other. Now care must be taken here with the
choice of the first three so as to insure that the arcs in the new graph
have consistent labels by pairs of edges. In particular, this is done as
follows: Take the first incident half-arc to be oriented as an incoming arc,
and let $(e, e')$ denote its edge pair label. The second incident half-arc
must be the arc that follows the first in $\ell _{oe}$. Note that it must
also be distinct from the first arc. As a consequence, its pair label has
the form $(e, e'')$. This second incident half-arc is outgoing. The third
incident half-arc should be the arc that follows the first on $\ell_{oe' }$. I
t must also be distinct arc from the first arc. Thus,
its edge label has the form $(e''', e')$. It too is outgoing. Note that the
case $e''' = e$ is allowed. The new arc that runs between the two new vertices
is oriented as an incoming half-arc and labeled by the edge pair $(e''',e'')$.
One of the two new vertices should be labeled with the integer 0, the
other with the integer that labels $\upsilon $. Note that there is a
completely analogous construction that has all arc orientations reversed.
The operation just described can be performed on any vertex with valency
greater than four.

Suppose next that $\underline {\Gamma }_{o}$ has only 4--valent and
bivalent vertices, but at least one 4--valent vertex with a non-zero integer
label. Let $\upsilon $ denote one of the latter vertices. In this case,
${\underline {\Gamma }_{o}}'$ is obtained from $\underline {\Gamma
}_{o}-\upsilon $ in the manner just described. One of the
vertices in the new graph is bivalent and the other is 4--valent. The
bivalent vertex should be given $\upsilon $'s integer label and the other
should be labeled with 0.
\end{proof}

Consider now:

\begin{lemma}\label{lem:8.3}
Suppose that $T'$ is obtained from $T$ by modifying one version of
$\underline {\Gamma }_{(\cdot )}$ as just described. Then
${\mathcal{M}^{* }}_{\hat{A},T}$ is nonempty if and only if
${\mathcal{M}^{* }}_{\hat{A},T'}$ is nonempty.
\end{lemma}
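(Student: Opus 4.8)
\textbf{Proof proposal for Lemma \ref{lem:8.3}.}

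The plan is to argue this using the compactness/limit machinery from \fullref{sec:7d}, running it in both directions. First observe that the operation taking $\underline{\Gamma}_o$ to $\underline{\Gamma}_o'$ does not change any of the data used to define $\hat{A}$: it leaves the vertex angle $\theta_o$ unchanged, it preserves the edge-pair labels on arcs (by the careful choice of the first three dangling half-arcs, the new arcs inherit consistent labels), and the integer weights attached to vertices of $\underline{\Gamma}_o$ are merely redistributed between the two new vertices (one gets $\upsilon$'s weight, the other gets $0$), so the multiset of vertex weights — hence the data $\hat{A}_o$ attached to $o$ — is unchanged. Consequently $T$ and $T'$ are graphs for the \emph{same} asymptotic data set $\hat{A}$, and they differ only in the combinatorial structure of one $\underline{\Gamma}_{(\cdot)}$. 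Moreover the edge integer pairs $\{Q_e\}$ are determined by $\hat{A}$ and the branching of $T$ via \eqreft63, and since $P_o$ is unchanged and the incident-edge set $E_o$ is unchanged, the $Q_e$'s agree for $T$ and $T'$; so the conditions of \fullref{prop:8.2} hold for $T$ if and only if they hold for $T'$.

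Next I would set up the degeneration in one direction. Suppose ${\mathcal{M}^*}_{\hat{A},T}$ is nonempty. The operation on $\underline{\Gamma}_o$ that splits a vertex $\upsilon$ into two vertices joined by a single arc $\gamma$ is, geometrically, the merging of a critical point of $\theta$ on $C_0$ with the singular level set $\Gamma_o$: a pair $(C_0,\phi)$ whose graph is $T$ has, near $\theta_o$, a component of $C_0-\Gamma$ whose closure contains the arc $\gamma$; as one lets the $\theta$-critical point corresponding to a vertex of ${\underline{\Gamma}_o}'$ drift toward the angle $\theta_o$, the limiting subvariety has graph $T$. Conversely, given $(C_0,\phi)$ with graph $T$, one can perturb within ${\mathcal{M}^*}_{\hat{A}}$ — using \fullref{lem:5.4}, equations \eqref{eq2.4} and \eqref{eq2.11}, and the implicit function theorem exactly as in the proof of \fullref{prop:8.1} — to pull off a critical value from $\theta_o$ into an adjacent component of $(0,\pi)-\Lambda_{+,B}$, producing a pair whose graph is $T'$. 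Thus ${\mathcal{M}^*}_{\hat{A},T}$ nonempty implies ${\mathcal{M}^*}_{\hat{A},T'}$ nonempty. For the reverse implication, one takes a sequence $\{(C_{0j},\phi_j)\}$ from (homotopy-perturbed versions of) ${\mathcal{M}^*}_{\hat{A},T'}$ in which the relevant critical value of $\theta$ is driven toward $\theta_o$, and invokes \fullref{prop:7.1} (and the limit analysis of \fullref{sec:7d}) to extract a convergent subsequence whose limit $(S_0,\phi)$ has graph $T$. One must check that the limit graph is $T$ rather than some other graph with the same angle collapse — but this follows from the arc/vertex correspondences established in Steps 4, 9 and 11 of \fullref{sec:7d}, which show that the combinatorics of $\Gamma_o$ in the limit is recovered by collapsing precisely the one arc $\gamma$ connecting the two new vertices, which is exactly the inverse of the modification, giving $\underline{\Gamma}_o$ back from ${\underline{\Gamma}_o}'$.

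The main obstacle I anticipate is controlling the limit graph in the reverse direction: a priori, as a critical value of $\theta$ approaches $\theta_o$, several things could happen simultaneously (other critical values could also approach, or the critical point could approach an \emph{end}'s limiting angle rather than $\theta_o$), and one needs to know that the particular sequence chosen produces exactly $T$. The remedy is to choose the approximating sequence carefully so that only the single designated critical value moves and it moves only toward $\theta_o$; then the arc-assignment bookkeeping of \fullref{sec:7d} (in particular the statement that a compact portion of the interior of any arc in the $S_0$ version of $\Gamma_o$ has a unique $\theta$-preserving preimage in large-$j$ versions of $C_{0j}$, together with the integer-weight computation via the integral of $\frac{1}{2\pi}(p\,dt+p'\,d\varphi)$ over constant-$|s|$ slices) pins down that the graph of the limit is obtained from $T'$ precisely by the single arc collapse, hence equals $T$. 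Granted this, the Lemma follows, and combined with Steps 1 and 2 it reduces the nonemptiness of ${\mathcal{M}^*}_{\hat{A},T}$ for arbitrary $T$ to the generic case settled by \cite[Theorem~1.3]{T3}, completing the proof of \fullref{prop:8.2}.
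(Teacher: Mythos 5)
Your high-level strategy — one direction by perturbation via \fullref{lem:5.4}, \eqref{eq2.4}, \eqref{eq2.11} and the implicit function theorem, the other direction by constructing a degenerating sequence and running the compactness machinery of \fullref{sec:7d} — is the paper's strategy. But your geometric picture of what degenerates is wrong, and this would derail the actual execution.

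The modification $\underline{\Gamma}_o \to \underline{\Gamma}_o'$ splits a high-valency vertex $\upsilon$ of the singular level set into two vertices joined by a new arc $\gamma$, with \emph{no change at all in vertex angles}: both new vertices still sit at $\theta_o$, in $T$ and $T'$ alike. Geometrically, $T'$ describes a curve with two $\theta$-critical points at the same critical value $\theta_o$, separated by a small arc of $\Gamma_o$; $T$ describes the curve obtained when that arc has zero length, i.e., the two critical points have coalesced into one more degenerate one. The degeneration from $T'$ to $T$ is therefore the collapse of the coordinate $r_j(\gamma)$ in the simplex $\Delta_o'$ to zero, which is exactly what the paper does (choose $\lambda_j$ constant except for the $\Delta_o'$ factor, with the $\gamma$-coordinate tending to zero). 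Your repeated description of a ``critical value of $\theta$ driven toward $\theta_o$'' and of ``pulling off a critical value from $\theta_o$ into an adjacent component of $(0,\pi)-\Lambda_{+,B}$'' describes a change of vertex angle — that is the content of \fullref{lem:8.4}, not \fullref{lem:8.3}. In \fullref{lem:8.3} nothing leaves $\theta_o$; only the combinatorics of the singular level set changes. This confusion matters: the compactness input you need is not control of a critical value approaching a fixed angle, but control of the one shrinking arc in $\Gamma_o$. The paper flags this precisely, replacing the no-small-arc hypotheses \eqreft79 and \eqreft7{10} with the statement that only the single new arc $\gamma$ (the one not inherited from $\underline{\Gamma}_o - \upsilon$) is allowed to be mapped into a small ball or to large $|s|$; without that replacement, \fullref{prop:7.1} and the rest of \fullref{sec:7d} do not directly apply. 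You should correct the geometric interpretation and explicitly build in that modified version of the arc-control conditions rather than treating the argument as a drift of critical values.
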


\begin{proof}[Proof of  \fullref{lem:8.3}]
Suppose first that ${\mathcal{M}^{* }}_{\hat{A},T' }$ is nonempty. Let $o \in T$ denote the
vertex involved and $\upsilon $ the vertex in $\underline {\Gamma }_{o}$.
Consider a sequence, $\{\lambda _{j}\}$, in the $T'$ version of the
space in \eqref{eq6.15} whose coordinates are $j$--independent but for the coordinate
in the $T'$ simplex $\Delta _{o}'$. In the latter, the coordinate for the
arc between the two vertices in ${\underline {\Gamma }_{o}}'$ that replace
$\upsilon $ should converge to zero. The remaining arc coordinates should
converge as $j  \to\infty $ so as to define a point in $\Delta _{o}$.
Use a $j$--independent value in $\mathbb{R}$ and the image of $\{\lambda_{j}\}$ in
$O_{T' }/\Aut (T')$ to define a sequence of equivalence
classes in ${\mathcal{M}^{* }}_{\hat{A},T' }$. The
discussion in \fullref{sec:7d} can be repeated now with a minor modification to
prove that the sequence in ${\mathcal{M}^{* }}_{\hat{A},T'}$ converges to an
element in ${\mathcal{M}^{* }}_{\hat{A},T}$.
The salient modification replaces \eqreft79 and \eqreft7{10} with the assertion that
only the one arc in ${\underline {\Gamma }_{o}}'$ that does not come from
$\underline {\Gamma }_{o}-\upsilon $ can correspond to an arc in
$C_{0j}$ that either lies entirely in a radius $\varepsilon $ ball or where $|s|\ge R$.
\end{proof}

Suppose next that ${\mathcal{M}^{* }}_{\hat{A},T}$ is nonempty.
The fact that ${\mathcal{M}^{* }}_{\hat{A},T' }$ is
nonempty follows using \fullref{lem:5.4} with \eqref{eq2.4}, \eqref{eq2.11} and the implicit
function theorem.

\substep{Step 3}
Suppose now that each version from $T$ of
$\underline {\Gamma }_{(\cdot )}$ has only bivalent or 4--valent vertices
and that all 4--valent vertices are labeled by zero. Thus, no modifications
as described in Part 2 are possible. However, suppose that $o$ is a
multivalent vertex in $T$ and that $\underline {\Gamma }_{o}$ contains two
or more vertices with one being a 4--valent vertex. In this case, $T$ is
modified to produce a graph $T'$ as follows: To start, let
$\upsilon\in \underline {\Gamma }_{o}$ denote a 4--valent vertex in
$\underline {\Gamma}_{o}$. The arc segments that are incident to $\upsilon $ are
labeled by pairs of edges, but there are either 3 or 4 edges in total involved. In any
case, two connect $o$ to respective vertices whose angles are either both
smaller or both larger than $\theta _{o}$. What follows assumes the
former; the argument in the latter case is omitted since it differs only
cosmetically.

Let $e$ and $e'$ denote the two edges that connect $o$ to vertices with larger
angle. The complement in $T$ of the interiors of $e$ and $e'$ is disconnected,
with three components, these denoted as $T_{e}$, $T_{e' }$ and $T_{- }$.
The graph $T_{ - }$ contains o, while $T_{e}$ contains the vertex
opposite $o$ on the edge $e$ and $T_{e' }$ contains the vertex opposite
 $o$ on the edge $e'$. The graph $T'$ is the union of $T_{e}$, $T_{e' }$,
$T_{ - }$ and a trivalent graph, $Y$, with one vertex and three edges. The two
vertices at the top of the $Y$ are identified in $T'$ with the respective $e$ and
 $e'$ vertices in $T_{e}$ and $T_{e' }$. The vertex at the bottom of
the $Y$ is identified in $T'$ with the vertex $o$ in $T_{ - }$. The vertex at the
center of the $Y$ is denoted by $o'$, and its angle, $\theta _{o' }$,
is slightly larger than $\theta _{o}$. The corresponding graph,
$\underline {\Gamma }_{o' }$, is a figure 8 where the two small circles are
the versions of $\ell _{o' (\cdot )}$ that are labeled by the two
top edges of the $Y$ graph. Meanwhile, the loop that traces the figure 8 is
the version of $\ell _{o' (\cdot )}$ that is labeled by the bottom
edge in the $Y$ graph.

Let $\hat{o}$ denote the vertex in $T'$ that corresponds to $o \in T_{ - }$.
The vertex angle of $\hat{o}$ is that of $o$, thus $\theta _{o}$. The graph
$\underline {\Gamma }_{\hat{o}}$ is obtained from $\underline {\Gamma }_{o}-\upsilon $
as follows: Attach the incoming arc
segment to $\upsilon $ with label $e$ to the outgoing arc segment with label
$e'$. Likewise, attach the incoming arc segment with label $e'$ to that outgoing
arc segment with label $e$. Then, replace $e$ and $e'$ in all arc labels by the
label of the bottom edge in the figure $Y$.

As can be readily verified, a graph $T'$ as just described satisfies the
conditions in \fullref{prop:8.2} if $T$ does, and if the vertex angle
$\theta_{o' }$ is sufficiently close to $\theta _{o}$. This understood,
consider:

\begin{lemma}\label{lem:8.4}
Suppose that $T'$ is a graph as just described, with $\theta _{o' }$
very close to $\theta _{o}$. Then ${\mathcal{M}^{* }}_{\hat{A},T}$
is nonempty if and only if ${\mathcal{M}^{* }}_{\hat{A},T' }$ is nonempty.
\end{lemma}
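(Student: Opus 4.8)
The plan is to follow the template already laid down in the proof of Lemma~8.3, and more generally in the compactness arguments of \fullref{sec:7d}, adapting the degeneration to the present situation where a figure-eight vertex graph $\underline{\Gamma}_{o'}$ is being collapsed to produce the four-valent vertex structure $\underline{\Gamma}_{o}$. First I would handle the easy direction: suppose ${\mathcal{M}^{*}}_{\hat{A},T}$ is nonempty. Then \fullref{lem:5.4}, combined with the local structure provided by \eqref{eq2.4}, \eqref{eq2.11}, and the implicit function theorem, shows that one can deform the model curve of any element of ${\mathcal{M}^{*}}_{\hat{A},T}$ so as to split the four-valent critical point of $\theta$ at the level $\theta_{o}$ into a nearby four-valent critical point at level $\theta_{\hat{o}} = \theta_{o}$ together with a pair of trivalent critical points (or rather, the single new three-valent vertex $o'$ corresponding to the figure-eight), where $\theta_{o'}$ is taken slightly larger than $\theta_{o}$. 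Since $T'$ satisfies the conditions of \fullref{prop:8.2} when $\theta_{o'}$ is close enough to $\theta_{o}$, the deformed curve represents a point in ${\mathcal{M}^{*}}_{\hat{A},T'}$, so the latter is nonempty.

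For the harder direction, suppose ${\mathcal{M}^{*}}_{\hat{A},T'}$ is nonempty. I would pick a sequence $\{\lambda_j\}$ in the $T'$ version of the space in \eqref{eq6.15} whose coordinates are $j$-independent in all factors except the $\Delta_{o'}$ factor. In $\Delta_{o'}$, the three arcs of the figure-eight $\underline{\Gamma}_{o'}$ receive coordinates; I would let the coordinate on the arc that traces the full figure-eight (the $\ell_{o'(\cdot)}$ labeled by the bottom edge of the $Y$) stay bounded away from zero, while letting the remaining arc coordinates on $\Delta_{o'}$ degenerate appropriately so as to force $\theta_{o'} \to \theta_{o}$ and so that the limiting data defines a point of $\Delta_{\hat{o}}$ for the four-valent graph $\underline{\Gamma}_{o}$. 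Using a $j$-independent value in the $\mathbb{R}$ factor together with the image of $\{\lambda_j\}$ in $O_{T'}/\Aut(T')$, \fullref{thm:6.2} produces a corresponding sequence of equivalence classes in ${\mathcal{M}^{*}}_{\hat{A},T'}$. The task is then to show this sequence converges in ${\mathcal{M}^{*}}_{\hat{A}}$ to an element whose graph is $T$. This is where I would invoke essentially verbatim the six-step compactness analysis from the proof of \fullref{prop:7.1} (and its deployment in Part~3 of \fullref{sec:7d}), deducing via \fullref{prop:7.1} that the limit data set $\Xi$ contains a single element $(S,n)$, and via the arc-counting arguments (based on the fact that the assigned point in $\times_o \Delta_o$ converges, so each arc receives a $j$-independent positive lower bound) that $n=1$ and $S$ has model curve with graph $T$.

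The main obstacle — and the place where the present case genuinely differs from Lemma~8.3 — is that here two distinct edges $e$ and $e'$ of $T$ (those connecting $o$ to vertices of larger angle) are being merged: in $T'$ they are replaced by a single edge (the bottom edge of the $Y$) running from $\hat{o}$ to $o'$. So the naive replacement of \eqreft79 and \eqreft7{10} used in Lemma~8.3 does not literally apply. Instead I would argue that, as $\theta_{o'} \to \theta_o$, the component of the model curve labeled by the bottom edge of the $Y$ shrinks in its $\theta$-extent, and in the limit the two components labeled by $e$ and $e'$ reattach directly across the level $\theta_o$, precisely producing the incoming/outgoing arc reattachment that defines $\underline{\Gamma}_{\hat{o}}$ from $\underline{\Gamma}_o - \upsilon$. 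The key technical point to verify is that no arc of the $C_{0j}$ version of any $\Gamma_{(\cdot)}$ other than the one coming from the contracting figure-eight arc can shrink to lie in a small ball or escape to large $|s|$; this follows from the convergence of the images in $\times_o \Delta_o$ together with the integral estimates for the form $(1-3\cos^2\theta)d\varphi - \surd 6 \cos\theta\, dt$ exactly as in \fullref{sec:7d}. Once this is in place, the homeomorphism and covering-degree bookkeeping (Steps~9--11 of the proof of \fullref{prop:7.1}) go through and identify the limit's graph with $T$, completing the proof.
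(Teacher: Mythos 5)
Your treatment of the easy direction (${\mathcal{M}^{*}}_{\hat{A},T}\ne\text{\o}$ implies ${\mathcal{M}^{*}}_{\hat{A},T'}\ne\text{\o}$) matches the paper. The hard direction, however, has a genuine gap and also a structural misunderstanding of the simplex $\Delta_{o'}$.

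First, the structural point. You propose to "let the remaining arc coordinates on $\Delta_{o'}$ degenerate appropriately so as to force $\theta_{o'}\to\theta_o$." But $\underline{\Gamma}_{o'}$ is a figure-eight with a single 4--valent vertex, so by the definition following \eqref{eq6.6}, $\Delta_{o'}$ has dimension zero — it is a single point. There are no coordinates in $\Delta_{o'}$ to degenerate, and in any case the vertex angle $\theta_{o'}$ is a parameter of the graph, not something encoded in $\Delta_{o'}$. The paper therefore degenerates the graph itself: it takes a sequence of angles $\{\theta_j\}\to\theta_o$ from above and the corresponding sequence of graphs $T_j$ (each $T'$ with $\theta_{o'}=\theta_j$), keeping the point $\lambda_j$ in the identified $T_j$ version of \eqref{eq6.15} constant in \emph{every} factor. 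This is a different mechanism from \fullref{lem:8.3}, where the degenerating quantity really is a simplex coordinate; you have imported \fullref{lem:8.3}'s mechanism to a situation where it does not exist.

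Second, and more important, you do not address the key obstruction that the paper's proof is organized around. Even after one knows (from the \fullref{sec:7d} compactness arguments) that the sequence converges to some $(S,n)$ with $n=1$ and that $S$ lies in a moduli space ${\mathcal{M}^{*}}_{\hat{A},T''}$, it is \emph{not} automatic that $T''\cong T$: the paper explicitly warns that $T''$ may differ from $T$ at the vertex $\hat{o}$ of angle $\theta_o$, i.e.\ the limit may carry the wrong graph $\underline{\Gamma}_{\hat{o}}$ rather than $\underline{\Gamma}_o$. Your sketch ("the two components labeled by $e$ and $e'$ reattach directly across the level $\theta_o$, precisely producing ... $\underline{\Gamma}_{\hat{o}}$ from $\underline{\Gamma}_o-\upsilon$") in fact describes how $\underline{\Gamma}_{\hat{o}}$ was \emph{built} from $\underline{\Gamma}_o$, not an argument that the limiting critical level set recovers $\underline{\Gamma}_o$. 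The recovery is not generic; it requires tuning the constant sequence so that the critical point labeled by $o'$ coalesces with the $\hat{o}$-critical level in the right place. The paper does this by a very specific choice of the $\mathbb{R}_{o'}$ coordinate (the midpoint $\frac{1}{2}(\tau_0+\tau_1)$ of the two lifts determined by the concatenating path set for $\ell_{\hat{o}\hat{e}}$), together with a compatible choice in the $\mathbb{R}_{\hat{o}}\times\Delta_{\hat{o}}$ factor via the map $r\mapsto\sum_{\gamma'\subset\pi(\gamma)}r(\gamma')$. Without this step, the proof does not close: you would only know the limit lies in some ${\mathcal{M}^{*}}_{\hat{A},T''}$ with $T''$ of the special type, not that ${\mathcal{M}^{*}}_{\hat{A},T}$ itself is nonempty.
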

\noindent This lemma is proved momentarily.

\fullref{prop:8.2} is a corollary to Lemmas~\ref{lem:8.3} and~\ref{lem:8.4}
together with \cite[Theorem~1.3]{T3}. The reason is that any given $T$ can be
sequentially modifed as described first by \fullref{lem:8.3} and then as in
\fullref{lem:8.4} so as to obtain a graph that is generic in the sense that is used by Step 1.

\substep{Step 4}
This step contains the following proof.

\begin{proof}[Proof of \fullref{lem:8.4}]
Suppose first that ${\mathcal{M}^{* }}_{\hat{A},T'}$ is nonempty when $\theta _{o' }$
is sufficiently close to $\theta _{o}$. Take a sequence of angles
$\{\theta _{j}\}_{j = 1,2,\ldots }$ that converges to $\theta_{o}$ from above with
each very close to $\theta _{o}$. Let
$\{T_{j}\}_{j = 1,2,\ldots }$ denote a corresponding sequence of
graphs where the $j$'th version is $T'$ with $\theta _{o' }=\theta_{j}$.
The plan is to define a corresponding sequence in ${\mathcal{M}^{* }}_{\hat{A}}$
whose $j$'th element is a point in the $T'$ = $T_{j}$
version of $\mathcal{M}^{\ast }_{\hat{A},T' }$ by using
\fullref{thm:6.2} and a point in the $T_{j}$ version of the space in \eqref{eq6.15}. For
this purpose, it is necessary to first make the choices that are described
in Parts 1 and 2 of \fullref{sec:6c}. Since the $T_{j}$'s are pairwise homotopic,
these choices can be made for all at once. Granted that this is done, choose
a corresponding sequence, $\{\lambda _{j}\}$, with $\lambda _{j}$ a
point in the $T_{j}$ version of the space in \eqref{eq6.15}. This sequence should be
chosen so that all of the factors are independent of the index $j$. Now define
a corresponding sequence in ${\mathcal{M}^{* }}_{\hat{A}}$ whose
$j$'th element is in the $T' = T_{j}$ version of ${\mathcal{M}^{* }}_{\hat{A},T' }$
and is obtained from $\lambda _{j}$
using the $T_{j}$ version of \fullref{thm:6.2} with some $j$--independent choice for
the $\mathbb{R}$ factor. Arguments that are much like those in \fullref{sec:7d} can
be employed to prove that such a sequence converges in ${\mathcal{M}^{* }}_{\hat{A}}$
and that the limit is in some ${\mathcal{M}^{* }}_{\hat{A},T''}$ where $T''$ is a graph of a
rather special sort. In particular, if $T''$ is not isomorphic to T, then it
has a vertex, $\hat{o}$, with angle $\theta _{o}$ such that the replacement of
$\underline {\Gamma }_{\hat{o}}$ with $\underline {\Gamma }_{o}$
makes a graph that is isomorphic to $T$. As is explained next, a careful
choice for the constant sequence $\{\lambda _{j}\}$ gives a version of
$T''$ that is isomorphic to $T$.

Care must be taken only with the coordinates of $\lambda _{j}$ in the
$\mathbb{R}_{\hat{o}} \times \Delta _{\hat{o}}$ and
$\mathbb{R}_{o' }$ factors in \eqref{eq6.15}. To specify the latter, return
to the construction of the graph $\underline {\Gamma }_{\hat{o}}$
in $T'$ from $\underline {\Gamma }_{o}$. Let $\hat{e}$ denote the bottom edge of
the $Y$--graph portion of $T'$. There is a map from $\underline {\Gamma
}_{\hat{o}}$ to $\underline {\Gamma }_{o}$ that is 1--1 except for
two points on $\ell _{\hat{o}\hat{e} }$ that are both mapped
to the vertex $\upsilon $. Let $\pi $ denote the latter map. By assumption,
there is another vertex besides $\upsilon $ in $\ell _{oe}\cup\ell_{oe' }$.
For the sake of argument, suppose $\ell _{oe}$ has a
second vertex, then let $\upsilon _{0}$ denote the vertex on $\ell_{oe}$
that starts the arc in $\ell _{oe}$ that ends at $\nu $.
Meanwhile, the abstract version of the loop $\ell _{o' \hat{e} }$ from the
figure 8 graph $\underline {\Gamma }_{o' }$ has
two vertices. Let $\upsilon _{1}$ and $\upsilon _{2}$ denote the latter
where the convention has the arc that starts at $\upsilon _{1}$ and ends
at $\upsilon _{2}$ mapping to $\ell _{o' ,e}$ in $\underline {\Gamma }_{o' }$.

The next step to choosing the coordinates of $\lambda _{j}$ requires the
introduction of the map from $\Delta _{o}$ to $\Delta _{\hat{o}' }$
that is defined so that  $r \in\Delta _{o}$ sends
$\gamma\subset \underline {\Gamma }_{\hat{o}}$ to $\sum
_{\gamma '  \subset \pi (\gamma )}r(\gamma ')$. Fix
$r\in\Delta _{o}$ and use its image under this map for $\lambda_{j}$'s
factor in $\Delta _{\hat{o}}$. Also, fix $\tau\in \mathbb{R}_{\hat{o}}$.
To define the coordinate of $\lambda _{j}$
in the $\mathbb{R}_{o}$ factor, observe first that the concatenating path set
for the loop $\ell _{\hat{o}\hat{e} }$ in $\underline {\Gamma}_{\hat{o}}$
can be used to assign a value in $\mathbb{R}$ to the
vertex $\pi ^{-1}(\upsilon _{0})$ from any given $\tau\in \mathbb{R}_{\hat{o}}$.
This value is defined by starting with $\tau $
and adding or subtracting suitable multiples of the values given by the
image of $r$ in $\Delta _{\hat{o}}$ to the arcs on a certain path
from the distinguished vertex in $\underline {\Gamma }_{\hat{o}}$
to $\pi ^{-1}(\upsilon _{0})$. In particular, the path uses the
concatenating path set to get to $\ell _{\hat{o}\hat{e} }$
and then proceeds in the oriented direction on $\ell _{\hat{o}\hat{e} }$
to the vertex $\pi ^{-1}(\upsilon _{0})$. All
of this is done so as to be compatible with the parametrizing algorithm as
described in \fullref{sec:2}. Let $\tau _{0}$ denote the $\mathbb{R}$ value defined
in this way from the pair $(\tau , r)$. Now let $\tau _{1}$ denote the
result of adding to $\tau _{0}$ the value that the image of $\mathbb{R}$
assigns to the arc in $\ell _{\hat{o}\hat{e} }$ that starts
at $\pi ^{-1}(\upsilon _{0})$. Use $\frac{1}{2}(\tau
_{0}+\tau _{1})$ for $\lambda _{j}$'s coordinate in $\mathbb{R}_{o' }$.

Arguments that are much like those used in \fullref{sec:7d} prove that the
sequence $\{\lambda _{j}\}$ as just described defines a sequence in
${\mathcal{M}^{* }}_{\hat{A}}$ whose limit is in ${\mathcal{M}^{* }}_{\hat{A},T}$.
The proof that ${\mathcal{M}^{* }}_{\hat{A},T' }$ is nonempty if
${\mathcal{M}^{* }}_{\hat{A},T}$ is obtained using \fullref{lem:5.4}
with \eqref{eq2.4}, \eqref{eq2.11} and the implicit function theorem.
\end{proof}

\subsection{Proof of Proposition \ref{prop:6.1}}\label{sec:8c}

In order to prove \fullref{prop:6.1}, it is necessary to return to the milieu
of \fullref{prop:8.1} and obtain a more refined picture of the image of the map
$\mathfrak{p}$. For this purpose, remember that there are $\mathfrak{m}$ angles in the image of the
map $f$ that can vary on a given stratum component $\mathcal{S} $. These angles
are distributed amongst the various components of $(0, \pi )-\Lambda _{ + ,B}$.
If $T$ and $T'$ are homotopic graphs, and if $o \in T$ and $o' \in T'$
are corresponding vertices, then either $\theta _{o} = \theta _{o' }$ and this
angle is in $\Lambda _{ + ,B}$, or
else $\theta _{o}$ and $\theta _{o'  }$ are in the same component
of $(0, \pi )-\Lambda _{ + ,B}$.

With this last point in mind, suppose that $T_{1}$ and $T_{2}$ are homotopic
graphs with the following property: Let $\theta _{1}\in  (0, \pi)-\Lambda _{ + ,B}$
and suppose that there are multivalent
vertices in $T_{1}$ with angle $\theta _{1}$. Let $V \subset T_{1}$
denote the latter set, and also the corresponding set in $T_{2}$. Suppose
that the vertices in the $T_{2}$ version of $V$ are assigned angle
$\theta_{2}>\theta _{1}$. In addition, assume that both the $T = T_{1}$ and
 $T = T_{2}$ versions of ${\mathcal{M}^{* }}_{\hat{A},T}$ are
nonempty. For each angle $\theta  \in (\theta _{1},\theta _{2})$, let $T_{\theta }$
denote the version of $T$ that is obtained
from $T_{1}$ by assigning the angle $\theta $ to the vertices in $V$. Thus,
each $T_{\theta }$ is homotopic to $T$.

\fullref{prop:6.1} is now a consequence of the following:

\begin{lemma}\label{lem:8.5}
If both the $T_{1}$ and $T_{2}$ versions of ${\mathcal{M}^{* }}_{\hat{A},T}$
are nonempty, then such is the case for each $T = T_{\theta }$ version in the case that
$\theta\in  [\theta _{1}, \theta _{2}]$.
\end{lemma}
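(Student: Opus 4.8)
\textbf{Proof plan for Lemma \ref{lem:8.5}.}
The plan is to show that the set of angles $\theta\in[\theta_1,\theta_2]$ for which $\mathcal{M}^{*}_{\hat{A},T_\theta}$ is nonempty is both open and closed in $[\theta_1,\theta_2]$; since it contains $\theta_1$ and $\theta_2$ by hypothesis, and $[\theta_1,\theta_2]$ is connected, this forces it to be the whole interval. Before doing this it is worth recording that each $T_\theta$ is a legitimate graph of the sort described in \fullref{sec:6a}: the only data that change with $\theta$ are the angles of the vertices in $V$, and the constraints in \eqreft61 and \eqreft64 are preserved because no new vertex angle coincidence is created (all of $(\theta_1,\theta_2)$ lies in a single component of $(0,\pi)-\Lambda_{+,B}$, and the $T_1$ and $T_2$ versions obey \eqreft64 by \fullref{prop:8.2}, so the relevant $\alpha_{Q_e}$ positivity persists throughout the closed interval by the interlacing of the zeros described in \eqreft81). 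This last point is routine and I would dispatch it with the observation that $\alpha_{Q_e}$ is positive on $(\theta_1,\theta_2)$ whenever it is positive at both endpoints and at the corresponding vertex angles, together with \eqreft82.

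Openness is the easy half. Suppose $\mathcal{M}^{*}_{\hat{A},T_{\theta_*}}$ is nonempty for some $\theta_*\in[\theta_1,\theta_2]$, and let $(C_0,\phi)$ represent a point in it. By \fullref{lem:5.4}, near this point the functions $\{r_z\}$, $\{\varpi_{\pm i}\}$, $\varpi'$ and the selected subset of $\{\theta_z\}$ give smooth (orbifold) coordinates on the ambient stratum, and among these the critical value functions $\theta_z$ for the critical points mapping to vertices of $V$ can be varied freely and independently within $(0,\pi)-\Lambda_{+,B}$ by the implicit function theorem applied as in \fullref{sec:5d}, using \eqref{eq2.4} and \eqref{eq2.11} to control the behaviour near the singular level sets. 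Pushing these critical values slightly away from $\theta_*$ produces, for all $\theta$ in a neighbourhood of $\theta_*$ in $[\theta_1,\theta_2]$, a pair $(C_0',\phi')$ whose graph is $T_\theta$; hence $\mathcal{M}^{*}_{\hat{A},T_\theta}$ is nonempty for such $\theta$.

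Closedness is where the real content lies. Suppose $\{\theta_k\}\subset[\theta_1,\theta_2]$ converges to some $\theta_\infty$ and each $\mathcal{M}^{*}_{\hat{A},T_{\theta_k}}$ is nonempty; I must show $\mathcal{M}^{*}_{\hat{A},T_{\theta_\infty}}$ is nonempty. The strategy mirrors the compactness arguments of \fullref{sec:7d}. Using \fullref{thm:6.2}, choose for each $k$ a representative $(C_{0k},\phi_k)$ of a point in $\mathcal{M}^{*}_{\hat{A},T_{\theta_k}}$ whose image in $\mathbb{R}\times O_{T_{\theta_k}}/\Aut(T_{\theta_k})$ is chosen to stay in a fixed compact region of the $O$-factor and at a fixed value of the $\mathbb{R}$-factor; concretely, all simplex coordinates and all the $\mathbb{R}_{(\cdot)}$ lifts are taken $k$-independent (this makes sense because the graphs $T_{\theta_k}$ are all homotopic, so the choices of Parts 1 and 2 of \fullref{sec:6c} are made once and for all, and the spaces $O_{T_{\theta_k}}$ differ only through the angle-dependent translation constants \eqref{eq6.7}, \eqref{eq6.8}, which vary continuously in $\theta$). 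Then \fullref{prop:7.1} (equivalently \cite[Proposition~3.7]{T3} together with \eqref{eq7.4}) provides a subsequence and a limit data set $\Xi$ with $\sum_{(S,n)\in\Xi}n\int_S\varpi=\lim_k\int_{C_k}\varpi$ and pointwise convergence. The arguments of Part 2 of \fullref{sec:7d}, adapted here, show $\Xi$ has a single element $(S,n)$: the point is again that any arc in a $\Gamma_o$-graph of $C_{0k}$ cannot collapse into a small ball or out to large $|s|$ because its integral of $(1-3\cos^2\theta_o)d\varphi-\surd6\cos\theta_o\,dt$ equals a simplex coordinate that has a $k$-independent positive lower bound, and distinct non-$\mathbb{R}$-invariant limit pieces would force a homologically forbidden configuration as in Step 2 there. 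Finally Part 3 of \fullref{sec:7d} shows $n=1$ and $S$ defines a point of some $\mathcal{M}^{*}_{\hat{A},T''}$; since the critical values of $\theta$ on $S$ are the $k\to\infty$ limits of those on $C_{0k}$, the vertices of $V$ acquire angle $\theta_\infty$ and the graph $T''$ is isomorphic to $T_{\theta_\infty}$. Thus $\mathcal{M}^{*}_{\hat{A},T_{\theta_\infty}}\ne\emptyset$, which closes the argument. I expect the main obstacle to be verifying that the limiting graph is exactly $T_{\theta_\infty}$ rather than some further degeneration: one must rule out, for instance, the coalescence of a vertex of $V$ with a pre-existing vertex of $\Lambda_{+,B}$ at an endpoint, and here the hypothesis $\theta_\infty\in[\theta_1,\theta_2]$ with $(\theta_1,\theta_2)$ disjoint from $\Lambda_{+,B}$ is precisely what prevents it; the endpoint cases $\theta_\infty=\theta_1$ or $\theta_2$ are covered by hypothesis, so only the genuinely interior limits need this analysis, and there the edge-label and $\Gamma_o$-structure is forced to be constant by \fullref{prop:7.1} exactly as in Step 11 of \fullref{sec:7d}.
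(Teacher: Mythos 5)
Your connectedness strategy (openness plus closedness on $[\theta_1,\theta_2]$, then observe the set contains both endpoints) is the right one, and in fact it is the paper's argument phrased a little differently: the paper argues by contradiction, using \fullref{prop:8.1} for the openness statement and then applying the \fullref{sec:7d} compactness machinery to the first angle $\sigma\ge\theta_1$ at which $\mathcal{M}^*_{\hat{A},T_\theta}$ becomes empty. Your openness half is handled correctly via \fullref{lem:5.4}, and your preliminary observation about $\alpha_{Q_e}$ positivity persisting across $[\theta_1,\theta_2]$ is both true and, as it happens, exactly the fact the paper uses.

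The gap is in the closedness half, at precisely the point you flag at the end and then dismiss. The adaptation of \fullref{prop:7.1} to a sequence in which the graph $T_{\theta_k}$ itself varies is \emph{not} automatic: Step~3 of the proof of \fullref{prop:7.1} in \fullref{sec:7d} uses the fact that a component of the $s=s_*$ slice that is nontrivial near $S_*$ would have $\int x = \pm\alpha_{Q_e}(\theta_*)$ for some incident-edge integer pair $Q_e$, and that this is nonzero. If $\alpha_{Q_e}(\theta_\infty)=0$, i.e.\ if $\theta_\infty$ coincides with $\theta_e$ or $\theta_{-e}$ for an incident edge $e$ to a vertex in $V$, this step fails and the limit may develop a cylinder in $\Xi$ that cannot be ruled out. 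This is the genuine obstruction, and it has nothing to do with coalescence into $\Lambda_{+,B}$; your closing sentence, which invokes $(\theta_1,\theta_2)\cap\Lambda_{+,B}=\emptyset$ to "prevent" the degeneration, addresses the wrong potential failure. What actually closes the gap is the observation you already recorded at the outset: since $T_1$ and $T_2$ both satisfy the \fullref{prop:8.2} conditions and the vertices in $V$ are multivalent (not monovalent $(0,-,\ldots)$ vertices), $\alpha_{Q_e}$ is strictly positive at every $\theta\in[\theta_1,\theta_2]$ and every incident edge $e$ to a vertex in $V$. You should therefore cite Step~3 of the proof of \fullref{prop:7.1} explicitly as the place needing the positivity hypothesis, and then invoke your $\alpha_{Q_e}$-positivity remark to supply it; with that connection made, your proof matches the paper's.
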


\begin{proof}[Proof of  \fullref{lem:8.5}]
Consider the following scenario:

\begin{scenario}\label{scn:1}

An angle $\sigma $ lies in $\theta _{1}$'s component of $(0, \pi)-\Lambda _{ + ,B}$,
the $T = T_{\sigma }$ version of ${\mathcal{M}^{* }}_{\hat{A},T}$ is empty, but all
$T = T_{\theta }$ versions of ${\mathcal{M}^{* }}_{\hat{A},T}$ are nonempty when
$\theta\in  [\theta _{1}, \sigma )$.

\end{scenario}
\noindent Note that \fullref{prop:8.1} finds some such $\sigma $ when the $T = T_{1}$
version of ${\mathcal{M}^{* }}_{\hat{A},T}$ is nonempty. As is
explained momentarily, Scenario~\ref{scn:1} occurs if and only if $\sigma $ coincides
with $\theta _{ - e}$ in the case that $e$ is an edge that connects a vertex
from the $T_{1}$ version of $V$ to a vertex with angle less than $\theta_{1}$.

Here is a second scenario:

\begin{scenario}\label{scn:2}
An angle $\sigma '$ lies in $\theta _{1}$'s component of $(0, \pi)-\Lambda _{ + ,B}$, the
$T = T_{\sigma ' }$ version of ${\mathcal{M}^{* }}_{\hat{A},T}$ is empty, but all
$T = T_{\theta }$ versions of ${\mathcal{M}^{* }}_{\hat{A},T}$ are nonempty when
$\theta\in (\sigma ', \theta_{2}]$.

\end{scenario}
\noindent A cosmetic modification to the arguments given below to prove the assertion
just made about Scenario~\ref{scn:1} prove the following: Scenario~\ref{scn:2} occurs if and
only if $\sigma '$ coincides with $\theta _{e}$ in the case that $e$ is an
edge that connects a vertex from the $T_{2}$ version of $V$ to a vertex with
angle greater than $\theta _{2}$.

Note that Scenario~\ref{scn:1} precludes Scenario~\ref{scn:2}, and vice versa.
Indeed, were both scenarios to occur, then both $T_{1}$ and $T_{2}$ would be in violation of
the conditions in \fullref{prop:8.2}. \fullref{lem:8.5} is a consequence of this fact
that the two scenarios cannot both occur.

To explain the assertion about Scenario~\ref{scn:1}, note first that if
$\sigma  = \theta _{ - e}$ with $e$ as described, then the $T = T_{\sigma }$ version
of ${\mathcal{M}^{* }}_{\hat{A},T}$ is empty because $T_{\sigma }$
violates the conditions stated in \fullref{prop:8.2}. Suppose next that $\sigma$
is some as yet undistinguished angle that gives Scenario~\ref{scn:1}. By virtue of
the fact that the various $\theta\in  [\theta _{1}, \sigma )$
versions of $T_{\theta }$ differ only in their vertex angle labels, the
choices that are made in Parts 1 and 2 of \fullref{sec:6c} can be made in a
$\theta $--independent fashion. This then identifies all $T = T_{\theta }$
versions of the space that is depicted in \eqref{eq6.15}. Fix some element in this
space, $\lambda $, and a real number, $s_{0}$. Next, let $T = T_{\theta }$
and use $(s_{0}, \lambda )$ in $\mathbb{R}\times O_{T}/\Aut (T)$ to
define via \fullref{thm:6.2} a point in this same $T = T_{\theta }$ version of
${\mathcal{M}^{* }}_{\hat{A},T}$. Let $c_{\theta }$ denote the
latter sequence. As will now be explained, arguments much like those used in
\fullref{sec:7d} prove the following: The sequence $\{c_{\theta }\}$ converges
as $\theta\to\sigma $ in ${\mathcal{M}^{* }}_{\hat{A}}$
to a point in the $T = T_{\sigma }$ version of ${\mathcal{M}^{* }}_{\hat{A},T}$
unless $\sigma =\theta _{ - e}$ with
 $e$ an edge that connects a vertex in $V$ to a vertex with angle less than
$\theta _{1}$. Indeed, all of the arguments in \fullref{sec:7d} can be made
in this situation except possibly those in Step 3 in \fullref{sec:7d} from the
proof of \fullref{prop:7.1}. Moreover, the arguments in Step 3 from the proof
of \fullref{prop:7.1} can also be made if the angle $\sigma $ (this is the
angle to use for $\theta _{* }$ in Step 3 of \fullref{sec:7d}) is
such that $\alpha _{Q}(\sigma ) > 0$ when $Q$ is the integer pair for any
edge that is incident to a vertex in $V$.
\end{proof}

\setcounter{section}{8}
\setcounter{equation}{0}
\section{Geometric limits}\label{sec:9}

The purpose of this last section is to indicate how the various codimension
1 strata fit one against another to make the whole of ${\mathcal{M}^{* }}_{\hat{A}}$.
The resulting picture of ${\mathcal{M}^{* }}_{\hat{A}}$ is by no means complete,
and perhaps not very illuminating. However, what follows should indicate how tools from the
previous sections can be used to add missing details.

This story starts with the codimension 0 strata, and so let $\mathcal{S} $ denote
a component of such a stratum in ${\mathcal{M}^{* }}_{\hat{A}}$.
What follows summarizes some of results about $\mathcal{S} $ from the previous
sections. To start, introduce $k$ to denote
$N_{ - }+\hat {N}+\text{\c{c}}_{ - }+\text{\c{c}}_{ + }-2$.
The component $\mathcal{S} $ lies in a stratum of the form
$\mathcal{S} _{B,c,\mathfrak{d}}$ where $B = ${\o} and $c = k$, and $d$ is the partition of
$N_{+ }+k$ with the maximal number of elements. As a consequence,
\fullref{prop:5.1}'s integer $m$ is equal to $k$ also. Thus, if
$T$ is a graph that arises from
an element in $\mathcal{S} $, then $T$ has $k$ trivalent vertices, with no two angles
identical and none an angle from an integer pair of any $(0,+,\ldots)$ element in
$\hat{A}$. If $o$ is a trivalent vertex, then $\Gamma_{o}$ is compact, a
figure~8 with one vertex. The other multivalent
vertices are bivalent. If $o$ is a bivalent vertex in $T$, then
$\underline {\Gamma }_{o}$ is a circular graph whose vertices correspond to the
$(0,+,\ldots)$ elements in $\hat{A}$ with integer pairs that
define $o$'s angle, $\theta _{o}$, via \eqref{eq1.8}.

This all translates into the following geometry: Suppose that $(C_{0}, \phi )$
defines an element in $\mathcal{S} $. Then the pull-back of $\theta $ to
$C_{0}$ has non-degenerate critical points; the critical values are pairwise
distinct, and none is an $|s|\to\infty $ limit of
$\theta $ on a concave side end of $C_{0}$ where $\lim_{| s| \to\infty }\theta\in(0, \pi )$.
If $E$ is such a concave side end,
then $E$'s version of \eqref{eq2.4} has integer $n_{E} = 1$. On the otherhand, if $E$ is
a convex side end where $\lim_{| s| \to \infty }\theta\in (0, \pi )$,
then the integer $n_{E}$ is zero. Finally, $C_{0}$ has
transversal intersections with the $\theta  = 0$ and $\theta =\pi $
cylinders.

To picture $\mathcal{S} $, note first that the image of
\fullref{sec:8a}'s map $\mathfrak p$ can be
viewed as a $k$--dimensional product of simplices in $\times _{k}
((0, \pi)-\Lambda _{+, \text{{\o}} })$, this denoted in what follows
as $\Delta ^{k}$. Then \fullref{sec:6c}'s map provides an orbifold
diffeomorphism that identifies $\mathcal{S} $ as $\mathbb{R} \times {\mathbb{O}}$,
where ${\mathbb{O}}$ is fibered by $\mathfrak p$ over $\Delta ^{k}$. In addition, the
typical fiber has the form $O_{T}/\Aut (T)$ with $T$ as just described. Because
all such fibers have homotopic graphs, the fibration
$\mathfrak{p}\co  {\mathbb{O}}\to\Delta ^{k}$ can be trivialized. This is to say that there is an
orbifold diffeomorphism from $\mathcal{S} $ to
\begin{equation}\label{eq9.1}
\mathbb{R}\times {\mathcal{O}}/ \mathcal{A}\times \Delta^{k},
\end{equation}
where ${\mathcal{O}} = O_{T}$ and $\mathcal{A} = \Aut (T)$ for a some fixed graph $T$.

The behavior of the codimension 0 strata near a given codimension 1 stratum
component is rather benign by virtue of the fact that the codimension 1
strata are submanifolds where smooth and suborbifolds otherwise. To
elaborate, remark that the $\mathfrak p$--image of a path in $\mathcal{S} $ to a codimension 1
or larger stratum will limit to a boundary point of the closure of $\Delta^{k}$
in $\times _{k} ((0, \pi )-\Lambda _{ + ,\text{ {\o}}})$.
As explained below, each codimension 1 stratum component in the
closure of $\mathcal{S} $ correspond in a natural fashion to a codimension 1 face
in this closure of $\Delta ^{k}$. In particular, if $\mathcal{S} _{1}$
denotes a codimension 1 stratum component, then $\mathcal{S} _{1}$ fibers over
some $k-1$ dimensional product of simplices, thus some $\Delta ^{k - 1}$. In
particular, $\mathcal{S} _{1}$ is diffeomorphic as an orbifold to
$\mathbb{R}\times{\mathcal{O}}_{1}/\mathcal{A}^{1} \times \Delta ^{k - 1}$
where ${\mathcal{O}}_{1}$ is some $O_{T^1} $ and $\mathcal{A}^{1}$ the
corresponding $\Aut (T^{1})$ with $T^{1}$ some fixed graph. In all cases, the
group $\mathcal{A}^{1}$ has a representation in $\mathbb{Z}/2\mathbb{Z}$, and this
understood, a neighborhood of $\mathcal{S} _{1}$ in ${\mathcal{M}^{*}}_{\hat{A},T}$
is diffeomorphic as an orbifold to
\begin{equation}\label{eq9.2}
\mathbb{R}\times ({\mathcal{O}}_{1}\times  (-1,1))/\mathcal{A}^{1}\times\Delta ^{k - 1},
\end{equation}
where $\mathcal{A}^{1}$ acts on $(-1,1)$ through the multiplicative action of
$\mathbb{Z}/2\mathbb{Z}$ as $\pm 1$. In all cases, the stratum $\mathcal{S} _{1}$
appears in \eqref{eq9.2} as the locus where the coordinate in the $(-1,1)$ factor is 0.

As it turns out, there is much more to say about how various codimension 0
and 1 strata fit around the codimension 2 strata in their closure. This
aspect of the stratification is the focus of \fullref{sec:9a} that follows.

Most of the rest of this section focuses on what can be viewed as the
codimension 1 strata in a certain natural compactification of
${\mathcal{M}^{* }}_{\hat{A}}$. To elaborate for a moment on this point,
remark that the identification given by \fullref{thm:6.2} provides a natural
compactification of any given ${\mathcal{M}^{*}}_{\hat{A},T}$,
this obtained from the compactification of $O_{T}$ that replaces each open
simplex in \eqref{eq6.9} with the corresponding closed symplex. This and the
replacement of $\Delta ^{k}$ in \eqref{eq9.1} with its closure defines, up to an
obvious factor of $\mathbb{R}$, a stratified space compactification of
${\mathcal{M}^{* }}_{\hat{A}}$. As it turns out, each added stratum in
this compactification has a natural geometric interpretation in terms of
multiply punctured sphere subvarieties. This interpretation is presented
below in Sections~\ref{sec:9b} for the additional codimension 1 strata in the
compactifications of the various versions of ${\mathcal{M}^{*}}_{\hat{A},T}$.
\fullref{sec:9c} goes on to describe \fullref{sec:1b}'s compactification of
the $N_{ - }+\hat {N}+\text{\c{c}}_{ - }+\text{\c{c}}_{ + } = 2$
versions of $\mathcal{M}_{\hat{A}}$. The added codimension 1 strata for the
compactification of ${\mathcal{M}^{* }}_{\hat{A}}$ when
$N_{ -}+\hat {N}+\text{\c{c}}_{ - }+\text{\c{c}}_{ + } > 2$ are described in
Sections~\ref{sec:9d} and~\ref{sec:9e}.

\subsection{The codimension one strata}\label{sec:9a}

The components of the codimension 1 strata in the case that
$N_{ - }+\hat{N}+\text{\c{c}}_{ + }+\text{\c{c}}_{ - }\equiv k+2$ are characterized in
part by the structure of the graph that arises from a typical element. Here
are the four possibilities for a component of a codimension 1 stratum:

\itaubes{9.3}
\textsl{The graph has $k$ trivalent vertices where precisely one pair have identical angles.
Even so, no trivalent vertex angle comes via \eqref{eq1.8} from an integer pair of a $(0,+,\ldots)$
element in $\hat{A}$.}

\item
\textsl{The graph has $k-2$ trivalent vertices and one 4--valent vertex. No two have the same
angle and none comes via \eqref{eq1.8} from an integer pair of a $(0,+,\ldots)$ element in
$\hat{A}$. In addition, the 4--valent vertex is assigned a graph with two vertices.}

\item
\textsl{The graph has $k-1$ trivalent vertices and none comes via \eqref{eq1.8} from an integer pair
of a $(0,+,\ldots)$ element in $\hat{A}$. Meanwhile, there are $N_{ + }+1$ bivalent vertices.}

\item
\textsl{The graph has $k$ trivalent vertices with pairwise distinct angles, and precisely
one such angle comes via \eqref{eq1.8} from an integer pair of a $(0,+,\ldots)$ element in
$\hat{A}$. The latter has a graph with one vertex labeled with 0.}
\end{itemize}

These four cases correspond to the following sorts of codimension 1 faces in
the symplex $\Delta ^{k}$ that appears in \eqref{eq9.1}. The first and second
points in \eqreft93 arise when two or more trivalent vertex angles lie in the
same component of $(0, \pi )-\Lambda _{ + , \text{{\o}}}$. The
first point can arise when there are no multivalent vertex angles between
the angles of a pair of trivalent vertices from distinct edges in $T$. The
second point can arise when no multivalent vertex angle lies between the
angles of two trivalent vertices that share an edge.

The third point in \eqreft93 can arise when no multivalent vertex angle lies
between the angles of a monovalent and trivalent vertex from a single edge.
The fourth point in \eqreft93 can occur when no multivalent vertex angle lies
between the angles of a bivalent and trivalent vertex from a single edge.

There is a corresponding geometric interpretation to the four strata in
\eqreft93. To say more in this regard, suppose that $(C_{0}, \phi )$ defines
an element in a codimension 1 stratum. If the component is characterized by
the first or second points in \eqreft93, then there are two critical points of
$\theta $ on $C_{0}$ with the same critical value in $(0, \pi )$. In the
case of the first point, the corresponding $\theta $ level sets are
disjoint; and they are not disjoint in the case of the second point. In the
case of the third point, a convex side end version of \eqref{eq2.4} has $n_{E} = 1$.
In the case of the fourth point, a critical value of $\theta $ coincides
with the $|s|\to\infty $ limit of $\theta $ on some concave side end.

As is explained in the final subsection, sequences in a top dimensional
stratum where a trivalent vertex angle limits to 0 or $\pi $ can not converge
to a codimension 1 stratum in ${\mathcal{M}^{* }}_{\hat{A}}$.
Faces of the closure of $\Delta ^{k}$ in $\times _{k} [0, \pi ]$ that
lie on the boundary of $\times _{k}[0, \pi ]$ can give rise to
codimension 2 strata.

The subsequent five parts of this subsection describe how the codimension 0
and the codimension 1 strata fit together inside ${\mathcal{M}^{* }}_{\hat{A}}$.
The proofs of the assertions that are made below
are omitted except for the comment that follows because the arguments in all
cases are lengthy yet introduce no fundamentally new ideas. Here is the one
comment: The proofs use the implicit function theorem to establish the
pictures that are presented below of the relevant strata of
${\mathcal{M}^{* }}_{\hat{A}}$. In this regard, the arguments use the techniques
that have already been introduced in \fullref{sec:7d}, in much the same manner as
they are used in \fullref{sec:7d}, to prove that the picture that is provided by
the implicit function theorem contains a full neighborhood of the stratum in
question.

\step{Part 1}
This part describes a neighborhood of the strata whose elements have
graphs that are described by the first point in \eqreft93. There are two cases
to consider; the distinction is whether the group $\mathcal{A}^{1}$ that
appears in \eqref{eq9.2} is larger than the automorphism group of the graphs that
arise from elements in the nearby codimension 0 strata. To elaborate, the
automorphism group of a graph from a codimension 0 stratum element must fix
all trivalent vertices since these have distinct angles. However, the
automorphism group of the codimension 1 stratum can, in principle,
interchange the two trivalent vertices that share the same angle. Granted
this, let $\mathcal{A}$ denote the automorphism group of a graph from an element
in a nearby codimension zero stratum and $\mathcal{A}^{1}$ denote that for a
graph from an element in the codimension 1 stratum. The two cases under
consideration here are those where $\mathcal{A}^{1}\approx\mathcal{A}$ and
where $\mathcal{A}^{1}$ is the semi-direct product of $\mathbb{Z}/2\mathbb{Z}$
with $\mathcal{A}$. In the former case, $\mathcal{A}^{1}$ acts trivially on the
$(-1, 1)$ factor in \eqref{eq9.2}. In the other case, $\mathcal{A}^{1}$ acts via its
evident projection to $\mathbb{Z}/2\mathbb{Z}$.

To explain how this dichotomy of automorphism groups arises, let $T^{1}$
denote the graph from a typical element in the codimension 1 stratum. Thus,
$\mathcal{A}^{1}$ is isomorphic to $\Aut (T^{1})$. The graph $T^{1}$ has an
$\mathcal{A}^{1}$--invariant, trivalent vertex $o$ with the following property:
Let $T_{1}$, $T_{2}$ and $T_{3}$ denote the closures of the three components
of $T^{1}-$o. The labeling is such that $T_{1}$ and $T_{2}$ each
contain one of the two trivalent vertices with equal angle. If $T_{1}$
is not isomorphic to $T_{2}$, then $\mathcal{A}^{1}\approx\mathcal{A}$.
If $T_{1}$ is isomorphic to $T_{2}$, then the distinguished $\mathbb{Z}/2\mathbb{Z}$
subgroup in $\mathcal{A}^{1}$ switches $T_{1}$ with $T_{2}$.

\step{Part 2}
This part and the next part of the subsection consider the components of
the codimension 1 stratum whose elements have graphs that are characterized
by the second point in \eqreft93. There are also various cases to consider here.
To elaborate, let $T^{1}$ denote such a graph, and let $o \in T^{1}$
denote the 4--valent vertex. Introduce $E_{ - }$ and $E_{ + }$ to denote the
sets of incident edges to $o$ that respectively connect $o$ to vertices with
smaller angle and with larger angle. In the first case, either $E_{ + }$ or
$E_{ - }$ has a single edge. In the second case, both have two edges. Note
that in the first case, the graph $\Gamma _{o}$ associated to $o$ has the
form:
\begin{equation}
\label{eq9.4}\includegraphics{\figdir/fig9-1}
\end{equation}
when $E_{ + }$ and $E_{ - }$ each have two edges, then the associated graph
$\Gamma _{o}$ can be either the graph in \eqref{eq9.4} or the graph that follows.
\begin{equation}
\label{eq9.5}\includegraphics{\figdir/fig9-2}
\end{equation}
This part of the subsection focuses on the case where $E_{ - }$ has three of
$o$'s incident edges. The story when $E_{ + }$ has three edges is identical
but for notation to that told here. Part 3 of the subsection considers the
case when both $E_{ - }$ and $E_{ + }$ have two edges.

To start, label the three edges in $E_{ - }$ as $\{e_{1}, e_{2},e_{3}\}$.
The corresponding versions of $\ell _{o(\cdot )}$ label
the three circles depicted in \eqref{eq9.4}. Note that the central circle is
distinguished, and each of the three edges here can have the central circle
for its version of $\ell _{o(\cdot )}$. This is an important fact in what
follows because it indicates that there can be three distinct codimension 1
stratum components involved.

Suppose that $e_{2}$ occupies the central circle. The image of
$\mathcal{A}^{1}=\Aut(T^{1})$ in $\Aut _{o}$ is either trivial or
$\mathbb{Z}/(2\mathbb{Z})$. In this regard, let $T_{1}$, $T_{2}$ and $T_{3}$ denote
the closures in $T^{1}$ of the respective components of $T^{1}-o$
that contain the interiors of $e_{1}$, $e_{2}$ and $e_{3}$. The case where
the $\Aut _{o}$ image of $\mathcal{A}^{1}$ is $\mathbb{Z}/(2\mathbb{Z})$ arises when
$T_{1}$ and $T_{3}$ are isomorphic.

Let $\mathcal{S} _{1}$ denote the corresponding codimension 1 stratum of
${\mathcal{M}^{* }}_{\hat{A}}$. In either case, a neighborhood of
$\mathcal{S} _{1}$ in ${\mathcal{M}^{* }}_{\hat{A}}$ is diffeomorphic
as an orbifold to the space depicted in \eqref{eq9.2}. In this case, the group
$\mathcal{A}^{1}$ acts on the $(-1,1)$ factor via its image in $\Aut _{o}$, either
trivially or as the $\mathbb{Z}/(2\mathbb{Z})$ action as multiplication by $\pm 1$.

There is more to the picture just presented by virtue of the fact that any
one of the three edges in $E_{ - }$ can label the middle circle in \eqref{eq9.4} and
so there can be from 1 to 3 distinct codimension one strata involved here.
What follows describes how these codimension 1 strata and their
neighborhoods fit together in ${\mathcal{M}^{* }}_{\hat{A}}$.

To start, consider a graph, $T$, as described in the second point of \eqreft93
save that its 4--valent vertex, o, has a version of $\Gamma _{o}$ with only
one vertex. Thus, $\Gamma _{o}$ is the union of three circles that
intersect at a single point. These circles are the $e = e_{1}, e_{2}$ and
$e_{3}$ versions of $\ell _{oe}$. In this case, $\Aut _{o}$ is a subgroup of
$\mathbb{Z}/(3\mathbb{Z})$, thus trivial if $T_{1}$, $T_{2}$ and $T_{3}$ are not
mutually isomorphic. Note that there are two distinct versions of $\Gamma_{o}$
in any case, these are distinguished as follows: Let $\hat{e}$ denote
the single edge in $E_{ + }$. Then $\ell _{o\hat{e} }$ has three
vertices and three arcs. Each arc is labeled by $e_{1}$, $e_{2}$ and
$e_{3}$; and the two versions of $\Gamma _{o}$ are distinguished by the
two possible cyclic orderings of the arcs that comprise $\ell _{o\hat{e} }$.
Thus, there are, in fact, two possibilities for $T$. However, the
two versions are isomorphic when two or more from the collection
$\{T_{j}\}_{j = 1,2,3}$ are isomorphic.

The homotopy type of a graph $T$ as just described labels a codimension 2
stratum component, this diffeomorphic as an orbifold to
$\mathbb{R}\times {\mathcal{O}}_{2}/\mathcal{A}^{2}\times \Delta ^{k - 2}$. Here,
${\mathcal{O}}_{2}$ is diffeomorphic to $O_{T}$ and $\mathcal{A}^{2}$ isomorphic
to $\Aut (T)$.

Now introduce $Z  \subset\mathbb{C}\mathbb{P} ^{1}=\mathbb{C}\cup \infty $
to denote the complement of the three cube roots of $-1$. Thus, $Z$
is a model for a standard `pair of pants'. Let $Z_{1}\subset $Z$ \cap\mathbb{C}\mathbb{P} ^{1}$
denote the three rays that go through 0, $\infty $,
and the respective cube roots of 1. The drawing in \eqref{eq1.27} depicts $Z_{1}$ in
the finite part of $Z$.

There are three cases to consider. In the first, $T_{1}$, $T_{2}$ and
$T_{3}$ are pairwise non-isomorphic. In this case, there are three distinct
codimension 1 strata components involved; and a neighborhood in
${\mathcal{M}^{* }}_{\hat{A}}$ of their union is diffeomorphic as an
orbifold to
\begin{equation}\label{eq9.6}
\mathbb{R}\times {\mathcal{O}}_{2} / \mathcal{A}^{2}\times Z \times \Delta ^{k - 2}.
\end{equation}
Here, the three codimension 1 strata correspond to the loci
$Z_{1}-\{0,\infty \}$, and the two codimension 2 strata correspond to 0 and
$\infty $.

The second case occurs when two of the three graphs from $\{T_{j}\}_{j= 1,2,3}$
are isomorphic. In this case, there can be as few as two distinct
codimension 1 strata components involved. To describe a neighborhood of the
union of these strata, note that the image of $\mathcal{A}^{2}$ in $\Aut _{o}$
in this case is $\mathbb{Z}/2\mathbb{Z}$. Thus, $\mathcal{A}^{2}$ acts on
$\mathbb{C}\cup\infty $ through the action of $\mathbb{Z}/2\mathbb{Z}$ on
$\mathbb{C}\cup\infty $ whose generator sends $z \to\bar {z}$.
Note that this action commutes with the $\mathbb{Z}/2\mathbb{Z}$ action on
$\mathbb{C}\cup\infty $ as $z \to -\bar {z}^{ - 1}$ and whose orbit
space is $\mathbb{R}\mathbb{P} ^{2}$. As a consequence $\mathcal{A}^{2}$ acts on
$\mathbb{R}\mathbb{P} ^{2}$ as well. With all of this understood, a neighborhood
of their union in ${\mathcal{M}^{* }}_{\hat{A}}$ is diffeomorphic
as an orbifold to
\begin{equation}\label{eq9.7}
\mathbb{R}\times ({\mathcal{O}}_{2} \times \bar {Z})/\mathcal{A}^{2}\times\Delta ^{k - 2},
\end{equation}
where $\bar {Z}\subset\mathbb{R}\mathbb{P} ^{2}$ is the image of $Z$. Here,
the two codimension 1 strata correspond to the image in $\mathbb{R}\mathbb{P}^{2}$
of $Z_{1}-\{0,\infty \}$ and the codimension 2 stratum
to the image of $\{0, \infty \}$. In this regard, note that the rays of
$Z_{1}$ through the non-trivial cube roots of 1 have the same image in
$\mathbb{R}\mathbb{P} ^{2}$, this distinct from the ray through 1.

The final case occurs when $T_{1}$, $T_{2}$ and $T_{3}$ are pairwise
isomorphic. To picture this case, introduce the six element permutation
group $G$ of the set $\{1, 2, 3\}$. This group has the $\mathbb{Z}/3\mathbb{Z}$
subgroup of elements that preserves the cyclic order. The quotient of $G$ by
the latter group gives the parity homomorphism $G  \to\mathbb{Z}/2\mathbb{Z}$.
The group $G$ acts on $\mathbb{C}\mathbb{P} ^{1}$ as the group of
complex automorphisms that permutes the cube roots of $-1$. In the latter
guise, $G$ has generators $z \to  1/z$ and $z \to \lambda z$
where $\lambda $ is a favorite, non-trivial cube root of 1. Note that $G$
also permutes the cube roots of 1. In any event, $G$ acts on
$Z \subset \mathbb{C}\mathbb{P} ^{1}$. This understood, a neighborhood of the codimension 2
stratum in ${\mathcal{M}^{* }}_{\hat{A} }$ is diffeomorphic as an orbifold to
\begin{equation}\label{eq9.8}
\mathbb{R}\times \bigl[({\mathcal{O}}_{2}\times  G)/\mathcal{A}^{2}\times _{G} Z\bigr]
\times\Delta ^{k - 2},
\end{equation}
where $\mathcal{A}^{2}$ acts on $G$ on its left side through
$\Aut _{o}=\mathbb{Z}/(3\mathbb{Z})$. Meanwhile, $G$ acts on itself on its right
side and on $Z$ as noted above. The codimension 1 stratum appears in \eqref{eq9.8} as the image of
$Z_{1}-\{0,\infty \}$. The codimension 2 strata appear as the
image of 0 and $\infty $ from $Z$.

\step{Part 3}
What follows here is a description of a neighborhood in ${\mathcal{M}^{* }}_{\hat{A}}$
of the components of the codimension 1 strata whose
elements have graphs that are characterized by the second point in \eqreft93 in
the case that the 4--valent vertex has two edges that connect it to vertices
with larger angle and two that connect it to vertices with smaller angle.
To set the stage, let $T^{1}$ denote the graph in question and $o$ the
4--valent vertex. Let $e_{ - }$ and ${e_{-}}'$ denote the edges in $E_{ - }$
and let $e_{ + }$ and ${e_{+}}'$ denote those in $E_{ + }$. In what follows,
$\alpha _{ - }$, ${\alpha_{-}}'$, $\alpha _{ + }$ and ${\alpha _{+}}'$
denote $\alpha _{Q}(\theta _{o})$ in the case that $Q = Q_{e}$
with $e$ respectively the edges $e_{ - }$, ${e_{-}}'$, $e_{ + }$ and ${e_{+}}'$.
Each of these functions is positive at $\theta _{o}$, and by virtue
of \eqref{eq2.17},
\begin{equation}\label{eq9.9}
\alpha _{ -} + {\alpha_{-}}' = \alpha _{ + }+{\alpha_{+}}'.
\end{equation}
A distinction must now be made between the cases when the unordered sets
$\{\alpha _{ - }, {\alpha_{-}}'\}$ and $\{\alpha _{ + },{\alpha_{+}}'\}$
are distinct, and when they agree. Considered first
as Case 1 is that when these two sets are distinct.

\substep{Case 1}
Make the convention that $\alpha _{ - } \ge {\alpha_{-}}'$ and that
$\alpha _{ + }\ge{\alpha _{+}}'$. At least one of these is a strict inequality.
Since $\alpha _{ - }\ne\alpha _{ + }$, one or the other is larger; and as the description
for the $\alpha _{ - }>\alpha _{ + }$ case is identical to that when
$\alpha _{ + }>\alpha _{ - }$, the former is left to the reader.
Thus, in what follows,
\begin{equation}\label{eq9.10}
\alpha _{ +} > \alpha_{ - } \ge {\alpha_{-}}'> {\alpha_{+}}'.
\end{equation}
As noted briefly above, the graph $\Gamma _{o}$ can be either as depicted
in \eqref{eq9.4} or as in \eqref{eq9.5}. In the case of \eqref{eq9.4}, the assumption in \eqref{eq9.10}
implies that there are only two consistent labelings of the arcs with pairs
of incident edges. These are as follows:

\itaubes{9.11}
\textsl{Both of the middle circle's arcs are labeled by $(e_{ - }, e_{ + })$,
and the other two arcs are labeled by $(e_{ - }, {e_{+}}')$ and $({e_{-}}', e_{ + })$.}

\item
\textsl{Both of the middle circle's arcs are labeled by $({e_{-}}', e_{ + })$,
and the other two arcs are labeled by $({e_{-}}', {e_{+}}')$ and $(e_{ - }, e_{ + })$.}
\end{itemize}

Let $T_{ - }$, ${T_{-}}'$, $T_{ + }$ and ${T_{+}}'$ denote the closures of
the four components of $T^{1}-$o, here labeled so as to indicate
which contains which incident edge. By virtue of \eqref{eq9.10}, the graphs
$T_{ +}$ and ${T_{+}}'$ are not isomorphic. However, $T_{ - }$ and ${T_{-}}'$ may
be isomorphic. In the latter case, the two versions of $T^{1}$ that
correspond to the two labelings in \eqreft9{11} are isomorphic. Otherwise, the two
versions are not isomorphic. In any event, the image of $\Aut (T^{1})$ in
$\Aut _{o}$ is trivial.

Granted what has just been said, a component of a stratum whose typical
element has a graph such as either version of $T^{1}$ as just described is
diffeomorphic as an orbifold to what is depicted in \eqref{eq9.2} with
$\mathcal{A}^{1} = \Aut (T^{1})$ acting trivially on $(-1, 1)$.

When the graph $\Gamma _{o}$ is as depicted in \eqref{eq9.5}, there is, up to
isomorphism, only one way to label the arcs with pairs of edges. From right
to left, the labeling is:
\begin{equation}\label{eq9.12}
\bigl(e_{ - }, e_{ + }\bigr), \quad \bigl(e_{ - }, \quad  {e_{+}}'\bigr),
\quad \bigl({e_{-}}', {e_{+}}'\bigr),\quad
\bigl({e_{-}}', e_{ + }\bigr).
\end{equation}
In the case that $T_{ - }$ is not isomorphic to ${T_{-}}'$, the image of
$\Aut (T^{1})$ in $\Aut _{o}$ is trivial. In the case that these two graphs are
isomorphic, the image is $\mathbb{Z}/2\mathbb{Z}$. In the latter case, the
element $-1$ acts so as to rotate the diagram in \eqref{eq9.5} by $\pi $ radians.

A neighborhood in ${\mathcal{M}^{* }}_{\hat{A}}$ of a component of
a codimension 1 stratum that yields a the graph $T^{1}$ is diffeomorphic as
an orbifold to the space in \eqref{eq9.2}. In this regard, the action of
$\mathcal{A}^{1}= \Aut (T^{1})$ on $(-1, 1)$ is trivial if $T_{ - }$ and ${T_{-}}'$
are not isomorphic. If they are isomorphic, then the action on $(-1, 1)$ is
via its image in $\Aut _{o}$ as the multiplicative action of $\{\pm 1\} =
\mathbb{Z}/2\mathbb{Z}$.

A particularly intriguing point here concerns how the codimension 0 and 1
strata fit around a codimension 2 stratum. The intrigue stems from the fact
that the closures of the respective \eqref{eq9.4} and \eqref{eq9.5} cases for
$\mathcal{S}_{1}$ intersect. In this regard, the relevant graph that describes
the intersection obeys the second point of \eqreft93 save that the graph for the
4--valent vertex has but one vertex. Thus, the latter graph, $\Gamma _{o}$,
consists of three circles that meet at a single point. Such a graph is
obtained from \eqref{eq9.4} by shrinking either of the two arcs in the middle
circle. The same graph is produced by shrinking either arc. However, the two
labeled versions of \eqreft9{11} produce distinctly labeled versions of such a
1--vertex and 3--circle $\Gamma _{o}$. In particular, the respective top and
bottom versions of \eqreft9{11} produce such a $\Gamma _{o}$ whose arcs are
labeled by
\begin{equation}\label{eq9.13}
\bigl(e_{ - }, {e_{+}}'\bigr), \bigl(e_{ - }, e_{ + }\bigr), \bigl({e_{-}}', e_{ + }\bigr)
\qquad\text{and}\qquad \bigl({e_{-}}', {e_{+}}'\bigr), \bigl({e_{-}}', e_{ + }\bigr),
\bigl(e_{ - }, e_{ + }\bigr).
\end{equation}
Meanwhile, \eqref{eq9.5} yields a one vertex and three circle graph in two ways
since either of the arcs labeled with ${e_{+}}'$ can be shrunk. Note that
the shrinking of an $e_{ + }$ labeled arc is prohibited by the assumption in
\eqref{eq9.10}. There are two resulting versions of $\Gamma _{o}$, these
distinguished by the arc labelings in \eqref{eq9.13}.

Note that in the case that $T_{ - }$ and ${T_{-}}'$ are isomorphic, the two
versions of $T$ that correspond to the two versions of $\Gamma _{o}$ with
the respective arc labelings in \eqref{eq9.13} are isomorphic. Otherwise, they are
not isomorphic graphs.

Granted all of this, a picture of a neighborhood in ${\mathcal{M}^{* }}_{\hat{A}}$
of all of these strata is obtained as follows: Let $T$
denote one or the other of the graphs that result from the two cases in
\eqref{eq9.13}, and let ${\mathcal{O}}_{2}$ denote $O_{T}$ and $\mathcal{A}^{2}$ denote $\Aut (T)$.
Next, let $Z \subset\mathbb{C}$ denote the complement of
2 and $-2$. Let $Z_{1}\subset Z$ denote the union of the circles of
radius 1 centered at 2 and $-2$ together with the arc $[-1,1]$ along the real
axis between them. The drawing in \eqref{eq1.28} depicts $Z_{1}$ in $Z$. In the case
that $T_{ - }$ is not isomorphic to ${T_{-}}'$, a neighorhood of the
codimension 1 and 2 strata just described is diffeomorphic as an orbifold to
\begin{equation}\label{eq9.14}
\mathbb{R}\times {\mathcal{O}}_{2} / \mathcal{A}^{2}\times Z \times
\Delta ^{k - 2}.
\end{equation}
Here, the codimension 1 strata correspond to the complement in $Z_{1}$ of 1
and $-1$, while the latter correspond to the codimension 2 strata. In this
regard, subvarieties that map to the arc $(-1,1)  \subset Z_{1}$ have the
4--valent vertex graph in \eqref{eq9.5}. Those on the two circular parts of $Z_{1}$
have graphs as in \eqref{eq9.4}, and the two circles are distinguished by the two
labelings in \eqreft9{11}. Meanwhile, the subvarieties in the codimension 2 strata
that map to +1 are distinguished from those that map to $-1$ by the two
labelings in \eqref{eq9.13}.

In the case that $T_{ - }$ and ${T_{-}}'$ are isomorphic, then a
neighborhood of the strata is diffeomorphic as an orbifold to
\begin{equation}\label{eq9.15}
\mathbb{R}\times {\mathcal{O}}_{2} / \mathcal{A}^{2}\times
\bar {Z}\times \Delta ^{k - 2},
\end{equation}
where $\bar {Z}$ is the quotient of $Z$ via the action by multiplication of
$\{\pm 1\}$ on $\mathbb{C}$.

\substep{Case 2}
Consider now the case that the sets
$\{\alpha _{ - }, {\alpha_{-}}'\}$ and $\{\alpha _{ + },{\alpha_{+}}'\}$ are identical.
Agree to label things so that $\alpha_{ - }\ge{\alpha_{-}}'$ and
$\alpha _{ + }\ge\alpha_{ + }'$. Assume first that these are strict inequalities. In this case,
the version of $\Gamma _{o}$ in \eqref{eq9.4} has but one allowable arc labeling,
this where the middle arcs are labeled by $(e_{ - }, e_{ + })$ and the
outer two by $(e_{ - }, {e_{+}}')$ and $({e_{-}}', e_{ + })$. Shrinking
either of the two middle arcs yields a graph with three circles and one
vertex. The two graphs so obtained have identical edge labels,
$(e_{ - },e_{ + }), (e_{ - }, {e_{+}}')$ and $({e_{-}}', e_{ + })$. The
corresponding two versions of $T$ are thus isomorphic.

In the case that the version of $\Gamma _{o}$ is given by \eqref{eq9.5}, there is,
as before, only one way to label the graph in \eqref{eq9.5} by incident half-arcs.
Note that maps in the corresponding $\Delta _{o}$ must assign a greater
value to the $(e_{ - }, e_{ + })$ arc than to the $({e_{-}}', {e_{+}}')$
arc. Meanwhile, the arc labeled by $(e_{ - }, {e_{+}}')$ must be assigned
the same value as the $({e_{-}}', e_{ + })$ arc. This being the case, the
boundary of $\Delta _{o}$ can be reached either by shrinking the value of
the $({e_{-}}', {e_{+}}')$ arc or by simultaneously shrinking the values
of the $(e_{ - }, {e_{+}}')$ and $({e_{-}}', e_{ + })$ arcs. The face
that corresponds to giving the $({e_{-}}', {e_{+}}')$ arc value zero
corresponds to the codimension 2 stratum whose elements have a graph $T$ as
just described in the preceding paragraph. The other face of $\Delta _{o}$
does not correspond to an element in ${\mathcal{M}^{* }}_{\hat{A}}$.
Indeed, according to \fullref{lem:9.6} to come, points on the latter face
correspond to reducible subvarieties.

Granted this, here is a picture of a neighborhood in ${\mathcal{M}^{* }}_{\hat{A} }$
of these strata. Let $Z \subset\mathbb{C}$ denote
the complement of the circle of radius 1 centered at 1, and let
$Z_{1}\subset Z$ denote the union of this circle with the negative real axis. Let
 $T$ denote the graph that is referred to in the preceding paragraph.
 Set ${\mathcal{O}}_{2} = O_{T}$ and $\mathcal{A}^{2}= \Aut (T)$. Then the strata in
question have a neighborhood in ${\mathcal{M}^{* }}_{\hat{A}}$ that
is diffeomorphic as an orbifold to \eqref{eq9.14} with $Z$ as just described. In this
new version, the codimension 1 strata correspond to $Z_{1}-0$ and
the codimension 2 stratum corresponds to 0. Here, the circular part of
$Z_{1}$ corresponds to the component of the codimension 1 stratum whose
elements have 4--valent vertex graphs as depicted in \eqref{eq9.4}. The negative real
axis corresponds to the component whose elements have 4--valent graphs as
depicted in \eqref{eq9.5}.

The final case to consider is that where
$\alpha _{ - }=\alpha _{ -}' = \alpha _{ + }={\alpha_{+}}'$. In this case, the 4--valent
vertex in the codimension 1 stratum must have the form depicted in \eqref{eq9.5}.
Just one component is involved. The image of the corresponding version of
$\mathcal{A}^{1} = \Aut (T^{1})$ in $\Aut _{o}$ is either trivial,
$\mathbb{Z}/2\mathbb{Z}$ or $\mathbb{Z}/2\mathbb{Z} \times \mathbb{Z}/2\mathbb{Z}$. The
last case occurs when $T_{ - }$ is isomorphic to ${T_{-}}'$ and when
$T_{ +}$ is isomorphic to ${T_{+}}'$. The $\mathbb{Z}/2\mathbb{Z}$ case occurs when
one or the other of these pairs consist of isomorphic graphs, but not both.
The trivial case occurs when neither pair has isomorphic graphs. A
neighborhood in ${\mathcal{M}^{* }}_{\hat{A}}$ is diffeomorphic as
an orbifold to what is depicted in \eqref{eq9.2} where $\mathcal{A}^{1}$ acts on $(1,-1)$
via its image in $\Aut _{o}$, thus via its image in the group
$\mathbb{Z}/2\mathbb{Z} \times \mathbb{Z}/2\mathbb{Z}$. Here, the action of the
latter on $(-1, 1)$ has both the $\mathbb{Z}/2\mathbb{Z}$ generators multiplying by
$-1$. Note that in this case, neither boundary face of the $\Delta _{o}$
factor in ${\mathcal{O}}_{2}$ corresponds to a subvariety in
${\mathcal{M}^{* }}_{\hat{A}}$; both correspond to reducible subvarieties.

\step{Part 4}
This part describes the neighborhood of the codimension 1 strata in
${\mathcal{M}^{* }}_{\hat{A} }$ whose elements have graphs that are
characterized by the third point in \eqreft93. If $(C_{0}, \phi )$ defines a
point on such a stratum, then the extra bivalent vertex in $T_{C}$ has graph
$\underline {\Gamma }_{o}$ that is a single circle with one vertex. As
noted previously, the latter vertex corresponds to a convex side end of
$C_{0}$ where the $|s|\to\infty $ limit of $\theta $ is
neither 0 nor $\pi $, and whose version of \eqref{eq2.4} has $n_{E} = 1$. Let $o$
denote this vertex and $\theta _{o}$ its angle. Elements near this stratum
have a trivalent vertex, $\hat{o}$, that is very nearly $\theta _{o}$. If
$\theta _{\hat{o}}>\theta _{o}$, the $\hat{o}$ version of $E_{ -}$ has two of the
three incident edges; if $\theta _{\hat{o}}<\theta _{o}$, then the corresponding
$E_{ + }$ has two of the three
incident edges. The graphs from elements on the two sides of this stratum
would otherwise be homotopic.

Suppose that $\{(C_{0j}, \phi _{j})\}_{j = 1,2,\ldots }$ is a
sequence in ${\mathcal{M}^{* }}_{\hat{A}}$ with limit $(C_{0}, \phi )$
and if this sequence is not on the stratum in question, then each
$C_{0j}$ has a unique $\theta $ critical point that is very close to $\theta_{o}$.
Let $z_{j}$ denote the latter point. Then $\{\theta(z_{j})\}_{j = 1,2,\ldots }$
converges to $\theta _{o}$ but
$\{s(z_{j})\}_{j = 1,2,\ldots }$ is unbounded from below. To put this
in a colloquial fashion, the position of this one critical point moves as $j \to\infty $
to more and more negative values of $s$.

\step{Part 5}
This part describes the neighborhoods of the codimension 1 strata whose
elements have graphs that are characterized in the fourth point of \eqreft93.
There are now two cases to consider. In the first case, the graph $T$ for
$C_{0}$ has $N_{ + }$ bivalent vertices. This is to say that $\theta $ on
$C_{0}$ has $k$ non-degenerate critical points with one having $\theta $ value
in $\Lambda _{ + }$. However, the constant $\theta $ level set through
this critical point is compact like the others; thus still a figure~8. In
this case, a neighborhood of the stratum is not very interesting and won't
be discussed further except to say that the union of the two abutting
codimension zero strata are described by a version of \eqref{eq9.1}.

The more interesting case occurs when there are $N_{ + }-1$ bivalent
vertices. Assuming that this is the situation, let $o$ denote the trivalent
vertex in $T$ whose angle is in $\Lambda _{ + , \text{{\o}} }$. The
corresponding graph $\underline {\Gamma }_{o}$ is a figure~8 with one
4--valent vertex labeled with $o$ and with all other vertices being bivalent
and having positive integer labels. These bivalent vertices are in 1--1
correspondence with the $(0,+,\ldots)$ elements in $\hat{A}$
whose integer pair defines $\theta _{o}$ via \eqref{eq1.8}.

The various versions of $\underline {\Gamma }_{o}$ are characterized by
the manner in which these bivalent vertices are distributed on the two
circles that comprise the figure~8. In this regard, the possible isomorphism
types for $\underline {\Gamma }_{o}$ are described momentarily with the
help of an ordered pair $(\wp ', \wp '')$ of linearly ordered sets that
partition the set of $(0,+,\ldots)$ elements in $\hat{A}$ whose
integer pairs define $\theta _{o}$ in \eqref{eq1.8}.
For the purposes of this
description, label the incident edges to $o$ as $e$, $e'$ and $e''$ with the
convention that $e'$ and $e''$ connect $o$ to vertices with both angles either
greater than $\theta _{o}$ or both angles less than $\theta _{o}$. The
pair $(\wp ', \wp '')$ defines a version of $\underline {\Gamma }_{o}$
by using $\wp '$ to label the bivalent vertices that are met when
circumnavigating $\ell _{oe' }$ in the oriented direction starting
at the 4--valent vertex. The elements in $\wp ''$ have the analogous
interpretation with regards to $\ell _{oe''}$. Note that
this labeling of the versions of $\underline {\Gamma }_{o}$ can be
redundant. In particular, such will be the case when $\hat{A}$ has multiple
copies of some $(0,+,\ldots)$ whose integer pair gives
$\theta _{o}$ via \eqref{eq1.8}. The labeling is also redundent when $Q_{e}$ and
$Q_{e' }$ agree, for then the respective versions of $\underline {\Gamma }_{o}$
that are defined by $(\wp ', \wp '')$ and $(\wp '',\wp ')$ are identical.

The closures of these various strata components intersect in codimension 2
strata and it is intriguing to see how all of these strata fit together.
Since the story in general can be complicated, attention is restricted in
what follows to the case when $\theta _{o}$ is determined by the integer
pair from a single $(0,+,\ldots)$ element in $\hat{A}$. To
describe this case, let $T$ denote the graph that is described by the fourth
point in \eqreft93 but where the version of $\underline {\Gamma }_{o}$ has a
single 4--valent vertex with positive label. Let $T_{e}$, $T_{e' }$
and $T_{e'' }$ denote the closures of the components of
T$-$o, these labeled so as to indicate which incident edge is in which
component. Now, let $Z \subset\mathbb{C}$ denote the complement of $\{1,-1\}$ and
let $Z_{1}\subset Z$ denote the figure 8 locus where $|z^{2}-1|  = 1$.
This is depicted schematically in \eqref{eq1.26}. In the
case that $T_{e' }$ is not isomorphic to $T_{e'' }$, a neighborhood in
${\mathcal{M}^{* }}_{\hat{A}}$ of the
strata involved is diffeomorphic as an orbifold to the space depicted in
\eqref{eq9.4} with $Z$ as just described, with ${\mathcal{O}}_{2} = O_{T}$, and with
$\mathcal{A}^{2} = \Aut (T)$. In the case at hand, the two codimension 1 strata
are the two components of $Z_{1}-0$, and the codimension 2 stratum
corresponds to 0.

In the case that $T_{e' } = T_{e'' }$, there is
but a single codimension 1 stratum component and one codimension 2 stratum
component involved. To describe a neighborhood in ${\mathcal{M}^{* }}_{\hat{A} }$
of these strata, note that in this case $\Aut (T)$ has
image $\mathbb{Z}/2\mathbb{Z}$ in $\Aut _{o}$. Thus, $\Aut (T)$ acts on $\mathbb{C}$ via
this image and the $\mathbb{Z}/2\mathbb{Z}$ action is via multiplication by
$\{\pm 1\}$ on $\mathbb{C}$. This understood, a neighborhood in
${\mathcal{M}^{* }}_{\hat{A}}$ of the codimension 1 and codimension 2
strata is diffeomorphic as an orbifold to
$\mathbb{R}\times ({\mathcal{O}}_{2}\times  Z)/\mathcal{A}^{2} \times \Delta ^{k - 2}$ where
${\mathcal{O}}_{2}= O_{T}$, $\mathcal{A}^{2} = \Aut (T)$ and $Z$ is as described in
the preceding paragraph. In this case, the codimension 1 stratum is the
image of $Z_{1}-0$ and the codimension 2 stratum is the image of 0.

\subsection{Limits of sequence in ${\mathcal{M}^{*}}_{\hat{A},T}$
 that do not converge in $O_{T}/\Aut (T)$}\label{sec:9b}

The space $O_{T}/\Aut (T)$ has a natural compactification as a stratified
space that is obtained by replacing each open symplex in \eqref{eq6.9} by the
corresponding closed symplex. All of the added points in this
compactification label subvarieties that are geometric limits of elements in
${\mathcal{M}^{*}}_{\hat{A},T}$. The two parts of this subsection
describe the various limits for sequences that converge to the added
codimension 1 strata of the compactification. Analogous assertions for the
codimension greater than 1 added strata when
$N_{ - }+\hat {N}+\text{\c{c}}_{ - }+\text{\c{c}}_{ + } > 2$
are more complicated and so left to the more
industrious readers.

The proofs of the lemmas that appear are omitted but for some sporadic
comments. This is because the argument in each case would lengthen an
already long exposition; in any event, but for straighforward modifications,
each argument repeats the those that appear in \fullref{sec:7d}.

\step{Part 1}
To set the scene for this part of the subsection, a multivalent vertex
$o\in T$ has been fixed along with an arc $\gamma\subset \underline {\Gamma }_{o}$.
In addition, a sequence $\{\lambda _{j}\}_{j =1,2,\ldots }$ has been specified in $T$'s
version of the space depicted in
\eqref{eq6.9} that is constant but for the $\Delta _{o}$ factor. In this regard,
the $\Delta _{o}$ part of $\lambda _{j}$ maps $\gamma $ to a number,
$r_{j}$, such that the resulting sequence $\{r_{j}\}_{j = 1,2,\ldots}$
converges to zero. Meanwhile, the analogous sequences that are defined by
the remaining arcs in $\underline {\Gamma }_{o}$ converge as  $j \to \infty $
to positive numbers. With $\{\lambda _{j}\}$ so chosen, fix
$s_{0}\in\mathbb{R}$ and then use the image of $(s_{0}, \lambda_{j})$ in
$\mathbb{R}\times O_{T}/\Aut (T)$ with the inverse of the map
from \fullref{sec:6c} to define a sequence in ${\mathcal{M}^{*}}_{\hat{A},T}$.
This scenario is assumed implicitly in the statements of Lemmas~\ref{lem:9.1}--\ref{lem:9.4}
that follow.

By the way, the arc $\gamma $ can not start and end at the same vertex
because each element in $\Delta _{o}$ assigns the same value to any given
arc that starts and ends at the same vertex. To explain, note that such an
arc defines a non-trivial class in $H_{1}(\underline {\Gamma }_{o}; \mathbb{Z})$,
and the latter can be written as $\sum _{e}c_{e} [\ell_{oe}]$ with $\{c_{e}\}$
a collection of integers. Let $Q = \sum_{e}c_{e}Q_{e}$. As a consequence of \eqref{eq6.6},
each element in $\Delta_{o}$ must assign $\alpha _{Q}(\theta _{o})$ to $\gamma $.

Here is one comment with regards to proving Lemmas~\ref{lem:9.1}--\ref{lem:9.4}:
\fullref{prop:7.1} is still valid under the assumptions just made. In fact,
what follows are assumptions that guarantee the validity of \fullref{prop:7.1}.

\qtaubes{9.16}
{\sl
Let $\{\lambda _{j}\}_{j = 1,2,\ldots }$ denote a sequence from $T$'s version of the
space in \eqref{eq6.15} that is constant but for the $\Delta _{o}$ factor. Meanwhile,
let $\{r_{j}\}$ denote the corresponding sequence of
$\Delta _{o}$ factors. This sequence of maps should converge so that the following is
true: Let $\mathcal{A}$ denote the set of arcs in $\underline {\Gamma }_{o}$
for which $\lim_{j \to \infty }r_{j}(\cdot )$ is zero. Then
$ \cup _{\gamma \in \mathcal{A}}\gamma $ is simply connected.}
\endqtaubes

As illustrated by what is discussed in Case 2 of Part 3 in \fullref{sec:9a}, there
may be codimension 1 faces of $\Delta _{o}$ whose maps assign 0 to two or
more arcs from $\underline {\Gamma }_{o}$. As explained in Part 2 of this
subsection, limits to these faces violate the preceding assumption. In any
event, such faces are considered in Part 2.

In the various cases described below, a certain graph, $T'$, is defined from $T$
by collapsing $\gamma $ to a point and suitably interpreting the result. In
all of the cases, $T'$ is a graph that has the properties stated in
\fullref{sec:6a}. In what follows, $\hat{A}'$ denotes the corresponding
asymptotic data set.

In the meantime, a stratum in the compactified version of $O_{T}/\Aut (T)$
arises as the image of the face of the compact version of $\Delta _{o}$
whose maps send $\gamma $ to zero. This stratum has a natural interpretation
as $O_{T' }/\Aut (T')$. In each case that follows, the explanation of
this interpretation is straightforward and left to the reader. Granted this
interpretation, the sequence $\{\lambda _{j}\}$ in each of the cases
that follow has a well defined limit point in $O_{T' }/\Aut (T')$.
This point is denoted by $[\lambda _{0}]$. The $T'$ versions of \fullref{thm:6.2}
and the inverse of the map from \fullref{sec:6c} assign a point in
${\mathcal{M}^{* }}_{\hat{A}}$ the pair $(s_{0}, [\lambda _{0}])$.
Use $(C_{0}, \phi _{0})$ to denote the latter point in ${\mathcal{M}^{* }}_{\hat{A}}$.

In the first lemma that follows, $T'$ is identical to $T$ but for the graph
assigned to $o$. The latter, ${\underline {\Gamma}_{o}}'$, is obtained from
$\underline {\Gamma }_{o}$ by removing the interior of the arc $\gamma $
and replacing its two vertices with a single vertex whose label is the sum
of the integers that label the vertices on $\gamma $.

\begin{lemma}\label{lem:9.1}
Assume that at least one vertex on $\gamma $ is labeled by the integer 0.
In this case, the sequence in ${\mathcal{M}^{* }}_{\hat{A}}$ converges in
${\mathcal{M}^{* }}_{\hat{A}}$ to the point defined by $(C_{0}, \phi _{0})$
in ${\mathcal{M}^{* }}_{\hat{A},T' }$.
\end{lemma}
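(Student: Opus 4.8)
\textbf{Proof strategy for Lemma \ref{lem:9.1}.} The plan is to follow the blueprint of \fullref{sec:7d} almost verbatim, treating $(C_0,\phi_0)$ as the candidate geometric limit and showing that the given sequence in ${\mathcal{M}^{*}}_{\hat{A}}$ converges to it in the ${\mathcal{M}^{*}}_{\hat{A}}$ topology. Write $\{(C_{0j},\phi_j)\}$ for the sequence produced from $(s_0,\lambda_j)$ via \fullref{thm:6.2} and the inverse of \fullref{sec:6c}'s map. First I would invoke \fullref{prop:7.1}: since the hypotheses here are a special case of the scenario in \eqreft9{16} (the single collapsing arc $\gamma$ has simply connected union), the proposition applies and yields, after passing to a subsequence, a finite limit data set $\Xi$ with the two convergence properties. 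The bulk of the argument is then to show $\Xi=\{(S,1)\}$ with $S$ the subvariety determined by $(C_0,\phi_0)$, and that convergence holds in ${\mathcal{M}^{*}}_{\hat{A},T'}$.

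Next I would argue that $\Xi$ has a single element, following Part 2 of \fullref{sec:7d}. The only novelty is that here we are not assuming convergence in all of $O_T/\Aut(T)$ but only along the prescribed one-parameter degeneration; however, the key inputs used in Part 2 of \fullref{sec:7d} — convergence of the images in each $\Delta_o$ factor, and in each $\mathbb{R}_{o}$ and $\mathbb{R}_-$ factor — still hold here because $\{\lambda_j\}$ is constant except in the single $\Delta_o$ coordinate, and even there the coordinates other than $r_j(\gamma)$ converge. So the arc-assignment lower bounds used in Steps 3--6 of the proof of \fullref{prop:7.1} and in Part 2 of \fullref{sec:7d} remain available for every arc other than $\gamma$, which is exactly what is needed to rule out two non-$\mathbb{R}$-invariant subvarieties in $\Xi$ and to rule out $\mathbb{R}$-invariant cylinders in $\Xi$. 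Then, following Part 3 of \fullref{sec:7d}, I would compare the graph $T_S$ of the single limit subvariety $S$ with a target graph. The crucial point is that the collapse $r_j(\gamma)\to 0$ forces the two vertices of $\gamma$ in $\underline{\Gamma}_o$ to merge in the limit, with integer label the sum of the two original labels; since at least one of those labels is $0$, no end of $C_{0j}$ is being created or destroyed in this limit, so $T_S$ has the same number of ends as $T$ and in fact $T_S\cong T'$. This identification of $T_S$ with $T'$ is the step where I would be most careful, adapting Steps 11--12 of \fullref{sec:7d}: one must check that the vertex correspondence respects angles (immediate, since only $\theta_o$'s critical-locus combinatorics change), that the edge integer pairs match, and that the merged vertex of $\underline{\Gamma}_{o}$ in $T_S$ carries the summed integer label, which uses that the integral of $\tfrac{1}{2\pi}dt$, $\tfrac{1}{2\pi}d\varphi$ around the relevant $|s|$-slices is additive when two arcs of $\Gamma_o$ collapse together.

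The main obstacle I anticipate is precisely verifying that the multiplicity $n$ in the single pair $(S,n)\in\Xi$ equals $1$, i.e.\ ruling out the $n>1$ branched-cover scenario from Part 3 of \fullref{sec:7d}. The argument there (Steps 7--8) used convergence of the $\{(C_{0j},\phi_j)\}$ image in $O_T/\Aut(T)$ to conclude that the $n>1$ case would force the limit image into $(O_T-\hat{O}_T)/\Aut(T)$, contradicting convergence in $\hat{O}_T/\Aut(T)$. Here the relevant statement is slightly different: if $n>1$ then, exactly as in Step 1 of Part 3 of \fullref{sec:7d}, $n$ must divide both integers of the edge label $Q_{\hat{E}}$ with smallest-angle vertex, and the coincidence argument with the $\Delta_{o'}$ coordinates would force the limit point $[\lambda_0]$ to lie in the fixed locus of a nontrivial $\mathbb{Z}/(n\mathbb{Z})\subset\Aut(T')$. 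Since the sequence $\{\lambda_j\}$ was chosen constant in all coordinates that would detect such a $\mathbb{Z}/(n\mathbb{Z})$-symmetry (only $r_j(\gamma)$ varies, and $\gamma$ is a single arc with one vertex labeled $0$), this symmetry would have to be present already in $\lambda_1$, contradicting the assumption that $\lambda_1$ determines an honest element of ${\mathcal{M}^{*}}_{\hat{A},T}$ rather than a multiply covered one unless $n=1$. Once $n=1$ is established, \fullref{thm:1.3} (which says ${\mathcal{M}_{\hat{A},T'}}$ is open in ${\mathcal{M}^{*}}_{\hat{A},T'}$ and that the two topologies agree) upgrades the pointwise convergence of \eqreft74 to convergence in ${\mathcal{M}^{*}}_{\hat{A},T'}$, and since every subsequence has a sub-subsequence converging to this same $(C_0,\phi_0)$, the full sequence converges, completing the proof.
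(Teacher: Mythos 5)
The paper does not supply a written-out proof of this lemma; it states explicitly that "the argument in each case repeats those that appear in \fullref{sec:7d}" and only gives the comment \eqreft9{16} guaranteeing \fullref{prop:7.1}. Your proposal correctly identifies this blueprint and makes the right central observation: since at least one vertex on $\gamma$ carries the label $0$, the collapse merges a $\theta$--critical point with the other vertex, so the asymptotic data set is unchanged and the expected limit lives in ${\mathcal{M}^{*}}_{\hat{A},T'}$ rather than some other ${\mathcal{M}^{*}}_{\hat{A}'}$. That is the reason Lemma 9.1 asserts genuine convergence in ${\mathcal{M}^{*}}_{\hat{A}}$ whereas Lemmas 9.2--9.4 assert only the weaker forms of convergence.

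However, your third paragraph has a genuine gap. You set out to show $n=1$ in the pair $(S,n)\in\Xi$, and then use that to invoke \fullref{thm:1.3}. Neither step is right. First, establishing $n=1$ is not needed: the space ${\mathcal{M}^{*}}_{\hat{A},T'}$ includes multiple covers, and the lemma asserts convergence to $(C_0,\phi_0)$ whether or not $\phi_0$ is almost everywhere 1--1. If the point $[\lambda_0]$ on the face of $\Delta_o$ lies in $O_{T'}-\hat{O}_{T'}$, then $(C_0,\phi_0)$ is a multiple cover and $n>1$, and that is a perfectly legitimate conclusion of the lemma. Second, the argument you give to exclude $n>1$ is unsound: you invoke "the assumption that $\lambda_1$ determines an honest element of ${\mathcal{M}^{*}}_{\hat{A},T}$ rather than a multiply covered one," but no such assumption is made --- the sequence $\{\lambda_j\}$ lives in $O_T$, not $\hat{O}_T$, and even when $\lambda_j$ do have trivial $\Aut(T)$ stabilizer, the stabilizer of $[\lambda_0]$ in $\Aut(T')$ can jump up on the boundary face of $\Delta_o$ precisely because the collapse introduces extra symmetry of $\underline{\Gamma}_{o}'$. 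What the argument actually requires --- and what \fullref{sec:7d} already provides in Steps 12--14 of Part 3 of the proof of \fullref{prop:7.1} --- is the explicit construction of the degree-$n$ ramified covers $\pi_j\colon C_{0j}\to S_0$ satisfying \eqref{eq7.11}, which is what upgrades the set-theoretic convergence of \eqref{eq7.4} to convergence in the ${\mathcal{M}^{*}}_{\hat{A}}$ topology of \eqref{eq1.24}, uniformly in $n\ge 1$. Your appeal to \fullref{thm:1.3} only works conditionally on $n=1$, and so inherits the gap.

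To fix the proposal: after using \eqreft9{16} and \fullref{prop:7.1} to get $\Xi$, rule out $\mathbb{R}$--invariant cylinders and a second non-cylinder exactly as in Part 2 of \fullref{sec:7d} (the arc-coordinate lower bounds away from $\gamma$ survive because only $r_j(\gamma)$ degenerates); identify the graph of the single limiting pair with $T'$ as in Steps 1--11 of Part 3, using the $0$ label on $\gamma$ to see that $\hat{A}$ is unchanged; and then run Steps 12--14 verbatim to produce the covers $\pi_j$ and conclude convergence in ${\mathcal{M}^{*}}_{\hat{A}}$ to $(C_0,\phi_0)$ without any restriction on $n$.
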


The next lemma, assumes that the two vertices on $\gamma $ have either both
positive or both negative integer assignments. Let $m_{1}$ and $m_{2}$
denote their respective integer assignments. The two vertices on $\gamma $
correspond to respective 4--tuples $a_{1}$ and $a_{2}$ in $\hat{A}$ of the form
$(0, \varepsilon , | m_{1}|  P)$ and $(0, \varepsilon ,| m_{2}|  P)$ where $P$
is the relatively prime integer pair that
defines $\theta _{o}$ via \eqref{eq1.8} and $\varepsilon  = \sign(m_{1})$. Let
$\hat{A}'$ denote the asymptotic data set that is obtained from $\hat{A}$ by first
removing both $a_{1}$ and $a_{2}$, and then adding $a' = (0, \varepsilon ,| m_{1}+m_{2}| P)$.
Meanwhile, let $T'$ denote the graph that
is identical to $T$ but for the graph assigned to $o$. This new assignment,
${\underline {\Gamma}_{o}}'$, is obtained from $\underline {\Gamma }_{o}$
by removing the interior of $\gamma $ and replacing its two vertices by a
single vertex whose integer assignment is $m_{1}+m_{2}$.

\begin{lemma}\label{lem:9.2}

With the circumstances as just indicated, the sequence converges to the element defined by
$(C_{0}, \phi )$ in ${\mathcal{M}^{*}}_{\hat{A}',T'}$ in the following sense:
The sequence in ${\mathcal{M}^{*}}_{\hat{A},T}$ comes from a sequence
$\{(C_{0j},\phi _{j})\}_{j = 1,2,\ldots }$ such that

\begin{itemize}

\item
$\lim_{j \to \infty }\smallint _{C_{0j} } \phi _{j}^*\varpi = \int _{C_0 }\phi ^*\varpi $
for each compactly supported 2--form $\varpi $.

\item
The following limit exists and is zero:
\begin{equation*}
\lim_{j \to \infty } \Bigl[\sup_{z \in C_{0j} } \dist\bigl(\phi _{j}(z), \phi
(C_{0})\bigr) + \sup_{z \in C_0 }  \dist\bigl(\phi _{j}(C_{0j}), \phi (z)\bigr)\Bigr].
\end{equation*}

\end{itemize}
\noindent Moreover, there exists an end, $E  \subset C_{0}$, that gives the 4--tuple $a'$
and has the following significance:  Given $R \gg 1$ and any sufficiently large $j$,
there is a smooth, proper embedding, $\psi _{j}$, from the complement in
$C_{0}$ of the  $|s| > R$ portion of $E$ into $C_{0j}$; and these are such that

\begin{itemize}

\item
The complement of the image of $\psi _{j}$ is a properly embedded, thrice punctured
sphere that contains two ends of $C_{0j}$, these supplying the elements
$a_{1}$ and $a_{2}$ to $\hat{A}$.

\item
$\lim_{j \to \infty } \dist(\phi _{j} \circ \psi _{j}(z), \phi
(z)) = 0$ for all $z$ in the domain of $\psi _{j}$.

\item
$|\bar {\partial }\psi _{j}|  \ll | \partial \psi
_{j}| $ and  $\lim_{j \to \infty } \sup_{\domain(\psi _j )}|
\bar {\partial }\psi _{j}| /| \partial \psi _{j}|  = 0$.

\end{itemize}

\end{lemma}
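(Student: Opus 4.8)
The proof of Lemma~\ref{lem:9.2} follows the template established in Section~\ref{sec:7d}, and in particular it reuses Proposition~\ref{prop:7.1} together with the analysis in Steps~1--14 of Part~3 of Section~\ref{sec:7d}. The plan is as follows. First I would verify that the hypotheses guarantee the validity of Proposition~\ref{prop:7.1} for the sequence $\{(C_{0j},\phi_j)\}$. The relevant point is \eqreft9{16}: the arcs of $\underline{\Gamma}_o$ whose $\Delta_o$ coordinates limit to zero consist of the single arc $\gamma$, and $\gamma$ alone is simply connected since it does not start and end at the same vertex (as noted before the statement, an arc with coincident endpoints would define a nontrivial homology class and hence receive a fixed positive value from every element of $\Delta_o$). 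Thus Proposition~\ref{prop:7.1} applies and provides a finite set $\Xi$ of pairs $(S,n)$ that captures the $j\to\infty$ behavior of $\{C_j\equiv\phi_j(C_{0j})\}$ in the sense of its two bullet points, and in particular the second and third displayed limits in the statement of Lemma~\ref{lem:9.2} (these are just the $j$-indexed reformulation of the two bullets of Proposition~\ref{prop:7.1}, once $\Xi$ is shown to contain a single element).

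Next I would run the argument of Part~2 of Section~\ref{sec:7d} to conclude that $\Xi$ has exactly one element $(S,n)$, and then the argument of Part~3 of Section~\ref{sec:7d} to conclude that $n=1$ and that $S$ is the subvariety $\phi(C_0)$ arising from the limit point $[\lambda_0]\in O_{T'}/\Aut(T')$ via Theorem~\ref{thm:6.2} and the inverse of the map from Section~\ref{sec:6c}. The only novelty here relative to Section~\ref{sec:7d} is bookkeeping for the graph $T'$: one must check that $T'$, obtained from $T$ by collapsing $\gamma$ and fusing its two vertices into a single vertex with integer label $m_1+m_2$, is a legitimate graph of the sort described in Section~\ref{sec:6a} for the modified data set $\hat{A}'$ (the constraint that a label-$0$ vertex have at least four incident half-arcs and the edge-pair labelling consistency in \eqref{eq6.5} need to be checked; since $m_1,m_2$ are both nonzero of the same sign, $m_1+m_2\ne0$, so no label-$0$ vertex is created at this step). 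The assertion that the codimension-one face of the closed simplex $\Delta_o$ sending $\gamma$ to $0$ is naturally $O_{T'}/\Aut(T')$ is then routine from the definitions in \eqref{eq6.6} and \eqref{eq6.9}, and it furnishes the limit point $[\lambda_0]$ and hence $(C_0,\phi_0)$.

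The substantive new content of the lemma is the last three bullet points, which describe the geometry near the end $E\subset C_0$ that realizes the fused $4$-tuple $a'=(0,\varepsilon,|m_1+m_2|P)$. For this I would localize near the component of $C_0-\Gamma$ labelled by the edge $e$ of $T$ whose largest-angle vertex is $o$: near the $\sigma=\theta_o$ boundary circle of its parametrizing cylinder, $\gamma$ corresponds to a short arc whose endpoints are the two missing points labelled by $a_1$ and $a_2$, and $r_j(\gamma)\to 0$ forces these two missing points to merge. Steps~7--10 of Part~3 of Section~\ref{sec:7d} already analyze precisely how a large-$|s|$ portion of $C_{0j}$ near an end behaves, and a close cousin of Step~4 there (the $\theta$-preserving preimage count together with \eqref{eq2.11}) shows that for $R\gg1$ and $j$ large the two $a_1,a_2$-ends of $C_{0j}$, together with the thin tube between them, form a properly embedded thrice-punctured sphere whose complement maps by a $\theta$-preserving diffeomorphism $\psi_j$ onto the $|s|\le R$-truncated-at-$E$ part of $C_0$. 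The estimates $|\bar\partial\psi_j|\ll|\partial\psi_j|$ and $\sup|\bar\partial\psi_j|/|\partial\psi_j|\to0$, as well as $\dist(\phi_j\circ\psi_j,\phi)\to0$, come from Proposition~\ref{prop:7.1} and the local coordinate computations of Steps~12--14 of Section~\ref{sec:7d}, exactly as there; one builds $\psi_j$ first on the tubular-neighborhood region using the fibered-product construction of Step~4 of Section~\ref{sec:5c} and then extends it over the ends.

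The main obstacle I anticipate is the third bullet: controlling $r(\psi_j)=|\bar\partial\psi_j|/|\partial\psi_j|$ uniformly as $j\to\infty$ on the thrice-punctured-sphere region, where the geometry is degenerating (two ends pinching together along a lengthening tube). The difficulty is that the naive comparison map built from the parametrizations of Definition~\ref{def:2.1} need not be pseudoholomorphic, and on the pinching tube the relevant $\sigma$-function $\alpha_{Q}(\theta_o)\hat v$ is being dragged across a full $2\pi$ period, so one must be careful that the $\mathbb{Z}\times\mathbb{Z}$ ambiguity in \eqref{eq2.14}--\eqref{eq2.15} is pinned down correctly (this is where Lemma~\ref{lem:2.3} and the blow-up graph ${\underline{\Gamma}^{*}}_o$ enter). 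The resolution is the same as in Step~12 of Section~\ref{sec:7d}: one does not need $\psi_j$ to be holomorphic, only to have exponentially small antiholomorphic part, and this follows because $\phi_j$ is pseudoholomorphic and, by Proposition~\ref{prop:7.1}, maps the relevant region into an arbitrarily small neighborhood of $S$, on which the relevant structure coefficient $\sigma$ (in the Point~1 coordinates of Step~2 of Section~\ref{sec:5c}) vanishes along the $y=0$ locus. I would therefore organize the proof of the third bullet exactly as the proof of \eqref{eq7.11} in Section~\ref{sec:7d}, and leave the remaining, more routine, estimates to the reader with a pointer to Section~\ref{sec:7d}, as is done elsewhere in Section~\ref{sec:9}.
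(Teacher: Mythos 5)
The paper omits the proof of Lemma~\ref{lem:9.2}, noting only that ``each argument repeats those that appear in Section~\ref{sec:7d}'' after using the remark at \eqreft9{16} to justify invoking Proposition~\ref{prop:7.1} despite the failure of convergence in $O_T/\Aut(T)$. Your proposal is a correct implementation of exactly that guidance: you rightly observe that \eqreft9{16} is satisfied because the unique shrinking arc $\gamma$ has distinct endpoints and is therefore simply connected, you rerun Parts~2--3 of Section~\ref{sec:7d} to identify $\Xi$ with a single pair $(S,1)$ where $S=\phi(C_0)$ is the subvariety produced by the boundary face of $\Delta_o$ via Theorem~\ref{thm:6.2}, you correctly bookkeep that fusing the two vertices of $\gamma$ produces an integer label $m_1+m_2\ne 0$ (so no label-$0$ degeneracy in $T'$), and you organize the thrice-punctured-sphere description and the $|\bar\partial\psi_j|/|\partial\psi_j|\to 0$ estimate along the lines of Steps~4, 7--10 and~12--14 of Section~\ref{sec:7d}. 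This matches the paper's stated route.
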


The final two lemmas in this Part 1 assume that both vertices on $\gamma $
have nonzero integer assignments with differing signs. Let $m_{1}$ and
$-m_{2}$ denote the two integers with the convention here that $m_{1} > 0$
and $m_{2 }> 0$. The two vertices on $\gamma $ correspond to 4--tuples
$a_{1}$ and $a_{2}$ in $\hat{A}$ where $a_{1} = (0,+, m_{1}P)$ and $a_{2} =(0,-, m_{2}P)$.
Here, as before, $P$ is the relatively prime pair that
defines $\theta _{o}$ via \eqref{eq1.8}.

The following lemma assumes that $m_{1}\ne m_{2}$. Let $\varepsilon = \sign(m_{1}-m_{2})$
and $m = | m_{1}-m_{2}| $. For this
case, let $\hat{A}'$ denote the asymptotic data set that is obtained from $\hat{A}$
by removing both $a_{1}$ and $a_{2}$, and replacing them with $a' = (0,\varepsilon , mP)$.
As in the previous cases, $T'$ denotes the graph that is
identical to $T$ but for the graph assigned to $o$; and the latter is obtained
from $\underline {\Gamma }_{o}$ by removing the interior of $\gamma $ and
replacing its two vertices by a single vertex whose integer assignment is
$m_{1}+m_{2}$.

\begin{lemma}\label{lem:9.3}
With the circumstances as just indicated, the sequence converges to the element defined by
$(C_{0}, \phi )$ in ${\mathcal{M}^{*}}_{\hat{A}',T'}$ in the following sense:
There is an $\mathbb{R}$--invariant cylinder, $S$, at angle $\theta _{o}$ and a sequence
$\{(C_{0j}, \phi_{j})\}$ that defines the given sequence, and these are such that

\begin{itemize}

\item
$\lim_{j \to \infty }\int _{C_{0j} } \phi _{j}^*\varpi =\smallint _{C_0 }\phi ^*\varpi
+ \min(m_{1}, m_{2})\int _{S}\varpi $ when $\varpi $ has compact support

\item
The following limit exists and is zero:

\begin{equation*}
\lim_{j \to \infty } \Bigl[\sup_{z \in C_{0j} } \dist\bigl(\phi _{j}(z), \phi
(C_{0})\cup  S\bigr) + \sup_{x \in \phi (C_0 ) \cup S}  \dist\bigl(\phi
_{j}(C_{0j}), x\bigr)\Bigr].
\end{equation*}
\end{itemize}
Moreover, there exists an end, $E  \subset C_{0}$, that gives the 4--tuple $a'$
and has the following significance:  Given $R \gg 1$ and any sufficiently large $j$,
there is a smooth, proper embedding, $\psi _{j}$, from the complement in $C_{0}$ of the
$|s|  > R$ portion of $E$ into $C_{0j}$; and these are such that

\begin{itemize}

\item
The $|s|\to\infty $ limit of the constant $|s| $ slices of $E$ converge as a degree $m$
multiple cover of the Reeb orbit in $S^1\times S^{2}$ that defines the cylinder $S$.

\item
The complement of the image of $\psi _{j}$ is a properly embedded, thrice
punctured sphere that contains two ends of $C_{0j}$, these supplying the elements
$a_{1}$ and $a_{2}$ to $\hat{A}$.

\item
The constant $|s| $ slices of the latter ends converge as $|s|\to\infty $
as multiple covers of a $\theta =\theta_{o}$ Reeb orbit, $\nu _{j}$;
and the resulting sequence, $\{\nu _{j}\}$, of Reeb orbits converges as $j  \to\infty $
to the Reeb orbit that defines $S$.

\item
$\lim_{j \to \infty } \dist(\phi _{j} \circ \psi _{j}(z), \phi
(z)) = 0$ for all $z$ in the domain of $\psi _{j}$.

\item
$|\bar {\partial }\psi _{j}|  \ll | \partial \psi_{j}| $ and
$\lim_{j \to \infty } \sup_{\domain(\psi _j )}|
\bar {\partial }\psi _{j}| /| \partial \psi _{j}|  = 0$.

\end{itemize}
\end{lemma}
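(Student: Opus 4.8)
\textbf{Proof of \fullref{lem:9.3}.}
The plan is to adapt the machinery of \fullref{sec:7d}, and in particular the six-step proof of \fullref{prop:7.1}, to the present situation where the sequence $\{\lambda_j\}$ is not arbitrary but is built so that exactly one arc coordinate in the $\Delta_o$ factor tends to zero while all others stay bounded away from $0$. The first step is to invoke the observation made just before the lemma: since $\gamma$ does not start and end at the same vertex, the condition in \eqref{eq9.16} holds with the set $\mathcal{A}$ equal to the single arc $\{\gamma\}$, and so $\cup_{\gamma'\in\mathcal{A}}\gamma'=\gamma$ is simply connected. Consequently \fullref{prop:7.1} applies to any sequence $\{(C_{0j},\phi_j)\}$ that arises from $\{(s_0,\lambda_j)\}$ via the inverse of the map from \fullref{sec:6c} together with \fullref{thm:6.2}. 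This produces, after passing to a subsequence, the limit data set $\Xi$ consisting of pairs $(S,n)$, and \eqref{eq7.4} holds.

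Next I would analyze $\Xi$ in exactly the spirit of Part 2 and Part 3 of \fullref{sec:7d}, but now tracking the one shrinking arc. The argument that $\Xi$ has at most one non-$\mathbb{R}$-invariant element goes through verbatim using the convergence of the image of $\{(C_{0j},\phi_j)\}$ in $\times_o\Delta_o$; the difference is that here one arc coordinate tends to $0$, and the point is to show that this forces exactly one $\mathbb{R}$-invariant cylinder $S$ into $\Xi$, sitting at angle $\theta_o$, with weight $\min(m_1,m_2)$. The mechanism is the one from Steps 3--6 of the proof of \fullref{prop:7.1}: the shrinking of the $\gamma$-coordinate means that a constant $\theta=\theta_o$ slice of $C_{0j}$ that is homologically non-trivial in the $|\theta-\theta_o|<\delta$ neighborhood of $\Gamma_o$ gets pushed, in the geometric limit, onto (a multiple cover of) the $\theta=\theta_o$ Reeb orbit; the integer weight is read off from the integrals of $\tfrac1{2\pi}dt$ and $\tfrac1{2\pi}d\varphi$ around that slice, which by construction sees the two vertices on $\gamma$ with their integer labels $m_1$ and $-m_2$, so the surviving net multiplicity is $|m_1-m_2|$ on the end $E$ of the limit curve and $\min(m_1,m_2)$ on the cylinder. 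The description of $T'$ and $\hat A'$ in the statement encodes precisely the collapse of $\gamma$'s two vertices into a single vertex of weight $m_1+m_2$ with the two original ends replaced by one end carrying $a'=(0,\varepsilon,mP)$.

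With $\Xi=\{(S,n_0)\}\cup\{S\}$ identified, the third block of the argument constructs the embeddings $\psi_j$. Here I would copy Steps 9--14 of \fullref{sec:7d}: away from the shrinking region, \fullref{prop:7.1} and the parametrizations from \fullref{def:2.1} give, for $R$ large and $j$ large, a proper embedding $\psi_j$ of the $|s|\le R$ part of $C_0$ (that is, of $C_0$ minus the far-out part of the end $E$) into $C_{0j}$ with $\dist(\phi_j\circ\psi_j(z),\phi(z))\to0$ and $|\bar\partial\psi_j|\ll|\partial\psi_j|$, using the fibered-product construction over tubular neighborhoods of the cylinder pieces exactly as in Step 4 of \fullref{sec:5c}. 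The complement of the image is then forced by the genus-zero and end-count bookkeeping (as in Steps 9--11) to be a thrice-punctured sphere containing the two $C_{0j}$ ends that supply $a_1$ and $a_2$; the claim that the constant $|s|$ slices of those two ends converge as multiple covers of $\theta=\theta_o$ Reeb orbits $\nu_j$ converging to the Reeb orbit defining $S$ follows from the same integral-of-$x$ estimates used in Steps 7--8, with $x=(1-3\cos^2\theta_o)d\varphi-\surd6\cos\theta_o\,dt$ and its primitive $f$ vanishing on the Reeb orbit. Finally the two convergence statements in the lemma—the measure convergence $\int_{C_{0j}}\phi_j^*\varpi\to\int_{C_0}\phi^*\varpi+\min(m_1,m_2)\int_S\varpi$ and the Hausdorff-type limit—are immediate from \fullref{prop:7.1} once $\Xi$ is identified as above.

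The main obstacle I expect is the precise determination of the weight $\min(m_1,m_2)$ on the cylinder $S$ together with the verification that the limit curve $C_0$ is an \emph{honest} element of ${\mathcal{M}^*}_{\hat A',T'}$ rather than a multiply covered or reducible object, i.e.\ that the surviving end $E$ carries exactly the 4-tuple $a'=(0,\varepsilon,mP)$ and nothing more. This is where the bookkeeping of integer pairs around constant-$|s|$ slices must be done carefully, using \fullref{prop:7.1} to transfer integrals from $S$ to $C_{0j}$ and then back to $C_0$, exactly as in Steps 5--6 and 10 of \fullref{sec:7d}; the subtlety is that here the two ends $a_1,a_2$ of $C_{0j}$ both converge to $\theta=\theta_o$ Reeb orbits, so one must argue that their multiplicities $m_1$ and $m_2$ partially cancel in the limit (leaving $m=|m_1-m_2|$ on $E$) while the overlap $\min(m_1,m_2)$ accounts for the cylinder. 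Everything else is a routine, if lengthy, transcription of the arguments already given in Sections~\ref{sec:5c} and~\ref{sec:7d}.
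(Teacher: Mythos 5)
The paper actually omits the proofs of Lemmas 9.1--9.4, saying only that they "repeat the arguments that appear in Section 7.D" after invoking (9.16) to guarantee the validity of \fullref{prop:7.1}, so there is no official proof to compare against. Your outline does follow the route the paper signals, and the use of (9.16) and \fullref{prop:7.1} at the start is exactly right. That said, two places in your sketch are asserted rather than argued, and they are precisely where the Section 7.D machinery does \emph{not} transfer verbatim.

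First, you write that "the argument that $\Xi$ has at most one non-$\mathbb{R}$-invariant element goes through verbatim." Be careful: Part 2 of the proof of \fullref{prop:4.6}/\fullref{prop:7.1} proves \emph{two} things, that $\Xi$ has at most one non-$\mathbb{R}$-invariant subvariety \emph{and} that $\Xi$ contains no $\mathbb{R}$-invariant cylinders. The second half is exactly what must \emph{fail} here, and the place it fails is concrete: the "nonsense" derived at the end of Part 2 (and after \fullref{lem:4.8}) is that the line integral $\nu_j$ of $p\,d\varphi - p'\,dt$ over the path from the convex to the concave end of $C_{j*}$ is a fixed multiple of a sum of $\Delta_o$--coordinates of $\lambda_j$, hence bounded away from zero when the image converges in the \emph{open} $\Delta_o$. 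Under your hypotheses the sum includes the $\gamma$-coordinate, which tends to $0$, so no contradiction arises and $S$ is allowed in $\Xi$. You need to say this explicitly, since it is the whole mechanism by which the cylinder enters; merely saying "the shrinking of the $\gamma$-coordinate $\dots$ gets pushed onto a multiple cover of the Reeb orbit" asserts the conclusion without pointing to the step of the quoted argument that changes sign.

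Second, the weight $\min(m_1,m_2)$ on the cylinder is not derived by your "the integrals of $\tfrac1{2\pi}dt$ and $\tfrac1{2\pi}d\varphi$ see $m_1$ and $-m_2$" remark — that computes the end multiplicity $m=|m_1-m_2|$ of $E$ but does not by itself give the degree of the cover of $S$. The honest derivation is to look at the degree of the projection from the thrice-punctured complement $Z_j$ onto the $\mathbb{R}$--invariant cylinder $S$. At a fiber disk over a point of $S$ with $\rho$ very negative, $Z_j$ contributes exactly the convex end, so the local intersection count is $m_2$; at a fiber disk with $\rho$ just below the boundary value $R$ (assuming $m_1>m_2$ so $E$ is concave side), the concave end contributes $m_1$ while the boundary circle, which wraps $m=m_1-m_2$ times, subtracts off $m_1-m_2$, again giving $m_2=\min(m_1,m_2)$. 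Only after this does the measure statement follow from \fullref{prop:7.1}. You correctly flag this as "the main obstacle," but the proof needs the degree computation to actually close; it does not come free from the Section 7.D arguments, which never see the end-coalescing phenomenon that appears here.

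The remaining parts of your sketch (the construction of $\psi_j$ by fibered products, the identification of the thrice-punctured complement, the statement that the sliced Reeb orbits $\nu_j$ converge, and the $|\bar\partial\psi_j|\ll|\partial\psi_j|$ estimates) are all in line with Steps 7--14 of \fullref{sec:7d} and would indeed go through with only cosmetic changes, so I have no objections there.
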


\fullref{lem:9.4} that follows assumes that the integers $m_{1}$ and $m_{2}$ are
equal. In this case, $\hat{A}'$ is obtained from $\hat{A}$ by removing both $a_{1}$
and $a_{2}$. Nothing is added. The versions of $T'$ in this case depend on
certain properties of $\underline {\Gamma }_{o}$. There are three cases.
In the first case, the graph $\underline {\Gamma }_{o}$ has only two
vertices and both are bivalent. In this case, the vertex $o$ has just two
incident edges and $T'$ is obtained from $T$ by removing the vertex $o$ and
concatenating these two edges as one. In the second case, $\underline {\Gamma }_{o}$
has more than two vertices, but the two vertices on
$\gamma $ are bivalent. In this case, $T'$ differs from $T$ only in the graph
assigned to $o$. Here, the $T'$ version is obtained from $\underline {\Gamma
}_{o}$ by removing the whole of $\gamma $ and concatenating the resulting
free ends as a single arc in the new graph. This arc is labeled by the same
pair of edges that label $\gamma $. In the third case, $\underline {\Gamma
}_{o}$ may or may not have two vertices, but at least one of the vertices
on $\gamma $ has valency 4 or more. In this case, the interior of $\gamma $
is removed from $\underline {\Gamma }_{o}$, and its two vertices are
replaced by a single vertex whose integer assignment is zero. In any of
these cases, write the sum of the valencies of the two vertices on $\gamma $
as $k+4$.

\begin{lemma}\label{lem:9.4}
With the circumstances as just indicated, the sequence converges to the element defined by
$(C_{0}, \phi )$ in ${\mathcal{M}^{*}}_{\hat{A}',T'}$ in the following sense:
There is an $\mathbb{R}$--invariant cylinder, $S$, at angle $\theta _{o}$ and a sequence
$\{(C_{0j}, \phi_{j})\}$ that defines the given sequence, and these are such that

\begin{itemize}

\item
$\lim_{j \to \infty }\int _{C_{0j} } \phi _{j}^*\varpi =\int _{C_0 }\phi ^*\varpi
+ m_{1}\smallint _{S}\varpi $ when $\varpi $ has compact support

\item
The following limit exists and is zero:
\begin{equation*}
\lim_{j \to \infty } \Bigl[\sup_{z \in C_{0j} } \dist\bigl(\phi _{j}(z), \phi
(C_{0})\cup  S\bigr) + \sup_{x \in \phi (C_0 ) \cup S}  \dist\bigl(\phi
_{j}(C_{0j}), x\bigr)\Bigr].
\end{equation*}
\end{itemize}

Moreover, there exists a point $z  \in C_{0}$ with the following significance:
Given small but positive $\varepsilon $ and any sufficiently large $j$,
there is a smooth, proper embedding, $\psi _{j}$, from the complement in
$C_{0}$ of the radius $\varepsilon^{2}$ disk about $z$ in $C_{0}$ into $C_{oj}$;
and these are such that:

\begin{itemize}

\item
If $k = 0$, then $d\theta | _{z}\ne 0$. If $k > 0$, then $z$ is a critical point of
$\theta $ and $\deg(d\theta| _{z})= k$.  In either case, $\phi (z)  \in  S$.

\item
The complement of the image of $\psi _{j}$ is a properly embedded, thrice punctured
sphere that contains two ends of $C_{0j}$, these supplying the elements
$a_{1}$ and $a_{2}$ to $\hat{A}$.

\item
The constant $|s| $ slices of the latter ends converge as $|s|\to\infty $ as multiple
covers of a $\theta =\theta_{o}$ Reeb orbit, $\nu _{j}$; and the resulting sequence,
$\{\nu _{j}\}$, of Reeb orbits converges as $j  \to\infty $ to the Reeb orbit that defines $S$.
The third end of this thrice punctured sphere is mapped by $\phi_{j}$ to the radius
$\varepsilon $ ball in $\mathbb{R}\times  (S^1\times S^2)$ about $\phi(z)$.

\item
$\lim_{j \to \infty } \dist(\phi _{j} \circ \psi _{j}(z), \phi
(z)) = 0$ for all $z$ in the domain of $\psi _{j}$.

\item
$|\bar {\partial }\psi _{j}|  \ll | \partial \psi
_{j}| $ and  $\lim_{j \to \infty } \sup_{\domain(\psi _j )}|
\bar {\partial }\psi _{j}| /| \partial \psi_{j}|  = 0$.

\end{itemize}
\end{lemma}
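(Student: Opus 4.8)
The plan is to follow the pattern of \fullref{sec:7d} and of the proofs of Lemmas \ref{lem:9.2} and \ref{lem:9.3}, treating the present statement as the $m_1 = m_2$ specialization of the situation covered by \fullref{lem:9.3}. First, observe that the set of arcs in $\underline{\Gamma}_o$ on which $\lim_{j\to\infty} r_j(\cdot) = 0$ consists of the single arc $\gamma$, and $\gamma$ is simply connected because (as noted just before \fullref{lem:9.1}) an arc cannot begin and end at the same vertex. Hence the hypothesis \eqreft9{16} is met and \fullref{prop:7.1} applies to the sequence $\{(C_{0j},\phi_j)\}$ that $\{\lambda_j\}$ defines via the construction in \fullref{sec:6c}. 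Passing to a subsequence, let $\Xi$ denote the resulting limit data set.

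Next I would identify $\Xi$. The arguments of Part 2 of \fullref{sec:7d} carry over: the $\Delta_{o'}$ coordinates of $\lambda_j$ are $j$--independent for $o' \ne o$, and the $\Delta_o$ coordinate converges to a point on the face where $\gamma \mapsto 0$, so every arc other than $\gamma$ is assigned a $j$--independent positive lower bound; as in \fullref{sec:7d} this excludes disconnected compact $\theta$ level sets and rules out extra subvarieties in $\Xi$. The unique non-$\mathbb{R}$--invariant member of $\Xi$ then has model curve the $(C_0,\phi)$ furnished by the $T'$ version of \fullref{thm:6.2} applied to the limit point $[\lambda_0]$ of $\{\lambda_j\}$, once the relevant face of $\Delta_o$ is identified with $O_{T'}/\Aut(T')$. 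Because the two vertices on $\gamma$ carry the $4$--tuples $a_1 = (0,+,m_1P)$ and $a_2 = (0,-,m_2P)$ of equal magnitude, the collapse of $\gamma$ produces an $\mathbb{R}$--invariant cylinder $S = \mathbb{R}\times\nu$, where $\nu$ is the $\theta = \theta_o$ Reeb orbit that is the $|s|\to\infty$ limit of the two merging ends, and $S$ appears in $\Xi$ with multiplicity $\min(m_1,m_2) = m_1$; this is exactly the $|m_1-m_2| = 0$ case of \fullref{lem:9.3}. Granted this, the first bullet of \fullref{prop:7.1} gives the integral identity with weight $m_1$ on $\int_S\varpi$ and the second bullet gives the stated pointwise convergence to $\phi(C_0)\cup S$.

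It then remains to construct the embeddings $\psi_j$, which is done as in Part 3 of \fullref{sec:7d}. The point $z\in C_0$ is the image of the collapsed arc $\gamma$: the two vertices on $\gamma$, of total valency $k+4$, are replaced in $T'$ by a single vertex, and inspection of this vertex together with \eqref{eq2.11} records the asserted vanishing of $d\theta$ at $z$ --- in particular $d\theta|_z\ne 0$ precisely when $k = 0$ --- while $\theta(z) = \theta_o$ and $\phi(z)\in S$ because the pinching region collapses onto $\nu$. On the complement in $C_0$ of the radius $\varepsilon^2$ disk about $z$ and the $|s| > R$ portions of the ends, $C_0$ is carried into $C_{0j}$ by the implicit-function-theorem correspondence of \fullref{sec:5c} (the exponential map composed with a small-normed section); this partial map is extended over the cylindrical pieces by the fibered-product argument of Step 4 in \fullref{sec:5c} and over the disk about $z$ by the ramified-cover construction of Steps 13 and 14 in \fullref{sec:7d}, yielding $\psi_j$ with $|\bar\partial\psi_j|\ll|\partial\psi_j|$ and $\sup|\bar\partial\psi_j|/|\partial\psi_j|\to 0$ as in \eqref{eq7.11}. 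The complement of $\psi_j$'s image is then a thrice--punctured sphere whose two merging punctures supply $a_1$ and $a_2$ --- with constant $|s|$ slices converging to multiple covers of Reeb orbits $\nu_j\to\nu$ --- and whose third puncture maps into the radius $\varepsilon$ ball about $\phi(z)$.

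The principal obstacle will be pinning down the topology of the pinching region of $C_{0j}$: showing it is a single thrice--punctured sphere, that its third puncture lands in the $\varepsilon$--ball about $\phi(z)$, and that the two merging punctures carry exactly the multiplicities $m_1$ and $m_2$ rather than some other partition. This calls for a slice-by-slice study of $C_{0j}$ near angle $\theta_o$ via the preferred parametrizations of \fullref{def:2.1} and the asymptotics \eqref{eq2.4}, in the style of Steps 1--10 of Part 3 of \fullref{sec:7d}, together with the intersection-number bookkeeping against the fiber disks of a tubular neighborhood of $S$ used in \fullref{sec:7d}'s Step 10 and in the proof of \fullref{lem:4.8}. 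Checking that $T'$ is, in each of the three structural cases, a legitimate graph of the type described in \fullref{sec:6a} and that the collapsed face of $\Delta_o$ is naturally $O_{T'}/\Aut(T')$ is routine, and I would dispatch it by direct inspection of the arc and vertex labels.
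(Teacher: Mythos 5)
Your plan matches what the paper actually does: the paper explicitly omits the proof of Lemma~9.4 (and Lemmas~9.1--9.3) with the remark that ``but for straightforward modifications, each argument repeats those that appear in Section~7.D,'' and offers only the observation that \eqreft9{16} guarantees the validity of Proposition~7.1. Your proposal is a faithful expansion of exactly that plan --- verify \eqreft9{16}, invoke Proposition~7.1, identify the limit data set $\Xi$, and run the Part~3 bubble-tree/covering analysis from Section~7.D.

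Two points deserve a caveat, both of which you partly acknowledge but which are worth stating more sharply. First, the claim that ``the arguments of Part~2 of Section~7.D carry over ... and rule out extra subvarieties in $\Xi$'' should be qualified: the argument in Part~2 of Section~7.D that excludes $\mathbb{R}$--invariant cylinders from $\Xi$ rests precisely on the fact that every $\Delta_o$ coordinate of the image sequence has a positive, $j$--independent lower bound. Here the $\gamma$ coordinate tends to zero, so that exclusion argument fails by design, and the correct conclusion is that $\Xi$ contains exactly one $\mathbb{R}$--invariant cylinder $S$ (at angle $\theta_o$) together with $(C_0,\phi)$; you must run a modified version of the Part~2 argument that establishes no \emph{other} element appears, rather than carrying the Part~2 argument over wholesale. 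Second, the degree formula $\deg(d\theta|_z)=k$ is asserted rather than derived; your appeal to \eqref{eq2.11} is the right tool, but the bookkeeping needs care --- after removing the interior of $\gamma$ the merged vertex has $k+2$ incident half-arcs, and \eqref{eq2.11} relates the number of half-arcs to the order of vanishing in a way that must be reconciled with the statement. Spelling out that reconciliation (or noting the intermediary role that the local degree of $\phi$ near $z$ plays when $\phi$ is not an embedding there) is part of what ``straightforward modification'' actually entails here and is where the genuine content of the omitted proof lies.
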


\step{Part 2}
The preceding four lemmas require that $\{\lambda _{j}\}$ provide a
convergent sequence with non-zero limit to each arc but one in
$\underline {\Gamma }_{o}$. As seen in the previous subsection, there can arise
situations where a codimension 1 face of $\Delta _{o}$ can not be reached
by such a sequence. This can happen when the constraints in \eqref{eq6.6} require
some collection of arcs in $\underline {\Gamma }_{o}$ to have identical
assignments for each  $r \in\Delta _{o}$.

To say more about this phenomena, note that when $r \in \Delta _{o}$ and a vertex
$\upsilon\in \underline {\Gamma}_{o}$ are given first, and then a number with
sufficiently small absolute
value is chosen, a new point in $\Delta _{o}$ is obtained from r by adding
the chosen number to $r$'s value on each outward pointing arc at $\upsilon $
while subtracting the number from $r$'s value on each inward pointing arc.
This observation has the following implication: Let $\gamma $ and $\gamma '$
denote arcs in $\underline {\Gamma }_{o}$. Then $r(\gamma )- r(\gamma ')$ is independent
of  $r \in\Delta _{o}$ if and only if $\gamma$ and $\gamma '$ share the same
starting vertex and also the same ending vertex.

To see what $r(\gamma )- r(\gamma ')$ can be in this case, remark that
the loop defined by traversing $\gamma $ in its oriented direction and
$\gamma '$ in reverse defines a primitive homology class from
$H_{1}(\underline {\Gamma }_{o}; \mathbb{Z})$ and so can be written as a
linear combination $\sum _{e}c_{e} [\ell _{oe}]$ where
$\{c_{e}\}$ is a collection of integers that are defined modulo adding a
constant to each $e \in $ $E_{ + }$ version of $c_{e}$ and subtracting the
same constant from each $e \in E_{ - }$ version. This implies that
\begin{equation}\label{eq9.17}
r(\gamma )- r(\gamma ') = 2\pi\sum _{e}c_{e}\alpha _{Q_e} (\theta _{o}).
\end{equation}
Since the collection $\{c_{e}\}$ are integers, this last identity has
the following implication:

\qtaubes{9.18}
\textsl{The values of $r(\gamma )$ and $r(\gamma ')$ are equal for all  $r \in\Delta _{o}$}
\textsl{for at most two values of $\theta _{o}$ unless $\sum _{e}c_{e}Q_{e} = 0$.}
\endqtaubes

Indeed, with  $Q \equiv\sum _{e}c_{e}Q_{e}\ne 0$, then
$r(\gamma )- r(\gamma ') = 2\pi\alpha _{Q}(\theta _{o})$;
and this is zero when $Q  \ne 0$ if and only if either $Q$ or $-Q$ defines the
angle $\theta _{o}$ via \eqref{eq1.8}.

The following lemma about the set of integers $\{c_{e}\}$ plays a role
in what follows.

\begin{lemma}\label{lem:9.5}

Let $\gamma $ and $\gamma '$ denote arcs that share the same starting vertex and share
the same ending vertex. Then the homology class of the loop that is obtained by traversing
$\gamma $ in its oriented direction and then $\gamma '$ in reverse can be written as
$\sum_{e} c_{e} [\ell _{oe}]$ where  $c_{e} = 1$ or 0 when $e  \in  E_{ - }$ and where
$c_{e} =-1$ or 0 if $e \in E_{ + }$. Moreover, $c_{e}\ne 0$ for at least one $e  \in E_{ - }$
and for at least one $e  \in E_{ + }$.

\end{lemma}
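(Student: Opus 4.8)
The plan is to work directly with the structure of the graph $\underline{\Gamma}_o$ and the families of loops $\{\ell_{oe}\}$ that were developed in Part 3 of \fullref{sec:2c}. Recall from Property 1 there that each arc in $\underline{\Gamma}_o$ carries a label consisting of exactly one edge from $E_-$ and one from $E_+$, and from Property 2 (equation \eqref{eq2.17}) that $\sum_{e\in E_+}[\ell_{oe}] - \sum_{e\in E_-}[\ell_{oe}] = 0$. The first step is to fix a normalization of the collection $\{c_e\}$. Since the integers $\{c_e\}$ that express the class of the loop $\gamma \cdot \gamma'^{-1}$ are defined only modulo adding a common constant to all $E_+$ versions and subtracting that same constant from all $E_-$ versions (this is exactly the relation \eqref{eq2.17}), I can choose the representative so that the minimum of $c_e$ over $e\in E_-$ is $0$ and correspondingly the minimum of $(-c_e)$ over $e\in E_+$ is also $0$; equivalently, after adding the appropriate constant, $c_e\ge 0$ for $e\in E_-$ and $c_e\le 0$ for $e\in E_+$, with equality achieved in each of the two sets.

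The second step is to show that with this normalization every $c_e$ for $e\in E_-$ is $0$ or $1$ and every $c_e$ for $e\in E_+$ is $0$ or $-1$. For this I would count multiplicities of arcs. A class $\sum_e c_e[\ell_{oe}]$ is represented, arc by arc, by assigning to an arc $\tau$ with label $(e_-,e_+)\in E_-\times E_+$ the integer $c_{e_-} + c_{e_+}$ — this is because an oriented arc belongs to exactly the two loops named in its label, appearing with $+1$ in $\ell_{oe_-}$ and with $+1$ in $\ell_{oe_+}$ under the sign conventions of \eqref{eq2.17} (one should double-check the sign bookkeeping here against the definitions in Part 3 of \fullref{sec:2c}). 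On the other hand, the class of $\gamma\cdot\gamma'^{-1}$ must assign to each arc a value in $\{-1,0,1\}$ since $\gamma$ and $\gamma'$ are each embedded oriented paths crossing no arc more than once, and they can traverse a common arc in the same or opposite directions. Hence $c_{e_-}+c_{e_+}\in\{-1,0,1\}$ for every arc-label pair $(e_-,e_+)$ that actually occurs. Combined with $c_{e_-}\ge 0$, $c_{e_+}\le 0$, and the fact that the minimum over $E_-$ is $0$ and the maximum over $E_+$ is $0$: pick $e_+^0\in E_+$ with $c_{e_+^0}=0$; then for any arc with second label $e_+^0$ its first label $e_-$ satisfies $c_{e_-}=c_{e_-}+c_{e_+^0}\le 1$, so $c_{e_-}\in\{0,1\}$. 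By Property 3 of Part 3 in \fullref{sec:2c} (the connectedness/equivalence-relation statement), the loops $\ell_{oe^0_+}$ and $\ell_{oe}$ for varying $e\in E_-$ are linked through shared vertices in a way that propagates this bound to all of $E_-$; symmetrically for $E_+$.

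The third and final step is to show that at least one $c_e$ with $e\in E_-$ and at least one with $e\in E_+$ is nonzero. Suppose all $c_e=0$ for $e\in E_-$; then by the normalization all $c_e$ for $e\in E_+$ are $\le 0$ with maximum $0$, but the constraint $c_{e_-}+c_{e_+}\in\{-1,0,1\}$ forces each $c_{e_+}\in\{-1,0\}$, and the class $\sum c_e[\ell_{oe}]$ would then be $-\sum_{e\in E_+}c_e[\ell_{oe}]$ with all coefficients in $\{0,1\}$; but via \eqref{eq2.17} this is homologous to a class supported on $E_-$ with the same coefficient pattern, hence — by the first step's normalization uniqueness — it must be the representative with minimum-over-$E_-$ equal to $0$, which forces the class to be $0$. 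A homologically trivial loop $\gamma\cdot\gamma'^{-1}$ in the tree-plus-cycles structure means $\gamma$ and $\gamma'$ are homotopic rel endpoints in $\underline{\Gamma}_o$, and since both are embedded and cross no arc twice, this forces $\gamma=\gamma'$, contrary to the hypothesis that they are distinct arcs. The symmetric argument handles $E_+$. The main obstacle I anticipate is the careful sign/orientation bookkeeping in the second step — matching the $\pm$ conventions in the arc labels of \eqref{eq2.17} with the directions in which the embedded paths $\gamma,\gamma'$ traverse shared arcs — together with making the propagation-of-bounds argument via Property 3 fully rigorous rather than hand-waved; everything else is essentially linear algebra over $\mathbb{Z}$ on $H_1(\underline{\Gamma}_o)$.
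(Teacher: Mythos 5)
There is a genuine gap, and it sits in your Step 1. The collection $\{c_e\}$ is defined only up to the gauge transformation $c_e \to c_e + t$ for $e\in E_+$, $c_e \to c_e - t$ for $e\in E_-$, which is a \emph{one}-parameter freedom. The quantity $\min_{e\in E_-}c_e + \max_{e\in E_+}c_e$ is invariant under this gauge (the first term shifts by $-t$, the second by $+t$). Thus you cannot, in general, choose $t$ so that both $\min_{e\in E_-}c_e=0$ \emph{and} $\max_{e\in E_+}c_e=0$; this is possible only if the invariant vanishes, which is not given a priori — it is essentially part of what the lemma is asserting. You would have to prove the invariant is zero, and your argument offers no route to that. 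By contrast, the paper avoids this by pinning down a representative via the single gauge condition $c_{e}=1$ on the $E_-$ label $e$ of $\gamma$ (which forces $c_{\hat e}=0$ on $\gamma$'s $E_+$ label, since $c_e+c_{\hat e}=1$ is gauge-invariant), a normalization that always exists.

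Your Step 2 has a second problem that interacts with the first. The propagation rule $c_{e_-}+c_{e_+}=$ (arc coefficient) works cleanly only on arcs other than $\gamma,\gamma'$, where the coefficient is $0$. When the propagation reaches the arc $\gamma'$, you need $c_{e'}+c_{\hat e'}=-1$; if at that point you only know $c_{e'}\in\{0,1\}$, the case $c_{e'}=1$ would force $c_{\hat e'}=-2$, breaking the desired bound. The paper excludes $c_{e'}=1$ (in fact shows $c_{e'}=0$, $c_{\hat e'}=-1$) by a half-arc parity count at the common starting vertex $\upsilon$ of $\gamma$ and $\gamma'$: both the number of incident half-arcs whose $E_-$ label has coefficient $-1$ and the number whose $E_+$ label has coefficient $1$ are even (each loop $\ell_{oe}$ uses an even number of half-arcs at any vertex), and tracking how $\gamma$ and $\gamma'$ affect these two counts rules out the bad value. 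Your proposal has no analog of this parity argument; invoking Property~3 of Part~3 of \fullref{sec:2c} only gives local connectivity of the edge labels at a single vertex, which does not by itself bound the coefficients at $\gamma'$. Once the four values at $\gamma$ and $\gamma'$ are pinned down, the paper does carry out a propagation via an increasing filtration of arc sets $\mathcal{A}_1\subset\mathcal{A}_2\subset\cdots$, which is the rigorous form of what you sketch — so the spirit of your Step 2 is right, but it cannot be launched without first establishing the data at $\gamma$ and $\gamma'$. Step 3 is salvageable and in fact can be made simpler (if all $c_e=0$ on $E_-$ then the arc coefficient of $\gamma$ would be $c_{\hat e}\le 0\neq 1$), but it relies on the Step~1 normalization and so inherits its gap.
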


\begin{proof}[Proof of  \fullref{lem:9.5}]
Let $\mathbb{Z}$ denote the free $\mathbb{Z}$--module
generated by the arcs in $\underline {\Gamma }_{o}$, and let $\eta $
denote an arc. Introduce the homomorphism $f^{\eta }\co \mathbb{Z}\to \mathbb{Z}$
that sends $\eta $ to 1 and all other arcs to 0. Now, let $(e,\hat{e})$ denote the
edges that label $\gamma $ with the convention that $e\in E_{ - }$ and $\hat{e}\in E_{ + }$.
As $f^{\gamma }(\gamma  -\gamma ') = 1$, it follows that $c_{e}+c_{\hat{e} } = 1$ since only
$\ell _{oe}$ and $\ell _{o\hat{e} }$ contain $\gamma $. As a
consequence, \eqref{eq2.17} can be used to choose the coefficients $\{c_{(\cdot)}\}$ so as to
make $c_{e} = 1$ and $c_{\hat{e} } = 0$. Now let $(e',\hat{e}')$ denote the edges
that label $\gamma '$ with $e' \in E_{ - }$ and $\hat{e}'  \in E_{ + }$.

As is explained next, $c_{e' }$ can not equal $-1$. To see why such
is the case, introduce $\upsilon $ to denote the vertex where $\gamma $ and
$\gamma '$ start. An incident half-arc to $\upsilon $ is called a $1_{ - }$
half-arc when $c_{(\cdot )}= -1$ on the $E_{ - }$ component of its
labeling pair. Meanwhile, a $1_{ + }$ half-arc has $c_{(\cdot )}= 1$
on the $E_{ + }$ component of its labeling pair. Note that there are an even
number of $1_{ - }$ half-arcs and and even number of $1_{ + }$ half-arcs.
Also, every $1_{ - }$ half-arc except $\gamma '$ in the case that
$c_{e' } = -1$ is a $1_{ + }$ half-arc. On the other hand, a $1_{ +}$ half-arc is also a
$1_{ - }$ half-arc because $f^{\gamma ' }(\gamma -\gamma ') = -1$. Granted these
obervations, then parity considerations forbid $c_{e' }$ from having value $-1$.

A very similar argument now proves that $c_{e' } = 0$ and $c_{\hat{e} ' } = 1$.
To see how this comes about, say that an incident
half-arc to $\upsilon $ is a $0_{ - }$ half-arc if $c_{(\cdot )}= 0$
on the $E_{ - }$ component of its labeling pair. Meanwhile, a $0_{ + }$
half-arc has $c_{(\cdot )}= 0$ on the $E_{ + }$ component of its
labeling pair. There are an even number of $0_{ - }$ half-arcs and and even
number of $0_{ + }$ half-arcs. Also, every $0_{ + }$ half-arc except $\gamma$
is a $0_{ - }$ half-arc. On the other hand, every $0_{ - }$ half-arc is a
$0_{ + }$ half-arc except $\gamma '$ if $c_{e' } = 0$. Granted these
obervations, then parity considerations require $c_{e' } = 0$ and
then $c_{\hat{e} ' } = -1$ because $f^{\gamma ' }(\gamma -\gamma ')= -1$.

Having established that $c_{e} = 1$, $c_{\hat{e} } = 0$, $c_{e' } = 0$ and
$c_{\hat{e} ' } = -1$, define a sequence of sets,
$\mathcal{A}_{1}\subset\mathcal{A}_{2}\subset\cdots
\subset\mathcal{A}^{\gamma ,\gamma ' }$ of arcs
in $\underline {\Gamma }_{o}-\{\gamma $, $\gamma '\}$ as
follows: The arcs that comprise $\mathcal{A}_{1}$ are the arcs from
$\ell_{oe}\cup\ell _{o\hat{e} ' }-\{\gamma, \gamma '\}$ Those in any
$k \ge 2$  version of $\mathcal{A}_{k}$ are
labeled by an edge from $e$ that also labels an arc from $\mathcal{A}_{k - 1}$.
If $e$ labels an arc in $\mathcal{A}^{\gamma ,\gamma ' }$, then
$c_{e} = 1$ when $e \in E_{ - }$, and $c_{e} = -1$ when $e \in E_{ +}$.
If $e$ does not label an arc from $\mathcal{A}^{\gamma ,\gamma ' }$,
then $c_{e} = 0$. As a parenthetical remark, note that the sets
$\mathcal{A}^{\gamma ,\gamma ' }$ and $\mathcal{A}^{\gamma ' ,\gamma}$
are complementary: An arc from $\underline {\Gamma }_{o}-\{\gamma , \gamma '\}$
in the former is not in the latter and vice versa.
\end{proof}

To lay more groundwork for the assertions to come, suppose that an ordered
pair of distinct vertices in $\underline {\Gamma }_{o}$ have been
specified and let $\mathbb{A}$ denote a set of arcs that start at the first
vertex of the pair, end at the second and are all assigned equal value by
every map in $\Delta _{o}$. Other arcs can connect these vertices if there
is a map in $\Delta _{o}$ that assigns them values that differ from its
value on the arcs in $\mathbb{A}$. It is assumed in what follows that
$\mathbb{A}$ has two or more arcs. Now, define a new graph, ${\underline {\Gamma}_{o}}'$,
from $\underline {\Gamma }_{o}$ by identifying the
arcs in $\mathbb{A}$ to a single vertex. This single vertex, $\upsilon '$, is
labeled by the sum of the integers that label the two vertices on the arcs
in $\mathbb{A}$. To obtain the other labels for ${\underline {\Gamma}_{o}}'$,
declare that the collapsing map from $\underline {\Gamma }_{o}$ to
${\underline {\Gamma}_{o}}'$ induce an isomorphism between
$\underline {\Gamma }_{o}- \cup _{\gamma \in \mathbb{A}}\gamma $ and
${\underline {\Gamma}_{o}}'-\upsilon '$ that preserves all
integer labels to vertices and the edge label pairs of the arcs.

Since $\mathbb{A}$ has more than one arc, the graph ${\underline {\Gamma}_{o}}'$
has the wrong Euler number for a graph that arises from a pair
$(C_{0}, \phi )$ with $C_{0}$ a multiply punctured sphere. Indeed, let $n$
denote the number of incident edges to $o$ and let a denote the number of arcs
in $\mathbb{A}$. The Euler number of ${\underline {\Gamma}_{o}}'$ is $1-n+(a-1)$
where as $1-n$ is the correct number were ${\underline {\Gamma}_{o}}'$ to
come from a multiply punctured sphere. As a consequence, the image in
$H_{1}({\underline {\Gamma}_{o}}'; \mathbb{Z})$ of the classes $\{[\ell_{oe}]\}$
must satisfy $a > 1$ constraints, thus more than the one that is
depicted in \eqref{eq2.17}. Of course, these extra constraints are generated by a
set whose elements are labeled by distinct, unordered pairs of arcs from
$\mathbb{A}$. In this regard, the constraint labeled by such a pair,
$\{\gamma , \gamma '\}$, asserts that
\begin{equation}\label{eq9.19}
\sum _{e}c_{e} [\ell _{oe}] = 0 \textsl{ in } H_{1}({\underline {\Gamma}_{o}}'; \mathbb{Z}),
\end{equation}
where the collection $\{c_{e}\}$ comes from the $\{\gamma , \gamma '\}$
version of \fullref{lem:9.5}.

The next task is to make some sense out of this. To start, fix an arc
$\gamma\in\mathbb{A}$ and let $e$ denote the edge from $E_{ - }$ that
labels $\gamma $. Define sets
$\mathbb{A}^{\gamma }_{1}\subset\mathbb{A}^{\gamma }_{2}\subset\cdots
\subset\mathbb{A}^{\Gamma }$ of arcs in ${\underline {\Gamma}_{o}}'$ as follows: The
arcs from $\ell _{oe}$ comprise $\mathbb{A}^{\gamma }_{1}$. Meanwhile,
those in the $k \ge 2$  version of $\mathbb{A}^{\gamma }_{k}$ are labeled
by an edge in $E_{ + }$ that also labels an arc in $\mathbb{A}^{\gamma }_{k- 1}$.
Note that $\mathbb{A}^{\gamma }\cap\mathbb{A}^{\gamma ' } = ${\o} if $\gamma\ne\gamma '$.
To see this, first return to the
proof of \fullref{lem:9.5} to see that
$\mathbb{A}^{\gamma }\subset\mathbb{A}^{\gamma ,\gamma ' }$.
Since $\mathbb{A}^{\gamma ,\gamma ' }$ is disjoint from
$\mathbb{A}^{\gamma ' ,\gamma }$, so
$\mathbb{A}^{\gamma }$ and $\mathbb{A}^{\gamma ' }$ are also disjoint.

Let $E^{\gamma }$ denote the set of $o$'s incident edges that label arcs in
$\mathbb{A}^{\gamma }$. Note that $E^{\gamma }$ is disjoint from $E^{\gamma' }$
in the case that $\gamma\ne\gamma '$. Indeed, such is
the case since the former set have $c_{(\cdot )}=\pm 1$ in the
$\{\gamma $, $\gamma '\}$ version of \fullref{lem:9.5}, while the latter set
have $c_{(\cdot )}= 0$ in this same version. Granted this, let
$E^{\gamma }_{\pm }$ denote $E_{\pm }\cap E^{\gamma }$. It follows
from \fullref{lem:9.5} that the union of the arcs in $\mathbb{A}^{\gamma }$ define a
subgraph, ${\underline{\Gamma}_{o}}^{\gamma}\subset {\underline {\Gamma}_{o}}'$,
whose $\mathbb{Z}$ homology is generated by the collection
$\{\ell _{o\hat{e} }: \hat{e}  \in E^{\gamma }\}$ subject to the one constraint,
\begin{equation}\label{eq9.20}
\sum_{\hat{e} \in E^\gamma _ - } [\ell _{o\hat{e} }] - \sum _{\hat{e}
\in E^\gamma _ + } [\ell _{o\hat{e} }] = 0.
\end{equation}
The various $\gamma\in\mathbb{A}$ versions of \eqref{eq9.20} generate the
constraint in \eqref{eq2.17} and the full collection of the various $\{\gamma ,\gamma '\}$
versions of the constraint in \eqref{eq9.19}.

Return now to the graphs in the collection $\{{\underline{\Gamma}_{o}}^{\gamma}\}$.
Remark first that any two such graphs with
distinct arc labels intersect only at $\upsilon '$. On the other hand,
$\cup _{\gamma \in \mathbb{A}}{\underline{\Gamma}_{o}}^{\gamma}$ is
the whole of ${\underline {\Gamma}_{o}}'$. These obervations follow from
Property 3 from Part 3 of \fullref{sec:2c}.

Each arc in ${\underline{\Gamma}_{o}}^{\gamma}$ already has an edge
pair label, and all but one vertex has an integer label. The one as yet
unlabeled vertex is the one that maps to $\upsilon '$ in ${\underline {\Gamma}_{o}}'$.
What follows explains how to deal with the latter so that the
result, ${\underline{\Gamma }_o}^\gamma  $, comes from the label of a vertex
in a graph as described in a version of \fullref{sec:6a}. To start, define
\begin{equation}\label{eq9.21}
Q^{\gamma }\equiv\sum _{\hat{e} \in E^\gamma _ - } Q_{e}-\sum
_{\hat{e} \in E^\gamma _ + } Q_{e}.
\end{equation}
It follows from \eqreft9{18} that $\theta _{o}$ is defined via \eqref{eq1.8} by some
relatively prime integer pair if $Q^{\gamma }$ is non-zero; and then
$Q^{\gamma }$ is a multiple of this pair. Let $\hat {m}^{\gamma }$ denote
the latter multiple when $Q^{\gamma }$ is non-zero, and set
$\hat{m}^{\gamma } = 0$ otherwise. Meanwhile, let $m^{\gamma }$ denote the sum
of the integers that label the vertices in ${\underline{\Gamma}_{o}}^{\gamma}-\upsilon '$.

Now there are two cases to consider. If $\hat {m}^{\gamma } - m^{\gamma}$
is non-zero or if the vertex in ${\underline{\Gamma}_{o}}^{\gamma}$
that maps to $\upsilon '$ has more than two incident edges, then the vertex
that maps to $\upsilon '$ is viewed in ${\underline{\Gamma }_o}^\gamma $ as
an honest vertex with integer label $\hat {m}^{\gamma } - m^{\gamma }$.
If the vertex in ${\underline{\Gamma}_{o}}^{\gamma}$ that maps to
$\upsilon '$ is bivalent and if $\hat {m}^{\gamma } - m^{\gamma } = 0$,
then this vertex is invisible in ${\underline{\Gamma }_o}^\gamma  $ and its
incident arc are viewed as a single arc.

The corresponding \fullref{sec:6a} graph is denoted in what follows as
$T^{\gamma}$, and it is characterized as follows: There is a multivalent vertex,
$o^{\gamma }$, in $T^{\gamma }$ whose angle is $\theta _{o}$ and graph
label is ${\underline{\Gamma }_o}^\gamma  $. Meanwhile, $T^{\gamma }-o^{\gamma }$
is isomorphic to the union of the components of $T-o$
that contain the interiors of the edges from the set $E^{\gamma }$. The
corresponding asymptotic data set for $T^{\gamma }$ is denoted below by
$\hat{A}^{\gamma }$.

All of this background is assumed in the upcoming \fullref{lem:9.6}. In addition,
this lemma assumes that a sequence, $\{\lambda _{j}\}_{j = 1,2\ldots}$,
has been specified in $T$'s version of the space depicted in \eqref{eq6.15} with
the following properties: The sequence is constant but for the $\Delta_{o}$ factor.
Let $\{r_{j}\}$ denote the corresponding sequence in the
$\Delta _{o}$ factor. Then $\{r_{j}\}$ converges and $\mathbb{A}$ is the
set of arcs in $\underline {\Gamma }_{o}$ where $\lim_{j \to \infty }r_{j}(\cdot )$
is zero. With $\{\lambda _{j}\}$ so chosen, fix a
sequence $\{s_{j}\} \in\mathbb{R}$ and then use the image of
$(s_{j}, \lambda _{j})$ in $\mathbb{R}\times O_{T}/\Aut (T)$ with the
inverse of the map from \fullref{sec:6c} to define a sequence in
${\mathcal{M}^{*}}_{\hat{A},T}$.

\begin{lemma}\label{lem:9.6}

With the circumstances as just indicated, there exists the following:
First, a pair, $(S^{\gamma }, \phi ^{\gamma })$, that defines an element in each
$\gamma\in \mathbb{A}$ version of the space $\mathcal{M}^{* }_{\hat{A}^\gamma ,T^\gamma } $.
Second, a subsequence of $\{\lambda_{j}\}$, hence renumbered consecutively from 1,
and a sequence $\{(C_{j0}, \phi _{j})\}$ that defines the corresponding subsequence in
${\mathcal{M}^{*}}_{\hat{A},T}$. Third, a finite set, $\Xi $, of pairs of the form
$(S, \phi )$ where each $(S, \phi )\in\Xi $ is one of the following

\begin{itemize}

\item
An $\mathbb{R}$--invariant cylinder at an angle in $\Lambda _{\hat{A}}$ or at the angle
$\theta _{o}$.

\item
An element from the set $\{(S^{\gamma }, \phi ^{\gamma })\}_{\gamma \in \mathbb{A}}$.
\end{itemize}

Here is the significance: Given a compact set $K\subset\mathbb{R}\times(S^1\times S^2)$, then

\begin{itemize}

\item
$\lim_{j \to \infty }\int _{C_{j0} } \phi _{j}^*\varpi =
\sum _{(S,\phi ) \in \Xi }\int _{S}\phi ^*\varpi $
for each 2--form $\varpi $ with compact support in $K$.

\item
The following limit exists and is zero:
\begin{equation*}
\lim_{j \to \infty } \Bigl(\sup_{z \in \phi _j ^{ - 1}(K)} \dist\bigl(\phi
_{j}(z),  \cup _{(S,\phi ) \in \Xi }\phi (S)\bigr) + \sup_{z \in \cup
_{(S,\phi ) \in \Xi } \phi ^{ - 1}(K)}  \dist\bigl(\phi _{j}(C_{j0}), \phi (z)\bigr)\Bigr).
\end{equation*}

\end{itemize}
\noindent Moreover, there is a subset $\mathbb{A}_{0}\subset\mathbb{A}$ and two versions of
$\Xi $ that arises from suitable choices of $\{s_{j}\}$ such that the first contains
only and all of the $\gamma\in \mathbb{A}_{0}$ versions of $(S^{\gamma }, \phi ^{\gamma })$,
and the second contains only and all of the $\gamma\in\mathbb{A}-\mathbb{A}_{0}$ versions.

\end{lemma}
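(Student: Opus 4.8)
\textbf{Proof proposal for Lemma \ref{lem:9.6}.}
The plan is to adapt the compactness machinery of \fullref{sec:7d} to the present situation where the degenerating data is an entire subset $\mathbb{A}$ of arcs of $\underline{\Gamma}_o$ collapsing simultaneously, rather than a single arc. First I would invoke \cite[Proposition~3.7]{T3} (exactly as in the proof of \fullref{prop:7.1}) to extract from the sequence $\{(C_{j0},\phi_j)\}$ a subsequence whose images converge to a finite union of irreducible pseudoholomorphic punctured spheres, and to establish the analog of \eqref{eq7.4}: the convergence is in the Hausdorff sense on all of $\mathbb{R}\times(S^1\times S^2)$. The argument for this last point is the verbatim analog of the proof of \fullref{prop:7.1}, using that the arc assignments in each $\Delta_o$ factor away from $\mathbb{A}$ stay bounded below, so the only way for a sequence of arcs to have vanishing $x$-integral is for those arcs to lie in $\mathbb{A}$ or far out on ends. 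One must also allow the $\mathbb{R}$-translation parameters $\{s_j\}$ to diverge, which is exactly what produces the extra $\mathbb{R}$-invariant cylinders in $\Xi$ at angles from $\Lambda_{\hat A}$; the bookkeeping here mimics Part 1 of \fullref{sec:4e} combined with the case analysis of \fullref{sec:7d}'s Steps 2--10.

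Next I would identify the non-cylinder components of $\Xi$ with the pairs $(S^\gamma,\phi^\gamma)$ indexed by $\gamma\in\mathbb{A}$. The key structural input is the decomposition of $\underline{\Gamma}_o$ established in the paragraphs preceding the lemma: $\cup_{\gamma\in\mathbb{A}}\underline{\Gamma}_o^\gamma = \underline{\Gamma}_o'$, any two $\underline{\Gamma}_o^\gamma$ with distinct arc labels meet only at the collapsed vertex $\upsilon'$, and each $\underline{\Gamma}_o^\gamma$ carries the edge set $E^\gamma$ with the single homology constraint \eqref{eq9.20}. Geometrically, as the $\mathbb{A}$-arcs shrink, the portion of $C_{j0}$ near the $\theta=\theta_o$ locus splits along the $2(|\mathbb{A}|-1)$ vanishing circles into pieces, one for each $\underline{\Gamma}_o^\gamma$; the piece labeled by $\gamma$ converges to (the model curve of) $S^\gamma$. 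The integer-weight computations of Step 6 in \fullref{sec:7d} show that the ends and $\theta\in\{0,\pi\}$ points of $S^\gamma$ realize exactly the asymptotic data set $\hat A^\gamma$, and the graph-comparison argument of Step 11 there shows the graph of $(S^\gamma,\phi^\gamma)$ is $T^\gamma$, so $(S^\gamma,\phi^\gamma)$ defines a point in $\mathcal{M}^{*}_{\hat A^\gamma,T^\gamma}$. For the vertices of $\underline{\Gamma}_o^\gamma$ that carry $\hat m^\gamma-m^\gamma\neq 0$ but map to $\upsilon'$, the corresponding end of $S^\gamma$ has $|s|\to\pm\infty$ limit $\theta_o$ with weight $|\hat m^\gamma-m^\gamma|$, by the same $df_*$-integral estimate used in Steps 5 and 8 of \fullref{sec:7d}.

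The final assertion — that by choosing $\{s_j\}$ appropriately one obtains two versions of $\Xi$, one containing precisely the $\gamma\in\mathbb{A}_0$ pieces and the other the $\gamma\in\mathbb{A}-\mathbb{A}_0$ pieces — is where I expect the real work. The point is that the relative positions along $\mathbb{R}$ of the various degenerating pieces are governed by the ratios of the vanishing rates $r_j(\gamma)/r_j(\gamma')$, and these ratios are not pinned down by the hypothesis that each $r_j(\cdot)\to 0$ on $\mathbb{A}$. One must show: given any partition $\mathbb{A}=\mathbb{A}_0\cup(\mathbb{A}-\mathbb{A}_0)$ compatible with the constraint structure \eqref{eq9.20} (i.e. such that the union of edge sets $\cup_{\gamma\in\mathbb{A}_0}E^\gamma$ is ``separated'' from its complement in the appropriate sense), there is a choice of $\{s_j\}$ making the $\mathbb{A}_0$-pieces converge to finite-$s$ limits while the $(\mathbb{A}-\mathbb{A}_0)$-pieces escape to $|s|=\infty$, and vice versa. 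I would prove this by an explicit construction: parametrize the $e'$-components of $C_{j0}-\Gamma$ for a distinguished $e'$ in $\cup_{\gamma\in\mathbb{A}_0}E^\gamma$ using \fullref{def:2.1}, read off from \eqref{eq2.8} and \eqref{eq6.27} how $s_j$ must be shifted so that the function $w$ on that component stays bounded, and then verify via \fullref{prop:7.1} applied to the shifted sequence that the resulting limit data set is as claimed. The combinatorial compatibility condition on the partition is precisely the statement that $\cup_{\gamma\in\mathbb{A}_0}\underline{\Gamma}_o^\gamma$ and its complement meet only at $\upsilon'$ and glue back to give graphs of the required Euler characteristic; this follows from the disjointness properties of the $\{E^\gamma\}$ and \fullref{lem:9.5}. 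The chief obstacle is thus the careful tracking of the $\mathbb{R}$-parameters through the splitting, but no genuinely new analytic technique beyond \fullref{sec:7d} is needed.
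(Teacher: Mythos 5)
The paper omits the proof of \fullref{lem:9.6} entirely (the preamble to this subsection says all such proofs repeat the arguments of \fullref{sec:7d} with straightforward modifications), so no direct comparison is possible. Your overall architecture — invoke \cite[Proposition~3.7]{T3}, identify the non-cylinder pieces of $\Xi$ with the $\underline{\Gamma}_o^{\gamma}$ decomposition, then track the $\mathbb{R}$-parameters — is the right one. But there is a real gap in the first paragraph.

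You claim to establish ``the analog of \eqref{eq7.4}: the convergence is in the Hausdorff sense on all of $\mathbb{R}\times(S^1\times S^2)$,'' and you say the argument is the verbatim analog of the proof of \fullref{prop:7.1}. This cannot work. The paper explicitly flags, immediately before the lemma, that the hypothesis \eqreft9{16} — simple-connectedness of the union of collapsing arcs — is exactly what guarantees the validity of \fullref{prop:7.1}, and that the situation of \fullref{lem:9.6} (a set $\mathbb{A}$ of two or more arcs sharing the same pair of endpoints, all collapsing simultaneously) \emph{violates} \eqreft9{16}. The argument in Steps~5 and~6 of \fullref{sec:7d}'s proof of \fullref{prop:7.1} derives a contradiction from arcs of $\Gamma_o$ having vanishing $x$-integral; here such arcs exist by design, so the contradiction is not reached and the global Hausdorff conclusion \eqref{eq7.4} genuinely fails. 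What actually happens is that the necks over the collapsing arcs have unbounded length in $s$, so the various $\gamma$-pieces separate along the $\mathbb{R}$-axis at $j$-dependent rates; for any fixed choice of $\{s_j\}$ some pieces stay at bounded $s$ while others escape to $|s|=\infty$, and the escaping pieces appear in the limit only as $\mathbb{R}$-invariant cylinders. This is precisely why the lemma's second bullet is stated relative to a compact set $K$ — which is directly supplied by \cite[Proposition~3.7]{T3} without any \eqref{eq7.4}-style upgrade — and why the final assertion about $\mathbb{A}_0$-dependence is nontrivial. You correctly intuit this mechanism in your third paragraph, but it contradicts the claim in your first. You should drop the global Hausdorff claim, use only the compact-$K$ version from \cite[Proposition~3.7]{T3}, and treat the $s$-separation of the pieces as the organizing principle for the whole argument rather than as an afterthought. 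One secondary note: the lemma only asserts the \emph{existence} of some partition $\mathbb{A}_0$ realizing the dichotomy; you set yourself the harder task of realizing an arbitrary ``compatible'' partition, which is more than you need to prove.
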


A story can be told in the manner of Lemmas~\ref{lem:9.1}--\ref{lem:9.4}
that gives a much more detailed account of the convergence of the sequence in question. In
particular, a part of this story describes a subvariety in
$\times_{\gamma \in \mathbb{A}}O_{T^\gamma } /\Aut (T^{\gamma })$ that maps
in a proper, finite to one fashion onto the given codimension 1 stratum of
the compactification of $O_{T}/\Aut (T)$. In any event, this more detailed
account is left to the reader except for the following comment: As with
Lemmas~\ref{lem:9.1} and~\ref{lem:9.4}, various cases must be distinguished. In the present
circumstances, these are characterized by

\begin{itemize}

\item
\textsl{the integers that are assigned the two vertices on the arcs in $\mathbb{A}$;}

\item
\textsl{whether or not $\upsilon '$ is represented by a vertex in the various versions of
${\underline{\Gamma }_o}^\gamma  $;}

\item
\textsl{in the cases that $\upsilon '$ is so represented, the integer label of the corresponding vertex.}

\end{itemize}

\subsection{The compactification of $\mathcal{M}_{\hat{A}}$ in
the case that $N_{ - }+\hat {N}+\text{\c{c}}_{ -}+\text{\c{c}}_{ + }= 2$}\label{sec:9c}

The purpose of this subsection is to describe a compactification of the
whole of $\mathcal{M}_{\hat{A}}$ in the case that $\hat{A}$ is an
asymptotic data set with $N_{ - }+\hat {N}+\text{\c{c}}_{ - }+\text{\c{c}}_{ + } = 2$.

The proofs of the various assertions that follow are omitted as they can be
obtained in a straightforward manner using variations of arguments from
\fullref{sec:4} and \fullref{sec:7}. In this regard, \eqreft9{16} should be used to obtain the
conclusions of \fullref{prop:7.1}.

To start the discussion, return to \fullref{thm:1.2} where
$\mathcal{M}_{\hat{A}}$ is described as $\hat{O}^{\hat{A}}/\Aut
^{\hat{A}}$ with $\hat{O}^{\hat{A}}$ sitting in the space $O^{A}$ in
\eqref{eq1.21} as the subset where the $\Aut ^{\hat{A}}$ action is
free. The compactification here is as described in Part 2 of
\fullref{sec:1b}, thus $\underline {O}^{\hat{A}}/\Aut
^{\hat{A}}$ where
\begin{equation}\label{eq9.22}
\underline {O}^{A}\equiv [\mathbb{R}_{ - }\times \Maps(\hat{A}_{ + }; \mathbb{R})]/
\bigl[(\mathbb{Z} \times \mathbb{Z})\times \Maps(\hat{A}_{ + }; \mathbb{Z})\bigr].
\end{equation}
As indicated in Part 2 of \fullref{sec:1b}, the points in
$\underline {O}^{\hat{A}}-\hat{O}^{\hat{A}}$ describe bonafide
subvarieties, but these lie in $\hat{A}' \ne \hat{A}$ versions of
$\mathcal{M}_{\hat{A}' }$. This story is told in the two parts of
the subsection that follow. The first part discusses the points in
$O^{\hat{A}}-\hat{O}^{\hat{A}}$ while the second considers those in
$\underline{O}^{\hat{A}}-O^{\hat{A}}$.

\step{Part 1}
The following summarizes most of the story on $O^{\hat{A}}-\hat{O}^{\hat{A}}$:

\begin{proposition}\label{prop:9.7}
The map described in \fullref{sec:3c} extends to define an
orbifold diffeomorphism between $\mathbb{R}\times O^{\hat{A}}/\Aut^{\hat{A}}$ and ${\mathcal{M}^{*}}_{\hat{A}}$.
\end{proposition}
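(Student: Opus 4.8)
\textbf{Proof proposal for \fullref{prop:9.7}}

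The plan is to mimic the structure of the proof of \fullref{thm:3.1} from Sections~\ref{sec:3}--\ref{sec:4}, but now working with ${\mathcal{M}^{*}}_{\hat{A}}$ in place of its linear-graph subspace $\mathcal{M}$, and recognizing that when $N_{-}+\hat{N}+\text{\c{c}}_{-}+\text{\c{c}}_{+}=2$ the graph $T^{\hat{A}}$ is linear and hence $\mathcal{M}=\mathcal{M}_{\hat{A}}$ already on the open stratum. First I would recall that \fullref{thm:3.1} (equivalently \fullref{thm:1.2}) has already established an orbifold diffeomorphism $\mathfrak{B}\co \mathcal{M}_{\hat{A}}\to\mathbb{R}\times\hat{O}^{\hat{A}}/\Aut^{\hat{A}}$ intertwining the two $\mathbb{R}$ actions. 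The task is therefore to extend $\mathfrak{B}^{-1}$, or rather to extend the map described in \fullref{sec:3c}, across the locus $O^{\hat{A}}-\hat{O}^{\hat{A}}$ and to show the extension is a homeomorphism onto all of ${\mathcal{M}^{*}}_{\hat{A}}$, and smooth as an orbifold map. The key observation is that the extra points in ${\mathcal{M}^{*}}_{\hat{A}}$ beyond $\mathcal{M}_{\hat{A}}$ are exactly the multiply covered subvarieties, and these correspond precisely to the non-free locus of the $\Aut^{\hat{A}}$ action, as is visible from Step~7--8 of the properness argument in \fullref{sec:4e} and from \fullref{prop:4.4} and \fullref{prop:4.5} describing $O^{\hat{A}}-\hat{O}^{\hat{A}}$.

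The concrete steps I would carry out, in order, are as follows. (1) Observe that when $N_{-}+\hat{N}+\text{\c{c}}_{-}+\text{\c{c}}_{+}=2$ the stratification of \fullref{sec:5a} is trivial at the top: $B=\emptyset$, $c=0$, and $d$ has $N_{+}$ parts, so ${\mathcal{M}^{*}}_{\hat{A}}$ is a single ${\mathcal{M}^{*}}_{\hat{A},T}$ for the linear graph $T=T^{\hat{A}}$ (together with its homotopes); hence by \fullref{thm:6.2} it is diffeomorphic as an orbifold to $\mathbb{R}\times O_{T}/\Aut(T)$. (2) Identify $O_{T}$ with $O^{\hat{A}}$ and $\Aut(T)$ with $\Aut^{\hat{A}}$; this is the content of \fullref{sec:6e}, where the linear-graph case is exactly the situation in which $\Aut_{\diamondsuit}$ fixes an incident edge, so the $\Aut(T)$ action is already visible on the space in \eqref{eq6.9} and matches the action defining $O^{\hat{A}}$ in \eqref{eq1.21}. (3) Check that under these identifications the map of \fullref{thm:6.2}, restricted to the complement of $\mathcal{R}$, agrees with the map $\mathfrak{B}^{-1}$ of \fullref{thm:3.1}; this follows because both are characterized by the same geometric data — the $|s|\to\infty$ Reeb-orbit limits and the behavior near the $0$ and $\pi$ loci — as spelled out in Propositions \ref{prop:3.5}, \ref{prop:3.6}, and \ref{thm:6.3}. (4) Conclude that the map of \fullref{thm:6.2} is the desired extension of the \fullref{sec:3c} map: it is an orbifold diffeomorphism $\mathbb{R}\times O^{\hat{A}}/\Aut^{\hat{A}}\to{\mathcal{M}^{*}}_{\hat{A}}$, it carries $\mathbb{R}\times\hat{O}^{\hat{A}}/\Aut^{\hat{A}}$ onto ${\mathcal{M}^{*}}_{\hat{A}}-\mathcal{R}=\mathcal{M}_{\hat{A}}$, and it carries the non-free locus onto $\mathcal{R}$, the locus of genuinely multiply covered subvarieties.

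I expect the main obstacle to be Step~(2)--(3): verifying that the group action on $O_{T}$ built abstractly in \fullref{sec:6} through the blow-up graph $\bar{\Gamma}^{*}$ and the cohomology class $\phi_{\diamondsuit}$ reduces, in the linear-graph case, to the concrete $\mathbb{Z}\times\mathbb{Z}$ and $\Maps(\hat{A}_{+};\mathbb{Z})$ action of \eqref{eq1.18}--\eqref{eq1.20}, and that the two parametrizing recipes (the one in \fullref{sec:3c} versus the one in \fullref{sec:6c}) produce the same point. The reconciliation is essentially bookkeeping — the concatenating path sets of \fullref{sec:6c} collapse to the simple inductive lifts of Part~2 of \fullref{sec:2c} when $T$ is linear with no automorphisms of the relevant $\ell_{o}$ — but it requires care to track the various distinguished-vertex and distinguished-edge choices and confirm they are immaterial, which is what \fullref{sec:3d} and \fullref{sec:6d} already do for each map separately. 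Once those two maps are matched, the proposition is immediate from \fullref{thm:6.2}, and one gets \fullref{prop:9.7} essentially for free. A secondary point worth a remark is that one must confirm that the $N_{-}+\hat{N}+\text{\c{c}}_{-}+\text{\c{c}}_{+}=2$ hypothesis forces every $(C_{0},\phi)\in{\mathcal{M}^{*}}_{\hat{A}}$ to have $C_{0}$ a punctured sphere whose $\phi^{*}\theta$ has no critical points in $(0,\pi)$ — this is \cite[Proposition~2.11]{T3}, already invoked in \fullref{sec:1b} — so that the relevant graph is indeed linear and the machinery of \fullref{sec:6e} applies verbatim.
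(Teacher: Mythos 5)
Your overall architecture is exactly right and matches the paper: Proposition 9.7 is indeed stated to be a corollary of Theorem~6.2, and the paper's elaboration proceeds by matching the construction of $O^{\hat{A}}$ from \eqref{eq1.21} and \eqref{eq3.12} with the stratum-by-stratum picture from Section~6. However, your Steps~(1)--(2) contain an imprecision that, left as written, produces a gap. You assert that $\mathcal{M}^{*}_{\hat{A}}$ is a single $\mathcal{M}^{*}_{\hat{A},T}$ and that one should identify $O_{T}$ with $O^{\hat{A}}$ and $\Aut(T)$ with $\Aut^{\hat{A}}$. Neither identification is correct in general. The space $O_{T}$ from \eqref{eq6.9} is connected (it is built from simplices and a Euclidean factor), whereas $O^{\hat{A}}$ is generally disconnected: as recalled in \fullref{sec:4c}, its components are indexed by $\times_{o}\Cyc_{o}$, the product over bivalent vertices of the sets of cyclic orderings of $\hat{A}_{o}$. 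Correspondingly, $\mathcal{M}^{*}_{\hat{A}}$ in the $N_{-}+\hat{N}+\text{\c{c}}_{-}+\text{\c{c}}_{+}=2$ case is a disjoint union of pieces $\mathcal{M}^{*}_{\hat{A},T}$, one for each isomorphism class of linear graph $T$ decorated with the circular graphs $\underline{\Gamma}_{(\cdot)}$; these are not related by graph homotopy in the sense of \fullref{sec:6a} (homotopy moves vertex angles, which are all frozen here, but does not change the $\underline{\Gamma}_{o}$). Likewise, the automorphism group of the decorated graph $T$ is the subgroup $\Aut_{v}=\times_{o}\Aut_{o,v}\subset\Aut^{\hat{A}}$ that preserves the cyclic orderings $v$, not all of $\Aut^{\hat{A}}$.

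The fix is entirely within your framework: for each $v\in\times_{o}\Cyc_{o}$ one gets a decorated linear graph $T=T(v)$, and Theorem~6.2 gives an orbifold diffeomorphism $\mathcal{M}^{*}_{\hat{A},T(v)}\cong\mathbb{R}\times O_{T(v)}/\Aut(T(v))$. Then identify $O_{T(v)}$ with the component $O^{\hat{A}}_{v}$ of $O^{\hat{A}}$ (as in \eqref{eq3.12}) and $\Aut(T(v))$ with $\Aut_{v}$, so that $O^{\hat{A}}_{v}/\Aut_{v}$ is the image of $O^{\hat{A}}_{v}$ in $O^{\hat{A}}/\Aut^{\hat{A}}$. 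Since the $\Aut^{\hat{A}}$-orbits on the set of components of $O^{\hat{A}}$ are in bijection with the isomorphism classes of such $T$, these pieces assemble to the claimed diffeomorphism of the whole of $\mathbb{R}\times O^{\hat{A}}/\Aut^{\hat{A}}$ with $\mathcal{M}^{*}_{\hat{A}}$. With this correction your Steps~(3) and~(4) — matching the \fullref{sec:3c} recipe with the \fullref{sec:6c} recipe component by component using \fullref{sec:6e} — go through and complete the proof as in the paper.
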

\noindent This proposition is a corollary of \fullref{thm:6.2}. To elaborate some on how
\fullref{prop:9.7} arises, suppose that $o$ denotes a bivalent vertex in
$T^{\hat{A}}$ and let $\hat{A}_{o}$ denote the set of elements in
$\hat{A}_{ + }$ whose integer pair gives $\theta _{o}$ via \eqref{eq1.8}. Let
Cyc$_{o}$ denote the set of cyclic orderings of $\hat{A}_{o}$. As noted in
\fullref{sec:4c}, the components of $O^{\hat{A}}$ are in 1--1 correspondence
with the points in $\times _{o}$ Cyc$_{o}$. Let $v$ denote a point in the
latter space and let ${O^{\hat{A}}}_{v}$ denote the corresponding
component. Borrowing again from \fullref{sec:4c}, let $\Aut_{o,v}$ denote the group
of permutations of $\hat{A}_{o}$ that preserve the cyclic ordering from $v$
while permuting only elements with identical 4--tuples, and set $\Aut_{v }
\equiv\times _{o}\Aut_{o,v}$. This is the subgroup of $\Aut^{\hat{A}}$ that
preserves ${O^{\hat{A}}}_{v}$. As such, the image of
${O^{\hat{A}}}_{v}$ in $O^{\hat{A}}/\Aut^{\hat{A}}$ is
diffeomorphic as an orbifold to ${O^{\hat{A}}}_{v}/\Aut_{v}$. The
component ${O^{\hat{A}}}_{v}/\Aut_{v}$ corresponds in the present
contex to one of \fullref{sec:5}'s top dimensional strata in ${\mathcal{M}^{* }}_{\hat{A}}$.

To say something more about the points in $O^{\hat{A}}-\hat{O}^{\hat{A}}$, suppose that $v$ is as above and that
$\Aut_{v}$ has a nontrivial, canonical $\mathbb{Z}/k\mathbb{Z}$ subgroup. In this case,
${O^{\hat{A}}}_{v }$ will have points where the $\Aut^{\hat{A}}$
action is not free; and, of course, these give points in $O^{\hat{A}}/\Aut^{\hat{A}}$ that are not in
$\hat{O}^{\hat{A}}/\Aut ^{\hat{A}}$. Now, let $G$ denote a nontrivial subgroup of some
canonical $\mathbb{Z}/k\mathbb{Z}$ subgroup of $\Aut_{v}$. Define ${\hat{A}_{+}}'$
to be the quotient of $\hat{A}_{ + }$ by $G$. Thus, ${\hat{A}_{+}}'$ is obtained
from $\hat{A}_{ + }$ by replacing each $G$ orbit in $\hat{A}_{ + }$ by a single
point. Note that the given cyclic ordering $v$ for $\hat{A}_{ + }$ induces one
for ${\hat{A}_{+}}'$; the latter is denoted in what follows by $v'$.

Meanwhile, both $\text{\c{c}}_{ - }$ and $\text{\c{c}}_{ + }$ must vanish when
$\Aut ^{\hat{A}}$ has a nontrivial, canonical $\mathbb{Z}/k\mathbb{Z}$
subgroup. Moreover, $\hat{A}-\hat{A}_{ + }$ must consist of two 4--tuples that
are of the form $(0,-,Q)$ or $(\pm 1,\cdot ,Q)$ where $k$ evenly divides both
components of $Q$. This understood, augment ${\hat{A}_{+}}'$ with the 4--tuples
that are obtained from the latter two by replacing Q with $\frac{1}{{|G|}}Q$. Here, $|G|$ denotes the size of the
group $G$. Use $\hat{A}'$ to denote this augmented set. Note that $\hat{A}'$ defines a
graph, $T^{\hat{A}' }$, of the sort described in \eqreft1{15} that
satisfies the condition stated in \eqreft1{16}. In this regard, the vertices of
$T^{\hat{A}'}$ enjoy an angle preserving 1--1 correspondence
with those of $T^{\hat{A}}$. This correspondence induces one between
the respective edges that has the following property: If $Q$ is an integer
pair from an edge in $T^{\hat{A}}$, then $\frac{1}{{|G|}}Q$ is the integer pair of its partner
in $T^{\hat{A}' }$.

Now, let ${O^{\hat{A}}}_{v,G}$ denote the subset of points in
${O^{\hat{A}}}_{v}$ whose $\Aut_{v}$ stabilizer is $G$. Then the
quotient of of ${O^{\hat{A}}}_{v,G}$ by $\Aut_{v}$ is diffeomorphic to
${\hat{O}^{\hat{A}'}}_{v' }/\Aut_{v' }$. Here, $\Aut_{v' }$ is the subgroup of $\Aut ^{\hat{A}' }$ that
preserves the cyclic order of ${\hat{A}_{+}}'$ that is
inherited from $v$'s ordering of $\hat{A}_{ + }$. This diffeomorphism is induced
by a map from ${O^{\hat{A}}}_{v}$ to ${\hat{O}^{\hat{A}'}}_{v' }$ that intertwines the action of $\Aut_{v}$ with that of
$\Aut_{v'}$. To describe the desired map, fix a point in each of the $G$ orbits in $\hat{A}_{ + }$. Now,
let $\lambda$ denote a point in $\mathbb{R}_{-}\times \Maps(\hat{A}_{ + }; \mathbb{R})$ whose image lies in
${O^{\hat{A}}}_{v,G}$. Define from $\lambda $ a point $\lambda '\in\mathbb{R}_{ - }\times
\Maps({\hat{A}_{+}}';
\mathbb{R})$ by taking
its $\mathbb{R}_{ - }$ factor to be $\frac{1}{{| G| }}$
times that of $\lambda $ while defining its value on any element in
${\hat{A}_{+}}'$ to be the value that $\lambda $ assigns to the chosen inverse
image in $\hat{A}_{ + }$. This assignment of $\lambda '$ to $\lambda $ defines
a smooth map from ${O^{\hat{A}}}_{v,G}$ to $O^{\hat{A}' }_{v' }$ that intertwines the $\Aut_{v}$ action with the
$\Aut_{v' }$ action. The image of this map is ${\hat{O}^{\hat{A}'}}_{v' }$ and the induced map,
$\pi\co {O^{\hat{A}}}_{v,G}/\Aut_{v}\to {\hat{O}^{\hat{A}'}}_{v' }/\Aut_{v' }$, is the desired diffeomorphism

With the preceding understood, consider now the following scenario: Let
$\lambda _{0}\in {O^{\hat{A}}}_{v,G}$ and let $\{\lambda_{j}\} \in \hat{O}^{\hat{A}}_{v}$ denote a sequence that
converges to $\lambda _{0}$. Fix $s_{0}\in\mathbb{R}$ and let $(C,\phi)$ denote the point
in $\mathcal{M}_{\hat{A}' }$ that is
defined by $(s_{0}$, $\pi (\lambda _{0}))$. Because $C$ has genus zero,
there is a unique holomorphic covering of $C$ with the following properties:
First, the covering space is a punctured sphere and the group of deck
transformations is $\mathbb{Z}/(| G| \mathbb{Z})$. Second, the
covering is trivial over each concave side end of $C$ where the $|s|\to\infty $ limit of $\theta $ is
in $(0,\pi)$. Third, the covering restricts over the other two ends of $C$ as a connected covering
space. Use $C_{0}$ to denote the covering space and let $\phi _{0}\co C_{0}\to\mathbb{R}\times
(S^1\times S^2)$ denote the composition of the map $\phi$ with the holomorphic covering map to $C$. Thus,
$(C_{0}$, $\phi _{0})$ defines a point in ${\mathcal{M}^{* }}_{\hat{A}}$. Meanwhile, use
$\{(C_{j0}$, $\phi _{j})\}$ to denote the sequence in $\mathcal{M}_{\hat{A}}$ that corresponds via
the map from \fullref{sec:3} to the sequence $\{(s_{0}, \lambda _{j})\}$. \fullref{thm:6.2}
implies that $\{(C_{j0},\phi _{j})\}$ converges in ${\mathcal{M}^{* }}_{\hat{A}}$ to $(C_{0},\phi _{0})$.

\step{Part 2}
This part of the subsection discusses the geometric significance of the
points in $\underline {O}^{\hat{A}}-O^{\hat{A}}$.
The story here starts with the definition of an $\Aut ^{\hat{A}}$
invariant stratification of $\underline{O}^{\hat{A}}$. In this
regard, a given stratum of the stratification is labeled by a partition,
$\wp$, of $\hat{A}_{ + }$ of the following sort: Each partition subset
consists of elements whose integer pair components define the same angle via \eqref{eq1.8}.
The stratum ${\mathcal{O}}_{\wp } \subset \underline{O}^{\hat{A}}$ comes via \eqref{eq9.22} from the
subset maps from $\hat{A}_{ + }$ to $\mathbb{R}$ that have the following two properties: First,
the map assigns the same value in $\mathbb{R}/(2\pi \mathbb{Z})$ to elements from the same partition
subset. Second, the map assigns distinct values in $\mathbb{R}/(2\pi \mathbb{Z})$ to elements from
distinct partition subsets when their associated
integer pair components define the same angle via \eqref{eq1.8}. The set $O^{\hat{A}}$ consists of
the components of the strata where $\wp $ consists
solely of single element sets. In general, the stratum labeled by $\wp $ has
dimension $n_{\wp }+2$, where $n_{\wp }$ denotes the number of sets that
comprise the partition $\wp$.

Associated to $\wp $ is an asymptotic data set, $\hat{A}'$; this is defined as
follows: The integers $(\text{\c{c}}_{ - }, \text{\c{c}}_{ + })$ for $\hat{A}$ are used for
$\hat{A}'$ as are any 4--tuples of the form $(\pm 1,\ldots)$ or $(0,-,\ldots)$. Meanwhile, the 4--tuples
from ${\hat{A}_{+}}'$ are in 1--1 correspondence with the partition subsets of $\wp$. In
particular, each is of the form $(0,+,\ldots)$. The integer
pair component for the 4--tuple labeled by a given partition subset is the
sum of those that label its elements. Thus, all such integer pairs define
the same angle via \eqref{eq1.8}.

The next order of business is to define a certain map from ${\mathcal{O}}_{\wp}$ to $O^{\hat{A}' }$.
For this purpose, let $\lambda $ denote a point in $\mathbb{R}_{-}\times \Maps(\hat{A}_{ + };
\mathbb{R})$. Define $\lambda '\in\mathbb{R}_{ - }\times \Maps(\hat{A}'_{+}; \mathbb{R})$ by using
$\lambda$'s factor in $\mathbb{R}_{ - }$ for the
$\mathbb{R}_{ - }$ factor of $\lambda '$, and by using $\lambda $'s value on
any element from any given partition subset for the value of $\lambda '$ on
the corresponding 4--tuple in $\hat{A}_{\wp + }$. This map induces a
diffeomorphism between ${\mathcal{O}}_{\wp }$ and $O^{\hat{A}' }$
and thus defines a map, $\pi $, from ${\mathcal{O}}_{\wp }/\Aut^{\hat{A}}$ to $O^{\hat{A}' }
/\Aut ^{\hat{A}' }$. Note here that $\Aut^{\hat{A}}$ is, in all cases, a subgroup of $\Aut^{A' }$.

Points in $O^{\hat{A}'}$ where $\Aut ^{\hat{A}'}$ acts freely parametrize $\mathcal{M}_{\hat{A}' }$. As was
just asserted in \fullref{prop:9.7}, the whole of $\mathbb{R}\times O^{\hat{A}' }/\Aut ^{\hat{A}' }$
parametrizes ${\mathcal{M}^{*}}_{\hat{A}'}$.

With all of this as background, consider now the following scenario: Fix
$\lambda _{0}\in{\mathcal{O}}_{\wp}$ and $s_{0 }\in\mathbb{R}$;
and let $(C_{0},\phi)$ define the point in $\mathcal{M}^{*}_{\hat{A}'}$ that is defined from $(s_{0},\pi
(\lambda _{0}))$ as given by the $\hat{A}'$ version of \fullref{prop:9.7}. Use
the asymptotic data from the ends of $C_{0}$ to define a 1--1 correspondence
between this set of ends and the 4--tuples in $\hat{A}'$.

In the mean time, let $\{\lambda _{j}\} \in O^{\hat{A}}$
denote a sequence that converges to $\lambda _{0}$; and let $\{(C_{j0}$,
$\phi _{j})\}$ denote the sequence in ${\mathcal{M}^{* }}_{\hat{A}}$ that is defined from
$\{(s_{0},\lambda _{j})\}$ as described in \fullref{prop:9.7}.

\begin{lemma}\label{lem:9.8}
The following limit exists and is zero:
\begin{equation*}
\lim_{j \to \infty } \Bigl[\sup_{z \in C_{0j}} \dist\bigl(\phi _{j}(z), \phi
(C_{0})\bigr) + \sup_{z \in C_0 } \dist\bigl(\phi _{j}(C_{0j}), \phi (z)\bigr)\Bigr].
\end{equation*}
Moreover, let $R$ be such that the $|s|> R$  portion of $C_{0}$
lies in the ends of $C_{0}$. Define $C_{R}\subset C_{0}$ to be the complement in $C_{0}$
of the $|s|> R$ portion of those ends whose associated  $\hat{A}'$  label corresponds to a
partition subset with size greater than 1. Then, for all $j$ sufficently large, there exists an
embedding, $\psi _{j}\co C_{R}\to C_{j0}$, with the following properties:
\begin{enumerate}
\item[$\bullet$] $\lim_{j \to \infty }\dist(\phi _{j} \circ \psi _{j}(z)$, $\phi(z)) = 0$ for all
$z$ in the domain of $\psi _{j}$.
\item[$\bullet$] $|\bar {\partial }\psi _{j}| \ll | \partial \psi_{j}|$ and
$\lim_{j \to \infty }\sup_{\domain(\psi _j )}|\bar{\partial }\psi _{j}|/|\partial \psi_{j}|  = 0$.
\item[$\bullet$] The complement in $C_{0j}$ of the image of $\psi _{j}$ consists of a disjoint
union of multiply punctured spheres, each embedded in a tubular neighborhood of an
$\mathbb{R}$--invariant cylinder so that the projection to the cylinder defines
an orientation preserving, ramified covering map with finitely many ramification points.
\end{enumerate}
\end{lemma}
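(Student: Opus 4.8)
The plan is to run the compactness machinery of Section~7.4, specialized to the case $N_{-}+\hat N+\text{\c{c}}_{-}+\text{\c{c}}_{+}=2$, and to use the hypothesis \eqreft9{16} to supply the conclusions of \fullref{prop:7.1} in this setting. First I would observe that $\{\lambda_j\}$ being a convergent sequence in $O^{\hat A}$ whose limit lies in ${\mathcal O}_{\wp}$ means, by the description of ${\mathcal O}_{\wp}$ in Part~2, that the elements of any given partition subset of $\wp$ have angular coordinates that coalesce modulo $2\pi\mathbb Z$ as $j\to\infty$; in terms of the graph picture from Section~6.A, this is precisely the situation where a collection of arcs in the relevant version of $\underline\Gamma_{(\cdot)}$ shrinks to zero length. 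Since each partition subset consists of elements whose integer pairs define the \emph{same} angle via \eqref{eq1.8}, the shrinking arcs all lie in a single $\underline\Gamma_{o}$ at a fixed angle, and their union is simply connected (each such $\underline\Gamma_{o}$ here is a circular graph, so a union of its arcs is simply connected precisely when it is not the whole circle, which is the generic situation encoded by \eqreft9{16}). Thus the assumption guaranteeing \fullref{prop:7.1} is met, and I may extract a subsequence with a limit data set $\Xi$ as in \fullref{prop:7.1}.

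Next I would identify $\Xi$. Because the graphs $T_{(\cdot)}$ of the $C_{0j}$ are all homotopic (they differ only in vertex-angle labels, which converge), and because the limit point in $O^{\hat A}_{\wp}$ corresponds under $\pi$ to the point in $O^{\hat A'}$ defining $(C_{0},\phi)$, the arguments of Part~2 and Part~3 of Section~7.4 show that $\Xi$ contains a single ``main'' element whose model curve is $C_{0}$, together possibly with $\mathbb R$--invariant cylinders at the angle of the coalescing arcs and at angles in $\Lambda_{\hat A}$. This is essentially \fullref{lem:9.6} read off in the $k=0$ case: each shrinking family of arcs in the circular graph $\underline\Gamma_{o}$ produces a thrice-or-more punctured sphere component attached along a $\theta=\theta_{o}$ Reeb orbit. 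The point-set convergence statement — the first display of the lemma — is then exactly the second bullet of \fullref{prop:7.1} (equivalently of \fullref{lem:9.6}), together with the fact that the $\mathbb R$--invariant cylinder pieces in $\Xi$ are attached to $C_0$ along Reeb orbit limits of its ends and hence lie inside the closure of $\phi(C_0)$ in the Hausdorff sense relevant to \eqref{eq1.13}.

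For the embedding $\psi_j$, I would follow the construction in Steps~7--14 of Part~3 of Section~7.4. Over the region $C_R$ — the complement of the far ends of $C_0$ associated to multi-element partition subsets — the maps $\phi_j(C_{0j})$ intersect a fixed tubular neighborhood of $\phi(C_R)$ as graphs of small normed sections of the pull-back normal bundle, and over the singular points and the $\theta\in\{0,\pi\}$ points one builds the local branched-cover extensions as in Steps~13 and~14. The first two bullets of the lemma — the distance estimate and the bound $|\bar\partial\psi_j|\ll|\partial\psi_j|$ with supremum tending to zero — come verbatim from the constructions there, using \eqref{eq7.11} and the local coordinate normal forms \eqref{eq5.2}--\eqref{eq5.4}. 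For the third bullet I would analyze the complement of the image: over each far end of $C_0$ belonging to a multi-element partition subset, Steps~9 and~10 show that the nearby part of $C_{0j}$ consists of a multiply punctured sphere sitting in a tubular neighborhood of the $\mathbb R$--invariant cylinder defined by that end's limiting Reeb orbit, and that the tubular-neighborhood projection restricts to it as an orientation-preserving, properly ramified covering with finitely many ramification points; that these are the \emph{only} complementary pieces follows because $\psi_j$ was built to be surjective off the far ends of single-element subsets.

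\textbf{The main obstacle.}
The delicate point, as always in these compactness statements, is not the implicit-function-theorem construction of $\psi_j$ on a compact part, but verifying that $\psi_j$ captures \emph{all} of $C_{0j}$ outside the advertised cylindrical pieces — that no additional component of $\phi_j(C_{0j})$ can escape to infinity or bubble off unaccounted for. In the present case this amounts to ruling out, via \fullref{prop:7.1} and the convergence of the images $\{\lambda_j\}$ in the factors $\mathbb R_{-}$ and $\Maps(\hat A_+;\mathbb R)$, that any coalescing family of arcs other than those forced by $\wp$ occurs, and that the shrinking arcs do not drag an entire circle $\ell_{oe}$ down with them; this is exactly where \eqreft9{16} — the simple-connectivity of the union of the vanishing arcs — is essential, and reproving the relevant steps of Section~7.4's Part~3 under this hypothesis rather than under the original genericity hypothesis is the one place that requires care. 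The argument is, however, a cosmetic modification of what is already in Section~7.4, so I would only flag the point and refer there for the details.
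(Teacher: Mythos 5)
Your overall strategy is essentially the paper's: the paper explicitly says that the proofs in \fullref{sec:9c} are ``omitted as they can be obtained in a straightforward manner using variations of arguments from [Sections~4 and~7]'' with \eqreft9{16} invoked to secure the conclusions of \fullref{prop:7.1}, and your plan tracks that template faithfully, including the identification of the coalescing partition subsets with shrinking arcs in the circular graphs $\underline\Gamma_o$, the extraction of the limit data set $\Xi$ via \fullref{prop:7.1}, and the construction of $\psi_j$ along the lines of Steps~7--14 of Part~3 of \fullref{sec:7d} with the ramified-cover description over the coalescing ends.

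One parenthetical claim is incorrect as stated and is worth repairing. You write that for a circular graph a union of arcs ``is simply connected precisely when it is not the whole circle.'' This equivalence is false: a disjoint union of two or more proper intervals in a circle is not simply connected (it fails connectedness), and such configurations genuinely arise in \fullref{lem:9.8} whenever the partition $\wp$ contains two or more multi-element subsets defining the same angle $\theta_o$ via \eqref{eq1.8}, since the shrinking arcs then fall into two or more disjoint clusters in a single $\underline\Gamma_o$. The property that is actually both needed and automatic here is that the shrinking arcs never contain a full loop $\ell_{oe}$ (equivalently, never cover the circle): each partition subset contributes only its $m_i-1$ ``interior'' arcs to the shrinking set, while the ``boundary'' arcs between consecutive subsets converge to positive lengths by the second defining condition for $\mathcal{O}_\wp$. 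It is this avoidance of a full $\ell_{oe}$, not literal simple connectivity of the union, that keeps the modified Step~6 of the proof of \fullref{prop:7.1} from producing a thin, homologically essential circle and thereby prevents the limit from disconnecting. With that correction, the appeal to \fullref{prop:7.1} goes through and the rest of your argument stands.
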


\subsection{The codimension 1 strata in the compactification of
${\mathcal{M}^{*}}_{\hat{A}}$}\label{sec:9d}

This subsection describes the codimension 1 strata in the compactification
of ${\mathcal{M}^{* }}_{\hat{A}}$ in the case that $N_{ - }+\hat
{N}+\text{\c{c}}_{ - }+\text{\c{c}}_{ + } > 2$. To start, remark that there are two
sorts of codimension 1 strata in these cases. Strata of the first sort arise
from the codimension 1 strata in the compactifications of those
$O_{T}/\Aut (T)$ that appear in the various versions of \eqref{eq9.1}. The geometric
interpretation of these strata are described in Lemmas~\ref{lem:9.2}--\ref{lem:9.4}. In this
regard, keep in mind that only circular versions of $\underline {\Gamma}_{o}$ are involved in
the cases when the added strata are of codimension 1.

The second sort of stratum arises by adding certain dimension $k-1$ faces to
the simplex product $\Delta ^{k}$ in \eqref{eq9.1}. This section and the final
section explore the geometry of the subvarieties in the codimension 0 strata
that are near this second kind of added strata.

To start the discussion, let $\mathcal{S}$ denote the codimension 0 stratum
involved. The space $\Delta ^{k}$ for $\mathcal{S} $ is a $k$--dimensional product
of simplices in $\times _{k}((0,\pi )-\Lambda _{+,\text{{\o} }})$. The compactification
of $\mathcal{S}$ contains an added codimension 1 stratum when the following occurs: The homotopy type
of graph for $\mathcal{S}$ has an edge, $e$, with a trivalent vertex whose angle
values on $\mathcal{S} $ limit to the angle defined via \eqref{eq1.8} by $Q_{e}$, or to
that defined by --$Q_{e}$. Denote the latter angle by $\theta _{* }$;
it is the angle that defines the relevant codimension 1 face in the closure
of $\Delta ^{k}$. The relevant trivalent vertex is denoted in what follows
by $o$.

The upcoming \fullref{lem:9.9} describes some of the basic features of sequence in
$\mathcal{S} $ whose image in $\Delta ^{k}$ approach the face just described.
To set the stage for the lemma, let $T$ denote a graph whose homotopy type
arises from the elements in $\mathcal{S} $. The other two of $o$'s incident edges
are denoted by $e'$ and $\hat{e}$ with the one distinguished from the other in the
following manner: The second vertices on $e$, $e'$ and $\hat{e}$ have angles that
bracket $\theta _{o}$ in the interval $(0,\pi )$; and under the
circumstances, the second angle on $e$ is on the same side of $\theta _{o}$
as the second angle on one of the other two edges. The convention takes this
other edge to be $e'$.

Let $T_{e}$ denote the closure of the component of $T-o$ that contains
the interior of $e$, and let $T'$ denote the component of $T-\inti(e)$ that
contains $o$. Thus, $T_{e}$ and $T'$ are closed subgraphs in $T$ that intersect
only at $o$. Both can be viewed as graphs that are described by versions of
\fullref{sec:6a}. To elaborate, the labeling of $T_{e}$ is as follows: This graph
has a monovalent vertex, $o_{e}$, at angle $\theta _{* }$ such that
$T_{e}-o_{e}$ is isomorphic to the component of $T-o$ that
contains the interior of e. Meanwhile, $T'$ has a bivalent vertex, $o'$, at
angle $\theta _{* }$ with the property that $T'-o'$ is
isomorphic to the other two components of $T-o$. Let $\hat{A}'$ denote the
asymptotic data set that is defined by $T'$ and let $\hat{A}_{e}$ denote the
corresponding set that comes from $T_{e}$.

Because the graphs that arise from the elements in $\mathcal{S} $ are all
homotopic to a single graph, the choices that are offered in Parts 1 and 2
of \fullref{sec:6c} can be made once so as to hold for all fibers of the
projection map in \eqref{eq9.1} to $\Delta ^{k}$. This done, consider a sequence
in the space depicted in \eqref{eq9.1} whose factors are constant save for the
coordinate in $o$'s factor of $\Delta ^{k}$. The latter angle for the $j$'th
element of the sequence is denoted $\theta _{j}$, and the corresponding
sequence of angles converges with limit $\theta _{* }$. The inverse
of the map in \fullref{sec:6c} takes such a sequence and provides a sequence in
$\mathcal{S} $ with no limit in ${\mathcal{M}^{* }}_{\hat{A}}$. Let
$\{(C_{0j},\phi _{j})\}_{j = 1,2,\ldots}$ denote a sequence of
pairs that defines this sequence in $\mathcal{S} $.

With the preceding as background, consider:

\begin{lemma}\label{lem:9.9}
Under the circumstances as just described, there exists the following:
\begin{enumerate}
\item[$\bullet$] Pairs $({C_{0}}', \phi')$ and $(C_{e0},\phi _{e})$
that define respective points in the $(\hat{A}', T')$ and
$(\hat{A}_{e},T_{e})$ versions of ${\mathcal{M}^{*}}_{(\cdot,\cdot)}$.

\item[$\bullet$] Sequences $\{{r'}_{j}\}_{j = 1,2,\ldots }$ and $\{r_{ej}\}_{j = 1,2,\ldots }$
of real numbers, the former increasing without limit and the latter decreasing without limit.

\item[$\bullet$] Sequences $\{{s'}_{j}\}_{j = 1,2,\ldots }$ and $\{s_{ej}\}_{j = 1,2,\ldots }$
of real numbers, such that the former converges and the latter is decreasing and unbounded,
or else the former is decreasing and unbounded while the latter converges.
\item[$\bullet$] For all sufficiently large $j$, a proper embedding, ${\psi'}_{j}$, into
$C_{0j}$ from the complement in ${C_{0}}'$ of the $s > {r'}_{j}$  part of end in ${C_{0}}'$
that corresponds to the vertex $o'\in T'$.
\item[$\bullet$] For all sufficently large $j$, a proper embedding, $\psi _{ej}$, into $C_{0j}$
from the complement in $C_{e0}$ of the $s < -r_{ej}$ part of the end in
$C_{e0}$ that corresponds to the vertex $o_{e}\in T_{e}$.
\end{enumerate}
Here is the significance: First, the image via ${\psi'}_{j}$ of the $s = {r'}_{j}$ boundary of
its domain coincides with the image via $\psi _{ej}$ of the $s = r_{ej}$ boundary of the
latter's domain. Otherwise, the images of the two maps are disjoint. Second,
\begin{enumerate}
\item[$\bullet$] $\lim_{j \to \infty }\dist(\phi _{j}\circ {\psi'}_{j}(z)$,
${\phi'}_{j}(z)) = 0$ for any fixed $z\in {C_{0}}'$. Here, ${\phi'}_{j}$ is obtained from
$\phi '$ by composing with the translation by ${s'}_{j}$ along the $\mathbb{R}$ factor of
$\mathbb{R}\times (S^1\times S^2)$.
\item[$\bullet$] $|\bar {\partial }{\psi'}_{j}| \ll |\partial {\psi'}_{j}|$ and  $\lim_{j \to \infty }
\sup_{domain({\psi'}_{j} )} | \bar {\partial }{\psi'}_{j}| /| \partial {\psi'}_{j}|  = 0$.
\end{enumerate}
Finally,
\begin{enumerate}
\item[$\bullet$] $\lim_{j \to \infty }\dist(\phi _{j} \circ \psi _{ej}(z)$,
$\phi_{ej}(z)) = 0$ for any fixed $z\in C_{e0}$. Here, $\phi _{ej}$
is obtained from $\phi _{e}$ by composing with the translation by $s_{ej}$ along the $\mathbb{R}$
factor of $\mathbb{R}\times (S^1\times S^2)$.
\item[$\bullet$] $|\bar {\partial }\psi _{ej}| \ll | \partial \psi_{ej}| $ and
$\lim_{j \to \infty }\sup_{\domain(\psi _{ej} )}|\bar {\partial }\psi _{ej}| /| \partial \psi
_{ej}|  = 0$.
\end{enumerate}
\end{lemma}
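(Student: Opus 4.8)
The plan is to follow the template of the compactness arguments in \fullref{sec:7d}, in particular the proof of \fullref{prop:7.1} and the subsequent analysis in Part 3 of that subsection, adapting \eqreft9{16} in place of the hypotheses used there to secure the conclusions of \fullref{prop:7.1}. First I would apply \fullref{prop:7.1} (as validated under \eqreft9{16}) to the sequence $\{(C_{0j},\phi_j)\}$ to extract a subsequence, renumbered from $1$, together with a finite limiting set $\Xi$ of pairs $(S,n)$. The key first step is to identify the elements of $\Xi$: because the coordinate in $o$'s factor of $\Delta^k$ is the only one that varies and $\theta_j\to\theta_*$ where $\theta_*$ is the angle defined by $\pm Q_e$ via \eqref{eq1.8}, the arc in $\Gamma_o$ that carries the bivalent-vertex coordinate approaching $\theta_*$ forces, via the argument of Step 3 in the proof of \fullref{prop:7.1}, the appearance of exactly one $\mathbb{R}$--invariant cylinder $S_*$ at angle $\theta_*$ in $\Xi$, together with (at most) one subvariety that is not $\mathbb{R}$--invariant. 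The mountain-pass/topological argument used there rules out additional non-invariant components or invariant components at other angles, using that each $C_{0j}$ has genus zero and that the images of $\{(C_{0j},\phi_j)\}$ converge in every other simplex factor and in every $\mathbb{R}_{(\cdot)}$ factor of \eqref{eq9.1}.

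Next I would show that the non-invariant part of $\Xi$ actually splits into two pieces corresponding to $({C_0}',\phi')$ and $(C_{e0},\phi_e)$. Here the point is that the angle $\theta_*$ is the zero of $\alpha_{\pm Q_e}$, so the edge $e$ degenerates: the part of $C_{0j}$ on one side of the $\theta=\theta_*$ level set (the $T_e$ side) slides off to one end of $\mathbb{R}\times\gamma_*$ while the part on the $T'$ side slides off the other end. This is precisely the mechanism behind \fullref{prop:8.2}: the graphs $T_e$ and $T'$ are the two pieces obtained by cutting $T$ along the degenerating edge, with $o_e\in T_e$ a monovalent vertex at $\theta_*$ labeled by a $(0,-,\dots)$ element and $o'\in T'$ a bivalent vertex at $\theta_*$. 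I would then invoke \fullref{thm:6.2} for the graphs $T'$ and $T_e$ separately: the constant-in-$j$ values of the factors of \eqref{eq9.1} other than $\theta_o$ determine limiting points in $O_{T'}/\Aut(T')$ and $O_{T_e}/\Aut(T_e)$, and hence pairs $({C_0}',\phi')$, $(C_{e0},\phi_e)$ representing points in the respective versions of ${\mathcal{M}^*}_{(\cdot,\cdot)}$. The translation sequences $\{{s'}_j\}$, $\{s_{ej}\}$ and the cutoff sequences $\{{r'}_j\}$, $\{r_{ej}\}$ record the relative amount of sliding: one of ${s'}_j$, $s_{ej}$ stays bounded (the piece that is "anchored") while the other runs off to $-\infty$, mirroring the dichotomy in \fullref{lem:4.7} about which of two boundary-value sequences stays bounded.

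Then I would construct the embeddings ${\psi'}_j$ and $\psi_{ej}$ exactly as in Step 12--14 of Part 3 of \fullref{sec:7d}: away from the degenerating end, the exponential-map/tubular-neighborhood argument from Step 1 of \fullref{sec:5c} (see also \eqreft5{2}) gives a proper embedding of the $|s|$--bounded part of ${C_0}'$ (resp.\ $C_{e0}$) into $C_{0j}$ whose composition with $\phi_j$ is $C^1$--close to ${\phi'}_j$ (resp.\ $\phi_{ej}$), with $|\bar\partial\psi|\ll|\partial\psi|$ and the Beltrami ratio tending to $0$ by the second-point estimate in Step 2 of \fullref{sec:5c}; the preferred parametrizations from \fullref{def:2.1} are used to control behavior near the degenerating level set, and \fullref{prop:7.1} guarantees that the two images fit together along the common $\theta=\theta_*$ circle (which is where $s={r'}_j$ on the one domain and $s=r_{ej}$ on the other) and are otherwise disjoint. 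I expect the main obstacle to be the bookkeeping that matches the specific boundary circle $s={r'}_j$ of $\psi'_j$'s domain with the circle $s=r_{ej}$ of $\psi_{ej}$'s domain and verifies that, apart from this matched pair, the images are disjoint: this requires tracking, via the parametrizing algorithm of \fullref{sec:2c} and the canonical coordinates on the ends corresponding to $o'$ and $o_e$, exactly how the $|s|\to\infty$ slices of the two degenerating ends converge as (multiple covers of) the $\theta=\theta_*$ Reeb orbit that underlies $S_*$, and then showing the number of Reeb-orbit slices involved matches on both sides. Everything else is a routine adaptation of the machinery already developed, so the proof reduces to verifying \eqreft9{16} applies and then citing Sections~\ref{sec:5c} and~\ref{sec:7d}.
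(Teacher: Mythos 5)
Your proposal has the right overall template (follow Sections~\ref{sec:4e} and \ref{sec:7d}), but it has a genuine gap at the very first move: you cannot invoke \fullref{prop:7.1} here, and \eqreft9{16} is exactly the wrong tool to appeal to.  In the scenario of \fullref{lem:9.9}, the vertex $o$ is trivalent and $\underline{\Gamma}_o$ is a figure~8 with a single vertex, so $\ell_{o,e}$ is one of the two small circles.  As $\theta_j\to\theta_*$ the constraint in \eqref{eq6.6} forces every $r(\gamma)$ with $\gamma\subset\ell_{o,e}$ to go to zero, since their sum is $2\pi\alpha_{Q_e}(\theta_j)\to 0$.  Thus the set $\mathcal{A}$ of collapsing arcs is the whole of $\ell_{o,e}$, a \emph{circle}, and \eqreft9{16} is violated: the union is not simply connected.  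The paper's remark that \eqreft9{16} secures the conclusions of \fullref{prop:7.1} applies to the Lemma~9.1--9.6 and Lemma~9.8 scenarios (arc degeneration with simply connected union at fixed $\theta_o$), not to this one.

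Because \eqreft9{16} fails, the second conclusion \eqref{eq7.4} of \fullref{prop:7.1} is actually \emph{false} here, which is precisely the content of the "one of $\{s'_j\},\{s_{ej}\}$ is unbounded" alternative in the statement of \fullref{lem:9.9}: the $T_e$ (or $T'$) piece of $C_{0j}$ slides to $s\to-\infty$ and cannot be approximated by any fixed finite set $\cup_\Xi S$.  The correct starting point is \cite[Proposition~3.7]{T3} directly, which delivers a $\Xi$ satisfying only the compactly supported integral limit; one then observes that the situation is exactly that of \fullref{lem:4.7} in \fullref{sec:4e}: an $\mathbb{R}$--invariant cylinder $S_*$ at angle $\theta_*$ appears and there are sequences $\{s_{j-}\}$, $\{s_{j+}\}$ of slice parameters with at least one unbounded.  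This is the mechanism that produces the translation sequences $\{s'_j\}$, $\{s_{ej}\}$ of the lemma, not a corollary of \fullref{prop:7.1}.  Equivalently: the step in Part~2 of \fullref{sec:4e} that rules out $\mathbb{R}$--invariant cylinders from $\Xi$ crucially uses that the image of $\{C_j\}$ in $\Delta_o$ converges to an interior point, giving the arc values a positive lower bound; here that hypothesis is deliberately broken, so the cylinder $S_*$ enters $\Xi$ and nothing stops a piece from sliding off.  Once you apply \cite[Proposition~3.7]{T3} to the original sequence and to a suitably translated sequence, extract $(C_0',\phi')$ and $(C_{e0},\phi_e)$, and then build the embeddings $\psi'_j$, $\psi_{ej}$ exactly as you sketch from Steps~12--14 of Part~3 of \fullref{sec:7d}, the argument goes through.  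The remainder of your proposal (the identification of $T'$, $T_e$ via cutting $T$ along $e$, the use of \fullref{thm:6.2}, and the matching of boundary circles via \fullref{sec:2c}'s parametrizing algorithm) is sound.
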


More can be said about the sorts of pairs $({C_{0}}', \phi ')$ and
$(C_{0e},\phi _{e})$ that appear here. For example, if the vertex $o_{e}\in T_{e}$ is fixed by
$\Aut (T_{e})$, then the subset of such pairs in
\begin{equation}\label{eq9.23}
{\mathcal{M}^{*}}_{\hat{A}',T'} \times {\mathcal{M}^{*}}_{\hat{A}_e ,T_e }
\end{equation}
is a smooth, codimension 2 suborbifold that can be defined as a
fiber product using the maps that are depicted in \cite[(2.19) and
(2.20)]{T3}. The task of filling in the details is left to
the reader.

The story told by \fullref{lem:9.9} has analogues in higher codimensions. This can
occur when some version of $\underline {\Gamma }_{(\cdot )}$ has an
arc that begins and ends on the same vertex. By way of elaboration, suppose
that $o$ is a multivalent vertex in $T$ and that
$\gamma\subset \underline {\Gamma }_{o}$ is such an arc. As noted previously, the maps
in $\Delta _{o}$ are identical on $\gamma $; they send $\gamma $ to
$\alpha _{Q}(\theta _{o})$ where $Q =\sum _{e}c_{e}Q_{e}$
with the constants $\{c_{e}\}$ defined by writing $\gamma $'s homology
class as $\sum _{e}c_{e}[\ell _{oe}]$. Because the maps in $\Delta_{o}$ must send
$\gamma $ to a positive number, situations arise where the
angle $\theta _{o}$ varies on graphs homotopic to $T$ so that $\alpha_{Q}(\theta _{o})$ limits to zero.
\fullref{lem:9.9} concerns the case that
$\underline{\Gamma }_{o}$ is a figure 8 graph with a single 4--valent
vertex. However, the event just described can occur in a more general
context.

But for one comment, the general context is left for the reader to describe.
Here is the comment: The techniques used in Steps 2 and 3 from the proof of
\fullref{prop:8.2} in \fullref{sec:8b} can be used to prove that the event in
question occurs only in the case that ${\mathcal{M}^{*}}_{\hat{A},T}$ lies in the closure of
a codimension zero stratum of the sort that was
just discussed. In particular, the angle where $\alpha _{Q}$ is zero
defines a codimension 1 face of the space $\Delta ^{k}$ that appears in
the corresponding version of \eqref{eq9.1}; this a face that provides one of the
added strata in the closure of ${\mathcal{M}^{* }}_{\hat{A}}$.

\subsection{Faces of $\Delta^{k}$ where $\theta  = 0$ or $\pi $}\label{sec:9e}

There are versions of ${\mathcal{M}^{* }}_{\hat{A}}$ that have top
dimensional strata whose elements define a graph with an edge that has a
trivalent vertex on one end and a $\theta  = 0$ or $\theta =\pi $ vertex
on the other. There can be strata in this case where the trivalent vertex
angle limits to 0 or $\pi $ as the case may be. This subsection discusses
the behavior of the subvarieties where the graph has a very small trivalent
vertex angle. The discussion for the case where the angle is close to $\pi $
is not given since it is essentially identical to what follows.

Let $\mathcal{S} $ denote the top dimensional stratum involved. In this case, one
of the faces in $\Delta ^{k}$ is characterized by a $\theta  = 0$
assignment to a trivalent vertex. To start the story, note that as before,
the choices in Parts 1 and 2 of \fullref{sec:6c} can be made once so as to hold
for all graphs that arise from elements in $\mathcal{S} $. This understood, let $o$
denote the multivalent vertex involved and let $e$ denote its edge. The
assumption here is that there are no other vertex angles between $\theta_{o}$ and 0.

Fix a graph from an element in $\mathcal{S} $ to call $T$, and then
construct a sequence of homotopic graphs, $\{T_{j}\}_{j = 1,2,\ldots}$, where $T_{j}$ varies
from $T$ only in the angle assigned to $o$. The latter
angle is denoted now as $\theta _{j}$ and the assumption in what follows
is that $\lim_{j \to \infty }\theta _{j} = 0$. The choices from Parts 1
and 2 of \fullref{sec:6c} identify all $T = T_{j}$ versions of \eqref{eq6.15}. This
understood, fix a point, $\lambda $, in \eqref{eq6.15} and let $(C_{0j}, \phi_{j})$ denote a
pair that gives rise to the image of $\lambda $ in the $T = T_{j}$ version of
${\mathcal{M}^{*}}_{\hat{A},T}$ via the
inverse of the map from \fullref{sec:6c}.

There are now two cases to consider. In the first, two of $o$'s incident edges
connect $o$ to respective vertices with larger angle. In the second case, $e$
and another edge connect $o$ to angle 0 vertices. The two parts to this
subsection discuss these two cases.

\step{Part 1} Let $e'$ and $e''$ denote the two incident edges that connect $o$ to vertices
with larger angles. It must be assumed here that neither the $Q = Q_{e' }$ nor $Q = Q_{e''}$ version
of $\alpha _{Q}$ has a zero between 0 and $\theta _{o}$. Since $\alpha _{Q}(\theta _{o}) > 0$, this
occurs when $Q$ does not define an angle via \eqref{eq1.8} or, if it does,
then so does --$Q$, the former is greater than the latter and both are greater
than $\theta _{o}$.

The limiting behavior of $\{(C_{0j}, \phi _{j})\}$ is described
below in terms of a pair of graphs, $T'$ and $T''$, that are obtained from $T$ in
the following manner: Let $T_{e'}$ and $T_{e''}$
denote the closures in $T$ of the respective components of $T-o$ that
contain the interiors of $e'$ and $e''$. The graph $T'$ is characterized as
follows: It has an edge, $\hat{e}'$, with a $\theta  = 0$ monovalent vertex such
that the closure of $T'-\hat{e}'$ in $T'$ is isomorphic to the
closure of $T_{e' }-e'$ in $T$. In addition, $Q_{\hat{e}'} = Q_{e' }$. There is a similar characterization of
$T''$: It has an edge, $\hat{e}''$, with a monovalent vertex such that the closure
of $T''-\hat{e}''$ in $T''$ is isomorphic to that of $T_{e''}-e''$ in $T$. Moreover,
$Q_{\hat{e}''} = Q_{e'}$. Let $\hat{A}'$ and $\hat{A}''$ denote the respective asymptotic
data sets for the graphs $T'$ and $T''$. Note that these respective sets are
determined from $T$ and $T'$ via the rules in \eqreft63.

\begin{lemma}\label{lem:9.10}
With the scenario just described, there exist exist the following:
\begin{enumerate}
\item[$\bullet$] Pairs $({C_{0}}', \phi ')$ and $({C_{0}}'', \phi'')$ that
define respective elements in ${\mathcal{M}^{*}}_{\hat{A}',T'}$ and
${\mathcal{M}^{* }}_{\hat{A}'' ,T''}$.
\item[$\bullet$] Sequences $\{{s'}_{j}\}_{j = 1,2,\ldots }$ and $\{{s''}_{j}\}_{j = 1,2,\ldots }$
of real numbers.
\item[$\bullet$] For all sufficiently large $j$, a proper embedding,
${\psi'}_{j}$, into $C_{0j}$ of the complement in ${C_{0}}'$ of the $\theta < 2\theta _{j}$
part of the component of the ${C_{0}}'$ version of $C_{0}-\Gamma $
that corresponds to the edge $\hat{e}' \subset T'$.
\item[$\bullet$] For all sufficently large $j$, a proper embedding, ${\psi ''}_{j}$, into $C_{0j}$
of the complement in ${C_{0}}''$ of the $\theta  < 2\theta _{j}$ part of the component of the ${C_{0}}''$
version of $C_{0}-\Gamma $ of $C_{e0}$ that corresponds to the edge $\hat{e}''\subset T''$.
\end{enumerate}
Here is the significance: First, the images of
${\psi'}_{j}$ and ${\psi ''}_{j}$ are disjoint and their complement in
$C_{0j}$ is the $\theta  < 2\theta _{j}$ part of the closure of the components of the
$C_{0j}$ version of $C_{0}-\Gamma $ that correspond to the edges $e$, $e'$ and $e''$. Second,
\begin{enumerate}
\item[$\bullet$] $\lim_{j \to \infty }\dist(\phi _{j} \circ {\psi'}_{j}(z)$,
${\phi'}_{j}(z)) = 0$ for any fixed $z\in {C_{0}}'$. Here, ${\phi'}_{j}$ is obtained from $\phi '$
by composing with the translation by ${s'}_{j}$ along the $\mathbb{R}$ factor of
$\mathbb{R}\times (S^1\times S^2)$.
\item[$\bullet$] $|\bar {\partial }{\psi'}_{j}| \ll | \partial {\psi'}_{j}|$ and $\lim_{j \to \infty }
\sup_{\domain({\psi'}_{j} )} | \bar {\partial }{\psi'}_{j}| /| \partial {\psi'}_{j}|  = 0$.
\end{enumerate}
Finally,
\begin{enumerate}
\item[$\bullet$] $\lim_{j \to \infty }\dist(\phi _{j} \circ {\psi ''}_{j}(z)$, $\phi ''_{j}(z)) = 0$
for any fixed $z\in {C_{0}}'$. Here, $\phi ''_{j}$ is obtained from $\phi ''$ by composing
with the translation by ${s''}_{j}$ along the $\mathbb{R}$ factor of $\mathbb{R}\times (S^1\times S^2)$
\item[$\bullet$] $|\bar {\partial }{\psi ''}_{j}| \ll | \partial {\psi''}_{j}| $ and
$\lim_{j \to \infty }\sup_{\domain({\psi''}_{j})}| \bar {\partial }{\psi ''}_{j}|/|
\partial {\psi ''}_{j}|  = 0$.
\end{enumerate}
\end{lemma}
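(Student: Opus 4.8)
The plan is to mimic the limit analysis of \fullref{sec:7d} and the companion arguments of Lemmas~\ref{lem:9.1}--\ref{lem:9.4} and \fullref{lem:9.9}. First I would invoke the version of \fullref{prop:7.1} that is valid under the hypothesis \eqreft9{16}: since the chosen point $\lambda$ in \eqref{eq6.15} is fixed and only the angle $\theta_j$ at the vertex $o$ varies, the relevant configurations are uniformly controlled, so a subsequence of $\{(C_{0j},\phi_j)\}$ has a geometric limit described by a finite set $\Xi$ of pairs. The assertion to establish is that, after the translations by $s'_j$ and $s''_j$, the non-$\mathbb{R}$--invariant members of $\Xi$ are precisely the two subvarieties $\phi'(C_0')$ and $\phi''(C_0'')$, while any $\mathbb{R}$--invariant member sits at the angle $0$. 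Two branches appear here because, as $\theta_j\to 0$, the components of $C_{0j}-\Gamma$ labelled by $e$, $e'$ and $e''$ are squeezed into arbitrarily small values of $\theta$, so in the rescaled pictures centred on $T_{e'}$ and on $T_{e''}$ these components drop out of the limit and the two sides of $o$ separate.

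Next I would identify the graphs $T'$ and $T''$ and the data sets $\hat{A}'$ and $\hat{A}''$. Because every graph arising from $\mathcal{S}$ is homotopic to a fixed $T$, each $(C_{0j},\phi_j)$ carries a canonical correspondence, and one can track which components of $C_{0j}-\Gamma$ and which arcs of the various $\underline{\Gamma}_{(\cdot)}$ survive under each rescaling. The components on the $T_{e'}$ side of $o$ reassemble, in the limit, into a graph isomorphic to the closure of $T_{e'}-e'$ with a new monovalent vertex at angle $0$ attached along an edge $\hat{e}'$; the integer pair of $\hat{e}'$ is $Q_{e'}$ because the constant $|s|$ slices of the surviving part of $K_{e'}$ are the large $|s|$ slices of this new end and so carry the integrals that define $Q_{e'}$. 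The hypothesis that neither $\alpha_{Q_{e'}}$ nor $\alpha_{Q_{e''}}$ has a zero between $0$ and $\theta_o$ is exactly what forces these new ends to have $\theta\to 0$, hence makes $T'$ and $T''$ obey the conditions of \fullref{prop:8.2} and makes \eqreft63 produce the asserted $\hat{A}'$, $\hat{A}''$; thus $(C_0',\phi')$ and $(C_0'',\phi'')$ define points in the stated versions of ${\mathcal{M}^{*}}_{(\cdot),(\cdot)}$. The same reasoning, applied on the $T_{e''}$ side, fixes $T''$.

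To produce the embeddings ${\psi'}_j$ and ${\psi''}_j$ I would argue as in Step~1 of \fullref{sec:5c} and Step~3 of \fullref{sec:7d}. Away from the $\theta<2\theta_j$ part of the closures of the $e$, $e'$ and $e''$ components, $C_{0j}$ is $C^1$--close to a fixed translate of $\phi'(C_0')$, so it is the image of a small normed section of the pull-back normal bundle, and composing that description with the bundle projection to $C_0'$ defines ${\psi'}_j$ on the complement in $C_0'$ of the $\theta<2\theta_j$ part of $K_{\hat{e}'}$. The $C^0$ convergence $\dist(\phi_j\circ{\psi'}_j(z),{\phi'}_j(z))\to 0$ is immediate from \fullref{prop:7.1}, and the estimate $|\bar{\partial}{\psi'}_j|\ll|\partial{\psi'}_j|$ with supremum tending to zero follows, as in the second point of Step~2 of \fullref{sec:5c}, from the fact that the form $\nu$ in \eqref{eq5.2} pulls back as a form of type $(1,0)$. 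The translation amounts $s'_j$ and $s''_j$ are read off from the $s$--coordinates of the relevant parts of $C_{0j}$, equivalently from $\lambda$ and the normalisations of \fullref{sec:6c}, exactly as in Part~4 of \fullref{sec:9a}; one then verifies by a count of components that the complement in $C_{0j}$ of the union of the images of ${\psi'}_j$ and ${\psi''}_j$ is precisely the $\theta<2\theta_j$ part of the closures of the $e$, $e'$, $e''$ components and that the two images are disjoint. An essentially identical construction, with $C_0''$ in place of $C_0'$, produces ${\psi''}_j$.

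I expect the main obstacle to be the degeneration analysis near $\theta=0$, as in \fullref{prop:7.1}: one must show that no extra topology or bubbling is created in the $\theta<2\theta_j$ region, that the limit set $\Xi$ contains no irreducible, non-$\mathbb{R}$--invariant subvariety other than $C_0'$ and $C_0''$ and no $\mathbb{R}$--invariant cylinder at a positive angle (either of which would obstruct the asserted splitting), and that the $T_{e'}$ and $T_{e''}$ sides genuinely separate in their respective rescaled pictures. This is carried out by the homological arguments of Steps~3--6 in the proof of \fullref{prop:7.1}, using that $\theta$ has no local extrema with value in $(0,\pi)$ on a pseudoholomorphic subvariety, together with the convergence of the $\Delta_o$--coordinates and the preferred parametrizations of \fullref{def:2.1}; the rate at which $\theta$ reaches $0$ is controlled by \eqref{eq2.4} and \eqref{eq1.9} and their a priori rate constants, which also force the new ends of $C_0'$ and $C_0''$ to be of the correct type. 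With these points established, the convergence statements and the remaining $\bar{\partial}$ estimates follow as above.
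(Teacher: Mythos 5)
The paper gives no proof of Lemma~\ref{lem:9.10} at all: as stated just before Part~1 of \fullref{sec:9b}, these lemmas are omitted because ``but for straightforward modifications, each argument repeats those that appear in \fullref{sec:7d}''. Your proposal follows exactly that indicated route -- a geometric compactness statement yielding a limit set $\Xi$, identification of the two limit graphs $T'$ and $T''$ and the corresponding pairs $(C_0',\phi')$, $(C_0'',\phi'')$, construction of $\psi'_j,\psi''_j$ via normal-bundle sections and pseudoholomorphic local coordinates as in Step~2 of \fullref{sec:5c}, and a homological separation argument from Steps~3--6 of the proof of \fullref{prop:7.1} to rule out extra bubbling or $\mathbb{R}$--invariant cylinders at positive angle. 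This matches the paper's intent.

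One imprecision is worth flagging. You invoke ``the version of \fullref{prop:7.1} that is valid under the hypothesis \eqreft9{16}.'' But \eqreft9{16} concerns a degeneration in the $\Delta_o$ factor of \eqref{eq6.15} (certain arc lengths in $\underline{\Gamma}_o$ tending to zero with simply connected union), whereas here the $\Delta_o$ data is held fixed and the vertex angle $\theta_j$ tends to $0$; these are different degenerations and \eqreft9{16} is not the relevant hypothesis. Moreover the conclusion \eqref{eq7.4} of \fullref{prop:7.1} genuinely fails in the Lemma~\ref{lem:9.10} scenario -- that failure is the content of the lemma -- so what one must invoke is the raw geometric compactness of \cite[Proposition~3.7]{T3} (applied to each of the two translated sequences separately), together with the degeneration analysis patterned on \fullref{lem:4.7} and Step~2 of \fullref{sec:7d}, rather than \fullref{prop:7.1} itself. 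Since the substance of your argument in fact uses only the geometric compactness and the homological exclusion, this is a citation error rather than a gap in reasoning, and the remainder of your sketch is sound.
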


As in the case with \fullref{lem:9.9}, the set of pairs in ${\mathcal{M}^{*}}_{\hat{A}' ,T'}
\times{\mathcal{M}^{* }}_{\hat{A}'' , T''}$ that arise can be described rather explicitly.

\step{Part 2} In this case, the limit of $\{(C_{0j},\phi _{j})\}$ defines a
point in ${\mathcal{M}^{* }}_{\hat{A}}$ only in the case that both
integer pairs that connect $o$ to $\theta  = 0$ vertices have the form $(0,\cdot)$.
The resulting stratum in ${\mathcal{M}^{* }}_{\hat{A}}$ has codimension 2 rather than codimension 1.
Without this integer pair
condition, the limit is described with the help of a point in some $\hat{A}'\ne \hat{A}$ moduli
space ${\mathcal{M}^{*}}_{\hat{A}'}$.

In any case, to say more about the limit, introduce the graph $T'$ that is
characterized as follows: Let $\hat{e}$ denote the edge in $T$ that connects the
vertex $o$ to a larger angled vertex. The graph $T'$ has an edge, $\hat{e}'$, with a
monovalent vertex at angle 0 such that the closure in $T'$ of $T'-\hat{e}'$ is isomorphic to the
closure in $T$ of the component of $T-o$ that contains the interior of $T-\hat{e}$. In addition,
$Q_{\hat{e}'} = Q_{\hat{e} }$. Let $\hat{A}'$ denote the asymptotic data set that is determined by $T'$.

\begin{lemma}\label{lem:9.11}
With the scenario just described, there exist exist the following:
\begin{enumerate}
\item[$\bullet$] A pair $({C_{0}}', \phi ')$ that defines an element in
${\mathcal{M}^{*}}_{\hat{A}',T'}$.
\item[$\bullet$] A sequence $\{s_{j}\}_{j = 1,2,\ldots }$ of real numbers.
\item[$\bullet$] For all sufficiently large $j$, a proper embedding, ${\psi'}_{j}$, into $C_{0j}$
of the complement in ${C_{0}}'$ of the $\theta  < 2\theta _{j}$ part of the component of the
${C_{0}}'$ version of $C_{0}-\Gamma $ that corresponds to the edge $\hat{e}'\subset T'$.
\end{enumerate}
Here is the significance: First, the complement of the image of ${\psi'}_{j}$
 in $C_{0j}$ is the $\theta  < 2\theta _{j}$ part of the closure of the components of the
 $C_{0j}$ version of $C_{0}-\Gamma$ that correspond to the edges $e$, $e'$ and $e''$. Second,
\begin{enumerate}
\item[$\bullet$] $\lim_{j \to \infty }\dist(\phi _{j} \circ {\psi'}_{j}(z)$, ${\phi'}_{j}(z)) = 0$
for any fixed $z\in {C_{0}}'$. Here, ${\phi'}_{j}$ is obtained from $\phi '$ by composing with
the translation by $s_{j}$ along the $\mathbb{R}$ factor of $\mathbb{R}\times  (S^1\times S^2)$.
\item[$\bullet$] $|\bar {\partial }{\psi'}_{j}| \ll | \partial {\psi'}_{j}| $ and
$\lim_{j \to \infty }\sup_{\domain({\psi'}_{j})} | \bar {\partial }{\psi'}_{j}| /
| \partial {\psi'}_{j}|  = 0$.
\end{enumerate}
In the case that both $Q_{e }$ and $Q_{e' }$ have first component zero, the sequence $\{s_{j}\}$ can be
taken to be constant and the domain of ${\psi'}_{j}$ can be taken to be the whole of ${C_{0}}'$.
In this case, the range of ${\psi'}_{j}$ is the whole of $C_{0j}$. In particular, the sequence in
${\mathcal{M}^{* }}_{\hat{A} }$ defined by $\{(C_{0j}, \phi _{j})\}$ converges in
${\mathcal{M}^{* }}_{\hat{A}}$ to the point defined by $({C_{0}}', \phi')$.
\end{lemma}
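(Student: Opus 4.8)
Lemma~\ref{lem:9.11} concerns a degeneration in which a single trivalent
vertex angle $\theta_o$ of a graph $T$ from the top stratum
$\mathcal S$ shrinks to zero, with both other incident edges of $o$
running to $\theta=0$ vertices. The plan is to argue exactly as in the
proof of \fullref{prop:7.1} in \fullref{sec:7d} (invoking \eqreft9{16} to
secure the conclusions of \fullref{prop:7.1} under the present
constraints), but now organized around the single shrinking vertex. First
I would apply \fullref{prop:7.1} to the sequence $\{(C_{0j},\phi_j)\}$,
after composing each $\phi_j$ with a $j$--dependent translation $s_j$
along the $\mathbb R$ factor chosen so that the limit data set $\Xi$ of
\cite[Proposition~3.7]{T3} contains an irreducible subvariety that is not
an $\mathbb R$--invariant cylinder. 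The choice of $s_j$ is dictated by
requiring that the $|s|$--coordinates of the critical point of
$\phi_j^*\theta$ corresponding to $o$ stay bounded after translation; this
is possible precisely because $\theta_o\to 0$, and it is the mechanism
that produces the pair $({C_0}',\phi')$ defining a point of
$\mathcal M^*_{\hat A',T'}$, where $T'$ and $\hat A'$ are the graph and
data set obtained by deleting $o$ and the $\theta=0$ portion of $C_0$
past the shrinking region.

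The embedding $\psi'_j$ is then constructed by the same tubular
neighbourhood argument used in Step~1 of \fullref{sec:5c} and repeatedly
in \fullref{sec:7d}: on the $\theta\ge 2\theta_j$ portion of the component
$K_{\hat e'}\subset C_0'-\Gamma$ one builds the exponential map on a disk
subbundle of the pull-back normal bundle $N$, uses \fullref{prop:7.1} and
positivity of intersections to see that $C_{0j}$ meets each fibre disk
exactly once, and writes $C_{0j}$ near $\phi'(C_0')$ as the image of
$e\circ\eta$ for a small normed section $\eta$. This gives $\psi'_j$ with
$\dist(\phi_j\circ\psi'_j(z),\phi'_j(z))\to 0$ and, by the argument of
Step~3 in \fullref{sec:5c} using the coordinates $(x,y)$ from Step~2
there, with $r_z(\psi'_j)=|\bar\partial\psi'_j|/|\partial\psi'_j|$
uniformly small and limiting to zero. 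That the complement of the image of
$\psi'_j$ in $C_{0j}$ is precisely the $\theta<2\theta_j$ part of the
closures of the components labelled $e$, $e'$ and $e''$ follows from a
topological count: these three components of $C_0-\Gamma$ are the only
ones whose angle range dips below $2\theta_j$ once $j$ is large, by the
assumption that no vertex angle besides $0$ and $\theta_o$ lies in
$(0,2\theta_j)$.

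The special case where both $Q_e$ and $Q_{e'}$ have vanishing first
component is where the sharpening to a genuine convergence statement in
${\mathcal M^*}_{\hat A}$ occurs, and I expect this to be the delicate
point. Here the $\theta=0$ vertices attached to $e$ and $e'$ carry
$(0,\ldots)$ integer pairs, so they correspond to $\theta=0$ \emph{points}
of $C_{0j}$ rather than to $\delta=\pm1$ ends; consequently the
shrinking region does not carry any non-trivial $|s|\to\infty$ asymptotics
and the three exceptional components remain of bounded diameter. One must
check, using the parametrizations of \fullref{def:2.1} and Property~6 of
\fullref{sec:2b} together with \fullref{prop:7.1}, that no escape of area
or diameter occurs through these components as $\theta_j\to 0$: the
relevant integrals of $\tfrac1{2\pi}dt$ and $\tfrac1{2\pi}d\varphi$ about
constant $\theta$ slices are controlled by $Q_e,Q_{e'}$, and $q_e=q_{e'}=0$
forces the $s$--coordinate on these components to stay bounded after a
\emph{constant} translation. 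Granting that, $\{s_j\}$ can be chosen
constant, $\psi'_j$ extends over all of $C_0'$, and the maps $\psi'_j$
become diffeomorphisms onto $C_{0j}$ with $r(\psi'_j)\to 0$, which is
exactly the statement that $\{(C_{0j},\phi_j)\}$ converges in
${\mathcal M^*}_{\hat A}$ to the point defined by $({C_0}',\phi')$, via
\eqref{eq1.24}. The main obstacle, as in \fullref{sec:7d}, is promoting
the implicit-function-theorem picture near $({C_0}',\phi')$ to a
\emph{full} neighbourhood statement; but this is handled verbatim by the
compactness argument of \fullref{sec:7d}, now with \eqreft9{16} replacing
the genericity input needed for \fullref{prop:7.1}, so no new idea is
required beyond bookkeeping of the three exceptional components.
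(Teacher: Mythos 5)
The paper does not actually spell out a proof of Lemma~\ref{lem:9.11}: the relevant paragraph at the start of \fullref{sec:9b} explicitly says the proofs of the lemmas in this section are omitted because ``but for straightforward modifications, each argument repeats those that appear in \fullref{sec:7d}.'' Your proposal is a reasonable reconstruction of exactly that plan: run the $\theta$--preserving preimage / tubular--neighbourhood machinery of \fullref{sec:7d} and Steps 1--3 of \fullref{sec:5c} to build the embeddings $\psi'_j$, and verify $r(\psi'_j)\to 0$ via the pseudoholomorphic coordinate lemma from Step 2 of \fullref{sec:5c}. You also correctly isolate the one genuinely delicate point --- the sharpened claim when both $Q_e$ and $Q_{e'}$ have vanishing first component --- and correctly explain it: rule (a) of \eqreft63, combined with the constraint in \eqreft17 that $(\pm1,\ldots)$ elements have $p<0$, forces such a monovalent vertex to be a $\theta=0$ \emph{point} of $C_0$ (a contribution to $\text{\c c}_+$) rather than an end, so no $|s|\to\infty$ escape occurs through the shrinking region and a constant translation suffices.

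One small misattribution worth flagging: you cite \eqreft9{16} as the input that secures the conclusions of \fullref{prop:7.1}. That condition (simple connectivity of the union of shrinking arcs in $\underline\Gamma_o$) is the one the paper invokes for the arc--degeneration scenarios of \fullref{sec:9b} and \fullref{sec:9c}, not for the $\theta_o\to 0$ degeneration of \fullref{sec:9e} here, where the relevant failure mode in \fullref{prop:7.1}'s argument is different (the $\theta=\theta_*$ locus approaching the $\theta=0$ cylinder and possibly producing a mountain-pass critical point). So the correct statement is that one must re-run the \fullref{sec:7d} compactness argument and check that the $\theta_*\in\{0,\pi\}$ case of \fullref{lem:4.7}'s fallout is handled by the present hypotheses, rather than invoke \eqreft9{16} as such. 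This does not change the architecture of your argument, only the label on the verification step.
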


Techniques of \cite[Section~3]{T3} can be used to prove that every element in
${\mathcal{M}^{*}}_{\hat{A}',T'}$ arises as a
limit in the sense just described of a sequence in the stratum $\mathcal{S}$.

\bibliographystyle{gtart}
\bibliography{link}
\end{document}